\DeclareMathSymbol{\intop}{\mathop}{symbols}{115}
\def\int{\intop}
\let \iint \relax
\DeclareMathOperator*{\iint}{\intop\!\!\intop}
\DeclareMathSymbol{\sumop}{\mathop}{largesymbols}{80}
\def\ssum{\sumop}
\let \sum \relax
\DeclareMathOperator*{\sum}{\textstyle{\ssum}}
\DeclareMathSymbol{\prodop}{\mathop}{largesymbols}{81}
\def\sprod{\prodop}
\let \prod \relax
\DeclareMathOperator*{\prod}{\textstyle{\sprod}}
\newtheorem{theorem}{Theorem}[section]
\newtheorem{proposition}[theorem]{Proposition}    
\newtheorem{lemma}[theorem]{Lemma}           
\newtheorem{corollary}[theorem]{Corollary}
\newcounter{thmbiss}
\newcounter{lemmabiss}
\theoremstyle{definition}
\newtheorem{definition}[theorem]{Definition}
\newtheorem{remark}[theorem]{Remark}
\newcommand{\N}{\ensuremath{\mathbb{N}}}
\newcommand{\R}{\ensuremath{\mathbb{R}}}
\newcommand{\C}{\ensuremath{\mathbb{C}}}
\newcommand{\eps}{\varepsilon}
\newcommand{\B}{\ensuremath{\mathscr B}}
\newcommand{\D}{\ensuremath{\mathscr D}}
\newcommand{\E}{\ensuremath{\mathcal E}}
\renewcommand{\H}{\ensuremath{\mathscr H}}
\newcommand{\M}{\ensuremath{\mathcal M}}
\renewcommand{\O}{\ensuremath{\mathscr O}}
\newcommand{\U}{\ensuremath{\mathscr U}}
\newcommand{\W}{\ensuremath{\mathcal W}}
\newcommand{\Con}{\ensuremath{\mathscr C}}
\newcommand{\Cinf}{\ensuremath{\Con^\infty}}
\newcommand{\Cinfc}{\ensuremath{\Con^\infty}_{c}}
\renewcommand{\S}{\ensuremath{\mathscr S}}
\let \Re \relax
\DeclareMathOperator{\Re}{Re}
\let \Im \relax
\DeclareMathOperator{\Im}{Im}
\newcommand{\ovl}[1]{\overline{#1}}
\newcommand{\udl}[1]{\underline{#1}}
\DeclareMathOperator{\Span}{Span}
\DeclareMathOperator{\supp}{supp}
\DeclareMathOperator{\sgn}{sgn}
\DeclareMathOperator{\Char}{char}
\DeclareMathOperator{\dist}{dist}
\DeclareMathOperator{\id}{Id}
\newcommand{\inp}[2]{\langle #1, #2 \rangle} 
\newcommand{\scp}[2]{( #1 , #2 )}
\newcommand{\norm}[2]{{| #1 |}_{#2}}
\newcommand{\Norm}[2]{\| #1 \|_{#2}}
\newcommand{\imp}{\Rightarrow}
\DeclareMathOperator{\Op}{Op}
\DeclareMathOperator{\Opt}{Op_\T}
\newcommand{\brx}{\ensuremath{_{|x_d=0^+}}}
\newcommand{\brz}{\ensuremath{_{|z_N=0^+}}}
\newcommand{\brs}{\ensuremath{_{|s=0^+}}}
\newcommand{\transp}{\ensuremath{\phantom{}^{\displaystyle{t}}}}
\renewcommand{\d}{\ensuremath{\partial}}
\newcommand{\nhd}{neighborhood\xspace}
\newcommand{\nhds}{neighborhoods\xspace}
\newcommand{\cnhd}{conic neighborhood\xspace}
\renewcommand{\qedsymbol}{$\blacksquare$}
\newcommand{\bld}[1]{\mbox{\boldmath $#1$}}
\newcommand{\hf}{\frac{1}{2}}
\newcommand{\wrt}{w.r.t.\@\xspace}
\newcommand{\rhs}{r.h.s.\@\xspace}
\newcommand{\lhs}{l.h.s.\@\xspace}
\renewcommand{\ae}{a.e.\@\xspace}
\newcommand{\eg}{e.g.\@\xspace}
\newcommand{\resp}{resp.\@\xspace}
\newcommand{\viz}{viz.\@\xspace}
\newcommand{\suff}{sufficiently\xspace}
\newcommand{\st}{such that\xspace}
\newcommand{\Rp}{\R^N_+}
\newcommand{\Rpb}{\ovl{\R^N_+}}
\newcommand{\csp}{\gamma}
\newcommand{\ctp}{\eps}
\newcommand{\cpsi}{\psi_\ctp}
\newcommand{\cnorm}[1]{\norm{#1}{\ctp}}
\newcommand{\cB}{B_\ctp}
\newcommand{\y}{\varrho}
\newcommand{\ttau}{\tilde{\tau}}
\newcommand{\hm}{\hat{m}}
\newcommand{\hq}{\hat{q}}
\newcommand{\hlambdat}{\hat{\lambda}_{\mbox{\tiny ${\mathsf T}$}}}
\newcommand{\hmu}{\hat{\mu}}
\newcommand{\hrho}{\hat{\rho}}
\newcommand{\htau}{\hat{\tau}}
\newcommand{\hatt}{\hat{t}}
\newcommand{\tauast}{\tau_\ast}
\newcommand{\Ssc}{S_\tau}
\newcommand{\Psisc}{\Psi_\tau}
\newcommand{\T}{{\mbox{\tiny ${\mathsf T}$}}}
\newcommand{\Ssct}{S_{\T,\tau}}
\newcommand{\Psisct}{\Psi_{\T,\tau}}
\newcommand{\lambsc}{\lambda_{\ttau}}
\newcommand{\lambsct}{\lambda_{\T,\ttau}}
\newcommand{\Lambsct}{\Lambda_{\T,\ttau}}
\newcommand{\gt}{g_\T}
\newcommand{\Mt}{\ensuremath{\mathcal M}_{\T}}
\newcommand{\MtV}{\ensuremath{\mathcal M}_{\T, V}}
\newcommand{\MtVz}{\ensuremath{\mathcal M}_{\T, V_0}}
\newcommand{\MtVp}{\ensuremath{\mathcal M}_{\T, V'}}
\newcommand{\mi}{\alpha}
\newcommand{\smi}{\beta}
\newcommand{\tmi}{\delta}
\newcommand{\Pconj}{P_\varphi}
\newcommand{\pconj}{p_\varphi}
\newcommand{\tchi}{\tilde{\chi}}
\newcommand{\uchi}{\udl{\chi}}
\DeclareMathOperator{\trace}{tr}
\newcommand{\wlm}{w_{\ell,m}}
\newcommand{\tD}{\tilde{D}}
\newcommand{\lsct}{\lambda_{\T,\tau}}
\newcommand{\Lsct}{\Lambda_{\T,\tau}}
\newcommand{\Symbsc}{S_{\ttau}}
\newcommand{\PsiOpsc}{\Psi_{\ttau}}
\newcommand{\Cst}{\text{Cst}}
\numberwithin{equation}{section}
\begin{document}
\baselineskip 16pt

\date{\today}
% title
\title{Spectral inequality and resolvent estimate for the bi-Laplace operator}

\author{J\'{e}r\^{o}me Le Rousseau} \address{J\'{e}r\^{o}me Le
  Rousseau.  
  Laboratoire Analyse, G\'eom\'etrie et Applications, CNRS UMR 7539, 
  universit\'e Paris-Nord, 93430 Villetaneuse, France, Institut universitaire de
France.}  
\email{jlr@math.univ-paris13.fr}

\author{Luc Robbiano} \address{Luc Robbiano.  Laboratoire de Math\'ematiques de Versailles, UVSQ, CNRS, 
Universit\'e Paris-Saclay, 78035 Versailles, France
  }
\email{luc.robbiano@uvsq.fr}

\thanks{Part of this article was written when both authors were on a
  leave at Insitut Henri-Poincar\'e, Paris. They wish to thank the
  institute for its hospitality. Both authors acknowledge support from Agence Nationale de
  la Recherche (grant ANR-13-JS01-0006 - iproblems - Probl\`emes
  Inverses).
The authors also wish to thank two anonymous referees who have
contributed to improve the presentation of this article by their remarks.}

% abstract
\begin{abstract}  
  On a compact Riemannian manifold with boundary, we prove a spectral
  inequality for the bi-Laplace operator in the case of  so-called ``clamped''
  boundary conditions, that is, homogeneous Dirichlet and Neumann
  conditions simultaneously. We also prove a resolvent estimate for
  the generator of the damped plate semigroup associated with these
  boundary conditions. The spectral inequality allows one to
  observe finite sums of eigenfunctions for this fourth-order elliptic
  operator, from an arbitrary open subset of the manifold. Moreover,
  the constant that appears in the inequality grows as
  $\exp( C \mu^{1/4})$ where $\mu$ is the largest eigenvalue
  associated with the eigenfunctions appearing in the sum. This type
  of inequality is known for the Laplace operator. As an application,
  we obtain a null-controllability result for a higher-order parabolic
  equation. The resolvent estimate provides the spectral behavior of
  the plate semigroup generator on the imaginary axis. This type of
  estimate is known in the case of the damped wave semigroup. As an
  application, we deduce a
  stabilization result for the damped plate equation, with a
  $\log$-type decay.

  The proofs of both the spectral inequality and the resolvent
  estimate are based on the derivation of different types of Carleman
  estimates for an elliptic operator related to the bi-Laplace
  operator: in the interior and at some boundaries. One of these
  estimates exhibits a loss of one full derivative.  Its proof
  requires the introduction of an appropriate semi-classical calculus
  and a delicate microlocal argument.

  \medskip

  \noindent
  {\sc Keywords:} {high-order operators; boundary value problem;
    spectral inequality; resolvent estimate;
    interpolation inequality; controllability; stabilization; Carleman estimate;
    semi-classical calculus.}\\
  
  \noindent
  {\sc  AMS 2010 subject classification: 35B45; 35J30; 35J40; 
    35K25; 35S15; 74K20; 93B05; 93B07; 93D15.}
\end{abstract}

\maketitle

\vspace*{-0.5cm}
\small
\setcounter{tocdepth}{2}
\tableofcontents

%%%%%%%%%%%%%%%
%% Section             %
%%%%%%%%%%%%%%%
\section{Introduction}
Let $A$ be the positive Laplace operator on a compact Riemannian manifold 
$(\Omega, \mathfrak{g})$, of dimension $d\geq 1$, 
with nonempty boundary $\d \Omega$. In
local coordinates, it reads
$$A = -\Delta =  |\mathfrak{g}|^{-1/2} \sum_{1\leq i,j\leq d} D_i
\big(|\mathfrak{g}|^{1/2} \mathfrak{g}^{i j} D_j \big),$$
where $D = -i \d$.
For boundary conditions, say of homogeneous Dirichlet type\footnote{What
  we describe is yet valid for more general
  boundary conditions of Lopatinskii type for the Laplace operator.}, we denote by
$0 < \omega_1 \leq \cdots \leq \omega_j\leq \cdots$, the eigenvalues of the
operator $A$, associated with a family $(\phi_j)_{j \in \N}$ of
eigenfunctions that form a Hilbert basis for $L^2(\Omega)$. We refer to
this selfadjoint operator as the Dirichlet Laplace operator.  The
following spectral inequality originates from
\cite{LR:95,LZ:98,JL:99}.
%%%%%%%%%%%%%%%%%%%%%%%%
% theorem              %
%%%%%%%%%%%%%%%%%%%%%%%%
\begin{theorem}
  \label{theorem: spectral inequality Laplace operator}
  Let $\O$ be an open subset of $\Omega$. There exists $C>0$ \st
  \begin{align}
     \label{eq: spectral inequality Laplace operator}
    \Norm{u}{L^2(\Omega)}\leq C e^{C\omega^{1/2}} \Norm{u}{L^2(\O)}, 
    \quad \omega >0, \ \ u \in \Span\{\phi_j; \ \omega_j \leq \omega\}. 
    \end{align}
\end{theorem}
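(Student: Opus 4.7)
My approach is the classical Lebeau--Robbiano one: reduce the spectral inequality to an interpolation inequality for the elliptic augmented operator $P = \d_s^2 - A$ in one more variable. Writing $u = \sum_{\omega_j \leq \omega} a_j \phi_j$ and, for $S>0$ to be chosen, setting
$$
v(s,x) = \sum_{\omega_j \leq \omega} a_j\, \frac{\sinh(s\sqrt{\omega_j})}{\sqrt{\omega_j}}\, \phi_j(x), \qquad (s,x) \in (-S,S)\times \Omega,
$$
one has $P v = 0$, $v$ satisfies the Dirichlet condition on $(-S,S)\times \d \Omega$, and its Cauchy data at $s=0$ read $v_{|s=0} = 0$, $\d_s v_{|s=0} = u$. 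Orthonormality of the $\phi_j$ together with $\Norm{\phi_j}{H^1(\Omega)}^2 \leq C \omega_j$ directly yield
$$
\Norm{v}{H^1((-S,S)\times \Omega)} \leq C\, e^{C S\sqrt{\omega}}\, \Norm{u}{L^2(\Omega)}.
$$

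The core technical step is to establish a global interpolation inequality for regular solutions $w$ of $Pw = 0$ on $(-S,S)\times \Omega$ with $w_{|\d \Omega} = 0$: there should exist $\theta \in (0,1)$ and $C>0$ such that
$$
\Norm{w}{H^1((-S/2, S/2)\times \Omega)} \leq C\, \bigl( \Norm{w_{|s=0}}{H^1(\O)} + \Norm{\d_s w_{|s=0}}{L^2(\O)} \bigr)^{\theta}\, \Norm{w}{H^1((-S,S)\times \Omega)}^{1-\theta}.
$$
I would prove this by gluing together two types of local Carleman estimates for $P$: an interior Carleman estimate with weight $e^{\tau \varphi}$---$\varphi$ satisfying H\"ormander's sub-ellipticity condition---yielding local three-ball estimates, and a boundary Carleman estimate at $\d \Omega$ compatible with the Dirichlet condition. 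Propagation through a chain of overlapping balls covering $(-S/2, S/2)\times \Omega$, started from a ball centered on $\{s=0\}\times \O$ where the Cauchy data control $w$, would then deliver the inequality, the weight being chosen to decrease at $\{s = \pm S\}$ so that the artificial-boundary contributions are absorbed into the bulk term on the right-hand side.

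Applied to $w = v$, the vanishing $v_{|s=0} = 0$ leaves only $\Norm{\d_s v_{|s=0}}{L^2(\O)} = \Norm{u}{L^2(\O)}$ in the observation factor; combining with the upper bound on $\Norm{v}{H^1((-S,S)\times \Omega)}$, together with interior elliptic regularity plus the trace theorem to get $\Norm{u}{L^2(\Omega)} = \Norm{\d_s v_{|s=0}}{L^2(\Omega)} \leq C \Norm{v}{H^1((-S/2, S/2)\times \Omega)}$, one arrives at
$$
\Norm{u}{L^2(\Omega)} \leq C\, \Norm{u}{L^2(\O)}^{\theta}\, e^{C(1-\theta)S\sqrt{\omega}}\, \Norm{u}{L^2(\Omega)}^{1-\theta}.
$$
Fixing any $S>0$ and dividing by $\Norm{u}{L^2(\Omega)}^{1-\theta}$ delivers \eqref{eq: spectral inequality Laplace operator}. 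The main obstacle will be the interpolation inequality itself: I will need to design Carleman weights simultaneously compatible with the Dirichlet condition at $\d \Omega$, allowing the propagation of smallness from the Cauchy surface $\{s=0\}\times \O$ through the whole strip, and decaying at $\{s = \pm S\}$; once these local Carleman estimates are in hand, the propagation through overlapping balls is geometrically standard.
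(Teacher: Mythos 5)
Your proposal is correct and coincides with the Lebeau--Robbiano method that the paper itself recalls in Section 1.2 (local interior and boundary Carleman estimates for the augmented elliptic operator, patched into a global interpolation inequality, then applied to $v(s,x)=\sum a_j\,\omega_j^{-1/2}\sinh(s\sqrt{\omega_j})\,\phi_j$). The only cosmetic difference is that you place the Cauchy surface $\{s=0\}$ in the interior of a symmetric strip $(-S,S)\times\Omega$, whereas the paper works on $(0,S_0)\times\Omega$ and treats $\{s=0\}$ as a boundary; also, the bound $\Norm{u}{L^2(\Omega)}\lesssim\Norm{v}{H^1}$ follows directly from $\Norm{\d_s v(s,\cdot)}{L^2(\Omega)}^2=\sum_j|a_j|^2\cosh^2(s\sqrt{\omega_j})\geq\Norm{u}{L^2(\Omega)}^2$ without invoking elliptic regularity and the trace theorem.
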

It provides an observation estimate of finite sums of
eigenfunctions. The constant $C e^{C\omega^{1/2}}$ in the inequality
is in fact optimal if $\O \Subset \Omega$ \cite{JL:99,LL:12}, and
can be seen as a measure of the loss of orthogonality of the
eigenfunctions $\phi_j$ when restricted to $\O$. This inequality
has various applications.  It can be used to prove the null-controllability of the heat
equation \cite{LR:95} (see also the review article \cite{LL:12}), the
null-controllability of the thermoelasticity system \cite{LZ:98}, the
null-controllability of the thermoelastic plate system
\cite{BN:02,Miller:07}, and the null-controllability of some systems of
parabolic PDEs \cite{Leautaud:09}. 
It can also be used to estimate the $(d-1)$-Hausdorff measure of the nodal
set of finite sums of eigenfunctions of $A$, in the case of an analytic
Riemannian manifold \cite{JL:99}, recovering the result of
\cite{Lin:91}, that generalizes a result of \cite{DF:88} for eigenfunctions.

\medskip
Consider now the unbounded operator acting on $H^1_0(\Omega) \times L^2(\Omega)$
\begin{equation*}
  \mathcal A = \begin{pmatrix}
    0 & -1 \\
    A & \alpha
    \end{pmatrix},
\end{equation*}
with domain $D(\mathcal A) = (H^2(\Omega) \cap H^1_0(\Omega)) \times
H^1_0(\Omega)$, where $\alpha(x)$ is a nonnegative function. One can
prove the following 
resolvent estimate  \cite{Lebeau:96}.
\begin{theorem}
  \label{theorem: resolvent estimate A}
  Let $\O$ be an open subset of $\Omega$ and $\alpha$ be such that
  $\alpha(x) \geq \delta >0$ on $\O$. Then, the unbounded operator $i\sigma
  \id -\mathcal A$ is invertible on $\H = H^1_0(\Omega) \times L^2(\Omega)$ for
  all $\sigma \in \R$
and  there exist $K>0$ and $\sigma_0>0$ such that 
  \begin{equation}
    \label{eq: resolvent estimate laplace}
    \Norm{  (i\sigma \id -\mathcal A)^{-1}}{\mathscr L(\H,\H)}
    \le Ke^{K|\sigma|}, \qquad \sigma\in\R, \ |\sigma|\geq \sigma_0.
  \end{equation}
\end{theorem}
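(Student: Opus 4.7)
The plan is to reduce the resolvent estimate to a scalar second-order elliptic problem with a complex lower-order potential, and then combine an energy identity with an elliptic interpolation (Carleman-type) inequality. Writing $U=(u,v)$ and $F=(f,g)$, the system $(i\sigma \id -\mathcal A)U=F$ unwinds to $v = f - i\sigma u$ and
\[
P_\sigma u := (A - \sigma^2 - i \sigma \alpha)\, u = (i\sigma - \alpha) f - g =: h, \qquad u \in H^2(\Omega) \cap H^1_0(\Omega),
\]
with $\Norm{h}{L^2} \leq C(1+|\sigma|) \Norm{F}{\H}$. Invertibility of $P_\sigma$ from $H^2 \cap H^1_0$ onto $L^2$ is Fredholm of index zero, and injectivity follows by taking the imaginary part of $\dis{P_\sigma u}{u}_{L^2}$ (which forces $u \equiv 0$ on $\O$) and then invoking the Aronszajn--Cordes unique continuation principle for the elliptic operator $A-\sigma^2$. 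It therefore suffices to establish
\[
\Norm{u}{H^1(\Omega)} \leq C\, e^{C|\sigma|}\, \Norm{h}{L^2(\Omega)}, \qquad |\sigma|\geq \sigma_0,
\]
from which \eqref{eq: resolvent estimate laplace} follows after tracking the polynomial $(1+|\sigma|)$ factors coming from $h$ and from $v = f - i\sigma u$.

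First I would extract the damped energy identity by pairing $P_\sigma u = h$ with $\bar u$ in $L^2(\Omega)$ and taking imaginary parts, which yields
\[
\sigma \int_\Omega \alpha\, |u|^2\, dx \ = \ -\Im \int_\Omega h\, \bar u\, dx,
\]
so the hypothesis $\alpha \geq \delta > 0$ on $\O$ gives the damping-region control
\[
\Norm{u}{L^2(\O)}^2 \ \leq\ \frac{1}{\delta\, |\sigma|}\, \Norm{h}{L^2} \Norm{u}{L^2}.
\]
The rest of the proof aims to convert this local $L^2$ control into a global $H^1$ bound, at the cost of an exponential factor in $|\sigma|$.

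Next, and this is the heart of the argument, I would propagate the smallness from $\O$ to the whole of $\Omega$ by an elliptic Carleman/interpolation estimate. Introduce an auxiliary variable $s \in (-S_0,S_0)$ and set $w(s,x) = \sinh(\sigma s)\, u(x) / \sigma$, so that $w_{|s=0} = 0$, $\d_s w_{|s=0} = u$, and
\[
(-\d_s^2 + A)\, w(s,x) \ =\ \frac{\sinh(\sigma s)}{\sigma}\, (A-\sigma^2)\, u(x) \ =\ \frac{\sinh(\sigma s)}{\sigma}\bigl(h + i\sigma \alpha\, u\bigr).
\]
The augmented operator $-\d_s^2 + A$ is elliptic on $\Omega \times (-S_0,S_0)$, and a Carleman estimate with a weight $\varphi(s,x)$ satisfying H\"ormander's pseudo-convexity condition up to the lateral boundary $\d \Omega \times (-S_0,S_0)$, combined with the standard interior/boundary gluing of Lebeau--Robbiano, yields an interpolation inequality of the form
\[
\Norm{w}{H^1(\Omega\times I_1)}
\ \leq\ C\bigl(\Norm{(-\d_s^2+A)w}{L^2} + \Norm{w}{L^2(\O\times I_0)}\bigr)^{\mu}\, \Norm{w}{H^1(\Omega\times I_2)}^{1-\mu}
\]
for some $\mu \in (0,1)$ and nested intervals $I_0 \subset I_1 \subset I_2$. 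The factor $\sinh(\sigma s)/\sigma$ contributes $e^{C|\sigma|}$ in $I_0$ and $I_2$; evaluating $\d_s w_{|s=0} = u$ and using the PDE to replace $(-\d_s^2+A)w$ produces
\[
\Norm{u}{H^1(\Omega)} \ \leq\ C e^{C|\sigma|}\bigl(\Norm{h}{L^2} + \Norm{u}{L^2(\O)}\bigr).
\]
Feeding in the damped energy estimate and using Young's inequality to split $\Norm{h}{L^2}^{1/2}\Norm{u}{L^2}^{1/2}$, the term $\hf \Norm{u}{H^1}$ is absorbed on the left-hand side once $|\sigma| \geq \sigma_0$ (the polynomial factor $|\sigma|^{-1/2}$ being harmless in front of $e^{C|\sigma|}$), giving the announced bound.

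The main obstacle is the derivation of the interpolation inequality of the third step with the sharp exponential rate $e^{C|\sigma|}$: one must construct a weight function pseudo-convex up to the Dirichlet boundary $\d\Omega$, prove a large-parameter Carleman estimate for $-\d_s^2 + A$ uniformly across that boundary, and then combine interior and boundary Carleman estimates by a careful iteration so that the power $\mu$ produced by the three-ball mechanism interacts correctly with the exponential growth of $\sinh(\sigma s)/\sigma$. For the clamped bi-Laplace operator treated in this paper the analogous step is expected to be substantially harder, since the fourth-order factorization and the simultaneous Dirichlet/Neumann traces demand a refined semi-classical and microlocal framework.
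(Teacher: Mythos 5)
Your proposal reproduces the standard Lebeau-type argument for this result, which the paper itself does not prove but quotes from~\cite{Lebeau:96}; the route you describe is also exactly the scheme the paper carries out for its own bi-Laplace analogue, Theorem~\ref{theorem: resolvent estimate bilaplace clamped}, in Section~\ref{sec: resolvent estimate}. The algebra of the resolvent equation, the damping identity $\sigma\int_\Omega\alpha|u|^2 = -\Im\dis{h}{u}$, the passage to the augmented operator $-\d_s^2 + A$, and the use of an interior-observation interpolation inequality to absorb the global $H^1$ norm are all correct and in order.

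Two small remarks on presentation. First, the recovery of $\Norm{u}{H^1(\Omega)}$ from $\Norm{w}{H^1(\Omega\times I_1)}$ does not actually go through the trace $\d_s w|_{s=0}=u$: since the interpolation inequality lives on a strip $I_1$ bounded away from $s=0$, one simply uses that $|\sinh(\sigma s)/\sigma|$ is bounded below on $I_1$ (up to a factor $e^{-C|\sigma|}$), so $\Norm{w}{H^1(\Omega\times I_1)}\gtrsim e^{-C|\sigma|}\Norm{u}{H^1(\Omega)}$. The paper's Section~\ref{sec: resolvent estimate} avoids the apparent trace issue in the same way, but uses an auxiliary function $\exp(s\rho|\sigma|^{1/2})u_0$ that does not vanish at $s=0$ (the Laplace analogue would be $e^{\sigma s}u$); your $\sinh$ is a linear combination of $e^{\pm\sigma s}$ and works equally well. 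Second, the Fredholm-plus-unique-continuation step to establish invertibility of $P_\sigma$ is correct, though one can bypass it: the quantitative estimate itself, applied to $h=0$, directly yields injectivity, and surjectivity then follows since $P_\sigma$ is a compact perturbation of $A$.
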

This resolvent estimate allows one to deduce a logarithmic type
stabilization result for the damped wave equation
\begin{equation*}
  \d_t^2 y  + A  y + \alpha \d_t  y =0, \qquad y_{|t=0} = y_0, \ \d_t
  y_{|t=0} = y_1,\qquad y_{| [0,+\infty) \times \d \Omega}=0,
\end{equation*}
for $y_0$ and $y_1$  chosen \suff regular, \eg $(y_0, y_1) \in
D(\mathcal A)$ \cite{Lebeau:96,Burq:98,Ba-Du:08}. 

\bigskip It is quite natural to wish to obtain similar 
inequalities for higher-order elliptic operators on $\Omega$, along with
appropriate boundary conditions. 
The bi-Laplace operator, that can be
encountered in models originating from elasticity for example,
appears as a natural candidate for such a study. To understand some of
the issues associated with the boundary conditions one may wish to
impose let us consider the case of a spectral inequality of the form
of \eqref{eq: spectral inequality Laplace operator}.
 If the boundary
conditions used for the bi-Laplace operator precisely make it the
square of the Laplace operator $A$ (with its boundary conditions) then
the spectral inequality is obvious as the eigenfunctions are the same
for the two operators and $\lambda_j\geq 0$ is an eigenvalue of the
bi-Laplace operator if and only if $\sqrt{\lambda_j}$ is one for the
Laplace operator. To be clearer, let us consider the positive
Dirichlet Laplace operator $A$. If $A^2$ is the bi-Laplace operator on
$\Omega$ along with the boundary conditions $u_{|\d \Omega}=0$ and
$\Delta u_{|\d \Omega}=0$, then the family $(\phi_j)_{j \in \N}$
introduced above, is in fact  composed of eigenfunctions for $A^2$ associated with
the eigenvalues $\lambda_j = \omega_j^2$.  This set of boundary conditions
is known as the ``hinged'' boundary conditions. We refer to this
operator as the ``hinged'' bi-Laplace operator, and, for 
this operator, with Theorem~\ref{theorem: spectral inequality Laplace operator}, we directly
have  the following spectral inequality, for $\O \subset \Omega$,
\begin{align}
     \label{eq: spectral inequality bi-Laplace operator}
    \Norm{u}{L^2(\Omega)}\leq C e^{C\lambda^{1/4}} \Norm{u}{L^2(\O)}, 
    \quad \lambda >0, \ \ u \in \Span\{\phi_j; \ \lambda_j \leq \lambda\}. 
    \end{align}
    One is naturally inclined to consider another set of boundary
    conditions, the so-called ``clamped'' boundary conditions,
    $u_{|\d \Omega}=0$ and $\d_\nu u_{|\d \Omega}=0$, where $\nu$ is
    the outward normal to $\d \Omega$. We refer to this operator as
    the ``clamped'' bi-Laplace operator. It is sometimes referred to
    as the Dirichlet-Neumann bi-Laplace operator.  Eigenfunctions of
    the ``clamped'' bi-Laplace operator are not related to
    eigenfunctions of the Dirichlet Laplace operator. In fact, observe
    that an eigenfunction of the ``clamped'' bi-Laplace operator
    cannot be an eigenfunction for the Laplace operator $A$,
    independently of the boundary conditions used for $A$. Indeed,
    from unique continuation arguments, if a $H^2$-function $\phi$ is
    such that $A \phi = \lambda \phi$ on $\Omega$ and
    $\phi _{|\d \Omega}=\d_\nu \phi _{|\d \Omega}= 0$, then $\phi$
    vanishes identically.  Thus, a spectral inequality for the
    ``clamped'' bi-Laplace cannot be deduced from a similar inequality
    for the Laplace operator $A$ with some well chosen boundary
    conditions. Yet, such an inequality is valuable to have at hand,
    in particular as the ``clamped'' bi-Laplace operator appears
    naturally in models. It is however often disregarded in the
    mathematical literature and replaced by the ``hinged'' bi-Laplace
    operator for which analysis can be more direct, in particular for
    the reasons we put forward above. A resolvent estimate of the form
    of \eqref{eq: resolvent estimate laplace} is also of interest
    towards stabilization results.

    The main purpose of the present article is to show that a spectral
    inequality of the form of \eqref{eq: spectral inequality Laplace operator} and
    a resolvent estimate of the form \eqref{eq: resolvent estimate
      laplace} hold for the ``clamped'' bi-Laplace operator and, more
    generally, to provide some analysis tools to carefully study
    fourth-order operators that have a product structure. Carleman
    estimates will be central in the analysis here and we shall show
    how their derivation is feasible when the so-called
    sub-ellipticity condition does not hold, which is typical for
    product operators. If $B$ is the ``clamped'' bi-Laplace operator,
    that is, the unbounded operator $B = \Delta^2$ on $L^2(\Omega)$,
    with domain $D(B) =H^4(\Omega) \cap H^2_0(\Omega)$, which turn $B$
    into a selfadjoint operator, let $(\varphi_j)_{j\in \N}$ be a
    family of eigenfunctions of $B$ that form a Hilbert basis for
    $L^2(\Omega)$, associated with the eigenvalues
    $0 < \mu_1 \leq \cdots \leq \mu_j\leq \cdots$ (the selfadjointness
    of $B$ and the existence of such a family are recalled in
    Section~\ref{sec: properties bi-laplace operator} below).  We
    shall prove the following spectral inequality.
    %%%%%%%%%%%%%%%%%%%%%%%%
    % theorem              %
    %%%%%%%%%%%%%%%%%%%%%%%% 
    \begin{theorem}[Spectral inequality for the ``clamped'' bi-Laplace
      operator] 
      \label{theorem: spectral inequality} 
      Let $\O$ be an open subset of $\Omega$.
      There exists $C>0$ \st 
      \begin{align*}
        \Norm{u}{L^2(\Omega)} \leq C e^{C \mu^{1/4}}
        \Norm{u}{L^2(\O)}, \qquad \mu >0, \quad   u \in \Span \{\varphi_j; \ \mu_j \leq \mu\}.
      \end{align*}
\end{theorem}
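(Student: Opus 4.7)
The plan is to adapt to the fourth-order clamped bi-Laplace the Lebeau--Robbiano strategy that yields Theorem~\ref{theorem: spectral inequality Laplace operator}. The idea is to lift the finite spectral combination $u$ to a function $U$ on the cylinder $Z = (-S,S) \times \Omega$ that solves a fourth-order equation, and then to combine an explicit a~priori bound coming from this lifting with an interpolation (three-ball type) inequality obtained from Carleman estimates for a fourth-order elliptic operator related to $B$.

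For the lifting, I would set, for $u = \sum_{\mu_j \leq \mu} a_j \varphi_j$,
\begin{equation*}
 U(s,x) = \sum_{\mu_j \leq \mu} a_j\, \frac{\sinh(\mu_j^{1/4} s)}{\mu_j^{1/4}}\, \varphi_j(x), \qquad (s,x)\in Z.
\end{equation*}
Since $\varphi_j \in D(B) \subset H^2_0(\Omega)$, both $U(s,\cdot)$ and $\d_\nu U(s,\cdot)$ vanish on $\d\Omega$ for every $s$, and from $\Delta^2\varphi_j = \mu_j\varphi_j$ one checks that $U|_{s=0}=0$, $\d_s U|_{s=0}=u$, and $PU=0$ in $Z$, where $P = \d_s^4 - \Delta^2 = (\d_s^2-\Delta)(\d_s^2+\Delta)$. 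The orthonormality of the family $(\varphi_j)_j$ together with this explicit formula gives, for any integer $k$,
\begin{equation*}
\Norm{U}{H^k(Z)} \leq C\, e^{C \mu^{1/4}}\, \Norm{u}{L^2(\Omega)}, \qquad \Norm{U}{H^k((-S,S)\times\O)} \leq C\, e^{C \mu^{1/4}}\, \Norm{u}{L^2(\O)}.
\end{equation*}

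The central analytic step is to derive an interpolation estimate of the form
\begin{equation*}
 \Norm{U}{H^k(Z_0)} \leq C\, \Norm{U}{H^k(Z)}^{1-\theta}\, \big(\Norm{U}{H^k((-S,S)\times\O)}+\Norm{PU}{L^2(Z)}\big)^{\theta},
\end{equation*}
for some $\theta\in(0,1)$ and a smaller cylinder $Z_0\subset Z$ containing a neighborhood of $\{s=0\}$. Such an inequality is obtained by combining a weighted Carleman estimate for $P$ in the interior of $Z$ with another near the lateral boundary $(-S,S)\times\d\Omega$ adapted to the clamped conditions, the Carleman weights being chosen to vanish suitably across $\{s=0\}$. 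Since $PU=0$ by Step~1, combining this inequality with the bounds above and the trace estimate $\Norm{u}{L^2(\Omega)} = \Norm{\d_s U|_{s=0}}{L^2(\Omega)} \leq C\Norm{U}{H^k(Z_0)}$ yields
\begin{equation*}
 \Norm{u}{L^2(\Omega)} \leq C\, e^{C \mu^{1/4}}\, \Norm{u}{L^2(\Omega)}^{1-\theta}\, \Norm{u}{L^2(\O)}^{\theta},
\end{equation*}
and the stated spectral inequality follows upon dividing and raising to the power $1/\theta$.

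The hard part will be the Carleman estimates for $P = (\d_s^2-\Delta)(\d_s^2+\Delta)$. Because $P$ is a product of two second-order operators with opposite principal symbols, the standard H\"ormander sub-ellipticity condition on the Carleman weight cannot be achieved simultaneously for both factors, so the best one can hope for is an estimate with a loss of one full derivative, produced by a microlocal splitting of the cotangent variables and a dedicated semiclassical calculus, precisely as signaled in the abstract. A further difficulty is that the clamped boundary conditions do not form a Lopatinskii system for either factor of $P$ in isolation, which forces a delicate root-configuration analysis of $P$ along $(-S,S)\times\d\Omega$ in order to control the boundary contributions appearing in the Carleman estimate.
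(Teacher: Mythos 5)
The lifting is wrong, and this breaks the whole argument. With $U(s,x)=\sum a_j\,\mu_j^{-1/4}\sinh(\mu_j^{1/4}s)\,\varphi_j(x)$ one has $\partial_s^4 U=\sum a_j\,\mu_j\,\mu_j^{-1/4}\sinh(\mu_j^{1/4}s)\,\varphi_j = \Delta^2 U$, so the natural annihilating operator is indeed $\partial_s^4-\Delta^2=(\partial_s^2-\Delta)(\partial_s^2+\Delta)$ as you write. But the factor $\partial_s^2-\Delta$ has symbol $-\sigma^2+|\xi|^2$, which vanishes on the cone $|\sigma|=|\xi|$: it is a wave operator, so $\partial_s^4-\Delta^2$ is \emph{not} elliptic. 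For a non-elliptic $P$, the interior Carleman estimate of Proposition~\ref{prop: Carleman P interior} and the three-ball propagation argument of Lemma~\ref{lemma: interior interpolation} simply do not hold, and there is no loss-of-one-derivative workaround (that applies only when sub-ellipticity fails at isolated characteristic directions, not on a whole hyperbolic cone). To get an elliptic augmented operator $P=D_s^4+B=\partial_s^4+\Delta^2$, one must choose the profile $f$ so that $\partial_s^4 f=-f$ rather than $+f$; the paper takes $f(s)=\gamma\sin(\gamma s)\cosh(\gamma s)-\gamma\cos(\gamma s)\sinh(\gamma s)$ with $\gamma=\sqrt2/2$ and sets $w(s,x)=\sum a_j\mu_j^{-3/4}f(\mu_j^{1/4}s)\varphi_j(x)$, so that $Pw=0$ with $P$ elliptic. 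The choice of $f$ is also engineered so that $f(0)=f'(0)=f''(0)=0$ and $f'''(0)=1$, which is essential and which $\sinh$ does not satisfy.

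That last point matters because you have the observation set wrong, and the a~priori bound you state over it is false. The interpolation inequality the paper proves (Theorem~\ref{theorem: interpolation inequality}) has observation on the slice $\{s=0\}\times\O$, namely $\sum_{0\le j\le 3}\Norm{\partial_s^j w|_{s=0}}{H^{3-j}(\O)}$, not over a sub-cylinder $(-S,S)\times\O$. Your claimed estimate $\Norm{U}{H^k((-S,S)\times\O)}\le Ce^{C\mu^{1/4}}\Norm{u}{L^2(\O)}$ cannot be true: restricted to $\O$ the eigenfunctions are no longer orthogonal, so there is no way to relate the $H^k$-norm of the new linear combination $U(s,\cdot)$ over $\O$ to $\Norm{u}{L^2(\O)}$ -- indeed if such a bound held, the whole interpolation machinery would be superfluous. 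With the correct $f$, the slice traces $\partial_s^j w|_{s=0}$ vanish for $j=0,1,2$ and equal $u$ for $j=3$, so the observation term collapses exactly to $\Norm{u}{L^2(\O)}$; with $\sinh$, besides the ellipticity failure, you also have $\partial_s^3 U|_{s=0}=\sum a_j\mu_j^{1/2}\varphi_j$, which again cannot be controlled by $\Norm{u}{L^2(\O)}$. So even the boundary-slice version of your argument would break with the $\sinh$ lifting.
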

Note that the spectral inequality of Theorem~\ref{theorem: spectral
  inequality} was recently proven in \cite{AE:13} and  \cite{Gao:15}.
In \cite{AE:13} the coefficients and the domain are assumed to be
analytic (the techniques used for the proof are then very different and exploit
the analytic properties of the eigenfunctions). In  \cite{Gao:15},
the result is obtained  in one space dimension; yet , therein,
the factor $e^{C \mu^{1/4}}$ is replaced by $e^{C \mu^{1/2}}$,
yielding a weaker form of the spectral inequality.

 We shall present a null controllability result for the parabolic
 equation associated with $B$ which is a consequence of this spectral
 inequality. Such a result can be found in \cite{AE:13,EMZ:15} in the case
 of analytic coefficients and domain. Here, coefficients are only assumed
 smooth. We conjecture that regularity could be lowered as low as
 $W^{1,\infty}$ for the coefficients in the principal part of the
 operator. This would require further developments in the line of what
 is done in \cite{DCFLV:17} for instance. This  would however
 significantly increase the size of the present article. Note that the analytic
 setting allows the authors of \cite{AE:13,EMZ:15,EMZ:15b} to obtain
 control  properties by only requiring  the control domain to be of
 positive measure.

\medskip
We shall also prove a resolvent estimate for 
 the unbounded operator acting on $H^2_0(\Omega) \times L^2(\Omega)$,
\begin{equation}
  \label{eq: definition mathcal B}
  \mathcal B = \begin{pmatrix}
    0 & -1 \\
    B & \alpha
    \end{pmatrix},
\end{equation}
with domain $D(\mathcal B) = (H^4(\Omega) \cap H^2_0(\Omega)) \times
H^2_0(\Omega)$, where $\alpha(x)$ is a nonnegative function.
\begin{theorem}
  \label{theorem: resolvent estimate bilaplace clamped}
  Let $\O$ be an open subset of $\Omega$ and $\alpha$ be such that
  $\alpha (x)\geq \delta >0$ on $\O$. Then, the unbounded operator
  $i\sigma \id -\mathcal B$ is invertible on
  $\H = H^2_0(\Omega) \times L^2(\Omega)$ for all $\sigma \in \R$ and
  there exist $K>0$ and $\sigma_0>0$ such that
  \begin{equation*}
    %\label{eq: resolvent estimate bilaplace}
    \Norm{  (i\sigma \id -\mathcal B)^{-1}}{\mathscr L(\H,\H)}
    \le Ke^{K|\sigma|^{1/2}}, \qquad \sigma\in\R, \ |\sigma|\geq \sigma_0.
  \end{equation*}
\end{theorem}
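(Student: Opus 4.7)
The strategy mirrors Lebeau's scheme \cite{Lebeau:96} for the damped wave equation, adapted to the fourth-order setting; the exponent $|\sigma|^{1/2}$ (rather than $|\sigma|$) reflects the fact that $\sigma^2$ has the scale of an eigenvalue of $B$. First, I would rewrite the resolvent system $(i\sigma\,\id -\mathcal B)(u,v)^T = (f,g)^T$ componentwise: the first line gives $v = f - i\sigma u$, and substituting into the second yields the scalar stationary equation
\begin{equation*}
(B - \sigma^2 - i\sigma\alpha)u = F, \qquad F := i\sigma f - \alpha f - g, \qquad u \in D(B),
\end{equation*}
with $\|F\|_{L^2} \le C(1+|\sigma|)\|(f,g)\|_\H$. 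It thus suffices to bound $\|u\|_{L^2(\Omega)}$ by $C\,e^{K|\sigma|^{1/2}}\|F\|_{L^2}$; $\|\Delta u\|_{L^2}$ (and hence $\|u\|_{H^2}$) is then controlled via the dissipation identity below, and $\|v\|_{L^2}$ follows from $v = f - i\sigma u$, with only a polynomial loss in $|\sigma|$ that is absorbed in the exponential. Injectivity of $i\sigma\,\id -\mathcal B$ on $\H$ for $\sigma\in\R$ follows from the same energy identity together with unique continuation for $\Delta^2$, and the compactness of the resolvent of $\mathcal B$ then gives the invertibility statement via the Fredholm alternative.

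The first ingredient is the dissipation identity obtained by pairing with $u$ in $L^2(\Omega)$ and using the clamped conditions (which give $\langle Bu,u\rangle_{L^2} = \|\Delta u\|^2_{L^2}$):
\begin{equation*}
\|\Delta u\|^2_{L^2} - \sigma^2\|u\|^2_{L^2} - i\sigma\int_\Omega \alpha|u|^2\,dx = \langle F,u\rangle_{L^2}.
\end{equation*}
Its imaginary part, combined with $\alpha \ge \delta > 0$ on $\O$, produces $|\sigma|\,\|u\|^2_{L^2(\O)} \le C\|F\|_{L^2}\|u\|_{L^2}$, so $u$ is automatically small on the damping region when $|\sigma|$ is large.

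The second, and central, ingredient is an interpolation inequality propagating this smallness from $\O$ to all of $\Omega$. Setting the large parameter $\tau := |\sigma|^{1/2}$, the operator $B - \sigma^2$ is not elliptic---its characteristic variety is $|\xi|^4 = \sigma^2$---so I would lift the stationary problem to an auxiliary cylinder $\Omega \times (-S,S)$, on which it is encoded by an elliptic fourth-order operator of bi-Laplace type acting on a function $w(x,s) = \Phi(\tau s) u(x)$ built from $u$ with $\Phi$ chosen so that the lift is transparent with respect to the spectral parameter. The interior and boundary Carleman estimates established in the earlier sections of the paper---including the one exhibiting a loss of one full derivative at the boundary announced in the abstract---then yield an interpolation inequality of the form
\begin{equation*}
\|u\|_{L^2(\Omega)} \le C\,e^{K\tau}\bigl(\|u\|_{L^2(\O)} + \|(B - \sigma^2 - i\sigma\alpha)u\|_{L^2(\Omega)}\bigr).
\end{equation*}
Plugging in the bound on $\|u\|_{L^2(\O)}$ from the dissipation identity and using Young's inequality to absorb the factor $\|u\|_{L^2}^{1/2}$ produces $\|u\|_{L^2(\Omega)} \le C\,e^{K'|\sigma|^{1/2}}\|F\|_{L^2}$, which gives the announced resolvent bound.

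\textbf{Main obstacle.} The principal difficulty is the interpolation inequality above with the correct exponent $\tau = |\sigma|^{1/2}$. The naive factorization $B - \sigma^2 = (A-\sigma)(A+\sigma)$ is incompatible with the clamped boundary conditions, which are not of Lopatinskii type for either factor separately, and the fourth-order operator itself fails the standard sub-ellipticity condition required to produce Carleman estimates by routine arguments. The proof must therefore rely on the refined microlocal and semi-classical Carleman calculus for the fourth-order operator developed in the paper, and in particular on the boundary Carleman estimate with the loss of one full derivative. This is the feature that distinguishes the treatment of the clamped bi-Laplace from the hinged case, where the problem factors trivially and no such machinery is needed.
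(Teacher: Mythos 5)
Your proposal matches the paper's proof essentially step for step: same reduction to the scalar equation $(B - \sigma^2 - i\sigma\alpha)u_0 = f$, same dissipation identity yielding smallness of $u_0$ on $\O$, same lift to a cylinder $\Omega\times(0,S_0)$ in a dummy variable $s$ so that the interpolation inequality for the augmented operator (Proposition~\ref{prop: interpolation interior observation}) can be invoked, and the same final Young-inequality absorption. The one technical point you leave implicit is the precise form of the lift: the paper takes $u(s,x) = e^{s\rho|\sigma|^{1/2}}u_0(x)$ with $\rho = e^{i\,\sgn(\sigma)\pi/4}$, so that $\rho^4 = -1$ forces $D_s^4 u = -\sigma^2 u$ (absorbing the $-\sigma^2$ into the elliptic $D_s^4+B$) while $\rho^2 = i\,\sgn(\sigma)$ makes $\alpha D_s^2 u = -i\sigma\alpha u$, turning the full equation into $(D_s^4 + B + \alpha D_s^2)u = e^{s\rho|\sigma|^{1/2}}f$; the conversion of the Hölder-type interpolation inequality into a single exponential factor $e^{K|\sigma|^{1/2}}$ then comes precisely from the bounded growth and non-degeneracy of this exponential weight on the cylinder. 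This choice is the content of your phrase ``$\Phi$ chosen so that the lift is transparent,'' and making it explicit is the only thing separating your sketch from the complete argument.
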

We shall present a log type stabilization result that is a consequence
of Theorem~\ref{theorem: resolvent estimate bilaplace clamped}  for the following
damped plate equation 
\begin{equation*}
  \d_t^2 y  + \Delta^2  y + \alpha \d_t  y =0, \qquad y_{|t=0} = y_0, \ \d_t
  y_{|t=0} = y_1,
  \qquad y_{| [0,+\infty) \times \d \Omega}=\d_\nu y_{| [0,+\infty) \times \d \Omega}=0.
\end{equation*}

\medskip Both the proofs of the spectral inequality and the resolvent
estimate are based on Carleman estimates for the fourth-order operator
$P = D_s^4 + B$.

The subject of the present article is connected to that of unique
continuation, in particular through the use of Carleman
estimates. Moreover, the spectral inequality of Theorem~\ref{theorem:
  spectral inequality} is a quantified version of the unique
continuation property for finite sums of eigenfunctions.  There is an
extensive literature on unique continuation for differential operators; yet,
positive results require assumptions on the operator or on the hypersurface across
which unique continuation is pursued. For instance, a  simple-root
assumption is often made following the work of  A.~Calder\'on
\cite{Calderon:58} or the celebrated strong pseudo-convexity condition
is assumed following the work of L.~H\"ormander \cite{Hoermander:58,Hoermander:63}.
For second-order elliptic operators (with smooth complex coefficients) these
assumptions are fulfilled. However, for higher-order operators they may not be satisfied. Counterexamples for the non
uniqueness of  fourth-order and higher-order operators with smooth coefficients can be found in \cite{Plis:61}
and \cite{Hoermander:75}. See also  the monograph \cite{Zuily:83} for
manifold positive and negative results. The question of strong unique
continuation is also of interest for higher-order operators; see for
instance \cite{AB:80} for a positive result and \cite{Alinhac:80} for
a large class of negative results. 
Note that the above literature concerns unique continuation properties
{\em away from boundaries}. For the results of  Theorems~\ref{theorem:
  spectral inequality} and \ref{theorem: resolvent estimate bilaplace clamped}
the analysis we use mainly focuses on the \nhd of
the boundary of the open set $\Omega$. There are few results on
unique continuation near a boundary. Under the strong
pseudo-convexity condition the unique continuation property can be
obtained, even for higher-order operators; see \cite{Tataru:96} and
\cite{BLR:13}.  For the operator  $P = D_s^4 + B$ that   we
consider here, the strong pseudo-convexity property fails to hold near the
boundary and also away from it. General approaches as developed in
\cite{Tataru:96,BLR:13} cannot be used. This is one of the interests of
the present article.

\subsection{On Carleman estimates}
Carleman estimates are weighted {\em a priori} inequalities for the solutions of
a partial differential equation (PDE), where the weight is of exponential type.  For a partial
differential operator $Q$ {\em away from boundaries}, it takes the form:
\begin{equation*}
  %\label{eq: intro Carleman}
  \Norm{e^{\tau \varphi} w}{L^2}
  \lesssim \Norm{e^{\tau \varphi} Q w}{L^2}, \qquad 
  w \in \Cinfc(\Omega), \ \tau \geq \tau_0.
\end{equation*}
The exponential weight involves a parameter $\tau$ that can be taken
as large as desired.  Additional terms in the \lhs, involving
derivatives of $u$, can be obtained depending on the order of $Q$ and
on the {\em joint} properties of $Q$ and $\varphi$.  For instance for
a second-order operator $Q$, such an estimate can take the form
\begin{align}
  \label{eq: example Carleman}
  \tau^{3/2} \Norm{e^{\tau \varphi} u }{L^2}
  + \tau^{1/2} \Norm{e^{\tau \varphi} D_x u }{L^2}
  \lesssim
  \Norm{e^{\tau \varphi} Q u }{L^2}, \qquad 
  \tau \geq \tau_0,\  u \in \Cinfc(\Omega).
\end{align}
One says that this estimate is characterized by the {\em loss of a
  half derivative}. This terminology originates from the underlying
semi-classical calculus where one gives the same strengths to the
parameter $\tau$ and to $D$. Whereas $Q$ is a second-order operator,
the \lhs only exhibits derivatives or powers of $\tau$ of order
$3/2$. For most operators, this cannot be improved
\cite{Hoermander:63,Hoermander:V4}. In the proof of a Carleman
estimate one introduces the so-called conjugated operator
$Q_\varphi = e^{\tau \varphi} Q e^{-\tau \varphi}$, and estimate
\eqref{eq: example Carleman} reads 
\begin{align*}
  \tau^{3/2} \Norm{v }{L^2}
  + \tau^{1/2} \Norm{D_x v }{L^2}
  \lesssim
  \Norm{Q_\varphi v }{L^2}, \qquad 
  \tau \geq \tau_0,\  v= e^{\tau \varphi}u, \ \ u \in \Cinfc(\Omega).
\end{align*}

This type of estimate was used for the first time by T.~Carleman
\cite{Carleman:39} to achieve uniqueness properties for the Cauchy
problem of an elliptic operator. Later, A.-P.~Calder\'on and
L.~H\"ormander further developed Carleman's method
\cite{Calderon:58,Hoermander:58}.  To this day, the method based on Carleman estimates
remains essential to prove unique continuation properties;
see for instance \cite{Zuily:83} for an overview.  On such questions,
more recent advances have been concerned with differential operators
with singular potentials, starting with the contribution of D.~Jerison
and C.~Kenig \cite{JK:85}. There, Carleman estimates rely on
$L^p$-norms rather than $L^2$-norms as in the estimates
above. The proof of such $L^p$ Carleman estimates is very
delicate. The reader is also referred to \cite{Sogge:89,KT:01,KT:02,DDSF:05,KT:05}.
In more recent years, the field of applications of Carleman estimates
has gone beyond the original domain; they are also used in the study
of:
\begin{itemize}
\item Inverse problems, where Carleman estimates are used to obtain
  stability estimates for the unknown sought quantity (\eg
  coefficient, source term) with respect to norms on measurements
  performed on the solution of the PDE, see \eg
  \cite{BK:81,Isakov:98,Kubo:00,IIY:03}; Carleman estimates are also
  fundamental in the construction of complex geometrical optic solutions
  that lead to the resolution of inverse problems such as the Calder\'on
  problem with partial data \cite{KSU:07,DKSU:09}.
  
\item Control theory for PDEs;  Carleman estimates yield the null controllability of linear parabolic equations
\cite{LR:95} and the null controllability of classes of semi-linear
parabolic equations \cite{FI:96,Barbu:00,FZ:00}.
They can also be used to prove unique continuation properties, that in
turn are crucial for the treatment of low frequencies for  exact
controllability results for  hyperbolic equations as in \cite{BLR:92}.
\end{itemize}

\bigskip
To indicate how the spectral inequality of Theorem~\ref{theorem:
  spectral inequality} for the bi-Laplace operator $B$ can be proven
by means of Carleman estimates, 
we first review a method, that yields the spectral inequality of
Theorem~\ref{theorem: spectral inequality Laplace operator} for the
Laplace operator $A$. In this introductory section, we have
chosen to mainly focus on the method of proof of the spectral
inequality; a comprehensive presentation including a presentation of
the proof of the resolvent estimates of Theorems~\ref{theorem:
  resolvent estimate A} and \ref{theorem: resolvent
  estimate bilaplace clamped} would not bring any further insight to
the reader as the line
of arguments is quite similar.

\subsection{A method to prove the spectral inequality for the Laplace
  operator}
The method we describe here originates from \cite{LR:95}.  We consider
the elliptic  operator $P_A = D_s^2 + A$ on $Z= (0,S_0)\times \Omega$, for some
$S_0>0$ meant to remain fixed.  We also pick $0< \alpha< S_0/2$.
Three different types of Carleman estimates are proven for the
operator $P_A$: (i) in the interior of $(0,S_0)\times \Omega$; (ii) at
the boundary $\{ s=0\} \times \Omega$; (iii) at the boundary
$(\alpha,S_0-\alpha)\times \d\Omega$. The three regions where these
Carleman estimates are derived are illustrated in Figure~\ref{fig:
  Carleman method}.
%% figure
\begin{figure}
\begin{center}
\begin{picture}(0,0)%
\includegraphics{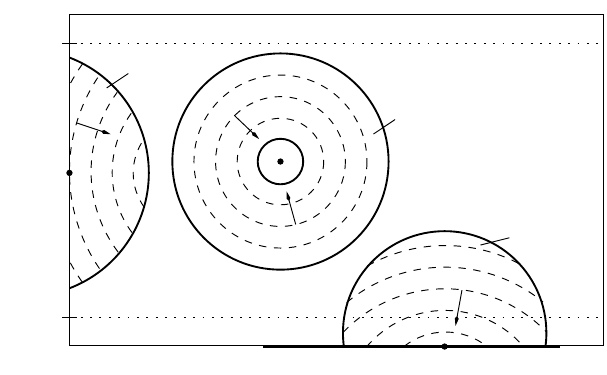}%
\end{picture}%
\setlength{\unitlength}{3947sp}%
\begingroup\makeatletter\ifx\SetFigFont\undefined%
\gdef\SetFigFont#1#2#3#4#5{%
  \reset@font\fontsize{#1}{#2pt}%
  \fontfamily{#3}\fontseries{#4}\fontshape{#5}%
  \selectfont}%
\fi\endgroup%
\begin{picture}(4836,2987)(340,-3063)
\put(355,-445){\makebox(0,0)[lb]{\smash{{\SetFigFont{10}{12.0}{\rmdefault}{\mddefault}{\updefault}{\color[rgb]{0,0,0}$S_0 - \alpha$}%
}}}}
\put(1414,-3003){\makebox(0,0)[lb]{\smash{{\SetFigFont{10}{12.0}{\rmdefault}{\mddefault}{\updefault}{\color[rgb]{0,0,0}$\Omega$}%
}}}}
\put(1374,-696){\makebox(0,0)[lb]{\smash{{\SetFigFont{10}{12.0}{\rmdefault}{\mddefault}{\updefault}{\color[rgb]{0,0,0}$V_3$}%
}}}}
\put(4428,-2010){\makebox(0,0)[lb]{\smash{{\SetFigFont{10}{12.0}{\rmdefault}{\mddefault}{\updefault}{\color[rgb]{0,0,0}$V_2$}%
}}}}
\put(658,-223){\makebox(0,0)[lb]{\smash{{\SetFigFont{10}{12.0}{\rmdefault}{\mddefault}{\updefault}{\color[rgb]{0,0,0}$S_0$}%
}}}}
\put(662,-2636){\makebox(0,0)[lb]{\smash{{\SetFigFont{10}{12.0}{\rmdefault}{\mddefault}{\updefault}{\color[rgb]{0,0,0}$\alpha$}%
}}}}
\put(727,-2877){\makebox(0,0)[lb]{\smash{{\SetFigFont{10}{12.0}{\rmdefault}{\mddefault}{\updefault}{\color[rgb]{0,0,0}$0$}%
}}}}
\put(664,-1523){\makebox(0,0)[lb]{\smash{{\SetFigFont{8}{9.6}{\rmdefault}{\mddefault}{\updefault}{\color[rgb]{0,0,0}$z^{(3)}$}%
}}}}
\put(2619,-2977){\makebox(0,0)[lb]{\smash{{\SetFigFont{10}{12.0}{\rmdefault}{\mddefault}{\updefault}{\color[rgb]{0,0,0}$\O$}%
}}}}
\put(3850,-2979){\makebox(0,0)[lb]{\smash{{\SetFigFont{8}{9.6}{\rmdefault}{\mddefault}{\updefault}{\color[rgb]{0,0,0}$z^{(2)}$}%
}}}}
\put(3516,-1061){\makebox(0,0)[lb]{\smash{{\SetFigFont{10}{12.0}{\rmdefault}{\mddefault}{\updefault}{\color[rgb]{0,0,0}$V_1$}%
}}}}
\put(2559,-1490){\makebox(0,0)[lb]{\smash{{\SetFigFont{8}{9.6}{\rmdefault}{\mddefault}{\updefault}{\color[rgb]{0,0,0}$z^{(1)}$}%
}}}}
\end{picture}%

\caption{Location and geometry of the three types of estimates. Dashed
  are level sets for the weight functions $\varphi$ used in the Carleman
  estimates. Arrows
represent the directions of the (non vanishing) gradient of $\varphi$.}
\label{fig: Carleman method}
\end{center}
\end{figure}
It is simpler to first describe Case (i), that is, the estimate in
the interior. In Figure~\ref{fig:
  Carleman method}, this corresponds to the \nhd $V_1$ of some point
$z^{(1)} \in Z$. There, the Carleman estimate for this operator $P_A$
is of the form described above, that is, 
\begin{align}
  \label{2.carleman}
  \tau^{3/2}\Norm{e^{\tau \varphi} w }{L^2(Z)}
  + \tau^{1/2} \Norm{e^{\tau \varphi} D_z w }{L^2(Z)} 
  \lesssim
  \Norm{e^{\tau \varphi} P_A  w }{L^2(Z)}, 
\end{align}
where the weight function $\varphi = \varphi(z)$ is real-valued with a
non-vanishing gradient, $\tau$ is a large positive parameter, and $w$
is any smooth function compactly supported in $V_1$. In fact, this
estimate holds if the so-called sub-ellipticity condition is fulfilled
by $P_A$ and $\varphi$.  If $p_A(z, \zeta)$ is the principal symbol of
$P_A$, the sub-ellipticity condition in $V_1$ reads
\begin{align}
  \label{3. carleman}
  p_A(z,\zeta + i \tau d\varphi(z)) =0  
  \ \ \imp  \ \ \frac{1}{2 i} \{ \ovl{p_A (z,\zeta + i \tau
  d\varphi(z))}, p_A(z,\zeta + i \tau d\varphi(z))  \} >0, 
\end{align}
for $z \in V_1$, $\zeta \in \R^{d+1}$, and $\tau \geq 0$. It is in
fact equivalent to a Carleman estimate of the form~\eqref{2.carleman}
for $P_A$
(see \cite{Hoermander:63} or \cite{LL:12}). Observe that $p_A(z,
\zeta + i \tau d\varphi(z))$ is the semi-classical principal symbol of
the conjugated operator $P_{A,\varphi} = e^{\tau \varphi} P_A e^{-\tau
  \varphi}$. 

The function $\varphi$ is chosen of the form
$\varphi(z) = \exp(- \csp |z-z^{(1)}|^2)$ and $V_1$ is an annulus around
$z^{(1)}$, thus avoiding where the gradient of $\varphi$ vanishes (see
Figure~\ref{fig: Carleman method}). For
$\csp>0$ chosen \suff large, one can prove that the sub-ellipticity
condition~\eqref{3. carleman} holds and thus estimate
\eqref{2.carleman} is achieved (see \eg \cite{LR:95} or \cite{LL:12}).

From estimate~\eqref{2.carleman}, one can deduce the following local
interpolation inequality, for all $r>0$ chosen \suff small, for some
$\delta \in (0,1)$ (see \eg \cite{LR:95}),
\begin{align}
  \label{4.carleman}
    \Norm{v}{H^1(B(z^{(1)},3 r))}
    \lesssim  \Norm{v}{H^1(Z)}^{1-\delta}
    \Big(\Norm{P_A v}{L^2(Z)} + \Norm{v}{H^1(B(z^{(1)},r))}
  \Big)^{\delta},
  \qquad v \in H^2(Z).
\end{align}

\bigskip
We now consider Case (ii). 
In a \nhd $V_2$  of a point $z^{(2)} \in \{0\} \times \O$, one can derive an
estimate of the same form as \eqref{2.carleman},  yet,  with two trace terms in
the \rhs, that is, 
\begin{align}
  \label{5.carleman}
  \sum_{|\smi|\leq  1} \tau^{3/2-|\smi|} \Norm{e^{\tau
  \varphi} D^\smi w }{L^2(Z)}
  \lesssim
  \Norm{e^{\tau \varphi} P_A  w }{L^2(Z)} + 
  \tau^{1/2} \Big(\norm{e^{\tau \varphi} w\brs}{H^1(\O)}
  + \norm{e^{\tau \varphi}\d_s w\brs}{L^2(\O)}\Big), 
\end{align}
for $\tau \geq \tau_0\geq 1$ and $w$ smooth up to the boundary
$\{s=0\}$,  with $\supp(w) \cap Z \subset V_2$, with $V_2$ as  represented in
Figure~\ref{fig: Carleman method}. This can be obtained by locally choosing
a weight function of the form $\varphi (z) = \exp(\csp \psi(z))$ with
$\psi(z)$ such that $\d_s \psi(z) \leq -C <0$ in $V_2$ and choosing the
parameter $\csp>0$ \suff large (see \eg \cite{LZ:98}). We
use the notation $\Norm{.}{}$ for functions in the interior of the
domain and $\norm{.}{}$ for functions on the boundaries. 

From estimate~\eqref{5.carleman} one deduces the following local
interpolation inequality: 
 there exist $V\subset V_2 $
  and $\delta\in(0,1)$ such that  
    \begin{align}
    \label{6.carleman}
    \Norm{v}{H^1(V\cap Z)} \lesssim \Norm{v}{H^1(Z)}^{1-\delta}
    \Big(
    \Norm{P_A v}{L^2(Z)} +   \norm{v\brs}{H^1(\O)} 
    + \norm{\d_s v\brs}{L^2(\O)}   \Big)^\delta,  \qquad v \in H^2(Z).
  \end{align}  

\bigskip 
We finally consider Case (iii). 
In a \nhd of a point $z^{(3)} \in (\alpha, S_0- \alpha) \times \d\Omega$, one can derive an
estimate of the same form as \eqref{2.carleman},  yet,  with a single trace term in
the \rhs, that is, 
\begin{multline}
  \label{7.carleman}
  \sum_{|\smi| \leq 1} \tau^{3/2-|\smi|} 
  \Norm{e^{\tau \varphi} D^\smi w }{L^2(Z)} 
  + \tau^{1/2} \norm{e^{\tau \varphi} \d_\nu w_{|\d Z}}{L^2((\alpha, S_0 -
  \alpha) \times \d \Omega)}\\
  \lesssim
  \Norm{e^{\tau \varphi} P_A  w }{L^2(Z)} + 
  \sum_{|\smi'| \leq 1} \tau^{3/2 - |\smi'|} 
  \norm{e^{\tau \varphi} D_{\T}^{\smi'} w_{|\d Z}}{L^2((\alpha, S_0 -
  \alpha) \times \d \Omega)}, 
\end{multline}
for $\tau \geq \tau_0\geq 1$ and $w$ smooth up to the boundary
$(\alpha, S_0 - \alpha) \times \d \Omega$, with
$\supp(w) \cap Z \subset V_3$, with $V_3$ as represented in
Figure~\ref{fig: Carleman method}. This can be obtained by locally
choosing a weight function of the form
$\varphi (z) = \exp(\csp \psi(z))$ with $\psi(z)$ such that
$\d_\nu \psi(z) \leq -C <0$ in $V_3$, where $\nu$ is the outward
normal to $\d \Omega$, and choosing the parameter $\csp>0$ \suff large
(see \eg \cite{LR:95}). Here, for $|\smi'|\geq 1$,  $D_{\T}^{\smi'}$ stand as
differentiations in the tangential directions only, along vector
fields that form a local frame.

From estimate~\eqref{7.carleman} one deduces the following local
interpolation inequality: 
 there exist $V\subset V_3$, with $\ovl{V}$ \nhd of $z^{(3)}$ in
 $\ovl{Z}$, 
 some open subset $\mathscr Q \subset V_3$ with positive distance
  to the boundary, 
  and $\delta\in(0,1)$ such that  
 \begin{align}
    \label{8.carleman}
    \Norm{v}{H^1(V\cap Z)} \lesssim  \Norm{v}{H^1(Z)}^{1-\delta}
    \left(\Norm{P_A v}{L^2(Z)} + \Norm{v}{H^1(\mathscr Q)}
   \right)^\delta,
   \qquad v \in H^2(Z), \ \  v_{|(0,S_0) \times \d \Omega}=0. 
  \end{align}  

  \bigskip
  The three interpolation inequalities \eqref{4.carleman},
  \eqref{6.carleman}, and \eqref{8.carleman} can be used to form a global
  interpolation inequality, by means of compactness arguments. In particular,
  the interior inequality~\eqref{4.carleman} permits the ``propagation'' of
  the estimate. Then,  there exists
  $\delta \in (0,1)$, such that 
  \begin{align}
    \label{9.carleman}
    \Norm{v}{H^1((\alpha, S_0 - \alpha)\times \Omega)} 
    \lesssim \Norm{v}{H^1(Z)}^{1-\delta}
    \left(\Norm{P_A v}{L^2(Z)} +  \norm{v\brs}{H^1(\O)} 
    + \norm{\d_s v\brs}{L^2(\O)}  \right)^\delta,
  \end{align}  
  for $v \in H^2(Z)$ satisfying $v_{|(0,S_0) \times \d \Omega}=0$.
  This inequality then implies the spectral property for the Laplace
  operator for
  $u = \sum_{\omega_j \leq \omega} u_j \phi_j \in \Span\{\phi_j; \ \omega_j \leq \omega\}$, if applied to a well
  chosen function $v(s,x)$, namely, 
  \begin{align*}
    v(s,x) = \sum_{\omega_j \leq \omega} u_j  \omega_j^{-1/2}\sinh(\omega_j^{1/2} s)
    \phi_j(x). 
  \end{align*}
  Details can for instance be found in \cite{LL:12}. In the present paper, we shall apply
  this approach for the bi-Laplace operator, the argument is provided
  in details in Section~\ref{sec: from interpolation to spectral inequality}.

\subsection{Outline of the proof of the spectral inequality for the
 bi-Laplace operator}
\label{sec: proof outline bi-laplace}
Above we described how Carleman estimates can be used to prove a
spectral inequality of the form given in Theorem~\ref{theorem:
  spectral inequality Laplace operator} for the Laplace operator. To prove the spectral
inequality of Theorem~\ref{theorem: spectral inequality} for the
``clamped'' bi-Laplace operator, we shall prove several Carleman
estimates for the following fourth-order elliptic operator
\begin{equation*}
  P = D_s^4 + \Delta^2\qquad  \text{on}\   Z= 0,S_0) \times \Omega.
\end{equation*}  
As for $P_A$
above, we shall prove such estimates at three different locations: (i)
in the interior of $(0,S_0)\times \Omega$, in Section~\ref{sec:
  estimate away from boundaries}; (ii) at the boundary
$\{ s=0\} \times \Omega$, in Section~\ref{sec: Carleman boundary s=0};
(iii) at the boundary $(\alpha,S_0-\alpha)\times \d\Omega$, in
Section~\ref{sec: Carleman boundary x}. In Section~\ref{sec: spectral
  inequality}, these three types of estimations are then used to
achieve local interpolation inequalities that can be used to prove,
first, a global interpolation inequality and, second, the spectral
inequality of Theorem~\ref{theorem: spectral inequality}.
Note that for the proof of the resolvent estimate of Theorem~\ref{theorem:
  resolvent estimate bilaplace clamped} only steps (ii) and (iii) are needed.

\medskip
\paragraph{\bfseries Cases (i) and (ii)}
The weight functions that we shall
use will be the same as that used for the operator $P_A$ for Cases~(i)
and (ii). In Case (ii), the estimate we obtain for $P$ takes the form
\begin{multline*}
    \tau^{-1/2}
    \sum_{|\mi| \leq 4} \Norm{\tau^{4-|\mi|} e^{\tau \varphi} D_{s,x}^\mi u}{L^2(Z)}
    \lesssim
    \Norm{e^{\tau \varphi} P u}{L^2(Z)}\\
    + \sum_{j=0}^3\Big( \tau^{7/2 - j}\norm{e^{\tau \varphi} D_{s}^j
  u\brs}{L^2(\Omega)} +  \norm{e^{\tau \varphi} D_{s}^j
  u\brs}{H^{7/2-j}(\Omega)} \Big), 
  \end{multline*}
  for functions localized near a point $z^{(2)} \in \{0\}\times \O$,
  with $\O \subset \Omega$.  We have observation terms at the boundary
  $\{s=0\}$.  We
use the notation $\Norm{.}{}$ for functions in the interior of the
domain and $\norm{.}{}$ for functions on the boundaries.

  Note that this estimate is characterized by the loss
  of half-derivative, similarly to the estimate one can derive for
  $P_A$. In fact, the sub-ellipticity condition holds in $V_2$ despite
  the fact that $\Pconj = e^{\tau \varphi} P e^{-\tau \varphi}$ can be written as a product of two
  operators, $\Pconj = Q_1Q_2$, as, here, $\Char(Q_1) \cap \Char(Q_2)
  = \emptyset$. 

In Case (i), however, the estimate we obtain is characterized
by the loss of {\em one full} derivative, taking the form
\begin{align*}
    \sum_{|\mi| \leq 4} \tau^{3-|\mi|} 
    \Norm{e^{\tau \varphi} D^{\mi} u}{L^2(Z)} 
    \lesssim \Norm{e^{\tau \varphi} P u}{L^2(Z)}, 
  \end{align*}
  for functions compactly supported away from boundaries.  In fact,
  this loss cannot be improved as explained in Section~\ref{sec:
    Carleman higher-order operators}.  Here also,  the operator $\Pconj$
  can be written as a product of two operators, $\Pconj = Q_1Q_2$, and here, as
  opposed to Case (ii), we have
  $\Char(Q_1) \cap \Char(Q_2) \neq \emptyset$.

  We provide fairly short proofs of the
  Carleman estimates in Cases (i) and (ii) in Sections~\ref{sec: estimate away from boundaries} and
  \ref{sec: Carleman boundary s=0}.
   Note, however, that the loss of a full
  derivative in Case (i) does not create any obstruction to the
  derivation of a local interpolation inequality in Section~\ref{sec: spectral inequality}.

  \begin{remark}
    Sub-ellipticity does not hold in $V_1$. The reader should note
    that the failure of the sub-ellipticity property does not
    automatically imply a loss of one full derivative. The phenomena
    that can occur require a fine analysis to be understood. This is
    carried out in \cite{lerner:88}. Roughly speaking, if
    sub-ellipticity does not hold, and if some iterated Poisson brackets
    vanish up to order $k$ and an iterated Poisson bracket of order
    $k+1$ is positive, then an estimate can be obtained with a loss of
    $k/(k+1)$ derivative.  In the present case, as we can prove that
    the loss of one full derivative cannot be improved, we then know
    that all the iterated Poisson brackets used in \cite{lerner:88} 
    vanish. The essential problem is that the conjugated operator
    $\Pconj$ can be written as a product of two operators $Q_1Q_2$, and
    in the case  $\Char(Q_1) \cap \Char(Q_2) \neq \emptyset$, not only
    does sub-ellipticity not hold, but we see that the iterated Poisson
    brackets also vanish.
  \end{remark}

  \medskip 
  \paragraph{\bfseries Case (iii).} This case is delicate and the
  derivation of the Carleman estimate at the boundary
  $(\alpha,S_0-\alpha)\times \d\Omega$ is one of the main results of
  the present article. This case is also precisely where we have to
  take into account the boundary conditions for the bi-Laplace
  operator $B$.  The estimate we obtain in Case (iii) in
  Section~\ref{sec: Carleman boundary x} is characterized by the loss
  of {\em one full} derivative and, as for case (i), this cannot be
  improved as explained in Section~\ref{sec: Carleman higher-order
    operators}. This is a source of major complications for the proof
  of the Carleman estimate itself. As in Case (i) this, however, does
  not create any obstruction in the derivation of the local
  interpolation inequality in Section~\ref{sec: spectral inequality}.
  In fact, the proof of the local Carleman estimate in $V_3$, a \nhd
  of a point of  the boundary
  $(\alpha,S_0-\alpha)\times \d\Omega$, requires microlocal arguments.
  This implies the introduction of microlocalization operators that
  realize some partition of unity in phase space over $V_3$. For each induced
  microlocal region, a Carleman estimate is derived. One region is
  less favorable: there, the fourth-order conjugated operator $\Pconj$
  can we written as a product of four first-order factors, and two of
  them fail to be elliptic. Moreover, their characteristic sets
  intersect; sub-ellipticity does not hold there and, in fact, this
  generates a loss of a full derivative in the estimation. There, the
  a priori estimate one derives permits to only estimate the
  semi-classical $H^3$-norm, \viz,
  $\Norm{w}{3,\tau} \asymp \tau^3 \Norm{w}{L^2} + \Norm{w}{H^3}$.  In
  other microlocal regions  over $V_3$, the conjugated operator $\Pconj$ exhibits
  at most a non elliptic first-order factor only yielding a half
  derivative loss as sub-ellipticity holds. If one does not proceed
  carefully, the derivation in the least favorable region yields error
  terms that can be of the same strength as the norm
  $\Norm{w}{3,\tau}$, preventing to conclude positively to the
  Carleman estimate.

  We define the weight function in the form
  $\varphi(z) = e^{\csp \psi(z)}$ and keep track of the parameter
  $\csp$ that is meant to be large. The function $\psi$ is chosen such that
  $\d_\nu \psi \leq -C < 0$ in a \nhd of a point of the boundary where
  we try to derive the Carleman estimate.  The use of an exponential form
  for the weight function can already be found in the seminal work of
  L.~H\"ormander (\cite[Section 8.6]{Hoermander:63} and \cite[Section
  28.3]{Hoermander:V4}), in connexion with the celebrated notions of
  pseudo-convexity and strong pseudo-convexity. This introduces a
  second large parameter.  Several authors have derived Carleman
  estimates for some operators in which the dependence upon the second
  large parameters is explicit. See for instance \cite{FI:96}. Such
  result can be very useful to address applications such as inverse
  problems. On such questions see for instance
  \cite{Eller:00,EI:00,IK:08,BY:10}.  In \cite{LeRousseau:12}, an
  analysis framework is introduced, based on the Weyl-H\"ormander
  calculus (\cite{Hoermander:79}, \cite[Sections
  18.4--18.6]{Hoermander:V3}), that allows one to describe the explicit
  dependence of Carleman upon the second large parameter $\csp$ for
  general classes of operators. That analysis is carried out
  away from boundaries. Here, we use that approach by means of a
  tangential Weyl-H\"ormander calculus.   The introduction of the
  second large parameter $\csp$ allows us to handle some error terms
  in the derivation of the Carleman estimate in $V_3$. This is however not
  sufficient to have control over all the error terms that appear in the
  microlocal region within $V_3$ where sub-ellipticity does not hold,
  since the operator under study is a product of two second-order
  operators (see above). 

  Yet, when one attempts to derive the estimate, one realizes that the
  derivation is possible in the case $\varphi$, and thus $\psi$, only
  depend on the normal variable to the boundary.  Yet, for the
  interpolation inequality we wish to derive at the boundary
  $(\alpha,S_0-\alpha)\times \d\Omega$, some convexity of the level
  sets of the weight function $\varphi$ is needed: $\varphi$ cannot be
  constant along the boundary. This is illustrated in Figure~\ref{fig:
    Carleman method} (in the \nhd $V_3$).  We thus introduce the
  function $\cpsi(z) =\psi( \ctp z', z_N)$, where $z'$ denotes the
  tangential variables and $z_N$ denotes the normal variable (in local
  coordinates where the boundary is given by $\{z_N=0\}$), and we set
  $\varphi(z) = e^{\csp \cpsi(z)}$. Here, $\ctp$ is a small parameter,
  $\ctp \in (0,1)$. Keeping track of the dependence of the microlocal
  estimates in this third parameter too then allows one to obtain a
  Carleman estimate, at the boundary, with a weight function with some
  convexity of its level sets with respect to the boundary. This is
  precisely done by extending some of the work of \cite{LeRousseau:12}
  and introducing a Weyl-H\"ormander calculus, with three parameters: the large
  semi-classical parameter $\tau$, the second large parameter $\csp$,
  and this new parameter $\ctp \in (0,1)$ that controls the convexity
  of the level sets of the weight function. Note that even in the case
  $\psi = \psi(z_n)$, the proof of the Carleman estimate relies on
  taking the second parameter $\csp$ \suff large (see the end of
  Proposition~\ref{prop: microlocal estimate E0} below). The
  introduction of the parameter $\ctp$ alone would not be
  sufficient. Only the joint introduction of the two parameters allows
  us to conclude positively to the Carleman estimate in the microlocal
  region where a full derivative is lost.

  All the different
  microlocal estimates need to be derived within the refined
  semi-classical calculus with three parameters. Arguments are based on the ellipticity or
  sub-ellipticity of the different factors building the fourth-order
  operator $\Pconj$, and the position of theirs roots in the complex
  plane. This analysis follows in part from the different works
  \cite{Bellassoued:03,LRR:10,LRR:11,LR-L:13,CR:14}.

  Eventually, the various microlocal estimates we obtain need to be
  patched together.  This procedure generates commutators of the fourth operator
  $\Pconj$ and the microlocal cut-offs, leading to  some third-order
  error terms that can be handled thanks to the better microlocal
  estimates obtained away from the least favorable region.

  Near a point of the boundary $\d Z = (0,S_0) \times \d\Omega$
  locally written in the form $\{x_d =0\}$ with
  $Z = (0,S_0) \times \Omega=\{ x_d>0\}$, the estimate we obtain, for
  $\tau$ and $\csp$ large and $\eps$ small,  is of
  the form
  \begin{multline*}
    \csp 
    \sum_{|\mi| \leq 4}\Norm{\ttau^{3-|\mi|}e^{\tau \varphi}
    D_{s,x}^\mi u}{L^2(Z)} 
    + 
    \sum_{0\leq j \leq 3} \norm{e^{\tau \varphi} D_{x_d}^r u_{|\d Z}}{7/2-j,\ttau}
    \lesssim \Norm{e^{\tau \varphi} P u}{L^2(Z)}
    + 
    \sum_{j=0,1} \norm{e^{\tau \varphi} D_{x_d}^j u_{|\d Z}}{7/2-j,\ttau}. 
  \end{multline*}
  On the \lhs we find norms of all traces; on the \rhs we only have
  observation with the traces $u_{|\d Z}$ and $D_{x_d} u_{|\d Z}$
  associated with the clamped boundary conditions. Here $\ttau = \tau \csp \varphi$.

\subsection{On Carleman estimates for higher-order elliptic operators}
\label{sec: Carleman higher-order operators}

If $Q$ is an elliptic operator of even order $m$, and $\varphi$ is
a weight function \st the couple $(P, \varphi)$ satisfies the
sub-ellipticity condition (as stated above),  then a Carleman estimate 
can be obtained, even at a boundary, for instance with the results of
\cite{BLR:13}. We use those results in Section~\ref{sec: Carleman
  boundary s=0} for the proof of the Carleman estimate at the boundary
$\{s=0\}$.

If $m\geq 4$, it is however quite natural to not have the
sub-ellipticity condition, in particular if the operator $Q$ is in the
form of a product of two operators, say $Q= Q_1 Q_2$. Denote by $q$,
$q_1$, and $q_2$ the principal symbols of $Q$, $Q_1$, and $Q_2$
respectively.  The conjugated operator
$Q_\varphi = e^{\tau \varphi} Q e^{-\tau \varphi}$ reads
$Q_\varphi = Q_{1,\varphi} Q_{2,\varphi}$, with
$Q_{k,\varphi} = e^{\tau \varphi} Q_k e^{-\tau \varphi}$, $k=1,2$. If
we have
$\Char(Q_{1,\varphi}) \cap \Char(Q_{2,\varphi}) \neq \emptyset$ then
the sub-ellipticity condition fails to hold.  In fact, if $q_\varphi$,
$q_{1,\varphi}$, and $q_{2,\varphi}$ are the semi-classical principal
symbols of $Q_\varphi$, $Q_{1,\varphi}$, and $Q_{2,\varphi}$, that is,  
$q_\varphi = q(z, \zeta + i \tau d \varphi(z))$ and $q_{k,\varphi} =
q_k(z, \zeta + i \tau d \varphi(z))$, $k=1,2$,   we can write
\begin{align*}
  \{ \ovl{q}_\varphi, q_\varphi\}  = 
  |q_{1,\varphi}|^2 \{\ovl{q}_{2,\varphi}, q_{2,\varphi}\} 
  + |q_{2,\varphi}|^2 \{\ovl{q}_{1,\varphi}, q_{1,\varphi}\} 
  + f |q_{1,\varphi}|  \, |q_{2,\varphi}|, 
\end{align*}
for some function $f$.
Thus $\{ \ovl{q}_\varphi, q_\varphi\}$ vanishes if $q_{1,\varphi} =
q_{2,\varphi}=0$. Then, the sub-ellipticity property of \eqref{3. carleman}
cannot hold for $Q$. 

Observe that in the above example we have
$d_{z,\zeta} q (z,\zeta + i \tau d \varphi(z)) =0$ if
$q_{2} (z,\zeta + i \tau d \varphi(z)) = q_{1} (z,\zeta + i \tau d
\varphi(z)) =0$.
The following proposition (that applies to operators that need not be
elliptic) shows that in such case of symbol ``flatness'', the Carleman
estimate we can derive for $Q$ exhibits at least a loss of one full
derivative.
%%%%%%%%%%%%%%%%%%%%%%%%
% proposition          %
%%%%%%%%%%%%%%%%%%%%%%%%
\begin{proposition}
  \label{prop: Carleman P2 - optim}
  Let $Q=Q(z,D_z)$ be a smooth operator of order $m\geq 1$ in $Z$, an open
  subset of $\R^N$. Assume further that 
 there exist a smooth weight function $\varphi(z)$, $C>0$, $\tau_1>0$, a
 multi-index $\mi$ with $0\leq |\mi| \leq m$, and $\delta \geq 0$ \st
 \begin{align}
   \label{eq: Carleman P2 - optim}
   \tau^{m -1-|\mi| + \delta}
   \Norm{e^{\tau \varphi} D_z^\mi u}{L^2}
   \leq C \Norm{e^{\tau \varphi}  Q u}{L^2},
 \end{align}
 for $\tau \geq \tau_1$ and 
 for $u \in \Cinf(\R^N)$ with $\supp(u) \subset Z$.  Let
 $q(z,\zeta)$ be the principal symbol of $Q$. If there exist $z_0 \in
 Z$, $\zeta_0 \in \R^N$ and $\tau_0>0$ \st $\theta_0^\mi
 \neq 0$, with $\theta_0= \zeta_0 + i \tau_0 d \varphi(z_0)$, and 
 \begin{align*}
   q(z_0,\theta_0 )= q_\varphi (z_0,\zeta_0, \tau_0)=0, 
   \quad  d_{z,\zeta} q (z_0,\theta_0 )=0, 
 \end{align*}
then $\delta =0$. 
\end{proposition}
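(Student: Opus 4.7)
The plan is to test the assumed Carleman inequality on a semi-classical Gaussian wave packet concentrated at $(z_0,\zeta_0)$ in phase space, with the semi-classical parameter tied to $\tau$, and to read off the obstruction to having $\delta>0$.

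First I would rewrite \eqref{eq: Carleman P2 - optim} in semi-classical form. Setting $h=\tau_0/\tau$, so that $\tau\to\infty$ corresponds to $h\to 0^+$, and introducing $v=e^{\tau\varphi}u$, one has $e^{\tau\varphi}D_z^\mi u=(D+i\tau d\varphi)^\mi v=h^{-|\mi|}(hD+i\tau_0 d\varphi)^\mi v$ and $e^{\tau\varphi}Qu=Q_\varphi v=h^{-m}\wt Q_\varphi v$, where $\wt Q_\varphi$ is a semi-classical operator of order $m$ with principal symbol $q_\varphi(z,\zeta)=q(z,\zeta+i\tau_0 d\varphi(z))$ and $(hD+i\tau_0 d\varphi)^\mi$ is the semi-classical operator of order $|\mi|$ with principal symbol $(\zeta+i\tau_0 d\varphi(z))^\mi$. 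Absorbing constants depending on $\tau_0$, the Carleman estimate becomes
\begin{align*}
h^{1-\delta}\Norm{(hD+i\tau_0 d\varphi)^\mi v}{L^2}\leq C'\Norm{\wt Q_\varphi v}{L^2},\qquad v\in\Cinfc(Z),\ 0<h\leq h_1.
\end{align*}

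Next I would probe this inequality with the $L^2$-normalized wave packet
\begin{align*}
v_h(z):=h^{-N/4}\,\chi\bigl((z-z_0)/\sqrt h\bigr)\,e^{i\zeta_0\cdot(z-z_0)/h},
\end{align*}
where $\chi\in\Cinfc(\R^N)$, $\chi(0)=1$, with $\supp\chi$ small enough that $\supp v_h\subset Z$ for all $h$ small. A standard WKB expansion shows that for any smooth semi-classical symbol $p(z,\zeta)$,
\begin{align*}
\Norm{p(z,hD)v_h-p(z_0,\zeta_0)v_h}{L^2}=O(\sqrt h),
\end{align*}
with the sharper bound $O(h)$ when $p(z_0,\zeta_0)=0$ and $d_{z,\zeta}p(z_0,\zeta_0)=0$, because under these two conditions $p(z,\zeta)=O(|z-z_0|^2+|\zeta-\zeta_0|^2)$ near $(z_0,\zeta_0)$, $|z-z_0|=O(\sqrt h)$ on $\supp v_h$, and each $(\zeta-\zeta_0)$-factor transfers to a $\sqrt h$-sized derivative of the amplitude. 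Applied to the two sides: the symbol $(\zeta+i\tau_0 d\varphi(z))^\mi$ evaluates at $(z_0,\zeta_0)$ to $\theta_0^\mi\neq 0$, so the left-hand side is $|\theta_0^\mi|\,\Norm{\chi}{L^2}+o(1)$; for the right-hand side, by the chain rule,
\begin{align*}
q_\varphi(z_0,\zeta_0)=q(z_0,\theta_0)=0,\quad \d_\zeta q_\varphi(z_0,\zeta_0)=\d_\zeta q(z_0,\theta_0)=0,\quad \d_z q_\varphi(z_0,\zeta_0)=\d_z q(z_0,\theta_0)+i\tau_0 d^2\varphi(z_0)\d_\zeta q(z_0,\theta_0)=0,
\end{align*}
so $\Norm{\wt Q_\varphi v_h}{L^2}=O(h)$. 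The Carleman inequality then yields $h^{1-\delta}(|\theta_0^\mi|+o(1))\leq C'' h$ for all $0<h\leq h_1$, that is, $h^{-\delta}|\theta_0^\mi|\leq C'''$ uniformly in $h$. Letting $h\to 0^+$ forces $\delta\leq 0$; combined with $\delta\geq 0$, this gives $\delta=0$.

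The main obstacle is the careful bookkeeping in the WKB expansion of $\wt Q_\varphi v_h$: one must verify that not only does the principal-symbol contribution $q_\varphi(z,\zeta_0)a(z)$ vanish to second order in $z-z_0$ (which uses both $q(z_0,\theta_0)=0$ and $d_{z,\zeta}q(z_0,\theta_0)=0$), but also that the first transport term $\frac{h}{i}\d_\zeta q_\varphi(z,\zeta_0)\cdot\nabla a$ and the subprincipal contribution are $O(h)$ rather than the generic $O(\sqrt h)$. This is exactly where the vanishing of $\d_\zeta q(z_0,\theta_0)$ is decisive, since $\nabla a$ carries an $h^{-1/2}$ factor; without the full flatness one would only obtain an $O(\sqrt h)$ remainder on the right, ruling out $\delta>\tfrac12$ but not the sharp conclusion $\delta=0$.
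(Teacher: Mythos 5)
Your proposal is correct and, once the semi-classical change of variables is unwound, it is essentially the same argument as the paper's: the test function $u_\tau(z)=e^{i\tau\langle z,\theta_1\rangle}f(\tau^{1/2}z)$ used there is (up to normalization and the choice of amplitude) exactly your conjugated wave packet $v_h$ with $h=1/\tau$, concentrated at $(z_0,\zeta_0)$ on the $\sqrt h$ scale. Both proofs draw the conclusion from the same two facts: the second-order vanishing of $q_\varphi$ at $(z_0,\zeta_0)$ (via the chain rule from $q(z_0,\theta_0)=0$ and $d_{z,\zeta}q(z_0,\theta_0)=0$) upgrades the right-hand side to $O(h)$, while $\theta_0^\mi\neq 0$ keeps the left-hand side of order one, forcing $\delta\le 0$.
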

In other words,  if there is a point  $(x_0,\xi_0,\tau_0)$  where the
symbol $q_\varphi$ vanishes at second
order, then if a Carleman estimate holds it exhibits at least the loss
of a full derivative.

We refer to Section~\ref{sec: prop: Carleman P2 - optim} for a proof.

\begin{remark}$\phantom{-}$
  %\begin{enumerate}
  %\item In Proposition~\ref{prop: Carleman P2 - optim} we restrict
  %  ourselves to estimates away from boundaries but the same
  %  would apply for estimates at a boundary.
  %\item 
    This loss of at least one full derivative shows that the
    analysis of \cite{lerner:88} cannot be applied here, as it
    concerns Carleman estimate with losses of less that one
    derivative. In particular, one can check that iterated Poisson
    brackets used in \cite{lerner:88} all vanish at points where 
    $q_\varphi$ vanishes at second order.
  %\end{enumerate}
\end{remark}

In dimension greater than $1$, this proposition applies to the
bi-Laplace operator  $B$ introduced above on the manifold $\Omega$. If
$a(x,\xi)$ is the principal symbol of the Laplace operator in a local
chart $V$, for all
$x_0 \in V$, there exists $\xi_0$ and $\tau_0>0$ such that
$a(x_0, \xi_0 + i \tau_0 d_x \varphi(x_0))=0$. Then, the symbol
$b= a^2$ vanishes at second order at
$(x_0, \xi_0 + i \tau_0 d_x \varphi(x_0)$. Hence, we cannot hope for a
Carleman estimate for $B$ with a loss of less than one full
derivative. In fact, such an estimate can be obtained by using twice in
cascade the Carleman estimate for the Laplace operator. This is
consistent, as the estimate for the Laplace operator exhibits a loss a
half derivative in dimension greater than $1$ (if $\varphi$ is chosen
\st sub-ellipticity
holds -- see \cite{LL:12}).

 In dimension one, however,
$B= D_x^4$ and the conjugated operator $(D_x + i \tau d \varphi(x))^4$ is elliptic (in the
sense of semi-classical operators) if $d \varphi (x) \neq 0$ in
$\Omega$. Then, the resulting Carleman estimate is characterized by no
derivative loss.

Concerning the operator $P = D_s^4 + B$ in $Z=(0,S_0)\times \Omega$,
that is central in the present article, we write $P = P_1P_2$ with
$P_k = (-1)^k i D_s^2 + A$. Setting $P_{k,\varphi}  =e^{\tau \varphi}
P_k e^{-\tau \varphi}$, with semi-classical principal symbols given by 
\begin{align*}
  p_{k,\varphi}(z,\zeta,\tau) =  (-1)^k i (\sigma + i \tau \d_s \varphi(z))^2
  + a(x,\xi+ i \tau d_x\varphi(z)),\quad k=1,2,
\end{align*}
where $z=(s,x) \in Z $ and $\zeta = (\sigma, \xi) \in \R^{1+d} =
\R^N$. 
Let $d\geq 2$.  If, for some $z_0 \in Z$, we have
$\d_s \varphi(z_0) =0$, if we choose $\xi_0\in \R^d$ and $\tau_0>0$ such that
$a(x_0,\xi_0+ i \tau_0 d_x\varphi(z_0))=0$, then for $\sigma_0=0$, we
have $\zeta_0 = (0, \xi_0) \in \R^N$ and $\theta_0 = (0,\xi_0) + i \tau_0 (0, d_x \varphi(z_0))$ and
$p_{k,\varphi}(z_0,\zeta_0,\tau_0) = p_k (z_0, \theta_0) =0$
and $d_{z,\zeta} p  (z_0, \theta_0) =0$, where $p$ and $p_k$ are the
principal symbols of $P$ and $P_k$, $k=1,2$.
Hence, in a \nhd of $z_0$, Proposition~\ref{prop: Carleman P2 - optim}
applies. 

This situation occurs in Cases (i) and (iii) described in
Section~\ref{sec: proof outline bi-laplace} and Figure~\ref{fig:
  Carleman method}. In the \nhds $V_1$ and $V_3$ introduced there, we
have points where $\d_s \varphi$ vanishes (as can observed by the
shapes of the
level sets of $\varphi$ in Figure~\ref{fig: Carleman method}). 
This explains why we can only obtain estimates with a
loss of one full derivative for those cases. 
In case (ii), however, this does not occur, and there we obtain an
estimation with only a loss of a half derivative.

\subsection{Some perspectives}

The present article deals with the natural ``clamped'' boundary
conditions, that is, homogeneous Dirichlet and Neumann conditions
simultaneously. In the light of the results obtained here and those
that can be obtained for very general boundary conditions of
Lopatinskii type in \cite{Tataru:96,BLR:13}, for instance for unique
continuation through the derivation of Carleman estimates at the
boundary for general elliptic operators with complex coefficient in
cases where the sub-ellipticity property hold, one is inclined to
attempt to prove estimates similar to those proven in the present
article, in the case of an operator, such as the operator
$P = D_s^4 + B$ studied here, for which the sub-ellipticity condition cannot
hold everywhere and for general boundary conditions of Lopatinskii type. 

Here, we considerer the bi-Laplace operator $B=\Delta^2$. It would be of
interest to consider more general polyharmonic operators such as
$\Delta^k$, $k \in \N$, on $\Omega$ along with natural boundary conditions, \eg, 
\begin{align*}
  u_{|\d \Omega} =0,   \dots , \d_\nu^{k-1} u_{|\d \Omega} =0,
\end{align*}
or more general Lopatinskii type conditions.

\subsection{Notation}
We shall use some spaces of smooth functions in the closed half space.
We set
\begin{equation*}
 \S(\Rpb) = \big\{ u_{|\Rpb}; \ u \in \S(\R^N)\big\}.
\end{equation*}

The reader needs to be warned that in some sections $z\in \R^N$ will
denote $(x,s)$, with $x \in \R^d = \R^{N-1}$ and $s\in \R$, and thus,
there, $z_N =s$. This is the case in Section~\ref{sec: Carleman
  boundary s=0}.  In other sections,
$z$ will denote $(s,x)$, and thus there $z_N = x_d$. This is the case
of Section~\ref{sec: Carleman boundary x} and Appendices~\ref{sec: proofs
  semi-classical calculus} and \ref{sec: elliptic
  sub-elliptic estimates boundary x}.

\medskip
Some specific notation for semi-classical tangential operators will be introduced
in Section~\ref{sec: tangential semi-classical
  calculus}, and they allow us to derive the
Carleman estimate for $D_s^4 + B$ at the
boundary $\{0\} \times \Omega$ (Cases (i) above). Semi-classical calculus is
characterized by the presence of a large parameter denoted by $\tau$
here, that is precisely the large parameter that appears in the
Carleman estimates (for readers familiar with semi-classical analysis
this is done by taking $\tau = 1/h$ where $h$ is the Planck constant.)

A special class of semi-classical calculus is also introduced in
Section~\ref{sec: pseudo -3p} and is characterized by three
parameters. This calculus is essential in the proof of the Carleman
estimate for $D_s^4 + B$ at the boundary $(0,S_0)\times \d\Omega$
(Case (iii) above).

\medskip
In this article, when the constant $C$ is used, it refers to a
constant that is independent of the semi-classical parameters, \eg $\tau$, $\csp$,
$\ctp$.  Its value may however change from one line to another. If we
want to keep track of the value of a constant we shall use another
letter.

For concision, we use the notation
$\lesssim$ for $\leq C$, with a constant $C>0$.
We also write $a \asymp b$ to denote $a \lesssim b \lesssim a$.
As done above, we shall
use the notation $\Norm{.}{}$ for functions in the interior of the
domain and $\norm{.}{}$ for functions on the boundaries.

\bigskip
We finish this introductory section by stating some basic properties
of the ``clamped'' bi-Laplace operator that will be used at places in
this article (some were implicitly used above). 

\subsection{Some basic properties  of the bi-Laplace operator}
\label{sec: properties bi-laplace operator}
We recall here some facts on the ``clamped'' bi-Laplace operator. 
We define the operator $B = \Delta^2$ on $L^2(\Omega)$ with domain
$D(B) = H^4(\Omega) \cap H^2_0(\Omega)$. 
%%%%%%%%%%%%%%%%%%%%%%%%
% proposition          %
%%%%%%%%%%%%%%%%%%%%%%%%
\begin{proposition}
  \label{prop: bi-laplace selfadjoint max monotone}
  The operator $(B, D(B))$ is selfadjoint on  $L^2(\Omega)$ and
  maximal monotone.
\end{proposition}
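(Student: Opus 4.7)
The plan is to establish symmetry and positivity directly by integration by parts, then obtain maximal monotonicity via a variational formulation on $H^2_0(\Omega)$, and finally invoke a standard criterion to promote symmetry plus surjectivity of $B + \id$ to selfadjointness.

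First I would verify that $B$ is symmetric and nonnegative. For $u, v \in D(B) = H^4(\Omega) \cap H^2_0(\Omega)$, Green's formula applied twice gives
\begin{equation*}
\inp{B u}{v}_{L^2(\Omega)} = \int_\Omega \Delta^2 u\, \ovl{v}\, d\mathfrak{g} = \int_\Omega \Delta u\, \Delta \ovl{v}\, d\mathfrak{g},
\end{equation*}
since the boundary contributions involve $v\brx$ and $\d_\nu v\brx$, both of which vanish because $v \in H^2_0(\Omega)$. Symmetry in $(u,v)$ is then manifest, and taking $u=v$ shows $\inp{Bu}{u} = \Norm{\Delta u}{L^2}^2 \geq 0$, so $B$ is symmetric and monotone.

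Next I would establish that $B + \id$ is surjective from $D(B)$ onto $L^2(\Omega)$. Consider the bilinear form
\begin{equation*}
a(u,v) = \int_\Omega \Delta u\, \Delta \ovl{v}\, d\mathfrak{g} + \int_\Omega u\, \ovl{v}\, d\mathfrak{g}, \qquad u,v \in H^2_0(\Omega).
\end{equation*}
Continuity on $H^2_0(\Omega)$ is immediate. Coercivity follows from the classical identity $\Norm{\Delta u}{L^2}^2 = \Norm{D^2 u}{L^2}^2$ for $u \in H^2_0(\Omega)$ (obtained by integration by parts, with all boundary terms killed by the clamped conditions) combined with the Poincar\'e inequality, giving $a(u,u) \gtrsim \Norm{u}{H^2(\Omega)}^2$. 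By Lax-Milgram, for every $f \in L^2(\Omega)$ there is a unique $u \in H^2_0(\Omega)$ with $a(u,v) = \inp{f}{v}_{L^2}$ for all $v \in H^2_0(\Omega)$. Testing against $v \in \Cinfc(\Omega)$ yields $\Delta^2 u + u = f$ in the distributional sense, and elliptic regularity for the bi-Laplace on the smooth compact manifold $\Omega$ (with the clamped Lopatinskii boundary conditions) upgrades $u$ to $H^4(\Omega)$. Thus $u \in D(B)$ and $(B + \id)u = f$.

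Finally I would conclude. A symmetric operator whose range contains a shift $B + \lambda \id$ that is surjective (here $\lambda = 1$) is automatically selfadjoint by the standard criterion (\eg Reed-Simon, Theorem VIII.3). Since we have also shown $\inp{Bu}{u} \geq 0$, $B$ is in fact nonnegative selfadjoint, hence monotone, and the surjectivity of $B + \id$ means $B$ is maximal monotone. The main technical step is the coercivity / elliptic regularity input; the rest is a routine application of the Lax-Milgram / Reed-Simon machinery.
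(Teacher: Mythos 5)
Your proposal is correct and takes essentially the same route the paper hints at: the paper itself, immediately after stating the proposition, notes that well-posedness of $\Delta^2 u + \mu u = f$ is obtained by Lax--Milgram on $H^2_0(\Omega)$ followed by elliptic regularity (Theorem~20.1.2 of H\"ormander, Vol.~III), and you are supplying exactly the missing bookkeeping (symmetry by two integrations by parts with boundary terms killed by the clamped conditions, and then the standard ``symmetric $+$ $B + \id$ surjective $\Rightarrow$ selfadjoint'' criterion and the characterization of maximal monotonicity by surjectivity of $B+\id$).

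One small caveat worth flagging: the ``classical identity'' $\Norm{\Delta u}{L^2}^2 = \Norm{D^2 u}{L^2}^2$ for $u \in H^2_0$ is exact only for the flat Laplacian on a Euclidean domain. On the compact Riemannian manifold $(\Omega, \mathfrak{g})$ the Bochner formula introduces lower-order Ricci-curvature corrections, so the two quantities differ by a term of order $\Norm{\nabla u}{L^2}^2$. This does not break coercivity — the cleanest justification is simply to invoke the a priori estimate $\Norm{u}{H^2(\Omega)} \lesssim \Norm{\Delta u}{L^2(\Omega)} + \Norm{u}{L^2(\Omega)}$ valid for the Dirichlet Laplacian on a compact manifold with boundary, which applies to $u \in H^2_0(\Omega) \subset H^2 \cap H^1_0(\Omega)$ and yields $a(u,u) \gtrsim \Norm{u}{H^2(\Omega)}^2$ directly, without any claimed pointwise identity between $|\Delta u|$ and $|D^2 u|$. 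With that adjustment the argument is airtight.
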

In particular, if $\mu \geq 0$, there exists $C>0$ \st, for any $f \in L^2(\Omega)$, there
exists a unique $u \in D(B)$ \st
\begin{align}
  \label{eq: bilaplace elliptic estimate}
  \Delta^2 u + \mu u = f, \ \ \text{and} \ \ \Norm{u}{H^4(\Omega)} \leq C \Norm{f}{L^2(\Omega)}.
\end{align}
This can be proven by first finding a unique solution in
$H^2_0(\Omega)$ with the Lax-Milgram theorem and then applying Theorem 20.1.2 in \cite[Section 20.1]{Hoermander:V3}.
Note in particular that $\Norm{\Delta^2 u}{L^2}$ is a equivalent
  norm on $H^4(\Omega) \cap H^2_0(\Omega)$ by \eqref{eq: bilaplace elliptic estimate}.

As a consequence of Proposition~\ref{prop: bi-laplace selfadjoint max
  monotone} we have the existence of a Hilbert basis for $L^2(\Omega)$ made of eigenfunctions. 
%%%%%%%%%%%%%%%%%%%%%%%%
% Corollary   %
%%%%%%%%%%%%%%%%%%%%%%%%
\begin{corollary}
  \label{cor: bi-laplace spectral family}
  There exist $(\mu_j)_{j \in \N} \subset \R$, and $(\varphi_j )_{j \in \N}
  \subset D(B)$ such that 
  \begin{align*}
    0 < \mu_1 \leq \mu_2\leq \cdots \leq \mu_j \leq \cdots, \qquad
    \lim_{j \to \infty} \mu_j = + \infty, 
    \qquad B \varphi_j = \mu_j    \varphi_j, 
  \end{align*}
  and the family $(\varphi_j )_j$ forms a Hilbert basis for $L^2(\Omega)$. 
\end{corollary}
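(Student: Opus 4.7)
The plan is to reduce the corollary to the spectral theorem for compact self-adjoint positive operators applied to the resolvent of $B$. Since the estimate \eqref{eq: bilaplace elliptic estimate} holds for $\mu=0$, the operator $B$ is invertible: for each $f\in L^2(\Omega)$ there is a unique $u\in D(B)$ with $\Delta^2 u=f$ and $\Norm{u}{H^4(\Omega)}\leq C\Norm{f}{L^2(\Omega)}$. First, I would introduce $K:=B^{-1}:L^2(\Omega)\to L^2(\Omega)$, viewed as taking values in $L^2(\Omega)$ via the inclusion $D(B)\subset L^2(\Omega)$.

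Next, I would verify the three properties needed to invoke the spectral theorem. Compactness of $K$ follows from the bound $\Norm{Kf}{H^4(\Omega)}\lesssim \Norm{f}{L^2(\Omega)}$ together with the Rellich compact embedding $H^4(\Omega)\hookrightarrow L^2(\Omega)$, which is available since $\Omega$ is a compact Riemannian manifold with boundary. Self-adjointness of $K$ is immediate from the self-adjointness of $(B,D(B))$ stated in Proposition~\ref{prop: bi-laplace selfadjoint max monotone}. Strict positivity of $K$ is obtained by writing $u=Kf$ and integrating by parts twice: for $u\in D(B)=H^4(\Omega)\cap H^2_0(\Omega)$, the clamped boundary conditions $u_{|\d\Omega}=\d_\nu u_{|\d\Omega}=0$ give
\begin{equation*}
(Kf,f)_{L^2(\Omega)} = (u, \Delta^2 u)_{L^2(\Omega)} = \Norm{\Delta u}{L^2(\Omega)}^2 \geq 0,
\end{equation*}
with all boundary terms vanishing. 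If this quantity is zero, then $\Delta u=0$ in $\Omega$ with $u=\d_\nu u=0$ on $\d\Omega$; unique continuation for the Laplacian (or elementary elliptic regularity applied to the Dirichlet problem $\Delta u=0$, $u_{|\d\Omega}=0$, forcing $u=0$, and then using $\d_\nu u=0$ is automatic or simply noting that $u\equiv 0$) yields $u=0$, hence $f=Bu=0$.

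Then the spectral theorem for compact, self-adjoint, strictly positive operators on a separable Hilbert space provides a sequence of positive eigenvalues $\lambda_1\geq \lambda_2\geq\cdots>0$ with $\lambda_j\to 0$, together with an associated orthonormal family $(\varphi_j)_{j\in\N}$ in $L^2(\Omega)$ that forms a Hilbert basis, such that $K\varphi_j=\lambda_j \varphi_j$. Setting $\mu_j:=1/\lambda_j$, one has $\varphi_j=\lambda_j^{-1} K\varphi_j\in\text{Range}(K)=D(B)$, and applying $B$ to $K\varphi_j=\lambda_j \varphi_j$ gives $B\varphi_j=\mu_j\varphi_j$ with $0<\mu_1\leq\mu_2\leq\cdots$ and $\mu_j\to+\infty$, after reindexing so that the $\mu_j$ are non-decreasing.

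There is no real obstacle here: the corollary is a routine consequence of Proposition~\ref{prop: bi-laplace selfadjoint max monotone} and the elliptic estimate \eqref{eq: bilaplace elliptic estimate}. The only point deserving a line of verification is the strict positivity of $B$, which rests on the fact that $\Norm{\Delta u}{L^2(\Omega)}^2$ defines an equivalent norm on $H^4(\Omega)\cap H^2_0(\Omega)$ (noted just after \eqref{eq: bilaplace elliptic estimate}) and in particular vanishes only for $u=0$; this is what rules out $0$ from the spectrum and yields the strict inequality $\mu_1>0$.
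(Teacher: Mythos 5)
The paper states this corollary without a proof, remarking only that it is ``a consequence of Proposition~\ref{prop: bi-laplace selfadjoint max monotone}''. Your argument supplies the standard spectral-theory proof that the authors are implicitly invoking: $B$ is boundedly invertible by~\eqref{eq: bilaplace elliptic estimate} with $\mu=0$, the resolvent $K=B^{-1}$ is compact by the bound $\Norm{Kf}{H^4(\Omega)}\lesssim\Norm{f}{L^2(\Omega)}$ together with the Rellich embedding $H^4(\Omega)\hookrightarrow L^2(\Omega)$, self-adjoint because $(B,D(B))$ is, and positive definite because $(Kf,f)_{L^2}=\Norm{\Delta(Kf)}{L^2(\Omega)}^2$ vanishes only when $Kf=0$. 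The spectral theorem then gives the eigenpairs and the Hilbert basis, and taking reciprocals of the eigenvalues of $K$ produces the nondecreasing sequence $\mu_j\to+\infty$. This is correct, complete, and essentially the proof the paper intends.

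One small imprecision in your closing remark: you assert that $\Norm{\Delta u}{L^2(\Omega)}^2$ defines an equivalent norm on $H^4(\Omega)\cap H^2_0(\Omega)$ and cite the sentence after~\eqref{eq: bilaplace elliptic estimate}, but what the paper notes there is that $\Norm{\Delta^2 u}{L^2(\Omega)}$ is an equivalent norm on $H^4(\Omega)\cap H^2_0(\Omega)$. The quantity $\Norm{\Delta u}{L^2(\Omega)}$ cannot control the $H^4$ norm; it is, however, an equivalent norm on $H^2_0(\Omega)$, and it is only this weaker definiteness fact that your positivity argument requires. Your main paragraph already gives the clean reasoning (uniqueness for the Dirichlet problem $\Delta u=0$, $u_{|\d\Omega}=0$), so the misattributed remark is harmless and does not create a gap.
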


%%%%%%%%%%%%%%%%%%%%%%%%
% Corollary   %
%%%%%%%%%%%%%%%%%%%%%%%%
\begin{corollary}
  \label{cor: semigroup}
  The operator $(B, D(B))$ generates an analytic $C_0$-semigroup $S(t)$ on $L^2(\Omega)$. 
  
  For $T>0$, $y_0 \in L^2(\Omega)$, and $f \in  L^2(0,T; H^{-2}(\Omega))$,  there exists a unique 
  \begin{align*}
    y \in L^2([0,T]; H^2_0(\Omega)) \cap \Con([0,T]; L^2(\Omega)) \cap
    H^1 (0,T; H^{-2}(\Omega)), 
  \end{align*}
 given by $y(t) = S(t) y_0 + \int_0^t S(t-s) f(s) ds$, such that 
\begin{align*}
    \d_t y + \Delta^2 y =f \ \ \text{for}\  t \in (0,T)\  \text{\ae}, \quad
  y_{|t=0} =y_0  
\end{align*}
\end{corollary}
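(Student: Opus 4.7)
The plan is to deduce the statement directly from the spectral resolution of $B$ given in Corollary~\ref{cor: bi-laplace spectral family}, combined with standard semigroup theory for positive selfadjoint operators and the Lions variational theory for parabolic evolution equations.

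First, I would use the Hilbert basis $(\varphi_j)_{j\in \N}$ with positive eigenvalues $(\mu_j)_{j\in \N}$ tending to $+\infty$ to define, for $\Re t \geq 0$ and $f \in L^2(\Omega)$,
\begin{equation*}
  S(t) f = \sum_{j \in \N} e^{-t \mu_j} (f,\varphi_j)_{L^2(\Omega)}\, \varphi_j.
\end{equation*}
Since $\mu_j>0$, one has $|e^{-t\mu_j}|\le 1$ on $\{\Re t \geq 0\}$, so Parseval shows that $S(t)$ is a contraction on $L^2(\Omega)$. The semigroup identity $S(t)S(s)=S(t+s)$ follows from the multiplicativity of the exponential on each spectral mode, and the strong continuity at $t=0$ from dominated convergence on the series. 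Identifying the generator from the spectral expansion, one checks that $f\in D(B)$ if and only if $t \mapsto S(t)f$ is $L^2$-differentiable at $0^+$, and then $\partial_t S(t)f_{|t=0}=-Bf$. For analyticity on $\{\Re t>0\}$, the series $\sum_j \mu_j e^{-t\mu_j}(f,\varphi_j)\varphi_j$ converges in $L^2(\Omega)$ whenever $\Re t >0$, since $\sup_j \mu_j e^{-(\Re t)\mu_j}<\infty$; term-by-term differentiation then gives $\partial_t S(t)f =-BS(t)f$ for $\Re t>0$, and holomorphy of $t\mapsto S(t)f$ on the right half-plane.

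Second, for the inhomogeneous Cauchy problem, I would set $V = H^2_0(\Omega)$ and $V' = H^{-2}(\Omega)$, with pivot space $L^2(\Omega)$. The sesquilinear form
\begin{equation*}
  a(u,v) = \int_\Omega \Delta u\, \overline{\Delta v}\, dx
\end{equation*}
is continuous on $V\times V$ and, by the norm equivalence stated after Proposition~\ref{prop: bi-laplace selfadjoint max monotone}, coercive on $V$. Its associated unbounded operator on $L^2(\Omega)$ coincides with $(B,D(B))$, and its $V\to V'$ realization is the natural $H^{-2}$-extension of $\Delta^2$. The Lions theorem on variational parabolic evolution equations then yields, for every $y_0 \in L^2(\Omega)$ and $f \in L^2(0,T; V')$, a unique
\begin{equation*}
  y \in L^2(0,T; V) \cap \Con([0,T]; L^2(\Omega)) \cap H^1(0,T; V')
\end{equation*}
solving $\partial_t y +\Delta^2 y=f$ in $\D'(0,T;V')$ with $y(0)=y_0$ in $L^2(\Omega)$.

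Third, to obtain the Duhamel representation, I would extend $S(t)$ to a $C_0$-semigroup on $V'$ by duality: since $S(t)$ is selfadjoint on $L^2(\Omega)$ and maps $L^2(\Omega)$ into $V$ for $t>0$, its transpose gives a natural extension to $V'$ whose spectral expression is the same formula acting on the Fourier coefficients. A Cauchy--Schwarz estimate in the spectral variable shows that $f \mapsto \int_0^\cdot S(\cdot -s)f(s)\,ds$ is bounded from $L^2(0,T;V')$ into $L^2(0,T;V)\cap \Con([0,T];L^2(\Omega))$, and likewise for the homogeneous part $y_0 \mapsto S(\cdot)y_0$. Each term is then seen to solve the variational problem (for $y_0$ by the generator identity, for the convolution by a standard density argument from smooth data), and uniqueness in the Lions framework identifies $y$ with the claimed Duhamel formula.

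The main obstacle I anticipate is the regularity statement when $f$ lies only in $L^2(0,T; H^{-2}(\Omega))$ rather than in $L^2(0,T; L^2(\Omega))$: this forces one to leave the purely semigroup formulation on $L^2(\Omega)$ and to work consistently in the extrapolation space $V'=H^{-2}(\Omega)$, so that the matching between the variational solution and the Duhamel formula requires a careful definition of the extended semigroup and a density argument to reduce to the smooth case.
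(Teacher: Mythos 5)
Your proposal is correct and is essentially the standard argument the paper implicitly relies on: the paper states this corollary without proof as a direct consequence of Proposition~\ref{prop: bi-laplace selfadjoint max monotone} and Corollary~\ref{cor: bi-laplace spectral family}, and your spectral-expansion construction of $S(t)$ together with the Lions variational theory for the inhomogeneous problem is precisely the expected route.

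One small imprecision worth flagging: for the coercivity of $a(u,v)=\int_\Omega \Delta u\,\overline{\Delta v}$ on $V=H^2_0(\Omega)$ you invoke ``the norm equivalence stated after Proposition~\ref{prop: bi-laplace selfadjoint max monotone},'' but that statement concerns $\Norm{\Delta^2 u}{L^2(\Omega)}$ being an equivalent norm on $H^4(\Omega)\cap H^2_0(\Omega)$, i.e.\ on $D(B)$, not on $V$. What you actually need is that $\Norm{\Delta u}{L^2(\Omega)}$ controls the full $H^2$-norm on $H^2_0(\Omega)$. This follows from elliptic regularity for $\Delta$ with Dirichlet condition together with a Poincar\'e inequality (or equivalently from the positivity $\mu_1>0$ combined with a G\aa rding inequality for the form $a$), but it is a separate fact and should be argued as such rather than borrowed from the cited norm equivalence on $D(B)$. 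With that justification supplied, the rest of the argument — strong continuity and analyticity from the spectral series, the Lions existence/uniqueness theorem on the Gelfand triple $V\subset L^2(\Omega)\subset V'$, extension of $S(t)$ to $V'$, and identification of the Lions solution with the Duhamel formula by density — is sound.
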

For semigroup theory we refer the reader to \cite{Pazy:83}.

For the operator $\mathcal B$ defined in \eqref{eq: definition mathcal
  B} we have the following property.
\begin{proposition}
  The spectrum of $\mathcal B$ is contained in $\{ z \in \C; \Re (z)
  >0\}$. Moreover, for  $z \in \C$ such that $\Re z
  <0$, we have 
  \begin{equation*}
    \Norm{(z \id_\H - \mathcal B) U}{\H} \geq |\Re z| \, \Norm{U}{\H},
    \quad U \in D(\mathcal B), 
  \end{equation*}
  with $\H= H^2_0(\Omega) \times L^2(\Omega)$.
\end{proposition}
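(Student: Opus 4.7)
The plan is to show that $\mathcal{B}$ is maximal accretive on $\H = H^2_0(\Omega) \times L^2(\Omega)$, equipped with the energy inner product
\[
\langle (u_1,v_1), (u_2,v_2) \rangle_\H = \langle \Delta u_1, \Delta u_2 \rangle_{L^2(\Omega)} + \langle v_1, v_2 \rangle_{L^2(\Omega)},
\]
whose induced norm is equivalent to the standard product norm by the remark following \eqref{eq: bilaplace elliptic estimate}. The key accretivity computation drives everything: for $U=(u,v) \in D(\mathcal{B})$, we have $\mathcal{B}U = (-v, \Delta^2 u + \alpha v)$, so
\[
\langle \mathcal{B}U, U\rangle_\H = -\langle \Delta v, \Delta u\rangle_{L^2} + \langle \Delta^2 u, v\rangle_{L^2} + \langle \alpha v, v\rangle_{L^2}.
\]
Since $v \in H^2_0(\Omega)$, integration by parts gives $\langle \Delta^2 u, v\rangle_{L^2} = \langle \Delta u, \Delta v\rangle_{L^2}$. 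The two off-diagonal terms are complex conjugates, so upon taking real parts they cancel and I obtain $\Re\langle\mathcal{B}U, U\rangle_\H = \int_\Omega \alpha |v|^2 \, dx \geq 0$.

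With accretivity in hand, the resolvent estimate for $\Re z < 0$ is a one-line Cauchy--Schwarz argument. Indeed,
\[
\Re\langle (z\id_\H - \mathcal{B})U, U\rangle_\H = \Re(z)\,\|U\|_\H^2 - \int_\Omega \alpha|v|^2\,dx \leq -|\Re z|\,\|U\|_\H^2,
\]
and therefore
\[
\|(z\id_\H - \mathcal{B})U\|_\H \, \|U\|_\H \geq \bigl|\Re\langle(z\id_\H-\mathcal{B})U,U\rangle_\H\bigr| \geq |\Re z|\,\|U\|_\H^2,
\]
which yields the claimed inequality. This also shows $z\id_\H - \mathcal{B}$ is injective with closed range for such $z$. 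To upgrade closed range to surjectivity, I would show that $\lambda\id_\H + \mathcal{B}$ is onto $\H$ for a single $\lambda > 0$ (which then implies it for every such $\lambda$ by the Lumer--Phillips theorem, and therefore that $-\mathcal{B}$ generates a contraction semigroup and $\sigma(\mathcal{B}) \subset \{\Re z \geq 0\}$). Solving $(\lambda \id_\H + \mathcal{B})(u,v) = (f,g)$ by eliminating $v = \lambda u - f$ reduces to the elliptic problem
\[
\Delta^2 u + \lambda(\lambda + \alpha)u = g + (\lambda+\alpha) f \ \text{in } \Omega,\qquad u \in H^2_0(\Omega),
\]
whose right-hand side lies in $L^2(\Omega)$. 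Since $\lambda(\lambda+\alpha)\geq \lambda^2 > 0$, the associated bilinear form is coercive on $H^2_0(\Omega)$; Lax--Milgram produces a unique weak solution, which Proposition~\ref{prop: bi-laplace selfadjoint max monotone} promotes to $u \in H^4(\Omega)\cap H^2_0(\Omega) = D(B)$, and then $v = \lambda u - f \in H^2_0(\Omega)$ as required.

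The remaining point -- excluding the imaginary axis from the spectrum to get the \emph{strict} inclusion $\sigma(\mathcal{B}) \subset \{\Re z > 0\}$ -- is the most delicate step, but follows from compactness together with the observability induced by $\alpha$. The operator $\mathcal{B}$ has compact resolvent since, by Corollary~\ref{cor: bi-laplace spectral family}, the first component of $(\id_\H + \mathcal{B})^{-1}F$ lies in $H^4\cap H^2_0 \Subset H^2_0$ and the second in $H^2_0 \Subset L^2$; thus $\sigma(\mathcal{B})$ consists of isolated eigenvalues. An eigenvalue $z=i\sigma$ with eigenvector $U=(u,v)$ would force $\Re\langle\mathcal{B}U,U\rangle_\H = \int\alpha|v|^2 = 0$, hence $v \equiv 0$ on $\mathcal{O}$, and since $v=-i\sigma u$ this also yields $u \equiv 0$ on $\mathcal{O}$ for $\sigma\neq 0$; the function $u$ then satisfies $\Delta^2 u = (\sigma^2 + i\sigma\alpha)u$ on $\Omega$, with full clamped data and vanishing on $\mathcal{O}$. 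The case $\sigma=0$ is immediate from the strict positivity of $B$. The nontrivial vanishing of $u$ in the case $\sigma \neq 0$ is exactly the unique-continuation statement whose quantified form is provided by Theorem~\ref{theorem: resolvent estimate bilaplace clamped} (proved later via the boundary Carleman estimate of Case (iii)); this is the main obstacle, and is why the strict inclusion is naturally paired with, rather than established independently of, the principal resolvent estimate of the paper.
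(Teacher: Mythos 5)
Your accretivity computation and the resolvent estimate for $\Re z < 0$ are correct, and the Lax--Milgram reduction establishing surjectivity of $\lambda\id_\H + \mathcal B$ for $\lambda > 0$ works, so maximal monotonicity and the inclusion $\sigma(\mathcal B)\subset\{\Re z\geq 0\}$ are in place. The argument has a genuine gap at the last step, though. When an eigenvector $(u,v)$ for $z = i\sigma$, $\sigma\neq 0$, forces $\int_\Omega\alpha|v|^2\,dx = 0$, you deduce only that $v$ and $u$ vanish on $\O$ and then defer the ensuing unique continuation for $\Delta^2 u = (\sigma^2 + i\sigma\alpha)u$ to the full fourth-order Carleman machinery of Theorem~\ref{theorem: resolvent estimate bilaplace clamped}. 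That is circular at worst --- invertibility of $i\sigma\id_\H - \mathcal B$ for every real $\sigma$ is part of the statement of that theorem --- and unnecessarily heavy at best, since no fourth-order Carleman estimate is needed here.

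The observation you did not exploit is that $\int_\Omega\alpha|v|^2\,dx = 0$ with $\alpha\geq 0$ gives $\alpha v = 0$ a.e.\ on \emph{all} of $\Omega$, not merely $v = 0$ on $\O$; since $v = -i\sigma u$, this gives $\alpha u = 0$ a.e.\ as well. The damping term therefore drops out of the eigenequation, which collapses to $A^2 u = \sigma^2 u$ with $A = -\Delta$, i.e.\ $(A - |\sigma|)(A + |\sigma|)u = 0$. Setting $w = (A + |\sigma|)u \in H^2(\Omega)$, one has $(A - |\sigma|)w = 0$ in $\Omega$ and $w = 0$ on $\O$ (because $u$ and all its weak derivatives vanish there), so $w\equiv 0$ by the classical interior unique continuation theorem for second-order elliptic operators with smooth real coefficients; then $(A + |\sigma|)u = 0$ with $u = 0$ on $\O$ gives $u\equiv 0$, a contradiction. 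This soft factorization argument is what keeps the proposition a genuine preliminary rather than a consequence of the paper's main theorem, and it is the step your proof should supply in place of the appeal to Theorem~\ref{theorem: resolvent estimate bilaplace clamped}.
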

With the Hille-Yoshida theorem \cite[Theorem 3.1, Chapter
1]{Pazy:83} we then have the following results.
\begin{corollary}
  \label{theorem: existence semigroup damped plate}
  The unbounded operator $(\mathcal B, D(\mathcal B))$ generates a $C_0$-semigroup of
  contraction $\Sigma(t)$ on
  $\H$. 
\end{corollary}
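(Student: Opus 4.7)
The plan is to obtain the conclusion as a direct application of the Hille-Yosida theorem, with the resolvent estimate of the preceding proposition as the sole substantive ingredient. Since that proposition locates the spectrum of $\mathcal B$ in the open right half-plane and bounds $\Norm{(z\id_\H-\mathcal B)U}{\H}$ from below for $\Re z<0$, the Hille-Yosida conditions are natural to verify for $-\mathcal B$, and this is what one expects: in the paper's convention the semigroup $\Sigma(t)$ is the one associated with rewriting the damped plate equation as the first-order system $\d_t U+\mathcal B U=0$ with $U=(y,\d_t y)$.

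First I would dispatch the density of the domain. The set $D(\mathcal B)=(H^4(\Omega)\cap H^2_0(\Omega))\times H^2_0(\Omega)$ is dense in $\H=H^2_0(\Omega)\times L^2(\Omega)$ because $H^2_0(\Omega)$ contains $\Cinfc(\Omega)$ and is thus dense in $L^2(\Omega)$, while $H^4(\Omega)\cap H^2_0(\Omega)=D(B)$ is dense in $H^2_0(\Omega)$ as the domain of the selfadjoint operator $B$ of Proposition~\ref{prop: bi-laplace selfadjoint max monotone}, endowed with the norm $\Norm{\Delta\cdot}{L^2}$ that is equivalent to the $H^2$-norm on $H^2_0(\Omega)$ by \eqref{eq: bilaplace elliptic estimate}. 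Next I would translate the resolvent information into Hille-Yosida form: for $\lambda>0$, applying the preceding proposition with $z=-\lambda$ shows that $-\lambda\in\rho(\mathcal B)$ and yields $\Norm{(\lambda\id_\H+\mathcal B)U}{\H}\ge\lambda\Norm{U}{\H}$ for every $U\in D(\mathcal B)$; the surjectivity of $\lambda\id_\H+\mathcal B$ follows from the location of the spectrum, so that $(\lambda\id_\H+\mathcal B)^{-1}$ is defined on all of $\H$ with $\Norm{(\lambda\id_\H+\mathcal B)^{-1}}{\mathscr L(\H,\H)}\le 1/\lambda$. Closedness of $\mathcal B$ is then automatic from the existence of a bounded everywhere-defined inverse, and Hille-Yosida delivers a $C_0$-semigroup of contractions $\Sigma(t)$ on $\H$ generated by $-\mathcal B$, which is the semigroup associated with $\mathcal B$ in the sense indicated above.

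No genuine obstacle arises at this step: the substantive content has already been packaged into the preceding proposition, whose own proof requires identifying the natural energy inner product $\langle(u,v),(u',v')\rangle_\H=(\Delta u,\Delta u')_{L^2}+(v,v')_{L^2}$ on $\H$ (well-defined thanks to the equivalence of norms noted after \eqref{eq: bilaplace elliptic estimate}), checking accretivity via the integration-by-parts identity $\Re\langle\mathcal B U,U\rangle_\H=\int_\Omega\alpha|v|^2\ge 0$, which uses the clamped conditions encoded in $H^2_0(\Omega)$, and solving the resolvent equation $(z\id_\H-\mathcal B)U=F$ for $\Re z<0$, which, after elimination of one component, reduces to a fourth-order elliptic problem of the form $\Delta^2 u+(z^2+z\alpha)u=\widetilde F$ in $D(B)$, covered by \eqref{eq: bilaplace elliptic estimate} together with a standard perturbative argument for the zero-order term involving $\alpha$.
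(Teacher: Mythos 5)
Your proposal is correct and takes essentially the same route as the paper, which simply invokes the Hille--Yosida theorem together with the preceding proposition; you are just more explicit in verifying density of $D(\mathcal B)$, in spelling out that the proposition gives both the resolvent bound $\Norm{(\lambda\id_\H+\mathcal B)^{-1}}{\mathscr L(\H,\H)}\le 1/\lambda$ and the surjectivity of $\lambda\id_\H+\mathcal B$ for $\lambda>0$, and in noting the sign convention ($-\mathcal B$ is the actual generator, matching $\d_t Y+\mathcal B Y=0$). The material you add about the proof of the preceding proposition itself (energy inner product, accretivity, elliptic resolvent equation) is accurate but not needed for the corollary.
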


\begin{corollary}
  \label{theorem: existence semigroup damped plate 2}
For $(y_0,y_1)  \in D(\mathcal B)$ there exists a unique 
  \begin{align*}
    y \in \Con^2([0,+\infty); L^2(\Omega)) \cap \Con^1([0,+\infty); H^{2}_0(\Omega)) \cap \Con^0([0,+\infty); D(B)),
  \end{align*}
  such that 
\begin{align*}
 \d_t^2 y + \Delta^2 y + \alpha \d_t y =0\  \text{in} \
  L^\infty((0,+\infty); L^2(\Omega)), \qquad y_{|t=0} =y_0, \  \d_t
  y_{|t=0} =y_1.
\end{align*}
The solution is given by the first component of
$\Sigma(t) Y_0$ with $Y_0 =(y_0, y_1)$. The energy $t \mapsto  E(y)(t)$ with 
\begin{align}
  \label{eq: energy plate equation}
  E(y)(t) = \frac12 \Norm{\d_t y(t)}{L^2(\Omega)}^2 + \frac12
  \Norm{\Delta y(t)}{L^2(\Omega)}^2,
\end{align}
is nonincreasing:
for $0\leq t_1 \leq t_2$ we have
  $E(y)(t_2) - E(y)(t_1) = - \int_{t_1}^{t_2} \Norm{\alpha^{1/2} \d_t y(t)}{L^2(\Omega)}^2 \, d t.$
\end{corollary}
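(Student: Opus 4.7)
The plan is to invoke the standard theory of $C_0$-semigroups applied to the contraction semigroup $\Sigma(t)$ on $\H = H^2_0(\Omega) \times L^2(\Omega)$, whose existence is the content of Corollary~\ref{theorem: existence semigroup damped plate}. Set $Y_0 = (y_0, y_1) \in D(\mathcal B)$ and write $\Sigma(t) Y_0 = (y(t), v(t))$. Since $Y_0$ lies in the domain of the generator, classical semigroup theory yields $\Sigma(\cdot) Y_0 \in \Con^1([0,+\infty); \H) \cap \Con^0([0,+\infty); D(\mathcal B))$, together with the abstract Cauchy equation in $\H$. Reading this equation component-wise from the block structure of $\mathcal B$ produces $\d_t y = v$ and $\d_t v + \Delta^2 y + \alpha v = 0$ in $L^2(\Omega)$. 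Since $D(\mathcal B) = (H^4(\Omega) \cap H^2_0(\Omega)) \times H^2_0(\Omega)$, the regularity statements $y \in \Con^0([0,+\infty); D(B))$ and $v \in \Con^0([0,+\infty); H^2_0(\Omega)) \cap \Con^1([0,+\infty); L^2(\Omega))$ follow immediately; the identity $\d_t y = v$ then upgrades these to $y \in \Con^1([0,+\infty); H^2_0(\Omega)) \cap \Con^2([0,+\infty); L^2(\Omega))$. Substituting $\d_t v = \d_t^2 y$ in the second component yields the PDE $\d_t^2 y + \Delta^2 y + \alpha \d_t y = 0$ pointwise in $t$, hence in $L^\infty((0,+\infty); L^2(\Omega))$.

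For the energy identity, I differentiate $E(y)(t) = \hf \Norm{\d_t y(t)}{L^2(\Omega)}^2 + \hf \Norm{\Delta y(t)}{L^2(\Omega)}^2$ using the regularity just established. The first term contributes $\big( \d_t^2 y, \d_t y \big)_{L^2}$ and the second, via $\d_t \Delta y = \Delta \d_t y$, contributes $\big( \Delta y, \Delta \d_t y \big)_{L^2}$. Two integrations by parts convert the latter into $\big( \Delta^2 y, \d_t y \big)_{L^2}$; the four arising boundary terms all vanish since $y(t) \in H^4(\Omega) \cap H^2_0(\Omega)$ and $\d_t y(t) = v(t) \in H^2_0(\Omega)$, killing both the trace and the normal trace of $\d_t y$ on $\d \Omega$. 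Summing and inserting the PDE gives $\frac{d}{dt} E(y)(t) = -\Norm{\alpha^{1/2} \d_t y(t)}{L^2(\Omega)}^2$, and integrating over $[t_1, t_2]$ delivers the stated dissipation identity, which in particular shows $E$ is nonincreasing.

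Uniqueness follows at once from this identity applied to the difference of two solutions with the stated regularity: such a difference solves the PDE with zero Cauchy data, hence its energy is identically zero, which forces $\d_t y \equiv 0$; combined with $y(0) = 0$, this yields $y \equiv 0$. The only delicate point in the whole argument is justifying the integration by parts $\big( \Delta y, \Delta \d_t y \big)_{L^2} = \big( \Delta^2 y, \d_t y \big)_{L^2}$ pointwise in $t$; this is precisely where $(y_0, y_1) \in D(\mathcal B)$ is essential, since one needs $y(t) \in D(B)$ and $\d_t y(t) \in H^2_0(\Omega)$ as genuine pointwise statements, not merely in an averaged sense, for the four boundary terms to vanish cleanly. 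Once this regularity is in hand, the computation is entirely routine.
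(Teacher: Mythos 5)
Your proof is correct, and it is precisely the standard argument the paper implicitly relies on: the corollary is stated without proof, as a routine consequence of Corollary~\ref{theorem: existence semigroup damped plate} together with the classical fact that a $C_0$-semigroup applied to data in the generator's domain produces a $\Con^1$-in-time, domain-valued solution of the abstract Cauchy problem. Reading the block structure of $\mathcal B$ component-wise, invoking $D(\mathcal B) = (H^4 \cap H^2_0) \times H^2_0$ for the stated regularity, differentiating the energy and integrating by parts with the clamped conditions killing the boundary terms, and closing uniqueness via the resulting dissipation identity are all exactly the expected steps; there is no gap.
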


%%%%%%%%%%%%%%%
%% Section             %
%%%%%%%%%%%%%%%
\section{Estimate away from boundaries}
\label{sec: estimate away from boundaries}

For operators exhibiting at most double (complex) roots, estimates can
be found in the proof of Theorem~28.1.8 in \cite{Hoermander:V4}. Here,
the structure of the operator $P$ is explicit which allows one to
expose the argumentation in a self contained yet short presentation.

\subsection{Simple-characteristic property of 
  second-order factors}
\label{sec: simple char Qk}
We consider the augmented operator $P = D_s^4 + B$ in
$Z = (0,S_0) \times \Omega$, remaining away from boundaries here.  We
write
\begin{align}
  \label{eq: operators Pk}
  P = P_1 P_2, \quad \text{with}\  P_k= (-1)^k i D_s^2  + A.
\end{align}
Here, we show that $P_1$ and $P_2$ both satisfy the so-called simple
characteristic property in the case of a weight function whose
differential does not vanish.

Let $\ell(z,\zeta)$, with $(z, \zeta) \in \R^N\times \R^N$, be polynomial
of degree $m$
in $\zeta$, with smooth coefficient in $z$. For $z \mapsto M(z) \in \R^N\setminus \{0\}$,
we introduce the map 
\begin{align}
  \label{eq: simp char map}
  \begin{array}{rl}
  \rho_{z,\zeta,M}: \R_+     &\to \C, \\
  \theta &\mapsto \ell(z,\zeta + i \theta M(z)).
  \end{array}
\end{align}
%%%%%%%%%%%%%%%%%%%%%%%%
% definition           %
%%%%%%%%%%%%%%%%%%%%%%%%
\begin{definition}
  \label{def: simple characteristics}
  Let $W$ be an open set of $\R^N$. 
We say that $\ell$ satisfies the  simple-characteristic property in 
direction $M$ in $\ovl{W}$ if,  for
  all $z \in \ovl{W}$, we have $\zeta = 0$ and $\theta=0$ when  the map
  $\rho_{z,\zeta,M}$ has a double root. 
\end{definition}
We can formulate this condition as follows
\begin{align}
    \label{eq: simple root property}
    \ell(z,\zeta + i \theta M(z)) = d_\zeta \ell (z,\zeta + i \theta M(z))(M(z)) =0 \quad \Rightarrow \quad \zeta= 0, \ \theta =0.
  \end{align}

\bigskip
%%%%%%%%%%%%%%%%%%%%%%%%
% lemma                %
%%%%%%%%%%%%%%%%%%%%%%%%
\begin{lemma}
  \label{lemma: dim geq 3}
  Let $W$ be an open set of $\R^N$. 
  If $N\geq 3$ and $\ell(z,\zeta)$ is of order two (with complex coefficients) and elliptic  for
  $z\in \ovl{W}$, then for any map $z \mapsto M(z) \in \R^N\setminus \{0\}$,  $\ell$
  satisfies the simple-characteristic property in direction $M$ in $\ovl{W}$.
\end{lemma}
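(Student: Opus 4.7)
The plan is to argue by contradiction using $N\ge 3$ to exhibit a nonzero real vector on which $\ell$ vanishes, contradicting ellipticity; the extra dimension enters through a Brouwer degree (winding number) argument. Fix $z\in\ovl{W}$, set $M=M(z)$, and let $B$ denote the polar bilinear form of $\ell$. Suppose the simple-characteristic property fails, so that for some $(\zeta_*,\theta_*)\neq(0,0)$ with $\theta_*\ge 0$, the vector $\xi_*=\zeta_*+i\theta_*M$ satisfies $\ell(\xi_*)=0$ and $d_\zeta\ell(\xi_*)(M)=0$, the second condition being $B(\xi_*,M)=0$. Two easy reductions rule out degenerate subcases: if $\theta_*=0$ then $\ell(\zeta_*)=0$ on a real vector, forcing $\zeta_*=0$ by ellipticity; if $\zeta_*$ and $M$ are $\R$-linearly dependent then $\xi_*=cM$ with $c\in\C$, and $\ell(\xi_*)=c^2\ell(M)=0$ forces $c=0$ (since $\ell(M)\neq 0$ by ellipticity applied to the real nonzero $M$). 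Hence $\theta_*>0$ and $\zeta_*,M$ are $\R$-linearly independent.

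Expanding the two assumptions yields $B(\zeta_*,M)=-i\theta_*\ell(M)$ and $\ell(\zeta_*)=-\theta_*^2\ell(M)$. Substituting these into the bilinear expansion of $\ell$ on $\mathrm{span}_\R\{\zeta_*,M\}$ produces the identity
\begin{equation*}
\ell(a\zeta_*+bM)=\ell(M)(b-ia\theta_*)^2,\qquad (a,b)\in\R^2.
\end{equation*}
Because $N\ge 3$, I can choose $\nu\in\R^N\setminus\mathrm{span}_\R\{\zeta_*,M\}$, and then
\begin{equation*}
g(a,b):=\ell(a\zeta_*+bM+\nu)=\ell(M)(b-ia\theta_*)^2+2aB(\zeta_*,\nu)+2bB(M,\nu)+\ell(\nu).
\end{equation*}

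The crux is to show that $g$ vanishes at some $(a,b)\in\R^2$. Since $\theta_*>0$, the real-linear map $(a,b)\mapsto u:=b-ia\theta_*$ is a bijection $\R^2\to\C$; expressing the linear part of $g$ through $u,\bar u$ recasts it as
\begin{equation*}
f(u)=\ell(M)u^2+pu+q\bar u+\lambda,\qquad p,q,\lambda\in\C.
\end{equation*}
Because $\ell(M)\neq 0$, the quadratic term dominates on $\{|u|=R\}$ for $R$ large, so the straight-line homotopy $f_t(u)=\ell(M)u^2+t(pu+q\bar u+\lambda)$, $t\in[0,1]$, never vanishes on that circle. Hence the Brouwer degree of $f:\C\to\C$ at $0$ over the disc $\{|u|\le R\}$ equals that of $u\mapsto\ell(M)u^2$, namely $2$; in particular $f$ has a zero $u_0\in\C$, giving $(a_0,b_0)\in\R^2$ with $g(a_0,b_0)=0$.

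This is the required contradiction: the real vector $a_0\zeta_*+b_0M+\nu$ is nonzero (since $\nu$ lies outside $\mathrm{span}_\R\{\zeta_*,M\}$), yet is annihilated by $\ell$, violating ellipticity. The one delicate point is the degree computation, which is routine once one checks that $|pu+q\bar u+\lambda|$ is dominated by $|\ell(M)|\,R^2$ for $|u|=R$ large. The role of $N\ge 3$ is exactly to secure the auxiliary direction $\nu$; this is also why the conclusion fails in $N=2$, where rank-one elliptic forms produce counter-examples.
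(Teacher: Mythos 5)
Your proof is correct and follows a genuinely different route from the paper's. The paper's argument is topological: it studies the degree-two polynomial $t\mapsto \ell(z,\zeta+tM(z))$, observes that ellipticity forces all roots off the real axis when $\zeta\neq 0$, and counts the numbers $m^\pm(\zeta)$ of roots in the upper and lower half-planes; since $J=\R^N\setminus\Span(M(z))$ is connected when $N\geq 3$, the pair $(m^+,m^-)$ is constant on $J$, and the symmetry under $\zeta\mapsto-\zeta$ (which exchanges $t$ and $-t$) forces $m^+=m^-=1$, so the roots are simple. Your argument runs in the opposite direction, by contradiction: you exploit the algebraic structure imposed by a double root, showing that on $\Span_\R\{\zeta_*,M\}$ the symbol collapses to the perfect square $\ell(M)(b-ia\theta_*)^2$, and then spend the hypothesis $N\geq 3$ on a third direction $\nu$, using a Brouwer-degree (winding-number) argument on the quadratic map $g(a,b)=\ell(a\zeta_*+bM+\nu)$ to produce a nonzero real null vector for $\ell$, contradicting ellipticity. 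Both proofs spend $N\geq 3$ in essentially the same place, but differently packaged: the paper uses it for connectedness of $\R^N\setminus\Span(M)$, you use it to secure a transversal $\nu$. The paper's proof is a bit lighter (only continuity of roots and connectedness), while yours buys an explicit description of the degenerate structure and a constructive contradiction; it also makes the sharpness at $N=2$ transparent, since $\ell(\zeta_1,\zeta_2)=(\zeta_1+i\zeta_2)^2$ is elliptic yet has a double root in direction $M=(0,1)$, exactly the rank-one scenario you flag.
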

\begin{proof}
  The proof can be adapted from classical ideas (see \cite[proof of
  Proposition 1.1, Chapter 2]{LM:68} or \cite{Hoermander:83b}).  We
  consider the polynomial $f_{z, \zeta, M} (t) = \ell(z,\zeta + t M(z))$
  where $t$ is a complex variable, for $z \in \ovl{W}$, $\zeta \in \R^N$.  

  If $\zeta$ is colinear to $M(z)$, \eg
  $\zeta = \alpha M(z)$ then
  $f_{z, \zeta, M}(t) = (\alpha + t)^2 \ell(z,M(z))$. Because of the
  ellipticity of $\ell$, $\ell(z,M(z)) \neq 0$, and we only  have $t =
  -\alpha$ as a double {\em real} root for $f$. 

  We set $ J = \R^{N} \setminus \Span (M(z))$. Note that $z$ is fixed
  here and $\Span (M(z))$ is a vector line. The set $J$ is connected as
  $N\geq 3$.  Let now $\zeta \in J$, that is, $\zeta$ is not colinear
  to $M(z)$.  As $\ell$ is elliptic, the roots of $f_{z, \zeta, M}$ cannot
  be real numbers. We denote by $m^+(\zeta)$ and $m^-(\zeta)$ the
  number of roots with positive and negative imaginary parts,
  respectively.  We have $2 = m^+(\zeta) + m^-(\zeta)$. Since roots
  are continuous \wrt $\zeta$ and cannot be real, they remain in the
  upper- or lower-half complex plane as $\zeta$ varies in $J$, as $J$
  is connected, meaning
  that $m^+$ and $m^-$ are then invariant.  In particular,
  $m^+(\zeta) = m^+(-\zeta)$ and $m^-(\zeta) = m^-(-\zeta)$.
  Observing however, that if $t_0$ is a root of
  $t \mapsto \ell(z,\zeta + t M(z))$ then $-t_0 $ is a root of
  $t \mapsto \ell(z,-\zeta + t M(z))$, we find that
  $m^+(\zeta) = m^-(-\zeta)$.  This gives
  $m^+(\zeta) = m^-(\zeta) =1$.  Hence, complex roots are simple.
  
  In any case, we see that if the map $\theta 
\mapsto \rho_{z,\zeta,M} = f_{z, \zeta, M}(i\theta)$ has a double real
  root  $\theta_0$  then $\theta_0=0$  and $\zeta=0$. The simple-characteristic
  property is thus fulfilled.
\end{proof}

If we consider a weight function $\psi = \psi(s,x)$, for the operators
$P_k$, $k=1,2$, introduced in \eqref{eq: operators Pk}, we have the
following proposition.
%%%%%%%%%%%%%%%%%%%%%%%%
% proposition          %
%%%%%%%%%%%%%%%%%%%%%%%%
\begin{proposition}
  \label{prop: simple-char}
  Let $k=1$ or $2$. 
  Assume that
  $d \psi \neq 0$ in $\ovl{(0,S_0)\times \Omega}$. Then, $P_k$
  satisfies the simple-characteristic property in direction $d
  \psi$ in $\ovl{(0,S_0)\times \Omega}$.  
\end{proposition}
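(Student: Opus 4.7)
The plan is to apply Lemma~\ref{lemma: dim geq 3} to each $p_k$ whenever possible, and to handle the remaining low-dimensional case by a direct computation. The preliminary step is to check that $p_k(z,\zeta) = (-1)^k i\, \sigma^2 + a(x,\xi)$, regarded as a polynomial in $\zeta = (\sigma,\xi) \in \R^N$ with smooth complex coefficients, is elliptic on $\ovl{(0,S_0)\times \Omega}$. Indeed, if $\zeta \neq 0$, then either $\sigma \neq 0$ and $\Im p_k(z,\zeta) = (-1)^k \sigma^2 \neq 0$, or $\sigma = 0$ and $\xi \neq 0$, so that $p_k(z,\zeta) = a(x,\xi) > 0$ by the ellipticity of the Laplace symbol.

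When $d \geq 2$, so that $N = 1+d \geq 3$, I would apply Lemma~\ref{lemma: dim geq 3} with $M(z) := d\psi(z)$; the hypothesis $d\psi \neq 0$ on $\ovl{(0,S_0)\times \Omega}$ is exactly what that lemma requires, and the simple-characteristic property of $p_k$ in direction $d\psi$ follows at once.

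It remains to treat $d = 1$, for which $N = 2$ and the connectedness argument in the proof of Lemma~\ref{lemma: dim geq 3} breaks down. Here I would argue directly. Fixing $z$ and working in local coordinates so that $a(x,\xi) = g(x)\xi^2$ with $g(x) > 0$, set $\alpha := \sigma + i\theta\, \d_s\psi(z)$ and $\beta := \xi + i\theta\, \d_x\psi(z)$. The conditions $p_k(z,\zeta+i\theta d\psi(z)) = 0$ and $d_\zeta p_k(z,\zeta+i\theta d\psi(z))(d\psi(z)) = 0$ become
\begin{align*}
  (-1)^k i\, \alpha^2 + g\, \beta^2 &= 0,\\
  (-1)^k i\, \alpha\, \d_s\psi + g\, \beta\, \d_x\psi &= 0.
\end{align*}
Assume $\beta \neq 0$. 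The first equation then yields $\alpha^2/\beta^2 = (-1)^k i g$, which is purely imaginary and nonzero. If moreover $\d_s\psi \neq 0$, the second equation gives $\alpha/\beta = (-1)^k i g\, \d_x\psi/\d_s\psi$, whose square $-g^2(\d_x\psi/\d_s\psi)^2$ is real, contradicting the previous conclusion; if on the other hand $\d_s\psi = 0$, then $\d_x\psi \neq 0$ since $d\psi \neq 0$, and the second equation directly forces $\beta = 0$, again a contradiction. Hence $\beta = 0$, the first equation gives $\alpha = 0$, and the reality of $\sigma,\xi,\theta$ combined with $d\psi(z) \neq 0$ yields $\zeta = 0$ and $\theta = 0$.

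The main obstacle is precisely the two-dimensional case: the failure of Lemma~\ref{lemma: dim geq 3} there (its proof relies on the connectedness of $\R^N \setminus \Span(M)$, which fails when $N=2$) forces one to use the specific splitting of $p_k$ into a purely imaginary $\sigma^2$-part and a real positive $\xi$-part, and to separate real and imaginary components to exclude any double root at nonzero $(\zeta,\theta)$.
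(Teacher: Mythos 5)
Your proof is correct and follows the same overall structure as the paper: verify ellipticity of $p_k$, invoke Lemma~\ref{lemma: dim geq 3} for $d \geq 2$, and treat $d=1$ by a direct computation. In the residual two-dimensional case the paper solves $\partial_\theta \rho = 0$ explicitly for $\theta$ and $\sigma$ (distinguishing $M_\sigma \neq 0$ from $M_\sigma = 0$) and substitutes back into $\rho$, whereas you form the ratio $\alpha^2/\beta^2$ and observe that it would have to be simultaneously purely imaginary nonzero and real; the two computations are equivalent, but your packaging via a reality/imaginarity clash is a bit cleaner. Both are correct, and your explicit preliminary check that $p_k$ is elliptic (needed to invoke Lemma~\ref{lemma: dim geq 3}) is a welcome addition that the paper leaves implicit.
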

%%%% proof of proposition
\begin{proof}
  Here, the dimension is $N=d+1$.  The case $d \geq 2$ is treated in
  Lemma~\ref{lemma: dim geq 3}.  It only remains to treat the case of
  dimension $d=1$. Then, the principal symbol of $A$ reads $a(x,\xi) =
  \alpha(x) \xi^2$, with $\alpha(x) \geq C>0$.   
  We set $M(z) = (M_\sigma(z), M_\xi(z))= d \psi(z) \in \R^N\setminus \{0\}$. We write $\rho$ in place of
  $\rho_{(z_0, \zeta_0,M)}$ for concision.

  With $\zeta = (\sigma, \xi)$, we have
  \begin{align*}
    \rho(\theta) &= p_k\big(z_0,
    \zeta + i \theta M\big)
    =  (-1)^k i\big( \sigma + i \theta M_\sigma\big)^2 
    +  \alpha(x_0) \big(\xi + i  \theta M_\xi \big)^2\\
    &= \alpha(x_0)\xi^2 - \alpha(x_0) (\theta  M_\xi)^2
      - 2 (-1)^k \theta\sigma M_\sigma 
      + i \big( 
      (-1)^k \sigma^2 -  (-1)^k (\theta M_\sigma)^2
      + 2 \theta \alpha(x_0)\xi M_\xi
      \big).
  \end{align*}
  We thus have 
    $\hf \d_\theta \rho (\theta) =
    - \alpha(x_0) \theta  M_\xi^2
   -  (-1)^k\sigma  M_\sigma 
   +
    i \big(\alpha(x_0)\xi M_\xi
    -  (-1)^k \theta M_\sigma^2    \big)
   $.
  Assuming that  $ M_\sigma  \neq 0$, if $\d_\theta \rho =0$ we find
  \begin{align*}
    \theta
    =  (-1)^k \frac{\alpha(x_0)\xi M_\xi}
    { M_\sigma^2 }, \quad \text{and}\ \ 
   \sigma
    = -  \alpha(x_0)^2 \xi \frac{ M_\xi^3}
    {M_\sigma^3 }.
  \end{align*}
  This yields
    $\rho = \alpha(x_0)  \xi^2 \Big(
    1 +(-1)^k i\alpha(x_0)M_\xi^2/ M_\sigma^2
   \Big)
    \Big(1 +
    \alpha(x_0)^2 M_\xi^4/ M_\sigma^4 
    \Big)$.
  In this case, we thus have $\rho = \d_\theta\rho =0$ if and only if
  $\theta =0$ and $\zeta =(\sigma,\xi)=(0,0)$.

  We assume now that $M_\sigma= 0$. Since $M
  \neq 0$, we find that $\d_\theta \rho=0$ implies $\theta=0$ and
  $\xi=0$. 
Then $\rho=0$ gives $\sigma=0$. Hence, in any case, the simple
characteristic property is  fulfilled. 
\end{proof}

\subsection{Local Carleman estimates away from boundaries}

Let $V$ be an open subset of $Z= (0,S_0) \times \Omega$.  We set
$z = (s,x)$. Let $L= L(z, D_z)$ be a differential operator of order $m$,
with smooth principal symbol, $\ell(z,\zeta)$.
%%%%%%%%%%%%%%%%%%%%%%%%
% definition           %
%%%%%%%%%%%%%%%%%%%%%%%%
\begin{definition}
  \label{def: sub-ellipticity}
  Let $\varphi(z)$ be defined and smooth in $\ovl{V}$ and \st
  $|d\varphi|\geq C>0$.
  We say that
  the couple $(L, \varphi)$ satisfies the sub-ellipticity
   condition in $\ovl{V}$ if we have
   \begin{multline*}
     \ell(z, \zeta + i \tau \varphi(z)) =0 \ \ \imp \\
     \frac{1}{2i}\{ \ovl{\ell(z, \zeta + i \tau d\varphi(z))}, \ell(z,
     \zeta + i \tau d\varphi(z))\}
     = \{ \Re \ell(z, \zeta + i \tau d\varphi(z)), \Im \ell(z, \zeta +
     i \tau d\varphi(z))\} >0, 
   \end{multline*}
   for all $z\in \ovl{V}$ and $\zeta\in \R^N$ and $\tau>0$. 
\end{definition}

Let $\psi(z)$ be smooth in $\ovl{V}$ and  \st $|d \psi| \geq C
   >0$ in $\ovl{V}$. We define $\varphi(z) = \exp(\csp
   \psi(z))$. Sub-ellipticity for the couple  $(P_k, \varphi)$ can
   be easily achieved by the following lemma.
%%%%%%%%%%%%%%%%%%%%%%%%
% lemma          %
%%%%%%%%%%%%%%%%%%%%%%%%
 \begin{lemma}
   \label{prop: pseudo-convexity}
   The couple $(P_k, \varphi)$ satisfies the sub-ellipticity
   condition in $\ovl{V}$ for $\csp>0$ chosen \suff large.
 \end{lemma}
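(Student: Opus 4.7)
The plan is to verify the sub-ellipticity condition directly by computing the Poisson bracket of the conjugated principal symbol $q(z,\zeta,\tau) = p_k(z, \zeta + i\tau\,d\varphi(z))$, extracting its leading contribution in the large parameter $\csp$, and then applying the simple characteristic property of Proposition~\ref{prop: simple-char}. The key substitution is $\ttau := \tau\csp\varphi(z)$, so that $\tau\,d\varphi = \ttau\,d\psi$ and $q = p_k(z,w)$ with $w = \zeta + i\ttau\,d\psi(z)$. Since $\partial_{z_j}\ttau = \csp\ttau\partial_j\psi$, a chain-rule computation combined with
\[
\frac{1}{2i}\{\ovl q, q\} = \Im\sum_j \ovl{\partial_{\zeta_j}q}\,\partial_{z_j}q
\]
yields the expansion
\[
\frac{1}{2i}\{\ovl q, q\}(z,\zeta,\tau) = \csp\,\ttau\,\bigl|d_\zeta p_k(z,w)\cdot d\psi(z)\bigr|^2 + \ttau\,R_1(z,\zeta,\ttau) + R_0(z,\zeta,\ttau),
\]
where $R_0$ and $R_1$ are smooth, $\csp$-independent, and polynomial in $(\zeta,\ttau)$ of degrees three and two respectively.

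Next, I show that the leading term is positive on the characteristic set. If $q(z,\zeta,\tau)=0$ with $\tau>0$, then $\ttau>0$ and the map from \eqref{eq: simp char map} with $M = d\psi(z)$ satisfies $\rho(\ttau)=0$; if additionally $d_\zeta p_k(z,w)\cdot d\psi(z)$ were to vanish, then also $\rho'(\ttau)=0$, and Proposition~\ref{prop: simple-char} would force $\zeta=0$ and $\ttau=0$, contradicting $\ttau>0$. Hence $|d_\zeta p_k(z,w)\cdot d\psi(z)|^2>0$ at every characteristic point with $\tau>0$. For uniformity I exploit homogeneity of degree three in $(\zeta,\ttau)$ of all three terms and restrict to the unit sphere $|\zeta|^2+\ttau^2=1$ over a compact piece of $\ovl V$. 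The slice $\{q=0\}$ on this sphere avoids $\{\ttau=0\}$, because $p_k(z,\zeta) = (-1)^k i\sigma^2 + a(x,\xi)$ can vanish at real $\zeta$ only at $\zeta=0$ (by ellipticity of $A$, which forces the real and imaginary parts to vanish independently). Continuity and compactness then yield $c_0>0$ with $\ttau\,|d_\zeta p_k(z,w)\cdot d\psi|^2 \geq c_0$ on this compact slice of the characteristic set, while $|\ttau R_1|+|R_0|$ is bounded there by some $\csp$-independent $M_0$. Choosing $\csp > M_0/c_0$ makes the bracket strictly positive on the normalized characteristic set, hence on the whole $\{q=0,\,\tau>0\}$ by rescaling, which is precisely the sub-ellipticity condition.

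The main difficulty is the accounting of the powers of $\csp$. The Hessian $\partial^2_{jl}\varphi = \csp^2\varphi\,\partial_j\psi\,\partial_l\psi + \csp\varphi\,\partial^2_{jl}\psi$ produces both a genuine leading $\csp^2$ contribution (responsible, after the switch to $\ttau$, for the dominant $\csp\,\ttau$ term) and a subleading $\csp$ contribution that only feeds the $\ttau R_1$ correction. Moreover $\csp$ is hidden inside $w$ through $\ttau$, which obscures direct bookkeeping: an attempt to expand with $(\zeta,\tau)$ held fixed and $\csp\to\infty$ produces a naive leading $\csp^4$ term that cannot be cleanly matched against a compactness argument. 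After the substitution, however, everything is homogeneous of degree three in $(\zeta,\ttau)$ and the $\csp$-dependence becomes purely linear, so that Proposition~\ref{prop: simple-char} combined with compactness closes the argument for $\csp$ sufficiently large.
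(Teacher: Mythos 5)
Your proof is correct, but it takes a genuinely different route from the paper. The paper dispatches the lemma in one line by citing H\"ormander's general machinery: Proposition~\ref{prop: simple-char} gives the simple-characteristic property of $P_k$ in direction $d\psi$, this is observed to imply that $\psi$ is strongly pseudo-convex with respect to $P_k$ in the sense of \cite[Section 28.3]{Hoermander:V4}, and then Proposition~28.3.3 of that reference delivers the sub-ellipticity of $(P_k, e^{\csp\psi})$ for $\csp$ large. You instead reprove the relevant part of H\"ormander's proposition directly: the change of variables $\ttau = \tau\csp\varphi$ (which linearizes the $\csp$-dependence), the expansion of the Poisson bracket into $\csp\ttau\,|d_\zeta p_k\cdot d\psi|^2$ plus lower-order-in-$\csp$ terms, the identification of the vanishing of the leading coefficient with a double root of $\rho_{z,\zeta,d\psi}$ (hence excluded on $\{q=0,\tau>0\}$ by Proposition~\ref{prop: simple-char}), and the homogeneity-plus-compactness argument to extract a uniform $c_0>0$. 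I checked the details: the identity $\frac{1}{2i}\{\ovl q,q\}=\Im\sum_j\ovl{\partial_{\zeta_j}q}\,\partial_{z_j}q$ is right, the chain rule through $w=\zeta+i\ttau\,d\psi$ and $\partial_{z_j}\ttau=\csp\ttau\,\partial_j\psi$ produces exactly the $\csp\ttau\,|d_\zeta p_k(z,w)\cdot d\psi|^2$ leading term, the degrees of $R_0$ and $R_1$ are correct, and the claim that the normalized characteristic slice avoids $\{\ttau=0\}$ is justified by ellipticity of $a$ and the splitting of $p_k$ into real and imaginary parts. What your approach buys is self-containedness (no reliance on strong pseudo-convexity as an intermediate notion, nor on Proposition~28.3.3); what the paper's approach buys is brevity and the fact that the cited result handles the general order-$m$, general-elliptic case once and for all, which is then reused elsewhere in the paper. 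Your closing remark about the substitution to $\ttau$ being essential for the bookkeeping is exactly right: expanding in $\csp$ at fixed $(\zeta,\tau)$ would mix the exponential $\varphi$-dependence with the Hessian contribution and obscure the uniform lower bound.
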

 %%%% proof of proposition
 \begin{proof}
   By Proposition~\ref{prop: simple-char} we see that 
  $P_k$ satisfies the simple-characteristic property in direction $d
  \psi$ in $\ovl{V}$. This implies that $\psi$ is strongly
  pseudo-convex with respect to $P_k$ in the sense given in
  \cite[Section 28.3]{Hoermander:V4} at every point in $V$. We then
  obtain that  the  couple $(P_k, \varphi)$ satisfies the sub-ellipticity
   condition in $\ovl{V}$ for $\csp>0$ chosen \suff large by Proposition
   28.3.3 in \cite{Hoermander:V4}.
\end{proof}

A consequence of the sub-ellipticity property is  the following
Carleman estimate for $P_k$ in $V$, that is, away from boundaries. 
%%%%%%%%%%%%%%%%%%%%%%%%
% proposition          %
%%%%%%%%%%%%%%%%%%%%%%%%
\begin{proposition}
  \label{prop: Carleman Pk interior}
  Let $k=1$ or $2$.
  Let $\varphi = \exp(\csp \psi)$ with $|d \psi|\geq C >0$ in
  $\ovl{V}$. For $\csp>0$ chosen \suff large, there exist $C>0$ and
  $\tau_0$ \st 
  \begin{align*}
    \sum_{|\mi| \leq 2} \tau^{3/2-|\mi|} 
    \Norm{e^{\tau \varphi} D_z^{\mi} u}{L^2(Z)} 
    \leq C \Norm{e^{\tau \varphi} P_k u}{L^2(Z)},  
  \end{align*}
  for $\tau \geq \tau_0$ and $u \in \Cinfc(V)$. 
\end{proposition}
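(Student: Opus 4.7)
The plan is to deduce the estimate from the sub-ellipticity condition already established in Lemma~\ref{prop: pseudo-convexity}, following the standard H\"ormander scheme for Carleman estimates of operators with simple characteristics. The loss of a half derivative visible in the powers of $\tau$ is the expected one for sub-elliptic operators of this type.

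First, I would pass to the conjugated operator: setting $v = e^{\tau \varphi} u$, the estimate to prove becomes
\begin{equation*}
\sum_{|\mi| \leq 2} \tau^{3/2 - |\mi|} \Norm{D_z^\mi v}{L^2(Z)} \lesssim \Norm{\Pcj{k} v}{L^2(Z)}, \qquad v = e^{\tau\varphi}u,\ u \in \Cinfc(V),
\end{equation*}
where $\Pcj{k} = e^{\tau \varphi} P_k e^{-\tau \varphi}$ is a second-order semi-classical differential operator (with large parameter $\tau$) whose semi-classical principal symbol is $p_{k,\varphi}(z,\zeta,\tau) = p_k(z,\zeta + i \tau d\varphi(z))$. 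I would then split $\Pcj{k} = Q_R + i Q_I$ into its self-adjoint and anti-self-adjoint parts, that is $Q_R = \hf(\Pcj{k} + \Pcj{k}^*)$ and $Q_I = \frac{1}{2i}(\Pcj{k} - \Pcj{k}^*)$, whose principal symbols are $\Re p_{k,\varphi}$ and $\Im p_{k,\varphi}$. Expanding the square norm on the right yields
\begin{equation*}
\Norm{\Pcj{k} v}{L^2}^2 = \Norm{Q_R v}{L^2}^2 + \Norm{Q_I v}{L^2}^2 + i \dis{[Q_R, Q_I] v}{v},
\end{equation*}
and the principal symbol of $i [Q_R, Q_I]$ is the Poisson bracket $\{\Re p_{k,\varphi}, \Im p_{k,\varphi}\}$, a first-order symbol in the semi-classical sense.

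Second, the heart of the argument is a semi-classical G\aa{}rding-type inequality that combines the three contributions above. Away from the characteristic set $\{p_{k,\varphi} = 0\}$, the operator $\Pcj{k}$ is elliptic at the top semi-classical order, so $\Norm{Q_R v}{L^2}^2 + \Norm{Q_I v}{L^2}^2$ already dominates $\tau^4\Norm{v}{L^2}^2 + \tau^2\Norm{D_z v}{L^2}^2$ there. On a conic neighborhood of the characteristic set, the strict positivity $\{\Re p_{k,\varphi}, \Im p_{k,\varphi}\} > 0$ given by sub-ellipticity, together with G\aa{}rding, turns the commutator term into a lower bound of the form $\tau^3 \Norm{v}{L^2}^2 + \tau \Norm{D_z v}{L^2}^2$ modulo lower-order errors absorbable for $\tau$ large. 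A microlocal partition of unity glues these contributions, yielding
\begin{equation*}
\tau^3 \Norm{v}{L^2}^2 + \tau \Norm{D_z v}{L^2}^2 \lesssim \Norm{\Pcj{k} v}{L^2}^2,
\end{equation*}
valid once $\csp$ is taken large enough (to guarantee uniform positivity of the bracket on the characteristic set via Lemma~\ref{prop: pseudo-convexity}) and $\tau \geq \tau_0$.

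Third, the second-derivative estimate is recovered by inverting the equation: since $\Pcj{k}$ has total order $2$ in $(D_z, \tau)$, one has $\Norm{D_z^2 v}{L^2} \lesssim \Norm{\Pcj{k} v}{L^2} + \tau \Norm{D_z v}{L^2} + \tau^2 \Norm{v}{L^2}$, and combining with the previous bound provides the $\tau^{-1/2} \Norm{D_z^2 v}{L^2}$ term. Returning to $u$ gives the statement. The main technical point is the correct application of the semi-classical G\aa{}rding estimate at the characteristic set, and the bookkeeping of powers of $\tau$; no boundary terms arise since $u$ is compactly supported in $V$. As noted by the author, the result is a particular case of the estimates in the proof of Theorem~28.1.8 in \cite{Hoermander:V4} or of Proposition~28.3.3 there, which one could alternatively invoke directly.
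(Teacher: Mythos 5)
Your proposal is correct and coincides with the paper's route: the paper simply cites \cite[Theorem 28.2.3]{Hoermander:V4} (and \cite{Hoermander:63} for the $|\alpha|=2$ term), and what you have written is a faithful sketch of the content of precisely that reference. In particular, the final recovery of $\tau^{-1/2}\Norm{D_z^2 v}{L^2}$ by inverting the elliptic part of $\Pcj{k}$ is exactly the mechanism the paper points to via \cite{Hoermander:63}.

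One small methodological remark. You propose to glue the elliptic region and the characteristic set by a microlocal partition of unity, applying G{\aa}rding only near $\mathrm{Char}$. The more common route (and the one in H\"ormander) avoids the partition of unity: one proves the symbol-level inequality
\begin{align*}
  \mu\, \lambda_\tau^{-1}\bigl(\,|\Re p_{k,\varphi}|^2 + |\Im p_{k,\varphi}|^2\bigr) + \{\Re p_{k,\varphi},\Im p_{k,\varphi}\} \;\geq\; c\,\lambda_\tau^3
\end{align*}
uniformly for $\mu$ large (this follows by homogeneity and compactness from the sub-ellipticity condition of Lemma~\ref{prop: pseudo-convexity}, together with ellipticity away from the characteristic set), and then applies a \emph{single} G{\aa}rding inequality to the combination $\mu\lambda_\tau^{-1}(Q_R^2+Q_I^2)+i[Q_R,Q_I]$. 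This makes the bookkeeping of remainders cleaner, since one never needs to control commutators with cutoffs. Both variants are correct; the global one is what you would find in the cited source, but your microlocal version also closes once the partition-commutator errors are absorbed for $\tau$ large.
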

We refer to \cite[Theorem 28.2.3]{Hoermander:V4} for a proof.  In fact,
to incorporate the term associated with $|\mi|=2$ see
\cite{Hoermander:63}. This estimate is characterized by the loss of a
half derivative.

From this estimate for $P_k$, $k=1,2$, we deduce the following
estimate for the operator $P = P_1 P_2$.
%%%%%%%%%%%%%%%%%%%%%%%%
% proposition          %
%%%%%%%%%%%%%%%%%%%%%%%%
\begin{proposition}
  \label{prop: Carleman P interior}
  Let $\varphi = \exp(\csp \psi)$ with $|d \psi|\geq C >0$ in
  $\ovl{V}$. For $\csp>0$ chosen \suff large, there exist $C>0$ and
  $\tau_0$ \st 
  \begin{align*}
    \sum_{|\mi| \leq 4} \tau^{3-|\mi|} 
    \Norm{e^{\tau \varphi} D_z^{\mi} u}{L^2(Z)} 
    \leq C \Norm{e^{\tau \varphi} P u}{L^2(Z)},  
  \end{align*}
  for $\tau \geq \tau_0$ and $u \in \Cinfc(V)$. 
\end{proposition}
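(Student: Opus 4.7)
The plan is to cascade the half-derivative Carleman estimate of Proposition~\ref{prop: Carleman Pk interior} through the factorization $P = P_1 P_2$. First choose $\csp$ large enough so that Proposition~\ref{prop: Carleman Pk interior} holds simultaneously for the couples $(P_1,\varphi)$ and $(P_2,\varphi)$, and set $v = P_2 u$, which lies in $\Cinfc(V)$. Applying the estimate to $P_1$ acting on $v$ yields
\begin{equation}
\label{eq:cascade-star}
\sum_{|\mi|\leq 2}\tau^{3/2-|\mi|}\Norm{e^{\tau\varphi}D_z^\mi v}{L^2(Z)} \lesssim \Norm{e^{\tau\varphi}P_1 v}{L^2(Z)} = \Norm{e^{\tau\varphi}P u}{L^2(Z)}.
\end{equation}

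Next, applying Proposition~\ref{prop: Carleman Pk interior} to $P_2$ acting on $u$ and combining with the $|\mi|=0$ instance of \eqref{eq:cascade-star} gives
\[
\sum_{|\mi|\leq 2}\tau^{3/2-|\mi|}\Norm{e^{\tau\varphi}D_z^\mi u}{L^2(Z)} \lesssim \Norm{e^{\tau\varphi}v}{L^2(Z)} \lesssim \tau^{-3/2}\Norm{e^{\tau\varphi}P u}{L^2(Z)},
\]
so multiplication by $\tau^{3/2}$ produces the target estimate for $|\mi|\leq 2$. To recover the terms with $|\mi|=3,4$, I would apply Proposition~\ref{prop: Carleman Pk interior} to $P_2$ on $D_z^\smi u \in \Cinfc(V)$ for each multi-index $\smi$ with $1\leq |\smi|\leq 2$, using the identity $P_2 D_z^\smi u = D_z^\smi v + [P_2, D_z^\smi] u$. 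Since $D_s^2$ commutes with $D_z^\smi$, we have $[P_2, D_z^\smi] = [A, D_z^\smi]$, an operator of order $|\smi|+1$. The term $\Norm{e^{\tau\varphi}D_z^\smi v}{L^2}$ is bounded via \eqref{eq:cascade-star} with a factor $\tau^{|\smi|-3/2}$, while the commutator involves only derivatives of $u$ of order at most $|\smi|+1\leq 3$, which have been estimated at the previous stage of an induction on $|\smi|$.

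A brief bookkeeping check confirms that the case $|\smi|=1$ delivers $\Norm{e^{\tau\varphi}D_z^\mi u}{L^2}\lesssim \Norm{e^{\tau\varphi}Pu}{L^2}$ for $|\mi|=3$, and the case $|\smi|=2$ delivers $\Norm{e^{\tau\varphi}D_z^\mi u}{L^2}\lesssim \tau\Norm{e^{\tau\varphi}Pu}{L^2}$ for $|\mi|=4$, matching the required weights $\tau^0$ and $\tau^{-1}$ respectively. The argument is essentially algebraic once Proposition~\ref{prop: Carleman Pk interior} is available; the only point requiring care is that the commutator losses remain compatible with the weights provided by the previous induction step, which succeeds precisely because \eqref{eq:cascade-star} exhibits only a half-derivative loss while the target estimate allows a full-derivative loss.
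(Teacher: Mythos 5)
Your proposal is correct and follows essentially the same cascade as the paper: apply the $P_1$-estimate to $P_2 u$, then apply the $P_2$-estimate to $D_z^{\smi}u$ and handle the commutator $[P_2,D_z^{\smi}]$ (of order $|\smi|+1$), with the extra room left by the half-derivative loss in Proposition~\ref{prop: Carleman Pk interior} making the bookkeeping close. The paper organizes the bookkeeping slightly differently — it applies the $P_2$-estimate to all $D_z^\mi u$ at once and then absorbs all commutator errors in a single ``$\tau$ large'' step rather than your explicit induction on $|\smi|$ — but the underlying argument is identical.
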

This estimate is characterized by the loss of a full derivative.

\begin{proof}
  With the estimate of Proposition~\ref{prop: Carleman Pk interior}
  for the operator $P_1$ applied to $P_2 u \in \Cinf(V)$ we have  
  \begin{align*}
    \sum_{|\mi| \leq 2} \tau^{3/2-|\mi|} 
    \Norm{e^{\tau \varphi} D_z^{\mi} P_2 u}{L^2(Z)} 
    \lesssim \Norm{e^{\tau \varphi} P u}{L^2(Z)}.  
  \end{align*}
  Observing that $[D_z^{\mi},  P_2]$ is a differential operator of
  order $1 + |\mi|$ we obtain
  \begin{align}
    \label{eq: Caleman P1 away from boundaries}
    \sum_{|\mi| \leq 2} \tau^{3/2-|\mi|} 
    \Norm{e^{\tau \varphi} P_2 D_z^{\mi}  u}{L^2(Z)} 
    \lesssim \Norm{e^{\tau \varphi} P u}{L^2(Z)}
    + \sum_{|\smi| \leq 3} \tau^{5/2-|\smi|} 
    \Norm{e^{\tau \varphi} D_z^{\smi}  u}{L^2(Z)}.
  \end{align}
  Applying now the estimate of
  Proposition~\ref{prop: Carleman Pk interior} for the operator $P_2$
  to $D_z^{\mi} u \in \Cinf(V)$ we obtain 
  \begin{align*}
    \sum_{|\tmi| \leq 2} \tau^{3/2-|\tmi|} 
    \Norm{e^{\tau \varphi} D_z^{\tmi + \mi} u}{L^2(Z)} 
    \lesssim \Norm{e^{\tau \varphi} P_2 D_z^{\mi}  u}{L^2(Z)}.  
  \end{align*}
With~\eqref{eq: Caleman P1 away from boundaries}
  we then obtain
  \begin{align*}
    \sum_{|\mi| \leq 4} 
  \tau^{3-|\mi|} \Norm{e^{\tau \varphi}
    D_z^{\mi}  u}{L^2(Z)} 
    &\asymp
    \sum_{|\tmi| \leq 2} 
    \sum_{|\mi|\leq 2} \tau^{3-|\tmi|-|\mi|} \Norm{e^{\tau \varphi}
    D_z^{\tmi+ \mi}  u}{L^2(Z)} \\
    &\lesssim \Norm{e^{\tau \varphi} P u}{L^2(Z)}
    + \sum_{|\smi| \leq 3} \tau^{5/2-|\smi|} 
    \Norm{e^{\tau \varphi} D_z^{\smi}  u}{L^2(Z)}.
  \end{align*}
  We then conclude by choosing $\tau>0$ \suff large.
\end{proof}

%%%%%%%%%%%%%%%
%% Section             %
%%%%%%%%%%%%%%%
\section{Estimate at the  boundary $\{s=0\}$}
\label{sec: Carleman boundary s=0}
\subsection{Tangential semi-classical calculus and associated
Sobolev norms}
\label{sec: tangential semi-classical calculus}

Considering   boundary problems,  
we shall locally
use coordinates so that the geometry is that of the  half space
\begin{equation*}
  \Rpb=\{ z\in\R^N,\ z_N>0\}, 
  \quad z = (z',z_N)\ \text{with} \ z' \in \R^{N-1}, \ z_N\in \R.
\end{equation*}

We shall use the notation $\y = (z,\zeta,\tau)$ and $\y'=(z,\zeta',\tau)$ in this section. (This notation is not to be confused
with that introduced and used in Section~\ref{sec: Carleman boundary
  x} and Appendix~\ref{sec: elliptic sub-elliptic estimates boundary x}.)

Let $a(\y') \in \Cinf(\Rpb\times\R^{N-1})$, with $\tau$ as a parameter
  in $ [1,+\infty)$ and $m \in \R$, be \st, for all multi-indices $\mi, \smi$, we have
  \begin{equation*}
    |\d_z^\mi \d_{\zeta'}^\smi a(\y') | \leq C_{\mi,\smi} \lsct ^{m-|\smi|}, 
    \quad z\in \Rpb,\ \zeta'\in\R^{N-1},\ \tau \in  [1,+\infty),
  \end{equation*}
  where $\lsct^2 =|\zeta'|^2 + \tau^2$. We write $a \in
  \Ssct^m$. 
  We also define $\Ssct^{-\infty} = \cap_{r \in \R} \Ssct^{r}$.
  For $a \in \Ssct^m$ we call principal symbol, $\sigma(a)$,   the equivalence class 
of $a$ in  $\Ssct^m/ \Ssct^{m-1}$. Note that we have $\lsct^m \in \Ssct^m$.

If $a(\y') \in \Ssct^m$, we set 
\begin{align*}
  \Opt(a) u(z) &:= (2 \pi)^{-(N-1)}  \int_{\R^{N-1}} e^{i\scp{z'}{\zeta'}} 
  a(\y')\ \hat{u}(\zeta',z_N)  \ d \zeta',
\end{align*}
for $u \in \S(\Rpb)$, where $\hat{u}$ is the partial Fourier transform
of $u$ with respect to the tangential variables $z'$. We denote by $\Psisct^m$ the set of these pseudo-differential operators. For $A \in \Psisct^m$,
  $\sigma(A) = \sigma (a)$  will be its principal
  symbol in $\Ssct^m /\Ssct^{m-1}$.
We also set $\Lsct^m = \Opt(\lsct^m)$, $m \in \R$.

Let $m\in \N$ and $m' \in \R$. 
If we
    consider $a$ of the form
    \begin{align*}
      a(\y) = \sum_{j=0}^m a_j (\y') \zeta_N^j, \qquad
      a_j \in \Ssct^{m+ m'-j},
    \end{align*}
 we define $\Op(a) := \sum_{j=0}^m \Opt(a_j) D_{z_N}^j$. We write $a
 \in \Ssc^{m,m'}$ and $\Op(a) \in \Psisc^{m,m'}$.

We define the following norm, for $m\in \N$ and $m' \in \R$, 
\begin{align*}
  &\Norm{u}{m,m',\tau} 
    \asymp \sum_{j=0}^m \Norm{\Lsct^{m+m'-j}D_{z_N}^j u}{+}
    \\
  &\Norm{u}{m,\tau}=\Norm{u}{m,0,\tau}
  \asymp \sum_{j=0}^m \Norm{\Lsct^{m-j }D_{z_N}^j u}{+},
  \quad u \in \S(\Rpb), 
\end{align*}
where $\Norm{.}{+} := \Norm{.}{L^2(\Rp)}$. 
We have 
\begin{align*}
  \Norm{u}{m,\tau} \asymp \sum_{{|\mi| \leq m}  \atop \mi \in \N^N}
  \tau^{m- |\mi|} \Norm{ D^\mi u}{+},
\end{align*}
and in the case $m'\in \N$ we have
\begin{align*}
  \Norm{u}{m,m',\tau} 
  \asymp \sum_{\mi_N \leq m}  
  \sum_{{|\mi|\leq m+m'}  \atop {\mi = (\mi',\mi_N) \in \N^N}}
    \tau^{m+m' -|\mi|} \Norm{ D^{\mi} u}{+}.
\end{align*}

If $m, m' \in \N$ and $m'', m''' \in \R$, and if 
 $a \in \Ssct^{m''}$, then we have 
\begin{align*}
  \Norm{\Opt(a) u }{m',m''',\tau} \leq C \Norm{u }{m',m''+ m''',\tau}, 
  \qquad u \in \S(\Rpb). 
\end{align*}
If $a \in \Ssct^{m,m''}$, then we have 
\begin{align*}
  \Norm{\Opt(a) u }{m',m''',\tau} \leq C \Norm{u }{m+m',m''+ m''',\tau}, 
  \qquad u \in \S(\Rpb). 
\end{align*}
The following argument will be used on many occasions in what follows, 
for $m \in \N$, $m ', \ell\in \R$, with  $\ell>0$, 
\begin{align}
  \label{eq: usual semi-classical argument 2}
  \Norm{w}{m,m', \tau} \ll
  \Norm{w}{m,m'+ \ell,\tau}.
\end{align}

At the boundary $\{z_N=0\}$ we define the following norms, for $m \in
\N$ and $m' \in \R$, 
\begin{equation*}
  \norm{\trace(u)}{m,m',\tau}^2
  =\sum_{j=0}^m \norm{ \Lsct^{m-j+m'} D_{z_N}^j u_{|z_N=0^+}}{L^2(\R^{N-1})}^2,
  \quad u \in \S(\Rpb).
\end{equation*}

\subsection{Statement of the Carleman estimate}
\label{sec: statement  Carleman boundary s=0}

In this section, we consider $z = (x, s)\in \R^N$ with $x\in \R^d$ and
$s\in \R$. We also set $Z = \Omega \times (0,S_0)$. 
We write $x=z'$ and $s = z_N$, in connexion with the notation
introduced for the tangential calculus in Section~\ref{sec: tangential semi-classical calculus}. 

Let $z_0=(x_0, 0)$ with $x_0 \in \Omega$.  We consider a function
$\psi \in \Cinf(\R^N)$ \st $\d_{s} \psi(z) \leq -C<0$ in a bounded
open \nhd $V$ of $z_0$ in $\R \times \Omega$. We then set $\varphi(z) = e^{\csp \psi(z)}$. 

Using the notation introduced in Section~\ref{sec: tangential
  semi-classical calculus} for semi-classical norms, we have the
following Carleman estimate at the boundary $ \Omega \times \{ 0\}$ for
functions defined in $\{ s \geq 0\} \cap V$. 
%%%%%%%%%%%%%%%%%%%%%%%%
% theorem              %
%%%%%%%%%%%%%%%%%%%%%%%%
\begin{theorem}
  \label{theorem: sharp Carleman boundary s=0}
  Let $P = D_s^4 + B =  D_s^4 + \Delta^2$ on $Z = \Omega \times (0,S_0)$. 
  Let $W$ be an open set of $\R^N$ \st $W \Subset V$.
  For $\csp>0$ chosen \suff large, there exist $\tau_0\geq 1$ and
  $C>0$ \st 
  \begin{align*}
    \sum_{|\mi| \leq 4} \tau^{7/2-|\mi|}\Norm{ e^{\tau \varphi} D_{s,x}^\mi u}{L^2(Z)}
    \leq C \Big( 
    \Norm{e^{\tau \varphi} P u}{L^2(Z)}
    + \sum_{j=0}^3 \norm{\trace(e^{\tau \varphi} D_{s}^j u)}{0,7/2-j,\tau}
    \Big),
  \end{align*}
  for $\tau \geq \tau_0$  and for $u = w_{|Z}$, with
  $ w \in\Cinfc(\R^d \times \R)$ and $\supp(w) \subset W$.
\end{theorem}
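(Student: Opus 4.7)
The plan is to exploit the factorization $P = P_1 P_2$ with $P_k = (-1)^k i D_s^2 + A$ from Section~\ref{sec: simple char Qk} and cascade two second-order boundary Carleman estimates. The crucial structural point, distinguishing this case from Case~(iii), is that the semi-classical principal symbols $p_{k,\varphi}(z,\zeta,\tau) = (-1)^k i(\sigma+i\tau\d_s\varphi)^2 + a(x,\xi+i\tau d_x\varphi)$ have disjoint characteristic sets in $V\times\R^N\times[\tau_0,\infty)$: subtracting $p_{1,\varphi}=p_{2,\varphi}=0$ forces $(\sigma+i\tau\d_s\varphi)^2=0$, and since $\d_s\varphi = \csp(\d_s\psi)\varphi \le -C<0$ in $V$, this yields $\tau=0$. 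Thus $\Char(P_{1,\varphi}) \cap \Char(P_{2,\varphi}) = \emptyset$, consistent with a loss of only a half derivative for $P$ and the $\tau^{7/2-|\mi|}$ scaling in the statement.

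I would first derive, for each factor $P_k$, a boundary Carleman estimate of the form
\begin{align*}
    \Norm{e^{\tau\varphi} w}{2,-1/2,\tau}
    \lesssim \Norm{e^{\tau\varphi} P_k w}{L^2(Z)}
    + \sum_{j=0}^{1} \norm{\trace(e^{\tau\varphi} D_s^j w)}{0,3/2-j,\tau}.
\end{align*}
By Proposition~\ref{prop: simple-char}, $P_k$ satisfies the simple-characteristic property in direction $d\psi$, hence, for $\csp$ large, Lemma~\ref{prop: pseudo-convexity} yields sub-ellipticity of $(P_k,\varphi)$ in $\ovl V$. Combined with the transversality $\d_s\psi\le -C<0$ (so the traces $w\brs$ and $D_s w\brs$ suffice on the right-hand side, with no boundary condition imposed), this is the half-space estimate underlying \eqref{5.carleman}, obtainable either directly via the tangential semi-classical calculus of Section~\ref{sec: tangential semi-classical calculus} or from the framework of \cite{BLR:13}.

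Next I would cascade. Applying the $P_1$ estimate to $v := P_2 u$ bounds $\Norm{e^{\tau\varphi} P_2 u}{2,-1/2,\tau}$ by $\Norm{e^{\tau\varphi} Pu}{L^2(Z)}$ plus trace norms of $D_s^j P_2 u$ at $\{s=0\}$ for $j=0,1$. Using $P_2 = -iD_s^2 + A$ and commuting $e^{\tau\varphi}$ past tangential derivatives, these traces decompose into the target family $\norm{\trace(e^{\tau\varphi} D_s^\ell u)}{0,7/2-\ell,\tau}$ for $0\le\ell\le 3$. Then I would apply the $P_2$ estimate to $D_z^\mi u$ for $|\mi|\le 2$; the commutator $[D_z^\mi, P_2]$ is of order $1+|\mi|$, hence its contribution carries an extra power of $\tau^{-1}$ relative to the left-hand side, and is absorbable by taking $\tau$ large, in the spirit of \eqref{eq: usual semi-classical argument 2}. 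Summing the two cascaded inequalities yields the target estimate, again re-expressing traces of tangentially differentiated $u$ in terms of the same family $\{D_s^\ell u\brs\}_{0\le\ell\le 3}$.

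The main obstacle I anticipate lies in the bookkeeping of boundary trace terms: one has to verify that the contributions arising both from the $P_1$-estimate applied to $P_2 u$ and from the $P_2$-estimate applied to tangential derivatives of $u$ all fit into the single family $\bigl\{\norm{\trace(e^{\tau\varphi} D_s^j u)}{0,7/2-j,\tau}\bigr\}_{j=0}^3$ stated in the theorem. This requires careful commutator accounting between $e^{\tau\varphi}$, the tangential operators $\Lsct^{7/2-j}$ and the trace map at $\{s=0\}$, and makes critical use of the remaining half-derivative margin present in $\Norm{\cdot}{2,-1/2,\tau}$ to absorb the resulting lower-order errors.
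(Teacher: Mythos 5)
Your structural observations are correct — the factorization $P=P_1P_2$, the disjoint characteristic sets of $Q_1=e^{\tau\varphi}P_1e^{-\tau\varphi}$ and $Q_2$, and the sub-ellipticity of each $(P_k,\varphi)$ are all accurately identified, and indeed match the ingredients in Section~\ref{sec: Carleman boundary s=0}. But the cascading step does not deliver the stated conclusion. Cascading two half-derivative-loss estimates for the second-order factors produces the scaling
\begin{align*}
\sum_{|\mi|\leq 2}\tau^{3/2-|\mi|}\;\sum_{|\smi|\leq 2}\tau^{3/2-|\smi|}\,\Norm{e^{\tau\varphi}D_z^{\mi+\smi}u}{L^2}
\;\asymp\; \sum_{|\mu|\leq 4}\tau^{3-|\mu|}\Norm{e^{\tau\varphi}D_z^\mu u}{L^2},
\end{align*}
which is precisely the full-derivative-loss estimate that the paper obtains in the interior (Proposition~\ref{prop: Carleman P interior}, where $\Char(Q_1)\cap\Char(Q_2)\neq\emptyset$), not the $\tau^{7/2-|\mi|}$ scaling claimed in the statement. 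Cascading two half-losses always yields a full loss; the disjointness of $\Char(Q_1)$ and $\Char(Q_2)$ near $\{s=0\}$, which you correctly cite as the structural distinction, is not in fact exploited by the naive cascade. You identify the key property but then use an argument that does not see it.

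The paper's route is different: Lemma~\ref{lemma: disjoint characteristic sets} (disjoint characteristics) feeds into Lemma~\ref{lemma: product sub-ellipticity}, which upgrades sub-ellipticity of the individual factors to sub-ellipticity of the \emph{product} $(P,\varphi)$ in $\ovl V$ (Corollary~\ref{cor: sub-ellipticity P}). With sub-ellipticity of the fourth-order operator $P$ itself in hand, the proof applies Lemma~4.3 of \cite{BLR:13} directly to $\Pconj = A+iB$ (symmetric/antisymmetric decomposition), obtaining $\tau^{-1/2}\Norm{v}{4,\tau}\lesssim\Norm{\Pconj v}{+}+\norm{\trace(v)}{3,1/2,\tau}$ in one shot, with no cascade at all. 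To salvage a factorization-based approach one would have to microlocalize: near $\Char(Q_1)$ use that $Q_2$ is elliptic (hence a loss-free estimate for $P_2$), and vice versa; this would recover the half-loss but is a substantially different and more involved argument than the cascade you describe. As written, your step two produces a correct but strictly weaker estimate and so does not prove the theorem.
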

This Carleman estimate is characterized by the loss of a half
derivative. 
%%%%%%%%%%%%%%%%%%%%%%%%
% Corollary      %
%%%%%%%%%%%%%%%%%%%%%%%%
\begin{corollary}
  \label{cor: Carleman boundary s=0}
  Let $P = D_s^4 + B =  D_s^4 + \Delta^2$ on $Z = \Omega \times (0,S_0)$. 
  Let $W$ be an open set of $\R^N$ \st $W \Subset V$.
  For $\csp>0$ chosen \suff large, there exist $\tau_0\geq 1$ and
  $C>0$ \st 
  \begin{align*}
    \sum_{|\mi| \leq 3} \tau^{7/2-|\mi|}\Norm{ e^{\tau \varphi} D_{s,x}^\mi u}{L^2(Z)}
    \leq C \Big( 
    \Norm{e^{\tau \varphi} P u}{L^2(Z)}
    + \tau^{1/2} \sum_{j=0}^3 \norm{\trace(e^{\tau \varphi} D_{s}^j u)}{0,3-j,\tau}
    \Big),
  \end{align*}
  for $\tau \geq \tau_0$  and for $u = w_{|Z}$, with
  $ w \in\Cinfc(\R^d \times \R)$ and $\supp(w) \subset W$.
\end{corollary}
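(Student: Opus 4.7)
The plan is to deduce Corollary~\ref{cor: Carleman boundary s=0} from Theorem~\ref{theorem: sharp Carleman boundary s=0}. Since the sum on the left-hand side of the corollary ranges only over $|\mi|\leq 3$, the $|\mi|=4$ contribution present in the theorem is simply dropped. The real task is thus to replace, on the right-hand side of the theorem, the boundary norm $\norm{\trace(e^{\tau\varphi}D_s^j u)}{0,7/2-j,\tau}$ by $C\tau^{1/2}\norm{\trace(e^{\tau\varphi}D_s^j u)}{0,3-j,\tau}$, modulo contributions absorbable into the left-hand side.

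The key algebraic ingredient is the Young-type estimate $\lambda_{\T,\tau}^{1/2}\leq \hf\tau^{1/2}+\hf\tau^{-1/2}\lambda_{\T,\tau}$ at the symbol level (from $2ab\leq a^2+b^2$ with $a=\tau^{-1/4}\lambda_{\T,\tau}^{1/2}$ and $b=\tau^{1/4}$). Using the factorization $\lambda_{\T,\tau}^{7/2-j}=\lambda_{\T,\tau}^{1/2}\lambda_{\T,\tau}^{3-j}$ and the $L^2$ triangle inequality at $s=0$, this yields
\begin{align*}
\norm{\trace(e^{\tau\varphi}D_s^j u)}{0,7/2-j,\tau}\leq \frac{\tau^{1/2}}{2}\norm{\trace(e^{\tau\varphi}D_s^j u)}{0,3-j,\tau}+\frac{\tau^{-1/2}}{2}\norm{\trace(e^{\tau\varphi}D_s^j u)}{0,4-j,\tau}.
\end{align*}
The first term on the right is precisely half of the $j$-th contribution to the corollary's boundary term. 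To control the residual one, I would invoke the semi-classical trace inequality $\Norm{w_{|s=0}}{L^2(\R^d)}^2\leq C(\tau\Norm{w}{+}^2+\tau^{-1}\Norm{\d_s w}{+}^2)$ (proved via $|g(0)|^2\leq 2\int|\tilde g||\d_s\tilde g|\,ds$ followed by a weighted Cauchy--Schwarz) applied to $w=\Lsct^{4-j}(e^{\tau\varphi}D_s^j u)$, together with the Leibniz identity $\d_s(e^{\tau\varphi}D_s^j u)=i\tau(\d_s\varphi)e^{\tau\varphi}D_s^j u+ie^{\tau\varphi}D_s^{j+1}u$. After decomposing $\Lsct^{4-j}$ into its $\tau$- and $|D_{x'}|$-components, the resulting interior contributions have total semi-classical order at most $4$ and, in view of the $\tau^{-1/2}$ prefactor, align with the $|\mi|\leq 4$ terms appearing in the left-hand side of Theorem~\ref{theorem: sharp Carleman boundary s=0}. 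A standard absorption, valid for $\tau$ sufficiently large, then closes up the estimate and delivers the corollary.

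The hard part is the bookkeeping of $\tau$-powers and semi-classical orders, ensuring that the trace-induced interior terms (including those arising from the Leibniz correction $e^{\tau\varphi}D_s^{j+1}u$) are actually dominated by the $|\mi|\leq 4$ contribution to the theorem's left-hand side with a coefficient small enough to be absorbed. Careful tracking of the commutators of $\d_s$ with $e^{\tau\varphi}$, together with the freedom to enlarge $\tau$, is what makes this absorption work.
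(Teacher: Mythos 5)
Your reduction runs into a genuine order-counting problem at the absorption step, and I do not see how to repair it along the lines you propose. After the Young split, the residual term is
\begin{align*}
\tau^{-1/2}\norm{\trace(e^{\tau\varphi}D_s^j u)}{0,4-j,\tau} .
\end{align*}
When you feed $w=\Lsct^{4-j}(e^{\tau\varphi}D_s^j u)$ into the elementary trace inequality $\Norm{w_{|s=0}}{L^2}^2\leq C(\tau\Norm{w}{+}^2+\tau^{-1}\Norm{\d_s w}{+}^2)$, the $\tau^{1/2}$ produced by the inequality exactly cancels the $\tau^{-1/2}$ prefactor, and you are left with
\begin{align*}
\Norm{\Lsct^{4-j}\,e^{\tau\varphi}D_s^j u}{+} + \tau^{-1}\Norm{\d_s\Lsct^{4-j}\,e^{\tau\varphi}D_s^j u}{+},
\end{align*}
both of total semi-classical order $4$. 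But the left-hand side of Theorem~\ref{theorem: sharp Carleman boundary s=0} controls only $\sum_{|\mi|\leq 4}\tau^{7/2-|\mi|}\Norm{e^{\tau\varphi}D^\mi u}{+}$, of total order $7/2$. The contribution from, say, $\Norm{\Lsct^{4-j}e^{\tau\varphi}D_s^j u}{+}$ expands (after commuting $D_x$ past $e^{\tau\varphi}$) into terms of the shape $\tau^{4-|\mi|}\Norm{e^{\tau\varphi}D^\mi u}{+}$ with $|\mi|\leq 4$, which is $\tau^{1/2}$ \emph{times} the corresponding term on the theorem's left-hand side. That ratio blows up as $\tau\to\infty$, so taking $\tau$ large pushes the absorption further out of reach rather than closing it; the gap is in the $\Lsct$-order, not the $\tau$-exponent, and the elementary trace inequality inevitably costs you half a derivative in the interior.

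The paper avoids this by acting on the estimate rather than on the trace term: it applies the equivalent form of Theorem~\ref{theorem: sharp Carleman boundary s=0} (estimate~\eqref{eq: equiv form Carleman s=0}) to $\tau^{1/2}\chi\Lsct^{-1/2}v$ with $v=e^{\tau\varphi}u$ and a cutoff $\chi$. The tangential multiplier $\Lsct^{-1/2}$ lowers every order uniformly by $1/2$ (in particular $\norm{\trace(\Lsct^{-1/2}v)}{3,1/2,\tau}=\norm{\trace(v)}{3,0,\tau}$ since $[D_s,\Lsct^r]=0$), the $\tau^{1/2}$ restores the $\tau$-weighting, the cutoff $\chi$ handles the loss of compact support, and the commutator $[\tchi\Pconj,\Lsct^{-1/2}]\in\Psisc^{4,-3/2}$ produces errors that genuinely \emph{are} of lower order and do absorb. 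This keeps the interior and boundary contributions at matching order $7/2$ throughout, which is precisely what your Young split destroys.
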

Proofs are given below.

\subsection{Sub-ellipticity property}
As in Section~\ref{sec: simple char Qk}, we write $P = P_1 P_2$ with
$P_k = (-1)^k i D_s^2 + A$, and
$\Pconj = e^{\tau \varphi} P e^{-\tau \varphi} = Q_1 Q_2$ with
$Q_k = e^{\tau \varphi} P_k e^{-\tau \varphi}$.
The principal symbol of $q_k$, in the sense of semi-classical operators,
is given by 
\begin{align*}
  q_k (z,\zeta, \tau) = (-1)^k i( \sigma + i \htau_\sigma)^2 + a(x,\xi
  + i \htau_\xi), \quad \htau(z,\tau)  = (\htau_\xi, \htau_\sigma) = \tau d \varphi \in \R^N,
\end{align*}
where $a(x,\xi)$ denotes the principal symbol of the Laplace operator
$A$.

Recalling the definition of the semi-classical characteristic set of a
(pseudo-)differential operator $A$, with principal symbol $a(\y)$,
$$\Char (A) = \{ \y= (z,\zeta, \tau) \in \ovl{V} \times \R^N \times
\R_+; \ (\zeta, \tau) \neq (0,0), \ \text{and} \
a(\y)=0\},$$ 
we have the
following results for the characteristic sets of $Q_k$, $k=1,2$.
%%%%%%%%%%%%%%%%%%%%%%%%
% lemma                %
%%%%%%%%%%%%%%%%%%%%%%%%
\begin{lemma}
  \label{lemma: disjoint characteristic sets}
  In $\ovl{V}$, we have $\Char(Q_1) \cap \Char(Q_2) = \emptyset$. 
\end{lemma}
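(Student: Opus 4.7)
The plan is purely algebraic: exploit the sum and difference of $q_1$ and $q_2$ to reduce the statement to the ellipticity of $A$ and to the non-vanishing of $\d_s\varphi$. Since $(-1)^1 = -1$ and $(-1)^2 = 1$, the two symbols read
\begin{align*}
  q_1(\y) = -i(\sigma + i\htau_\sigma)^2 + a(x,\xi + i\htau_\xi),
  \qquad
  q_2(\y) = i(\sigma + i\htau_\sigma)^2 + a(x,\xi + i\htau_\xi),
\end{align*}
so that
\begin{align*}
  q_2(\y) - q_1(\y) = 2i(\sigma + i\htau_\sigma)^2,
  \qquad
  q_2(\y) + q_1(\y) = 2\, a(x,\xi + i\htau_\xi).
\end{align*}

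Suppose, for contradiction, that there exists $\y = (z,\zeta,\tau) \in \ovl V \times \R^N \times \R_+$ with $(\zeta,\tau) \neq 0$ at which $q_1(\y) = q_2(\y) = 0$. The difference then gives $(\sigma + i\htau_\sigma)^2 = 0$, hence $\sigma + i\htau_\sigma = 0$; as $\sigma$ and $\htau_\sigma$ are real, this yields $\sigma = 0$ and $\htau_\sigma = 0$. Now $\htau_\sigma = \tau\, \d_s\varphi = \csp\tau\,\varphi\,\d_s\psi$, and $\d_s\psi \leq -C < 0$ in $\ovl V$ while $\varphi = e^{\csp\psi} > 0$, so $\d_s\varphi$ does not vanish in $\ovl V$. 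Therefore $\htau_\sigma = 0$ forces $\tau = 0$, and consequently $\htau_\xi = \tau\, d_x\varphi = 0$ as well.

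Going back to the sum, we now have $a(x,\xi) = 0$ with $(x,\xi) \in \ovl V \times \R^d$. Since $A$ is the Laplace operator on the Riemannian manifold $(\Omega,\mathfrak g)$, its principal symbol $a(x,\xi)$ is elliptic, so $a(x,\xi) = 0$ implies $\xi = 0$. Combined with $\sigma = 0$ and $\tau = 0$, this contradicts $(\zeta,\tau) \neq 0$, concluding the proof. The only potential obstacle is ensuring $\d_s\varphi \neq 0$ on the closure $\ovl V$, which is immediate from the construction of $\psi$ in Section~\ref{sec: statement Carleman boundary s=0}; everything else is purely formal.
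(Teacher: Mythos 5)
Your proof is correct and follows essentially the same route as the paper: from $q_1(\y)=q_2(\y)=0$ you deduce $(\sigma+i\htau_\sigma)^2=0$ and $a(x,\xi+i\htau_\xi)=0$ (the paper isolates the two pieces directly; you take the sum and difference, which is the same thing), then use $\d_s\varphi\neq 0$ to force $\sigma=\tau=0$ and ellipticity of $a$ to force $\xi=0$, contradicting $(\zeta,\tau)\neq(0,0)$. No gap.
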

%%%% proof of lemma
\begin{proof}
  Let $\y = (z,\zeta, \tau) \in \ovl{V} \times \R^N \times \R_+$, with
  $(\zeta, \tau)\neq (0,0)$, be \st $q_1 (\y) = q_2 (\y) =0$, which
  reads $(-1)^k i( \sigma + i \htau_\sigma)^2 
    + a(x,\xi+ i \htau_\xi) =0$, for both $k=1$ and $k=2$, 
  meaning that we have 
  \begin{align*}
    ( \sigma + i \htau_\sigma)^2 =0, \quad 
    a(x,\xi+ i \htau_\xi) =0.
  \end{align*}
  In particular this implies $\sigma= 0$ and $\htau_\sigma = \tau \d_s
  \varphi =0$. As here $\d_s
  \varphi \neq 0$ we thus have $\sigma=\tau=0$. With $\tau=0$,  we
  have $\htau_\xi=0$, and we thus obtain 
  $a(x,\xi) =0$, implying $\xi=0$ because of the ellipticity of
  $a(x,\xi)$. 
\end{proof}
%%%%%%%%%%%%%%%%%%%%%%%%
% lemma                %
%%%%%%%%%%%%%%%%%%%%%%%%
\begin{lemma}
  \label{lemma: product sub-ellipticity}
  Let $L_1$ and $L_2$ be differential operators in $V$.  Let
  $\varphi \in \Cinf(Z)$ and set
  $L_{k,\varphi} = e^{\tau \varphi} L_k e^{-\tau \varphi}$,
  $k=1,2$. Assume that
  $\Char(L_{1,\varphi}) \cap \Char(L_{2,\varphi}) = \emptyset$.  Then
  the couple $(L_1L_2, \varphi)$ satisfies the sub-ellipticity
  condition of Definition~\ref{def: sub-ellipticity} in $\ovl{V}$ {\em
    if and only if} both $(L_k, \varphi)$, $k=1,2$, satisfy this
  property.
\end{lemma}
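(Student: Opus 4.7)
The plan is to compute $\{q,\bar q\}$ by Leibniz, where $q = q_{1,\varphi}\, q_{2,\varphi}$ is the semi-classical principal symbol of $L_{1,\varphi} L_{2,\varphi}$, and then use the disjointness of the characteristic sets to show that at every point where $q=0$ only one of the two natural terms survives.

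First I would note that $L_{1,\varphi} L_{2,\varphi}$ has principal symbol $q = q_{1,\varphi} q_{2,\varphi}$, writing $q_k = q_{k,\varphi}$ for brevity. By the Leibniz rule for Poisson brackets,
\begin{align*}
\{q,\bar q\}
&= \{q_1 q_2, \bar q_1 \bar q_2\} \\
&= |q_1|^2 \{q_2,\bar q_2\} + |q_2|^2 \{q_1,\bar q_1\}
   + q_1 \bar q_2 \{q_2,\bar q_1\} + q_2 \bar q_1 \{q_1,\bar q_2\}.
\end{align*}
This is the only computation in the proof; dividing by $2i$ gives a real number since each $\{q_k,\bar q_k\}$ is purely imaginary.

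Next I would fix $\y = (z,\zeta,\tau)$ with $z \in \ovl V$, $\zeta \in \R^N$, $\tau > 0$, and examine what happens when $q(\y)=0$. Since $q = q_1 q_2$, either $q_1(\y) = 0$ or $q_2(\y)=0$; by the hypothesis $\Char(L_{1,\varphi}) \cap \Char(L_{2,\varphi}) = \emptyset$, exactly one of them vanishes (note $(\zeta,\tau)\neq(0,0)$ is automatic since $\tau>0$). If $q_1(\y)=0$ and $q_2(\y)\neq 0$, the three terms containing a factor $q_1$ or $\bar q_1$ (including the cross terms) all drop, and we get
\begin{equation*}
\frac{1}{2i}\{q,\bar q\}(\y) = |q_2(\y)|^2 \cdot \frac{1}{2i}\{q_1,\bar q_1\}(\y);
\end{equation*}
symmetrically when $q_2(\y)=0$. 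This is the only real content of the lemma.

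The equivalence then follows immediately. If both $(L_k,\varphi)$ are sub-elliptic, then at any $\y$ with $q(\y)=0$ the surviving factor $\{q_k,\bar q_k\}/(2i)$ (for the index $k$ such that $q_k(\y)=0$) is strictly positive, and $|q_j(\y)|^2 > 0$ for the other index, so $\{q,\bar q\}(\y)/(2i) > 0$, giving sub-ellipticity of $(L_1 L_2,\varphi)$. Conversely, if $(L_1 L_2,\varphi)$ is sub-elliptic and $q_1(\y)=0$, pick any $\y$ of this form (necessarily with $q_2(\y)\neq 0$ by disjointness of characteristic sets); then $q(\y)=0$ forces $|q_2(\y)|^2 \{q_1,\bar q_1\}(\y)/(2i) > 0$, hence $\{q_1,\bar q_1\}(\y)/(2i) > 0$, and sub-ellipticity of $(L_1,\varphi)$ follows. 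The argument for $(L_2,\varphi)$ is identical. There is no real obstacle here; the only point to be careful about is that the disjointness of characteristic sets is precisely what kills the cross terms and the degenerate case where both factors vanish simultaneously.
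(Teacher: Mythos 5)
Your proof is correct and is essentially the same as the paper's: compute $\{q,\bar q\}$ by the Leibniz rule, note the cross terms carry a factor $q_1$ or $q_2$ and so vanish on $\Char(L_{1,\varphi})\cup\Char(L_{2,\varphi})$, and use disjointness of the two characteristic sets to rule out simultaneous vanishing so that exactly one of the two ``diagonal'' terms survives with a nonzero elliptic multiplier. One small notational point: Definition~\ref{def: sub-ellipticity} uses $\frac{1}{2i}\{\bar\ell,\ell\}>0$, which is $-\frac{1}{2i}\{\ell,\bar\ell\}$, so your signs in the displayed bracket identity should all be flipped (or the arguments of each bracket swapped) to match the paper's convention exactly; the structure of the argument is unaffected.
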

%%%% proof of lemma
\begin{proof}
  We denote by $\ell_{k}$, the principal symbols of
  $L_{k,\varphi}$, $k=1,2$, and $\ell = \ell_1 \ell_2$ the principal
  symbol of  $e^{\tau \varphi} L_1 L_2 e^{-\tau \varphi}$. 
  We observe that 
  \begin{align*}
    \{ \ovl{\ell}, \ell\} = 
    |\ell_1|^2 \{ \ovl{\ell_2}, \ell_2\}  +  |\ell_2|^2\{ \ovl{\ell_1}, \ell_1\} 
    + f |\ell_1|\, |\ell_2|, 
  \end{align*}
  for some function $f$. 
  If $(\ell, \varphi)$ satisfies the sub-ellipticity condition and if
  $\ell_1(\y)=0$,  with $\y = (z,\zeta, \tau) \in \ovl{V}\times \R^N
  \times \R_+$, then $\ell_2(\y) \neq 0$ and
  $0< \{\ovl{\ell} , \ell\} (\y)/i = |\ell_2|^2 \{
  \ovl{\ell}_1,\ell_1\}/i$,
  thus yielding the sub-ellipticity condition at $\y$ for
  $\ell_1$. The same argument applies for $\ell_2$.
  
  Let us now assume that $\ell_1$ and $\ell_2$ both satisfy the
  sub-ellipticity condition. If $\ell(\y)=0$ then either  $\ell_1(\y)=0$ or
  $\ell_2(\y)=0$. Let us assume that
  $\ell_1(\y)=0$. Then $\ell_2(\y)\neq
  0$ and $\{ \ovl{\ell}_1,\ell_1\}(\y) /i>0$ . We then have 
  $\{\ovl{\ell} , \ell\}(\y)/i = |\ell_2(\y)|^2 \{ \ovl{\ell}_1,
  \ell_1\} (\y)/i >0$. 
\end{proof}
By Lemma~\ref{prop: pseudo-convexity}, the couples $(P_k, \varphi)$
satisfy the sub-ellipticity condition in $\ovl{V}$.  From
Lemmata~\ref{lemma: disjoint characteristic sets} and \ref{lemma:
  product sub-ellipticity} we deduce the following result.
%%%%%%%%%%%%%%%%%%%%%%%%
% corollary            %
%%%%%%%%%%%%%%%%%%%%%%%%
\begin{corollary}
  \label{cor: sub-ellipticity P}
  The couple $(P, \varphi)$ satisfies the sub-ellipticity
  condition of Definition~\ref{def: sub-ellipticity} in $\ovl{V}$.
\end{corollary}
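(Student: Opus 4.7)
The plan is to assemble Corollary~\ref{cor: sub-ellipticity P} directly from the three results established just above it, exploiting the factorization $P = P_1 P_2$ introduced in \eqref{eq: operators Pk}. No new computation is required; the work amounts to feeding the right hypotheses into Lemma~\ref{lemma: product sub-ellipticity}.

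First I would invoke Lemma~\ref{prop: pseudo-convexity} separately for $k=1$ and $k=2$. Because $|d\psi|\geq C>0$ on $\ovl V$ (guaranteed by $\d_s\psi \leq -C<0$ there, which forces $d\psi\neq 0$), the lemma supplies a threshold $\csp_k$ such that the couple $(P_k,\varphi)$ satisfies the sub-ellipticity condition in $\ovl V$ whenever $\csp\geq \csp_k$. Choosing $\csp\geq \max(\csp_1,\csp_2)$ we obtain sub-ellipticity for both factors simultaneously.

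Next I would note that Lemma~\ref{lemma: disjoint characteristic sets} provides exactly the disjointness hypothesis $\Char(Q_1)\cap\Char(Q_2)=\emptyset$ needed by Lemma~\ref{lemma: product sub-ellipticity}, with $Q_k=e^{\tau\varphi}P_k e^{-\tau\varphi}$. Applying Lemma~\ref{lemma: product sub-ellipticity} with $L_1=P_1$ and $L_2=P_2$ then yields that $(P_1P_2,\varphi)=(P,\varphi)$ satisfies the sub-ellipticity condition of Definition~\ref{def: sub-ellipticity} on $\ovl V$, which is the desired conclusion.

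There is no real obstacle here: the only care needed is to observe that the lower bound $\d_s\psi\leq -C<0$ in the \nhd $V$ of $z_0=(x_0,0)$ gives both the non-vanishing of $d\psi$ (so Lemma~\ref{prop: pseudo-convexity} applies) and the non-vanishing of $\d_s\varphi=\csp e^{\csp\psi}\d_s\psi$ (which was the key input used in the proof of Lemma~\ref{lemma: disjoint characteristic sets} to rule out the simultaneous vanishing of $q_1$ and $q_2$). Once these are in place, the corollary follows by direct combination of the three preceding lemmas.
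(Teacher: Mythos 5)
Your proposal is correct and matches the paper's argument exactly: sub-ellipticity for each factor $(P_k,\varphi)$ comes from Lemma~\ref{prop: pseudo-convexity}, the disjointness $\Char(Q_1)\cap\Char(Q_2)=\emptyset$ from Lemma~\ref{lemma: disjoint characteristic sets}, and Lemma~\ref{lemma: product sub-ellipticity} assembles these into sub-ellipticity of the product. The extra remarks about $\d_s\psi\leq -C<0$ supplying both $|d\psi|\neq 0$ and $\d_s\varphi\neq 0$ are accurate and simply make explicit the hypothesis checks that the paper leaves implicit.
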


\subsection{Proof of the estimate at $\{s=0\}$}
\label{sec: estimates  s=0}

The proof of Theorem~\ref{theorem: sharp Carleman boundary s=0} uses
Lemma~4.3 in \cite{BLR:13}.
\begin{proof}[\bfseries Proof of Theorem~\ref{theorem: sharp Carleman boundary s=0}]
  We denote by $a(\y)$ the principal symbol of $(\Pconj + \Pconj^*)/2$
  and by $b(\y)$ that  of $(\Pconj -
  \Pconj^*)/(2i)$. We have $a \in \Ssc^{4,0}$ and $b \in \Ssc^{3,1}$. We
  set $A = \Op(a)$ and $B= \Op(b)$
  and
  \begin{align*}
    Q_{a,b} (w) = 2 \Re \scp{A w}{B w}_+. 
  \end{align*}
  The sub-ellipticity of $(P,\varphi)$ given by Corollary~\ref{cor: sub-ellipticity P} reads
  \begin{align*}
    a(\y) = b(\y) =0 \ \ \imp \ \ \{ a, b\}>0, \qquad 
    \y \in \ovl{V}\times
    \R^N \times \R_+.
  \end{align*}
  With Lemma~4.3 in \cite{BLR:13}, we obtain, for some $C>0$ and
  $C'>0$, for $\tau\geq 1$ chosen \suff large, 
  \begin{equation*}
    C \Norm{v}{4,\tau}^2 \leq
    C' \big( \Norm{A v}{+}^2 + \Norm{B v}{+}^2 +
    \norm{\trace(v)}{3,1/2,\tau}^2 \big)
    + \tau \big(Q_{a,b}(v)-\Re \B_{a,b}(v)\big), 
  \end{equation*}
  where $|\B_{a,b}(v)| \lesssim \norm{\trace(v)}{3,1/2,\tau}^2$, 
  for $v = w_{|Z}$, with
  $ w \in\Cinfc(\R^d \times \R)$ and $\supp(w) \subset W$.
  We thus obtain 
  \begin{equation*}
    \tau^{-1} \Norm{v}{4,\tau}^2 \lesssim
    \Norm{(A + i B) v}{+}^2 
    +  \norm{\trace(v)}{3,1/2,\tau}^2.
  \end{equation*}
  As we have $\Pconj = A + i B \mod \Psisc^{3,0}$, by taking $\tau$ \suff
  large, with the  usual semi-classical argument \eqref{eq: usual
    semi-classical argument 2} we obtain
  \begin{equation}
    \label{eq: equiv form Carleman s=0}
    \tau^{-1/2} \Norm{v}{4,\tau} \lesssim
    \Norm{\Pconj v}{+}
    +  \norm{\trace(v)}{3,1/2,\tau}.
  \end{equation}
  The conclusion of the proof is then classical.
\end{proof}
\begin{proof}[\bfseries Proof of Corollary~\ref{cor: Carleman boundary s=0}]
  Let $W'$ be an open set of $\R^N$ \st $W \Subset W' \Subset V$ and
  let $\chi, \tchi \in \Cinfc(W')$ be \st $\chi\equiv 1$ in a \nhd of
  $W$ and $\tchi \equiv 1$ in a \nhd of $\supp(\chi)$. 

  We may apply
  estimate~\eqref{eq: equiv form Carleman s=0}, an equivalent form of the
  estimate of 
  Theorem~\ref{theorem: sharp Carleman boundary s=0},  to
  the function $\tau^{1/2} \chi(z) \Lsct^{-1/2} v$, for 
  $v = w_{|Z}$, with
  $ w \in\Cinfc(\R^d \times \R)$ and $\supp(w) \subset W$.
  Observe that we have
  \begin{align*}
   \chi(z) \Lsct^{-1/2} v
    = \Lsct^{-1/2} v + R_{0,-M} v,
  \qquad 
    P_\varphi \chi(z) \Lsct^{-1/2} v
    =  \tchi(z) P_\varphi \Lsct^{-1/2} v + R_{4,-M} v,
  \end{align*}
  because of the support of $v$, 
  with $R_{0,-M} \in \Psisc^{0, -M}$, and $R_{4,-M} \in \Psisc^{4, -M}$, for any $M\in \N$. 
  
  Setting $\tilde{v} = \tau^{1/2} \Lsct^{-1/2} v \in \S(\Rpb)$, we thus obtain,
  with \eqref{eq: equiv form Carleman s=0},
  \begin{equation}
    \label{eq: equiv form Carleman s=0 - 2}
    \tau^{-1/2} \Norm{\tilde{v}}{4,\tau} \lesssim
    \Norm{\tchi\Pconj \tilde{v}}{+}
    +  \norm{\trace(\tilde{v})}{3,1/2,\tau}
    + \Norm{v}{4,-M,\tau}.
  \end{equation}
  We then observe that we have
  \begin{align*}
    \tau^{-1/2}\Norm{\tilde{v}}{4,\tau}
    =\Norm{\Lsct^{-1/2} v}{4,\tau} 
    =\Norm{v}{4,-1/2,\tau}.
  \end{align*}
  We also have
  $\norm{\trace(\tilde{v})}{3,1/2,\tau} = \tau^{1/2}
  \norm{\trace(v)}{3,0,\tau}$,
  as $[D_s, \Lsct^r] =0$, $r \in \R$.  Next, as
  $[\tchi \Pconj, \Lsct^{-1/2}] \in \Psisc^{4,-3/2}$, we have
  \begin{align*}
    \Norm{\tchi \Pconj \tilde{v}}{+}
    \lesssim \tau^{1/2}\Norm{\Lsct^{-1/2} \tchi \Pconj v }{+}
    + \tau^{1/2} \Norm{v}{4,-3/2,\tau}
    \lesssim \Norm{\Pconj  v }{+}
    + \tau^{1/2} \Norm{v}{4,-3/2,\tau}.
  \end{align*}
  From~\eqref{eq: equiv form Carleman s=0 - 2}, we thus obtain 
  \begin{equation*}
    \Norm{v}{4,-1/2,\tau} 
    \lesssim
    \Norm{\Pconj v}{+}
    +  \tau^{1/2}\norm{\trace(\tilde{v})}{3,0,\tau}
    + \tau^{1/2} \Norm{v}{4,-3/2,\tau}.
  \end{equation*}
  With the usual
  semi-classical argument \eqref{eq: usual semi-classical argument 2}
  we conclude the proof, as $\Norm{v}{4,-1/2,\tau} \gtrsim
  \tau^{1/2}\Norm{v}{3,\tau}$. 
\end{proof}

%%%%%%%%%%%%%%%
%% Section             %
%%%%%%%%%%%%%%%
\section{Estimate at the
  boundary  $(0,S_0) \times \d\Omega$}
\label{sec: Carleman boundary x}

\subsection{A semi-classical calculus with three parameters}
\label{sec: pseudo -3p}
\setcounter{equation}{0}

We set $\W = \R^N \times \R^N$, $N=d+1$, often referred to as phase-space. A
typical element of $\W$ will be $X = (s,x,\sigma,\xi)$, with $s \in
\R$,  $x \in \R^d$, $\sigma \in \R$, and
$\xi \in \R^d$. We also write $x = (x', x_d)$, $x' \in \R^{d-1}$, $x_d
\in \R$,  and accordingly $\xi = (\xi', \xi_d)$. 

With $s$ and $x$ playing very similar r\^ole in the definition of the
calculus, we set $z=(s,x) \in \R^{N}$, $z'=(s,x') \in \R^{N-1}$, and
$z_N = x_d$. 
We also set $\zeta = (\sigma, \xi)\in \R^N$, $\zeta' = (\sigma, \xi')
\in \R^{N-1}$,  and $\zeta_N = \xi_d$.

In this section, we shall consider a weight
function of the form 
\begin{equation}
  \label{eq: weight function - au bord en x}
  \varphi_{\csp, \ctp} (z) 
  = e^{\csp \cpsi (z)},\qquad \cpsi (z) = \psi(\ctp z '\! ,z_N),
\end{equation}
with $\csp$ and $\ctp$ as parameters, satisfying $\csp\geq 1$, 
$\ctp\in [0,1]$, and $\psi\in \Cinf(\R^N)$.  To define a
proper pseudo-differential calculus, we assume the following properties
of $\psi$:
\begin{equation}
  \label{eq: cond psi}
    \psi \geq C>0, \quad  \Norm{\psi^{(k)}}{L^\infty} < \infty,  \ k
    \in \N.
\end{equation}
In particular, there exists $k>0$ \st 
\begin{equation}
  \label{eq: cond inf-sup}
  \sup_{\R^N} \psi \leq (k+1) \inf_{\R^N} \psi.
\end{equation}

\subsubsection{A class of semi-classical symbols}

We introduce the following class of tangential symbols depending on the variables 
$z\in \R^N$, $\zeta'\in \R^{N-1}$ and $\hatt \in \R^N$.
We set $\hlambdat^2 = |\zeta'|^2 + |\hatt|^2$. 
%%%%%%%%%%%%%%%%%%%%%%%%
% definition           %
%%%%%%%%%%%%%%%%%%%%%%%%
\begin{definition}
  \label{def: semi-classical symbol}
  Let $m \in \R$. We say that $a(z,\zeta', \hatt) \in \Cinf(\Rpb\times
  \R^{N-1}\times \R^N)$ belong to the class $S^m_{\T,\hatt}$ if, for all multi-indices $\mi\in \N^N$, $\smi\in \N^{N-1}$,
  $\tmi\in \N^N$, there exists $C_{\mi, \smi, \tmi}>0$ \st 
  \begin{align*}
    |\d_z^\mi \d_{\zeta'}^\smi \d_{\hatt}^\tmi a (z,\zeta', \hatt)|
    \leq C_{\mi, \smi, \tmi} \hlambdat^{m - |\smi| - |\tmi|}, \quad
    (z,\zeta', \hatt) \in \Rpb\times
  \R^{N-1}\times \R^N, \ |\hatt|\geq 1. 
  \end{align*}
  If $\Gamma$ is a conic  open set of $\Rpb\times
  \R^{N-1}\times \R^N$, we say that $a \in S^m_{\T,\hatt}$ in $\Gamma$
  if the above property holds for $(z,\zeta', \hatt) \in \Gamma$. 
\end{definition}
Note that, as opposed to usual semi-classical symbols, we ask for some
regularity with respect to the semi-classical parameter that is a
vector of $\R^N$ here.

This class of symbols will not be used as such to define a class of
pseudo-differential operators but rather to generate other classes of
symbols and associated operators in a more refined semi-classical
calculus that we present now.

\subsubsection{Metrics}
For $\tauast\geq 2$, 
we set 
\begin{align*}
  &\M = \R^N \times \R^{N}\times   [\tauast,+\infty)
      \times [1,+\infty)\times [0,1], \\
  &\Mt = \Rpb \times \R^{N-1}\times   [\tauast,+\infty)
      \times [1,+\infty)\times [0,1]. 
\end{align*}
We denote by $\y = (z,\zeta,\tau,\csp,\ctp)$ a point in $\M$ and by $\y' =
(z,\zeta',\tau,\csp,\ctp)$  a point in $\Mt$.

We set $\ttau = \tau \csp \varphi_{\csp, \ctp}(z) \in \R_+$. For simplicity, even though $\ttau$ is independent of
$\zeta'$, we shall write $\ttau = \ttau(\y')$, when we wish to keep in mind
that $\ttau$ is not a  simple parameter but rather a function.
As $\psi>0$, $\tau\geq \tauast$, and $\csp \geq 1$,we note that we have $\ttau\geq
\tauast$. 
We then set 
\begin{align*}
  \lambsc^2 =  \lambsc^2(\y)
  =|\zeta|^2 +\ttau(\y')^2, 
  \qquad 
  \lambsct^2 =  \lambsct^2(\y')
  =|\zeta'|^2 +\ttau(\y')^2.
\end{align*}
The explicit dependences of $\lambsc$ and $\lambsct$ upon $\y$ and
$\y'$  are now dropped to ease notation in this section.
Similarly,  we shall write $\varphi(z)$, or simply $\varphi$, 
in place of $\varphi_{\csp, \ctp}(z)$.

We consider the following metric on phase-space $\W = \R^N \times \R^N$
\begin{align}
  \label{eq: def g}
  g = (1+\csp \ctp)^2 |d z'|^2
  + \csp^2 |d z_N|^2
  + \lambsc^{-2}|d \zeta|^2,
 \end{align}
for $\tau \geq \tauast$, $\csp \geq 1$, and $\ctp \in [0,1]$. 
(Note that this metric is not to be confused with the Riemannian
metric $\mathfrak{g}$
on $\Omega$.)

On the phase-space $\W' = \R^N \times \R^{N-1}$ adapted to a
tangential calculus, we consider
the following metric:
\begin{align*}
  \gt = (1+ \csp \ctp)^2 |d z'|^2
  + \csp^2 |d z_N|^2
  + \lambsct^{-2} |d \zeta'|^2,
\end{align*}
for $\tau \geq \tauast$, $\csp \geq 1$, and $\ctp \in [0,1]$.

\medskip
The first result of this section shows that the metric $g$ on $\W$ defines a
Weyl-H\"ormander pseudo-differential calculus, and that both $\varphi$
and $\lambsc$ have the
properties to be used as proper order functions. For a presentation of the 
Weyl-H\"ormander calculus we refer to
\cite{Lerner:10}, \cite[Sections 18.4--6]{Hoermander:V3} and
\cite{Hoermander:79}.

%%%%%%%%%%%%%%%%%%%%%%%%
% proposition          %
%%%%%%%%%%%%%%%%%%%%%%%%
\begin{proposition}
  \label{prop: admissible metric and order function}
  The metric $g$ and the order functions $\varphi_{\csp, \ctp}$, $\lambsc$ are admissible, 
  in the sense that, the following properties hold (uniformly with respect to
  the parameters $\tau$, $\csp$, and $\ctp$):
  \begin{enumerate}
  \item $g$ satisfies the uncertainty principle, that is $ h_g^{-1} = 
    \csp^{-1} \lambsc \geq 1$.
    \item $\varphi_{\csp, \ctp}$, $\lambsc$ and $g$ are slowly varying;
    \item $\varphi_{\csp, \ctp}$, $\lambsc$ and $g$ are temperate.
  \end{enumerate}
\end{proposition}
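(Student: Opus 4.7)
The proposition is a routine but careful verification that the three-parameter metric $g$ and the two order functions fit into Hörmander's Weyl calculus, so the approach is to check each of the three conditions in turn, following the two-parameter blueprint of \cite{LeRousseau:12} and exploiting the third parameter $\ctp$ only through the specific coefficients of $g$. The key input from the hypotheses \eqref{eq: cond psi}--\eqref{eq: cond inf-sup} is a uniform bound on the derivatives of $\psi$ and the two-sided control $\sup \psi \le (k+1) \inf \psi$; everything else is linear algebra and Gronwall-type integration along segments.

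\textbf{Uncertainty principle.} I would first compute the symplectically dual metric by swapping the paired weights coordinate by coordinate: this gives
\begin{equation*}
g^\sigma \;=\; \lambsc^{2}\,|dz|^{2} \;+\; (1+\csp\ctp)^{-2}\,|d\zeta'|^{2} \;+\; \csp^{-2}\,d\zeta_N^{2}.
\end{equation*}
The Planck function is the maximum of the coordinate-wise ratios $g/g^\sigma$, which equals $\max\bigl((1+\csp\ctp)^{2},\csp^{2}\bigr)/\lambsc^{2}$. Since $\ctp\le 1$ and $\csp\ge 1$, one has $(1+\csp\ctp)\le 2\csp$, so $h_g\asymp \csp/\lambsc$ uniformly in the parameters. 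The lower bound $\lambsc^{2}\ge \ttau^{2} = \tau^{2}\csp^{2}\varphi^{2}\ge \tauast^{2}\csp^{2}$, which uses $\varphi\ge 1$ (itself following from $\psi\ge C>0$ in \eqref{eq: cond psi}), then yields $h_g^{-1}\gtrsim \tauast$, and this is $\ge 1$ once $\tauast$ is chosen sufficiently large.

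\textbf{Slow variation.} Differentiating $\varphi = e^{\csp\psi(\ctp z',z_N)}$ and using the $W^{k,\infty}$ bounds on $\psi$ from \eqref{eq: cond psi}, I obtain the pointwise estimates $|\d_{z'_j}\varphi|\le C\csp\ctp\,\varphi$ and $|\d_{z_N}\varphi|\le C\csp\,\varphi$. If $g_X(Y-X)\le r^{2}$, the definition of $g$ forces $(1+\csp\ctp)|z'_Y-z'_X|\le r$, $\csp|z_{N,Y}-z_{N,X}|\le r$, and $|\zeta_Y-\zeta_X|\le r\lambsc(X)$; feeding this into the above derivative bounds and integrating along the segment joining $X$ to $Y$ gives $|\log(\varphi(Y)/\varphi(X))|\le C r$. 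Hence $\varphi$ (and therefore $\ttau$) is slowly varying. Combining with $|\zeta_Y|^{2}\le 2|\zeta_X|^{2}+2r^{2}\lambsc(X)^{2}$ gives $\lambsc(Y)\asymp \lambsc(X)$ for $r$ small; since the other two weights of $g$ are independent of $X$, the metric $g$ is slowly varying.

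\textbf{Temperance.} This is the place where the global hypothesis \eqref{eq: cond inf-sup} is crucial and is the main technical obstacle, because the Lipschitz bound on $\psi$ only yields an exponential (not polynomial) bound on $\varphi(Y)/\varphi(X)$ in terms of $g^\sigma_X(X-Y)^{1/2}$. I would turn this around by using \eqref{eq: cond inf-sup} to get the global polynomial ceiling
\begin{equation*}
\varphi(Y)/\varphi(X) \;\le\; e^{\csp(\sup\psi-\inf\psi)} \;\le\; e^{\csp k\inf\psi} \;=\; \varphi_{\min}^{k} \;\le\; \varphi(X)^{k},
\end{equation*}
so that $\ttau(Y)\le \ttau(X)^{k+1}$ for all $X,Y$. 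Since $\ttau\ge \tauast\csp\ge 1$, this shows $\varphi$ and $\ttau$ are $g$-temperate order functions with uniform constants. For $\lambsc$, I combine this with the estimate $|\zeta_Y-\zeta_X|^{2}\lesssim \csp^{2}\,g^\sigma_X(X-Y)\le \lambsc(X)^{2}\,g^\sigma_X(X-Y)$ read off from the formula for $g^\sigma$; this gives $\lambsc(Y)^{2}\lesssim \lambsc(X)^{2}\bigl(1+g^\sigma_X(X-Y)\bigr)+\ttau(X)^{2(k+1)}$, which is the required polynomial bound. Temperance of $g$ itself then reduces to the temperance of $\lambsc$ (the other two weights being constants), and we are done.
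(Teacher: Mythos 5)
Your verification of the uncertainty principle and the slow-variation conditions is correct and runs essentially parallel to the paper's argument. The temperance step, however, has a genuine gap. The global ceiling $\varphi(z_Y)\le\varphi(z_X)^{k+1}$ (equivalently $\ttau(Y)\le\ttau(X)^{k+1}$) is a true consequence of \eqref{eq: cond inf-sup}, but it is not by itself a temperance estimate: temperance requires $\ttau(Y)\le C\,\ttau(X)\,\bigl(1+g^\sigma_X(X-Y)\bigr)^N$, and the excess factor $\ttau(X)^{k}$ that your bound produces is not controlled by any power of $g^\sigma_X(X-Y)$ (take $X$ and $Y$ close with $\tau$ large: $g^\sigma_X(X-Y)$ stays bounded while $\ttau(X)^k$ is arbitrarily large). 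The same defect appears in your final inequality $\lambsc(Y)^2\lesssim\lambsc(X)^2\bigl(1+g^\sigma_X(X-Y)\bigr)+\ttau(X)^{2(k+1)}$: the last summand is not majorized by $\lambsc(X)^2\bigl(1+g^\sigma_X(X-Y)\bigr)^N$ either, so this is not ``the required polynomial bound.''

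The missing ingredient is the dichotomy on the $z$-separation which the paper's proof carries out. When $(1+\csp\ctp)|z_X'-z_Y'|+\csp|(z_X)_N-(z_Y)_N|\le 1$, the Lipschitz (slow-variation) bound on $\csp\cpsi$ already yields $\ttau(Y)\lesssim\ttau(X)$ without invoking \eqref{eq: cond inf-sup}. When it exceeds $1$, one has $|z_X-z_Y|\gtrsim 1/\csp$, so $g^\sigma_X(X-Y)\ge\lambsc(X)^2|z_X-z_Y|^2\gtrsim\bigl(\tau\varphi(z_X)\bigr)^2$; it is precisely this lower bound that lets one absorb the excess $\varphi(z_X)^k$ (equivalently, $\bigl(\lambsc(X)/(\tau\csp)\bigr)^k$) coming from the global ceiling into powers of $\bigl(1+g^\sigma_X(X-Y)\bigr)$. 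Incidentally, hypothesis \eqref{eq: cond inf-sup} is needed only in this second case and only for the temperance of $\lambsc$ (hence of $g$); for $\varphi$ alone, the cruder bound $\varphi(z_X)\le\lambsc(X)/\csp\lesssim|z_X-z_Y|\,\lambsc(X)\le g^\sigma_X(X-Y)^{1/2}$ already suffices in that regime, so the exponential Lipschitz growth you correctly flagged as ``the main technical obstacle'' is only part of the story.
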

We refer to Appendix~\ref{proof prop: admissible metric and order function} for a proof.
Similarly, we have the following proposition.
%%%%%%%%%%%%%%%%%%%%%%%%
% proposition          %
%%%%%%%%%%%%%%%%%%%%%%%%
\begin{proposition}
  \label{prop: admissible metric and order function tangential}
  The metric $\gt$ and the order functions $\varphi_{\csp, \ctp}$,
  $\lambsct$ are admissible.
  For the tangential calculus we have $h^{-1}_{\gt} =
(1+ \ctp \csp)^{-1} \lambsct \geq 1$.
\end{proposition}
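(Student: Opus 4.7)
The plan is to adapt essentially verbatim the argument used for the full metric $g$ in Proposition~\ref{prop: admissible metric and order function}. The tangential setting differs only in that the phase-space $\W$ is replaced by $\W' = \R^N \times \R^{N-1}$, the covariable $\zeta$ becomes $\zeta'$, and the dual weight $\lambsc$ becomes $\lambsct$, while $z = (z',z_N) \in \R^N$ remains the full spatial variable. The three conditions to verify are the uncertainty principle, slow variation, and temperateness of $\gt$, $\varphi_{\csp,\ctp}$ and $\lambsct$.

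For the uncertainty principle, a direct computation with the tangential symplectic pairing $\sigma(dz', d\zeta'; d\tilde z', d\tilde \zeta') = dz'\cdot d\tilde\zeta' - d\tilde z'\cdot d\zeta'$ gives the dual metric $\gt^\sigma = \lambsct^2 |dz'|^2 + (1+\csp\ctp)^{-2} |d\zeta'|^2$ in the symplectically paired directions (the $z_N$-direction does not contribute), and hence $h_{\gt}^{-2} = \inf (\gt^\sigma/\gt) = (1 + \csp\ctp)^{-2} \lambsct^2$. Using $\psi \geq C_0 > 0$ from \eqref{eq: cond psi}, together with $\csp \geq 1$, we obtain $\varphi_{\csp,\ctp} \geq e^{C_0}$, hence $\ttau = \tau \csp \varphi_{\csp,\ctp} \geq e^{C_0}\tauast \csp$, which combined with $\ctp \leq 1$ yields $\lambsct \geq \ttau \geq \hf e^{C_0}\tauast (1 + \csp\ctp)$. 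Choosing $\tauast$ large enough, depending only on $\inf\psi$, therefore yields $h_{\gt}^{-1} = (1+\csp\ctp)^{-1} \lambsct \geq 1$, uniformly in the other parameters.

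Slow variation and temperateness of $\varphi_{\csp,\ctp}$ and $\lambsct$ rest on the derivative identities
\begin{align*}
  \d_{z'} \varphi_{\csp,\ctp}
  &= \csp\ctp\, (\d_{z'}\psi)(\ctp z',z_N)\, \varphi_{\csp,\ctp},
  \\
  \d_{z_N} \varphi_{\csp,\ctp}
  &= \csp\, (\d_{z_N}\psi)(\ctp z',z_N)\, \varphi_{\csp,\ctp},
\end{align*}
together with the $L^\infty$-bounds on all derivatives of $\psi$ coming from \eqref{eq: cond psi}. The chain-rule prefactors $\csp\ctp$ and $\csp$ are exactly the square roots of the metric coefficients $(1+\csp\ctp)^2$ and $\csp^2$ appearing in $\gt$, so a unit $\gt$-displacement in $z$ produces an $\mathcal O(1)$ relative variation of $\varphi_{\csp,\ctp}$; the same is then true of $\ttau$ and, using $|\d_{\zeta'}\lambsct| \leq 1$ and a unit $\gt$-displacement of size $\lambsct$ in $\zeta'$, of $\lambsct$ itself. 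The slow variation of $\gt$ is an immediate consequence of that of $\lambsct$, and the temperate bounds follow by the same dual-metric manipulations as in Appendix~\ref{proof prop: admissible metric and order function}.

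The main obstacle, shared with the full-metric case, is to keep all estimates uniform in the three parameters $\tau$, $\csp$, $\ctp$ simultaneously: the factors of $\csp$ and $\csp\ctp$ produced by differentiating $\varphi_{\csp,\ctp}$ must be absorbed once and for all into the lower bound on $\tauast$ provided by the uncertainty-principle step, after which the slow variation and temperate bounds propagate without further delicate adjustments. Apart from this bookkeeping, the tangential case introduces no new difficulty with respect to the full-space one.
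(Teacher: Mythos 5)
Your proof is correct and follows exactly the approach the paper intends, namely reproducing the argument of Proposition~\ref{prop: admissible metric and order function} in the tangential setting where $\zeta$ and $\lambsc$ are replaced by $\zeta'$ and $\lambsct$ and the $\zeta_N$-direction is dropped; the paper itself gives no separate proof and only appends a remark on $\tauast$. Your dual-metric computation and the conclusion $h_{\gt}^{-1}=(1+\ctp\csp)^{-1}\lambsct$ are right, and your lower bound $\geq 1$ via $1+\csp\ctp\leq 2\csp$ and $\tau\varphi\geq\tauast$ reproduces the paper's reasoning, including the trade-off between $\tauast$ and $\inf\psi$ that the paper comments on. One small imprecision: the chain-rule prefactors on $\varphi_{\csp,\ctp}$ are $\csp\ctp$ and $\csp$, which are dominated by, but not \emph{exactly equal to}, the square roots $(1+\csp\ctp)$ and $\csp$ of the metric coefficients — the added $1$ is what keeps $\gt$ non-degenerate when $\ctp=0$; this does not affect the argument, since an upper bound is all that is needed.
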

Note that the proof of the uncertainty principle uses that $\tauast
\geq 2$. The condition $\tauast\geq 1$ would suffice if we chose $\psi
\geq \ln (2)$. We preferred not to add this technical condition on the
weight function $\psi$. 

Consequently, $\ttau(\y')$ is also an admissible order function for
both calculi.
 
\subsubsection{Symbols} 
Let $a(\y) \in \Cinf(\R^N\times\R^N)$, with $\tau$, $\csp$, and $\ctp$
acting as parameters, and $m, r\in \R$, be \st for all
multi-indices $\mi, \smi \in \N^N$, with  $\mi = (\mi',\mi_N)$, we have
  \begin{equation}
      \label{eq: semi-classical symbols new}
    |\d_{z}^{\mi}\d_\zeta^\smi a(\y)|
    \leq C_{\mi,\smi}\  \csp^{|\mi_N|}(1+ \ctp \csp)^{|\mi'|} \ttau^r \lambsc^{m-|\smi|},
      \quad \  \y \in \M.
  \end{equation}
With the notation of \cite[Sections 18.4-18.6]{Hoermander:V3} we then
have $a (\y) \in S(\ttau^r \lambsc^m, g)$.

\medskip
Similarly, let $a(\y') \in \Cinf(\Rpb\times\R^{N-1})$, with $\tau$, $\csp$, and $\ctp$
acting as parameters, and $m \in
\R$. If  for all multi-indices $\mi=(\mi',\mi_N) \in \N^N , \smi' \in
\N^{N-1}$,  we have
  \begin{equation}
      \label{eq: semi-classical symbols tangential new}
      |\d_z^\mi \d_{\zeta'}^{\smi'} a(\y')|
    \leq C_{\mi,\smi'}\  \csp^{|\mi_N|}(1+ \ctp \csp)^{|\mi'|} \ttau^r \lambsct^{m-|\smi'|},
      \quad \y' \in \Mt, 
  \end{equation}
we then
write  $a (\y') \in S(\ttau^r \lambsct^m, \gt)$.
Observe that $S(\ttau^r \lambsct^m, \gt) \subset S(\lambsct^{r+m},
\gt)$.

The principal symbol associated with $a(\y') \in S(\ttau^r \lambsct^m, \gt)$
is given by its equivalence class in $S(\ttau^r \lambsct^m, \gt) / S((
1 + \ctp\csp) \ttau^r
\lambsct^{m-1}, \gt)$. We denote this principal part by $\sigma(a)$. 
Often, an homogeneous representative can be selected and the
principal part is then identified with this particular representative
of the equivalence class. (Conic sets and homogeneous symbols are
precisely defined in Section~\ref{sec: conic homog} below.)

\medskip
We define the following class of symbols, that are polynomial
with respect to $\xi_N$, 
\begin{align*}
  \Symbsc^{m,m'}= \sum_{j=0}^m S(\lambsct^{m+m'-j}, \gt) \zeta_N^j.
\end{align*}
For $a (\y) \in \Symbsc^{m,m'}$, with $a (\y) = \sum_{j=0}^m a_j (\y')
\zeta_N^j$, with $a_j(\y') \in S(\lambsct^{m+m'-j}, \gt)$, we denote
its principal part by $\sigma(a) (\y) = \sum_{j=0}^m \sigma(a_j) (\y')
\zeta_N^j$.

\medskip
For this calculus with parameters to make sense, it is important to
check that $\lambsc \in S(\lambsc, g)$ and
$\lambsct \in S(\lambsct, \gt)$ and $\ttau \in S(\ttau, g) \cap
S(\ttau, \gt)$. In fact,  the latter property implies the first two.
%%%%%%%%%%%%%%%%%%%%%%%%
% lemma                %
%%%%%%%%%%%%%%%%%%%%%%%%
\begin{lemma}
  \label{lemma: ttau in the calculus}
  We have $\ttau = \tau \csp \varphi_{\csp, \ctp} \in S(\ttau, g) \cap
S(\ttau, \gt)$.
\end{lemma}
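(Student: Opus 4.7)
The plan is to reduce everything to a direct differentiation of the explicit expression $\ttau = \tau \csp\, e^{\csp\psi(\ctp z', z_N)}$. Since $\ttau$ is independent of the dual variables, only the $z$-derivatives in \eqref{eq: semi-classical symbols new} and \eqref{eq: semi-classical symbols tangential new} need checking: for $|\smi|\ge 1$, $\d_\zeta^\smi\ttau=0$ and $\d_{\zeta'}^\smi\ttau=0$, which satisfies both required bounds trivially. So the two memberships $\ttau\in S(\ttau,g)$ and $\ttau\in S(\ttau,\gt)$ will be proved simultaneously by controlling $\d_z^\mi\ttau$.

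First I would compute the derivatives of the exponent $f(z)=\csp\psi(\ctp z',z_N)$. Writing $\mi=(\mi',\mi_N)$ with $\mi'\in\N^{N-1}$ and $\mi_N\in\N$, the chain rule gives
\begin{equation*}
\d_z^\mi f(z) \;=\; \csp\,\ctp^{|\mi'|}\,(\d^\mi\psi)(\ctp z',z_N),
\end{equation*}
so by the boundedness assumption \eqref{eq: cond psi} on $\psi$ and its derivatives,
\begin{equation*}
|\d_z^\mi f(z)| \;\le\; C_\mi\, \csp\, \ctp^{|\mi'|}, \qquad |\mi|\ge 1.
\end{equation*}

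Next I would apply the Faà di Bruno formula to $\ttau=\tau\csp\,e^f$. Every term in $\d_z^\mi e^f$ is of the form $e^f$ times a product $\prod_{k=1}^{p}\d_z^{\mi_{(k)}}f$ where $\{\mi_{(k)}\}$ is a partition of $\mi$ (so $\sum_k |\mi'_{(k)}|=|\mi'|$, $\sum_k |\mi_{(k),N}|=|\mi_N|$, and the number of parts $p$ satisfies $1\le p\le|\mi|$). Each factor is bounded by $C\,\csp\,\ctp^{|\mi'_{(k)}|}$, so the whole product is bounded by $C_\mi\, \csp^{p}\,\ctp^{|\mi'|}$. Using $p\le|\mi|=|\mi'|+|\mi_N|$ and $\csp\ge 1$, one has $\csp^{p}\,\ctp^{|\mi'|}\le \csp^{|\mi_N|}(\csp\ctp)^{|\mi'|}\le \csp^{|\mi_N|}(1+\csp\ctp)^{|\mi'|}$. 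Multiplying by $\tau\csp\,e^f=\ttau$ yields
\begin{equation*}
|\d_z^\mi \ttau(\y')|\;\le\; C_\mi\, \csp^{|\mi_N|}(1+\csp\ctp)^{|\mi'|}\,\ttau(\y'),
\end{equation*}
which is exactly the condition \eqref{eq: semi-classical symbols new} (respectively \eqref{eq: semi-classical symbols tangential new}) with $r=1$, $m=0$, $\smi=0$, and thus establishes the result.

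No serious obstacle arises here; the only small point requiring care is the bookkeeping in Faà di Bruno, namely matching the number of $\csp$-factors ($\le|\mi|$) against the split between tangential and normal derivatives, so that the unpaired $\csp$'s can be absorbed into $\csp^{|\mi_N|}$ and the remaining ones combined with the $\ctp^{|\mi'|}$ into $(1+\csp\ctp)^{|\mi'|}$. The boundedness assumption \eqref{eq: cond psi} on $\psi$ is what allows the derivatives of $\psi$ to be treated as mere constants and avoids any dependence on $\tauast$ or on the point $z$.
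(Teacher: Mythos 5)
Your proof is correct and follows essentially the same route as the paper's: both differentiate $\ttau=\tau\csp\varphi_{\csp,\ctp}$ via a Faà di Bruno expansion, bound each factor of $\d_z^{\mi^{(j)}}\cpsi$ using \eqref{eq: cond psi} to pick up a $\ctp^{|\mi^{(j)\prime}|}$, and then use the identity $\csp^{|\mi|}\ctp^{|\mi'|}=\csp^{|\mi_N|}(\csp\ctp)^{|\mi'|}\le\csp^{|\mi_N|}(1+\csp\ctp)^{|\mi'|}$ together with $\csp\ge 1$ and the triviality of the $\zeta$-derivatives.
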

We refer to  Section~\ref{proof: lemma: ttau in the calculus} for a proof.

\subsubsection{A semi-classical cotangent vector}
\label{sec: semiclassical cotangent vector htau}
We set $\htau = \tau d_z \varphi_{\csp, \ctp}(z) = \tau \csp
\varphi_{\csp, \ctp}(z) d_z \cpsi(z) = \ttau(\y') d_z \cpsi(z)
\in \R^N$. As for $\ttau$, we shall write $\htau = \htau(\y')$, when we wish to keep in mind
that $\htau$ is not a constant cotangent vector. 
Note that  $\htau = (\htau', \htau_N)$
with 
\begin{align*}
   \htau'(\y') = \ttau(y') d_{z'} \cpsi (z)
  = \ctp \ttau(y')  d_{z'}\psi(\ctp z '\!
  ,z_N), 
  \quad 
  \htau_N(\y') = \ttau(y')  \d_{z_N}\psi(\ctp z '\! ,z_N).
\end{align*}

As $d_{z'} \cpsi \in S(\ctp, \gt)$ and $\d_{z_N} \cpsi \in S(1,
\gt)$, we have the following result.
%%%%%%%%%%%%%%%%%%%%%%%%
% lemma                %
%%%%%%%%%%%%%%%%%%%%%%%%
\begin{lemma}
  \label{lemma: htau in the calculus}
  We have $\htau' \in S(\ctp \ttau, g)^{N-1} \cap S(\ctp \ttau, \gt)^{N-1}$
  and $\htau_N \in S(\ttau, g) \cap S(\ttau, \gt)$.
\end{lemma}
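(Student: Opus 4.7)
The plan is to use the chain rule to factor $\htau$ as a product, one factor being $\ttau$ (which sits in the desired classes by Lemma~\ref{lemma: ttau in the calculus}) and the other a $z$-only symbol built from $\psi$. By the chain rule applied to $\cpsi(z) = \psi(\ctp z', z_N)$, we have
$$d_{z'}\cpsi(z) = \ctp\, (d_{z'}\psi)(\ctp z', z_N), \qquad \d_{z_N}\cpsi(z) = (\d_{z_N}\psi)(\ctp z', z_N),$$
so that $\htau'(\y') = \ttau(\y')\, d_{z'}\cpsi(z)$ and $\htau_N(\y') = \ttau(\y')\, \d_{z_N}\cpsi(z)$.

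First I would verify that $d_{z'}\cpsi \in S(\ctp, g)^{N-1} \cap S(\ctp, \gt)^{N-1}$ and $\d_{z_N}\cpsi \in S(1, g) \cap S(1, \gt)$. Each tangential derivative $\d_{z'_j}$ hitting a function of the form $f(\ctp z', z_N)$ produces, by the chain rule, an extra factor of $\ctp$, while normal derivatives $\d_{z_N}$ produce no parameter factors. Using the boundedness of all derivatives of $\psi$ from \eqref{eq: cond psi}, this yields
$$|\d_{z'}^{\mi'}\d_{z_N}^{\mi_N}(d_{z'}\cpsi)| \leq C_\mi\, \ctp^{1+|\mi'|}, \qquad |\d_{z'}^{\mi'}\d_{z_N}^{\mi_N}(\d_{z_N}\cpsi)| \leq C_\mi\, \ctp^{|\mi'|}.$$
Since $\ctp^{|\mi'|} \leq (1+\ctp\csp)^{|\mi'|}$ and $1 \leq \csp^{|\mi_N|}$, and since these symbols are independent of $\zeta$ and $\zeta'$ (so the bounds for nontrivial dual-variable derivatives are trivially zero), both memberships follow at once from the symbol definitions~\eqref{eq: semi-classical symbols new} and \eqref{eq: semi-classical symbols tangential new}.

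The conclusion then follows from the standard product rule $S(m_1,g)\cdot S(m_2,g) \subset S(m_1 m_2,g)$ and its $\gt$-analog, combined with Lemma~\ref{lemma: ttau in the calculus}: multiplying $\ttau \in S(\ttau,g)\cap S(\ttau,\gt)$ componentwise by $d_{z'}\cpsi$ gives $\htau' \in S(\ctp\ttau,g)^{N-1}\cap S(\ctp\ttau,\gt)^{N-1}$, and similarly for $\htau_N$. The only bookkeeping point is that $\ctp\ttau$ and $\ttau$ must themselves be admissible order functions, which is immediate since the scalar parameter $\ctp \in [0,1]$ preserves slow variation and temperance. I do not anticipate any real obstacle here: the argument is a plain Leibniz-rule exercise, made routine by the fact that $\htau$ is independent of $\zeta$, so the weights $\lambsc$ and $\lambsct$ never enter the estimates and the $g$- and $\gt$-statements can be proved by the same computation.
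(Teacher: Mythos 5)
Your proposal is correct and follows the same route as the paper, which states the lemma without a formal proof block and justifies it only by the immediately preceding remark that $d_{z'}\cpsi\in S(\ctp,\gt)$ and $\d_{z_N}\cpsi\in S(1,\gt)$, combined with Lemma~\ref{lemma: ttau in the calculus}. You have simply unpacked that one-line justification — chain rule to get the $\ctp$-gain in tangential derivatives, the trivial $\zeta$-independence so that $\lambsc$ and $\lambsct$ never enter, and the product rule for Weyl--H\"ormander symbol classes — all of which is sound.
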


For later use, we also introduce the following notation:
\begin{align}
  \label{eq: decomposition ttau}
   &\htau_{\sigma} = \htau_{\sigma}(\y') =\tau \d_s \varphi_{\csp,
     \ctp}(z) \in \R, 
  \quad   \htau_\xi =\htau_\xi (\y') = \tau d_x \varphi_{\csp,
     \ctp}(z)\in\R^{N-1}= \R^d, \\
  &\htau_{\xi_d} =\htau_{\xi_d} (\y')= \tau \d_{x_d}
    \varphi_{\csp, \ctp}(z) \in \R, 
    \quad \htau_{\xi'} = \htau_{\xi'} (\y')
    = \tau d_{x'} \varphi_{\csp,\ctp}(z) \in\R^{N-2}= \R^{d-1}.
    \notag
\end{align}
We then have 
\begin{align}
  \label{eq: decomposition ttau2}
  \htau = (\htau_{\sigma}, \htau_\xi) = (\htau_{\sigma}, \htau_{\xi'},
  \htau_{\xi_d}),
  \qquad 
  \htau' = (\htau_{\sigma}, \htau_{\xi'}), \quad 
  \htau_N = \htau_{\xi_d}.
\end{align}

Even thought the following lemma is very elementary, we state it for
futur reference.
%%%%%%%%%%%%%%%%%%%%%%%%
% lemma                %
%%%%%%%%%%%%%%%%%%%%%%%%
\begin{lemma}
  \label{lemma: equiv ttau |htau|} 
  Let $V$ be an open set of $\R^N$ \st $\d_{x_d} \psi(z)\geq
  C>0$ for $z \in V$. 
  Then, we have  \begin{align}
  |\htau| \asymp \htau_{\xi_d} \asymp \ttau,
  \qquad z \in V.
\end{align}
\end{lemma}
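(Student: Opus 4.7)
The plan is to unwind the definitions and use that $\psi$ and all its derivatives are bounded (condition \eqref{eq: cond psi}) together with the lower bound $\d_{x_d}\psi \geq C>0$ in $V$.

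First, I recall that $\htau = \ttau\, d_z\cpsi(z)$ with $\cpsi(z) = \psi(\ctp z', z_N)$, so by the chain rule
\begin{equation*}
\htau_{\xi_d}(\y') = \ttau(\y')\, \d_{z_N}\psi(\ctp z', z_N),
\qquad
\htau'(\y') = \ctp\, \ttau(\y')\, d_{z'}\psi(\ctp z', z_N).
\end{equation*}

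From the hypothesis $\d_{x_d}\psi = \d_{z_N}\psi \geq C>0$ in $V$ together with the bound $\Norm{\d_{z_N}\psi}{L^\infty} <\infty$ coming from \eqref{eq: cond psi}, I immediately get $\htau_{\xi_d} \asymp \ttau$ on $V$. Since $d_{z'}\psi$ is also bounded by \eqref{eq: cond psi} and $\ctp\in[0,1]$, I have $|\htau'| \lesssim \ctp\, \ttau \leq \ttau$. Decomposing $|\htau|^2 = |\htau'|^2 + \htau_{\xi_d}^2$ as in \eqref{eq: decomposition ttau2}, the upper bound yields $|\htau|^2 \lesssim \ttau^2$ and the lower bound $|\htau|^2 \geq \htau_{\xi_d}^2 \gtrsim \ttau^2$. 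Combining, $|\htau| \asymp \htau_{\xi_d} \asymp \ttau$ on $V$, which is the claim.

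There is no real obstacle here; the statement is essentially a reading of the definitions, and the only thing to notice is that the tangential component $\htau'$ carries the small factor $\ctp$, so it cannot overwhelm the normal component $\htau_{\xi_d}$, which is already comparable to $\ttau$ by the sign assumption on $\d_{x_d}\psi$.
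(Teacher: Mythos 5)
Your argument is correct and matches the paper's proof essentially verbatim: both use the lower bound on $\d_{x_d}\psi$ to get $\ttau\lesssim\htau_{\xi_d}$ and the uniform bound $\Norm{\psi'}{L^\infty}<\infty$ from \eqref{eq: cond psi} to get $|\htau|\lesssim\ttau$. The extra observation about the factor $\ctp$ in $\htau'$ is accurate but not needed, since boundedness of $d_{z'}\psi$ already gives $|\htau'|\lesssim\ttau$ regardless.
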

\begin{proof}
  As $\Norm{\psi'}\infty\leq C$,  if 
$\d_{x_d} \psi\geq C>0$ for $z \in V \subset \R^N$, then we have
$|\htau| \lesssim \ttau \lesssim
\htau_{\xi_d}$ and thus the result.
\end{proof}

\subsubsection{Conic sets and homogeneity}
\label{sec: conic homog}
We recall that a set $\Gamma\subset \Rpb \times \R^{N-1}\times \R^N$
is said to be conic if $(z,\zeta',\hatt) \in \Gamma$ implies that $(z,
\nu \zeta',\nu \hatt) \in \Gamma$ for all $\nu>0$.

We introduce the map 
\begin{align*}
\kappa: \Mt &\to \Rpb \times \R^{N-1} \times \R^N, \\
  \y'= (z,\zeta', \tau, \csp, \ctp) &\mapsto  (z,\zeta', \htau(\y')).
\end{align*} 
Throughout Section~\ref{sec: Carleman boundary x} and Appendix~\ref{sec: elliptic sub-elliptic estimates boundary x}, we shall use the following terminology.
%%%%%%%%%%%%%%%%%%%%%%%%
% definition           %
%%%%%%%%%%%%%%%%%%%%%%%%
\begin{definition}
  \label{def: conic set.}
   An open subset $\U$ of $\Mt$ is said to be conic if $\Gamma = \kappa(\U)$ is conic in $\Rpb\times \R^{N-1} \times \R^N$.
   
   A function $f: \U \to E$, $E$ a vector space, is said to be
   homogeneous of degree $m$ if $f$ takes the form $f= g \circ \kappa$
   with $g: \Rpb\times \R^{N-1} \times \R^N \to E$ \st 
   $g (z,\nu\zeta,\nu\hatt) = \nu^m g (z,\zeta,\hatt)$, for $\nu>0$.
\end{definition}
In other words,  conic sets and homogeneity are to be
understood with respect to the variables $(z,\zeta, \htau)$ instead of
the variables $(z,\zeta, \tau, \csp, \ctp)$, where, as above, $\htau = \tau d_z \varphi_{\csp,
  \ctp}(z)= \tau  \csp \varphi_{\csp,\ctp}(z) d_z \cpsi(z)$.

\medskip
If $\U$ is a conic  open subset of $\Mt$ we shall say
that $a \in S(\ttau^r \lambsct^m, \gt)$ in $\U$ if property \eqref{eq:
  semi-classical symbols tangential new}
holds in $\U$, with a similar terminology for symbols that satisfy the
defining property of $\Symbsc^{m,m'}$ in
$\U$.

\medskip In what follows, the following lemma will be used for
instance, to generate cutoff functions. It will also be used to obtain
symbols with the adapted homogeneity with respect to $\zeta'$ and
$\htau$. We refer to Section~\ref{proof lemma: from on calculus to the other} for a proof.
%%%%%%%%%%%%%%%%%%%%%%%%
% lemma                %
%%%%%%%%%%%%%%%%%%%%%%%%
\begin{lemma}
  \label{lemma: from on calculus to the other}
  Let $\U$ be a conic  open subset of $\Mt$ and set $\Gamma =
  \kappa(\U)$. Assume also that $|\htau| \asymp \ttau$ in $\U$. 
  Let $m\in \R$ and  $\hat{a} (z,\zeta', \hatt)\in S^m_{\T,\hatt}$ in $\Gamma$  (as
  given in Definition~\ref{def: semi-classical symbol}).  We then have  
  $a (\y')  = \hat{a} \circ \kappa(\y') \in  S(\lambsct^m, \gt)$ in
  $\U$. 
  In fact, if $\hat{a}$ is polynomial in $(\zeta', \hatt)$ the
  assumption $|\htau| \asymp \ttau$ in $\U$ is not needed.
\end{lemma}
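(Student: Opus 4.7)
The plan is to reduce the claim to a direct chain-rule computation, exploiting the fact that the assumption $|\htau|\asymp\ttau$ in $\U$ gives an equivalence of the two relevant order functions,
\[
\hlambdat(\y')^2 = |\zeta'|^2 + |\htau(\y')|^2 \asymp |\zeta'|^2 + \ttau^2 = \lambsct^2,
\]
throughout $\U$. Purely $\zeta'$-derivatives of $a = \hat{a}\circ\kappa$ are then immediate: $\d_{\zeta'}^{\smi'} a = (\d_{\zeta'}^{\smi'}\hat{a})\circ\kappa$ is bounded by $\hlambdat^{m-|\smi'|}\asymp \lambsct^{m-|\smi'|}$, which is the correct growth for membership in $S(\lambsct^m,\gt)$.

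Next I would handle $\d_z^\mi$-derivatives via Fa\`a di Bruno. A generic term in $\d_z^\mi\d_{\zeta'}^{\smi'}a(\y')$ has the form
\[
\big(\d_z^{\mi^0}\d_{\zeta'}^{\smi'}\d_{\hatt}^{\tmi}\hat{a}\big)\circ\kappa\ \cdot\ \prod_{k=1}^{|\tmi|}\d_z^{\mi^k}\htau_{j_k},
\]
with $\mi^0+\mi^1+\cdots+\mi^{|\tmi|}=\mi$ and $j_k\in\{1,\dots,N\}$. Since $\hat{a}\in S^m_{\T,\hatt}$ in $\Gamma$, the first factor is controlled by $\hlambdat^{m-|\smi'|-|\tmi|}\asymp\lambsct^{m-|\smi'|-|\tmi|}$. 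For each factor $\d_z^{\mi^k}\htau_{j_k}$, Lemma \ref{lemma: htau in the calculus} applied componentwise yields
\[
|\d_z^{\mi^k}\htau_{j_k}|\lesssim \csp^{|\mi^k_N|}(1+\ctp\csp)^{|(\mi^k)'|}\,\ttau.
\]
Multiplying over $k$ and using $\ttau\lesssim\lambsct$ produces the overall bound
\[
|\d_z^\mi\d_{\zeta'}^{\smi'}a(\y')|\lesssim \csp^{|\mi_N|}(1+\ctp\csp)^{|\mi'|}\,\lambsct^{m-|\smi'|},
\]
where the total powers of $\csp$ and $(1+\ctp\csp)$ generated by the sum $\sum_k \mi^k \leq \mi$ are absorbed into the full exponents $|\mi_N|$ and $|\mi'|$, since $\csp,\,1+\ctp\csp\geq 1$. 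This is precisely the defining estimate \eqref{eq: semi-classical symbols tangential new} of $S(\lambsct^m,\gt)$.

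For the polynomial case without the hypothesis $|\htau|\asymp\ttau$, I would expand $\hat{a}(z,\zeta',\hatt) = \sum_{\mi,\smi} c_{\mi,\smi}(z)(\zeta')^{\mi}\hatt^{\smi}$ as a finite sum, so that $a(\y') = \sum c_{\mi,\smi}(z)(\zeta')^{\mi}\htau(\y')^{\smi}$. Lemma \ref{lemma: htau in the calculus} provides $\htau_{j}\in S(\ttau,\gt)$ unconditionally; together with $|\zeta'|\leq\lambsct$ and $\ttau\lesssim\lambsct$, it follows that each monomial lies in $S(\lambsct^{|\mi|+|\smi|},\gt)\subset S(\lambsct^m,\gt)$. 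The one-sided bound $|\htau|\lesssim\ttau$ (which always holds, since $\htau = \ttau\, d_z\cpsi$ with $d_z\cpsi$ uniformly bounded) replaces the full equivalence here, because no factor of $\hlambdat$ ever appears after the polynomial expansion.

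The main potential pitfall is the bookkeeping of the $\csp$- and $(1+\ctp\csp)$-factors generated by each $z$-derivative of $\htau$: one must verify that, when summed over the Fa\`a di Bruno decomposition, they never exceed what the metric $\gt$ permits. The trivial estimates $\csp, 1+\ctp\csp\geq 1$ make this routine; one could alternatively exploit the sharper tangential bound $\htau'\in S(\ctp\ttau,\gt)$ to save a factor $\ctp^{|(\mi^k)'|}$ per term, but this refinement is not needed for the present statement.
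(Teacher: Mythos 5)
Your proof is correct and follows essentially the same route as the paper: a multivariate chain-rule (Fa\`a di Bruno) decomposition of $\d_z^\mi\d_{\zeta'}^{\smi'}a$, with each factor $\d_z^{\mi^k}\htau_{j_k}$ bounded via Lemma~\ref{lemma: htau in the calculus} and the $\hat a$-derivative bounded using $\hlambdat\asymp\lambsct$, absorbing excess $\csp$- and $(1+\ctp\csp)$-powers since both are $\geq 1$. The only cosmetic difference is in the polynomial case: you expand $\hat a$ into monomials and place each one directly in $S(\lambsct^m,\gt)$, whereas the paper keeps the same chain-rule estimate and observes that the offending terms (those with negative $\hlambdat$-exponent) vanish identically; both are valid.
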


The following lemma is elementary.
%%%%%%%%%%%%%%%%%%%%%%%%
% lemma                %
%%%%%%%%%%%%%%%%%%%%%%%%
\begin{lemma}
  \label{eq: local symbol global symbol}
  Let $\U$ be a conic open subset of $\Mt$ and let
  $a \in S(\ttau^r \lambsct^m, \gt)$ in $\U$. Let $\chi \in S(1, \gt)$ in
  $\Mt$, with $\supp(\chi) \subset \U$.  Then,  $\chi a \in S(\ttau^r \lambsct^m, \gt)$ in $\Mt$. 
\end{lemma}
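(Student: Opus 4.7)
The plan is to establish the required symbol estimates pointwise on $\Mt$ via Leibniz's formula, using the fact that $\chi a$ vanishes outside $\supp(\chi)$ and that both $\chi$ and $a$ have matching estimates on $\supp(\chi) \subset \U$.

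First I would extend $\chi a$ to a smooth function on the whole of $\Mt$ by declaring it to vanish outside $\U$. This is well-defined: since $\supp(\chi)$ is a closed subset of $\U$, the set $\Mt\setminus \supp(\chi)$ is an open neighborhood of the complement of $\U$ on which $\chi\equiv 0$, so the extension by zero matches the naturally defined product $\chi a$ in a neighborhood of every point of $\U\setminus\supp(\chi)$. In particular, all derivatives of $\chi a$ at any point of $\Mt\setminus\supp(\chi)$ vanish, and the desired bound holds trivially there.

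Next I would check the estimate at points of $\supp(\chi)$. For such a point $\y'\in\U$, Leibniz's formula gives
\begin{equation*}
\d_z^\mi \d_{\zeta'}^{\smi'}(\chi a)(\y')
= \sum_{\substack{\mi_1+\mi_2=\mi\\ \smi_1'+\smi_2'=\smi'}}
\binom{\mi}{\mi_1}\binom{\smi'}{\smi_1'}
\bigl(\d_z^{\mi_1}\d_{\zeta'}^{\smi_1'}\chi\bigr)(\y')
\,\bigl(\d_z^{\mi_2}\d_{\zeta'}^{\smi_2'}a\bigr)(\y'),
\end{equation*}
with $\mi_k=(\mi_k',\mi_{k,N})$. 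Applying the symbol estimate for $\chi\in S(1,\gt)$ on $\Mt$ and the symbol estimate for $a\in S(\ttau^r\lambsct^m,\gt)$ in $\U$ (valid at $\y'$ since $\y'\in\U$), each term of the sum is bounded by
\begin{equation*}
C\,\csp^{|\mi_{1,N}|+|\mi_{2,N}|}(1+\ctp\csp)^{|\mi_1'|+|\mi_2'|}
\,\ttau^r\,\lambsct^{-|\smi_1'|+m-|\smi_2'|}
= C\,\csp^{|\mi_N|}(1+\ctp\csp)^{|\mi'|}\ttau^r\,\lambsct^{m-|\smi'|},
\end{equation*}
which is exactly the bound required for $\chi a\in S(\ttau^r\lambsct^m,\gt)$ on $\Mt$.

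There is essentially no obstacle: the argument is pure bookkeeping of multi-indices, relying only on the additivity $|\mi_{1,N}|+|\mi_{2,N}|=|\mi_N|$, $|\mi_1'|+|\mi_2'|=|\mi'|$, $|\smi_1'|+|\smi_2'|=|\smi'|$, and on the fact that the weights $\ttau$, $\lambsct$, $\csp$, $1+\ctp\csp$ appearing in both symbol classes are the same. The only point requiring a sentence of care is the smooth extension of $\chi a$ by zero, which is allowed precisely because $\supp(\chi)$ sits inside the open set $\U$ where $a$ is defined.
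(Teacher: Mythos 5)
Your proof is correct and is the standard elementary argument the paper presupposes by stating the lemma without proof: extend $\chi a$ by zero using $\supp(\chi)\subset\U$, then combine the two symbol estimates on $\supp(\chi)$ via Leibniz, with the exponents in $\csp$, $1+\ctp\csp$ and $\lambsct$ adding up exactly as required. No gap.
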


\subsubsection{Operators and Sobolev bounds}
\label{sec: operator sobolev}
For $a \in S(\ttau^r \lambsc^m, g)$  we define the following pseudo-differential
operator in $\R^N$:
\begin{align}
  \label{eq: pseudo}
  \displaystyle
  \Op(a) u(z)
  &= (2 \pi)^{-N} \int_{\R^N} e^{i z\cdot\zeta} 
    a(z, \zeta, \tau,\csp, \ctp)
  \hat{u}(\zeta) \ d \zeta, \qquad u \in \S(\R^N),
\end{align}
where $\hat{u}$ is the Fourier transform of $u$.
In the sense of oscillatory integrals, we have
\begin{align*}
 \Op(a) u(z) &= (2 \pi)^{-N} \iint_{\R^{2N}} e^{i (z-y)\cdot \zeta} a(z, \zeta, \tau,\csp, \ctp)
  u(y) \ d \zeta \ d y.
\end{align*}
\medskip
The associated class of pseudo-differential operators is denoted by
$\Psi(\ttau^r\lambsc^m, g)$. 
If $a$ is polynomial in the variables $\zeta$ and  $\htau(\y') = \ttau d_z
\cpsi(z)$, we then 
write $\Op(a) \in \D(\ttau^r\lambsc^m, g)$.

\medskip
Tangential operators are defined similarly. For $a \in
S(\ttau^r\lambsct^m, \gt)$ we set
\begin{align}
  \label{eq: tangential pseudo}
 \Opt(a) u(z) &= (2 \pi)^{-(N-1)} \iint_{\R^{2N-2}}
 e^{i (z'-y')\cdot \zeta'}  a(z,\zeta', \tau, \csp, \ctp)
  \ u(y',z_N) \ d \zeta' \ d y', 
\end{align}
for $u \in \S(\Rpb)$, where $z \in \Rpb$.
We write $A=\Opt(a) \in \Psi(\ttau^r\lambsct^m,
\gt)$.
We set $\Lambsct^m = \Opt(\lambsct^m)$.

\bigskip
We also introduce the following class of operators that act as
differential operators in the $z_N$ variable and as tangential
pseudo-differential operators in the $z'$ variables:
\begin{align}
  \label{eq: differential-psido}
  \PsiOpsc^{m,r} = \sum_{j=0}^m \Psi(\lambsct^{m+r-j}, \gt) D_{z_N}^j,
  \qquad m \in \N, \ r \in \R, 
\end{align}
that is, $\Op(a) \in \PsiOpsc^{m,r}$ if $a \in \Symbsc^{m,r}$. 
Operators of this class can be applied to functions that are only defined on the
half-space $\{ z_N \geq 0\}$.

At places, it will be handy to use the Weyl quantization for
tangential operators, namely with
$a \in S(\ttau^r \lambsct^m, \gt)$  we define
\begin{align}
  \label{eq: tangential pseudo Weyl}
  \Opt^w(a) u(z) &= (2 \pi)^{-(N-1)} \iint_{\R^{2N-2}}
 e^{i (z'-y')\cdot \zeta'}  a\big((z'+y')/2,z_N,\zeta', \tau, \csp, \ctp)
  u(y',z_N) \ d \zeta' \ d y'.
\end{align}
This quantification is often advantageous as $\Opt^w(a)^\ast =
\Opt^w(\ovl{a})$, and thus, for the symbol $a$ real, the
operator $\Opt^w(a)$ is 
(formally) selfadjoint. Note that $\Opt(a) - \Opt^w(a) \in (1 + \ctp\csp)
\Psi(\ttau^r \lambsct^{m-1}, \gt)$.

\bigskip
We now present some Sobolev-bound type result that we shall use in
what follows.
We use the following notation
$$\Norm{.}{+} =  \Norm{.}{L^2(\Rpb)}, 
\qquad \scp{.}{.}_{+} = \scp{.}{.}_{L^2(\Rpb)},
$$
for the $L^2$-norm on the half space $\Rpb$ and the associated scalar
product. 

\bigskip We have the following lemma whose proof is similar to that of
Lemma~2.7 in \cite{LeRousseau:12}.
%%%%%%%%%%%%%%
% lemma             %
%%%%%%%%%%%%%%
\begin{lemma}
  \label{lem: regularity 2p}
  Let $r, m \in \R$ and $a \in   S(\ttau^r \lambsct^m, \gt)$. There exists
  $C>0$ \st, for $\tau$ \suff large,
  \begin{align*}
    |\scp{\Opt(a) u}{v}_+| \leq C
    \Norm{\Opt(\ttau^{r'}\lambsct^{m'}) u}{+}
    \Norm{\Opt(\ttau^{r''}\lambsct^{m''}) v}{+},
    \qquad u, v\in \S(\Rpb).
  \end{align*}
  for $r=r'+r''$, $m=m'+m''$, with $r', r'' \in \R$, $m', m'' \in \R$. 
\end{lemma}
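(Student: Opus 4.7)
The plan is to reduce the pairing estimate to the $L^2$-boundedness of an operator in $\Psi(1, \gt)$, using that the weight factors $\ttau^{r'}\lambsct^{m'}$ and $\ttau^{r''}\lambsct^{m''}$ split the target order $\ttau^r \lambsct^m$. As a first step, I would establish that for any $\rho, \mu \in \R$, the tangential operator $\Opt(\ttau^\rho \lambsct^\mu)$ is, for $\tau$ sufficiently large, invertible with inverse lying in $\Psi(\ttau^{-\rho}\lambsct^{-\mu}, \gt)$. Indeed $\ttau^{-\rho}\lambsct^{-\mu} \in S(\ttau^{-\rho}\lambsct^{-\mu}, \gt)$, and the symbolic composition formula gives
$$\Opt(\ttau^{-\rho}\lambsct^{-\mu}) \circ \Opt(\ttau^{\rho}\lambsct^{\mu}) = \id + R_0, \qquad R_0 \in \Psi(h_{\gt}, \gt),$$
with Planck function $h_{\gt} = (1+\ctp\csp)/\lambsct$. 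Because $\psi \geq C>0$ yields $\ttau \geq \tau \csp e^{C}$ uniformly in $(\csp,\ctp)$, and since $\lambsct \geq \ttau$, the symbol of $R_0$ tends to $0$ in the $\Psi(1,\gt)$ topology as $\tau \to \infty$. Hence $\id + R_0$ is invertible by Neumann series inside the calculus for $\tau$ large, and the parametrix extends to a true two-sided inverse of $\Opt(\ttau^\rho \lambsct^\mu)$ in $\Psi(\ttau^{-\rho}\lambsct^{-\mu},\gt)$. The same argument applied to the adjoint yields an inverse of $\Opt(\ttau^\rho \lambsct^\mu)^*$.

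Set $\Lambda_1 = \Opt(\ttau^{r''}\lambsct^{m''})$ and $\Lambda_2 = \Opt(\ttau^{r'}\lambsct^{m'})$, both invertible in the above sense for $\tau$ large. Define
$$T = (\Lambda_2^*)^{-1} \circ \Opt(a) \circ \Lambda_1^{-1}.$$
The composition rules in the Weyl--H\"ormander calculus attached to $\gt$ give
$$T \in \Psi\big(\ttau^{r-r'-r''}\lambsct^{m-m'-m''}, \gt\big) = \Psi(1, \gt),$$
by the hypotheses $r = r' + r''$ and $m = m' + m''$. Since the metric $\gt$ is admissible (Proposition~\ref{prop: admissible metric and order function tangential}), the Calder\'on--Vaillancourt theorem in this calculus yields a uniform bound $\Norm{T w}{+} \leq C \Norm{w}{+}$ for $w \in \S(\Rpb)$, with $C$ independent of $(\tau,\csp,\ctp)$ in the required range.

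From the identity $\Opt(a) = \Lambda_2^* \, T\, \Lambda_1$ we then get
$$\scp{\Opt(a) u}{v}_+ = \scp{\Lambda_2^* T \Lambda_1 u}{v}_+ = \scp{T \Lambda_1 u}{\Lambda_2 v}_+,$$
and Cauchy--Schwarz with the bound on $T$ concludes the proof. The main subtlety, rather than an obstacle, is uniformity in the three parameters $(\tau,\csp,\ctp)$: the Neumann inversion and the Calder\'on--Vaillancourt constants must not deteriorate, which is precisely what is ensured by the admissibility of the metric $\gt$ and of the order functions $\ttau$ and $\lambsct$ (Proposition~\ref{prop: admissible metric and order function tangential} and Lemma~\ref{lemma: ttau in the calculus}). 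A secondary remark is that all operators involved are purely tangential, acting fiberwise in $z_N$, so the half-space nature of $\Rpb$ introduces no boundary complications and $L^2(\Rpb)$-boundedness is inherited from the $\R^{N-1}$-level estimate.
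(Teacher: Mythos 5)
Your proposal is correct and follows essentially the same strategy the paper indicates: the text right after the lemma points out that the argument rests on $\Opt(\ttau^{r} \lambsct^m) \Opt(\ttau^{-r} \lambsct^{-m}) = \id + R_1$ with $R_1 \in (1+\ctp\csp)\Psi(\lambsct^{-1},\gt)$ of small $L^2$-operator norm for $\tau$ large, which is exactly the approximate-inverse observation you use to invert $\Opt(\ttau^{\rho}\lambsct^{\mu})$ by Neumann series, conjugate $\Opt(a)$ down to order $(1,\gt)$, and apply $L^2$-boundedness plus Cauchy--Schwarz. One tiny imprecision worth noting, though harmless for the conclusion: since $\cpsi\geq C>0$ one in fact has $\ttau\geq\tau\csp e^{C\csp}$ (not merely $\tau\csp e^C$), which gives $(1+\ctp\csp)/\lambsct\lesssim 1/\tau$ uniformly in $(\csp,\ctp)$ and hence the geometric convergence of the Neumann series in the symbol class.
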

This contains the estimate
\begin{align}
  \label{eq: boundedness operators}
    \Norm{\Opt(\ttau^{r'}\lambsct^{m'}) \Opt(a) u}{+}
    \leq C
    \Norm{\Opt(\ttau^{r+r'}\lambsct^{m+m'}) u}{+}, 
    \qquad u\in \S(\Rpb), 
  \end{align}
for $r,m' \in \R$. The proof of Lemma~\ref{lem: regularity 2p} relies
in the fact that, for $r , m \in \R$, 
$$
\Opt(\ttau^{r} \lambsct^m) \Opt(\ttau^{-r} \lambsct^{-m}) = \id + R_1, 
$$ 
with $R_1 \in (1 + \ctp \csp) \Psi(\lambsct^{-1}, \gt)$ and $\Norm{R_1}{L^2 \to
  L^2} \ll 1$ for $\tau$ large.

Note also that we have the following result (see Section~\ref{proof lemma: equivalence norms} for a proof).
%%%%%%%%%%%%%%%%%%%%%%%%
% lemma                %
%%%%%%%%%%%%%%%%%%%%%%%%
\begin{lemma}
  \label{lemma: equivalence norms}
  We have 
  \begin{align}
    \label{eq: equivalence norms}
    \Norm{\Opt(\ttau^{r}\lambsct^{m}) u}{+} \asymp
    \Norm{\Opt(\lambsct^{m}) \ttau^{r} u}{+} ,  \qquad u \in \S(\Rpb),
\end{align}
and 
\begin{align}
    \label{eq: equivalence norms 2}
    \norm{\Opt(\ttau^{r}\lambsct^{m}) u\brz}{L^2(\R^{N-1})} \asymp
    \norm{\Opt(\lambsct^{m}) \ttau^{r} u\brz}{L^2(\R^{N-1})} ,  
  \qquad u \in \S(\R^{N-1}),
\end{align}
for $\tau$  chosen \suff large.
\end{lemma}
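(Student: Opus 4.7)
The plan is to combine an exact operator identity with a commutator estimate. The key observation is that, by the definition \eqref{eq: tangential pseudo} of the tangential quantization, the factor $\ttau^r=\ttau^r(z)$, being independent of the dual variable $\zeta'$, factors out of the oscillatory integral. Thus, as operators on $\S(\Rpb)$,
\begin{equation*}
\Opt(\ttau^{r}\lambsct^{m}) = M_{\ttau^r}\, \Opt(\lambsct^m),
\end{equation*}
where $M_{\ttau^r}$ denotes multiplication by $\ttau^r(z)$. Consequently,
\begin{equation*}
\Opt(\ttau^{r}\lambsct^{m})\,u - \Opt(\lambsct^{m})\, \ttau^r u = [M_{\ttau^r},\Opt(\lambsct^m)]\,u,
\end{equation*}
and proving \eqref{eq: equivalence norms} reduces to showing that this commutator is, for $\tau$ large, small in $L^2(\Rpb)$ relative to either side of the claimed equivalence.

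Next, I would invoke the tangential Moyal calculus for the metric $\gt$ to identify the commutator as an element of $(1+\ctp\csp)\,\Psi(\ttau^r \lambsct^{m-1},\gt)$. Each term
\begin{equation*}
\tfrac{(-i)^{|\mi|}}{\mi!}\,\d_{\zeta'}^{\mi}\lambsct^m\cdot\d_{z'}^{\mi}\ttau^r,\qquad |\mi|\geq 1,
\end{equation*}
in the asymptotic expansion of the composed symbols gains a power $\lambsct^{-|\mi|}$ from the $\zeta'$-derivatives of $\lambsct^m$ and a factor $(\ctp\csp)^{|\mi|}$ from the $z'$-derivatives of $\ttau^r$: indeed, with $\ttau=\tau\csp e^{\csp\cpsi}$ and $\cpsi(z)=\psi(\ctp z',z_N)$, each $\d_{z'}$ applied to $\ttau^r$ brings down exactly one factor $\ctp\csp$ times a bounded function of $z$. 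Applying Lemma~\ref{lem: regularity 2p} with the split $(r',m')=(r,m)$, $(r'',m'')=(0,-1)$ yields
\begin{equation*}
\bigl|\scp{[M_{\ttau^r},\Opt(\lambsct^m)]u}{v}_+\bigr|
\leq C\,(1+\ctp\csp)\,\Norm{\Opt(\ttau^r\lambsct^m)u}{+}\,\Norm{\Opt(\lambsct^{-1}) v}{+}.
\end{equation*}
Since $\lambsct\geq\ttau\geq\tauast\csp$ uniformly, the standard $L^2$-boundedness for the calculus furnishes $\Norm{\Opt(\lambsct^{-1})}{L^2(\Rpb)\to L^2(\Rpb)}=O(1/\tauast)$. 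Taking a supremum over $\Norm{v}{+}=1$ then produces
\begin{equation*}
\Norm{[M_{\ttau^r},\Opt(\lambsct^m)]u}{+} \leq C\,\tfrac{1+\ctp\csp}{\tauast}\,\Norm{\Opt(\ttau^{r}\lambsct^{m})u}{+},
\end{equation*}
which is $o(1)$ for $\tauast$ chosen sufficiently large with $\csp,\ctp$ fixed. The triangle inequality applied in both directions then yields \eqref{eq: equivalence norms}.

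For the trace inequality \eqref{eq: equivalence norms 2}, the same argument applies at $z_N=0^+$: since $\Opt$ is tangential, the identity reads $\Opt(\ttau^r\lambsct^m)u_{|z_N=0^+}=\ttau^r(z',0)\,\Opt(\lambsct^m)u_{|z_N=0^+}$, and the commutator is then controlled in $L^2(\R^{N-1})$ by the scalar-product analogue of Lemma~\ref{lem: regularity 2p} on the boundary. I expect the main obstacle throughout to be precisely this commutator estimate: one must verify that every $z'$-derivative of $\ttau^r$ genuinely carries the small weight $\ctp\csp$ (and not merely $1+\ctp\csp$), and then exploit the inequality $\lambsct\geq\tauast\csp$ provided by the three-parameter metric framework of Propositions~\ref{prop: admissible metric and order function}--\ref{prop: admissible metric and order function tangential} together with Lemma~\ref{lemma: ttau in the calculus}. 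This is what makes the factor $1/\tauast$ dominant and secures the asymptotic equivalence.
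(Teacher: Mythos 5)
Your argument is essentially the one in the paper: there the expansion is written as $\Opt(\ttau^{r}\lambsct^{m})=\Opt(\lambsct^{m})\ttau^{r}+\csp R$ with $R\in\Psi(\ttau^{r}\lambsct^{m-1},\gt)$, and $\csp R$ is precisely your commutator $[M_{\ttau^r},\Opt(\lambsct^m)]$; the remainder is then absorbed for $\tau$ large exactly as you do (the paper shortcuts one direction via \eqref{eq: boundedness operators}, but the mechanism is identical). One small correction at the end: the parameter taken large is the semi-classical parameter $\tau$, not the threshold $\tauast$; using $\lambsct\geq\ttau=\tau\csp\varphi\geq\tau\csp$ one gets $\Norm{\Opt(\lambsct^{-1})}{L^2\to L^2}\lesssim(\tau\csp)^{-1}$, so the commutator error is $O\big((1+\ctp\csp)/(\tau\csp)\big)$ and tends to $0$ as $\tau\to\infty$ with $\csp,\ctp$ fixed --- in particular the coarser weight $1+\ctp\csp$ already suffices, so the worry you raise in your last paragraph about needing the finer $\ctp\csp$ weight is unfounded.
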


\bigskip
We define the following semi-classical Sobolev norms
\begin{equation*}
  \norm{u}{m,\ttau} = \norm{\Lambsct^{m} u\brz}{L^2(\R^{N-1})},
  \quad m\in \R, \ \ u \in \S(\R^{N-1}), 
\end{equation*}
\begin{equation*}
  \Norm{u}{m,\ttau}
  \asymp \sum_{j=0}^m \Norm{\Lambsct^{m-j} D_{z_N}^j u}{+},
  \quad m \in \N, \ \  u \in \S(\Rpb).
\end{equation*}
We also set, for $m \in \N$
 and  $m'\in \R$, 
\begin{equation*}
  \Norm{u}{m,m',\ttau} 
  \asymp \sum_{j=0}^m\Norm{\Lambsct^{m-j+m'} D_{z_N}^j u}{+},
  \quad u \in \S(\Rpb).
\end{equation*}

At the boundary $\{z_N=0\}$ we define the following norms, for $m \in
\N$ and $m' \in \R$, 
\begin{equation*}
  \norm{\trace(u)}{m,m',\ttau}
  \asymp \sum_{j=0}^m \norm{ \Lambsct^{m-j+m'} D_{z_N}^j u\brz)}{L^2(\R^{N-1})},
  \quad u \in \S(\Rpb).
\end{equation*}

The following argument will be used on many occasions in what follows, 
for $r, r', m \in \R$, and $\ell>0$, 
\begin{align}
  \label{eq: usual semi-classical argument}
  \csp^r \Norm{\ttau^{r'} w}{m,\ttau} \ll 
  \Norm{\ttau^{r'+\ell} w}{m,\ttau} \lesssim
  \Norm{\ttau^{r'} w}{m+\ell,\ttau}, 
\end{align}
for $\tau$ chosen \suff large, as $\csp^r \lesssim
\varphi_{\csp,\ctp} = \exp(\csp \cpsi)$ since
$\cpsi \geq C>0$. We have similar such inequalities for the other norms
introduced above.

With the above results we deduce the following two propositions.
%%%%%%%%%%%%%%
% proposition      %
%%%%%%%%%%%%%%
\begin{proposition}
  \label{prop: Sobolev regularity pseudo tangential}
  Let $r,m \in \R$, and $a \in 
  S(\ttau^r\lambsct^m, \gt)$. Then, for $r', m'\in \R$, 
  there exists $C>0$ \st 
  \begin{align*}
    \norm{\ttau^{r'} \Opt(a) u\brz}{m',\ttau} 
    \leq C \norm{\ttau^{r+r'} u\brz}{m+m',\ttau}, \quad u
    \in \S(\Rpb),
  \end{align*}
  for $\tau$ \suff large. 
\end{proposition}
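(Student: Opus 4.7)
The plan is to reduce this boundary estimate to a standard $L^2$ boundedness result for tangential pseudo-differential operators on $\R^{N-1}$. The first observation is that $\Opt(a)$ acts only in the tangential variables $z'$, with $z_N$ entering the symbol as a mere parameter, so
\begin{equation*}
(\Opt(a)u)\brz = \Opt(a_0)(u\brz), \qquad a_0(z',\zeta',\tau,\csp,\ctp) := a(z',0,\zeta',\tau,\csp,\ctp),
\end{equation*}
and $a_0$, regarded as a symbol on $\R^{N-1}$, belongs to the analogous class with $\ttau$ and $\lambsct$ evaluated at $z_N=0$; indeed, restricting $z_N=0$ in Definition~\ref{def: semi-classical symbol} preserves all the symbol estimates, and the order functions and tangential metric remain admissible on $\R^{N-1}\times\R^{N-1}$ by Proposition~\ref{prop: admissible metric and order function tangential}.

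Next, applying Lemma~\ref{lemma: equivalence norms}, specifically~\eqref{eq: equivalence norms 2}, to move the scalar factor $\ttau^{r'}$ past $\Lambsct^{m'}$ at the boundary, one obtains
\begin{equation*}
\norm{\ttau^{r'}\Opt(a) u\brz}{m',\ttau}\asymp \Norm{\Opt(\ttau^{r'}\lambsct^{m'})\,\Opt(a_0)(u\brz)}{L^2(\R^{N-1})},
\end{equation*}
\begin{equation*}
\norm{\ttau^{r+r'} u\brz}{m+m',\ttau}\asymp \Norm{\Opt(\ttau^{r+r'}\lambsct^{m+m'})(u\brz)}{L^2(\R^{N-1})}.
\end{equation*}
It is therefore enough to show that, for $v\in\S(\R^{N-1})$ and $\tau$ large,
\begin{equation*}
\Norm{\Opt(\ttau^{r'}\lambsct^{m'})\Opt(a_0) v}{L^2(\R^{N-1})}\lesssim \Norm{\Opt(\ttau^{r+r'}\lambsct^{m+m'}) v}{L^2(\R^{N-1})}.
\end{equation*}

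This is precisely the $(N-1)$-dimensional version of the bound~\eqref{eq: boundedness operators}, itself a consequence of Lemma~\ref{lem: regularity 2p}: by the Weyl--H\"ormander symbolic calculus associated with $(\gt,\ttau,\lambsct)$ on $\R^{N-1}$, the symbol of $\Opt(\ttau^{r'}\lambsct^{m'})\,\Opt(a_0)\,\Opt(\ttau^{-(r+r')}\lambsct^{-(m+m')})$ lies in $S(1,\gt)$ up to a remainder of small operator norm on $L^2(\R^{N-1})$ for $\tau$ large, hence defines a uniformly bounded operator on $L^2(\R^{N-1})$. Combining this bound with the two asymptotic equivalences above yields the proposition. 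No real obstacle arises: the only care needed is the restriction of symbol class and metric to the hyperplane $\{z_N=0\}$ and the commutation of the scalar weight $\ttau^{r'}$ with the tangential Fourier multiplier $\Lambsct^{m'}$, both of which are already encapsulated in the calculus set up in Section~\ref{sec: pseudo -3p} and in Lemma~\ref{lemma: equivalence norms}.
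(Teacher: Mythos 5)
Your proof is correct and takes essentially the route the paper has in mind: the paper states Proposition~\ref{prop: Sobolev regularity pseudo tangential} as an immediate consequence of Lemma~\ref{lem: regularity 2p} (via~\eqref{eq: boundedness operators}) and Lemma~\ref{lemma: equivalence norms}, and your argument supplies exactly the two glue steps it leaves implicit — the identity $(\Opt(a)u)\brz=\Opt(a_0)(u\brz)$ for a tangential operator, and the observation that restriction to $\{z_N=0\}$ preserves the symbol class and the admissibility of the tangential metric so that the $(N-1)$-dimensional $L^2$ boundedness applies.
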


\begin{proposition}
  \label{prop: Sobolev regularity pseudo}
  Let $r,m' \in \R$, $m \in \N$, and $a \in \ttau^r
  \Symbsc^{m,m'}$. Then, for $r',m''' \in \R$ and $m'' \in \N$, 
  there exists $C>0$ \st 
  \begin{align*}
    \Norm{\ttau^{r'} \Op(a) u}{m'',m''',\ttau} 
    \leq C \Norm{\ttau^{r+r'} u}{m+m'',m'+ m''',\ttau}, \quad u
    \in \S(\Rpb),
  \end{align*}
   for $\tau$ \suff large. 
\end{proposition}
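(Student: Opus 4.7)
The plan is to reduce the statement to the already established tangential bound in Proposition~\ref{prop: Sobolev regularity pseudo tangential}, by exploiting the product structure of $\Symbsc^{m,m'}$ and carefully handling the commutators generated by the normal derivatives.

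First, I would decompose $a = \ttau^r \sum_{j=0}^m a_j(\y')\,\zeta_N^j$ with $a_j \in S(\lambsct^{m+m'-j}, \gt)$, so that, since $\ttau \in S(\ttau,\gt)$ by Lemma~\ref{lemma: ttau in the calculus},
\begin{equation*}
\Op(a)\,u \;=\; \sum_{j=0}^m \Opt(\ttau^{r} a_j)\, D_{z_N}^j u .
\end{equation*}
By definition of the norm on the LHS, it suffices to estimate each
\begin{equation*}
\Norm{\Lambsct^{m''-k+m'''}\, D_{z_N}^k\, \ttau^{r'}\, \Op(a)\, u}{+}, \qquad 0 \le k \le m''.
\end{equation*}

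Next, I would bring all the $z_N$-derivatives past the tangential operators and past $\ttau^{r'}$. The key point is that for any $b \in S(\ttau^{\sigma}\lambsct^{M}, \gt)$ one has $[D_{z_N},\Opt(b)]=\Opt(-i\partial_{z_N} b)$, and because $\partial_{z_N}\cpsi \in S(1,\gt)$ while $\partial_{z_N}\ttau = \csp\,\partial_{z_N}\cpsi\,\ttau$, every $\partial_{z_N}$ falling on a symbol (or on a power of $\ttau$) produces a multiplicative factor of $\csp$ without changing $\ttau^\sigma\lambsct^M$. Iterating, one obtains
\begin{equation*}
D_{z_N}^k\, \ttau^{r'}\, \Op(a)\, u \;=\; \sum_{j=0}^{m}\sum_{l=0}^{k} \csp^{\,k-l}\, \Opt(b_{j,k,l})\, D_{z_N}^{\,j+l} u,
\end{equation*}
where each $b_{j,k,l} \in \ttau^{r+r'}\, S(\lambsct^{m+m'-j},\gt)$ (the explicit factor $\csp^{k-l}$ collecting the contributions of the $k-l$ derivatives absorbed by the symbols and by $\ttau^{r+r'}$).

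Now I would apply the tangential bound of Proposition~\ref{prop: Sobolev regularity pseudo tangential} to each term, which yields
\begin{equation*}
\Norm{\Lambsct^{m''-k+m'''}\,\Opt(b_{j,k,l})\, D_{z_N}^{\,j+l} u}{+}
\;\lesssim\; \csp^{\,k-l}\,\Norm{\Lambsct^{m+m'-j+m''-k+m'''}\,\ttau^{r+r'}\, D_{z_N}^{\,j+l} u}{+}.
\end{equation*}
Since $\csp\, \lesssim\, \ttau\, \lesssim\, \lambsct$ for $\tau$ large, each factor $\csp^{k-l}$ is absorbed into $\lambsct^{k-l}$, producing exponent $m+m'+m''+m'''-(j+l)$ on $\lambsct$. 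Re-indexing with $p=j+l$, which ranges over $0\le p \le m+m''$, the sum collapses to
\begin{equation*}
\sum_{p=0}^{m+m''} \Norm{\Lambsct^{m+m''+m'+m'''-p}\,\ttau^{r+r'}\, D_{z_N}^{p} u}{+}
\;\asymp\; \Norm{\ttau^{r+r'} u}{m+m'',\,m'+m''',\,\ttau},
\end{equation*}
as desired.

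The only mildly delicate step is the commutator bookkeeping in the second paragraph: one must check that each normal derivative really costs only a factor of $\csp$ (not $\lambsct$) when it hits a symbol in $S(\ttau^\sigma\lambsct^M,\gt)$, which is precisely what the metric $g$ and $\gt$ encode through the coefficient $\csp^2|dz_N|^2$. Once this is granted, the remaining estimates are routine applications of Proposition~\ref{prop: Sobolev regularity pseudo tangential} and of \eqref{eq: usual semi-classical argument}.
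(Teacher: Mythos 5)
Your argument is correct and fills in exactly what the paper leaves implicit (the paper states this proposition as a consequence of the preceding lemmas without giving a proof). The decomposition of $a$ into $\ttau^r\sum_j a_j\,\zeta_N^j$, the Leibniz expansion pushing all $D_{z_N}$ factors to the right, the observation that each $\partial_{z_N}$ on a $\gt$-symbol costs a factor $\csp$, the tangential boundedness, and the absorption $\csp\lesssim\lambsct$ via \eqref{eq: usual semi-classical argument} are all the right ingredients, and the index bookkeeping ($p=j+l$ running from $0$ to $m+m''$) works out.

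Two small points of presentation that you should clean up. First, your citation of Proposition~\ref{prop: Sobolev regularity pseudo tangential} is mismatched: that proposition is a \emph{trace} estimate (norms of $u\brz$ on $\{z_N=0\}$), whereas what you are applying in the displayed inequality is the \emph{interior} $L^2(\Rp)$ bound for tangential operators, which is Lemma~\ref{lem: regularity 2p} / estimate~\eqref{eq: boundedness operators}, combined with Lemma~\ref{lemma: equivalence norms} to exchange $\Opt(\ttau^{r+r'}\lambsct^{M})$ for $\Lambsct^{M}\ttau^{r+r'}$. Second, in the intermediate displayed inequality the factor $\csp^{k-l}$ appears on the right-hand side only; since you pulled $\csp^{k-l}$ outside $\Opt(b_{j,k,l})$ when you defined $b_{j,k,l}\in\ttau^{r+r'}S(\lambsct^{m+m'-j},\gt)$, that factor should already be multiplying the left-hand side as well (or be omitted from both sides and reintroduced when summing). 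Finally, the last equivalence requires commuting $\ttau^{r+r'}$ past $D_{z_N}^p$; this is precisely what Lemma~\ref{lemma: different form norm} provides and should be cited there. None of these affects the substance of the argument.
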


Similarly to Lemma~\ref{lemma: equivalence norms}, we have the
following equivalences for norms.
%%%%%%%%%%%%%%%%%%%%%%%%
% lemma                %
%%%%%%%%%%%%%%%%%%%%%%%%
\begin{lemma}
  \label{lemma: different form norm}
  Let $m \in \N$ and $r, m' \in \R$.
  We have, for $\tau$ chosen \suff large, 
  \begin{align*}
    \Norm{\ttau^r w}{m,m',\ttau} 
    &\asymp \sum_{j=0}^m \Norm{D_{z_N}^j ( \ttau^r w)}{0, m +m' - j,\ttau}
   \asymp \sum_{j=0}^m \Norm{ \ttau^{r'_j} \Lambsct^{m''_j} D_{z_N}^j
      (\ttau^{r''_j} \Lambsct^{m'''_j}  w) }{+},
  \end{align*}
 where  $r= r'_j+ r''_j$, and  $m+ m' -j = m_j''+ m_j'''$, with $r_j', r_j'' \in \R$ and
 $m_j'', m_j''' \in \R$, $j=1, \dots, m$. Similarly, we have
  \begin{align*}
    \norm{\trace(\ttau^r w)}{m,m',\ttau} 
    \asymp \sum_{j=0}^m \norm{D_{z_N}^j (\ttau^r w)\brz }{m +m' - j,
    \ttau}
    \asymp \sum_{j=0}^m 
    \norm{\ttau^{r'_j} \Lambsct^{m''_j} \big(D_{z_N}^j (\ttau^{r''_j} \Lambsct^{m'''_j}  w)\big)\brz }{L^2(\R^{n-1})}.
  \end{align*}
\end{lemma}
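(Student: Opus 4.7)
Both equivalences are established termwise in the sum over $j$. The first equivalence is essentially the definition of the mixed norm: applying $\Norm{\cdot}{m,m',\ttau} \asymp \sum_{j=0}^m \Norm{\Lambsct^{m-j+m'} D_{z_N}^j \cdot}{+}$ to $u = \ttau^r w$, and identifying the $j$-th summand with $\Norm{D_{z_N}^j(\ttau^r w)}{0, m+m'-j, \ttau}$ via the case $m=0$ of the defining formula, gives it at once. So the real content is the second equivalence, which I prove for each fixed $j$ in the form
\[
\Norm{\Lambsct^{m+m'-j} D_{z_N}^j(\ttau^r w)}{+} \asymp \Norm{\ttau^{r'_j} \Lambsct^{m''_j} D_{z_N}^j(\ttau^{r''_j} \Lambsct^{m'''_j} w)}{+},
\]
under the constraints $r = r'_j + r''_j$ and $m+m'-j = m''_j + m'''_j$.

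\textbf{Commutation scheme.} To pass between the two sides I distribute the tangential factors $\Lambsct$ and the multiplier $\ttau$ through the normal derivative $D_{z_N}^j$ by a chain of commutations: (i) split $\Lambsct^{m+m'-j} = \Lambsct^{m''_j}\Lambsct^{m'''_j}$ modulo a tangential remainder in $\Psi(\lambsct^{m+m'-j-1},\gt)$ by symbol composition; (ii) commute $\Lambsct^{m'''_j}$ through $D_{z_N}^j$ at no cost, since $\Lambsct$ is tangential; (iii) factor $\ttau^r = \ttau^{r'_j}\ttau^{r''_j}$ and commute $\ttau^{r''_j}$ through $D_{z_N}^j$: by the Leibniz rule and the explicit formula $\d_{z_N}\ttau^{r''_j} = r''_j\csp (\d_{z_N}\psi)\,\ttau^{r''_j}$ this produces a sum of terms in which each $\d_{z_N}$ acting on $\ttau^{r''_j}$ preserves the weight $\ttau^{r''_j}$ but introduces one factor $\csp$ and removes one $D_{z_N}$-derivative from $w$; (iv) finally move $\ttau^{r'_j}$ past $\Lambsct^{m''_j}$ via Lemma~\ref{lemma: equivalence norms}, at the cost of a tangential commutator of class $(1+\ctp\csp)\Psi(\ttau^{r'_j}\lambsct^{m''_j-1},\gt)$.

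\textbf{Absorption and conclusion.} After taking $L^2(\Rpb)$-norms, each error term generated above is bounded by a quantity of the form $\csp^a\Norm{\ttau^{r''_j}\Lambsct^{b} D_{z_N}^{k} w}{+}$ with either $k<j$ (from step (iii)) or $b<m+m'-j-k$ (from (i) and (iv)); in every case the exponents are strictly smaller than those appearing in the leading term. By~\eqref{eq: usual semi-classical argument}, the $\csp^a$-factor is absorbed by an arbitrarily small positive power of $\ttau$, so the error is dominated by the leading term once $\tau$ is sufficiently large. This yields both directions of the equivalence. The boundary version follows by the same argument, replacing $\Norm{\cdot}{+}$ by $\norm{\cdot\brz}{L^2(\R^{N-1})}$ and using~\eqref{eq: equivalence norms 2} in step (iv) in place of~\eqref{eq: equivalence norms}.

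\textbf{Main obstacle.} The only genuinely delicate point is the commutator $[D_{z_N}^j,\ttau^{r''_j}]$ in step (iii): since $\ttau = \tau\csp\varphi_{\csp,\ctp}$ depends on $z_N$ through $\cpsi$, every normal derivative generates a factor $\csp$ which is large, not semiclassically small. What saves the estimate is that such factors are always accompanied by a strict loss of at least one $D_{z_N}$-derivative on $w$ (or by a gain of $\lambsct^{-1}$ in the tangential calculus), so the qualitative inequality $\csp^a\lesssim \ttau^{\eps}$ -- valid for any $\eps>0$ once $\tau$ is large, thanks to $\varphi_{\csp,\ctp}=e^{\csp\cpsi}\geq e^{C\csp}$ -- allows us to trade each $\csp$-factor for a fractional gain in $\ttau$ that is compensated by the missing derivative. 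This is exactly what~\eqref{eq: usual semi-classical argument} codifies, and it is why the conclusion requires $\tau$ sufficiently large.
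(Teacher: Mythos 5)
Your strategy mirrors the paper's — reduce to termwise equivalence and commute the factors through $D_{z_N}^j$, absorbing the resulting $\csp$-factors by the semiclassical argument \eqref{eq: usual semi-classical argument}. But step (ii) contains a genuine error: you claim that commuting $\Lambsct^{m'''_j}$ through $D_{z_N}^j$ is "at no cost, since $\Lambsct$ is tangential". In this calculus the symbol $\lambsct$ does depend on $z_N$, because $\lambsct^2 = |\zeta'|^2 + \ttau^2$ and $\ttau = \tau\csp\varphi_{\csp,\ctp}(z)$ with $\varphi_{\csp,\ctp}(z)=\exp(\csp\cpsi(z))$ depending on $z_N$. "Tangential" here means that the quantization acts in the $z'$ variables only, not that the symbol is $z_N$-independent, so $\Lambsct^{m'''_j}$ does \emph{not} commute with $D_{z_N}$. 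Explicitly, $\d_{z_N}\lambsct \in S(\csp\lambsct,\gt)$, and iterating one gets precisely the relation the paper records, namely $[\Lambsct^{m''_j}, D_{z_N}^j] \in \sum_{i=1}^j \csp^i\,\Psi(\lambsct^{m''_j},\gt)\,D_{z_N}^{j-i}$. This is a source of large $\csp$-factors fully on a par with the $\ttau$-commutator in your step (iii).

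The consequence is that your error accounting in the absorption paragraph is incomplete: the commutator from step (ii) contributes terms $\csp^i \Norm{\ttau^{r}D_{z_N}^{j-i}w}{0,\,m+m'-j,\ttau}$ for $i\geq 1$, which you have not listed. Fortunately these are of the same shape as the step-(iii) errors ($k<j$, accompanied by a compensating loss of $i=j-k$ in the $\lambsct$-exponent relative to the leading term), so the same application of \eqref{eq: usual semi-classical argument} absorbs them for $\tau$ large, and the lemma's conclusion still holds. But the proof as written is wrong: the claim that the $\Lambsct$-commutator vanishes is false, and it is load-bearing in your error accounting. Replace "at no cost" with the explicit commutator estimate above, and add the corresponding error terms to your list; the rest of the argument then goes through unchanged.
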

See Section~\ref{proof lemma: different form norm} for a proof.

%%%%%%%%%%%%%%%%%%%%%%%%
% proposition           %
%%%%%%%%%%%%%%%%%%%%%%%%
\begin{proposition}[local tangential G{\aa}rding inequality]
  \label{prop: local Garding}
  Let $W_0, W_1$ be two open sets of $\R^N$, with $W_0 \Subset W_1$.
  Let $a(\y') \in S(\ttau^r\lambsct^m, \gt)$, with principal part
  $a_{r,m}$. If there exist $C>0$ and $R>0$ \st
  \begin{equation*}
    \Re a_{r,m}(\y') \geq C \ttau^r\lambsct^m, \quad z \in W_1, \quad \zeta' \in \R^{N-1}, 
    \quad \tau \geq \tauast, \quad\lambsct \geq R,
    \end{equation*}
  then
  for any $0<C'<C$ there exists  $\tau_1\geq \tauast$ \st 
  \begin{equation*}
    \Re \scp{\Opt(a)u}{ u}_{+} \geq C' \Norm{\ttau^{r/2}   u}{0,m/2,\ttau}^2, 
\qquad \tau \geq \tau_1.
  \end{equation*}
  for $u = w_{|Z}$, with
  $ w \in\Cinfc((0,S_0) \times \R^d)$ and $\supp(w) \subset W_0$.
\end{proposition}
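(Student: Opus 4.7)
The proof reduces the local statement to a global application of the classical G{\aa}rding inequality in the admissible Weyl--H\"ormander calculus associated to the metric $\gt$, whose admissibility was established in Proposition~\ref{prop: admissible metric and order function tangential}. The reduction goes in three steps.

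First, I would perform a tangential localization. Pick a real cutoff $\chi \in \Cinfc(W_1)$ equal to $1$ on an open neighborhood of $\ovl{W_0}$. Viewing $\chi$ as an element of $S(1,\gt)$ independent of $\zeta'$, the composition rules give that $M_\chi \Opt(a) M_\chi$ has tangential symbol $\chi^2 a$ modulo a remainder in $S((1+\ctp\csp)\ttau^r\lambsct^{m-1},\gt)$. Since $\chi u = u$ for $u$ supported in $W_0$, self-adjointness of $M_\chi$ yields
\[
\scp{\Opt(a) u}{u}_+ \;=\; \scp{\Opt(\chi^2 a) u}{u}_+ + \scp{R u}{u}_+ ,
\qquad R \in (1+\ctp\csp)\,\Psi(\ttau^r\lambsct^{m-1},\gt).
\]

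Second, I would construct a globally positive symbol agreeing with $\chi^2 a$ in the high-frequency regime. Pick a frequency cutoff $\chi_\lambsct\in S(1,\gt)$ with $\chi_\lambsct = 0$ for $\lambsct \leq R$ and $\chi_\lambsct = 1$ for $\lambsct \geq 2R$, and set
\[
\tilde a(\y') \;:=\; \chi(z)^2\,\chi_\lambsct(\y')\, a(\y') \;+\; \bigl(1 - \chi(z)^2\chi_\lambsct(\y')\bigr)\, C\, \ttau^r\lambsct^m .
\]
Then $\tilde a \in S(\ttau^r\lambsct^m,\gt)$. A pointwise check using the hypothesis on the principal part $a_{r,m}$ gives $\Re \tilde a \geq C\, \ttau^r\lambsct^m$ on all of $\Mt$: on $\{\chi^2\chi_\lambsct > 0\}$ one has $z\in W_1$ and $\lambsct \geq R$, so $\Re a \geq C\,\ttau^r\lambsct^m$; on the complement only the $C\,\ttau^r\lambsct^m$ term is present; in between the bound holds by convex combination.

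Third, I would apply the classical G{\aa}rding inequality for the admissible calculus $\gt$ to $\tilde a$: for every $0 < C' < C$ and $\tau$ sufficiently large,
\[
\Re \scp{\Opt(\tilde a) u}{u}_+ \;\geq\; C'\, \Norm{\ttau^{r/2} u}{0,m/2,\ttau}^2,
\]
after absorbing the usual G{\aa}rding remainder (of order one less, with a $(1+\ctp\csp)$ factor) by means of the semi-classical argument~\eqref{eq: usual semi-classical argument}. It remains to compare $\Opt(\tilde a)$ with $\Opt(\chi^2 a)$: the difference symbol
\[
\tilde a - \chi^2 a \;=\; (1-\chi^2\chi_\lambsct)\bigl(C\,\ttau^r\lambsct^m - \chi^2 a\bigr)
\]
splits as a symbol supported in $\{z \notin \supp\chi\}$ (which combines with the remainder $R$ already identified) and a symbol supported in $\{\lambsct \leq 2R\}$ (which is bounded uniformly by $\ttau^r$ and is of lower order relative to the target norm). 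Both contributions are absorbed into $\Norm{\ttau^{r/2} u}{0,m/2,\ttau}^2$ by~\eqref{eq: usual semi-classical argument} for $\tau$ large, closing the argument.

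The main obstacle is bookkeeping of the dependence on the two auxiliary large parameters $\csp$ and $\ctp$: each use of the composition rules produces a factor $(1+\ctp\csp)$ in the remainders that must remain subordinate to $\ttau^{r/2}\lambsct^{m/2}$. This is precisely the role of the semi-classical argument~\eqref{eq: usual semi-classical argument} (based on $\csp^{k}\ll \varphi_{\csp,\ctp}$ for $\tau$ large), which is applied at each absorption step. No new analytic input is needed beyond the admissibility of $(\gt,\ttau,\lambsct)$ and the Sobolev boundedness of Section~\ref{sec: operator sobolev}.
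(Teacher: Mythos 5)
The paper itself states this proposition without proof, treating it as a standard consequence of the admissible Weyl--H\"ormander calculus established in Propositions~\ref{prop: admissible metric and order function} and~\ref{prop: admissible metric and order function tangential}, so there is no "paper's proof" to compare against. Your overall route---localize in $z$ with a cutoff, blend with a globally positive model $C\,\ttau^r\lambsct^m$, invoke the global G{\aa}rding inequality for $(\gt,\ttau,\lambsct)$, and absorb the remainders via~\eqref{eq: usual semi-classical argument}---is the standard one and would give a correct proof. A few slips in the execution are worth fixing.

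First, your pointwise bound $\Re\tilde a\geq C\,\ttau^r\lambsct^m$ on all of $\Mt$ does not follow from the hypothesis, which is only about the \emph{principal part} $a_{r,m}$; the full symbol $a$ differs from $a_{r,m}$ by an element of $S\big((1+\ctp\csp)\ttau^r\lambsct^{m-1},\gt\big)$ that can be negative. What you actually need (and have) is the pointwise bound on the principal part $\tilde a_{r,m}=\chi^2\chi_\lambsct a_{r,m}+(1-\chi^2\chi_\lambsct)C\,\ttau^r\lambsct^m$; this is precisely what the G{\aa}rding inequality in the Weyl--H\"ormander calculus consumes. Second, the displayed identity $\tilde a-\chi^2 a=(1-\chi^2\chi_\lambsct)\big(C\,\ttau^r\lambsct^m-\chi^2 a\big)$ is false---expanding the right side produces an extra $\chi^4\chi_\lambsct a$ term; the correct identity is $\tilde a-\chi^2 a=-\chi^2(1-\chi_\lambsct)a+(1-\chi^2\chi_\lambsct)C\,\ttau^r\lambsct^m$. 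Fortunately the intended conclusion survives: the difference is still supported in the union of $\{z:\chi(z)\neq1\}$ and $\{\lambsct\leq 2R\}$, and the first part combines with the step-one remainder via pseudo-locality while the second is lower order. Third, the frequency cutoff $\chi_\lambsct$ is in fact unnecessary: since $\lambsct\geq\ttau=\tau\csp\varphi_{\csp,\ctp}\geq\tau\,e^{\csp\inf\cpsi}$, for $\tau\geq\tau_1$ large enough one has $\lambsct\geq R$ on all of $\Mt$, so $\{\lambsct<R\}$ is empty and the blend simplifies to $\tilde a=\chi^2 a+(1-\chi^2)C\,\ttau^r\lambsct^m$; this is precisely why the statement reserves the freedom to enlarge $\tau_1$. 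Removing $\chi_\lambsct$ makes the difference $\tilde a-\chi^2 a=(1-\chi^2)C\,\ttau^r\lambsct^m$, cleanly supported away from a neighborhood of $\supp u$, and disposes of both the algebra slip and the ``bounded low-frequency'' bookkeeping in one stroke. With these corrections the argument closes as you describe, the only remaining inputs being the admissibility of the metric, Sobolev boundedness (Lemma~\ref{lem: regularity 2p}), and the absorption mechanism~\eqref{eq: usual semi-classical argument}.
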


In many occurrences, we shall use the following microlocal version of
the G{\aa}rding inequality.
%%%%%%%%%%%%%%%%%%%%%%%%
% proposition              %
%%%%%%%%%%%%%%%%%%%%%%%%
\begin{proposition}[microlocal tangential G{\aa}rding inequality]
  \label{prop: microlocal tangential Garding}
  Let $\U \subset \Mt$ be a conic open set.  Let also
  $\chi (\y') \in S(1, \gt)$ be homogeneous of degree zero and \st
  $\supp(\chi) \subset \U$. Let $r, m \in \R$ and
  $a(\y') \in S(\ttau^r\lambsct^m, \gt)$, with principal part
  $a_{r,m}$. If there exist $C>0$ and $R>0$ \st
  \begin{equation*}
    \Re a_{r,m}(\y') \geq C \ttau^r\lambsct^m, \quad \y' \in \U, 
    \quad \tau \geq \tauast, \quad\lambsct \geq R,
    \end{equation*}
  then
  for any $0<C'<C$, $M \in \N$, there exist $C_M$ and $\tau_0\geq \tauast$
  \st 
  \begin{equation*}
    \Re \scp{\Opt(a) \Opt(\chi) u}{ \Opt(\chi) u}_{+} 
    \geq C' \Norm{\ttau^{r/2} \Opt(\chi)
      u}{0,m/2,\ttau}^2 -C_M \Norm{u}{0,-M,\ttau}^2 , 
  \end{equation*}
  for $u \in \S(\Rpb)$ and $\tau \geq \tau_0$.
\end{proposition}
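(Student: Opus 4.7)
The strategy is the standard upgrade of the local Gårding inequality (Proposition~\ref{prop: local Garding}) to a microlocal version via a symbol extension, symmetrization in the Weyl quantization, and a global sharp Gårding in the three-parameter calculus.

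\emph{Step 1 — Symbol extension.} Since $\supp(\chi)$ is a closed conic set of $\Mt$ contained in the open conic set $\U$, Lemma~\ref{lemma: from on calculus to the other} provides a symbol $\chi_1 \in S(1, \gt)$, homogeneous of degree $0$, with $\chi_1 \equiv 1$ on a conic open neighborhood of $\supp(\chi)$ and $\supp(\chi_1) \Subset \U$. Fix $C'<C_0<C$ and set
\[
\tilde a(\y') = \chi_1(\y')\,\Re a(\y') + \bigl(1-\chi_1(\y')\bigr)\, C_0\, \ttau^r \lambsct^m.
\]
Then $\tilde a \in S(\ttau^r\lambsct^m, \gt)$ is real, it coincides with $\Re a$ on a conic neighborhood of $\supp(\chi)$, so that the identity $(\Re a - \tilde a)\chi \equiv 0$ holds on all of $\Mt$, and moreover $\tilde a(\y') \geq C_0 \ttau^r \lambsct^m$ globally on $\Mt$ for $\lambsct$ sufficiently large, by the hypothesis on $\Re a_{r,m}$ on $\U$ and the inclusion $\supp(\chi_1) \subset \U$.

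\emph{Step 2 — Global sharp Gårding for $\tilde a$.} The metric $\gt$ is admissible (Proposition~\ref{prop: admissible metric and order function tangential}), so $\tilde a$ fits in a Weyl--Hörmander calculus that supports a sharp Gårding inequality (cf.~\cite[Sec.~18.6]{Hoermander:V3}, \cite{Lerner:10}), which can also be obtained from Proposition~\ref{prop: local Garding} by a partition of unity compatible with $\gt$. For any $\epsilon>0$ and any $M\in\N$, there exist $\tau_1\geq \tauast$ and $C'_M>0$ such that
\[
\scp{\Opt^w(\tilde a) w}{w}_+ \;\geq\; (C_0-\epsilon)\,\Norm{\ttau^{r/2} w}{0,m/2,\ttau}^2 \;-\; C'_M\,\Norm{w}{0,-M,\ttau}^2,
\qquad w \in \S(\Rpb),\ \tau\geq \tau_1.
\]
Apply this estimate to $w = \Opt(\chi)u$; by Proposition~\ref{prop: Sobolev regularity pseudo tangential}, $\Norm{w}{0,-M,\ttau} \lesssim \Norm{u}{0,-M,\ttau}$.

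\emph{Step 3 — Comparison with $\Opt(a)\Opt(\chi)$.} Using $\Opt(a) - \Opt^w(a) \in (1+\ctp\csp)\,\Psi(\ttau^r \lambsct^{m-1}, \gt)$ and the selfadjointness property $\Opt^w(\tilde a)^* = \Opt^w(\tilde a)$, symmetrization yields
\[
\Re \scp{\Opt(a) w}{w}_+ = \scp{\Opt^w(\Re a) w}{w}_+ + E_1(w),\qquad |E_1(w)| \lesssim (1+\ctp\csp)\Norm{\ttau^{r/2} w}{0,(m-1)/2,\ttau}^2,
\]
by Lemma~\ref{lem: regularity 2p}. Moreover, because $(\Re a - \tilde a)\chi \equiv 0$ and all derivatives of $\chi$ are supported inside $\{\chi_1 = 1\}$ where $\Re a - \tilde a \equiv 0$, the composition formula of the Weyl--Hörmander calculus shows that $\Opt^w(\Re a - \tilde a)\,\Opt(\chi) \in \bigcap_{N\geq 0}\Psi(\ttau^r \lambsct^{-N}, \gt)$, so that $|\scp{\Opt^w(\Re a - \tilde a)w}{w}_+| \lesssim \Norm{u}{0,-M,\ttau}^2$ for every $M$. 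Combining with Step~2 gives
\[
\Re\scp{\Opt(a)\Opt(\chi)u}{\Opt(\chi)u}_+ \;\geq\; (C_0-\epsilon)\Norm{\ttau^{r/2}\Opt(\chi)u}{0,m/2,\ttau}^2 - C''_M\Norm{u}{0,-M,\ttau}^2 - E_1(w).
\]
The remainder $E_1(w)$ is absorbed into the leading term using the semiclassical argument \eqref{eq: usual semi-classical argument} for $\tau$ large, at the cost of replacing $C_0-\epsilon$ by any $C'<C$; the conclusion follows.

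\emph{Expected main obstacle.} The subtle point is the last assertion in Step~3: showing that the identity $(\Re a - \tilde a)\chi \equiv 0$ (a statement about symbols) forces $\Opt^w(\Re a - \tilde a)\Opt(\chi)$ to be an operator of arbitrarily low order, \emph{uniformly} in the three parameters $(\tau, \csp, \ctp)$. This rests on the full admissibility of $\gt$ — in particular the slow variation and temperateness provided by Proposition~\ref{prop: admissible metric and order function tangential} — so that the Weyl composition asymptotics produce residuals in $\Psi(\ttau^r\lambsct^{-N}, \gt)$ with constants independent of the semiclassical parameters.
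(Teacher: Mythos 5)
The paper gives no proof of this proposition (nor of Proposition~\ref{prop: local Garding} that it refines), so there is no argument in the text to compare against; I assess the proposal on its own terms. Your strategy --- extend $\Re a$ off a conic neighborhood of $\supp(\chi)$ by $C_0\ttau^r\lambsct^m$, apply a G\aa rding inequality to the globally positive extension, and handle $\Opt^w(\Re a - \tilde a)\Opt(\chi)$ by disjoint-support pseudo-locality --- is the natural one. Two routine points to tidy up: Lemma~\ref{lemma: from on calculus to the other} does not by itself produce the thickened cutoff $\chi_1$ (it is a symbol-class transfer statement carrying the hypothesis $|\htau|\asymp\ttau$, not an existence result for cutoffs --- construct $\chi_1$ as in Section~\ref{sec: def microlocal regions}); and Proposition~\ref{prop: local Garding} applies only to compactly supported $u$, so $\Opt(\chi)u$ must first be split with a spatial cutoff $\phi(z)$, $\phi\equiv 1$ near the spatial projection of $\supp(\chi)$, the piece $(1-\phi)\Opt(\chi)$ being $\Psi(\lambsct^{-M},\gt)$ uniformly in $(\csp,\ctp)$ because the disjointness is purely spatial, so the integrations by parts cost only $\zeta'$-derivatives, which are free.

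The genuine gap is in Step~3. The assertion $\Opt^w(\Re a - \tilde a)\Opt(\chi)\in\bigcap_N\Psi(\ttau^r\lambsct^{-N},\gt)$, with constants uniform in $(\csp,\ctp)$, does not follow from admissibility of $\gt$ alone. Here the disjointness is conic in $(\zeta',\htau)$, not spatial: integrating by parts against the phase $e^{-iw'\cdot\eta'}$ trades a factor $|\eta'|^{-1}\lesssim\lambsct^{-1}$ for one $z'$-derivative, and in this calculus every $z'$-derivative costs $(1+\ctp\csp)$. What one obtains directly is membership in $\bigcap_N (1+\ctp\csp)^N\Psi(\ttau^r\lambsct^{m-N},\gt)=\bigcap_N h_{\gt}^N\Psi(\ttau^r\lambsct^{m},\gt)$. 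Since $h_{\gt}\leq 2/\tau$ but $h_{\gt}$ is \emph{not} $\lesssim\lambsct^{-1}$ uniformly (e.g.\ for $\csp$ large and $\ctp$ fixed), this remainder is $O(\tau^{-N})$ but not, uniformly over $(\csp,\ctp)\in[1,\infty)\times[0,1]$, of arbitrarily negative $\lambsct$-order. Concretely, the constant $C_M$ your argument produces carries a factor $(1+\ctp\csp)^{2M+m+r}$, whereas the statement --- and its uses, e.g.\ in Lemma~\ref{lemma: sub-ellitptic estimate Qk+}, where $\csp$ is subsequently taken large --- requires it independent of the parameters. You correctly identified this uniformity as the main obstacle, but invoking slow variation and temperateness does not by itself resolve it. One must either exploit additional structure (e.g.\ trade $(1+\ctp\csp)^N$ for $\lambsct^N\tau^{-N}$ using $\lambsct\geq\ttau\gtrsim\tau(1+\ctp\csp)$, and then check that a remainder of the form $\tau^{-N}\cdot(\text{finite positive-order norm of }u)$ is acceptable in every invocation), or restate the proposition with a remainder that makes the $(\csp,\ctp)$ dependence explicit.
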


\subsection{Local setting and statement of the Carleman estimate}
\label{sec: local setting}
To explain the construction of the phase function, it is useful to use
a particular  set of coordinates.
We set $Z = (0,S_0) \times \Omega$ and $\d Z = (0,S_0) \times \d\Omega$.

Let $z_0 = (s_0,x_0) \in \d Z$.
In a \nhd $V$ of $z_0$ in $\R^N$, using normal geodesic coordinates for
the $x$ variable, we can express
the principal part of the Laplace operator $A$ in the following form
\begin{align}
  \label{eq: laplace normal geodesic coordinates}
  A = D_{x_d}^2 + R(x,D_{x'}), 
\end{align}
where $R(x,D_{x'})$ is a tangential differential operator of
order $2$ with principal symbol $r(x,\xi')$, 
\begin{align}
  \label{eq: quadratic form r}
  r(x,\xi') \geq C |\xi'|^2, 
\end{align}
where $C>0$.  We denote by $\tilde{r}(x,\xi',\eta')$ the associated
real symmetric bilinear form. The boundary $(0,S_0)\times \d \Omega$
is locally given by $\{ z_N=0\} = \{ x_d =0\}$. 

Without any loss of generality we shall assume that $V$ is a bounded
open set.

We then let $\psi(z)$ be defined in $\R^N$ and fulfilling the
properties listed in \eqref{eq: cond psi} with moreover, 
\begin{align}
  \label{eq: cond psi2}
  \d_{x_d} \psi (z)= \d_{z_N} \psi (z) \geq C>0, \qquad z \in V, 
\end{align}
and we set $\varphi_{\csp, \ctp} (z) = \exp(\csp \cpsi(z))$ with
$\cpsi(z) = \psi(\ctp s, \ctp x', x_d)$, for $\csp\geq 1$ and
$\ctp \in [0,1]$.  As mentioned above, we shall often write $\varphi$
in place if $\varphi_{\csp, \ctp}$ for the sake of concision.

\medskip
The main result of this section is the following Carleman estimate. 
%%%%%%%%%%%%%%%%%%%%%%%%
% theorem              %
%%%%%%%%%%%%%%%%%%%%%%%%
\begin{theorem}
  \label{theorem: Carleman boundary x}
  Let $P = D_s^4 + A^2$. Let $z_0= (s_0,x_0) \in (0,S_0) \times \d\Omega$. Let
  $\varphi(z) = \varphi_{\csp, \ctp} (z) $ be defined as above. There
  exists an open \nhd $W$ of $z_0$ in $(0,S_0) \times \R^d$, $W \subset
  V$, 
  and there exist $\tau_0\geq \tauast$, $\csp_0\geq 1$, $\ctp_0 \in (0,1]$, and $C>0$ \st
  \begin{multline}
    \label{eq: Carleman boundary x}
    \csp 
    \sum_{|\mi| \leq 4}\Norm{\ttau^{3-|\mi|}e^{\tau \varphi}
    D_{s,x}^\mi u}{+} 
    + 
    \sum_{0\leq j \leq 3} \norm{e^{\tau \varphi} D_{x_d}^r u_{|\d Z}}{7/2-j,\ttau}\\
    \leq C \Big( \Norm{e^{\tau \varphi} P u}{+}
    + 
    \sum_{j=0,1} \norm{e^{\tau \varphi} D_{x_d}^j u_{|\d Z}}{7/2-j,\ttau}
    \Big), 
  \end{multline}
 for $\tau \geq \tau_0$, $\csp \geq \csp_0$, $\ctp \in [0,\ctp_0]$,
  and for $u = w_{|Z}$, with
  $ w \in\Cinfc((0,S_0) \times \R^d)$ and $\supp(w) \subset W$.
\end{theorem}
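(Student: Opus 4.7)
The plan is to exploit the factorization $P = P_1 P_2$ with $P_k = (-1)^k i D_s^2 + A$, which induces $\Pconj = e^{\tau\varphi} P e^{-\tau\varphi} = Q_1 Q_2$, where $Q_k = e^{\tau\varphi} P_k e^{-\tau\varphi}$ is second order. Using the normal-geodesic form $A = D_{x_d}^2 + R(x,D_{x'})$ of \eqref{eq: laplace normal geodesic coordinates}, the principal symbol of each $Q_k$ is a quadratic polynomial in $\zeta_N$, so it factors microlocally as $(\zeta_N - \rho_k^+(\y'))(\zeta_N - \rho_k^-(\y'))$ with two symbols $\rho_k^\pm$ belonging to the three-parameter calculus of Section~\ref{sec: pseudo -3p}. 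Hence $\Pconj$ is, microlocally, a product of four first-order operators in $D_{x_d}$, and the whole proof is organized around a classification of conic open regions of $\Mt$ according to the positions of the four roots $\rho_k^\pm$ relative to the real axis.

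The next step is a microlocal partition of unity. Homogeneous symbols $\chi_j \in S(1,\gt)$ with conic supports (built via Lemma~\ref{lemma: from on calculus to the other}) decompose phase space into: (a) an elliptic region, where all four imaginary parts $\Im \rho_k^\pm$ are bounded away from $0$ and $\Pconj$ itself is elliptic in the three-parameter calculus, so that one controls the full fourth-order norm $\Norm{w}{4,\ttau}$ up to trace terms; (b) regions where only one root approaches the real axis, for which the corresponding first-order factor satisfies sub-ellipticity and standard one-factor arguments deliver an estimate with only a half-derivative loss; (c) a critical region where two of the first-order factors have intersecting characteristic sets. Region (c) is precisely the one pointed out in Section~\ref{sec: Carleman higher-order operators}: Proposition~\ref{prop: Carleman P2 - optim} shows no better than a full-derivative loss is possible, so one can only expect control of $\Norm{w}{3,\ttau}$ there. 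In regions (a) and (b) I would use the elliptic and sub-elliptic boundary estimates developed in the appendices; each of these involves the traces $D_{x_d}^j u\brz$ for $j\le 3$, and the boundary-condition structure of $P$ is exploited to split the trace norms into those controlled on the left-hand side ($j=2,3$) and those kept as observation terms on the right-hand side ($j=0,1$).

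The heart of the proof is the microlocal estimate in the critical region (c). Working within the Weyl--H\"ormander calculi for $g$ and $\gt$, one writes the critical piece of $\Pconj$ as a product of its four first-order factors and proceeds by a cascade of one-factor estimates. The two external parameters enter jointly and in an essential way: choosing $\csp$ large produces a positive lower bound via a G{\aa}rding-type argument (Proposition~\ref{prop: microlocal tangential Garding}) for the expressions that would vanish under pure sub-ellipticity, while choosing $\ctp$ small makes $\htau' = \O(\ctp\,\ttau)$ (Lemma~\ref{lemma: htau in the calculus}) and therefore reduces the tangential part of the weight to a controlled perturbation of the pure-normal case $\ctp = 0$, where $\psi$ depends only on $z_N$ and the estimate is accessible. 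The fact that the final estimate features an extra factor of $\csp$ on the left-hand side precisely reflects this joint large-$\csp$, small-$\ctp$ mechanism.

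Finally, the microlocal pieces are glued. Applying the estimates to $\Opt(\chi_j) u$ and summing, commutators $[\Pconj,\Opt(\chi_j)]$ produce error terms of order at most three in $(\zeta,\ttau)$, i.e.\@\xspace bounded by a constant times $\Norm{w}{3,\ttau}$. Outside the critical region, these errors are absorbed by the half-derivative-loss gain of the better estimates in regions (a)--(b); inside the critical region, the $\csp$-factor gain on the left-hand side absorbs them once $\csp$ is taken large enough. Unpacking the assembled inequality in terms of semi-classical Sobolev norms (using Lemma~\ref{lemma: different form norm} and the trivial weight comparison $\ttau \asymp \htau_{\xi_d}$ from Lemma~\ref{lemma: equiv ttau |htau|}) yields the stated estimate \eqref{eq: Carleman boundary x}. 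The main obstacle throughout is clearly step (c): sub-ellipticity genuinely fails, the loss of one full derivative is sharp, and it is only the interplay of the two additional parameters $\csp$ and $\ctp$ within a carefully tailored semi-classical calculus that makes the commutator errors absorbable after patching.
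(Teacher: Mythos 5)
Your proposal captures the paper's overall strategy faithfully: the factorization $\Pconj = Q_1 Q_2$ and, microlocally, of each $Q_k$ into first-order factors; the partition of $\MtV$ into three kinds of conic regions according to the positions of the four roots; the classification (a)--(c) matching the paper's $F$, $\E_-$, $\E_0$; and the joint role of the two extra parameters $\csp$ (large, via G{\aa}rding) and $\ctp$ (small, making $\htau' = \O(\ctp\,\ttau)$) in the critical region. These are all correct and are indeed the paper's mechanisms.

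There is, however, a genuine gap in your patching argument. You claim that the commutator error coming from the critical-region cutoff is ``absorbed by the $\csp$-factor gain on the left-hand side once $\csp$ is taken large enough.'' This cannot work as stated. In the three-parameter calculus, differentiating the cutoff (or the coefficients of $\Pconj$, which involve $\ttau = \tau\csp\varphi_{\csp,\ctp}$) in the normal variable brings in a factor $\csp$, so the commutator $[\Pconj,\Xi_{\delta,0}]$ lives in $\csp\,\PsiOpsc^{3,0}$. The resulting error is thus on the order of $\csp\Norm{v}{3,0,\ttau}$, which is of \emph{exactly the same strength} in $\csp$ as the left-hand side term $\csp\Norm{\ttau^{-1}\Xi_{\delta,0}v}{4,0,\ttau}\approx\csp\Norm{v}{3,0,\ttau}$ that the critical-region estimate provides. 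Taking $\csp$ larger does not help: both sides scale identically. This is precisely the structural danger that the introduction of Section~\ref{sec: proof outline bi-laplace} warns about.

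The missing ingredient is Lemma~\ref{lemma: gather 3}: the principal part of the symbol of $[\Pconj,\Xi_{\delta,0}]$ is supported, for $z\in V_0$, in a conic set where $\chi_F + \chi^{(1)}_{\delta,-} + \chi^{(2)}_{\delta,-}\geq 1$; i.e.\@\xspace, the commutator is microlocalized \emph{outside} the critical region, precisely where the better estimates of Propositions~\ref{prop: microlocal estimate E-} and \ref{prop: microlocal estimate F} apply. One then uses the microlocal G{\aa}rding inequality (Lemma~\ref{lemma: gather 4}) to bound $\Norm{\Op(h)v}{+}$ in terms of $\Norm{\Xi_F v}{3,0,\ttau}$ and $\Norm{\Xi^{(k)}_{\delta,-}v}{3,0,\ttau}$, and these are absorbed by the \emph{left-hand sides of the noncritical estimates} (which have no full derivative loss, hence gain a power of $\ttau$) for $\tau$ chosen sufficiently large. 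The weight $\alpha$ in the combination $\alpha(\text{(E$_-$)}+\text{(F)})+\text{(E$_0$)}$ and the inequality $\alpha\tau^{1/2}\geq 1$ handle the remaining commutators. Without Lemma~\ref{lemma: gather 3}, the patching step breaks down and the proof does not close.
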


As written in Case (iii) of  Section~\ref{sec: proof outline
  bi-laplace}, the proof we provide of this theorem is based on a
decomposition of phase-space in three microlocal regions
and the derivation of an adapted estimate in each one of these
regions. The  definition of these three regions is based on the properties 
of the roots of the principal symbol of $P$ viewed as a polynomial function
of degree four in the variable $\xi_d$. We start with the analysis of
those properties in the next section and define the microlocal regions
in Section~\ref{sec: def microlocal regions} below. In section~\ref{sec: Proof strategies in the three microlocal regions}
we provide  a proof scheme for a microlocal Carleman estimate in each
of the three regions. Then, in Sections~\ref{sec: Microlocal estimate
  in the region E-}--\ref{sec: Microlocal estimate in the region F} we
precisely state and prove the microlocal estimate associated with each region. Finally, in Section~\ref{sec: Proof of the Carleman estimate boudnary x} the various microlocal estimates are
  patched together, to yield the estimate of Theorem~\ref{theorem: Carleman boundary x}. 

\subsection{Root properties}
\label{sec: root properties}
Here, $z$ will
be assumed to be an element of $V$ so that all the symbols are well  defined. 
We write, as in Sections~\ref{sec: estimate away from boundaries} and
\ref{sec: Carleman boundary s=0},  
\begin{align*}
  P = P_2 P_1,  \quad \text{with}\  P_k= (-1)^k i D_s^2  + A.
\end{align*}

Setting $\Pconj = e^{\tau \varphi} P e^{-\tau \varphi}$ we have 
\begin{align}
  \label{eq: factorization Q2Q1}
  \Pconj=Q_2 Q_1,  \quad \text{with}\
  Q_k=  e^{\tau \varphi} P_k e^{-\tau \varphi}  = (-1)^k i (D_s+ i \tau \d_s \varphi(z))^2+ A_\varphi,
\end{align}
with, in the selected normal geodesic coordinates, 
\begin{align*}
  A_\varphi = e^{\tau \varphi} A e^{-\tau \varphi} =  (D_{x_d}+ i \tau \d_{x_d} \varphi(z))^2 
  + R(x,D_{x'} + i \tau d_{x'} \varphi(z)), \quad z=(s,x).
\end{align*}
In fact, we shall write $Q_k$ in the following form
\begin{align}
  \label{eq: def Mk}
  Q_k= (D_{x_d}+ i \tau \d_{x_d} \varphi(z))^2  + M_k,
  \qquad  M_k = (-1)^k i (D_s+ i \tau \d_s \varphi(z))^2+ R(x,D_{x'} + i \tau d_{x'} \varphi(z)).
\end{align}
This form will allow us, when a smooth square root $H_k$ of $M_k$ is
available in the tangential calculus associated with $\gt$, to write,
up to lower order terms,
$$
Q_k= (D_{x_d}+ i \tau \d_{x_d} \varphi + iH_k)
(D_{x_d}+ i \tau\d_{x_d} \varphi - iH_k),
$$
and, then, we shall base our derivation of a Carleman estimate for $P$
on estimates for first-order factors. This approach was introduced in
the seminal work of A.-P.~Calder\'on \cite{Calderon:58}. It has been
used recently to address boundary and interface problems in the
derivation of Carleman estimates \cite{LR-L:13,CR:14}. Of course, the
two smooth square roots, $H_1$ and $H_2$, may not always be
available. Still, on the occurrence of such a case, we shall find that
the operators $Q_1$ and $Q_2$ will be characterized by perfectly
elliptic estimates at the boundary, that is, one can estimate the
semi-classical Sobolev norm of the solution in $\Omega$ as well as the
counterpart norms for the traces of normal derivatives of the solution on
$\d \Omega$ (with the natural $1/2$ derivative shift for the
traces) --see Section~\ref{sec: perfect elliptic estimate}. As a preliminary to this analysis, we shall study the
properties of the principal symbols of $Q_1$ and $Q_2$ and the
properties of their roots.

\medskip
We denote the principal parts of $Q_k$ and $M_k$ by $q_k$ and
$m_k$, which gives, with $\y = (z, \zeta, \tau,\csp, \ctp)$ and $\zeta = (\sigma, \xi)$,  
\begin{align}
  \label{eq: def symbol qk}
  q_k(\y)=  \big(\xi_d+ i \tau \d_{x_d} \varphi(z)\big)^2   
    + m_k (\y') = 
  \big(\xi_d+ i \htau_{\xi_d} (\y')\big)^2   
    + m_k (\y') ,
\end{align}
with 
\begin{align}
  \label{eq: def symbol k}
    m_k(\y') =(-1)^k i \big(\sigma+ i \htau_\sigma(\y')\big)^2  
    + r\big(x,\xi' + i \htau_{\xi'}(\y')\big),
\end{align}
recalling the definition of $\htau(\y')$ introduced in
Section~\ref{sec: semiclassical cotangent vector htau} and using the
notation \eqref{eq: decomposition ttau}--\eqref{eq: decomposition
  ttau2}.

For $\hatt= (\hatt_\sigma, \hatt_\xi) \in \R \times \R^{d}$, with
$\hatt_\xi = (\hatt_{\xi'}, \hatt_{\xi_d}) \in \R^{d-1} \times \R$, we set 
\begin{align}
  \label{eq: def hm k}
  \hq_k(z,\zeta,\hatt)= \big(\xi_d+ i \hatt_{\xi_d} \big)^2 
  + \hm_k (z,\zeta',\hatt), 
  \quad 
\hm_k (z,\zeta',\hatt) := 
  (-1)^k i (\sigma+ i \hatt_\sigma)^2 
  +  r(x,\xi' + i \hatt_{\xi'}).
\end{align}
We have $q_k(\y)  = \hq_k (z,\zeta,\htau)$ and $m_k(\y')  = \hm_k (z,\zeta',\htau)$.

We now study the roots of $\hq_k(z,\zeta',\xi_d,\hatt)$, with $\zeta' =
(\sigma, \xi')$, when
viewed as a polynomial in the variable $\xi_d$, with the other  variables, $z$,
$\zeta'$, and $\hatt$  acting as parameters.
To that purpose, we introduce the following quantity
\begin{align}
  \label{eq: def hmu k}
  \hmu_k (z,\zeta',\hatt) := 4 \hatt_{\xi_d}^2 \Re \hm_k
  (z,\zeta',\hatt)
  - 4 \hatt_{\xi_d}^4 + \big(\Im \hm_k  (z,\zeta',\hatt) \big)^2.
\end{align}
We choose $\hat{h}_k (z,\zeta',\hatt) \in \C$ \st 
\begin{align}
  \label{eq: def ht k}
  \Re \hat{h}_k (z,\zeta',\hatt) \geq 0 \quad \text{and} \ \ \hat{h}_k
  (z,\zeta',\hatt)^2 = \hm_k (z,\zeta',\hatt). 
\end{align}
We may then write 
\begin{equation}
  \label{eq: hqk factorizatrion}
    \hq_k (z,\zeta,\hatt) = (\xi_d + i \hatt_{\xi_d})^2 + \hat{h}_k^2 (z,\zeta',\hatt)
    = \big(\xi_d - \hrho_{k,+}(z,\zeta',\hatt)\big)  \big(\xi_d - \hrho_{k,-}(z,\zeta',\hatt)\big),
\end{equation}
with
\begin{equation}
  \label{eq: def hrho}
  \hrho_{k,\pm}(z,\zeta',\hatt) = -  i \hatt_{\xi_d} \pm i \hat{h}_k (z,\zeta',\hatt).  
\end{equation}
The choice of  $\hat{h}_k$ is unique if $\hm_k\notin \R_-$. The results of this section are yet
valid in the case $\hm_k\in \R_-$; however, in the following sections, those
results based on the factorization~\eqref{eq: hqk factorizatrion} will
only be used in settings where $\hm_k\in \R_-$ does not occur. 

  We give some properties of the roots $\hrho_{k,\pm}(z,\zeta',\hatt)$. 
%%%%%%%%%%%%%%%%%%%%%%%%
% lemma                %
%%%%%%%%%%%%%%%%%%%%%%%%
\begin{lemma}
  \label{lemma: root behaviors1} 
  We assume that $\hatt_{\xi_d}\geq 0$. Let $k=1$ or $2$. The roots $\hrho_{k,+}(z,\zeta',\hatt)$ and
  $\hrho_{k,-}(z,\zeta',\hatt)$ are both homogeneous of degree one in $(\zeta',\hatt)$,
  and \st 
  \begin{equation}
    \label{eq: root behavior1}
    \Im \hrho_{k,-}\leq - \hatt_{\xi_d} \leq \Im \hrho_{k,+}.
  \end{equation}
  We also have 
  \begin{equation}
     \label{eq: root behavior2}
    \hrho_{k,-} = \hrho_{k,+} \quad  \Leftrightarrow  \quad   \hrho_{k,-} =
     \hrho_{k,+} = - i \hatt_{\xi_d}\quad \Leftrightarrow \quad
     \hm_k =0.
  \end{equation}
  Moreover, if $\hatt_{\xi_d}>0$, we have 
  \begin{equation}
     \label{eq: root behavior3}
   \Im \hrho_{k,+} \lesseqqgtr 0 \quad \Leftrightarrow \quad \hmu_k  \lesseqqgtr 0 .
  \end{equation}
\end{lemma}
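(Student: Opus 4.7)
The plan is to read all four statements directly off the explicit factorisation
\[
\hq_k(z,\zeta,\hatt)=\big(\xi_d-\hrho_{k,+}\big)\big(\xi_d-\hrho_{k,-}\big),
\qquad
\hrho_{k,\pm}=-i\hatt_{\xi_d}\pm i\hat{h}_k(z,\zeta',\hatt),
\]
together with the defining property $\hat{h}_k^2=\hm_k$ and $\Re\hat{h}_k\ge 0$. First, for the homogeneity statement, I would note that $\hm_k$, inspected in \eqref{eq: def hm k}, is a homogeneous polynomial of degree $2$ in $(\zeta',\hatt)$. By the uniqueness of the branch with nonnegative real part (away from $\hm_k\in\R_-$, where both choices coincide), $\hat{h}_k$ is homogeneous of degree $1$; since $\hatt_{\xi_d}$ is as well, so are $\hrho_{k,\pm}$.

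For \eqref{eq: root behavior1} I would simply take imaginary parts:
\[
\Im\hrho_{k,\pm}=-\hatt_{\xi_d}\pm\Re\hat{h}_k,
\]
and the inequality is immediate from $\Re\hat{h}_k\ge 0$ (no use of $\hatt_{\xi_d}\ge 0$ is needed here). For \eqref{eq: root behavior2}, equality $\hrho_{k,+}=\hrho_{k,-}$ forces $\hat{h}_k=0$, hence $\hm_k=0$ and both roots equal $-i\hatt_{\xi_d}$; conversely, $\hm_k=0$ gives $\hat{h}_k=0$ by the definition of the branch, and the roots coincide.

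The only computation of substance is \eqref{eq: root behavior3}. I would write $\hat{h}_k=\alpha+i\beta$ with $\alpha\ge 0$, so that
\[
\Re\hm_k=\alpha^2-\beta^2,\qquad \Im\hm_k=2\alpha\beta,
\]
and plug into the definition \eqref{eq: def hmu k}:
\begin{align*}
\hmu_k
&=4\hatt_{\xi_d}^2(\alpha^2-\beta^2)-4\hatt_{\xi_d}^4+4\alpha^2\beta^2\\
&=4\bigl(\alpha^2(\hatt_{\xi_d}^2+\beta^2)-\hatt_{\xi_d}^2(\beta^2+\hatt_{\xi_d}^2)\bigr)
=4(\hatt_{\xi_d}^2+\beta^2)(\alpha-\hatt_{\xi_d})(\alpha+\hatt_{\xi_d}).
\end{align*}
Under the hypothesis $\hatt_{\xi_d}>0$ the factors $\hatt_{\xi_d}^2+\beta^2$ and $\alpha+\hatt_{\xi_d}$ are strictly positive, so $\sgn\hmu_k=\sgn(\alpha-\hatt_{\xi_d})=\sgn\Im\hrho_{k,+}$, which is exactly \eqref{eq: root behavior3}. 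No genuine obstacle arises; the only bookkeeping care lies in checking the homogeneity across the branch-cut locus $\hm_k\in\R_-$, which is covered by the uniqueness of the continuous extension of $\hat{h}_k$ from the set $\{\hm_k\notin\R_-\}$.
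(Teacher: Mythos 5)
Your proof is correct and follows essentially the same route as the paper: it reads off $\Im\hrho_{k,\pm}=-\hatt_{\xi_d}\pm\Re\hat{h}_k$ and reduces \eqref{eq: root behavior3} to the algebraic identity $\hmu_k = 4\big(\hatt_{\xi_d}^2+(\Im\hat{h}_k)^2\big)\big((\Re\hat{h}_k)^2-\hatt_{\xi_d}^2\big)$, which the paper packages as the separate one-line Lemma~\ref{lemma: prep prop root behaviors1} (with $t=x+iy$, $m=t^2$, $x_0=\hatt_{\xi_d}$) while you inline and further factor it as $(\alpha-\hatt_{\xi_d})(\alpha+\hatt_{\xi_d})$. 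One small inaccuracy in a side remark: the branch of $\hat{h}_k$ fixed by $\Re\hat{h}_k\ge 0$ does \emph{not} extend continuously across $\hm_k\in\R_-$ (it jumps between $\pm i\sqrt{|\hm_k|}$), so ``uniqueness of the continuous extension'' is not the right justification; however, since $\Re\hat{h}_k=0$ for either choice there, the two roots coincide at $-i\hatt_{\xi_d}$, so the labeling ambiguity is immaterial to every assertion of the lemma (this is also why the paper notes that $\hat{h}_k$ ``will only play a r\^ole'' when $\hm_k\notin\R_-$).
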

In particular, if $\hatt_{\xi_d}>0$, observe that the root $\hrho_{k,-}$ remains in the
lower half complex plane, independently of the values of $z,\zeta'$,
and $\hatt$, while the root $\hrho_{k,+}$ may cross the real line.
%% proof of lemma
\begin{proof}
 The roots can be chosen continuous with respect to $\zeta$ and $\hatt$ and
  homogeneity comes  naturally. Observe that $\Im \hrho_{k,\pm} =-  
  \hatt_{\xi_d} \pm \Re \hat{h}_k$.  As $\Re \hat{h}_k \geq 0$ then \eqref{eq:
    root behavior1} is clear. The form of $\hrho_{k,\pm}$ above
  yields the equivalences in \eqref{eq:
    root behavior2}. 

  Finally, as $\Im \hrho_{k,+} \lesseqqgtr 0$ is equivalent to
  $\Re \hat{h}_k \lesseqqgtr \hatt_{\xi_d}$, Lemma~\ref{lemma: prep prop
    root behaviors1} below implies \eqref{eq: root behavior3}, since 
  $\Re \hat{h}_k \geq 0$ and $\hatt_{\xi_d} >0$. 
\end{proof} 

%%%%%%%%%%%%%%%%%%%%%%%%
% lemma                %
%%%%%%%%%%%%%%%%%%%%%%%%
\begin{lemma}
 \label{lemma: prep prop root behaviors1}
 Let $t \in \C$ and $m = t^2$. We then have, for $x_0 \in \R$ \st $x_0 \neq 0$, 
 \begin{align*}
   |\Re t| \lesseqqgtr |x_0| \quad \Leftrightarrow \quad 4 x_0^2 \Re m -  4x_0^4 +
   (\Im m)^2 \lesseqqgtr 0 .
 \end{align*}
\end{lemma}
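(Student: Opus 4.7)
The plan is a direct algebraic computation: write $t = a+ib$ with $a,b \in \R$ and expand everything in terms of $a,b$ and $x_0$. Since $m = t^2 = (a^2-b^2) + 2iab$, we have $\Re m = a^2-b^2$ and $\Im m = 2ab$, so
\begin{align*}
  4x_0^2 \Re m - 4 x_0^4 + (\Im m)^2
  &= 4 x_0^2(a^2-b^2) - 4 x_0^4 + 4 a^2 b^2 \\
  &= 4\bigl[a^2(x_0^2+b^2) - x_0^2(x_0^2+b^2)\bigr]
  = 4(x_0^2+b^2)\bigl(a^2 - x_0^2\bigr).
\end{align*}
The key point is the factorization on the last line.

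Since $x_0 \neq 0$, the factor $4(x_0^2+b^2)$ is strictly positive, so the sign of the whole expression equals the sign of $a^2 - x_0^2 = (\Re t)^2 - x_0^2$. This quantity is $\lessgtr 0$ (resp. $=0$) precisely when $|\Re t| \lessgtr |x_0|$ (resp. $|\Re t| = |x_0|$), which is the claimed equivalence.

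There is no real obstacle here: the lemma is essentially an identity. The only thing one has to notice is the clean factorization $x_0^2 a^2 - x_0^2 b^2 - x_0^4 + a^2 b^2 = (x_0^2+b^2)(a^2-x_0^2)$, and the fact that $x_0 \neq 0$ is used solely to guarantee $x_0^2 + b^2 > 0$, so that the positive prefactor does not vanish and the sign of the expression is controlled by $(\Re t)^2 - x_0^2$ alone.
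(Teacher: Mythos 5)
Your proof is correct and is essentially identical to the paper's: both write $t$ in real and imaginary parts, compute the same factorization $4(x_0^2+b^2)(a^2-x_0^2)$, and conclude from the strict positivity of the prefactor when $x_0\neq 0$.
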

%% proof of lemma
\begin{proof}
  Let $t = x + i y$. 
  We have $\Re m = x^2 - y^2$ and $\Im m = 2 xy$ and we observe that 
  \begin{align*}
    4 x_0^2 \Re m -  4x_0^4 + (\Im m)^2 = 4 (x_0^2 + y^2)(x^2 -
    x_0^2), 
 \end{align*}
  which gives the result.
\end{proof}
%%%%%%%%%%%%%%%%%%%%%%%%
% corollary            %
%%%%%%%%%%%%%%%%%%%%%%%%
\begin{corollary}
  \label{cor: positivity mu}
  We assume that $\hatt_{\xi_d}>0$. Let $k=1$ or $2$. 
  If $C>0$, there exists $C'>0$ \st 
  \begin{align*}
    \hmu_k (z, \zeta', \hatt) \geq C (|\hatt|^2 + |\zeta'|^2)^2 \quad \Rightarrow
    \Im \hrho_{k,+}(z, \zeta', \hatt) \geq C'\hlambdat, \qquad \hlambdat
    = (|\hatt|^2 + |\zeta'|^2)^{1/2}, 
  \end{align*}
  for $(z, \zeta', \hatt) \in \ovl{V \cap \Rp} \times \R^{N-1} \times \R^N$.
\end{corollary}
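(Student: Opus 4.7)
The plan is to combine the factorization identity from the proof of Lemma~\ref{lemma: prep prop root behaviors1} with a routine upper bound on $|\hm_k|$. The key observation is that the hypothesis $\hmu_k \geq C \hlambdat^4$ is quartic in $\hlambdat$, while everything on the other side is quadratic, so one simply has to distribute the quartic growth correctly.

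First, I would rewrite $\hmu_k$ in the factored form used in the proof of Lemma~\ref{lemma: prep prop root behaviors1}. Writing $\hat{h}_k = x + iy$ (so $x = \Re \hat{h}_k \geq 0$, $y = \Im \hat{h}_k$) and $x_0 = \hatt_{\xi_d} > 0$, and recalling that $\hm_k = \hat{h}_k^2$, the computation in that lemma gives
\begin{equation*}
\hmu_k(z,\zeta',\hatt) \;=\; 4\bigl(x_0^2 + y^2\bigr)\bigl(x^2 - x_0^2\bigr) \;=\; 4\bigl(\hatt_{\xi_d}^2 + (\Im \hat h_k)^2\bigr)\bigl((\Re \hat h_k)^2 - \hatt_{\xi_d}^2\bigr).
\end{equation*}

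Next I would control $|\hat h_k|$. Since $|\hat h_k|^2 = |\hm_k|$, and from the explicit formula \eqref{eq: def hm k} together with the boundedness of the coefficients of $r$ on the bounded set $\ovl{V \cap \Rp}$ and the quadratic nature of $r$ in $\xi'$, one has $|\hm_k(z,\zeta',\hatt)| \lesssim |\zeta'|^2 + |\hatt|^2 = \hlambdat^2$, uniformly in $(z,\zeta',\hatt) \in \ovl{V \cap \Rp} \times \R^{N-1} \times \R^N$. Consequently $|\hat h_k| \lesssim \hlambdat$, and in particular
\begin{equation*}
\hatt_{\xi_d}^2 + (\Im \hat h_k)^2 \;\lesssim\; \hlambdat^2, \qquad \Re \hat h_k + \hatt_{\xi_d} \;\lesssim\; \hlambdat.
\end{equation*}

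Combining these two upper bounds with the hypothesis $\hmu_k \geq C \hlambdat^4$ and the factored identity, I obtain
\begin{equation*}
(\Re \hat h_k)^2 - \hatt_{\xi_d}^2 \;\geq\; \frac{C \hlambdat^4}{4\bigl(\hatt_{\xi_d}^2 + (\Im \hat h_k)^2\bigr)} \;\gtrsim\; \hlambdat^2,
\end{equation*}
and then factoring the left-hand side as $(\Re \hat h_k - \hatt_{\xi_d})(\Re \hat h_k + \hatt_{\xi_d})$ and dividing by the upper bound $\Re \hat h_k + \hatt_{\xi_d} \lesssim \hlambdat$ yields $\Re \hat h_k - \hatt_{\xi_d} \gtrsim \hlambdat$. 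Since $\Im \hrho_{k,+} = \Re \hat h_k - \hatt_{\xi_d}$ by \eqref{eq: def hrho}, this is the claimed estimate.

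I do not expect a serious obstacle: all the work is already embedded in Lemma~\ref{lemma: prep prop root behaviors1} (which supplies the factorization of $\hmu_k$) and in the explicit polynomial form of $\hm_k$ (which supplies the trivial upper bound $|\hm_k| \lesssim \hlambdat^2$ on the compact set of $z$-values). The only point that requires a moment's care is ensuring that the upper bound $|\hat h_k| \lesssim \hlambdat$ is indeed uniform in $(z,\zeta',\hatt)$; this follows because the coefficients of $r(x,\xi')$ are smooth and $x$ ranges over the compact set $\ovl{V \cap \Rp}$ projected onto the $x$-coordinates.
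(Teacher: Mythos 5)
Your proof is correct, but it takes a genuinely different route from the paper's. The paper restricts to the unit sphere $\{\hlambdat=1\}$, observes that $\{\hmu_k \geq C\}$ cuts out a compact subset $K$ there, invokes point~(3) of Lemma~\ref{lemma: root behaviors1} to get $\Im\hrho_{k,+}>0$ pointwise on $K$, upgrades this to a uniform lower bound by compactness, and then scales back by homogeneity of degree one. You instead argue directly and quantitatively: you combine the factorization $\hmu_k = 4\bigl(\hatt_{\xi_d}^2 + (\Im\hat h_k)^2\bigr)\bigl((\Re\hat h_k)^2 - \hatt_{\xi_d}^2\bigr)$ supplied by the computation inside Lemma~\ref{lemma: prep prop root behaviors1} with the elementary bound $|\hm_k|\lesssim\hlambdat^2$ (hence $|\hat h_k|\lesssim\hlambdat$), divide by the first factor and by $\Re\hat h_k + \hatt_{\xi_d}\lesssim\hlambdat$ (both strictly positive since $\hatt_{\xi_d}>0$), and read off $\Im\hrho_{k,+} = \Re\hat h_k - \hatt_{\xi_d}\gtrsim\hlambdat$. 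Your version is more explicit and makes the constant $C'$ traceable in terms of $C$ and the coefficient bounds, at the cost of a slightly longer chain of inequalities; the paper's version is shorter and leans on the same compactness-plus-homogeneity pattern used repeatedly in Section~\ref{sec: root properties}. One small remark: your argument implicitly needs $\hlambdat>0$ to divide; this is automatic from $\hatt_{\xi_d}>0$, but it is worth saying.
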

%%%% proof of corollary
\begin{proof}
  We consider the compact set (recall that $V$ is bounded)
  \begin{align*}
    \Con = \{ (z, \zeta', \hatt) \in \ovl{V \cap \Rp} \times \R^{N-1}
    \times \R^N ;\ \hlambdat=1\}.
  \end{align*}
  The inequality $\hmu_k \geq C$ yields a compact set $K$ of $\Con$.
  By \eqref{eq:
    root behavior3} in  Lemma~\ref{lemma: root behaviors1}, we have
  $\Im \hrho_{k,+}\geq C'>0$ on $K$, and we conclude by homogeneity.
\end{proof}

%%%%%%%%%%%%%%%%%%%%%%%%
% proposition                %
%%%%%%%%%%%%%%%%%%%%%%%%
\begin{proposition}
  \label{prop root behaviors2} 
   We assume that $\hatt_{\xi_d}\geq 0$. Let $k=1$ or $2$, we have the following properties:
  \begin{enumerate}
  \item There exist $\theta_0 \in (0,1)$ and $C>0$ 
    \st if 
    \begin{align*}
      z \in V \ \ \text{and} \ \ |\hatt| \leq \theta_0\ \hlambdat, 
    \end{align*}
    then the roots $\hrho_{k,\pm}$ are simple and non real, and  moreover
    \begin{align}
      \label{eq: position roots htau=0}
      \Im \hrho_{k,+} \geq C\hlambdat, 
      \quad  \Im \hrho_{k,-} \leq -C \hlambdat \quad 
      (z,\zeta', \hatt) \in V \times \R^{N-1}\times \R^N,
   \end{align} %% 
   with $\hlambdat = (|\hatt|^2+ |\zeta'|^2)^{1/2}$. 
  \item There exists $C>0$ \st 
    \begin{align*}
     0\leq \hatt_{\xi_d} \leq C \big(|\hatt'| +  |\zeta'|\big), \quad \text{and} \ \ 
      |\zeta'| \leq C |\hatt|, 
    \end{align*}
    if $\hrho_{k,+}\in \R$, where $\hatt' = (\hatt_{\sigma},
    \hatt_{\xi'})$.  In such case,  the
    value of the imaginary part of the second root is prescribed and
    nonpositive: $\Im \hrho_{k,-}=-2\hatt_{\xi_d}$.
    \item There exists $C>0$ \st $|\hatt'|/C \leq  |\zeta'| \leq C
      |\hatt'|$,  if
   $\hq_k$ has a double root.  
\end{enumerate}

\medskip Finally, if $\hatt_{\xi_d}>0$ and if $|\hatt'| /
\hatt_{\xi_d}$ is \suff small, and if the polynomial $\hq_k$,
$k=1$ or $2$, has a double root, then both roots of the second symbol,
$\hq_{k'}$ with $k'\neq k$, are in the lower half complex plane. More
precisely, there exist $C_0,C_1>0$ \st if $|\hatt'| \le C_0
\hatt_{\xi_d}$ then
  \begin{equation}
    \label{eq: double -> Im < 0}
    \hrho_{k,+}=\hrho_{k,-} \quad \Rightarrow \quad  \Im
 \hrho_{k',\pm}\leq -C_1\hatt_{\xi_d}.
  \end{equation}

\end{proposition}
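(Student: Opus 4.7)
The plan is to treat the four parts in order, the common tools being the explicit formula $\hrho_{k,\pm} = -i\hatt_{\xi_d} \pm i\hat{h}_k$ with $\hat{h}_k^2 = \hm_k$ and $\Re \hat{h}_k \ge 0$, together with the decomposition of \eqref{eq: def hm k} obtained by expanding the squares (and using that $r(x,\cdot)$ is quadratic):
\begin{align*}
\Re \hm_k &= r(x,\xi') - r(x,\hatt_{\xi'}) - 2(-1)^k \sigma \hatt_\sigma, \\
\Im \hm_k &= (-1)^k(\sigma^2 - \hatt_\sigma^2) + 2\tilde{r}(x,\xi',\hatt_{\xi'}).
\end{align*}
For (1), at $\hatt = 0$ one has $\hm_k = r(x,\xi') + (-1)^k i\sigma^2$, which by \eqref{eq: quadratic form r} lies in the closed right half-plane with real part $\ge C|\xi'|^2 \ge C' |\zeta'|^2$ (once $\sigma^2$ is comparable to $|\xi'|^2$ one uses the imaginary part instead, so in all cases $|\hm_k| \ge c|\zeta'|^2$ with $\arg \hm_k$ bounded away from $\pi$); the principal square root then satisfies $\Re \hat{h}_k \ge c_0 |\zeta'|$. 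By continuity in $\hatt$ and homogeneity of degree two of $\hm_k$ in $(\zeta',\hatt)$, this persists for $|\hatt| \le \theta_0 \hlambdat$ with $\theta_0$ small; since in this regime $|\zeta'| \ge \sqrt{1-\theta_0^2}\,\hlambdat$ and $\hatt_{\xi_d} \le \theta_0 \hlambdat$, one obtains $\Im \hrho_{k,+} = -\hatt_{\xi_d} + \Re \hat{h}_k \ge C\hlambdat$ and $-\Im \hrho_{k,-} = \hatt_{\xi_d} + \Re \hat{h}_k \ge C\hlambdat$, which yields \eqref{eq: position roots htau=0}; the homogeneity of degree one of $\hrho_{k,\pm}$ in $(\zeta',\hatt)$ is immediate from the factorization.

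For (2), if $\hrho_{k,+} \in \R$ then $\Re \hat{h}_k = \hatt_{\xi_d}$, hence writing $\hat{h}_k = \hatt_{\xi_d} + iy$ gives $|\hat{h}_k|^2 = \hatt_{\xi_d}^2 + y^2 = |\hm_k|$. Crucially $\hm_k$ does \emph{not} depend on $\hatt_{\xi_d}$, and the decomposition above shows $|\hm_k| \le C(|\zeta'| + |\hatt'|)^2$, whence $\hatt_{\xi_d} \le C'(|\zeta'| + |\hatt'|)$. The identity $\Im \hrho_{k,-} = -\hatt_{\xi_d} - \Re \hat{h}_k = -2\hatt_{\xi_d}$ is then immediate. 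For $|\zeta'| \le C|\hatt|$ one invokes the contrapositive of (1): since $\hrho_{k,+}$ is real we cannot have $|\hatt| \le \theta_0 \hlambdat$, so $|\hatt|^2 > \theta_0^2(|\hatt|^2 + |\zeta'|^2)$ and $|\zeta'| \le \sqrt{1/\theta_0^2 - 1}\,|\hatt|$.

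For (3), a double root of $\hq_k$ forces $\hm_k = 0$ by \eqref{eq: root behavior2}, i.e.\ both components in the decomposition vanish. Using the ellipticity $r(x,\xi') \ge C|\xi'|^2$, the bound $r(x,\hatt_{\xi'}) \le C'|\hatt_{\xi'}|^2$, and $|\tilde{r}(x,\xi',\hatt_{\xi'})| \le C''|\xi'||\hatt_{\xi'}|$, one derives
\begin{align*}
C|\xi'|^2 \le C'|\hatt_{\xi'}|^2 + 2|\sigma||\hatt_\sigma|, \qquad \sigma^2 \le \hatt_\sigma^2 + 2C''|\xi'||\hatt_{\xi'}|,
\end{align*}
and a standard Young-inequality absorption on the mixed terms yields $|\zeta'|^2 \le C|\hatt'|^2$; the reverse estimate $|\hatt'|^2 \le C|\zeta'|^2$ follows by the symmetric manipulation (the two equations are invariant under exchange of $(\sigma,\xi')$ with $(\hatt_\sigma,\hatt_{\xi'})$ up to signs), giving $|\zeta'| \asymp |\hatt'|$.

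For the concluding assertion, one uses that $k'\neq k$ implies
\begin{equation*}
\hm_{k'} - \hm_k = \bigl[(-1)^{k'} - (-1)^k\bigr] i(\sigma + i\hatt_\sigma)^2 = \pm 2i(\sigma + i\hatt_\sigma)^2,
\end{equation*}
so when $\hm_k = 0$ one has $|\hm_{k'}| = 2|\sigma + i\hatt_\sigma|^2 = 2(\sigma^2 + \hatt_\sigma^2) \le 2(|\zeta'|^2 + |\hatt'|^2) \le C_\ast |\hatt'|^2$ by (3). Thus $|\hat{h}_{k'}| \le \sqrt{C_\ast}\,|\hatt'| \le \sqrt{C_\ast}\,C_0\,\hatt_{\xi_d}$, and choosing $C_0$ so small that $\sqrt{C_\ast}\,C_0 \le 1/2$ forces $\Re \hat{h}_{k'} \le \hatt_{\xi_d}/2$, whence $\Im \hrho_{k',+} \le -\hatt_{\xi_d}/2$ and $\Im \hrho_{k',-} \le -\hatt_{\xi_d}$, establishing \eqref{eq: double -> Im < 0} with $C_1 = 1/2$. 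The only real subtlety is in Part~(2): one must exploit that $\hm_k$ is a polynomial in $(\zeta',\hatt')$ alone with no $\hatt_{\xi_d}$ dependence, so that $\Re \hat{h}_k = \hatt_{\xi_d}$ combined with $\hat{h}_k^2 = \hm_k$ converts directly into an upper bound on $\hatt_{\xi_d}$ purely in terms of the other variables.
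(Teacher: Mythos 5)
Your proposal is correct and follows essentially the same route as the paper: homogenize to the sphere $\hlambdat=1$, analyze $\hm_k$ at $\hatt=0$ and extend by continuity/compactness for Part~(1); read off $\Re\hat h_k=\hatt_{\xi_d}$ and bound $|\hm_k|$ for Part~(2); use the system $\Re\hm_k=\Im\hm_k=0$ plus Young absorption for Part~(3); and control $\Re\hat h_{k'}$ by $|\hatt'|$ for the final claim. The only genuine departures are small refinements rather than new ideas: in Part~(1) you give a quantitative lower bound $\Re\hat h_k\gtrsim|\zeta'|$ at $\hatt=0$ via the argument of $\hm_k$ (the paper simply invokes compactness of $\mathbb S\cap\{\hatt=0\}$), and in the final assertion you exploit the exact identity $\hm_{k'}-\hm_k=\pm 2i(\sigma+i\hatt_\sigma)^2$ to compute $|\hm_{k'}|=2(\sigma^2+\hatt_\sigma^2)$ when $\hm_k=0$, whereas the paper just uses the generic bound $|\hm_{k'}|\lesssim|\hatt'|^2+|\zeta'|^2$ together with $|\hatt'|\asymp|\zeta'|$ from~(3). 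Both variants are valid; the explicit identity is slightly cleaner but buys nothing essential, since the needed conclusion only requires the weaker bound.
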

%%%% proof of proposition
\begin{proof}
  
 \noindent {\bfseries Proof of point (1).}
  Because of homogeneity it is sufficient to  assume that
  $(\zeta',\hatt)$ is on
  the sphere $\mathbb S = \big\{ \hlambdat =1\big\}.$
  If $\hatt=0$, then we have $\hm_k = \hat{h}_k^2 =
  r(x,\xi')+(-1)^ki\sigma^2$. 
  Observe that $\hm_k\neq 0$ here. Otherwise
  $\sigma=0$ and $\xi'=0$, which  cannot hold as $|\zeta'|=1$. Moreover $\Re \hm_k\geq 0$. Hence, we have
  $\Re\hat{h}_k >0$. 
  Then we write 
  \begin{align*}
    \hq_k = \xi_d^2 + \hat{h}_k^2 = (\xi_d + i \hat{h}_k)(\xi_d - i
    \hat{h}_k), 
  \end{align*}
  yielding $\hrho_{k,-} = - i \hat{h}_k$ and $\hrho_{k,+} =  i
  \hat{h}_k$, which gives $\Im \hrho_{k,-}<0$ and $\Im \hrho_{k,+}
  >0$. As $\mathbb S \cap \{ \hatt=0\}$ is compact we find that 
  $\Im \hrho_{k,-}\leq - C< 0$ and $\Im \hrho_{k,+}
  \geq C>0$, for some $C>0$, for $|\zeta'|=1$ and $\hatt=0$. Then,
  using a  compactness argument once more,  using the continuity of
  the roots, 
   there exist $\theta_0\in (0,1)$ \st 
  \begin{align*}
    \Im \hrho_{k,-} (z, \zeta', \hatt) \leq - C'< 0, \quad \Im \hrho_{k,+}
  (z, \zeta', \hatt)\geq C'>0, 
  \end{align*}
  if $z \in \ovl{V}$ and $|\hatt| \leq \theta_0$, recalling that $V$
  is bounded.
  We then obtain \eqref{eq: position roots htau=0} in $V$ by
  homogeneity. 
In particular this excludes having double roots and real roots.

  \bigskip
  \noindent{\bfseries Proof of point (2).}
  Observe that the inequality $ |\zeta'| \leq C |\hatt|$, in the case
  of a real root is simply another formulation of part of point (1). 
  Next, we observe that $|\hm_k| \lesssim |\hatt'|^2 + |\zeta'|^2$ implies
  $|\Re \hat{h}_k| \lesssim  |\hatt'| + |\zeta'|$. Since having
  $\hrho_{k,+}\in \R$ is equivalent to $\Re \hat{h}_k = \hatt_{\xi_d}$
  by \eqref{eq: def hrho},
  we thus obtain $\hatt_{\xi_d} \lesssim  |\hatt'| + |\zeta'|$.
 As $\Im \hrho_{k,-} = -  \hatt_{\xi_d} - \Re \hat{h}_k$, we then have $\Im \hrho_{k,-}=-2\hatt_{\xi_d}$.
 
 \bigskip
 \noindent{\bfseries Proof of point (3)}
  The equation $\hm_k=0$, which is equivalent to having a double root,
  reads 
  \begin{align}
    \label{eq: double root 1}
    r(x,\xi') - r(x, \hatt_{\xi'}) - (-1)^k 2 \sigma \hatt_\sigma=0,
      \qquad
    \sigma^2 - \hatt_\sigma^2 + (-1)^k 2 \tilde{r}(x, \xi',
    \hatt_{\xi'}) =0,
  \end{align}
with $\tilde{r}(x,\xi',\eta')$ defined below~\eqref{eq: quadratic
    form r}. 
  From \eqref{eq: double root 1}, using that $r(x,.)$ is uniformly
  positive definite, we obtain  
  \begin{align*}
    |\xi'|^2 \lesssim |\hatt_{\xi'}|^2 + |\sigma| |\hatt_\sigma|, 
    \qquad 
    |\sigma|^2 \lesssim |\hatt_{\sigma}|^2 + |\xi'| |\hatt_{\xi'}|.
  \end{align*}
  The sum of the two estimates gives $|\zeta'|^2 \lesssim |\hatt'|^2 +
  |\sigma| |\hatt_\sigma| + |\xi'| |\hatt_{\xi'}|$, 
  and with the Young inequality we obtain 
  $|\zeta'| \lesssim |\hatt'|$. Similarly,   from \eqref{eq: double
    root 1} we obtain  
  \begin{align*}
   |\hatt_{\xi'}|^2 \lesssim  |\xi'|^2 + |\sigma| |\hatt_\sigma|, 
    \qquad 
    |\hatt_{\sigma}|^2  \lesssim |\sigma|^2 + |\xi'| |\hatt_{\xi'}|, 
  \end{align*}
  and the  sum of the two estimates gives 
   $|\hatt'|^2 \lesssim  |\zeta'|^2 + |\sigma| |\hatt_\sigma| + |\xi'|
   |\hatt_{\xi'}|$, and with the Young inequality we obtain 
  $|\hatt'| \lesssim |\zeta'|$.

  \medskip
  Note that we could deduce that $|\zeta'| \lesssim
  |\hatt|$ from point (1). Here, we have obtained a sharper estimate. 
  
  \bigskip
  \noindent{\bfseries Proof of (\ref{eq: double -> Im < 0}).}
  If $\hq_k$ has a double root, then  $|\hatt'|\asymp |\zeta'|$ by
  point (3). Let $\delta \in (0,1)$, and set $C_1 = 1-\delta$. 
  To have $\Im \hrho_{k',\pm} \leq - C_1 \hatt_{\xi_d}$ it suffices
  to have $\Im \hrho_{k',+} \leq - C_1 \hatt_{\xi_d}$ by
  Lemma~\ref{lemma: root behaviors1}. With the notation of the proof
  of that lemma, this reads $-\hatt_{\xi_d} + \Re \hat{h}_{k'} \leq - C_1
  \hatt_{\xi_d}$, that is $0 \leq \Re \hat{h}_{k'} \leq \delta
  \hatt_{\xi_d}$. Now as we have $|\Re \hat{h}_{k'}|\leq |\hat{h}_{k'}| \leq |\hm_{k'}|^{1/2}
  \lesssim |\hatt'| +  |\zeta'|$, we find that $0\leq \Re \hat{h}_{k'} \lesssim
  |\hatt'|$ here.
  The result thus follows if we assume that $|\hatt'| / \hatt_{\xi_d}$
  is chosen \suff small. 
\end{proof}
%%%%%%%%%%%%%%%%%%%%%%%%
% lemma                %
%%%%%%%%%%%%%%%%%%%%%%%%
\begin{lemma}
  \label{lemma: mu > -C}
  Assume that $|\hatt'| \leq C_0\hatt_{\xi_d}$ for some $C_0>0$. 
  There exists $\delta_0>0$ \st if $\delta \in (0, \delta_0)$
  and  $\hmu_k (z,\zeta', \hatt)
  \geq - \delta \hlambdat^4$, with $ \hlambdat^2 = |\hatt|^2 + |\zeta'|^2$, then the roots of $\hq_k$
  are simple. 
\end{lemma}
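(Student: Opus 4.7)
The strategy is to argue by contradiction, leveraging the characterisation of double roots given in \eqref{eq: root behavior2} and the structural estimate \emph{(3)} of Proposition~\ref{prop root behaviors2}. First, I would recall that the hypothesis $|\hatt'|\leq C_0 \hatt_{\xi_d}$ forces $\hatt_{\xi_d}\geq 0$, so Lemma~\ref{lemma: root behaviors1} applies: $\hq_k$ has a double root in $\xi_d$ if and only if $\hm_k(z,\zeta',\hatt)=0$. Thus it suffices to exclude the possibility $\hm_k=0$ under the assumption $\hmu_k\geq -\delta\hlambdat^4$ for $\delta$ small.

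Assume for contradiction that $\hm_k(z,\zeta',\hatt)=0$. Then both $\Re\hm_k$ and $\Im\hm_k$ vanish, so the very definition \eqref{eq: def hmu k} of $\hmu_k$ yields
\begin{equation*}
\hmu_k(z,\zeta',\hatt) = -4\hatt_{\xi_d}^4.
\end{equation*}
Next, since $\hq_k$ has a double root in this case, point \emph{(3)} of Proposition~\ref{prop root behaviors2} provides a constant $C>0$ with $|\zeta'|\leq C|\hatt'|$. Combining with the standing hypothesis $|\hatt'|\leq C_0 \hatt_{\xi_d}$, we get $|\zeta'|\leq C C_0\, \hatt_{\xi_d}$, and moreover $|\hatt|^2 = |\hatt'|^2 + \hatt_{\xi_d}^2 \leq (1+C_0^2)\hatt_{\xi_d}^2$. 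Hence there exists $C_\ast>0$ (depending only on $C_0$ and $C$) such that
\begin{equation*}
\hlambdat^2 = |\hatt|^2 + |\zeta'|^2 \leq C_\ast\, \hatt_{\xi_d}^2,
\qquad\text{so}\qquad \hmu_k = -4\hatt_{\xi_d}^4 \leq -\frac{4}{C_\ast^2}\,\hlambdat^4.
\end{equation*}

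Now set $\delta_0 = 4/C_\ast^2$. If $\delta\in(0,\delta_0)$ and $\hmu_k\geq -\delta \hlambdat^4$, then the two bounds above force $\hlambdat=0$, i.e.\ $\hatt_{\xi_d}=0$, $\hatt'=0$, and $\zeta'=0$. But in that trivial case $\hq_k$ reduces to $\xi_d^2$, which lies outside the range of the homogeneous calculus where simple roots are meaningful; alternatively, one notes that $\hlambdat=0$ is excluded from the statement since the roots are considered as functions on the punctured cone. In all nontrivial situations, we therefore obtain $\hm_k\neq 0$, so by \eqref{eq: root behavior2} the two roots of $\hq_k$ are distinct, which concludes the proof. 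The only delicate point is tracking the constants, but since $C$ in point~\emph{(3)} and $C_0$ in the hypothesis are independent of the parameters, the resulting $\delta_0$ is uniform in $(z,\zeta',\hatt)$, as required.
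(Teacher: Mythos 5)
Your proof is correct and follows essentially the same route as the paper's: argue by contradiction, use the equivalence from Lemma~\ref{lemma: root behaviors1} to reduce a double root of $\hq_k$ to $\hm_k = 0$, read off $\hmu_k = -4\hatt_{\xi_d}^4$ directly from the definition, then combine point (3) of Proposition~\ref{prop root behaviors2} with the standing hypothesis $|\hatt'| \leq C_0\hatt_{\xi_d}$ to bound $\hlambdat \lesssim \hatt_{\xi_d}$ and force a contradiction for $\delta$ small. The only presentational difference is that the paper normalizes to the sphere $\hlambdat = 1$ from the outset, which automatically rules out the degenerate point $\hlambdat = 0$, whereas you keep the homogeneity explicit and must dismiss that case by hand at the end.
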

%%%% proof of lemma
\begin{proof}
  Because of homogeneity it is sufficient to work on
  the sphere $\mathbb S = \big\{ \hlambdat=1\big\}.$
  Writing $\hm_k = \hat{h}_k^2$ with $\Re \hat{h}_k\geq 0$ as in 
  \eqref{eq: def ht k}, 
  we observe that $\hmu_k \geq -\delta$ reads
  \begin{align*}
    4 \big(\hatt_{\xi_d}^2 + (\Im \hat{h}_k)^2\big) \big((\Re
    \hat{h}_k)^2 - \hatt_{\xi_d}^2 \big) \geq -\delta,
  \end{align*}
  using the computation of the proof of Lemma~\ref{lemma: prep prop
    root behaviors1} with $x_0= \hatt_{\xi_d}$.  Assume that we have a
  double root.  In such case $\hm_k=0$ by Lemma~\ref{lemma: root
    behaviors1} and $|\hatt'| \asymp |\zeta'|$ by
  point (3) of Proposition~\ref{prop root behaviors2}.  We then have $\hat{h}_k=0$,
  yielding
  $4 \hatt_{\xi_d}^4 \leq \delta = \delta \hlambdat^4
    \lesssim \delta \hatt_{\xi_d}^4$,
  using that $|\hatt'| \leq C_0\hatt_{\xi_d}$.
  Thus, for $\delta$ chosen \suff small we reach a contradiction.
\end{proof}

%%%%%%%%%%%%%%%%%%%%%%%%
% lemma                %
%%%%%%%%%%%%%%%%%%%%%%%%
\begin{lemma}
  \label{lemma: equivalence tau zeta'}
  Let $k=1$ or $2$. 
 If both $\delta>0$ and $|\hatt'|/ \hatt_{\xi_d}$ are
  \suff small, there exists $C>0$ \st for
  $(z, \zeta', \hatt) \in  \ovl{V\cap \Rp}\times \R^{N-1}\times
  \R^N$
  $$\hmu_k(z,\zeta', \hatt) 
      \geq - \delta \hlambdat^4 
      \ \ \Rightarrow \ \ |\hatt|\leq C|\zeta'|,$$
 with $ \hlambdat^2 = |\hatt|^2 + |\zeta'|^2$.
\end{lemma}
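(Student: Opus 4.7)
My plan is to prove the contrapositive via a homogeneity/compactness argument: I will show that for $C_0$ small enough, there exist $\eta>0$ and $c>0$ such that $\hmu_k(z,\zeta',\hatt)\leq -c\,\hlambdat^4$ on the cone $\{|\zeta'|\leq \eta\,\hlambdat,\ |\hatt'|\leq C_0 \hatt_{\xi_d}\}$, and then any $\delta<c$ will yield $|\hatt|\leq C|\zeta'|$ with $C=1/\eta$. Since $\hmu_k$ is homogeneous of degree $4$ in $(\zeta',\hatt)$, it is enough to work on the sphere $\mathbb S=\{\hlambdat=1\}$, where I will combine a direct computation at $\zeta'=0$ with a continuity argument to extend to $|\zeta'|\leq\eta$. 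The compactness of $\ovl{V\cap \Rp}$ (recall $V$ is bounded) will be used to obtain uniformity in $z$.

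First I would evaluate $\hmu_k$ at $\zeta'=0$. From the definition \eqref{eq: def hm k}, using that $r(x,\cdot)$ is a real quadratic form so $r(x,i\hatt_{\xi'})=-r(x,\hatt_{\xi'})$, one finds $\Re\hm_k(z,0,\hatt)=-r(x,\hatt_{\xi'})$ and $\Im\hm_k(z,0,\hatt)=(-1)^{k+1}\hatt_\sigma^2$, whence
\[
\hmu_k(z,0,\hatt) \;=\; -4\hatt_{\xi_d}^2\,r(x,\hatt_{\xi'}) \;-\; 4\hatt_{\xi_d}^4 \;+\; \hatt_\sigma^4.
\]
Using the positivity $r(x,\hatt_{\xi'})\geq 0$ from \eqref{eq: quadratic form r}, and the standing hypothesis $|\hatt'|\leq C_0\hatt_{\xi_d}$ (which gives $\hatt_\sigma^4\leq C_0^4\hatt_{\xi_d}^4$), I obtain
\[
\hmu_k(z,0,\hatt)\;\leq\; -(4-C_0^4)\,\hatt_{\xi_d}^4.
\]
On the sphere $\mathbb S$ with $\zeta'=0$, the relation $\hatt_{\xi_d}^2(1+C_0^2)\geq \hatt_{\xi_d}^2+|\hatt'|^2=1$ yields $\hatt_{\xi_d}^2\geq(1+C_0^2)^{-1}$. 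Hence, as soon as $C_0<\sqrt{2}$, there exists $c_1=c_1(C_0)>0$ such that $\hmu_k(z,0,\hatt)\leq -c_1$ uniformly in $z\in\ovl{V\cap\Rp}$ and in $\hatt$ with $\hlambdat=1$ and $|\hatt'|\leq C_0\hatt_{\xi_d}$.

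Next I would extend this bound to small $|\zeta'|$ by continuity. The function $\hmu_k$ is smooth in $(z,\zeta',\hatt)$, and the set
\[
K \;=\; \bigl\{(z,\zeta',\hatt)\in\ovl{V\cap\Rp}\times\R^{N-1}\times\R^N \,:\, \hlambdat=1,\ |\hatt'|\leq C_0\hatt_{\xi_d}\bigr\}
\]
is compact. Uniform continuity of $\hmu_k$ on $K$ therefore gives some $\eta>0$ such that $\hmu_k(z,\zeta',\hatt)\leq -c_1/2$ whenever $(z,\zeta',\hatt)\in K$ and $|\zeta'|\leq\eta$. By homogeneity this becomes
\[
\hmu_k(z,\zeta',\hatt)\;\leq\; -\tfrac{c_1}{2}\,\hlambdat^4 \qquad \text{whenever } |\zeta'|\leq\eta\,\hlambdat,\ |\hatt'|\leq C_0\hatt_{\xi_d}.
\]
Setting $\delta_0=c_1/2$, any $\delta\in(0,\delta_0)$ then forces: $\hmu_k\geq -\delta\,\hlambdat^4$ implies $|\zeta'|>\eta\,\hlambdat\geq\eta|\hatt|$, i.e.\ $|\hatt|\leq C|\zeta'|$ with $C=1/\eta$. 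I expect no serious obstacle beyond bookkeeping: the one place requiring care is the interdependence of the two smallness parameters, namely that $C_0$ must be fixed first (small enough to make the $\zeta'=0$ computation strictly negative) and only then $\eta$ and $\delta$ are chosen, a point the hypothesis of the lemma anticipates by asking both $\delta$ \emph{and} $|\hatt'|/\hatt_{\xi_d}$ to be sufficiently small.
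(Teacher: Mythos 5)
Your proof is correct and follows essentially the same route as the paper's: reduction to the sphere $\{\hlambdat=1\}$ by homogeneity, the explicit computation of $\hmu_k$ at $\zeta'=0$ giving $-4\hatt_{\xi_d}^2 r(x,\hatt_{\xi'})-4\hatt_{\xi_d}^4+\hatt_\sigma^4$, and a compactness argument over $\ovl{V\cap\Rp}\times\mathbb S$ to propagate the strict negativity to a conic neighborhood of $\{\zeta'=0\}$. The paper phrases the compactness step as a proof by contradiction with a sequence $(z^{(n)},\zeta^{\prime(n)},\hatt^{(n)})$ converging to a point with $\zeta^{\prime(\infty)}=0$, while you argue directly via uniform continuity on the compact set $K$, but these are the same idea.
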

%%%% proof of lemma
\begin{proof}
  Because of homogeneity it is sufficient to work on
  the sphere $\mathbb S = \big\{ \hlambdat=1\big\}.$

Let us now assume that the implication does not hold. Then there
  exists
  $(z^{(n)},\zeta^{\prime(n)}, \hatt^{(n)}) \in \ovl{V\cap \Rp} \times
  \mathbb S$,
  \st
  $\hmu_k (z^{(n)},\zeta^{\prime(n)}, \hatt^{(n)}) \geq - \delta$ and
  $|\hatt^{(n)}| > n |\zeta^{\prime(n)}|$.
   As
  $(z^{(n)},\zeta^{\prime(n)}, \hatt^{(n)})$ lays in a compact set
  (recall that $V$ is bounded), it
  converges, up to a subsequence, to
  $(z^{(\infty)},\zeta^{\prime(\infty)}, \hatt^{(\infty)}) \in \ovl{V\cap \Rp} \times
  \mathbb S$.
  We find that $\zeta^{\prime(\infty)} =0$ and 
    $\hm_{k} (z^{(\infty)},0,\hatt^{(\infty)}  )  
    =  (-1)^{k-1}i (\hatt^{(\infty)}_\sigma)^2 -
    r(x,\hatt^{(\infty)}_{\xi'})$ , yielding
  \begin{align*}
    \hmu_k (z^{(\infty)},0,\hatt^{(\infty)}  )  
    = - 4 (\hatt^{(\infty)}_{\xi_d})^2 r(x,\hatt^{(\infty)}_{\xi'}) 
    -4 (\hatt^{(\infty)}_{\xi_d})^4 
    + (\hatt^{(\infty)}_\sigma)^4 \leq -3, 
  \end{align*}
    for $|\hatt^{(\infty)\prime}|/ \hatt^{(\infty)}_{\xi_d}$ \suff
    small, as we have $|\hatt^{(\infty)}|=1$.
    For $\delta$ \suff small, we hence reach a contradiction.
\end{proof}

\subsection{Microlocal regions}
\label{sec: def microlocal regions}
With the functions $\hmu_k$, $k=1,2$,  introduced in \eqref{eq: def hmu k} we shall define several
microlocal regions. Observe first that $\hmu_k$ is an homogeneous
polynomial function of
degree four in $(\zeta', \hatt)$. We thus have $\hmu_k \in S^4_{\T,\hatt}$ in the
sense given by Definition~\ref{def: semi-classical symbol}. 
From Lemma~\ref{lemma: from on calculus to the other}, we find that we
have $\hmu_k (z, \zeta', \htau(\y')) \in S(\lambsct^4,
\gt)$. We thus define
\begin{align}
  \label{eq: def muk}
  \mu_k (\y') := \lambsct^{-4}(\y')\  \hmu_k (z,
  \zeta', \htau(\y')) \in S(1, \gt), \quad k=1,2, \
  \y'= (z, \zeta', \tau, \csp, \ctp).
\end{align}
We recall that $\htau = \tau d_z \varphi_{\csp, \ctp}(z) = \ttau(\y')
d_z \cpsi(z)$ and $\cpsi(z) =\psi( \ctp z', z_N)$ with $0< \eps< 1$. 
Observe that we have
$|\htau(\y')| = \ttau(\y') \Norm{d_z \cpsi}{L^\infty} \leq
\ttau(\y') \Norm{d_z \psi}{L^\infty}$.  Thus, having
$0\leq \ttau \leq \delta \theta_0 \lambsct(\y') / \Norm{d_z \psi}{L^\infty}$, for
$\delta \in (0,1]$, implies $|\htau(\y')| \leq
\theta_0\lambsct(\y')$. The value $\theta_0$ is as introduced
in Proposition~\ref{prop root behaviors2}. We set $\theta_1=
\frac{1}{32} \theta_0 / \Norm{d_z \psi}{L^\infty}$. 

Let $\delta\in(0,1]$ and let $V$ be the bounded
open \nhd in $\R^N$  of
$z_0 \in \d Z$, introduced in Section~\ref{sec: local setting}.
We set $\MtV = V \times \R^{N-1} \times   [\tauast,+\infty)
      \times [1,+\infty)\times [0,1]$. 
We define the following microlocal regions, for $k=1,2$, 
\begin{align*}
  &F (V,\delta ) = \{ \y' \in \MtV; \ z \in V, \
    \ttau (\y') \leq \delta \theta_1 \lambsct(\y') \} ,
  \\
  &E^{(k)}_-(V, \delta) = \{ \y' \in \MtV; \ z \in V, \
    \mu_k (\y') \leq - \delta\},\\
  &E_0^{(k)}(V, \delta) = \{ \y' \in \MtV; \ z \in V, \
    \mu_k (\y')  \geq -\delta\}.
\end{align*}
Evidently, we have $\MtV =E^{(k)}_-(V, \delta)   \cup E^{(k)}_0(V, \delta)$.
We now set 
\begin{align*}
  &\E_-(V, \delta) = E^{(1)}_-(V, \delta) \cup E^{(2)}_-(V, \delta),
    \qquad \E_0(V, \delta) = E^{(1)}_0(V, \delta) \cap E^{(2)}_0(V, \delta),
\end{align*}
and we have $\MtV = \E_-(V, \delta)  \cup \E_0(V, \delta)$.
Below, in the text, when no precision is needed,  we shall use the ``vague'' terminology
$F$, $\E_-$, or $\E_0$, to refer to microlocal regions that take the
forms of $F(V,\delta)$, $\E_-(V, \delta)$, $\E_0(V, \delta)$.

We let $\chi_-, \chi_0 \in \Cinf(\R)$, with values in $[0,1]$, be \st 
\begin{align*}
  & \chi_-\equiv 1 \ \text{on}\ (-\infty, -1], \quad\text{and} \ \
    \supp(\chi_-) \subset (-\infty, -1/2], \\
  & \chi_0\equiv 1 \ \text{on}\  [-2, +\infty), \quad\text{and} \ \
    \supp(\chi_0) \subset [-3, \infty).
\end{align*}
Let $V_0 \Subset V$ be an open \nhd of $z_0$ in $\R^N$ and let
$\chi_{V_0} \in \Cinf(\R^N)$ be \st
$\supp(\chi_{V_0}) \subset V$ and $\chi_{V_0}\equiv 1$ in an open \nhd
of $V_0$.  With $\eta \in \Cinfc(-\theta_1,\theta_1)$, with values in
$[0,1]$ \st $\eta\equiv 1$ in $[-\theta_1/2,\theta_1/2]$, we set
\begin{align*}
  \chi_{\delta,F}(\y') = \eta \big(\ttau(\y')/(\delta \lambsct(\y'))\big) \in S(1, \gt).
\end{align*}
and 
\begin{align*}
  \chi_{F}(\y') = \chi_{V_0}(z)\  \chi_{1,F}(\y') \in S(1, \gt).
\end{align*}
We set 
\begin{align*}
  &\chi^{(k)}_{\delta,-} (\y') = \chi_{V_0}(z)\ (1-\chi_{1/4,F}(\y'))\
    \chi_-(\mu_k(\y')/\delta) \in S(1,\gt).
\end{align*}
Observe that we have 
\begin{align*}
 \chi^{(k)}_{\delta,-} \equiv 1 \ \ \text{on} \ \
  E^{(k)}_-(V_0,\delta) \setminus F(V,1/4), 
  \quad \supp(\chi^{(k)}_{\delta,-}) \subset  E^{(k)}_-(V,\delta/2) \setminus F(V,1/8), 
\end{align*}
and thus 
\begin{align*}
\chi^{(1)}_{\delta,-} + \chi^{(2)}_{\delta,-} \geq 1 \ \ \text{on} \ \
  \E_-(V_0,\delta) \setminus F(V,1/4), 
 \quad \supp(\chi^{(1)}_{\delta,-} + \chi^{(2)}_{\delta,-}) 
  \subset  \E_-(V,\delta/2) \setminus F(V,1/8).
\end{align*}
We finally set 
\begin{align*}
  \chi_{\delta,0} (\y') = \chi_{V_0}(z)\ (1-\chi_{1/4,F}(\y'))\ 
  \chi_0(\mu_1(\y') /\delta) \ \chi_0(\mu_2(\y') /\delta)  \in S(1,\gt).
\end{align*}

%% figure
\begin{figure}
\begin{center}
\begin{picture}(0,0)%
\includegraphics{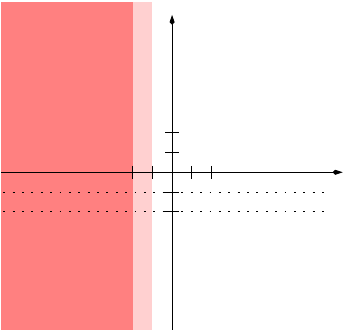}%
\end{picture}%
\setlength{\unitlength}{3947sp}%
\begingroup\makeatletter\ifx\SetFigFont\undefined%
\gdef\SetFigFont#1#2#3#4#5{%
  \reset@font\fontsize{#1}{#2pt}%
  \fontfamily{#3}\fontseries{#4}\fontshape{#5}%
  \selectfont}%
\fi\endgroup%
\begin{picture}(2754,2638)(1489,-2848)
\put(2927,-1442){\makebox(0,0)[lb]{\smash{{\SetFigFont{9}{10.8}{\rmdefault}{\mddefault}{\updefault}{\color[rgb]{0,0,0}$\delta/2$}%
}}}}
\put(2927,-1295){\makebox(0,0)[lb]{\smash{{\SetFigFont{9}{10.8}{\rmdefault}{\mddefault}{\updefault}{\color[rgb]{0,0,0}$\delta$}%
}}}}
\put(4178,-1507){\makebox(0,0)[lb]{\smash{{\SetFigFont{9}{10.8}{\rmdefault}{\mddefault}{\updefault}{\color[rgb]{0,0,0}$\mu_1$}%
}}}}
\put(2918,-369){\makebox(0,0)[lb]{\smash{{\SetFigFont{9}{10.8}{\rmdefault}{\mddefault}{\updefault}{\color[rgb]{0,0,0}$\mu_2$}%
}}}}
\end{picture}%

\caption{Microlocal region $\E_-$. In dark color is the region where
  $\chi_{\delta,-}^{(1)} \equiv 1$. In light color is the support of
  $\chi_{\delta,-}^{(1)}$.  The boundaries of the associated regions
  for $\chi_{\delta,-}^{(2)}$ are marked dashed.}
\label{fig: region E-}
\end{center}
\end{figure}

%% figure
\begin{figure}
\begin{center}
\begin{picture}(0,0)%
\includegraphics{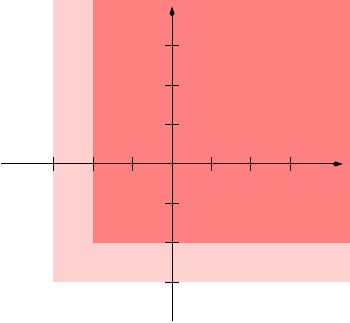}%
\end{picture}%
\setlength{\unitlength}{3947sp}%
\begingroup\makeatletter\ifx\SetFigFont\undefined%
\gdef\SetFigFont#1#2#3#4#5{%
  \reset@font\fontsize{#1}{#2pt}%
  \fontfamily{#3}\fontseries{#4}\fontshape{#5}%
  \selectfont}%
\fi\endgroup%
\begin{picture}(2799,2583)(1489,-2848)
\put(4072,-1493){\makebox(0,0)[lb]{\smash{{\SetFigFont{9}{10.8}{\rmdefault}{\mddefault}{\updefault}{\color[rgb]{0,0,0}$\mu_1$}%
}}}}
\put(2640,-453){\makebox(0,0)[lb]{\smash{{\SetFigFont{9}{10.8}{\rmdefault}{\mddefault}{\updefault}{\color[rgb]{0,0,0}$\mu_2$}%
}}}}
\put(1876,-1786){\makebox(0,0)[lb]{\smash{{\SetFigFont{9}{10.8}{\rmdefault}{\mddefault}{\updefault}{\color[rgb]{0,0,0}$-3\delta$}%
}}}}
\put(2176,-1486){\makebox(0,0)[lb]{\smash{{\SetFigFont{9}{10.8}{\rmdefault}{\mddefault}{\updefault}{\color[rgb]{0,0,0}$-2\delta$}%
}}}}
\end{picture}%

\caption{Microlocal region $\E_0$. In dark color is the region where
  $\chi_{\delta,0} \equiv 1$. In light color is the support of
  $\chi_{\delta,0}$.}
\label{fig: region Z}
\end{center}
\end{figure}

Observe that we have 
\begin{align*}
\chi_{\delta,0} \equiv 1 \ \ \text{on} \ \ \E_0(V_0,2\delta) \setminus F(V,1/4)
\quad \supp(\chi_{\delta,0}) \subset \E_0(V,3 \delta) \setminus F(V,1/8),
\end{align*}
and 
\begin{align}
  \label{eq: microlocal covering property}
\chi_{F}+ \chi^{(1)}_{\delta,-} + \chi^{(2)}_{\delta,-}
  + \chi_{\delta,0} 
  \geq  1\ \ \text{on a \cnhd of} \ \MtVz.
\end{align}
With the microlocal cutoff functions we have just introduced we
associate tangential pseudo-differential operators, all in $\Psi(1, \gt)$,
\begin{align}
  \label{eq: def microlocalization operators}
  \Xi_{F} = \Opt \big(\chi_{F}\big),
  \qquad 
  \Xi^{(k)}_{\delta, -} = \Opt \big(\chi^{(k)}_{\delta,-}\big), \ \
  k=1,2, \qquad \text{and} \ \
  \Xi_{\delta,0} = \Opt \big(\chi_{\delta,0}\big).
\end{align}

\subsection{Proof strategies in the three microlocal regions}
\label{sec: Proof strategies in the three microlocal regions}

Derivations in all three microlocal regions require first the proof of estimates
for various factors and second the  concatenation of those
estimates. For this second part, to avoid redundancies, we 
describe in Appendix~\ref{sec: Estimate concatenations}, along with
proofs, how various type of estimates can be  concatenated. 

The estimate associated with region
$\E_-$ is proven in Section~\ref{sec: Microlocal estimate in the region E-}. 
In region $\E_-$, we have $\Pconj = Q_1 Q_2$ where at least one of the factors
is characterized by a principal symbol with two roots in the lower
half complex plane. This yields for this factor, say $Q_1$, a perfectly
elliptic estimate at the boundary $\{x_d=0\}$, as given by
Lemma~\ref{lemma: microlocal elliptic estimate} (see
Appendix~\ref{sec: perfect elliptic estimate}). For the second
operator $Q_2$, one can derive an estimate whose form is classical and
exhibits a loss of a half derivative, as given in
Proposition~\ref{prop: estimate Qk}. A proof is provided in
Appendix~\ref{sec: An Estimate for Qk}, in particular since the
estimate needs to hold uniformly with respect to all parameters
introduced.  Finally, the two estimates are concatenated
to obtain an estimate for $\Pconj$ in $\E_-$.

The estimate associated with region
$\E_0$ is proven in Section~\ref{sec: Microlocal estimate in the region E0}. 
The treatment this region  requires the most delicate argument and
justifies the development of the Weyl-H\"ormander calculus of
Section~\ref{sec: pseudo -3p}. 
Microlocally, in this region we write $\Pconj = Q_1 Q_2$ 
and we manage to write each $Q_k$, $k=1,2$, in the form
\begin{align*}
  Q_k = Q_{k,-} Q_{k,+} + (1 + \csp \ctp) R_1,
\end{align*}
where $Q_{k,-}$, $Q_{k,+}$ and $R_1$ are all first-order operators.
The operator $Q_{k,-}$ is characterized by a principal symbol with a
root in the lower half complex plane. Setting $Q^- = Q_{1,-} Q_{2,-}$
and $Q^+ = Q_{1,+} Q_{2,+}$, with delicate commutator arguments we
obtain $\Pconj = Q^- Q^+ + (1 + \csp \ctp) R_3$ where
$R_3 \in \PsiOpsc^{2,1}$. We thus manage gather togethers factors with
similar root locations without generating a remainder in
$\csp \PsiOpsc^{2,1}$. Observe that this latter class for the remainder
is obtained if operator commutations within the Weyl-H\"ormander
calculus are carried invoking usual arguments. Here, to obtain the
sharper class $(1 + \csp \ctp) \PsiOpsc^{2,1}$, we use the
precise forms of the involved operators and symbols. 

 For $Q^-$ we have a
perfectly elliptic estimate at the boundary $\{x_d=0\}$, as given by
Lemma~\ref{lemma: microlocal elliptic estimate} (see
Appendix~\ref{sec: perfect elliptic estimate}). For each operator
$Q_{k,+}$ a sub-elliptic estimate can be obtained with a trace term used as
an observation as given by Lemma~\ref{lemma: sub-ellitptic estimate
  Qk+} in Appendix~\ref{sec: A root with a vanishing imaginary part}.
Concatenated together, two such estimates yield an estimate for
$Q_{+}$ with a loss of a full derivative and  observation
terms that involve both the Dirichlet trace and the Neumann trace of
the solution. Concatenating now the estimates for $Q^-$ and $Q^+$ one
obtains microlocally an estimate of the form 
\begin{align*}
    \csp \Norm{\ttau^{-1} v}{4,0,\ttau}
    + \norm{\trace(v)}{3, 1/2,\ttau} 
    \lesssim
      \Norm{Q^- Q^{+}   v}{+}
    +\norm{\trace(v)}{1,5/2,\ttau}.
  \end{align*}
With the form of the remainder term $(1 + \csp \ctp) R_3$ that appeared above in
the decomposition of $\Pconj$ one then sees that a similar estimate
can be obtained for $\Pconj$ in place of $Q^- Q^{+}$ by choosing
$\csp>0$ \suff  large and $\ctp>0$ \suff small.  
Observe that if the remainder term had been in  $\csp \PsiOpsc^{2,1}$ we would not
have been able to transform the estimate obtained for $Q^- Q^{+}$
into an estimate for $\Pconj$.

Some technical aspects of the proof in the region $\E_0$ described above
require to have $\ttau(\y')$ of the same order as
$\lambsct(\y')$. This is however not true in that region. One thus
rather considers a region of the form $\E_0 \setminus F$, since 
region $F$ is characterized by $\ttau(\y') \leq C \lambsct(\y')$ for a
well chosen constant (see above). A last microlocal region, namely
$F$, thus needs to be considered. 

The treatment of region $F$ is given in Section~\ref{sec: Microlocal
  estimate in the region F} and has some similarities with what is
done in the region $\E_0 \setminus F$. Yet, the treatment of remainder
terms needs not be as refined. The operator $\Pconj$ is
written in the form $\Pconj = Q^- Q^+ + \csp R_3$ with $R_3\in  \PsiOpsc^{2,1}$
and again $Q^- = Q_{1,-} Q_{2,-}$ and $Q^+ = Q_{1,+} Q_{2,+}$.
Here also, for $Q^-$ we have a perfectly elliptic estimate at the
boundary $\{x_d=0\}$. For $Q^+$, the estimate we obtain is very
different from what is done in $\E_0 \setminus F$. The region $F$ is
designed so that the roots associated with the factors $ Q_{1,+}$ and
$Q_{2,+}$ are both located
in the upper half complex plane. For each of these operators one can
thus obtain  a microlocal elliptic estimate at the boundary $\{x_d=0\}$ with
one trace used as an observation term yet without any loss of
derivative as given
in Lemma~\ref{lemma: elliptic estimate Qk+} in Appendix~\ref{sec: A
  root with a positive imaginary part}.  Put together, with a
concatenation argument, an estimate for $Q^+$ is obtained
with observation terms that involve both the Dirichlet trace and the
Neumann trace of the solution. This estimate for $Q^+$ does not exhibit
any loss of
derivative: it is an {\em elliptic} estimate. 
Concatenated together the estimates for
$Q^+$ and $Q^-$ yield also an elliptic estimate for $Q^- Q^+$ with the
above  two traces as observation terms. 
The elliptic strength of this
estimate then allows one to handle the remainder term in $\csp
\PsiOpsc^{2,1}$ yielding  a similar result for $\Pconj$ in the microlocal region $F$.

As a final step of the proof of  Theorem~\ref{theorem: Carleman
  boundary x}, we patch together  the estimates obtained in the above three microlocal regions.
This is done in Section~\ref{sec: Proof of the Carleman estimate boudnary x}.

\subsection{Microlocal estimate in the region $\E_-$}
\label{sec: Microlocal estimate in the region E-}
We prove the following estimate.
%%%%%%%%%%%%%%%%%%%%%%%%
% proposition          %
%%%%%%%%%%%%%%%%%%%%%%%%
\begin{proposition}
  \label{prop: microlocal estimate E-}
  Let $M \in \N$.
  Let $k=1$ or $2$. For $\delta\in (0,1)$, there exist $\tau_0\geq \tauast$,
  $\csp_0\geq 1$, and $C>0$ \st
  \begin{align}
    \label{eq: microlocal estimate E-}
    \csp^\hf
    \Norm{\ttau^{-1/2} \Xi^{(k)}_{\delta,-} v}{4, 0, \ttau} 
    +\norm{\trace(\Xi^{(k)}_{\delta,-} v)}{3,1/2,\ttau} 
    \leq C \Big( \Norm{\Pconj \Xi^{(k)}_{\delta,-}v}{+}
    +\norm{\trace(\Xi^{(k)}_{\delta,-}v)}{0,7/2,\ttau} 
    + \Norm{v}{4,-M, \ttau} 
    \Big), 
  \end{align}
 for $\tau \geq \tau_0$, $\csp \geq \csp_0$, $\ctp \in [0,1]$,
  and for $v  \in \S(\Rpb)$.
\end{proposition}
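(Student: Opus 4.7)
Set $w = \Xi^{(k)}_{\delta,-} v$ and $k' = 3-k$. The key geometric input is that on the support of $\chi^{(k)}_{\delta,-}$, both roots $\rho_{k,\pm}$ of the polynomial $\hq_k(z,\zeta',\xi_d,\htau)$ in $\xi_d$ lie in the strict lower half plane with $\Im \rho_{k,\pm} \leq -c\ttau$ for some $c>0$: the cutoff enforces $\mu_k \leq -\delta/2$, and since the region $F$ has been excluded, Lemma~\ref{lemma: equiv ttau |htau|} gives $|\htau| \asymp \ttau \gtrsim \lambsct$; the conclusion follows from Corollary~\ref{cor: positivity mu} together with $\Im \hrho_{k,-} \leq -\hatt_{\xi_d}$ from Lemma~\ref{lemma: root behaviors1}.

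The argument then proceeds in two stages: a perfect elliptic estimate for $Q_k$ at $\{x_d=0\}$, followed by a fourth-order bootstrap using $\Pconj = Q_2 Q_1$. For the first stage, extend $\rho_{k,\pm}$ to globally defined symbols $\tilde\rho_{k,\pm} \in S(\ttau,\gt)$ keeping $\Im \tilde\rho_{k,\pm} \leq -c\ttau$, set $H_{k,\pm} = \Opt^w(\tilde\rho_{k,\pm})$, and use the symbolic calculus of Section~\ref{sec: pseudo -3p} to factor
\begin{align*}
Q_k = (D_{x_d} - H_{k,+})(D_{x_d} - H_{k,-}) + R_k,
\end{align*}
modulo a tangential operator $R_k$ of one order lower. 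For each first-order factor $M_{k,\pm} = D_{x_d} - H_{k,\pm}$, an integration-by-parts argument against $iu$ in $L^2(\Rpb)$ using $|\Im \sigma(H_{k,\pm})| \gtrsim \csp\tau\varphi$ together with the positive boundary contribution from the $x_d$-integration yields
\begin{align*}
\csp^{1/2}\,\Norm{u}{1,\ttau} + \norm{u\brz}{1/2,\ttau} \lesssim \Norm{M_{k,\pm} u}{+} + (\text{l.o.t.}).
\end{align*}
Iterating the two first-order estimates produces the perfect elliptic estimate
\begin{align*}
\csp\,\Norm{u}{2,\ttau} + \norm{D_{x_d} u\brz}{1/2,\ttau} + \norm{u\brz}{3/2,\ttau} \lesssim \Norm{Q_k u}{+} + (\text{l.o.t.})
\end{align*}
for $Q_k$.

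For the second stage, apply the perfect elliptic estimate for $Q_k$ successively to $u = w$, $u = \Lambsct^2 w$, and $u = Q_{k'} w$. The first two give control of the boundary traces of $w$ and $D_{x_d} w$ at all the required weights up to $\Lambsct^{7/2}$ (in particular $\norm{w\brz}{7/2,\ttau}$ is precisely the right-hand side boundary term, absorbed automatically). The third, combined with the identity $Q_k Q_{k'} = \Pconj \pm [Q_k,Q_{k'}]$ (the commutator being trivial when $k=2$), controls $\Norm{Q_{k'} w}{2,\ttau}$ and the remaining traces of $D_{x_d}^2 w$, $D_{x_d}^3 w$ in terms of $\Norm{\Pconj w}{+}$ plus a third-order commutator remainder absorbed using $\Norm{w}{3,\ttau} \lesssim \ttau^{-1/2}\,\Norm{w}{4,0,\ttau}$. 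Writing $Q_{k'} = (D_{x_d} + i\tau\d_{x_d}\varphi)^2 + M_{k'}$ with $M_{k'}$ tangential of order two, the interior bound on $\Norm{Q_{k'} w}{2,\ttau}$ recovers all mixed derivatives $D_{x_d}^j D_{z'}^{\alpha'} w$ of total order four that constitute $\Norm{w}{4,0,\ttau}$, modulo lower-order terms controlled by the preceding estimates. The various commutators of $Q_k, Q_{k'}$ with the tangential cutoff $\Xi^{(k)}_{\delta,-}$ and with the extended operators $H_{k,\pm}$ produce remainders of arbitrarily large negative order off the essential support of $\chi^{(k)}_{\delta,-}$ and are absorbed into the residual $\Norm{v}{4,-M,\ttau}$.

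The main obstacle is the precise tracking of the three parameters $(\tau,\csp,\ctp)$ through the whole bootstrap, uniformly in $\ctp \in [0,1]$. The $\csp^{1/2}\ttau^{-1/2}$ weight on the left of \eqref{eq: microlocal estimate E-} encodes the balance between the $\csp$-gain of the perfect elliptic estimate for $Q_k$ and the $\ttau^{-1/2}$-absorption of the third-order error terms coming from $[Q_k, Q_{k'}]$ and from the extraction of the top normal derivative through $Q_{k'}$. Maintaining this balance requires systematic use of the refined three-parameter calculus of Section~\ref{sec: pseudo -3p}, whose structure was introduced precisely to handle such $\ctp$-dependent weights at the boundary.
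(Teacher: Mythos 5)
Your opening geometric observation is correct and matches the paper: on the support of $\chi^{(k)}_{\delta,-}$ one has $\mu_k\leq -\delta/2$ (and the region $F$ is excluded), so by Lemma~\ref{lemma: root behaviors1} both roots of $q_k$ lie strictly in the lower half-plane and $Q_k$ is perfectly elliptic in the sense of Lemma~\ref{lemma: microlocal elliptic estimate}. From there, however, your plan has a genuine gap. You never invoke a sub-elliptic estimate for the \emph{other} factor $Q_{k'}$ (with $k'=3-k$), and without one the fourth-order interior norm cannot be closed. In $E^{(k)}_-$ nothing is assumed about $\mu_{k'}$, so $q_{k'}$ can have a root crossing the real axis and $Q_{k'}$ is not elliptic; knowing $\Norm{Q_{k'}w}{2,\ttau}$ therefore does \emph{not} recover $\Norm{w}{4,0,\ttau}$ as you assert (the tangential piece $M_{k'}w$ of $Q_{k'}w$ involves two tangential derivatives of the unknown $w$ and cannot be inverted elliptically). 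The paper closes this via the dedicated estimate of Proposition~\ref{prop: estimate Qk}, a Carleman-type estimate with a half-derivative loss and boundary observation for $Q_{k'}$, fed into the concatenation lemma Proposition~\ref{prop: estimate order 4 operator} with $(\alpha_1,\alpha_2,\delta_1,\delta_2)=(0,1,1,0)$; this is the step your proposal is missing.

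Two related problems follow. First, your intermediate applications of the perfect elliptic estimate to $u=w$ and $u=\Lambsct^2 w$ would require control of $\Norm{Q_k w}{+}$ and $\Norm{Q_k w}{0,2,\ttau}$, which are not part of the data --- only $\Pconj w = Q_k Q_{k'} w$ (up to commutators) is available. The correct ordering is the opposite of yours: first apply the perfect elliptic estimate for $Q_k$ to $u=Q_{k'}w$ (so that $Q_k u$ relates to $\Pconj w$), then descend from $Q_{k'}w$ to $w$ using the sub-elliptic estimate for $Q_{k'}$. Second, the $\csp^{1/2}$ improvement in your claimed first-order estimate, and the $\csp$ gain in your claimed two-factor elliptic estimate, are spurious. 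An elliptic factor with $\Im\rho\leq -c\lambsct$ yields a perfect elliptic bound $\Norm{u}{1,\ttau}\lesssim\Norm{Mu}{+}+\dots$ with no $\csp$ gain; the factor $\csp^{1/2}\ttau^{-1/2}$ on the left of \eqref{eq: microlocal estimate E-} comes from the sub-elliptic estimate for the non-elliptic factor $Q_{k'}$ (Proposition~\ref{prop: estimate Qk}), not from the elliptic one. Misattributing this weight is not cosmetic: it is precisely the mechanism that lets the $\csp$-large regime absorb the $\csp$-growth of the commutator remainders, and your argument has no replacement for it.
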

The term $ \Norm{v}{4,-M, \ttau}$ in the \rhs stands as a remainder that
will be 'absorbed' once the estimations in the different microlocal
regions are patched together. In fact, observe that this term
is much weaker than that in the \lhs in the Carleman estimate \eqref{eq:
  Carleman boundary x} of Theorem~\ref{theorem: Carleman boundary
  x}. The meaningful observation term in the \rhs of \eqref{eq:
  microlocal estimate E-} is $\norm{\trace(v)}{0,7/2,\ttau}$, which is
of the strength as the terms in the \lhs of \eqref{eq:
  Carleman boundary x}, and can be found in the \rhs of that latter estimate.

\begin{proof}
  We have $\Pconj = Q_1 Q_2$. 
  We consider the case $k=1$. The same proof can be written in the
  case $k=2$. To ease notation we write $\chi$ in place of
  $\chi_{\delta,-}$ and $\Xi$ in place of $\Xi_{\delta,-}$.

  In a \cnhd of $\supp(\chi) \subset \MtV$, with $V$ introduced in
  Section~\ref{sec: local setting}, we have $\mu_1 \leq -C
  \delta$. As \eqref{eq: cond psi2} holds in $V$ we have 
  $\htau_{\xi_d} \geq C \ttau$ and thus $|\htau_\xi| \asymp  \ttau$.
  By Lemma~\ref{lemma: root behaviors1}, both roots of the
  symbol $q_1$ of the operator $Q_1$ are in the lower half complex
  plane. Thus,
  \begin{equation}
    \label{eq: E- Q1 perfect elliptic}
    \text{the operator $Q_1$ fulfills the requirements of  Lemma~\ref{lemma: microlocal elliptic estimate}.}
  \end{equation}

Also, for the operator $Q_2$, without any assumption on the position
of the roots in the complex plane, we have the following estimate,
characterized by the loss of a half derivative and a boundary
observation term,  by
Proposition~\ref{prop: estimate Qk}, for $\ell \in \R$, 
\begin{align}
  \label{eq: E- Q2 1/2 derivative loss}
    \csp^{1/2}\Norm{\ttau^{-1/2} \Xi v}{2,\ell,\ttau}
    + \norm{\trace(\Xi v)}{1,\ell+ 1/2,\ttau} 
      \lesssim
      \Norm{Q_2 \Xi v}{0,\ell,\ttau}
    + \norm{\trace(\Xi v)}{0,\ell+ 3/2,\ttau},
   \end{align}
   for $v\in \S(\Rpb)$, for $\tau\geq \tauast$ and $\csp\geq 1$ chosen \suff large,
   and $\ctp \in [0,1]$ (recall that $\supp(\chi)\subset \MtV$
   which gives $\supp(\Xi v) \subset V' \Subset V$, for some open set
   $V'$, thus permitting the application of Proposition~\ref{prop: estimate Qk}).

   With \eqref{eq: E- Q1 perfect elliptic}, \eqref{eq: E- Q2 1/2
     derivative loss}, and Proposition~\ref{prop: estimate order 4
     operator}, applied with $Q^-= Q_1$ and $Q^+ = Q_2$ here, and with
   $\alpha_1=0$ and $\alpha_2=1$ and $\delta_1=1$ and $\delta_2=0$, we
   obtain the result of the proposition, by choosing
   $\tau\geq \tauast$ and $\csp\geq 1$ \suff large.
\end{proof}

\subsection{Microlocal estimate in the region $\E_0 \setminus F$}
\label{sec: Microlocal estimate in the region E0}

We prove the following estimate.
%%%%%%%%%%%%%%%%%%%%%%%%
% proposition          %
%%%%%%%%%%%%%%%%%%%%%%%%
\begin{proposition}
  \label{prop: microlocal estimate E0}
 % Let $k=1$ or $2$.
  Let $M \in \N$.  For $\delta_0\in (0,1)$ chosen \suff small and
  $\delta \in (0, \delta_0]$, there exist $\tau_0\geq \tauast$, $\csp_0\geq 1$,
  $\ctp_0 \in (0,1]$, and $C>0$ \st
  \begin{align*}
    \csp
    \Norm{\ttau^{-1}\Xi_{\delta,0} v}{4, 0, \ttau} 
    +\norm{\trace(\Xi_{\delta,0} v)}{3,1/2,\ttau} 
    \leq C \Big( \Norm{\Pconj \Xi_{\delta,0}  v}{+}
    +\norm{\trace(\Xi_{\delta,0} v)}{1,5/2,\ttau} 
    + \Norm{v}{4,-M, \ttau} 
    \Big), 
  \end{align*}
 for $\tau \geq \tau_0$, $\csp \geq \csp_0$, $\ctp \in [0,\ctp_0]$,
  and for  $v  \in \S(\Rpb)$. 
\end{proposition}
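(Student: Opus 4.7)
The plan is to exploit the factorization $\Pconj = Q_2 Q_1$ and derive a half-derivative-loss estimate for each factor $Q_k$ on the support of $\chi_{\delta,0}$, then cascade the two into the full-derivative-loss estimate stated in the proposition. This parallels Proposition~\ref{prop: microlocal estimate E-}, but now both $Q_1$ and $Q_2$ contribute a half-derivative loss, rather than one being perfectly elliptic and the other sub-elliptic.

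First I record the geometry of the region. On $\supp(\chi_{\delta,0})$ we have $\mu_k \ge -3\delta$ for both $k$, and $\chi_{\delta,0}$ is supported away from $F$, so $\ttau \lesssim \lambsct$; combined with \eqref{eq: cond psi2} this yields $\htau_{\xi_d} \asymp \ttau$. Choosing $\ctp_0$ small forces $|\htau'| = \ctp\,\ttau\,|d_{z'}\cpsi| \leq C_0 \htau_{\xi_d}$ with $C_0$ as small as needed. For $\delta_0$ small, Lemma~\ref{lemma: mu > -C} then ensures that the roots $\hrho_{k,\pm}$ of $\hq_k$ are simple in a conic neighborhood of $\supp(\chi_{\delta,0})$, and Lemma~\ref{lemma: equivalence tau zeta'} gives $|\htau| \lesssim |\zeta'|$.

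Using the simple-root property, I would construct, for each $k = 1, 2$, a smooth first-order tangential symbol $h_k \in S(\lambsct, \gt)$ satisfying $h_k^2 = m_k$ modulo $S(\ttau \lambsct, \gt)$ in this conic neighborhood. Setting $H_k = \Opt(h_k)$, the factorization alluded to below \eqref{eq: def Mk} yields
\begin{equation*}
Q_k = (D_{x_d} + i\htau_{\xi_d} + iH_k)(D_{x_d} + i\htau_{\xi_d} - iH_k) + R_k,
\end{equation*}
with $R_k$ of lower order. The first factor is elliptic since its symbol has modulus bounded below by $\htau_{\xi_d} + \Re h_k > 0$, while the second may fail to be elliptic precisely where $\hrho_{k,+}$ approaches the real line. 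For this non-elliptic factor I would verify the sub-ellipticity condition in the three-parameter calculus: the exponential structure $\varphi = e^{\csp \cpsi}$ makes the leading Poisson bracket gain a multiplicative factor of $\csp$, whereas $\ctp$ small suppresses the $\ctp$-dependent remainders coming from the tangential variation of $\cpsi(z) = \psi(\ctp z', z_N)$. This gives a half-derivative-loss estimate for $Q_k$ applied to $\Xi_{\delta,0} v$, with observation traces of order at most one, i.e., of the form of Proposition~\ref{prop: estimate Qk}. Cascading via an analogue of Proposition~\ref{prop: estimate order 4 operator} (applied now with both $Q_1$ and $Q_2$ sub-elliptic, with shift parameters $\alpha_1 = \alpha_2 = 1$): one composes the two half-losses into the $\ttau^{-1}$ loss on the left-hand side and the two $\csp^{1/2}$ gains into the prefactor $\csp$. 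The commutators $[Q_k, \Xi_{\delta,0}]$ produce remainders absorbed into $\Norm{v}{4,-M,\ttau}$ for any $M$ since $\chi_{\delta,0} \in S(1, \gt)$ and $\Pconj$ has polynomial symbol. The two sets of traces generated by the cascade exhaust the $D_{x_d}^j$ boundary data for $j \le 3$, matching the $\norm{\trace(\Xi_{\delta,0} v)}{3,1/2,\ttau}$ in the left-hand side; only the Dirichlet/Neumann traces $j = 0, 1$ remain on the right-hand side, contributing the term $\norm{\trace(\Xi_{\delta,0} v)}{1,5/2,\ttau}$.

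The main obstacle is the sub-elliptic estimate for the non-elliptic first-order factor, since $\mu_k$ may be slightly negative in $\E_0$ so that $\hrho_{k,+}$ lies in the lower half-plane. Neither parameter alone suffices: as already signalled in the introduction, even when $\ctp = 0$ (i.e., $\cpsi$ depending only on $z_N$), taking $\csp$ large is required in order to turn the Poisson bracket positive; conversely, $\ctp$ small is needed to handle the error terms from the tangential dependence of $\cpsi$ once $\csp$ is fixed large. Choosing the constants in the order $\delta_0 \to \ctp_0(\delta_0) \to \csp_0(\delta_0, \ctp_0) \to \tau_0$ is what closes the argument.
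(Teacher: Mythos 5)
Your identification of the right region ($\ttau\lesssim\lambsct$, simple roots by Lemma~\ref{lemma: mu > -C}, smooth square roots $h_k$ of $m_k$, the r\^ole of $\csp$ large and $\ctp$ small) is sound, and so is the expectation of a first-order elliptic/sub-elliptic splitting $Q_k = Q_{k,-}Q_{k,+}$ plus remainder. The gap is in the cascading step: you propose to apply a half-loss estimate of the shape of Proposition~\ref{prop: estimate Qk} to each \emph{second-order} factor $Q_1$, $Q_2$ in turn. If you apply that estimate to $Q_1$ with unknown $Q_2\Xi_{\delta,0}v$ and $\ell=0$, the right-hand side carries an observation term $\norm{\trace(Q_2 \Xi_{\delta,0} v)}{0,3/2,\ttau}$, which (since $Q_2$ has $D_{x_d}^2$ as leading term) contains $\Lambsct^{3/2}D_{x_d}^2\Xi_{\delta,0}v\brz$. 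This is of exactly the same strength as what the left-hand side trace $\norm{\trace(Q_2\Xi_{\delta,0}v)}{1,1/2,\ttau}$ controls, so it cannot be absorbed, nor is it contained in $\norm{\trace(\Xi_{\delta,0}v)}{1,5/2,\ttau}$ (which only involves $j=0,1$). The cascade therefore does not close at the boundary.

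What makes the paper's proof work is a different grouping of the \emph{four} first-order factors: after establishing (via Lemma~\ref{lemma: squareroot mk}, including its improved bound $\d_{x_d}h_k\in S((1+\ctp\csp)\lambsct,\gt)$, and Lemmata~\ref{lemma: factorization Qk}--\ref{lemma: commutation Qk+ Ql-}) that the four first-order factors commute modulo acceptable remainders, one writes $\Pconj\Xi_{\delta,0} = Q^-Q^+\Xi_{\delta,0}+\cdots$ with $Q^-=Q_{1,-}Q_{2,-}$ and $Q^+=Q_{1,+}Q_{2,+}$. The operator $Q^-$ has \emph{both} roots in the lower half-plane, so Lemma~\ref{lemma: microlocal elliptic estimate} gives a perfect elliptic estimate with \emph{no} boundary observation term in the right-hand side; all the observation traces come from the two sub-elliptic first-order factors in $Q^+$, which are of order $0$ and $1$, precisely $\norm{\trace(\Xi_{\delta,0}v)}{1,5/2,\ttau}$. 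That regrouping of the roots, rather than the $Q_1Q_2$ grouping you keep, is the essential structural idea, and it is exactly what your proposal is missing. (Note also that Proposition~\ref{prop: estimate order 4 operator} requires its $Q^-$ input to satisfy the hypotheses of Lemma~\ref{lemma: microlocal elliptic estimate}, which neither $Q_1$ nor $Q_2$ alone does; the parameters $\alpha_1=\alpha_2=1$ there refer to the concatenated sub-elliptic input $Q^+$, not to the two $Q_k$.)
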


Before giving the proof of this microlocal estimate we need to provide
some additional properties of the symbols $m_k$ introduced in
Section~\ref{sec: root properties} and its square root, $h_k$. Note
that the region $F$ is introduced to isolate the case where
$\ttau \leq C|\zeta'|$ and this permits to exploit the relation
$|\zeta'|\leq C \ttau $ in the region $\E_0 \setminus F$. This is used
to
obtain some symbol properties of $h_k$.

We recall the form of the tangential differential operator $M_k$, as
introduced in \eqref{eq: def Mk}, 
\begin{align*}
M_k := (-1)^k i (D_s+ i \tau \d_s \varphi(z))^2 +  R(x,D_{x'} + i \tau
d_{x'} \varphi(z)), 
\end{align*}
whose principal symbol is given by 
$m_k(\y') := (-1)^k i (\sigma+ i \tau \d_s \varphi(z))^2 +  r(x,\xi' + i \tau
d_{x'} \varphi(z)) \in S(\lambsct^2, \gt)$.
Observe that we have the following symbol  estimation.
%%%%%%%%%%%%%%%%%%%%%%%%
% lemma                %
%%%%%%%%%%%%%%%%%%%%%%%%
\begin{lemma}
  \label{lemma: prop d xd mk}
  We have  $\d_{x_d} m_k \in S((1+ \ctp \csp)\lambsct^2, \gt)$.
\end{lemma}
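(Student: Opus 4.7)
The plan is to differentiate $m_k$ explicitly in $x_d = z_N$ and to check that each resulting piece sits in $S((1+\ctp\csp)\lambsct^2,\gt)$. The key observation is that $\varphi_{\csp,\ctp}(z)=e^{\csp \psi(\ctp z', z_N)}$ is exponential in $\csp$ and that the tangential derivatives of $\cpsi$ carry an extra $\ctp$ while the normal derivative of $\cpsi$ carries no $\ctp$. Consequently, by Lemma~\ref{lemma: htau in the calculus}, we have $\htau_\sigma,\htau_{\xi'_j}\in S(\ctp\ttau,\gt)$ and, writing $\d_{x_d}\htau_\sigma = \tau\,\d_s\d_{x_d}\varphi = \ctp\ttau\,\csp(\d_{z_N}\psi)(\d_{y_1}\psi) + \ctp\ttau\,\d_{z_N}\d_{y_1}\psi$, one gets $\d_{x_d}\htau_\sigma\in S(\ctp\csp\,\ttau,\gt)$, and likewise $\d_{x_d}\htau_{\xi'_j}\in S(\ctp\csp\,\ttau,\gt)$; all these estimates being propagated under further differentiations in $(z,\zeta')$ thanks to Proposition~\ref{prop: admissible metric and order function tangential}.

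Now I would apply $\d_{x_d}$ to the explicit expression of $m_k$. First,
\begin{align*}
\d_{x_d}\bigl[(-1)^k i(\sigma + i\htau_\sigma)^2\bigr]
= 2(-1)^k i(\sigma+i\htau_\sigma)\cdot i\,\d_{x_d}\htau_\sigma .
\end{align*}
Here $\sigma+i\htau_\sigma\in S(\lambsct,\gt)$ (as $|\htau_\sigma|\lesssim \ttau\leq\lambsct$) and $\d_{x_d}\htau_\sigma\in S(\ctp\csp\,\ttau,\gt)\subset S(\ctp\csp\,\lambsct,\gt)$, so this term lies in $S(\ctp\csp\,\lambsct^2,\gt)\subset S((1+\ctp\csp)\lambsct^2,\gt)$. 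Second, expanding $r(x,\eta)=\sum_{i,j}r^{ij}(x)\eta_i\eta_j$ and substituting $\eta=\xi'+i\htau_{\xi'}$, we get
\begin{align*}
\d_{x_d}\!\bigl[r(x,\xi'+i\htau_{\xi'})\bigr]
= \sum_{i,j}(\d_{x_d}r^{ij})(x)\,(\xi'+i\htau_{\xi'})_i(\xi'+i\htau_{\xi'})_j
+ 2i\sum_{i,j}r^{ij}(x)\,(\d_{x_d}\htau_{\xi'_i})\,(\xi'+i\htau_{\xi'})_j .
\end{align*}
The first sum belongs to $S(\lambsct^2,\gt)$ since $\d_{x_d}r^{ij}$ is smooth and bounded and $\xi'+i\htau_{\xi'}\in S(\lambsct,\gt)^{d-1}$. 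The second sum belongs to $S(\ctp\csp\,\lambsct^2,\gt)$ by the same argument as for the first term. Summing, $\d_{x_d} m_k\in S(\lambsct^2,\gt)+S(\ctp\csp\,\lambsct^2,\gt)=S((1+\ctp\csp)\lambsct^2,\gt)$.

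The only point requiring care is to verify that the defining symbol estimates \eqref{eq: semi-classical symbols tangential new} for $\d_{x_d} m_k$ persist under arbitrary further derivatives $\d_z^\mi\d_{\zeta'}^{\smi'}$; this is routine because each $\d_{\zeta'}$ lowers the $\lambsct$-weight by one, each tangential $\d_{z'}$ acting on $\htau$ or on $\varphi$ produces a factor $(1+\ctp\csp)$ (from a $\ctp$ coming from the chain rule in $\cpsi$ combined with a possible $\csp$ from the exponential), while each $\d_{z_N}$ produces the factor $\csp$; this matches the bounds defining $S((1+\ctp\csp)\lambsct^2,\gt)$. The main (mild) obstacle is simply bookkeeping the combinatorics of the Fa\`a di Bruno expansion of derivatives of $\varphi_{\csp,\ctp}$, but the homogeneity $\d_{x_d}\varphi=\csp(\d_{z_N}\cpsi)\varphi$ keeps track of this uniformly.
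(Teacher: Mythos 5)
Your proof is correct and follows essentially the same route as the paper: differentiate $m_k$ explicitly in $x_d$, invoke Lemma~\ref{lemma: htau in the calculus} to get $\htau'\in S(\ctp\ttau,\gt)^{N-1}$ and hence $\d_{x_d}\htau'\in S(\ctp\csp\,\ttau,\gt)^{N-1}$, and bound each of the three resulting pieces. The paper simply packages the quadratic-form term using the associated symmetric bilinear form $\tilde{r}$ instead of writing out the coordinate sums, but the estimates and the underlying argument are identical to yours.
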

%%%% proof of lemma
\begin{proof}
  We write $m_k(\y')  =  (-1)^k i \big(\sigma+ i \htau_\sigma (\y')\big)^2 
  +  r\big(x,\xi' + i \htau_{\xi'}(\y')\big)$, with the notation of
  \eqref{eq: decomposition ttau}.
  We then have
  \begin{align*}
    \d_{x_d} m_k 
    = -2 (-1)^k (\d_{x_d} \htau_\sigma) 
    (\sigma+ i\htau_\sigma)
    +2 i \tilde{r}(x,\xi' + i \htau_{\xi'}, \d_{x_d} \htau_{\xi'} )
    + \d_{x_d} r(x,\xi' + i \htau_{\xi'}), 
  \end{align*}
  with $\tilde{r}(x,\xi',\eta')$ defined below~\eqref{eq: quadratic
    form r}.  By Lemma~\ref{lemma: htau in the calculus},
  we have $\htau' = ( \htau_\sigma, \htau_{\xi'}) \in S(\ctp \ttau ,
  \gt)^{N-1}$ yielding $\d_{x_d}\htau' \in S(\ctp \csp \ttau ,
  \gt)^{N-1}$, and as $\d_{x_d} r (x,\xi' + i \htau_{\xi' }) \in S(\lambsct^2,
  \gt)$,
  the result follows.
\end{proof}

Let $k=1,2$.  If $\mu_k (\y') \geq - C\delta$, and for $\delta >0$
\suff small then $m_k \neq 0$ and, equivalently, the roots of
$q_k(\y)$ are simple, by Lemma~\ref{lemma: mu > -C} since
$|\htau'|\lesssim \htau_{\xi_d}$ for $z\in V$; recall the
definition of the operator $Q_k$,
$Q_k = (D_{x_d} + i \tau \d_{x_{d}} \varphi(x))^2 + M_k$, and its
principal symbol $q_k$ in \eqref{eq: factorization Q2Q1}--\eqref{eq:
  def symbol qk}.
%%%%%%%%%%%%%%%%%%%%%%%%
% lemma                %
%%%%%%%%%%%%%%%%%%%%%%%%
\begin{lemma}
  \label{lemma: squareroot mk}
  Let $C, C'>0$ and let $\U_\delta$ be a conic open set of
  $\MtV$
  \st $\mu_k (\y') \geq - C \delta$ and
  $\lambsct \leq C' |\htau(x)|$ in $\U_\delta$.  For $\delta_0\in (0,1)$ and
  $\ctp_0>0$ chosen
  \suff small, if $0< \delta \leq \delta_0$ and $0 \leq \ctp \leq
  \ctp_0$, the symbol $m_k$ is elliptic and
  there exists $h_k \in S(\lambsct,\gt)$ in $\U_\delta$ that is
  elliptic and that satisfies
  \begin{align*}
    h_k^2 = m_k \ \ \text{and}\ \ 
    \Re h_k \geq 0.
  \end{align*}
  Moreover, we have $\d_{x_d} h_k \in S((1+ \ctp \csp)\lambsct, \gt)$ in $\U_\delta$.
\end{lemma}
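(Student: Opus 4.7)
The first step is to record the equivalences that hold in the conic region $\U_\delta$. Since $|\htau| \lesssim \ttau$ (Lemma~\ref{lemma: htau in the calculus}) and $\lambsct \leq C'|\htau|$ in $\U_\delta$, and since $\ttau\leq \lambsct$ always, we have $\ttau \asymp \lambsct$ in $\U_\delta$. Combined with Lemma~\ref{lemma: equiv ttau |htau|} (which applies because $\U_\delta \subset \MtV$ and $V$ satisfies \eqref{eq: cond psi2}), we obtain $\lambsct \asymp \ttau \asymp |\htau| \asymp \htau_{\xi_d}$ on $\U_\delta$, and hence $\hlambdat \asymp \lambsct$. In particular, $|\htau'| \leq C\ctp\ttau \lesssim \ctp\, \htau_{\xi_d}$, so for $\ctp_0$ small enough and $\ctp \in [0,\ctp_0]$, the smallness hypothesis of Lemma~\ref{lemma: mu > -C} and of Proposition~\ref{prop root behaviors2} is satisfied in~$\U_\delta$.

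Next I would establish the ellipticity $|m_k| \gtrsim \lambsct^2$ in $\U_\delta$ for $\delta_0$ small and $\delta \leq \delta_0$. By homogeneity one reduces to the compact slice $\{\hlambdat=1\} \cap \kappa(\U_\delta)$. Whenever $\hm_k=0$ on this slice, the formula \eqref{eq: def hmu k} gives $\hmu_k = -4\hatt_{\xi_d}^4$; but on this slice $|\hatt'|^2 + |\zeta'|^2 \lesssim (\ctp^2+c)\hatt_{\xi_d}^2$, so $\hatt_{\xi_d}$ is bounded below by a positive constant, and therefore $\hmu_k \leq -c_0 < 0$ at every such point. By continuity and compactness, $\hmu_k \geq -\delta$ forces $|\hm_k|$ to be bounded below on the slice for $\delta_0$ small, and homogeneity gives $|m_k| \gtrsim \lambsct^2$.

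The key additional step---and the main obstacle---is to show that $m_k$ avoids a conic neighborhood of $\R_-$, so that the principal branch of the square root can be used. Writing $m_k = A + iB$, the definition \eqref{eq: def hmu k} together with $\mu_k \geq -\delta$ yields
\begin{equation*}
B^2 \;\geq\; 4\htau_{\xi_d}^4 - 4\htau_{\xi_d}^2 A - \delta\lambsct^4.
\end{equation*}
When $A \leq 0$, the term $-4\htau_{\xi_d}^2 A$ is nonnegative, and using $\htau_{\xi_d} \asymp \lambsct$ we obtain $B^2 \geq (c_1 - \delta)\lambsct^4$. For $\delta_0$ small, this yields $|\Im m_k| \gtrsim \lambsct^2$ whenever $\Re m_k \leq 0$. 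Hence $m_k$ remains in a closed sector of $\C$ with aperture strictly less than $2\pi$ not containing $\R_-$, and the principal branch $f(w) = \sqrt{w}$ (holomorphic on $\C \setminus \R_-$, with $\Re f \geq 0$) is defined and smooth on the range of $m_k$. I set $h_k := f(m_k)$; then $h_k^2 = m_k$, $\Re h_k \geq 0$, and $|h_k| = |m_k|^{1/2} \gtrsim \lambsct$, so $h_k$ is elliptic of order one.

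Finally, the symbol estimates $h_k \in S(\lambsct, \gt)$ follow by induction on the order of differentiation from the identity $2h_k\, d h_k = d m_k$. At each step one expresses $\d_z^\mi\d_{\zeta'}^{\smi'} h_k$ by Fa\`a di Bruno as a sum of products of derivatives of $m_k \in S(\lambsct^2, \gt)$ divided by powers of $h_k$; counting shows that each term obeys the expected bound $\csp^{|\mi_N|}(1+\ctp\csp)^{|\mi'|}\lambsct^{1-|\smi'|}$, using the ellipticity bound $|h_k| \gtrsim \lambsct$ to convert factors of $h_k^{-1}$ into $\lambsct^{-1}$ estimates in the symbol calculus associated to $\gt$. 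For the last claim, the identity $\d_{x_d} h_k = (\d_{x_d} m_k)/(2 h_k)$, Lemma~\ref{lemma: prop d xd mk} which provides $\d_{x_d} m_k \in S((1+\ctp\csp)\lambsct^2, \gt)$, and the fact that $1/h_k \in S(\lambsct^{-1}, \gt)$ together give $\d_{x_d} h_k \in S((1+\ctp\csp)\lambsct, \gt)$ in $\U_\delta$, as asserted.
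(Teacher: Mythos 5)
Your proof is correct and follows essentially the same route as the paper: establish the equivalence $\lambsct \asymp \ttau \asymp \htau_{\xi_d}$ in $\U_\delta$, then use the identity $\mu_k\lambsct^4 = 4\htau_{\xi_d}^2\Re m_k - 4\htau_{\xi_d}^4 + (\Im m_k)^2$ to show that $m_k$ avoids a conic neighborhood of $\R_-$, define $h_k$ as the principal square root, and obtain $\d_{x_d}h_k$ from $2h_k\,\d_{x_d}h_k = \d_{x_d}m_k$ via Lemma~\ref{lemma: prop d xd mk} and the ellipticity of $h_k$. Two small differences: you derive the needed equivalences directly from $\ttau \leq \lambsct \leq C'|\htau| \lesssim \ttau$ rather than invoking Lemma~\ref{lemma: equivalence tau zeta'} as the paper does, which is a slight simplification; and you first prove ellipticity of $m_k$ by a separate compactness argument on the slice $\hlambdat=1$ before doing the algebra, whereas the paper's single algebraic estimate (your step for the sector avoidance is its contrapositive) already yields $|m_k|\gtrsim\lambsct^2$ directly — so the compactness detour is harmless but superfluous, and it also requires a little care in identifying the compact subset of $(z,\zeta',\hatt)$-space since $\kappa(\U_\delta)$ depends on the parameters. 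Your Fa\`a di Bruno argument fills in the detail that the paper dismisses as ``straightforward'' for $h_k \in S(\lambsct,\gt)$, which is reasonable.
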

The second part of Lemma~\ref{lemma: squareroot mk} improves, for $h_k$,  upon the natural behavior of an
arbitrary element of $f \in S(\lambsct, \gt)$ for which we have
$\d_{x_d} f \in  S(\csp \lambsct, \gt)$. This is a key aspect of our
proof strategy of the Carleman estimate. In fact, if one chooses
$\ctp=0$, that is, a weight function $\psi = \psi(x_d)$, then one finds
directly that $\d_{x_d} m_k \in S(\lambsct^2, \gt)$ and $\d_{x_d}  h_k
\in S(\lambsct, \gt)$, as confirmed by 
Lemmata~\ref{lemma: prop d xd mk} and \ref{lemma: squareroot
  mk}. However, such a weight function is not convex with respect to
the boundary $\{ x_d =0\}$, which turns out to be an obstruction for the
applications of the Carleman estimate we consider here. If we simply let $\psi$ be of
the form $\psi(s,x',x_d)$ we then obtain $\d_{x_d}  h_k
\in S(\csp \lambsct, \gt)$ and the proof scheme for the Carleman
estimate collapses: the
parameter $\csp$ needs to be set large, which yields uncontrolled terms in
the derivation. The introduction of the
parameter $\ctp$, writing $\psi_\ctp(z) = \psi(\ctp s,\ctp x',x_d)$ is thus designed to control this behavior and to bring
the analysis as ``close'' as we wish to the case $\eps=0$ for the
derivation of the estimate and yet preserving some convexity   with respect to
the boundary $\{ x_d =0\}$.

%%%% proof of lemma
\begin{proof}
  In $V$, we have $\d_{x_d} \psi \geq C>0$ yielding $|\htau| \asymp
  \htau_{\xi_d}\asymp \ttau$ by Lemma~\ref{lemma: equiv ttau |htau|}. Next, $|\htau'|/\htau_{\xi_d}$ can be
  made as small as needed by choosing $\ctp>0$ small. 
 Thus, if we choose $\delta\in (0,\delta_0]$ and $\ctp>0$ \suff small, by 
  Lemma~\ref{lemma: equivalence tau zeta'} we have
  $|\htau(x)| \lesssim |\zeta'|$ and with the additional assumption made
  here we obtain 
  \begin{equation}
    \label{eq: equivalence tau zeta'}
  |\zeta'| \asymp \ttau \asymp \ttau_{\xi_d} \qquad \text{in} \ \ovl{\U_\delta}.
  \end{equation}

  If  $m_k(\y')$ remains away from a \nhd of the negative real axis in
  the complex plane
  for $\y' \in \ovl{\U_\delta}$, we can then define $h_k(\y')$ as the
  principal square root of $m_k(\y')$. Then, it is straightforward to
  obtain $h_k \in  S(\lambsct,\gt)$ in $\U_\delta$. In fact, if we assume
  $|\Im m_k(\y')| \leq \alpha \lambsct^2$, as we have, recalling the
  definition of $\mu_k$ in \eqref{eq: def muk}, 
  \begin{align*}
    \mu_k (\y') \lambsct^4 (\y')
    = 4 \htau_{\xi_d}^2 \Re m_k (\y')
  - 4 \htau_{\xi_d}^4 + \big(\Im m_k  (\y')\big)^2
  \end{align*}
  it yields, using \eqref{eq: equivalence tau zeta'}, 
     $\Re m_k (\y') \geq  \htau_{\xi_d}^2 (1 + \O(\delta + \alpha))$. 
   By choosing $\alpha$ and  $\delta$ \suff small, we obtain
   $\Re m_k (\y') \gtrsim  \htau_{\xi_d}^2$ in $\ovl{\U_\delta}$.
   
   As $m_k(\y')$ is homogeneous of degree two, we find that $h_k$ is
   homogeneous of degree one in $\ovl{\U_\delta}$.  Recalling that $z =(x,s)$
   remains in a compact domain here, we thus find
   \begin{equation}
     \label{eq: ellipticity tk}
     |h_k (\y')| \gtrsim  \lambsct \qquad \text{in} \ \ovl{\U_\delta}.
   \end{equation}

   Next, as $h_k^2 = m_k \neq 0$ in $\ovl{\U_\delta}$ we may write, with
   Lemma~\ref{lemma: prop d xd mk}, 
   \begin{align*}
     2 h_k \d_{x_d} h_k = \d_{x_d} m_k \in  S((1+ \ctp \csp)\lambsct^2, \gt).
   \end{align*}
   which yields the result using the ellipticity estimate~\eqref{eq: ellipticity tk}. 
\end{proof}

We let $\uchi_{\delta}, \chi_{\delta, 1} \in S(1, \gt)$ be supported in $\MtV$, homogeneous
of degree zero,  and  be \st 
$\mu_k \geq -C \delta$ for  both $k=1,2$ on their supports and 
$\chi_{\delta,1} \equiv 1$ in a \cnhd of
$\supp(\chi_{\delta,0})$ and $\uchi_{\delta} \equiv 1$ in a \cnhd of
$\supp(\chi_{\delta,1})$. Recalling the notation of Section~\ref{sec:
  def microlocal regions} and the microlocalization symbols constructed
there, this can be done as follows, for instance
for the construction of $\chi_{\delta,1}$.
Let $\hat{\chi_1}  \in \Cinf(\R)$ be \st
\begin{equation*}
  \supp(\hat{\chi_1})\subset [-4,+\infty), \quad
  \hat{\chi_1} \equiv 1 \ \text{on a \nhd of}\ [-3,+\infty).
\end{equation*}
%% figure
\begin{figure}
\begin{center}
\begin{picture}(0,0)%
\includegraphics{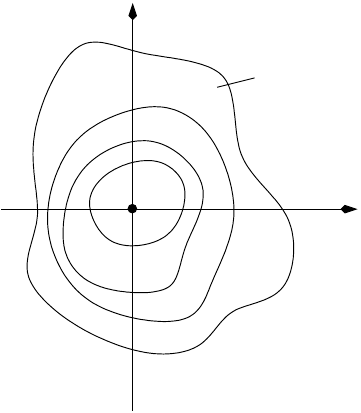}%
\end{picture}%
\setlength{\unitlength}{3947sp}%
\begingroup\makeatletter\ifx\SetFigFont\undefined%
\gdef\SetFigFont#1#2#3#4#5{%
  \reset@font\fontsize{#1}{#2pt}%
  \fontfamily{#3}\fontseries{#4}\fontshape{#5}%
  \selectfont}%
\fi\endgroup%
\begin{picture}(2874,3288)(739,-3637)
\put(1872,-1858){\makebox(0,0)[lb]{\smash{{\SetFigFont{12}{14.4}{\rmdefault}{\mddefault}{\updefault}{\color[rgb]{0,0,0}$W$}%
}}}}
\put(1861,-3523){\makebox(0,0)[lb]{\smash{{\SetFigFont{12}{14.4}{\rmdefault}{\mddefault}{\updefault}{\color[rgb]{0,0,0}$\d Q$}%
}}}}
\put(3226,-2236){\makebox(0,0)[lb]{\smash{{\SetFigFont{12}{14.4}{\rmdefault}{\mddefault}{\updefault}{\color[rgb]{0,0,0}$x_d = z_N\in \R$}%
}}}}
\put(1876,-586){\makebox(0,0)[lb]{\smash{{\SetFigFont{12}{14.4}{\rmdefault}{\mddefault}{\updefault}{\color[rgb]{0,0,0}$z'= (s,x')\in \R\times\R^{d-1} =\R^{N-1}$}%
}}}}
\put(1840,-2572){\makebox(0,0)[lb]{\smash{{\SetFigFont{12}{14.4}{\rmdefault}{\mddefault}{\updefault}{\color[rgb]{0,0,0}$V_0$}%
}}}}
\put(1993,-2842){\makebox(0,0)[lb]{\smash{{\SetFigFont{12}{14.4}{\rmdefault}{\mddefault}{\updefault}{\color[rgb]{0,0,0}$V_1$}%
}}}}
\put(1612,-2188){\makebox(0,0)[lb]{\smash{{\SetFigFont{12}{14.4}{\rmdefault}{\mddefault}{\updefault}{\color[rgb]{0,0,0}$z_0$}%
}}}}
\put(2776,-1003){\makebox(0,0)[lb]{\smash{{\SetFigFont{12}{14.4}{\rmdefault}{\mddefault}{\updefault}{\color[rgb]{0,0,0}$V$}%
}}}}
\end{picture}%

\caption{Open \nhds of $z_0 \in \d Z$
  introduced in the course of the proof of Theorem~\ref{theorem:
    Carleman boundary x}.}
\label{fig: nhd space}
\end{center}
\end{figure}
We also introduce $V_1\subset V$ an open  \nhd of $\supp(\chi_{V_0})$
in $\Rpb$, in particular $V_0 \Subset V_1$ (the local geometry is
illustrated in Figure~\ref{fig: nhd space})
and we choose $\chi_{V_1}
\in \Cinf(\Rpb)$ \st 
\begin{align*}
  &\chi_{V_1} \equiv 1 \ \text{on a \nhd of}\ V_1, \qquad
  \supp(\chi_{V_1}) \subset V.
\end{align*}
We set
\begin{align*}
  \chi_{\delta,1}(\y')  
  =\chi_{V_1}(z) \ (1 -\chi_{1/16,F}(\y'))\ 
  \hat{\chi}_1(\mu_1(\y')/\delta)
  \hat{\chi}_1(\mu_2(\y')/\delta)\in S(1, \gt).
\end{align*}
we have $\chi_{\delta,1} \equiv 1$ in a \cnhd of
$\supp(\chi_{\delta,0})$.

\medskip
We choose $\delta_0>0$ \suff small so that the results of
Lemmata~\ref{lemma: mu > -C} and \ref{lemma: equivalence tau zeta'}
apply, that is,  on $\supp(\uchi_\delta)$ the roots of $q_k$ are simple and
$|\htau(\y')| \lesssim |\zeta'|$, and also the result of Lemma~\ref{lemma: squareroot mk}
holds for $\U_\delta$ a \cnhd of $\supp(\uchi_{\delta})$, for $\delta
\in (0,\delta_0)$ and for $\ctp>0$ chosen \suff small. 
  With the value of $\delta$ fixed
now, to ease notation we now write $\uchi, \chi_0, \chi_1$ in place of
$\uchi_\delta, \chi_{\delta,0},  \chi_{\delta,1}$ and $\Xi_0, \Xi_1$
in place of $\Opt(\chi_{\delta,0}), \Opt(\chi_{\delta,1})$.
%%%%%%%%%%%%%%%%%%%%%%%%
% lemma                %
%%%%%%%%%%%%%%%%%%%%%%%%
\begin{lemma}
  \label{lemma: factorization Qk}
  Let $\chi = \chi_0$ or $\chi_1$ and, accordingly, $\Xi= \Xi_0$ or $\Xi_1$. We have 
  \begin{align*}
    Q_k \Xi
    &= Q_{k,+}Q_{k,-} \Xi
    + (1+\csp \ctp) R_1 \Xi + R_{-M}\\
    &=Q_{k,-}Q_{k,+} \Xi
    + (1+\csp \ctp) R_1' \Xi + R_{-M}',
\end{align*}
where $Q_{k,a} =\big(D_{x_d}+ i \htau_{\xi_d} - i \, a\Opt^w(h_k \uchi)
\big)$, $a \in \{ +,-\}$, and $R_1,R_1' \in \Psi(\lambsct, \gt)$ and $R_{-M}, R_{-M}'\in
\Psi(\lambsct^{-M}, \gt)$, for arbitrary large $M \in \N$.
\end{lemma}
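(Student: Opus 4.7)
The strategy is to expand
\[
Q_{k,+}Q_{k,-} = A^2 + T^2 + i[A,T], \qquad
Q_{k,-}Q_{k,+} = A^2 + T^2 - i[A,T],
\]
where $A := D_{x_d} + i\htau_{\xi_d}$ and $T := \Opt^w(h_k\uchi)$, and then to use the factorization $Q_k = A^2 + M_k$ coming from \eqref{eq: def Mk}. Both identities in the statement then reduce to proving
\[
\bigl(M_k - T^2 \mp i[A,T]\bigr)\,\Xi = (1+\csp\ctp)\,R_1\,\Xi + R_{-M},
\]
with $R_1\in\Psi(\lambsct,\gt)$ and $R_{-M}\in\Psi(\lambsct^{-M},\gt)$ for every $M$. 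The key structural input is that $\uchi\equiv 1$ on a \cnhd of $\supp(\chi)$, so that any symbol supported in $\{\uchi\neq 1\}$ is disjoint-support with $\chi$ and hence its Weyl quantization, when right-composed with $\Xi$, produces a smoothing operator to be absorbed into $R_{-M}$.

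First I would treat the commutator, writing $i[A,T] = i[D_{x_d},T] - [\Opt^w(\htau_{\xi_d}),T]$. For the $D_{x_d}$ piece, $[D_{x_d},\Opt^w(h_k\uchi)] = \tfrac{1}{i}\Opt^w(\partial_{x_d}(h_k\uchi))$, and splitting $\partial_{x_d}(h_k\uchi) = (\partial_{x_d}h_k)\uchi + h_k\,\partial_{x_d}\uchi$, the first summand lies in $S((1+\csp\ctp)\lambsct,\gt)$ by the second part of Lemma~\ref{lemma: squareroot mk}, extended globally via Lemma~\ref{eq: local symbol global symbol}. The second summand is supported in $\{\uchi\neq\text{const}\}$, hence outside $\supp(\chi)$, and contributes only to $R_{-M}$. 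For the $\htau_{\xi_d}$ piece, the Weyl calculus yields $[\Opt^w(\htau_{\xi_d}),T] = \tfrac{1}{i}\Opt^w(\{\htau_{\xi_d},h_k\uchi\}) \mod $ lower order, and $\{\htau_{\xi_d},h_k\uchi\} = -\partial_{z'}\htau_{\xi_d}\cdot\partial_{\zeta'}(h_k\uchi)$ lies in $S((1+\csp\ctp)\ttau,\gt)\subset S((1+\csp\ctp)\lambsct,\gt)$ by Lemma~\ref{lemma: htau in the calculus}.

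Next I would analyse $M_k - T^2$. Writing $M_k = \Opt^w(m_k) + R$ with $R\in(1+\csp\ctp)\Psi(\lambsct,\gt)$ via Lemma~\ref{lemma: prop d xd mk}, and using that the Weyl product of a symbol with itself has vanishing Poisson-bracket correction (since $\{f,f\}\equiv 0$), one gets $T^2 = \Opt^w((h_k\uchi)^2) + R'$, with $R'$ two orders below $(h_k\uchi)^2$ in the $\gt$-calculus and thus absorbable into $(1+\csp\ctp)\Psi(\lambsct,\gt)$ for $\lambsct$ large. Then on $\supp(\uchi)$, Lemma~\ref{lemma: squareroot mk} gives $h_k^2 = m_k$, so $m_k - h_k^2\uchi^2 = m_k(1-\uchi^2)$, which is supported outside $\{\uchi\equiv 1\}\supset\supp(\chi)$; composition of $\Opt^w(m_k(1-\uchi^2))$ with $\Xi$ is therefore smoothing and falls into $R_{-M}$.

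The main subtlety will be parameter bookkeeping: the bare contribution $h_k\,\partial_{x_d}\uchi$ sits only in $\csp\,\Psi(\lambsct,\gt)$, which is \emph{not} dominated by $(1+\csp\ctp)\,\Psi(\lambsct,\gt)$ when $\ctp\to 0$, and hence would not fit the stated remainder. The disjoint-support argument $\supp(\partial_{x_d}\uchi)\cap\supp(\chi)=\emptyset$ is exactly what rescues this, relegating the offending term to $R_{-M}$ after right-composition with $\Xi$. Collecting the two contributions yields the first identity with $+i[A,T]$; the second identity follows by the same argument, replacing $+i[A,T]$ by $-i[A,T]$ in the expansion.
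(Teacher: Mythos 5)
Your proof is correct and takes essentially the same route as the paper: expand $Q_{k,\pm}Q_{k,\mp}$ as $A^2 + T^2 \pm i[A,T]$ and reduce to controlling $(M_k - T^2 \mp i[A,T])\Xi$, handling the $M_k - T^2$ part via $h_k^2 = m_k$ on $\supp(\uchi)$ and disjoint support of $(1-\uchi^2)$ with $\chi$, and handling the commutator precisely as in the paper's Lemma~\ref{lemma: property tk} (including the correct observation that $h_k\,\d_{x_d}\uchi$ lives only in $\csp\,\Psi(\lambsct,\gt)$ and must be disposed of by disjoint support rather than by order). One minor slip: the assertion $M_k = \Opt^w(m_k) + R$ with $R\in (1+\csp\ctp)\Psi(\lambsct,\gt)$ is a general feature of the tangential calculus, not a consequence of Lemma~\ref{lemma: prop d xd mk}, which instead bounds $\d_{x_d}m_k$.
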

%%%% proof of lemma
\begin{proof}

  In the proof we shall denote by $R_j$ a generic operator in
  $\Psi(\lambsct^j,\gt)$, $j \in \mathbb \R$, whose expression may
  change from one line to the other.

Observe that we have, for any $M \in \N$, 
\begin{align*}
  M_k \Xi &= M_k \Opt(\uchi)^2\Xi  + R_{-M}
            = \Opt(m_k \uchi^2) \Xi + (1+\csp \ctp) R_1 \Xi +   R_{-M}.
\end{align*}

With Lemma~\ref{lemma: squareroot mk} applied with $\U_\delta$, a \cnhd
of $\supp(\uchi)$, we have 
\begin{align*}
  \Opt(m_k \uchi^2) =\Opt (h_k \uchi)^2 \mod \Psi((1+\csp \ctp) \lambsct,\gt),
\end{align*}
using the properties of the tangential calculus (see
Proposition~\ref{prop: admissible metric and order function
  tangential}). This yields
\begin{align*}
  M_k \Xi= \Opt (h_k \uchi)^2 \Xi +(1+\csp \ctp)R_1 \Xi +   R_{-M}\\
  = \Opt^w(h_k \uchi)^2 \Xi +(1+\csp \ctp)R_1 \Xi +   R_{-M}.
\end{align*}
We then find
\begin{align*}
  Q_k \Xi
  &= (D_{x_d}+ i \htau_{\xi_d})^2 \Xi +M_k \Xi
  \\
  &=\big(D_{x_d}+ i  \htau_{\xi_d} + i \Opt^w(h_k \uchi) \big)
  \big(D_{x_d}+ i \htau_{\xi_d}  - i \Opt^w(h_k \uchi) \big)\Xi  \\
  & \quad + i  [D_{x_d}+ i  \htau_{\xi_d},\Opt^w(h_k \uchi)] \Xi
    +(1+\csp \ctp)R_1 \Xi +   R_{-M},
\end{align*}
In fact, the order of the operators can be changed and we find
\begin{align*}
  Q_k \Xi
  &=\big(D_{x_d}+ i  \htau_{\xi_d} - i \Opt^w(h_k \uchi) \big)
  \big(D_{x_d}+ i \htau_{\xi_d}  + i \Opt^w(h_k \uchi) \big)\Xi  \\
  & \quad - i  [D_{x_d}+ i  \htau_{\xi_d},\Opt^w(h_k \uchi)] \Xi
    +(1+\csp \ctp)R_1 \Xi +   R_{-M}.
\end{align*}
The following lemma then yields the result.
\end{proof}
%%%%%%%%%%%%%%%%%%%%%%%%
% lemma                %
%%%%%%%%%%%%%%%%%%%%%%%%
\begin{lemma}
  \label{lemma: property tk}
  Let $\chi = \chi_0$ or $\chi_1$ and, accordingly, $\Xi= \Xi_0$ or $\Xi_1$.
  We have, for $a \in \{+,-\}$,  $[D_{x_d}+ i \htau_{\xi_d}, Q_{k,a}] \Xi =  -i\, a [D_{x_d}+ i \htau_{\xi_d},\Opt^w(h_k \uchi)] \Xi =
  (1+\csp \ctp) R_1 \Xi + R_{-M}$ with $R_1 \in  \Psi(\lambsct,\gt)$
  and $R_{-M} \in \Psi(\lambsct^{-M}, \gt)$. 
\end{lemma}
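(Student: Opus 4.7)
The first equality is immediate from~\eqref{eq: factorization Q2Q1}: since $Q_{k,a} = (D_{x_d}+i\htau_{\xi_d}) - i\,a\,\Opt^w(h_k\uchi)$ and $D_{x_d}+i\htau_{\xi_d}$ commutes with itself, the bracket picks up only the $\Opt^w(h_k\uchi)$ contribution. For the second equality, I split
\begin{align*}
[D_{x_d}+i\htau_{\xi_d},\Opt^w(h_k\uchi)]
= [D_{x_d},\Opt^w(h_k\uchi)] + i\,[\htau_{\xi_d},\Opt^w(h_k\uchi)],
\end{align*}
and treat the two summands separately.

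For the first summand, $D_{x_d}$ is a derivation transverse to the tangential variables, hence $[D_{x_d},\Opt^w(h_k\uchi)] = \frac{1}{i}\Opt^w\!\big(\d_{x_d}(h_k\uchi)\big)$. I would further split $\d_{x_d}(h_k\uchi) = (\d_{x_d}h_k)\,\uchi + h_k\,\d_{x_d}\uchi$. The first piece lies in $S((1+\csp\ctp)\lambsct,\gt)$ by the improved bound $\d_{x_d}h_k \in S((1+\csp\ctp)\lambsct,\gt)$ supplied by Lemma~\ref{lemma: squareroot mk} on a \cnhd of $\supp(\uchi)$, yielding a contribution of the desired form $(1+\csp\ctp)R_1$. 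The second piece is a priori only in $S(\csp\lambsct,\gt)$ (since $\d_{x_d}$ costs a factor $\csp$ on any $\gt$-symbol), which would be insufficient on its own; the gain comes from the support arrangement. Indeed, $\uchi\equiv 1$ on a \cnhd of $\supp(\chi_{\delta,0})\cup\supp(\chi_{\delta,1})$, so $\supp(\d_{x_d}\uchi)$ is disjoint from the symbol support of $\Xi$. By pseudo-locality of the tangential Weyl calculus, the product $\Opt^w(h_k\,\d_{x_d}\uchi)\,\Xi$ is therefore a residual operator in $\Psi(\lambsct^{-M},\gt)$ for every $M$, i.e. of the form $R_{-M}$. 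This is precisely where the right-composition with $\Xi$ is essential.

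For the second summand, $\htau_{\xi_d}=\ttau\,\d_{x_d}\cpsi(z)$ is a function of $z$ alone, so the Weyl commutator expansion gives a principal symbol proportional to the tangential Poisson bracket $\{\htau_{\xi_d},h_k\uchi\} = -\d_{z'}\htau_{\xi_d}\cdot\d_{\zeta'}(h_k\uchi)$, plus strictly lower-order Moyal remainders. Using $\cpsi(z)=\psi(\ctp z',x_d)$ and $\ttau=\tau\csp\varphi_{\csp,\ctp}$, one checks by direct computation that $\d_{z'}\htau_{\xi_d}\in S((1+\csp\ctp)\ttau,\gt)$: a factor $\ctp$ arises from differentiating $\psi$ through its rescaled $z'$-slot, while a contribution of size $\csp\ctp\ttau$ arises from differentiating $\ttau$ itself. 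Combined with $\d_{\zeta'}(h_k\uchi)\in S(1,\gt)$, the principal symbol lies in $S((1+\csp\ctp)\ttau,\gt)\subset S((1+\csp\ctp)\lambsct,\gt)$, and the higher Moyal terms are controlled analogously and absorbed into the $(1+\csp\ctp)$ prefactor. Multiplying the resulting bounds by $-ia$ and combining the two summands yields the stated identity. The only genuinely delicate point is the $h_k\,\d_{x_d}\uchi$ term, whose improvement from $\csp\lambsct$ to $R_{-M}$ relies crucially on the nesting $\supp(\chi)$ inside the interior of $\{\uchi\equiv 1\}$, and thus on the composition with $\Xi$ on the right.
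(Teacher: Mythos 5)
Your proof is correct and follows essentially the same route as the paper's: you split the commutator into $[D_{x_d},\cdot]$ and $[\htau_{\xi_d},\cdot]$, handle the $\d_{x_d}(h_k\uchi)$ term by the Leibniz rule using Lemma~\ref{lemma: squareroot mk} for the $(\d_{x_d}h_k)\uchi$ piece and a disjoint-support (pseudo-locality) argument against $\Xi$ for the $h_k\,\d_{x_d}\uchi$ piece, and control the multiplier commutator $[\htau_{\xi_d},\cdot]$ by the symbolic calculus. The only difference is that you spell out the Moyal-bracket estimate for $[\htau_{\xi_d},\Opt^w(h_k\uchi)]$, which the paper dismisses in one line as a consequence of the calculus; the reference to~\eqref{eq: factorization Q2Q1} should really be to Lemma~\ref{lemma: factorization Qk}, where $Q_{k,a}$ is defined, but this is cosmetic.
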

%%%% proof of lemma
\begin{proof}
  We have
  $[\htau_{\xi_d}, \Opt^w(h_k \uchi)] \in \Psi((1+\csp \ctp)
  \lambsct,\gt)$
  as a consequence of the tangential calculus we have introduced.  We
  have $[D_{x_d},\Opt^w(h_k \uchi)] = \Opt^w\big(D_{x_d} (h_k \uchi)\big)$.
  We then write
  \begin{align*}
  D_{x_d} (h_k \uchi) =  D_{x_d} (h_k) \uchi + h_k D_{x_d} (\uchi).
  \end{align*}
  Because of the definition of $\uchi$ we have $D_{x_d} \uchi(\y')
  \equiv 0$ in $\supp (\chi(\y'))$. Thus $\Opt^w\big((D_{x_d} \uchi) h_k\big) \Xi
  \in \Psi(\lambsct^{-M}, \gt)$, for any $M \in \N$. Next, by
  Lemma~\ref{lemma: squareroot mk} we  have 
  $\uchi D_{x_d} h_k\in S((1+ \ctp \csp)\lambsct, \gt)$, which
  concludes the proof.
\end{proof}
%%%%%%%%%%%%%%%%%%%%%%%%
% lemma                %
%%%%%%%%%%%%%%%%%%%%%%%%
\begin{lemma}
  \label{lemma: commutation Qk+ Ql-}
  Let $\chi = \chi_0$ or $\chi_1$  and, accordingly, $\Xi= \Xi_0$ or $\Xi_1$. 
  Let $k, \ell \in \{1,2\}$ and $a,b \in \{+,- \}$.  We have, for any
  $M\in \N$, 
  \begin{align*}
    [Q_{k,a}, Q_{\ell, b}] \Xi = (1+\csp \ctp) R_1 \Xi + R_{-M}, 
   \quad [D_{x_d}+ i \htau_{\xi_d}, Q_{k,a}Q_{\ell, b}] \Xi =
    (1+\csp \ctp) R_{1,1} \Xi + R_{1, -M},
  \end{align*}
  with $R_1 \in  \Psi(\lambsct,\gt)$, $R_{1,1} \in \PsiOpsc^{1,1}$,
  $R_{-M} \in \PsiOpsc^{0,-M}$, and $R_{1,-M} \in \PsiOpsc^{1,-M}$.
\end{lemma}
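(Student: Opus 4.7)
The plan is to exploit the factorization $Q_{k,a} = L - i a S_k$ where $L := D_{x_d} + i\htau_{\xi_d}$ and $S_k := \Opt^w(h_k \uchi)$, so that $[L, L] = 0$, together with Lemma~\ref{lemma: property tk} and the tangential Weyl--H\"ormander calculus of Section~\ref{sec: pseudo -3p}.

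For the first identity, I would simply expand the commutator as
\begin{align*}
  [Q_{k,a}, Q_{\ell,b}] = ia\,[L, S_k] - ib\,[L, S_\ell] - ab\,[S_k, S_\ell].
\end{align*}
The first two terms, once applied to $\Xi$, fall under Lemma~\ref{lemma: property tk} directly (noting $[L, S_k] = (ia)^{-1}[L, Q_{k,a}]$). For the third term, Lemma~\ref{lemma: squareroot mk} guarantees $h_k \uchi, h_\ell \uchi \in S(\lambsct, \gt)$ globally, and the tangential calculus then yields $[S_k, S_\ell] \in (1+\csp\ctp)\Psi(\lambsct, \gt)$, so that $[S_k, S_\ell]\Xi$ is already in the desired form $(1+\csp\ctp)R_1 \Xi$.

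For the second identity, I would use Leibniz:
\begin{align*}
  [L, Q_{k,a} Q_{\ell,b}] = [L, Q_{k,a}] \, Q_{\ell,b} + Q_{k,a} \, [L, Q_{\ell,b}].
\end{align*}
The second summand is the easy one: Lemma~\ref{lemma: property tk} gives $[L, Q_{\ell,b}] \Xi = (1+\csp\ctp) R_1 \Xi + R_{-M}$, and left-multiplication by $Q_{k,a} \in \PsiOpsc^{1,0}$ then produces $(1+\csp\ctp) R_{1,1} \Xi + R_{1,-M}$ using $\PsiOpsc^{1,0} \cdot \Psi(\lambsct^s, \gt) \subset \PsiOpsc^{1,s}$ for $s = 1, -M$.

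The hard part will be the summand $[L, Q_{k,a}] Q_{\ell,b} \Xi$, because Lemma~\ref{lemma: property tk} is not directly applicable: the microlocal cutoff $\Xi$ is separated from $[L, Q_{k,a}]$ by $Q_{\ell,b}$. To resolve this I would introduce an intermediate cutoff $\chi'' \in S(1, \gt)$, homogeneous of degree zero, with $\chi'' \equiv 1$ on an open conic neighborhood of $\supp \chi$ and with $\uchi \equiv 1$ on an open conic neighborhood of $\supp \chi''$, and set $\Xi'' := \Opt(\chi'')$. By construction, $\Xi''$ plays exactly the r\^ole of $\Xi_1$ with respect to $\chi$, so Lemma~\ref{lemma: property tk} applies to $\Xi''$ as well, and one writes
\begin{align*}
  [L, Q_{k,a}]\, Q_{\ell,b}\, \Xi = [L, Q_{k,a}]\, \Xi''\, Q_{\ell,b}\, \Xi + [L, Q_{k,a}]\, [Q_{\ell,b}, \Xi'']\, \Xi.
\end{align*}
The first term is then controlled by applying Lemma~\ref{lemma: property tk} to $\Xi''$: this gives $[L, Q_{k,a}]\Xi'' = (1+\csp\ctp) R_1 \Xi'' + R_{-M}$, and composition on the right with $Q_{\ell,b} \Xi$ yields $(1+\csp\ctp) R_{1,1} \Xi + R_{1,-M}$. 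For the second term, the symbol of $[Q_{\ell,b}, \Xi'']$ involves only derivatives of $\chi''$, all of which vanish on an open neighborhood of $\supp \chi$; hence $[Q_{\ell,b}, \Xi''] \Xi$ is smoothing in the tangential sense, and composition on the left with the purely tangential operator $[L, Q_{k,a}] = -ia[L, S_k] \in \Psi(\csp \lambsct, \gt)$ absorbs into $R_{1,-M}$ after taking $M$ large enough to compensate the $\csp$ loss. The delicate point is the careful choice of $\chi''$ so that the same support disjointness simultaneously controls both the identity $\Xi''\Xi = \Xi$ modulo $R_{-\infty}$ and the smoothing nature of $[Q_{\ell,b}, \Xi''] \Xi$.
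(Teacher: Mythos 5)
Your proof takes essentially the same route as the paper's: factor $Q_{k,a} = (D_{x_d}+i\htau_{\xi_d}) - i a\Opt^w(h_k\uchi)$, use the tangential Weyl--H\"ormander calculus to place $[\Opt^w(h_k\uchi),\Opt^w(h_\ell\uchi)]$ in $(1+\csp\ctp)\Psi(\lambsct,\gt)$, and invoke Lemma~\ref{lemma: property tk} for every piece containing $D_{x_d}+i\htau_{\xi_d}$; the paper's proof is just terser and leaves the intermediate cutoff $\chi''$ argument implicit. One minor bookkeeping remark: your displayed decomposition of $[D_{x_d}+i\htau_{\xi_d}, Q_{k,a}]Q_{\ell,b}\Xi$ is not a literal operator identity (the right-hand side collapses to $[D_{x_d}+i\htau_{\xi_d}, Q_{k,a}]Q_{\ell,b}\Xi''\Xi$), but since $\Xi''\Xi = \Xi + R_{0,-M}$ by the support containment, the discrepancy is an $R_{1,-M}$ remainder of the type the statement already tolerates.
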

%%%% proof of lemma
\begin{proof}
Since $[\Opt^w(h_k \uchi), \Opt^w(h_l \uchi)] \in  \Psi((1+\csp \ctp)
\lambsct,\gt)$, using the properties of the tangential calculus (see
Proposition~\ref{prop: admissible metric and order function
  tangential}),  the result follows from Lemma~\ref{lemma: property tk}.
\end{proof}

We may now provide a proof of the microlocal estimate for the region
$\E_0$. 
\begin{proof}[\bfseries Proof of Proposition~\ref{prop: microlocal
  estimate E0}.]

In the proof, we shall denote by $R_{j,k}$ a generic operator in
  $\PsiOpsc^{j,k}$, $j \in \mathbb N$, $k \in \R$,  whose expression may
  change from one line to the other. We denote by $M$ an arbitrarily
  large integer whose value may change from one line to the other.

With the previous lemmata we write, using that
$\chi_1 \equiv 1$ on $\supp(\chi_0)$, 
\begin{align}
  \label{eq: decomposition Pvarphi}
  \Pconj \Xi_0  &= Q_1 Q_2 \Xi_0  
                            = Q_1 \Xi_1Q_2 \Xi_0  + R_{4,-M}\\
                            &= Q_{1,-} Q_{1,+} \Xi_1Q_{2,-} Q_{2,+}\Xi_0 
                     + (1+\csp \ctp)R_{2,1} \Xi_0 
                     + R_{4,-M} \notag\\
    &= Q_{1,-} Q_{1,+} 
      Q_{2,-} Q_{2,+}\Xi_0  
      + (1+\csp \ctp)R_{2,1} \Xi_0 + R_{4,-M}\notag\\
      &=Q_{1,-} Q_{1,+} Q_{2,-} \Xi_1 Q_{2,+}\Xi_0  + (1+\csp \ctp)R_{2,1} \Xi_0 + R_{4,-M}\notag\\ 
      &=
      Q_{1,-}  Q_{2,-} Q_{1,+}\Xi_1 Q_{2,+}\Xi_0   + (1+\csp \ctp)R_{2,1} \Xi_0 + R_{4,-M}\notag\\
       &= Q^- Q^+ \Xi_0  + (1+\csp \ctp)R_{2,1} \Xi_0 + R_{4,-M},\notag
\end{align}
with $ Q^- = Q_{1,-}Q_{2,-}  $ and  $ Q^+ = Q_{1,+}Q_{2,+}$.

\medskip
The principal symbol of $Q^-$ is
  $q^- = q_{1-} q_{2,-} \in \Symbsc^{2,0}$ in a \cnhd of
  $\supp(\chi_0)$, where all the roots of $q^-$
  have negative imaginary parts. Thus, 
  \begin{equation}
    \label{eq: E0 Q- perfect elliptic}
    \text{the operator $Q^-$ fulfills the requirements of  Lemma~\ref{lemma: microlocal elliptic estimate}.}
  \end{equation}

For both $Q_{1,+}$ and $Q_{2,+}$ we have the following estimate,
characterized by the loss of a half derivative and a trace
observation, as given
by Lemma~\ref{lemma: sub-ellitptic estimate Qk+}, for $\ell, m \in
\R$, 
\begin{align*}
      \csp^{1/2}\Norm{\ttau^{m-1/2} \Xi_0 v}{1,\ell, \ttau} 
      \lesssim \Norm{\ttau^{m} Q_{k,+} \Xi_0 v}{0,\ell,\ttau}
  +  \norm{\trace(\ttau^{m}\Xi_0 v)}{0,\ell+ 1/2,\ttau} 
  + \Norm{v}{0,-M, \ttau}
  , \qquad k=1,2,
  \end{align*}
  for $v \in \S(\Rpb)$, and for $\tau\geq \tauast$ and $\csp\geq 1$ chosen \suff large,
  and $\ctp \in [0,1]$.  Then, according to Proposition~\ref{prop:
    estimate concatenation}, applied with $\alpha_1 = \alpha_2=1$, we
  have the following estimate for the operator $Q^+$, for
  $M >0$ and $\ell \in \R$,
  \begin{align}
    \label{eq: E0 est Q+}
    &\csp\Norm{\ttau^{-1}\Xi_0 v}{2,\ell,\ttau}
    + \norm{\trace(\Xi_0 v)}{1,\ell+ 1/2,\ttau} 
      \lesssim
      \Norm{Q^{+} \Xi_0 v}{0,\ell,\ttau}
    +\norm{\trace(\Xi_0 v)}{1,\ell+ 1/2,\ttau} 
      + \Norm{v}{2,-M, \ttau},
  \end{align}
for $v \in \S(\Rpb)$, and for $\tau$ and $\csp$ chosen \suff large.

With~\eqref{eq: E0 Q- perfect elliptic} and \eqref{eq: E0 est Q+},  applying now Proposition~\ref{prop: estimate order 4 operator},
and  using that, for any $M \in \N$, 
$[D_{x_d}+ i \htau_{\xi_d}, Q^+] \Xi_1 = (1+\csp \ctp) R_{1,1} \Xi_1 +
R_{1,-M}$ by Lemma~\ref{lemma: commutation Qk+
  Ql-}, we obtain
\begin{align*}
    &\csp \Norm{\ttau^{-1} \Xi_0 v}{4,0,\ttau}
    + \norm{\trace(\Xi_0  v)}{3, 1/2,\ttau} 
    \lesssim
      \Norm{Q^- Q^{+} \Xi_0   v}{+}
    +\norm{\trace(\Xi_0 v)}{1,5/2,\ttau} 
     + \Norm{v}{4,-M, \ttau},
      \notag
  \end{align*}
  for $v \in \S(\Rpb)$, and for $\tau\geq \tauast$ and $\csp\geq 1$ chosen \suff large,
  for $\ctp \in [0,\ctp_1]$ with $\ctp_1>0$ chosen \suff small. Finally,
  with \eqref{eq: decomposition Pvarphi}, we conclude the proof of
  Proposition~\ref{prop: microlocal estimate E0} by choosing $\csp$ large and $\ctp \in [0,\ctp_2]$
  with $\ctp_2>0$ chosen \suff small. 
 \end{proof}
 \begin{remark}
   Note that the end of the proof of Proposition~\ref{prop: microlocal
     estimate E0} is a point where the introduction of the second
   large parameter $\csp$ is crucial. Even in the case $\ctp=0$, that
   is for a weight function that only depend on the variable $z_N$,
   taking $\csp$ large is needed to conclude.
 \end{remark}

\subsection{Microlocal estimate in the region $F$}
\label{sec: Microlocal estimate in the region F}

In the region $F$ we have $\ttau \lesssim \lambsct$ 
and the symbols of the operators $Q_k$ are characterized by two simple
roots that are separated (see the first item of Proposition~\ref{prop root behaviors2}).
We prove the following estimate.
%%%%%%%%%%%%%%%%%%%%%%%%
% proposition          %
%%%%%%%%%%%%%%%%%%%%%%%%
\begin{proposition}
  \label{prop: microlocal estimate F}
  Let $M \in \N$. There exist $\tau_0\geq  \tauast$, $\csp_0\geq 1$, and $C>0$ \st
  \begin{align*}
    %\label{eq: microlocal estimate F}
    \Norm{ \Xi_F v}{4, 0, \ttau} 
    +\norm{\trace(\Xi_F v)}{3,1/2,\ttau} 
    \leq C \Big( \Norm{\Pconj \Xi_F v}{+}
    +\norm{\trace(\Xi_F v)}{1,5/2,\ttau} 
    + \Norm{v}{4,-M, \ttau} 
    \Big), 
  \end{align*}
 for $\tau \geq \tau_0$, $\csp \geq \csp_0$, $\ctp \in [0,1]$,
  and for $v  \in \S(\Rpb)$.
\end{proposition}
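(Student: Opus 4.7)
My plan is to mirror the proof of Proposition~\ref{prop: microlocal estimate E0}, with the decisive simplification that in region $F$ all factors of $\Pconj$ will be \emph{strictly} elliptic rather than merely sub-elliptic, so no $\csp$ loss arises in the left-hand side.

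First I would exploit the definition of $F$: on a conic neighborhood of $\supp(\chi_F)$ one has $\ttau \leq \delta \theta_1 \lambsct$, and thus $|\htau(\y')| \leq \theta_0 \lambsct$. Proposition~\ref{prop root behaviors2}(1) then yields $\Im \rho_{k,+} \geq C\lambsct$ and $\Im \rho_{k,-} \leq -C\lambsct$ for the roots of each $q_k$, and in particular $m_k$ is nonvanishing (since $m_k=0$ would force a double root by Lemma~\ref{lemma: root behaviors1}). A direct check shows moreover that $m_k$ stays away from $\R_-$ at distance $\gtrsim \lambsct^2$, so that the construction of Lemma~\ref{lemma: squareroot mk} applies and yields $h_k \in S(\lambsct,\gt)$ on $\supp(\uchi)$ with $h_k^2 = m_k$ and $\Re h_k \geq C\lambsct$, for a suitable cutoff $\uchi$ equal to $1$ near $\supp(\chi_F)$. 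Setting $Q_{k,a} = D_{x_d} + i\htau_{\xi_d} - ia\Opt^w(h_k \uchi)$, $a \in \{+,-\}$, the analogs of Lemmata~\ref{lemma: factorization Qk}, \ref{lemma: property tk}, and \ref{lemma: commutation Qk+ Ql-} carry over verbatim and give
\begin{align*}
\Pconj \Xi_F = Q^- Q^+ \Xi_F + (1 + \csp \ctp) R_{2,1} \Xi_F + R_{4,-M},
\end{align*}
with $Q^{\pm} = Q_{1,\pm} Q_{2,\pm}$.

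Next I would estimate $Q^-Q^+ \Xi_F v$ by treating the two second-order factors separately. The principal symbol of $Q^-$ has both roots strictly in the lower half-plane (separation $\gtrsim \lambsct$), so Lemma~\ref{lemma: microlocal elliptic estimate} delivers a perfect elliptic estimate for $Q^-$ with no boundary observation. For each first-order factor $Q_{k,+}$, the root sits in the strict upper half-plane with separation $\gtrsim \lambsct$, which makes the operator genuinely elliptic in the semi-classical sense and yields an estimate
\begin{align*}
\Norm{w}{1,\ell,\ttau} + \norm{\trace(w)}{0,\ell+1/2,\ttau} \lesssim \Norm{Q_{k,+} w}{0,\ell,\ttau} + \norm{w_{|z_N=0}}{\ell+1/2,\ttau} + \Norm{w}{0,-M,\ttau}
\end{align*}
\emph{without} any $\csp^{1/2}$ prefactor, in contrast to the sub-elliptic estimate of Lemma~\ref{lemma: sub-ellitptic estimate Qk+}. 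Concatenating via Proposition~\ref{prop: estimate concatenation} (with $\alpha_1 = \alpha_2 = 0$) yields the analogous elliptic estimate for $Q^+$, and combining with the one for $Q^-$ through Proposition~\ref{prop: estimate order 4 operator} (with $\delta_1=\delta_2=1$) furnishes the announced bound, once the error $(1+\csp\ctp) R_{2,1} \Xi_F$ is absorbed into the left-hand side by taking $\tau$ large.

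The main point is a bookkeeping one rather than a deep analytic difficulty: I must verify that the first-order elliptic estimate for $Q_{k,+}$ in region $F$ carries no $\csp^{1/2}$ factor. This is precisely because the imaginary part of the root is bounded below by $C\lambsct$ uniformly in $\csp$, so the symbol is truly elliptic rather than sub-elliptic, and one never has to invoke the second large parameter to close the estimate — this is why the left-hand side in the statement does not contain a factor $\csp$.
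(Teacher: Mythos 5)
Your proposal follows essentially the same route as the paper's: microlocalize with $\chi_F$, observe that $|\htau|\leq\theta_0\lambsct$ on the support, invoke Proposition~\ref{prop root behaviors2}(1) to get both $q_k$ with simple roots separated from the real axis by $\gtrsim\lambsct$, factor into first-order pieces, derive a perfect elliptic estimate for $Q^-=Q_{1,-}Q_{2,-}$ and a first-order elliptic estimate with trace observation (no $\csp$ loss) for each $Q_{k,+}$, concatenate via Proposition~\ref{prop: estimate concatenation} with $\alpha_1=\alpha_2=0$, and finish with Proposition~\ref{prop: estimate order 4 operator}. The key structural point you identify --- that in $F$ both roots are uniformly non-real so all factors are genuinely elliptic and $\csp$ never enters the left-hand side --- is exactly the mechanism the paper exploits.

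Three bookkeeping points diverge from the paper, two of which merit flagging. First, the paper does not go through the square root $h_k$ here: since the roots are simple and separated in a full conic neighborhood of $\supp(\chi_F)$, it works directly with $\rho_{k,\pm}\in S(\lambsct,\gt)$ (via Lemma~\ref{lemma: from on calculus to the other}) and sets $Q_{k,\pm}=D_{x_d}-\Opt^w(\uchi\rho_{k,\pm})$, re-deriving a factorization lemma of the type of Lemma~\ref{lemma: factorization Qk} by a direct expansion of $Q_{k,+}Q_{k,-}$; the $h_k$ detour gives an equivalent object since $\rho_{k,\pm}=-i\htau_{\xi_d}\pm ih_k$, but it is extra scaffolding. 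Second, and more substantively, your citation of Lemma~\ref{lemma: squareroot mk} cannot be taken at face value: that lemma is proved under the hypothesis $\lambsct\leq C'|\htau|$ (i.e.\ the regime of $\E_0\setminus F$), which is the \emph{opposite} of what holds in $F$. The construction does carry over --- $m_k$ is a small perturbation of $(-1)^ki\sigma^2+r(x,\xi')$ there, so it stays away from $\R_-$ --- but you must re-derive both $h_k\in S(\lambsct,\gt)$ and $\d_{x_d}h_k\in S((1+\ctp\csp)\lambsct,\gt)$ (needed for the analog of Lemma~\ref{lemma: property tk}) in the $F$ regime rather than cite the lemma; the latter estimate follows from Lemma~\ref{lemma: prop d xd mk} together with ellipticity of $h_k$. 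Third, your $\delta_1=\delta_2=1$ in Proposition~\ref{prop: estimate order 4 operator} is wrong: for $Q_{k,+}$ with a root in the upper half-plane one has $\alpha_k=0$, $\beta_k=1$, hence $\delta_k=(1-\alpha_k)(1-\beta_k)=0$. Taking $\delta_k=1$ would correspond to a perfect elliptic estimate for $Q^+$ (no trace observation), which is false here; $\delta_1=\delta_2=0$ is what produces the $\norm{\trace(\Xi_F v)}{1,5/2,\ttau}$ term that is present in the statement. None of these gaps undermines the overall strategy, but the second one does need an argument rather than a citation.
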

\begin{proof}
  We write $\chi_0 = \chi_{F}$ and $\Xi_0 =
  \Xi_F$, to ease the reading of the proof.

  We also let $\chi_1, \uchi \in S(1, \gt)$ be supported in $\MtV$,
  homogeneous of degree zero, and be \st
  $|\htau(\y')| \leq \frac12 \theta_0\lambsct(\y')$ in their support
  (using the notation of Section~\ref{sec: def microlocal regions})
  and \st $\chi_1 \equiv 1$ on a \cnhd of $\supp (\chi_0)$ and
  $\uchi \equiv 1$ on a \cnhd of $\supp (\chi_1)$.  This can be done as
  follows, for instance for the construction of $\chi_{1}$. We
  introduce $V_1\subset V$ an open set of $\R^N$ that is a \nhd of $\supp(\chi_{V_0})$ in
  $\Rpb$, in particular $V_0 \Subset V_1$ (the local geometry is
  illustrated in Figure~\ref{fig: nhd space}) and we choose $\chi_{V_1}
\in \Cinf(\Rpb)$ \st 
\begin{align*}
  &\chi_{V_1} \equiv 1 \ \text{on a \nhd of}\ V_1, \qquad
  \supp(\chi_{V_1}) \subset V.
\end{align*}
We set
  \begin{align*}
    \chi_1(\y')= \chi_{V_1}(z) \chi_{4,F}\in S(1, \gt),
  \end{align*}
   with the function $\chi_{\delta,F}$ as introduced in Section~\ref{sec: def
     microlocal regions}. 
   We have $|\htau(\y')| \leq \frac18 \theta_0\lambsct(\y')$, which
   leaves ``enough room'' for a similar construction for $\uchi$. 
   We set $\Xi_1 = \Opt(\chi_1)$.

 With Proposition~\ref{prop root behaviors2}, 
  in a \cnhd of $\supp(\uchi)$ the roots of $q_k$, $k=1,2$, are simple, and we
  may write  
  \begin{align*}
    q_k (\y) =q_{k,+}(\y) q_{k,-}(\y) , \qquad  q_{k,\pm}(\y) = \xi_d - \rho_{k,\pm}(\y'),  
  \end{align*}
  where $\rho_{k,\pm}  \in S(\lambsct, \gt)$ in a \cnhd of $\supp(\uchi)$
  and there we have 
  \begin{align*}
      \Im \rho_{k,+} \geq C\lambsct, 
      \quad  \Im \rho_{k,-} \leq -C \lambsct .
   \end{align*}
   We set $Q_{k,\pm} = D_{x_d} - \Opt^w(\uchi \rho_{k,\pm})$. 

   In the proof we shall denote by $R_{j,k}$ as a generic operator in
  $\PsiOpsc^{j,k}$, $j \in \mathbb N$, $k \in \R$,  whose expression may
  change from one line to the other. 
   
   %%%%%%%%%%%%%%%%%%%%%%%%
% lemma                %
%%%%%%%%%%%%%%%%%%%%%%%%
\begin{lemma}
  \label{lemma: E0 factorization Qk}
  Let $\Xi = \Xi_0$ or $\Xi_1$.  We have,  for arbitrary large $M \in
  \N$, 
  \begin{align*}
    Q_k \Xi
    &= Q_{k,+}Q_{k,-} \Xi
    + \csp R_{1,0} \Xi + R_{2,-M}\\
    &=Q_{k,-}Q_{k,+} \Xi
    + \csp R_{1,0} \Xi + R_{2,-M}.
\end{align*}
\end{lemma}
%%%% proof of lemma
\begin{proof}
  We have 
  \begin{align*}
    Q_{k,+}Q_{k,-} &= D_{x_d}^2  - \big( \Opt^w(\uchi \rho_{k,+}) 
    + \Opt^w(\uchi \rho_{k,-})\big) D_{x_d} 
    + \Opt^w(\uchi \rho_{k,+}) \Opt^w(\uchi \rho_{k,-}) 
    + \csp R_{0,1}. 
  \end{align*}
  We thus find, for any $M\in \N$, 
  \begin{align*}
    Q_{k,+}Q_{k,-} \Xi 
    &= \Big(\Opt^w(\uchi) D_{x_d}^2  - \big( \Opt^w(\uchi \rho_{k,+}) 
    + \Opt^w(\uchi \rho_{k,-})\big) D_{x_d} 
    + \Opt^w(\uchi \rho_{k,+}) \Opt^w(\uchi \rho_{k,-}) \Big)\Xi\\
    &\quad + \csp R_{1,0}\Xi
     + R_{2,-M} \\
    &=\Op^w(\uchi q_k) \Xi + \csp R_{1,0} \Xi + R_{2,-M}\\
    &= Q_k \Xi + \csp R_{1,0} \Xi + R_{2,-M}.
  \end{align*}
\end{proof}
This result yields, for any $M \in \N$, 
   \begin{align}
     \label{eq: F decomposition Pvarphi}
     \Pconj\Xi_0 &= Q_1 Q_2 \Xi_0 =    Q_1 \Xi_1 Q_2 \Xi_0  + R_{4,-M}\\
     &= Q_{1,-}Q_{1,+}  \Xi_1 Q_{2,-}Q_{2,+} \Xi_0 + \csp R_{3,0}\Xi_0 +
     R_{4,-M}\notag\\
     &= Q_{1,-}Q_{1,+}  Q_{2,-}Q_{2,+} \Xi_0 + \csp R_{3,0}\Xi_0 +
     R_{4,-M}\notag\\
     &= Q^- Q^++ \csp R_{3,0}\Xi_0 +
     R_{4,-M}\notag,
   \end{align}
   where $Q^- = Q_{1,-} Q_{2,-}$ and $Q^+ = Q_{1,+}  Q_{2,+} $.

  Both roots of the
  symbol $q^-$ of the operator $Q^-$ are in the lower half complex
  plane in a \cnhd of $\supp(\chi_0)$. Then, with Lemma~\ref{lemma: microlocal elliptic estimate}
  we have the following perfect elliptic estimate, for any $M >0$, 
  \begin{equation} 
    \label{eq: F est Q-}
    \Norm{\Xi_0 v}{2,0,\ttau}
    +\norm{\trace(\Xi_0 v)}{1,1/2,\ttau}
    \lesssim
    \Norm{Q^- \Xi_0 v}{+}
    + \Norm{v}{2,-M,\ttau},
\end{equation}
for $v\in \S(\Rpb)$, for $\tau\geq \tauast$ and $\csp\geq 1$ chosen \suff large, and
$\ctp \in [0,1]$.

The roots of the first-order factor $Q_{k,+}$, $k=1$ or $2$, 
are in upper half complex plane. Then, with Lemma~\ref{lemma: elliptic
  estimate Qk+}, 
 we have the following elliptic estimate, yet  with a trace
 observation term in the \rhs, for $M >0$ and $\ell \in \R$, 
\begin{align*}
  \Norm{|\Xi_0 v}{1,\ell,\ttau}
  \leq C \Big( \Norm{Q_{k,+} \Xi_0 v}{0,\ell,\ttau}
  + \norm{\trace(\Xi_0 v)}{0,\ell+ 1/2,\ttau} 
  + \Norm{v}{0,-M, \ttau}
  \Big),
  \end{align*}
for $v\in \S(\Rpb)$, for $\tau\geq \tauast$ and $\csp\geq 1$ chosen \suff large, and
$\ctp \in [0,1]$. Then, according to Proposition~\ref{prop: estimate concatenation},
  applied with $\alpha_1 = \alpha_2=0$ and $\delta_1 = \delta_2 =0$, we have the following
  estimates
  for the operator $Q^+$, for $M >0$ and $\ell \in \R$, 
  \begin{align}
    \label{eq: F est Q+}
    &\Norm{\Xi_0 v}{2,\ell,\ttau}
    + \norm{\trace(\Xi_0 v)}{1,\ell+ 1/2,\ttau} 
      \lesssim
      \Norm{Q^{+} \Xi_0 v}{0,\ell,\ttau}
    +\norm{\trace(\Xi_0 v)}{1,\ell+ 1/2,\ttau} 
      + \Norm{v}{2,-M, \ttau},
  \end{align}
for $v \in \S(\Rpb)$, and for $\tau\geq \tauast$ and $\csp\geq 1$ chosen \suff large.

Applying now Proposition~\ref{prop:  estimate order 4 operator},
with~\eqref{eq: F est Q-} and \eqref{eq: F est Q+}, we obtain 
\begin{align*}
    &\Norm{\Xi_0 v}{4,0,\ttau}
    + \norm{\trace(\Xi_0  v)}{3, 1/2,\ttau} 
    \lesssim
      \Norm{Q^- Q^{+} \Xi_0   v}{+}
    +\norm{\trace(\Xi_0 v)}{1,5/2,\ttau} 
     + \Norm{v}{4,-M, \ttau},
      \notag
  \end{align*}
  for $v \in \S(\Rpb)$, and for $\tau\geq \tauast$ and $\csp\geq 1$ chosen \suff large,
  for $\ctp \in [0,1]$. Finally,
  with \eqref{eq: F decomposition Pvarphi}, we conclude the proof of
  Proposition~\ref{prop: microlocal estimate F} by choosing $\tau$ and $\csp$ large. 
\end{proof}

\subsection{Proof of the Carleman estimate of Theorem~\ref{theorem:
    Carleman boundary x}}
\label{sec: Proof of the Carleman estimate boudnary x}
We choose $W$ an open \nhd of $z_0$ in $\R^N$ \st $W \Subset
V_0$ (see Figure~\ref{fig: nhd space}).
Let $u = w_{|Z}$, with
$ w \in\Cinfc((0,S_0) \times \R^d)$ and $\supp(w) \subset W$. We set
$v = e^{\tau \varphi} u$.

We collect the different estimations that we have obtained in
Propositions~\ref{prop: microlocal estimate E-}, \ref{prop: microlocal
  estimate E0}, and \ref{prop: microlocal estimate F}.  
For some $\delta =\delta_0\in (0,1)$ to be kept fixed, for
$\tau_0\geq \tauast$, $\csp_0\geq 1$,
  and $\ctp_0 \in (0,1]$ we have
\begin{align}
    \label{eq: collected estimate E-}
    \csp^\hf
    \Norm{\ttau^{-1/2} \Xi^{(k)}_{\delta,-} v}{4, 0, \ttau} 
    +\norm{\trace(\Xi^{(k)}_{\delta,-} v)}{3,1/2,\ttau} 
    \lesssim \Norm{\Pconj \Xi^{(k)}_{\delta,-} v}{+}
    +\norm{\trace(\Xi^{(k)}_{\delta,-} v)}{0,7/2,\ttau} 
    + \Norm{v}{4,-M, \ttau} , 
  \end{align}
for $k=1,2$, and 
  \begin{align}
    \label{eq: collected estimate E0}
    \csp
    \Norm{\ttau^{-1}\Xi_{\delta,0} v}{4, 0, \ttau} 
    +\norm{\trace(\Xi_{\delta,0} v)}{3,1/2,\ttau} 
    \lesssim \Norm{\Pconj \Xi_{\delta,0}  v}{+}
    +\norm{\trace(\Xi_{\delta,0} v)}{1,5/2,\ttau} 
    + \Norm{v}{4,-M, \ttau} , 
  \end{align}
and
\begin{align}
    \label{eq: collected estimate F}
    \Norm{ \Xi_F v}{4, 0, \ttau} 
    +\norm{\trace(\Xi_F v)}{3,1/2,\ttau} 
    \lesssim\Norm{\Pconj \Xi_F v}{+}
    +\norm{\trace(\Xi_F v)}{1,5/2,\ttau} 
    + \Norm{v}{4,-M, \ttau} , 
  \end{align}
for $\tau \geq \tau_0$, $\csp \geq \csp_0$, $\ctp \in [0,\ctp_0]$. 

We then pick $\alpha>0$ meant to be chosen small in what follows, 
and we shall consider $\alpha \big( \eqref{eq: collected estimate E-} +
\eqref{eq: collected estimate F}\big) + \eqref{eq: collected estimate
  E0}$.
We will choose $\tau$ \suff large so that $\alpha \tau^{1/2} \geq 1$. 

We first note that we have the following lemma whose proof is provided below.
%%%%%%%%%%%%%%%%%%%%%%%%
% lemma                %
%%%%%%%%%%%%%%%%%%%%%%%%
\begin{lemma}
  \label{lemma: gather 1}
  There exists $C>0$ such that
  \begin{align*}
    \alpha \csp^\hf\sum_{k=1,2}
    \Norm{\ttau^{-1/2} \Xi^{(k)}_{\delta,-} v}{4, 0, \ttau} 
    + \csp
    \Norm{\ttau^{-1}\Xi_{\delta,0} v}{4, 0, \ttau} 
    + \alpha \Norm{ \Xi_F v}{4, 0, \ttau} 
    \geq C \csp
    \Norm{\ttau^{-1} v}{4, 0, \ttau},   
  \end{align*}
  for $\tau$ chosen \suff large.
\end{lemma}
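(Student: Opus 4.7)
The plan is to decouple the proof into two independent reductions: first, a pointwise arithmetic comparison of weights that exploits the freedom to take $\tau$ large depending on $\csp$ and $\alpha$; second, a microlocal partition-of-unity argument that assembles the four localized pieces into a bound on $v$ itself, using the covering property \eqref{eq: microlocal covering property}.

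For the first reduction, since $\ttau = \tau \csp \varphi_{\csp,\ctp} \geq \tau \csp$ (because $\varphi_{\csp,\ctp} \geq 1$), one sees that for $\tau \geq \tauast$ chosen sufficiently large one has the pointwise symbol inequalities
\begin{equation*}
  \alpha \csp^{1/2}\, \ttau^{-1/2} \geq \csp\, \ttau^{-1}
  \quad \text{and} \quad
  \alpha \geq \csp\, \ttau^{-1},
\end{equation*}
the first holding as soon as $\ttau \geq \csp/\alpha^2$, i.e.\ $\tau \geq 1/\alpha^2$, and similarly for the second. Since $\ttau$ depends only on $z$ (not on $\zeta'$), and the norms $\Norm{\ttau^r \cdot}{4,0,\ttau}$ can, thanks to Lemma~\ref{lemma: equivalence norms} and Lemma~\ref{lemma: different form norm}, be rewritten up to equivalent norms with $\ttau^r$ commuted through, these pointwise inequalities transfer to the norms. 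This delivers the three estimates
\begin{equation*}
  \alpha \csp^{1/2} \Norm{\ttau^{-1/2} \Xi^{(k)}_{\delta,-} v}{4,0,\ttau}
  \gtrsim \csp \Norm{\ttau^{-1} \Xi^{(k)}_{\delta,-} v}{4,0,\ttau},
  \qquad
  \alpha \Norm{\Xi_F v}{4,0,\ttau} \gtrsim \csp \Norm{\ttau^{-1} \Xi_F v}{4,0,\ttau},
\end{equation*}
together with the trivial identity for the $\E_0$ term. Summing gives that the left-hand side dominates $\csp \sum_a \Norm{\ttau^{-1} \Xi_a v}{4,0,\ttau}$, where the sum runs over the four cutoffs.

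The second (and main) step is the parametrix construction at the symbol level, which will yield $\sum_a \Norm{\ttau^{-1} \Xi_a v}{4,0,\ttau} \gtrsim \Norm{\ttau^{-1} v}{4,0,\ttau}$. Writing $\tilde{\chi} = \chi_F + \chi^{(1)}_{\delta,-} + \chi^{(2)}_{\delta,-} + \chi_{\delta,0}$, the covering property gives $\tilde{\chi} \geq 1$ on a conic neighborhood of $\MtVz$. Pick $\eta \in \Cinfc(V_0)$ with $\eta \equiv 1$ on a neighborhood of $W$, so that $\eta v = v$ and $\eta(z)/\tilde{\chi}(\y')$ is a well-defined symbol in $S(1,\gt)$ (the denominator being bounded below by $1$ on the support of $\eta$). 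Setting $\Theta = \Opt(\eta/\tilde{\chi}) \in \Psi(1,\gt)$, the tangential calculus gives $\Theta \sum_a \Xi_a = \Opt(\eta) + R$ with $R \in (1+\csp\ctp)\Psi(\lambsct^{-1},\gt)$. Applying this to $\ttau^{-1} v$, using $\Opt(\eta)(\ttau^{-1} v) = \ttau^{-1} v$ since $\eta \equiv 1$ on the support of $v$, and commuting $\ttau^{-1}$ through each $\Xi_a$ (which produces only $\csp\ctp\,\ttau^{-1}\lambsct^{-1}$ errors, since $\ttau$ is independent of $\zeta'$), one obtains
\begin{equation*}
  \Norm{\ttau^{-1} v}{4,0,\ttau}
  \leq C \sum_a \Norm{\ttau^{-1} \Xi_a v}{4,0,\ttau} + C \Norm{\ttau^{-1} v}{4,-1,\ttau}.
\end{equation*}

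Finally, combining with the previous step, the remainder $C \csp\Norm{\ttau^{-1} v}{4,-1,\ttau}$ is absorbed into $C' \csp \Norm{\ttau^{-1} v}{4,0,\ttau}$ via the standard semi-classical argument \eqref{eq: usual semi-classical argument}, since $\lambsct \geq \tauast$ can be taken as large as desired. I expect the main obstacle to be a careful bookkeeping of the commutators and of the lower-order remainders at each stage, especially ensuring that the symbolic quotient $\eta/\tilde{\chi}$ genuinely lies in $S(1,\gt)$ with uniform constants in $\csp$ and $\ctp$, and that moving $\ttau^{-1}$ inside the tangential operators only creates errors of strictly lower semi-classical order so that the final absorption goes through for $\tau$ sufficiently large.
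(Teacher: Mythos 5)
Your proof is correct, and its overall skeleton is the same as the paper's: first the pointwise weight comparison $\alpha \geq \alpha\csp^{1/2}\ttau^{-1/2} \geq \csp\ttau^{-1}$ (validated by $\varphi\geq 1$, $\csp\geq 1$, and the a priori constraint $\alpha\tau^{1/2}\geq 1$ imposed earlier in the proof of Theorem~\ref{theorem: Carleman boundary x}), transferred to norms via Lemmata~\ref{lemma: equivalence norms} and~\ref{lemma: different form norm}; then a microlocal reassembly using the covering property~\eqref{eq: microlocal covering property}. Where you genuinely diverge is in the reassembly step. The paper combines the four terms into $\Opt(h)$, $h=\chi^{(1)}_{\delta,-}+\chi^{(2)}_{\delta,-}+\chi_{\delta,0}+\chi_F$, commutes $D_{x_d}^j$ past $\Opt(h)$, and then applies the local G{\aa}rding inequality (Proposition~\ref{prop: local Garding}) using $h\geq 1$ on $\supp(v)$. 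You instead build a left tangential parametrix $\Theta=\Opt(\eta/\tilde\chi)$ for $\Opt(h)$ over $\supp(v)$ and invert directly. Both arguments exploit the identical positivity fact and produce the same kind of $\csp$-weighted remainder that is absorbed for large $\tau$ via~\eqref{eq: usual semi-classical argument}. What the parametrix buys is an explicit algebraic inversion, avoiding the quadratic form; the price is that you must verify $\eta/\tilde\chi\in S(1,\gt)$ with constants uniform in $\csp,\ctp$ (which you correctly flag — it does hold, since the cutoffs depend on $\csp,\ctp$ only through $\mu_k$ and $\ttau/\lambsct$, both of class $S(1,\gt)$, and $\tilde\chi\geq 1$ holds on a genuine conic neighborhood of $\supp\eta$). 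One small thing you gloss over: after applying $\Theta\sum_a\Xi_a=\Opt(\eta)+R$ to $\ttau^{-1}v$, the $\Norm{\cdot}{4,0,\ttau}$ norm still involves $D_{x_d}^j$, which does not commute with the tangential operators; these commutators are what generate the $\csp$ factors in the error term, and you should make sure the Sobolev-boundedness invoked (Proposition~\ref{prop: Sobolev regularity pseudo}) indeed absorbs them — it does, because $\csp\lambsct^{-1}\lesssim\tau^{-1}$. Net: a valid, technically complete alternative modulo the bookkeeping you already flag.
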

With a similar, yet simpler, proof, we have the following lemma.
\begin{lemma}
  \label{lemma: gather 2}
  We have 
  \begin{align*}
    \alpha \sum_{k=1,2}
    \norm{\trace(\Xi^{(k)}_{\delta,-} v)}{3,1/2,\ttau} 
    + \norm{\trace(\Xi_{\delta,0} v)}{3,1/2,\ttau} 
    + \alpha \norm{\trace(\Xi_F v)}{3,1/2,\ttau} 
    \gtrsim \alpha  \norm{\trace(v)}{3,1/2,\ttau}, 
  \end{align*}
  for $\tau$ chosen \suff large.
\end{lemma}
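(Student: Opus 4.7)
The plan is to reduce the bound to the microlocal covering property \eqref{eq: microlocal covering property}, which guarantees that $\tilde\chi := \chi_F + \chi^{(1)}_{\delta,-} + \chi^{(2)}_{\delta,-} + \chi_{\delta,0} \geq 1$ on a conic \nhd of $\MtVz$. Since $v = e^{\tau\varphi} u$ with $u$ supported in $W \Subset V_0$, I will fix $\chi_W \in \Cinfc(V_0)$ with $\chi_W \equiv 1$ on $W$, so that $\chi_W\, v = v$, and I shall treat $\eta(\y') := \chi_W(z) \in S(1,\gt)$ as a tangential symbol independent of $\zeta'$. Every one of the four cutoffs defining $\tilde\chi$ carries the factor $\chi_{V_0}$, and since $\chi_{V_0} \equiv 1$ on a \nhd of $V_0 \supset \supp \eta$, an elementary verification using the defining shapes of $\chi_-$, $\chi_0$, $\chi_{1,F}$, $\chi_{1/4,F}$ in Section~\ref{sec: def microlocal regions} yields $\tilde\chi(\y') \geq 1$ on $\supp\eta \times \R^{N-1} \times [\tauast,+\infty) \times [1,+\infty) \times [0,1]$.

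Next, I would build a symbolic inverse $\tilde\eta \in S(1,\gt)$ of $\tilde\chi$ on the support of $\eta$ by the usual trick: pick $\rho \in \Cinf(\R)$ with $\rho(x) = 1/x$ for $x \geq 1/2$ and $\rho$ smooth and bounded, and set $\tilde\eta := \eta \cdot (\rho \circ \tilde\chi)$. Then $\tilde\eta \in S(1,\gt)$ (derivatives of $\rho\circ\tilde\chi$ inherit the $S(1,\gt)$ behaviour of $\tilde\chi$) and $\tilde\eta\, \tilde\chi = \eta$ identically, since on $\supp\eta$ one has $\tilde\chi \geq 1$ so that $\rho(\tilde\chi) = 1/\tilde\chi$. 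The Weyl--H\"ormander calculus for $\gt$ (Proposition~\ref{prop: admissible metric and order function tangential}) then delivers
\begin{equation*}
\Opt(\tilde\eta)\, \Opt(\tilde\chi) = \Opt(\eta) + R_{-1}, \qquad R_{-1} \in (1+\csp\ctp)\, \Psi(\lambsct^{-1},\gt).
\end{equation*}
Applied to $v$ and combined with $\Opt(\eta) v = \chi_W v = v$, this yields $v = \Opt(\tilde\eta)\, \Opt(\tilde\chi)\, v - R_{-1}\, v$.

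Both $\Opt(\tilde\eta)$ and $R_{-1}$ are tangential and so commute with restriction to $\{z_N = 0^+\}$ and with $D_{z_N}^j$. Taking the $\norm{\trace(\cdot)}{3,1/2,\ttau}$-norm of the identity above, I would use the boundedness of $\Opt(\tilde\eta) \in \Psi(1,\gt)$ on the trace norm scale (Proposition~\ref{prop: Sobolev regularity pseudo tangential}) to control the first piece by $\norm{\trace(\Opt(\tilde\chi) v)}{3,1/2,\ttau}$; expanding $\Opt(\tilde\chi) = \Xi_F + \Xi^{(1)}_{\delta,-} + \Xi^{(2)}_{\delta,-} + \Xi_{\delta,0}$ and using the triangle inequality produces $\sum_i \norm{\trace(\Xi_i v)}{3,1/2,\ttau}$. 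For the error term, the derivative gain from $R_{-1}$ gives
\begin{equation*}
\norm{R_{-1}\, \trace(v)}{3,1/2,\ttau}
\lesssim (1+\csp\ctp)\, \norm{\trace(v)}{3,-1/2,\ttau}
\lesssim (1+\csp\ctp)\, \tau^{-1}\, \norm{\trace(v)}{3,1/2,\ttau},
\end{equation*}
using $\lambsct \geq \ttau \gtrsim \tau$. For $\tau$ \suff large (with $\csp$ fixed and $\ctp \leq 1$) this remainder is absorbed on the left.

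The resulting inequality is $\norm{\trace(v)}{3,1/2,\ttau} \lesssim \sum_i \norm{\trace(\Xi_i v)}{3,1/2,\ttau}$. Multiplying by $\alpha$ and using $\alpha \leq 1$ to dominate $\alpha\, \norm{\trace(\Xi_{\delta,0} v)}{3,1/2,\ttau}$ by the unweighted term $\norm{\trace(\Xi_{\delta,0} v)}{3,1/2,\ttau}$ already present on the left-hand side of the statement then yields the claim. The only delicate point is the uniformity of every step in the three parameters $(\tau,\csp,\ctp)$ of Section~\ref{sec: pseudo -3p}, in particular that $\tilde\eta \in S(1,\gt)$ and that the composition remainder truly lies in $(1+\csp\ctp)\Psi(\lambsct^{-1},\gt)$ with constants independent of the parameters; given the framework developed there, both are routine.
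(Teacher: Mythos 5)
Your parametrix construction is a genuinely different route from the one the paper intends: the comment after the statement ("similar, yet simpler" than Lemma~\ref{lemma: gather 1}) points to the same mechanism used there, namely the symbol $h = \chi^{(1)}_{\delta,-} + \chi^{(2)}_{\delta,-} + \chi_{\delta,0} + \chi_F$, the commutator identity $[D_{x_d}^j,\Opt(h)]\in\csp\,\PsiOpsc^{j-1,0}$, and the local G\aa rding inequality of Proposition~\ref{prop: local Garding} applied at the boundary (since $h\ge 1$ on a \nhd of $V_0$ containing $\supp v$). You instead build an explicit left inverse $\Opt(\tilde\eta)$ of $\Opt(\tilde\chi)$ modulo $\Psi(\lambsct^{-1},\gt)$ and write $v = \Opt(\tilde\eta)\Opt(\tilde\chi)v - R_{-1}v$. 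Both are standard microlocal ellipticity arguments and both ultimately rest on the covering property~\eqref{eq: microlocal covering property}; the G\aa rding route avoids constructing an inverse symbol, your route avoids a quadratic form, and they give equivalent information here.

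There is, however, a genuine gap in the step where you assert that ``$\Opt(\tilde\eta)$ and $R_{-1}$ are tangential and so commute with restriction to $\{z_N=0^+\}$ and with $D_{z_N}^j$.'' Commutation with the restriction is correct, but commutation with $D_{z_N}^j$ is false: the symbols $\tilde\eta$, $\tilde\chi$, and hence that of $R_{-1}$ all depend on $z_N=x_d$ (through $\chi_{V_0}(z)$, $\chi_W(z)$, $\mu_k(\y')$, $\ttau(\y')$, etc.), so $[D_{z_N},\Opt(\tilde\eta)] = \Opt(D_{z_N}\tilde\eta)\ne 0$. In the three-parameter calculus each $z_N$-derivative of a symbol in $S(1,\gt)$ costs a factor $\csp$, so $[D_{z_N}^j,\Opt(\tilde\eta)]\in\sum_{i\ge 1}\csp^i\,\Psi(1,\gt)\,D_{z_N}^{j-i}$. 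You need these commutator terms explicitly, both when passing $D_{z_N}^j$ through $\Opt(\tilde\eta)$ in the bound $\norm{\trace(\Opt(\tilde\eta)\Opt(\tilde\chi)v)}{3,1/2,\ttau}\lesssim\norm{\trace(\Opt(\tilde\chi)v)}{3,1/2,\ttau}$, and similarly for $R_{-1}$. The resulting error terms are of strictly lower $\lambsct$-order with $\csp$-powers, so they are absorbed by the usual semi-classical argument~\eqref{eq: usual semi-classical argument} for $\tau$ large — exactly as in the gather~1 proof, where the authors write $[D_{x_d}^j,\Opt(h)]\in\csp\,\PsiOpsc^{j-1,0}$ and absorb $\csp^2\Norm{\ttau^{-1}v}{3,0,\ttau}$. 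As written, though, your proof skips this, and the literal commutation claim must be corrected.
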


With these two lemmata we obtain 
\begin{multline}
  \label{eq: gather 1}
  \csp \Norm{\ttau^{-1} v}{4, 0, \ttau} + \alpha \Big(\Norm{ \Xi_F v}{4, 0,
    \ttau} + \csp^\hf
    \sum_{k=1,2}\Norm{\ttau^{-1/2} \Xi^{(k)}_{\delta,-} v}{4, 0,
      \ttau} \Big)\\
    + \alpha  \norm{\trace(v)}{3,1/2,\ttau}
  \lesssim \alpha \Big( \text{\rhs}\eqref{eq: collected estimate E-} +
\text{\rhs}\eqref{eq: collected estimate F}\Big) + \text{\rhs}\eqref{eq: collected estimate
  E0}.
\end{multline}

The next lemma is crucial in the computation of the commutator
$[\Pconj, \Xi_{\delta, 0}]$. A proof is given below.
%%%%%%%%%%%%%%%%%%%%%%%%
% lemma                %
%%%%%%%%%%%%%%%%%%%%%%%%
\begin{lemma}
  \label{lemma: gather 3}
  We have $[\Pconj, \Xi_{\delta, 0}] = \Op(g) +  \Op(h) + \csp^2 R_{3,-1}$, where
  $g,h \in \csp \PsiOpsc^{3,0}$ and $R_{3,-1} \in \PsiOpsc^{3,-1}$, with
  \begin{itemize}
    \item $g(\y)= 0$ for $z$ in a \nhd of  $V_0$
    \item $h(\y) = \sum_{j=0}^3 h_j(\y') \xi_d^j$,
      $h_j \in \csp \Psi(\lambsct^{3-j},\gt)$, homogeneous of degree
      $3-j$, and
      $\chi_{\delta,-}^{(1)} +\chi_{\delta,-}^{(2)} + \chi_F \geq 1$
      in a \cnhd of $\supp(h_j)$ in the variables
      $(\zeta, \tau, \csp, \ctp)$, for $z \in V_0$, $j=0, \dots, 3$.
  \end{itemize}
 \end{lemma}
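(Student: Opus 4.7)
The plan is to expand $\Pconj$ as a polynomial of order four in $D_{x_d}$ with tangential pseudo-differential coefficients and to compute $[\Pconj, \Xi_{\delta,0}]$ term by term in the three-parameter calculus of Section~\ref{sec: pseudo -3p}. Writing $\Pconj = (D_s + i\htau_\sigma)^4 + A_\varphi^2$ with $A_\varphi = (D_{x_d} + i\htau_{\xi_d})^2 + R(x, D_{x'} + i\htau_{\xi'})$, we obtain $\Pconj = \sum_{j=0}^4 \Opt(a_j)\, D_{x_d}^j$ with $a_4 \equiv 1$ and tangential symbols $a_j$ of appropriate order. Then
\begin{equation*}
[\Pconj, \Xi_{\delta,0}] = \sum_{j=0}^3 [\Opt(a_j), \Xi_{\delta,0}]\, D_{x_d}^j + \sum_{j=1}^4 \Opt(a_j)\, [D_{x_d}^j, \Xi_{\delta,0}].
\end{equation*}
The tangential commutator $[\Opt(a_j), \Xi_{\delta,0}]$ has principal symbol $\tfrac{1}{i}\{a_j, \chi_{\delta,0}\}$ in the $(z', \zeta')$ variables, while $[D_{x_d}^j, \Xi_{\delta,0}]$ is a polynomial of degree $j-1$ in $D_{x_d}$ whose coefficients are tangential operators built from $\d_{x_d}^k \chi_{\delta,0}$, $1 \le k \le j$. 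Organizing the whole expansion by powers of $\xi_d$ yields a representative in $\csp\,\PsiOpsc^{3,0}$ plus a remainder in $\csp^2\,\PsiOpsc^{3,-1}$, the latter gathering the subprincipal terms of the composition asymptotic expansion in the $\gt$-calculus together with the non-homogeneous corrections needed to extract homogeneous representatives.

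We next apply the Leibniz rule using the factorization $\chi_{\delta,0}(\y') = \chi_{V_0}(z)\,\tilde\chi(\y')$, where $\tilde\chi = (1-\chi_{1/4,F})\,\chi_0(\mu_1/\delta)\,\chi_0(\mu_2/\delta)$. Contributions in which at least one derivative falls on $\chi_{V_0}$ have symbols supported in $\supp(d\chi_{V_0})$, which by construction lies outside an open neighborhood of $V_0$; we collect these into $g$, whose symbol then vanishes for $z$ in that neighborhood. The remaining contributions, in which all derivatives fall on $\tilde\chi$, are collected into $h$. Organizing by powers of $\xi_d$ yields $h(\y) = \sum_{j=0}^3 h_j(\y')\xi_d^j$; since $\tilde\chi$ is homogeneous of degree zero in $(\zeta', \htau)$, and since each differentiation of one of its symbol factors depending on $\ttau$ or $\htau$ produces a factor of $\csp$, each $h_j$ comes out in $\csp\,\Psi(\lambsct^{3-j}, \gt)$ and is homogeneous of degree $3-j$.

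The main technical step is to verify the support inclusion for $h_j$. We restrict to $z \in V_0$, so that $\chi_{V_0}(z) = 1$, and observe that any point in $\supp(d\tilde\chi)$ lies in at least one of the two transition regions of the factors of $\tilde\chi$. If $d\chi_{1/4,F}$ is non-zero there, then $4\ttau/\lambsct$ lies in the transition of $\eta$, so $\ttau/\lambsct \le \theta_1/4 < \theta_1/2$, whence $\chi_{1,F}(\y') = \eta(\ttau/\lambsct) = 1$ and therefore $\chi_F(\y') = 1$. Otherwise, $d\chi_0(\mu_k/\delta)$ is non-zero for some $k$, so $\mu_k/\delta \in [-3, -2]$ and $\chi_-(\mu_k/\delta) = 1$: either $\chi_{1/4,F}(\y') = 0$, in which case $\chi_{\delta,-}^{(k)}(\y') = 1$, or $\chi_{1/4,F} > 0$, forcing $\ttau/\lambsct \le \theta_1/4$ and hence $\chi_F(\y') = 1$ as above. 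In every case $\chi_F + \chi_{\delta,-}^{(1)} + \chi_{\delta,-}^{(2)} \ge 1$, and since each equality is achieved strictly inside an open conic region on which the relevant cutoff equals one, the inequality extends to an open conic neighborhood of $\supp(h_j)$, completing the proof.
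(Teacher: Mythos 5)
Your proof is correct and follows essentially the same route as the paper: reduce (up to the $\csp^2 R_{3,-1}$ remainder) to the principal symbol $-i\{\pconj,\chi_{\delta,0}\}$, apply the Leibniz rule to the product defining $\chi_{\delta,0}$, and sort terms according to which factor gets differentiated, using the transition-region supports of $\eta$ and $\chi_0$ to show that $\chi_F$, or $\chi_F + \chi_{\delta,-}^{(k)}$, equals or exceeds one on a conic neighborhood. Your explicit expansion of $\Pconj$ as a polynomial in $D_{x_d}$, and the case split on whether $\chi_{1/4,F}$ vanishes or not, are only cosmetic variants of the paper's direct use of the inequality $\chi_{1,F}+(1-\chi_{1/4,F})\geq 1$.
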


We have  $[\Pconj, \Xi] \in  \csp R_{3,0}$, for $\Xi = \Xi_{\delta,-}^{(1)}$,
$\Xi_{\delta,-}^{(2)}$ or $\Xi_F$. Lemma~\ref{lemma:
  gather 3} gives, for any $M\in \N$,   $\Norm{\Op(g) v}{+} \lesssim
\Norm{v}{3,-M,\ttau}$, and we obtain 
\begin{align*}
  \alpha \sum_{k=1,2}\Norm{\Pconj \Xi^{(k)}_{\delta,-} v}{+}
  + \Norm{\Pconj \Xi_{\delta,0}  v}{+}
  + \alpha\Norm{\Pconj \Xi_F v}{+}
  \lesssim \Norm{\Pconj v}{+}
  + \Norm{\Op(h) v}{+}
  + \alpha \csp \Norm{v}{3,0,\ttau}
  + \csp^2 \Norm{v}{3,-1,\ttau}.
\end{align*}
From \eqref{eq: gather 1} and \eqref{eq: collected estimate
  E-}--\eqref{eq: collected estimate F} we thus obtain, for $\alpha$
chosen \suff small (and kept fixed for the remainder of the proof) 
and $\tau$ chosen \suff large
\begin{multline}
\label{eq: gather 2}
  \csp \Norm{\ttau^{-1} v}{4, 0, \ttau} + \Norm{ \Xi_F v}{4, 0,
    \ttau} + \csp^\hf
    \sum_{k=1,2}\Norm{\ttau^{-1/2} \Xi^{(k)}_{\delta,-} v}{4, 0,
      \ttau} \\
    +  \norm{\trace(v)}{3,1/2,\ttau}
  \lesssim 
  \Norm{\Pconj v}{+}
  + \norm{\trace(v)}{1,5/2,\ttau} 
  + \Norm{\Op(h) v}{+}.
\end{multline}

We set $\chi = \chi_{\delta,-}^{(1)} +\chi_{\delta,-}^{(2)} + \chi_F$.
We have the following lemma whose proof is given below.
%%%%%%%%%%%%%%%%%%%%%%%%
% lemma                %
%%%%%%%%%%%%%%%%%%%%%%%%
\begin{lemma}
  \label{lemma: gather 4}
  Let $W$ be an open set of $\R^N$ with $W \Subset V_0$.
  There exist $C>0$ and $\tau_1\geq \tauast$ \st 
  $\Norm{\Opt(h_j) w}{+} \leq C \csp \big( \Norm{\Opt(\chi)w}{0,3-j, \ttau}
  + \csp (1 + \ctp\csp)\Norm{w}{0, 2-j,\ttau}\big)$, for $w \in \S(\Rpb)$,
  $\supp(w) \subset W$ and $\tau
\geq \tau_1$. 
\end{lemma}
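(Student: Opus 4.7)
The strategy is to exploit the microlocal support property supplied by Lemma~\ref{lemma: gather 3}, namely $\chi := \chi_{\delta,-}^{(1)} + \chi_{\delta,-}^{(2)} + \chi_F \ge 1$ on $\supp(h_j)$ for $z \in V_0$, through an elliptic division of $h_j$ by $\chi$ in the tangential three-parameter calculus. First, I would pick $\chi_0 \in \Cinfc(V_0)$ with $\chi_0 \equiv 1$ on $\ovl{W}$, so that $w = \chi_0 w$ whenever $\supp(w) \subset W$, and construct a zero-order tangential cutoff $\tilde\chi(\y') \in S(1,\gt)$, homogeneous in the sense of Definition~\ref{def: conic set.}, equal to $1$ on a conic neighborhood of $\supp(h_j) \cap \{z \in \supp(\chi_0)\}$ and supported in the set $\{\chi \ge 1/2\}$; the latter is possible because Lemma~\ref{lemma: gather 3} guarantees $\chi \ge 1$ on a conic neighborhood of $\supp(h_j)$ for $z \in V_0$. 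Setting $e_j := h_j \tilde\chi / \chi$, extended by $0$ off $\supp(\tilde\chi)$, gives $e_j \in \csp S(\lambsct^{3-j},\gt)$ (since $\chi \ge 1/2$ on $\supp(\tilde\chi)$) and the pointwise symbol identity $e_j \chi \chi_0 = h_j \tilde\chi \chi_0 = h_j \chi_0$.

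Next, using $\chi_0 w = w$ and the composition formula in the tangential calculus --- which, crucially, only produces $z'$-derivatives, not $z_N$-derivatives --- I would write
\begin{align*}
\Opt(h_j) w
&= \Opt(h_j)\,\chi_0 w = \Opt(h_j\chi_0) w + R_0 w = \Opt(e_j\chi\chi_0)w + R_0 w\\
&= \Opt(e_j)\Opt(\chi\chi_0) w + R_1 w + R_0 w = \Opt(e_j)\Opt(\chi) w + (R_0 + R_1 + R_2) w,
\end{align*}
where $R_0$ gathers the composition correction between $\Opt(h_j)$ and $\chi_0$, $R_1$ that between $\Opt(e_j)$ and $\chi\chi_0$, and $R_2$ arises from $\Opt(\chi\chi_0) w - \Opt(\chi)\chi_0 w$ (followed by $\chi_0 w = w$). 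Each such correction is generated by brackets $\d_{\zeta'}^\alpha \cdot \d_{z'}^\alpha$ with $|\alpha|\ge 1$, and using the admissible-metric estimates $\d_{\zeta'} e_j,\, \d_{\zeta'} h_j \in \csp S(\lambsct^{2-j},\gt)$ together with $\d_{z'} \chi,\, \d_{z'} \chi_0 \in (1+\ctp\csp)S(1,\gt)$, one obtains $R_0, R_1 \in \csp(1+\ctp\csp)\Psi(\lambsct^{2-j},\gt)$, with $R_2$ contributing at the same order after being composed with $\Opt(e_j) \in \csp \Psi(\lambsct^{3-j},\gt)$.

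Finally, I invoke Proposition~\ref{prop: Sobolev regularity pseudo tangential} on Sobolev mapping properties: from $e_j \in \csp S(\lambsct^{3-j},\gt)$ we get $\Norm{\Opt(e_j)\Opt(\chi) w}{+} \le C \csp \Norm{\Opt(\chi) w}{0,3-j,\ttau}$, while the remainder satisfies $\Norm{(R_0 + R_1 + R_2) w}{+} \le C \csp(1+\ctp\csp) \Norm{w}{0,2-j,\ttau}$, which is absorbed by the coarser bound $C \csp^2(1+\ctp\csp) \Norm{w}{0,2-j,\ttau}$ in the statement. The main delicate point is the bookkeeping of the three parameters $(\tau, \csp, \ctp)$ through the composition formula: the essential feature making the target bound work is precisely that the tangential calculus differentiates only in $z'$, since a $z_N$-derivative would introduce an additional factor of $\csp$ and the resulting $\csp^3(1+\ctp\csp)$ loss would spoil the balance of the Carleman estimate.
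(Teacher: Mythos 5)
Your argument is correct, but it takes a genuinely different route from the paper. The paper fixes a spatial cutoff $\chi_W \in \Cinfc(V_0)$ with $\chi_W \equiv 1$ near $W$, applies the microlocal G{\aa}rding inequality of Proposition~\ref{prop: microlocal tangential Garding} to the operator $\Opt(\chi)$ (using Lemma~\ref{lemma: gather 3} to guarantee $\chi \geq 1$ on a conic neighborhood of $\supp(\chi_W h_j)$), which yields
\begin{align*}
\Re \scp{\Opt(\chi)\,\Opt(\chi_W h_j) w}{\Opt(\chi_W h_j) w}_{+} + \Norm{w}{0,-M,\ttau}^2 \gtrsim \Norm{\Opt(\chi_W h_j) w}{+}^2,
\end{align*}
then invokes Young's inequality, replaces $\Opt(\chi_W h_j)w$ by $\Opt(h_j)w$ up to an $\O(\lambsct^{-M})$ error using $\supp(w) \subset W$, and finally commutes $\Opt(\chi)$ past $\Opt(h_j)$, noting $[\Opt(\chi),\Opt(h_j)] \in \csp(1+\ctp\csp)\Psi(\lambsct^{2-j},\gt)$. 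You instead perform an explicit elliptic division: construct a zero-order conic cutoff $\tilde\chi$ supported where $\chi \geq 1/2$ and equal to $1$ on the relevant conic neighborhood, form $e_j = h_j\tilde\chi/\chi \in \csp S(\lambsct^{3-j},\gt)$, and rewrite $\Opt(h_j)w = \Opt(e_j)\Opt(\chi)w + Rw$ with $R \in \csp(1+\ctp\csp)\Psi(\lambsct^{2-j},\gt)$, then apply the Sobolev mapping bounds. Both proofs exploit precisely the same ellipticity coming from Lemma~\ref{lemma: gather 3} and track the three parameters identically; the composition remainders in your version land in the same class as the paper's commutator, and your final bound $\csp\Norm{\Opt(\chi)w}{0,3-j,\ttau} + \csp(1+\ctp\csp)\Norm{w}{0,2-j,\ttau}$ is absorbed by (indeed slightly sharper than) the stated one. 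The G{\aa}rding route is shorter to write given that proposition as a black box; your parametrix route is more constructive and makes the role of the ellipticity of $\chi$ on $\supp(h_j)$ entirely explicit at the symbol level. Your closing remark about the tangential calculus producing only $z'$-derivatives in the composition formula, so that no extra $\csp$ is picked up, is correct and worth retaining, and is indeed the reason the $R_0, R_1, R_2$ corrections land at order $\csp(1+\ctp\csp)\lambsct^{2-j}$ rather than worse.
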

Thus, we obtain 
\begin{align*}
  \Norm{\Op(h) v}{+} &\leq \sum_{j=0}^3 \Norm{\Op(h_j) D_{x_d}^j v}{+} 
  \lesssim \sum_{j=0}^3 \csp \Norm{\Opt(\chi)  D_{x_d}^j v}{0,3-j,\ttau} 
  +  \csp  (1 + \ctp\csp) \Norm{v}{3,-1,\ttau}.
\end{align*}
As $[\Opt(\chi),  D_{x_d}^j] \in \csp \PsiOpsc^{j-1,0}$ we obtain
\begin{align*}
  \Norm{\Op(h) v}{+}
  &\lesssim \csp \Norm{\Opt(\chi) v}{3,0,\ttau} 
    + \csp^2 \Norm{ v}{3,-1,\ttau} \\
  &\lesssim
    \csp \Big( 
    \sum_{k=1,2}\Norm{\Xi^{(k)}_{\delta,-} v}{3, 0,
    \ttau}
    + \Norm{ \Xi_F v}{3, 0,
    \ttau} 
    \Big) 
    + \csp^2 \Norm{ v}{3,-1,\ttau} .
  \end{align*}
  Using this estimate in \eqref{eq: gather 2},  for $\tau$ chosen
  \suff large,  we thus obtain 
  \begin{align*}
  \csp \Norm{\ttau^{-1} v}{4, 0, \ttau} + \Norm{ \Xi_F v}{4, 0,
    \ttau} + \csp^\hf
    \sum_{k=1,2}\Norm{\ttau^{-1/2} \Xi^{(k)}_{\delta,-} v}{4, 0,
      \ttau} 
    +  \norm{\trace(v)}{3,1/2,\ttau}
  \lesssim 
  \Norm{\Pconj v}{+}
  + \norm{\trace(v)}{1,5/2,\ttau}.
\end{align*}
The end of the proof of Theorem~\ref{theorem:
    Carleman boundary x} is then classical.
\hfill \qedsymbol \endproof

\bigskip
%%%% proof of lemma
\begin{proof}[\bfseries Proof of Lemma~\ref{lemma: gather 1}]
  With Lemma~\ref{lemma: different form norm} we may write 
  \begin{align*}
    X&= \alpha \csp^\hf
    \sum_{k=1,2}\Norm{\ttau^{-1/2} \Xi^{(k)}_{\delta,-} v}{4, 0, \ttau} 
    + \csp
    \Norm{\ttau^{-1}\Xi_{\delta,0} v}{4, 0, \ttau} 
    + \alpha \Norm{ \Xi_F v}{4, 0, \ttau} \\
    &\gtrsim
      \sum_{j=0}^4 \Big( 
      \alpha \csp^\hf
    \sum_{k=1,2}\Norm{\ttau^{-1/2}\Lambsct^{4-j} D_{x_d}^j \Xi^{(k)}_{\delta,-} v}{+} 
    + \csp
    \Norm{\ttau^{-1}\Lambsct^{4-j}  D_{x_d}^j \Xi_{\delta,0} v}{+} 
    + \alpha \Norm{\Lambsct^{4-j}   D_{x_d}^j  \Xi_F v}{+}
      \Big)
  \end{align*}
  yielding 
  \begin{align*}
    X \gtrsim
      \csp \sum_{j=0}^4 \Big( 
    \sum_{k=1,2}\Norm{\ttau^{-1}\Lambsct^{4-j} D_{x_d}^j \Xi^{(k)}_{\delta,-} v}{+} 
    + 
    \Norm{\ttau^{-1}\Lambsct^{4-j}  D_{x_d}^j \Xi_{\delta,0} v}{+} 
    + \Norm{\ttau^{-1} \Lambsct^{4-j}   D_{x_d}^j  \Xi_F v}{+}
      \Big),
  \end{align*}
  as $\alpha \geq \alpha  \csp^\hf \ttau^{-1/2} \geq \csp \ttau^{-1}$
  using, on the one hand, that $(\tau \varphi)^{-1/2} = \csp^\hf
  \ttau^{-1/2} \leq 1$ since $\tau\geq \tauast\geq 1$ and $\varphi\geq 1$, and, on
  the other hand, that $\alpha \tau^{1/2}\geq 1$ implies $\alpha
  \ttau^{1/2} = \alpha
  (\tau\csp \varphi)^{1/2}\geq \csp^{1/2}$ since $\varphi\geq 1$. 
  We then find, with $h
  = \chi^{(1)}_{\delta,-} + \chi^{(2)}_{\delta,-} +\chi_{\delta,0} +
  \chi_F \in S(1, \gt)$, 
    $X \gtrsim
      \csp \sum_{j=0}^4 \Norm{\ttau^{-1}\Lambsct^{4-j} D_{x_d}^j
    \Opt(h) v}{+}$. 
  As $[D_{x_d}^j, \Opt(h) ] \in \csp \PsiOpsc^{j-1,0}$, we obtain 
  \begin{align*}
    X  + \csp^2 \Norm{\ttau^{-1} v}{3,0,\ttau}
    \gtrsim
      \csp \sum_{j=0}^4 \Norm{\ttau^{-1}\Lambsct^{4-j} \Opt(h)
      D_{x_d}^j  v}{+} .
  \end{align*}
  By the (local) G{\aa}rding inequality of Proposition~\ref{prop: local Garding}, as $h
  (\y') \geq 1$ in a \nhd of $V_0 \cap \Rpb$ that contains $\supp(v)$, 
  we obtain 
  \begin{align*}
    &X  + \csp^2 \Norm{\ttau^{-1} v}{3,0,\ttau}
      \gtrsim \csp  \sum_{j=0}^4 \Norm{\ttau^{-1}
      D_{x_d}^j  v}{0,4-j,\ttau}
      \asymp \csp \Norm{\ttau^{-1} v}{4, 0, \ttau},   
  \end{align*}
  We conclude by taking $\tau$ \suff large with  the usual
  semi-classical inequality \eqref{eq: usual semi-classical argument}.
\end{proof}

\bigskip
{\em \bfseries Proof of Lemma~\ref{lemma: gather 3}.}
  Up to $\csp^2 \Symbsc^{3,-1}$, the principal symbol of $[\Pconj,
  \Xi_{\delta, 0}]$ is given by $-i \{\pconj, \chi_{\delta,0} \}$,
  and thus involves derivatives of $\chi_{\delta,0}$. We recall the
  form of $\chi_{\delta,0}$, as introduced in Section~\ref{sec: def microlocal regions}, 
  \begin{align*}
    \chi_{\delta,0} (\y') = \chi_{V_0}(z)\ (1-\chi_{1/4,F}(\y'))\ 
    \chi_0(\mu_1(\y') /\delta) \ \chi_0(\mu_2(\y') /\delta).
  \end{align*}
  Computing $-i \{\pconj, \chi_{\delta,0} \}$, we obtain the following
  list of terms.
  \begin{description}
    \item[Terms involving derivatives of $\bld{\chi_{V_0}(z)}$]
      Those terms contribute to the symbol $g$ that vanishes in a
      \nhd of $V_0$.
    \item[Terms involving derivatives of $\bld{\chi_{1/4,F}(\y')}$]
      Those terms are supported in 
      $\{\theta_1 \lambsct /8 \leq \ttau \leq \theta_1 \lambsct /4\}$,
      using the notation of Section~\ref{sec: def microlocal
        regions}. As $\chi_{1, F}= 1$ for $\ttau\leq \theta_1 \lambsct
      /2$, we see that $\chi_F(\y') = \chi_{V_0}(z) \chi_{1, F}
      (\y')=1$ in a \nhd of the support of those terms for $z \in
      V_0$. Those terms contribute to the symbol $h$.
      \item[Terms involving derivatives of $\bld{\chi_0(\mu_k(\y')
          /\delta)}$, $\bld{k=1,2}$] 
        From the definition of $\chi_0$ we see that those terms are
        supported in $\{ -3\leq \mu_k(\y')/\delta \leq -2\}$.
        We have $\chi_-(\mu_k(\y')/\delta)=1$ in a \cnhd of this set. As 
        $\chi_{1,F}(\y') +  (1-\chi_{1/4,F}(\y')) \geq 1$ we find that 
        $\chi_F (\y') + \chi^{(k)}_{\delta,-}(\y') \geq 1$ in the
        support of those terms if $z \in
        V_0$. Those terms contribute to the symbol $h$.
        \hfill \qedsymbol \endproof
  \end{description}

\begin{proof}[\bfseries Proof of Lemma~\ref{lemma: gather 4}]
  Let $\chi_W(z) \in \Cinfc(V_0)$ be \st $\chi_W \equiv 1$ in a
  \nhd of $W$. 
  The microlocal version of the G{\aa}rding inequality of
  Proposition~\ref{prop: microlocal tangential Garding} gives, 
  by Lemma~\ref{lemma: gather 3}, 
  \begin{align*}
    \Re \scp{\Opt(\chi) \Opt(\chi_Wh_j) w}{ \Opt(\chi_W h_j) w}_{+} 
    + \Norm{w}{0,-M,\ttau}^2 
    \gtrsim\Norm{\Opt(\chi_W h_j) w}{+}^2.
  \end{align*}
  Then, with the Young inequality, we obtain 
  \begin{align*}
    \Norm{\Opt(\chi) \Opt(\chi_W h_j) w}{+} + \Norm{w}{0,-M,\ttau} , 
    \gtrsim \Norm{\Opt(\chi_W h_j) w}{+}.
  \end{align*}
  Since $\Opt(\chi_W h_j) w  = \Opt(h_j) w +
  R_{0,-M} w$, with $R_{0,-M}  \in \PsiOpsc^{0,-M}$, for any $M \in \N$, we
  obtain 
  \begin{align*}
    \Norm{\Opt(\chi) \Opt(h_j) w}{+} + \Norm{w}{0,-M,\ttau} , 
    \gtrsim \Norm{\Opt(h_j) w}{+}.
  \end{align*}
  As $[\Opt(\chi),  \Opt(h_j) ] \in \csp (1 + \ctp \csp)
  \Psi(\lambsct^{2-j}, \gt)$, we obtain the sought estimate.
\end{proof}

%%%%%%%%%%%%%%%
%% Section             %
%%%%%%%%%%%%%%%
\section{Spectral inequality and application}
\label{sec: spectral inequality}

We start this section by stating and proving an interpolation type
inequality.  Next, we prove the spectral inequality of
Theorem~\ref{theorem: spectral inequality}. Finally, as an
application, we state a null-controllability result that follows from
it.

\subsection{An interpolation inequality}
Let $S_0>0$ and $\alpha \in (0,S_0/2)$. We recall the notation
$Z=(0,S_0)\times \Omega$ and we introduce
$Y=(\alpha,S_0-\alpha)\times \Omega$ for some $\alpha>0$.  As is done
in other sections, we denote by $z=(s,x) \in Z$, with $s \in (0,S_0)$ and
$x \in \Omega$.  We recall that $P$ denotes the augmented elliptic operator
$P:=D_s^4 + B$, where $B = \Delta_x^2$. 
%%%%%%%%%%%%%%%%%%%%%%%%
% theorem              %
%%%%%%%%%%%%%%%%%%%%%%%%
\begin{theorem}[Interpolation inequality]
  \label{theorem: interpolation inequality}
  Let $\O$ be a nonempty open subset of $\Omega$. There exist $C>0$ and
  $\delta \in (0,1)$ \st for  $u \in H^4(Z)$ that satisfies
  \begin{align*}
    u(s,x)|_{x\in \d \Omega} =0, \ \  \d_\nu u(s,x)|_{x\in \d \Omega}
    =0, \qquad s \in (0,S_0),
  \end{align*}
    we have
  \begin{align}
    %\label{eq: interpolation inequality}
     \label{eq: interpolation boundary inequality}
    \Norm{u}{H^3(Y)} \leq C \Norm{u}{H^3(Z)}^{1-\delta}
    \Big(\Norm{P u}{L^2(Z)} 
      + \sum_{0\leq j \leq 3}\Norm{\d_s^j u_{|s=0}}{H^{3-j}(\O)}
      \Big)^\delta.
  \end{align}  
\end{theorem}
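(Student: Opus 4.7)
The plan is to follow the three-step Carleman-to-interpolation scheme described for the Laplace operator in the method of the introduction, adapted to the fourth-order operator $P=D_s^4+B$. I would first produce three local interpolation inequalities: an interior one around an arbitrary point $z^{(1)}\in Z$ from Proposition~\ref{prop: Carleman P interior}; one near the boundary $\{s=0\}$ around an arbitrary point $z^{(2)}\in\{0\}\times\O$ from Corollary~\ref{cor: Carleman boundary s=0}; and one near the lateral boundary around an arbitrary point $z^{(3)}\in(\alpha,S_0-\alpha)\times\d\Omega$ from Theorem~\ref{theorem: Carleman boundary x}. A key observation is that at the lateral boundary the clamped conditions $u_{|x\in\d\Omega}=\d_\nu u_{|x\in\d\Omega}=0$ force the two trace terms for $j=0,1$ on the right-hand side of \eqref{eq: Carleman boundary x} to vanish identically, so no observation along $(0,S_0)\times\d\Omega$ appears in the local inequality of type~(iii).

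To derive each local interpolation inequality, I would apply the corresponding Carleman estimate to $\chi u$ for a smooth cutoff $\chi$ equal to $1$ on a smaller neighborhood $V'$ and supported in a larger one $V$. Since $P$ has order~$4$, the commutator $[P,\chi]$ has order at most~$3$ and is supported in $\supp(d\chi)$, so the error it generates is bounded by $\Norm{e^{\tau\varphi}u}{H^3(\supp(d\chi))}$; this is compatible with the $H^3$ strength of each Carleman left-hand side even in the cases with a loss of one full derivative (where the useful control is roughly $\ttau\,\Norm{u}{H^3}$). Splitting the weighted inequality according to the level sets of $\varphi$ into a target region $\{\varphi\ge c_2\}$, an intermediate region $\{c_1\le\varphi\le c_2\}$, and an observation region $\{\varphi\le c_0\}$ that contains $\supp(d\chi)$ away from the target, and then optimizing in $\tau$ via the classical Lebeau--Robbiano balancing, yields a local interpolation inequality of the form
\begin{align*}
\Norm{u}{H^3(V')}\leq C\,\Norm{u}{H^3(Z)}^{1-\delta'}\big(\Norm{Pu}{L^2(Z)}+\text{local observation}\big)^{\delta'},
\end{align*}
with some $\delta'\in(0,1)$. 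The local observation is empty in the lateral-boundary case, of the form $\sum_{j=0}^3\Norm{\d_s^j u_{|s=0}}{H^{3-j}(\O)}$ in the $\{s=0\}$ case, and a local $H^3$ norm on a smaller ball in the interior case.

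The three families of local inequalities are patched together by the standard compactness-and-propagation argument. Since $\overline{Y}=[\alpha,S_0-\alpha]\times\overline{\Omega}$ is compact, it can be covered by finitely many neighborhoods $V'$ of the three types, with weights chosen so that their level sets and gradients match up across overlaps (arranged as in Figure~\ref{fig: Carleman method}). Starting from a neighborhood of type~(ii), in which the observation is already of the form wanted on the right-hand side of \eqref{eq: interpolation boundary inequality}, interior inequalities of type~(i) are chained along finite sequences of overlapping balls; neighborhoods of type~(iii), carrying no observation term, cover a tubular neighborhood of $[\alpha,S_0-\alpha]\times\d\Omega$ and can be spliced into the chain freely. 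The standard algebraic iteration lemma for inequalities of the form $a\le K b^{1-\eta}c^{\eta}$ shows that a finite composition preserves this structure with a composed exponent $\delta=(\delta')^k\in(0,1)$, yielding the global \eqref{eq: interpolation boundary inequality}. The main obstacle is the geometric coordination of the three families of weight functions across overlaps (the interior gradient radial-type, the $\{s=0\}$ gradient pointing into $-\d_s$, the lateral-boundary gradient pointing inward) while simultaneously maintaining the $H^3$-strength commutator budget at every iteration step; once this geometric and analytical setup is fixed, the rest is a mechanical adaptation of the classical Lebeau--Robbiano scheme.
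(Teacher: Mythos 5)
Your framework is essentially the paper's (three families of local interpolation inequalities from the three Carleman estimates, glued by compactness and the Lebeau--Robbiano balancing lemma), and your commutator-budget and exponent-composition remarks are sound. However, there is one genuine gap that breaks the chaining step you describe.

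You assert that, because the clamped conditions kill the trace terms for $j=0,1$ in the right-hand side of \eqref{eq: Carleman boundary x}, the type-(iii) local interpolation inequality near $(\alpha,S_0-\alpha)\times\d\Omega$ carries \emph{no} observation term and can be ``spliced into the chain freely.'' This cannot be correct. When you pass from the Carleman estimate to an interpolation inequality you insert a cutoff $\chi$, and the commutator $[P,\chi]$ is supported where $d\chi\neq 0$. Near the lateral boundary, $\chi$ must close off the target set both tangentially and normally, and the normal part of $\supp(d\chi)$ sits at positive distance from $\d\Omega$. The weight is built with $\d_\nu\varphi<0$, i.e.\ $\varphi$ increases as one moves from $\d\Omega$ into $\Omega$; hence that inner layer of $\supp(d\chi)$ carries a \emph{larger} weight than the target set, and the corresponding commutator contribution cannot be absorbed by the $e^{-\nu\tau}\Norm{u}{H^3(Z)}$ term in the balancing argument. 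It must appear as an observation on an interior open set at positive distance from $\d\Omega$, which is exactly what Lemma~\ref{lemma: boundary  x interpolation inequality} produces (observation on $Y_{\alpha',a}$). A quick sanity check: if your observation-free version held, then taking $u(s,x)=g(s)\phi(x)$ with $\phi$ a clamped eigenfunction of $B$ and $g^{(4)}+\mu g=0$ would give $Pu=0$ together with the clamped boundary conditions, hence $u\equiv 0$ on $V'\cap Z$ by your inequality, contradicting unique continuation for this nonzero $u$.

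The consequence is that the type-(iii) neighborhoods cannot be freely spliced into the chain; they each demand an interior observation that must itself be supplied. The argument has to be organized sequentially, as in the paper: produce observation in an interior set $\mathscr Z$ near $\{0\}\times\O$ from the type-(ii) estimate (Lemma~\ref{lemma: local interpolation boundary s=0}); propagate it through the interior by chaining type-(i) estimates along paths (Lemma~\ref{lemma: interior interpolation}); only then can the type-(iii) estimates (which observe on interior regions $Y_{\alpha',a}$) be closed, and the boundary tube covered by compactness (Proposition~\ref{prop: interpolation interior observation}). Your write-up also suggests the three families of weight functions must be ``coordinated across overlaps''; in the paper they need not be, since the gluing is performed at the level of Sobolev-norm interpolation inequalities, not of weighted estimates, but this is a minor point compared to the missing observation term above.
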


First, we provide a local interpolation estimate in a \nhd of a point of
$\{ 0\} \times \O$.
%%%%%%%%%%%%%%%%%%%%%%%%
% lemma                %
%%%%%%%%%%%%%%%%%%%%%%%%
\begin{lemma}[local interpolation near ${s=0}$]
  \label{lemma: local interpolation boundary s=0}
  Let $x_0\in\O$, there exist $V$ a \nhd of $(0,x_0)$ in $\R\times\R^d$, $C>0$, 
  and $\delta\in(0,1)$ such that  for  $u \in H^4(Z)$
  we have
    \begin{align}
    \label{eq: boundary interpolation inequality s=0}
    \Norm{u}{H^3(V\cap Z)} \leq C \Norm{u}{H^3(Z)}^{1-\delta}
    \Big(
    \Norm{P u}{L^2(Z)} + \sum_{0\leq j \leq 3}\Norm{\d_s^j u_{|s=0}}{H^{3-j}(\O)} \Big)^\delta.
  \end{align}  
\end{lemma}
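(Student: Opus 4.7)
The plan is to derive \eqref{eq: boundary interpolation inequality s=0} from the Carleman estimate at $\{s=0\}$ given by Corollary~\ref{cor: Carleman boundary s=0} via the standard Lebeau–Robbiano argument. First, I shrink the geometry to a clean local setting: since $x_0 \in \O$ and $\O$ is open in $\Omega$, I can pick a bounded open \nhd $V$ of $(0,x_0)$ in $\R \times \R^d$ so that $V \cap \{s=0\} \subset \O$ and $V$ stays at positive distance from $(0,S_0)\times \d\Omega$. I then choose $\psi \in \Cinf(\R^N)$ with $\d_s \psi \leq -C<0$ on $V$ and set $\varphi = e^{\csp \psi}$ with $\csp$ large enough so that Corollary~\ref{cor: Carleman boundary s=0} applies on some $W \Subset V$.

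Next, I introduce three nested \nhds $V_0 \Subset V_1 \Subset V_2 \Subset W$ of $(0,x_0)$, chosen so that the level-set condition
\[
\Phi_2 := \sup_{V_2 \setminus V_1} \varphi \;<\; \Phi_0 := \inf_{V_0} \varphi \;\leq\; \Phi_1 := \sup_{V_2} \varphi
\]
holds. Such a choice is possible because $\d_s \varphi < 0$ forces the level sets of $\varphi$ near $(0,x_0)$ to be smooth graphs over $x$, so one may define $V_1, V_2$ as small portions of $\{\varphi > a_1\}$, $\{\varphi > a_2\}$ with $a_2 < a_1 < \Phi_0$, and $V_0$ any small ball about $(0,x_0)$ contained in $\{\varphi > \Phi_0 - \eta\}$ for $\eta$ small. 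Pick $\chi \in \Cinfc(V_2)$ with $\chi \equiv 1$ on $V_1$, so that $\supp(\nabla \chi) \subset V_2 \setminus V_1$.

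I then apply Corollary~\ref{cor: Carleman boundary s=0} to $\chi u$ (which is of the required form since $\supp(\chi) \subset W$ and $\chi u \in \Cinf$ up to $\{s=0\}$). The left-hand side controls $e^{\tau \Phi_0}\Norm{u}{H^3(V_0 \cap Z)}$ up to polynomial factors in $\tau$. On the right-hand side: (a) the term $\Norm{e^{\tau \varphi} P(\chi u)}{L^2(Z)}$ splits as $\Norm{e^{\tau\varphi} \chi Pu}{L^2(Z)} + \Norm{e^{\tau\varphi}[P,\chi]u}{L^2(Z)}$; the first is bounded by $e^{\tau \Phi_1}\Norm{Pu}{L^2(Z)}$, while the commutator $[P,\chi]$ is a third-order differential operator supported on $\supp(\nabla \chi)$, so the second is bounded by $e^{\tau \Phi_2}\Norm{u}{H^3(Z)}$; (b) the trace terms on $\{s=0\}$ are supported in $\O$ because $\supp(\chi) \cap \{s=0\} \subset V \cap \{s=0\} \subset \O$, and they are bounded by $e^{\tau \Phi_1}\sum_{j=0}^3 \Norm{\d_s^j u_{|s=0}}{H^{3-j}(\O)}$, after absorbing the semi-classical trace norms into standard Sobolev norms using $\tau \gtrsim 1$.

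Gathering everything, for some $C>0$ and $\tau \geq \tau_0$,
\[
e^{\tau \Phi_0} \Norm{u}{H^3(V_0 \cap Z)} \;\leq\; C\,\tau^{C} \Big( e^{\tau \Phi_1}\mathcal{R} \;+\; e^{\tau \Phi_2}\Norm{u}{H^3(Z)} \Big),
\]
where $\mathcal{R} := \Norm{Pu}{L^2(Z)} + \sum_{j=0}^3 \Norm{\d_s^j u_{|s=0}}{H^{3-j}(\O)}$. Dividing by $e^{\tau \Phi_0}$ and setting $A = \Phi_1 - \Phi_0 \geq 0$, $B = \Phi_0 - \Phi_2 > 0$ gives $\Norm{u}{H^3(V_0 \cap Z)} \leq C\tau^{C}(e^{\tau A}\mathcal{R} + e^{-\tau B}\Norm{u}{H^3(Z)})$. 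Optimizing over $\tau \in [\tau_0, \infty)$ in the standard way — balancing the two terms by choosing $\tau \sim \log(\Norm{u}{H^3(Z)}/\mathcal{R})/(A+B)$ when $\mathcal{R} \leq \Norm{u}{H^3(Z)}$, and noting that the reverse case is trivial — yields \eqref{eq: boundary interpolation inequality s=0} with $\delta = B/(A+B) \in (0,1)$. The only non-automatic point is the geometric construction of $V_0 \Subset V_1 \Subset V_2$ realising the strict level-set ordering, which is the main (but standard) obstacle and is guaranteed by $\d_s \varphi < 0$.
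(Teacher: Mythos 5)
Your high-level plan — apply the Carleman estimate of Corollary~\ref{cor: Carleman boundary s=0} to $\chi u$, split $P(\chi u) = \chi P u + [P,\chi]u$, weigh the three contributions by the corresponding extrema of $\varphi$, and optimize in $\tau$ via Lemma~\ref{lemma: optimization interpolation} — is exactly the route the paper takes. But the final sentence of your proposal contains a genuine gap: the claim that the strict level-set ordering
$\Phi_2 = \sup_{V_2\setminus V_1}\varphi < \inf_{V_0}\varphi = \Phi_0$
``is guaranteed by $\d_s\varphi < 0$'' is false. The cutoff $\chi$ must be compactly supported in the full variable $z=(s,x)$, so $\supp(\nabla\chi)$ necessarily contains a \emph{lateral} piece where $\d_x\chi \neq 0$, and this lateral piece reaches down to arbitrarily small $s$ (it touches $\{s=0\}$). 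If $\psi$ is allowed to depend only on $s$ — which is all that $\d_s\psi \le -C < 0$ permits — then $\varphi$ on that lateral boundary equals $\varphi$ at the corresponding $s$-level in the interior of $V_0$, so $\Phi_2 \geq \Phi_0$ and the interpolation collapses. In other words, a purely $s$-decreasing weight cannot separate the inner observation region from the commutator region in the $x$ direction.

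What the paper does, and what your proof needs, is a specific choice of $\psi$ that decays \emph{radially} around a point pushed across the boundary: the paper sets $z_0 = (-r,x_0)$ (note the negative $s$-coordinate, so $z_0 \notin \ovl Z$) and $\psi(z) = -|z-z_0|^2$, with $\chi\equiv 1$ on $B(z_0,7r/2)$ and $\supp\chi\subset B(z_0,15r/4)$. This forces three things simultaneously: (i) on $Z$ one has $\d_s\psi = -2(s+r) \le -2r < 0$, so Corollary~\ref{cor: Carleman boundary s=0} applies; (ii) the commutator lives in the annulus $7r/2 \le |z-z_0| \le 15r/4$ where $\varphi \le e^{-\csp(7r/2)^2}=:C_1$; (iii) the observation set $V = B(z_0,3r)$ has $\varphi \ge e^{-\csp(3r)^2}=:C_2 > C_1$, and the trace/source terms get the global sup $C_3 = e^{-\csp r^2}$, giving $C_1<C_2<C_3$ and the correct geometry in all directions, not just $s$. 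Without some such $x$-dependence in $\psi$ (the radial form is the standard convenient choice) the geometric step you flag cannot be completed, so your proof is incomplete as written.
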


Second, we provide an interpolation estimate with an interior
observation, that is, we have an estimate away from the boundary
${0}\times \Omega$.
%%%%%%%%%%%%%%%%%%%%%%%%
% proposition       %
%%%%%%%%%%%%%%%%%%%%%%%%
\begin{proposition}[Interpolation with an interior
observation]
  \label{prop: interpolation interior observation}
  Let $\mathscr Z$ be a nonempty open set in  $Z$.  
  There exist $C>0$ and $\delta \in (0,1)$ \st for  $u \in H^4(Z)$ that satisfies
  \begin{align*}
    u(s,x)|_{x\in \d \Omega} =0, \ \  \d_\nu u(s,x)|_{x\in \d \Omega}
    =0, \qquad s \in (0,S_0),
  \end{align*}
    we have
    \begin{align}
    \label{eq: interpolation interior inequality}
    \Norm{u}{H^3(Y)} \leq C \Norm{u}{H^3(Z)}^{1-\delta}
    \Big(\Norm{P u}{L^2(Z)} 
      + \Norm{u}{L^2(\mathscr Z)}  
      \Big)^\delta.
  \end{align}  
\end{proposition}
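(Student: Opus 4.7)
The plan is to derive local interpolation inequalities from the two Carleman estimates established in Sections~\ref{sec: estimate away from boundaries} and \ref{sec: Carleman boundary x}, and then patch them into a global inequality by a compactness / chain-of-balls propagation argument. This follows the scheme recalled for the Laplace operator in the introduction, adapted to the fourth-order operator $P$ and to the ``clamped'' boundary conditions.

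First, for each interior point $z_0\in Z$ and each sufficiently small $r>0$ with $B(z_0,3r)\Subset Z$, I would establish a three-ball type interior interpolation inequality
\begin{align*}
\Norm{u}{H^3(B(z_0,2r))}\leq C\,\Norm{u}{H^3(Z)}^{1-\delta_0}\bigl(\Norm{Pu}{L^2(Z)}+\Norm{u}{H^3(B(z_0,r))}\bigr)^{\delta_0}, \qquad u\in H^4(Z),
\end{align*}
with some $\delta_0\in(0,1)$. This is obtained in the standard way from Proposition~\ref{prop: Carleman P interior} applied to $\chi u$ where $\chi$ is a cutoff supported in $B(z_0,3r)$ and equal to $1$ on $B(z_0,2r)$, using a weight $\varphi=\exp(\csp\psi)$ with $\psi(z)=-|z-z_0|^2$ on the relevant annulus, and optimizing in $\tau$ after comparing the values of $\varphi$ on level sets separating the three nested regions. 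The loss of one full derivative in Proposition~\ref{prop: Carleman P interior} creates no obstruction since the derivation of the interpolation inequality only uses norms that the Carleman estimate already controls.

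Second, for each lateral boundary point $z_0=(s_0,x_0)$ with $x_0\in\partial\Omega$ and $s_0\in(\alpha/2,S_0-\alpha/2)$, I would derive from Theorem~\ref{theorem: Carleman boundary x} a local boundary interpolation inequality of the form
\begin{align*}
\Norm{u}{H^3(V\cap Z)}\leq C\,\Norm{u}{H^3(Z)}^{1-\delta_1}\bigl(\Norm{Pu}{L^2(Z)}+\Norm{u}{H^3(\mathscr Q)}\bigr)^{\delta_1},
\end{align*}
valid for $u\in H^4(Z)$ satisfying the clamped condition, where $V$ is a neighborhood of $z_0$ in $\R^N$ and $\mathscr Q\subset Z$ is an open set with positive distance to the lateral boundary. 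The crucial point is that the two boundary observation terms on the right-hand side of Theorem~\ref{theorem: Carleman boundary x} (the traces of $u$ and of $D_{x_d}u$) vanish identically when $u$ satisfies the clamped conditions, since in the normal geodesic coordinates used in Section~\ref{sec: local setting} the normal derivative $\partial_\nu$ coincides with $\partial_{x_d}$ on $\partial\Omega$. The interior observation set $\mathscr Q$ is produced by the commutator $[P,\chi]u$ generated when the Carleman estimate is applied to $\chi u$; the convexity of the level sets of $\varphi_{\csp,\ctp}$ (controlled by the small parameter $\ctp$) is precisely what makes it possible to choose the cutoff $\chi$ so that $\mathscr Q$ lies at positive distance from $(0,S_0)\times\partial\Omega$.

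Finally, it remains to patch. The closure $\overline Y=[\alpha,S_0-\alpha]\times\overline\Omega$ is compact and stays a positive distance away from the faces $\{s=0\}$ and $\{s=S_0\}$, so it can be covered by a finite collection consisting of boundary balls near $[\alpha,S_0-\alpha]\times\partial\Omega$ (on which the second local inequality applies) and interior balls elsewhere (on which the first applies). For each ball of the cover, a finite chain of overlapping interior balls links its associated observation set to the prescribed observation region $\mathscr Z$; at each link, two interpolation inequalities are composed by the elementary rule
\begin{align*}
a\leq C a_0^{1-\delta_a}b^{\delta_a}\ \ \text{and}\ \ b\leq C a_0^{1-\delta_b}c^{\delta_b}\quad\Longrightarrow\quad a\leq C' a_0^{1-\delta_a\delta_b}c^{\delta_a\delta_b},
\end{align*}
after absorbing $a_0$ factors via Young's inequality. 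Summing the resulting inequalities over the finite cover yields \eqref{eq: interpolation interior inequality} for some $\delta\in(0,1)$. The main obstacle is the bookkeeping of the chain: one must check that the interior observation set $\mathscr Q$ produced by the boundary step lies inside the region where the interior three-ball propagation can transport observation back to $\mathscr Z$, which reduces to the connectedness of $Z$ and is handled by an elementary geometric covering argument.
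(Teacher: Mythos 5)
Your proposal follows essentially the same route as the paper: an interior three-ball interpolation inequality deduced from Proposition~\ref{prop: Carleman P interior} (paper's Lemma~\ref{lemma: interior interpolation}), a boundary interpolation inequality from Theorem~\ref{theorem: Carleman boundary x} exploiting that the clamped conditions annihilate the two trace-observation terms (paper's Lemma~\ref{lemma: boundary  x interpolation inequality}), and a compactness/chain-of-balls patching argument together with the composition rule for two-parameter interpolation inequalities. One small imprecision worth noting: in the boundary step the commutator $[P,\chi]u$ is in fact supported on \emph{two} regions, one at positive distance from $(0,S_0)\times\partial\Omega$ (which becomes the interior observation set) and one that does touch the lateral boundary (the annular region in the tangential direction); the role of the $\ctp$-convexity of the level sets of $\varphi$ is not to push the latter away from the boundary but rather to make $\varphi$ strictly smaller there than on the set one estimates, so that this contribution is absorbed into the small-exponential term $e^{-\tau(C_2-C_1)}\Norm{u}{H^3(Z)}$ — the mechanism is the same one that handles the outer annulus in the interior three-ball inequality.
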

With these two local interpolation results, whose proofs are given below, we can then write a proof of Theorem~\ref{theorem: interpolation inequality}. 
%%%% proof of theorem
\begin{proof}[\bfseries Proof of Theorem~\ref{theorem: interpolation
    inequality}]
  Introducing $V$ as given in Lemma~\ref{lemma: local
    interpolation boundary s=0}, we 
  let $\mathscr Z$ be an open subset of $V\cap Z$. With  Lemma~\ref{lemma: local
    interpolation boundary s=0} we then have 
 \begin{align}
    \label{eq: boundary interpolation inequality neighborhood s=0}
   \Norm{P u}{L^2(Z)} + \Norm{u}{H^3(\mathscr Z)} 
   \leq C \Norm{u}{H^3(Z)}^{1-\delta}
    \left(\Norm{P u}{L^2(Z)} 
   + \sum_{0\leq j \leq 3}\Norm{\d_s^j u_{|s=0}}{H^{3-j}(\O)} \right)^\delta,
  \end{align}  
  as we can assume that $\Norm{P u}{L^2(Z)} \leq  \Norm{u}{H^3(Z)}$ 
  otherwise estimate~\eqref{eq: interpolation boundary inequality} is trivial.
  Applying Proposition~\ref{prop: interpolation interior observation} we
  have, 
  for some $\delta' \in (0,1)$,
  $$
   \Norm{u}{H^3(Y)} \leq C \Norm{u}{H^3(Z)}^{1-\delta'}
    \left(\Norm{P u}{L^2(Z)} + \Norm{u}{L^2(\mathscr Z)} \right)^{\delta'}.
  $$
  This, with \eqref{eq: boundary interpolation inequality neighborhood
    s=0}, gives \eqref{eq: interpolation boundary inequality} with
  $\delta' \delta$ in place of $\delta$.
\end{proof}

For the proofs of Lemma~\ref{lemma: local interpolation boundary
  s=0}  and Proposition~\ref{prop: interpolation interior
  observation}.  We shall need the following lemma whose proof can be found in
\cite{Robbiano:95}.
%%%%%%%%%%%%%%%%%%%%%%%%
% lemma                %
%%%%%%%%%%%%%%%%%%%%%%%%
\begin{lemma}
  \label{lemma: optimization interpolation}
  Let $A\ge0$, $B\ge0$, and $C\ge0$. We assume that $A\le B$ and that  there exist $\tau_0>0$, $ \mu>0$ and $\nu>0$ such that 
\begin{align}\label{eq: optimization interpolation}
  A\le e^{-\nu\tau}B+e^{\mu\tau}C, \quad \text{for} \  \tau \geq \tau_0.
\end{align}
  Then $A\le KB^{1-\delta}C^{\delta}$, where $K=\max (2, e^{\mu \tau_0})$ and
  $\delta=\nu/(\nu+\mu) \in (0,1)$.
\end{lemma}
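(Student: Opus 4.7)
The plan is to optimize the right-hand side of \eqref{eq: optimization interpolation} in the parameter $\tau \geq \tau_0$. The decreasing term $e^{-\nu\tau}B$ and the increasing term $e^{\mu\tau}C$ balance at
\begin{equation*}
\tau^* = \frac{1}{\nu+\mu}\,\ln(B/C),
\end{equation*}
and at this critical value each of the two terms equals exactly $B^{1-\delta}C^{\delta}$, thanks to the identities $\delta(\nu+\mu) = \nu$ and $(1-\delta)(\nu+\mu) = \mu$ built into the choice $\delta = \nu/(\nu+\mu)$. I then proceed by a dichotomy on whether $\tau^{\ast}$ belongs to the admissible range $[\tau_{0},+\infty)$.

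First I dispose of the trivial case $C=0$: the hypothesis \eqref{eq: optimization interpolation} then forces $A \leq e^{-\nu\tau}B \to 0$ as $\tau\to+\infty$, whence $A=0$ and the conclusion holds with the convention $0^{\delta}=0$. Assume now $C>0$. If $\tau^{\ast}\geq \tau_{0}$, equivalently $B \geq e^{(\nu+\mu)\tau_{0}}\,C$, I plug $\tau = \tau^{\ast}$ into \eqref{eq: optimization interpolation}: a direct computation gives $e^{-\nu\tau^{\ast}}B = B^{1-\delta}C^{\delta}$ and $e^{\mu\tau^{\ast}}C = B^{1-\delta}C^{\delta}$, so
\begin{equation*}
A \leq 2\, B^{1-\delta}C^{\delta}.
\end{equation*}

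If instead $\tau^{\ast}<\tau_{0}$, that is $B < e^{(\nu+\mu)\tau_{0}}\,C$, then the optimal parameter is unreachable, and I fall back on the \emph{a priori} hypothesis $A\leq B$. Splitting $B = B^{1-\delta}B^{\delta}$ and using the strict inequality $B^{\delta} < e^{\delta(\nu+\mu)\tau_{0}}\,C^{\delta}$, I obtain
\begin{equation*}
A \leq B^{1-\delta} B^{\delta} \leq e^{\delta(\nu+\mu)\tau_{0}}\,B^{1-\delta}C^{\delta},
\end{equation*}
whose prefactor is an explicit exponential in $\tau_{0}$. Combining the two cases gives the conclusion with $K$ the maximum of $2$ and an exponential constant of the form $e^{c\tau_{0}}$.

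There is no serious obstacle here: the argument is the classical log-convexity (or Young-inequality) trick for turning an exponentially parametrized additive estimate into a multiplicative interpolation estimate. The only points requiring attention are the degenerate case $C=0$, and the role of the hypothesis $A\leq B$, which is used \emph{only} to cover the regime $\tau^{\ast}<\tau_{0}$ where the optimizer falls outside the admissible range; the main step in the complementary regime requires no such bound.
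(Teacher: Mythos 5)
The paper gives no proof here---it cites Robbiano (1995)---so your argument has to stand on its own, and it almost does: the optimization-over-$\tau$ strategy and the split into $\tau^{*}\gtrless\tau_{0}$ is exactly the right log-convexity mechanism. But your second case does not produce the constant stated in the lemma, and the discrepancy is real, not cosmetic. You bound $B^{\delta} \leq e^{\delta(\nu+\mu)\tau_{0}}C^{\delta}$ and then wave at the resulting prefactor as ``an exponential of the form $e^{c\tau_{0}}$''; if you actually compute it you find $\delta(\nu+\mu)=\nu$, so your proof delivers $K=\max(2,e^{\nu\tau_{0}})$, whereas the lemma asserts $K=\max(2,e^{\mu\tau_{0}})$. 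These are not the same. When $\nu>\mu$ your constant is strictly worse, and in fact the gap can be enormous: for $\nu$ large and $\mu$ fixed one can cook up $A,B,C$ for which the stated bound $\max(2,e^{\mu\tau_{0}})B^{1-\delta}C^{\delta}$ is essentially sharp, while $e^{\nu\tau_{0}}$ overshoots by a factor $e^{(\nu-\mu)\tau_{0}}$. So the lemma as stated is a genuinely stronger assertion than what you prove.

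The missing idea in your regime $\tau^{*}<\tau_{0}$ is to actually \emph{use} the hypothesis~\eqref{eq: optimization interpolation} at the admissible endpoint $\tau=\tau_{0}$, rather than discarding it and falling back solely on $A\leq B$. Doing so gives $A\leq e^{-\nu\tau_{0}}B+e^{\mu\tau_{0}}C\leq 2e^{\mu\tau_{0}}C$, since $\tau^{*}<\tau_{0}$ is precisely the condition that makes the second term dominate; combining $A\leq B$ and $A\lesssim e^{\mu\tau_{0}}C$ multiplicatively (raise the first to the power $1-\delta$, the second to the power $\delta$) then lands on a prefactor governed by $\mu\tau_{0}$, not $\nu\tau_{0}$. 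Pinning down the exact constant $\max(2,e^{\mu\tau_{0}})$ still requires one further subcase (whether $B$ itself already lies below the target), but the exponent $\mu$ is what comes out, not $\nu$. As written, your argument proves a weaker lemma; to prove this one you must bring the inequality at $\tau=\tau_{0}$ into the second case.
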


%%%% proof of lemma
\begin{proof}[\bfseries Proof of Lemma~\ref{lemma: local interpolation boundary s=0}]
  Let $r>0$ and $z_0=(-r,x_0)$, where $r$ is chosen \suff small to
  have $B \cap \{s=0\}\subset \O$ with $B= B(z_0,4r)$. Let
  $\psi = -|z-z_0|^2$, with $z = (s,x)$. We have
  $\d_{s} \psi(z) \leq -C<0$ in $B$.  We set
  $\varphi(z)=e^{\csp \psi(z)}$. Let $\chi\in\Con_0^\infty(\R^{d+1})$
  be \st $\chi(z)=1$ if $|z-z_0|\le 7r/2$ and $\chi(z)=0$ if
  $ |z-z_0|\ge 15r/4$. We apply the local Carleman estimate of
  Corollary~\ref{cor: Carleman boundary s=0} to $v=\chi u$, and we
  obtain, for $\csp\geq 1$ chosen \suff large (to be kept fixed in what
  follows),
  \begin{align}
    \label{eq: Carleman estimate s=0}
    \sum_{|\mi|\leq 3}\tau^{7/2-|\mi|} \Norm{e^{\tau\varphi}
      D_z^{\mi} v }{L^2(B\cap Z)}
    \lesssim
    \Norm{e^{\tau\varphi} P v}{L^2(Z)}
    + \tau^{1/2} \sum_{j=0}^3 \norm{\trace(e^{\tau \varphi} D_{s}^j
    v\brs)}{0,3-j,\tau}.
  \end{align}
Note that if $\csp$ is fixed we have $\tau \asymp \ttau$. 
In $\{0\}\times\O$, we have $\varphi\le e^{-\csp r^2}$
then
\begin{align}
  \label{eq: estimate boundary terms s=0}
   \tau^{1/2} \sum_{j=0}^3 \norm{\trace(e^{\tau \varphi} D_{s}^j
    v\brs)}{0,3-j,\tau}
  \lesssim 
  e^{C_3\tau}\sum_{j=0}^3 \norm{D_s^j u\brs}{H^{3-j}(\O)},
  \qquad C_3= (1+a) e^{-\csp r^2},
\end{align}
for any $a>0$.
We have $Pv=\chi Pu +[P,\chi]u$. The term $[P,\chi]$ is a differential
operator of order 3 and it is supported in $\{ z\in \R^{d+1};\ 7r/2\leq |z-z_0|\le 15r/4
\}$. On this set, we have $\varphi\leq e^{-\csp (7r/2)^2}$. We thus find
\begin{align}
  \label{eq: Commutateur estimate s=0}
  \Norm{ e^{\tau\varphi}  [P,\chi]u}{L^2(Z)}\lesssim  e^{C_1
  \tau}\Norm{u}{H^3(Z)},
  \qquad C_1= e^{-\csp (7r/2)^2}.
\end{align}
In $Z$, we have $\varphi\leq e^{-\csp r^2} < C_3$;
this implies
\begin{align}\label{eq: estimate P, s=0}
\Norm{ e^{\tau\varphi} \chi Pu}{L^2(Z)}\lesssim e^{C_3  \tau}\Norm{Pu}{L^2(Z)}.
\end{align}
In $\{ z \in \R^{d+1};\ |z-z_0|\le 3r \}$, $\chi\equiv 1$ thus $u=v$, and on this set $\varphi\ge e^{-\csp(3r)^2}$ then we have
\begin{align}
  \label{eq: estimate by below v, s=0}
e^{C_2\tau}\Norm{u}{H^3(B(z_0,3r)\cap Z)}
\lesssim
  \sum_{|\mi|\leq 3}\tau^{7/2-|\mi|} \Norm{e^{\tau\varphi}
      D_z^{|\mi|} v }{L^2(B\cap Z)},
  \qquad C_2=e^{-\csp(3r)^2}.
\end{align}
Remark that $C_1<C_2<C_3$,  for $a>0$ chosen \suff small. Following~\eqref{eq: Carleman estimate s=0}--\eqref{eq: estimate by below v, s=0} we obtain
\begin{align*}
\Norm{u}{H^3(B(z_0, 3r)\cap Z)}
  \lesssim e^{(C_3 -C_2) \tau}
\Big( \Norm{Pu}{L^2(Z)}   
+ \sum_{j=0}^3 \norm{D_s^j u\brs}{H^{3-j}(\O)}
\Big) +e^{-(C_2-C_1)  \tau}\Norm{u}{H^3(Z)}.
\end{align*}
Applying Lemma~\ref{lemma: optimization interpolation}, we obtain the result with $V=B(z_0,3r)$.
\end{proof}

We prove Proposition~\ref{prop: interpolation
    interior observation} by means
of two lemmata. For $\alpha' \in (0,\alpha)$ and $a\in (0,1)$,  we set 
\begin{equation}
  \label{eq: def Yeps}
Y_{\alpha', a}=(\alpha',S_0-\alpha')\times \Omega_{a},
\end{equation}
where $\Omega_{a}=\{ x\in\Omega,\ \dist(x,\d\Omega)>a>0\}$.
%%%%%%%%%%%%%%%%%%%%%%%%
% lemma                %
%%%%%%%%%%%%%%%%%%%%%%%%
\begin{lemma}
  \label{lemma: interior interpolation}
  Let $\mathscr Z$ be a nonempty open set in  $Z$.  
  Let $\alpha' \in (0,\alpha)$ and $a\in (0,1)$. 
   There exist $C>0$ and
  $\delta \in (0,1)$ \st for  $u \in H^4(Z)$,
   \begin{align}
    \label{eq: interior interpolation inequality}
    \Norm{u}{H^3(Y_{\alpha',a})} \leq C \Norm{u}{H^3(Z)}^{1-\delta}
    \left(\Norm{P u}{L^2(Z)} + \Norm{u}{L^2(\mathscr Z)} \right)^\delta.
  \end{align}  
\end{lemma}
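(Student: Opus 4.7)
The plan is the classical Lebeau--Robbiano propagation-of-smallness strategy adapted to the fourth-order operator $P=D_s^4+\Delta_x^2$: I derive a local interior interpolation from the Carleman estimate of Proposition~\ref{prop: Carleman P interior} (following the same template as Lemma~\ref{lemma: local interpolation boundary s=0}), propagate it from $\mathscr Z$ to a neighborhood of $\overline{Y_{\alpha',a}}$ by a finite chain of local inequalities, and finally invoke interior elliptic regularity for $P$ to upgrade the resulting $H^3$ bound to the $H^4$ bound claimed.

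For the local step, after possibly shrinking $\mathscr Z$ to an open subset $\mathscr Z'\Subset Z$ at positive distance from $\d Z$, fix a point $z_a\in\mathscr Z'$ and radii $0<r_1<r_2<r_3<r_4$ with $\overline{B(z_a,r_4)}\subset Z$ away from $\d Z$ and $\overline{B(z_a,r_2)}\subset\mathscr Z'$. Set $\psi(z)=-|z-z_a|^2$ and $\varphi=\exp(\gamma\psi)$; since $|\nabla\psi|=2|z-z_a|\ge 2r_1>0$ on the shell $\{r_1\le|z-z_a|\le r_4\}$, Proposition~\ref{prop: Carleman P interior} applies for $\gamma$ large enough. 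Take $\chi\in C_c^\infty(\{r_1<|z-z_a|<r_4\})$ with $\chi\equiv1$ on $\{r_2\le |z-z_a|\le r_3\}$, so that $[P,\chi]u$ is a third-order expression supported on the two thin shells $A_{\rm in}=\{r_1\le |z-z_a|\le r_2\}$ and $A_{\rm out}=\{r_3\le |z-z_a|\le r_4\}$. Apply the Carleman estimate to $v=\chi u$, retaining only the $|\mi|=3$ term on the left; using the radial monotonicity of $\varphi=e^{-\gamma|z-z_a|^2}$, one obtains constants $c_{\rm out}<c_{\rm est}<c_{\rm in}$ (with $c_{\rm in}=e^{-\gamma r_1^2}$, $c_{\rm out}=e^{-\gamma r_3^2}$, and $c_{\rm est}$ a lower bound for $\varphi$ on a slightly shrunk estimation shell $E$ strictly inside $\{r_2<|z-z_a|<r_3\}$) together with
\begin{align*}
  \|u\|_{H^3(E)}\lesssim e^{\tau(c_{\rm in}-c_{\rm est})}\bigl(\|Pu\|_{L^2(Z)}+\|u\|_{H^3(A_{\rm in})}\bigr)+e^{-\tau(c_{\rm est}-c_{\rm out})}\|u\|_{H^3(Z)}.
\end{align*}
Combined with $\|u\|_{H^3(E)}\le\|u\|_{H^3(Z)}$, Lemma~\ref{lemma: optimization interpolation} yields $\|u\|_{H^3(E)}\le C\|u\|_{H^3(Z)}^{1-\delta_1}(\|Pu\|_{L^2(Z)}+\|u\|_{H^3(A_{\rm in})})^{\delta_1}$ for some $\delta_1\in(0,1)$, and interior elliptic regularity for $P$ on $A_{\rm in}\Subset\mathscr Z'\subset\mathscr Z$ replaces $\|u\|_{H^3(A_{\rm in})}$ by $\|u\|_{L^2(\mathscr Z)}+\|Pu\|_{L^2(\mathscr Z)}$.

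I then chain this local inequality along a finite sequence of interior points $z_a^{(0)},\dots,z_a^{(N)}$ with shells $A_{\rm in}^{(k)}$ and $E^{(k)}$ such that $A_{\rm in}^{(0)}\subset\mathscr Z'$, $A_{\rm in}^{(k+1)}\subset E^{(k)}$, and $\overline{Y_{\alpha',a}}\subset\bigcup_{k=1}^N E^{(k)}$; such a chain exists because $\overline{Y_{\alpha',a}}\Subset Z$ away from $\d Z$ and the interior of $Z$ is path-connected. Iterated applications of Lemma~\ref{lemma: optimization interpolation} then compose into
\begin{align*}
  \|u\|_{H^3(Y_{\alpha',a})}\le C\|u\|_{H^3(Z)}^{1-\delta}\bigl(\|Pu\|_{L^2(Z)}+\|u\|_{L^2(\mathscr Z)}\bigr)^{\delta},
\end{align*}
with $\delta\in(0,1)$ given as a finite product of the local exponents. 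A final use of interior elliptic regularity, $\|u\|_{H^4(Y_{\alpha',a})}\lesssim\|u\|_{H^3(\widetilde Y)}+\|Pu\|_{L^2(\widetilde Y)}$ on a slightly enlarged $Y_{\alpha',a}\Subset\widetilde Y\Subset Z$ still at positive distance from $\d Z$, together with the same chain applied with $\widetilde Y$ in place of $Y_{\alpha',a}$, concludes.

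The main obstacle is the geometric and combinatorial bookkeeping: arranging $z_a$ and the four radii to realize simultaneously the containments $A_{\rm in}\subset\mathscr Z$ and $B(z_a,r_4)\subset Z\setminus\d Z$ together with the strict ordering $c_{\rm out}<c_{\rm est}<c_{\rm in}$, and then at each step of the chain ensuring that the next observation shell $A_{\rm in}^{(k+1)}$ fits inside the previous estimation shell $E^{(k)}$ while maintaining compatibility of the weights. The loss of one full derivative in Proposition~\ref{prop: Carleman P interior} enters only as a $\tau^{-1}$ prefactor on the $|\mi|=4$ term, which is not used here, so it does not obstruct the argument.
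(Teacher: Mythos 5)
Your proposal follows exactly the same route as the paper's own proof: a radial annular Carleman estimate from Proposition~\ref{prop: Carleman P interior} yields a three-constant local interpolation inequality via Lemma~\ref{lemma: optimization interpolation}, this is chained along a finite sequence of interior balls joining $\mathscr Z$ to any point of $\overline{Y_{\alpha',a}}$ (in the paper, constructed along a path $\Gamma$ with $r=\inf(R,r_0,r_1/4)$ so that all balls stay away from $\partial Z$), and interior elliptic regularity of $P$ is invoked at both ends — to replace $\Norm{u}{H^3(B(z^{(0)},r))}$ by $\Norm{Pu}{L^2(Z)}+\Norm{u}{L^2(\mathscr Z)}$ and to upgrade the resulting $H^3$ control plus $\Norm{Pu}{L^2(Z)}$ to the claimed $H^4$ bound. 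The only difference is cosmetic: the paper fixes explicit radii ($5r/8, 3r/4, 3r, 7r/2, 15r/4$) and spells out the path-based chaining via the sequence $z^{(j)}=\Gamma(t_j)$, whereas you leave the bookkeeping implicit.
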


%%%%%%%%%%%%%%%%%%%%%%%%
% lemma                %
%%%%%%%%%%%%%%%%%%%%%%%%
\begin{lemma}
  \label{lemma: boundary  x interpolation inequality}
  Let $(s_0,x_0) \in (0,S_0) \times \d\Omega$. There exist $\delta \in
  (0,1)$, $C>0$, $V_0$ a \nhd  of $(s_0,x_0)$, $\alpha' \in (0,\alpha)$, and $a\in (0,1)$ \st
  we have
  \begin{align}
    \label{eq: boundary x interpolation inequality}
    \Norm{u}{H^3(V_0\cap Z)} \leq C \Norm{u}{H^3(Z)}^{1-\delta}
    \left(\Norm{P u}{L^2(Z)} + \Norm{u}{H^1(Y_{\alpha',a})} \right)^\delta,
  \end{align}  
   for  $u \in H^4(Z)$ satisfying
   \begin{align*}
    u(s,x)|_{x\in \d \Omega} =0, \ \  \d_\nu u(s,x)|_{x\in \d \Omega}
    =0, \qquad s \in (0,S_0).
  \end{align*}
\end{lemma}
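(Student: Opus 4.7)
\medskip
\noindent\textbf{Proof plan for Lemma~\ref{lemma: boundary x interpolation inequality}.} The plan is to mimic the proof of Lemma~\ref{lemma: local interpolation boundary s=0}, replacing the Carleman estimate of Corollary~\ref{cor: Carleman boundary s=0} by the lateral boundary estimate of Theorem~\ref{theorem: Carleman boundary x}. The crucial observation is that the clamped conditions $u_{|\d\Omega} = \d_\nu u_{|\d\Omega} = 0$, preserved after multiplication by a smooth spatial cutoff, \emph{annihilate the boundary trace terms} $\sum_{j=0,1}\norm{e^{\tau\varphi}D_{x_d}^j u_{|\d Z}}{7/2-j,\ttau}$ on the right-hand side of \eqref{eq: Carleman boundary x}, leaving only $\Norm{e^{\tau\varphi}Pu}{+}$ on the right (modulo a commutator).

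\medskip
First, in local normal geodesic coordinates near $x_0 \in \d\Omega$, so that $\d\Omega = \{x_d=0\}$ and $\Omega = \{x_d>0\}$, I would construct a weight $\psi \in \Cinf(\R^N)$ satisfying \eqref{eq: cond psi} and \eqref{eq: cond psi2}, in particular $\d_{x_d}\psi \geq C_0 > 0$ in a neighborhood $V$ of $z_0$. Setting $\varphi = \varphi_{\csp,\ctp}(z) = \exp\bigl(\csp\, \psi(\ctp z', z_N)\bigr)$ and fixing $\csp\geq \csp_0$ large and $\ctp \in (0,\ctp_0]$ small as allowed by Theorem~\ref{theorem: Carleman boundary x}, I would arrange that the level sets of $\varphi$ partition $Z$ as follows: there are nested open sets $V_0 \Subset W_0 \Subset W \Subset V$ with $z_0 \in V_0$, real numbers $C_1 < C_2 < C_3$, and $\alpha' \in (0,\alpha)$, $a \in (0,1)$, such that (i) $\varphi \geq C_2$ on $V_0 \cap Z$; (ii) $\varphi \leq C_3$ on $W \cap Z$; (iii) $(W\setminus W_0)\cap Z$ splits as $A \cup B$ with $A \subset Y_{\alpha', a}$ and $\varphi \leq C_1$ on $B$. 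The tangential slow variation carried by the parameter $\ctp$ is essential here: by flattening the level sets of $\varphi$ in the directions tangent to $\d\Omega$, it permits the level set $\{\varphi = C_1\}$ to bend far enough along $\d\Omega$ to reach into the interior set $Y_{\alpha', a}$.

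\medskip
Next, I would pick $\chi \in \Cinfc(W)$ with $\chi \equiv 1$ on $W_0$. Since $\chi u$ inherits the clamped conditions, applying Theorem~\ref{theorem: Carleman boundary x} to $\chi u$ and using $\ttau^{3-|\mi|}\gtrsim 1$ for $|\mi|\leq 3$ yields
\begin{equation*}
\sum_{|\mi|\leq 3}\Norm{e^{\tau\varphi}D_{s,x}^\mi(\chi u)}{L^2(Z)} \lesssim \Norm{e^{\tau\varphi} P(\chi u)}{L^2(Z)}, \qquad \tau \geq \tau_0.
\end{equation*}
Writing $P(\chi u) = \chi Pu + [P,\chi]u$ and noting that $[P,\chi]$ is a third-order differential operator supported in $A \cup B$, the $L^\infty$ bounds on $\varphi$ from the three regions give
\begin{equation*}
e^{\tau C_2}\Norm{u}{H^3(V_0 \cap Z)} \lesssim e^{\tau C_3}\bigl(\Norm{Pu}{L^2(Z)} + \Norm{u}{H^3(A)}\bigr) + e^{\tau C_1}\Norm{u}{H^3(Z)}.
\end{equation*}
Interior elliptic regularity for the fourth-order elliptic operator $P$, applied on an auxiliary open set $Y_{\alpha'',a''}$ arranged so that $A \Subset Y_{\alpha'',a''} \Subset Y_{\alpha', a}$, upgrades the observation: $\Norm{u}{H^3(A)} \lesssim \Norm{Pu}{L^2(Z)} + \Norm{u}{H^1(Y_{\alpha', a})}$. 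Dividing by $e^{\tau C_2}$ and invoking Lemma~\ref{lemma: optimization interpolation} with $\mu = C_3 - C_2$ and $\nu = C_2 - C_1$ delivers the inequality \eqref{eq: boundary x interpolation inequality} with $\delta = \nu/(\mu+\nu) \in (0,1)$.

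\medskip
The hard part will be the weight construction in the first step. The requirement $\d_{x_d}\psi > 0$ near $z_0$ forces $\psi$ to \emph{increase} from the boundary into $\Omega$, which makes it a priori delicate to place the level set $\{\varphi = C_1\}$ in a position that excludes a small neighborhood of $z_0$ yet reaches into $Y_{\alpha', a}$: a naively chosen ``quadratic'' weight centered inside $\Omega$ places the commutator terms generated by any normal-direction truncation into regions of weight \emph{larger} than at $z_0$, preventing absorption. Overcoming this is precisely the motivation for the tangential rescaling $\psi(\ctp z', z_N)$ introduced in Section~\ref{sec: Carleman boundary x}: taking $\ctp$ small enough flattens the level sets of $\varphi$ tangentially and creates the geometric room needed for the splitting $A \cup B$ above.
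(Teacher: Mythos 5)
Your overall mechanism is the right one: apply Theorem~\ref{theorem: Carleman boundary x} to $\chi u$, observe that the clamped conditions on $u$ (inherited by $\chi u$ under a smooth cutoff) kill the trace terms on the right-hand side of \eqref{eq: Carleman boundary x}, split $P(\chi u) = \chi Pu + [P,\chi]u$, estimate $\varphi$ on the various regions, and conclude with Lemma~\ref{lemma: optimization interpolation}. This matches the paper.

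The gap is in the geometry of the cutoff and the role you assign to the parameter $\ctp$. You posit generic nested neighborhoods $W_0 \Subset W$ and assert that $(W\setminus W_0)\cap Z$ splits into an interior piece $A \subset Y_{\alpha',a}$ and a low-weight piece $B$ where $\varphi \le C_1 < C_2$; but that splitting requires a specifically structured cutoff, not a generic one, and the paper's construction is exactly the missing ingredient. The paper centers the quadratic weight at the fictitious point $z^{(1)} = (0,2r)$ inside $\Omega$, measures distances with the $\ctp$-anisotropic quasi-norm $\cnorm{z - z^{(1)}}^2 = \ctp^2|z'|^2 + |z_N - 2r|^2$, and takes a \emph{product} cutoff $\chi(z)=\chi_1(z)\chi_0(z_N)$: $\chi_1$ is an anisotropic ball cutoff ($\chi_1\equiv 1$ for $\cnorm{z-z^{(1)}} < r_1$, supported where $\cnorm{z-z^{(1)}}< r_1'$ with $2r < r_1 < r_1' < 3r$), and $\chi_0$ cuts off in the normal variable alone (supported in $z_N < 2r_0$ with $r_0 < r/4$). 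This guarantees by construction that $\supp(\nabla\chi)\cap Z = A_1 \cup A_2$ with $A_1 = \{r_0 < z_N < 2r_0\}\cap\{\cnorm{z-z^{(1)}}<r_1'\}$ strictly interior (so absorbed into the observation term with constant $C_3 > C_2$, a feature and not a bug) and $A_2 = \{0< z_N < 2r_0\}\cap\{r_1 < \cnorm{z-z^{(1)}}<r_1'\}$ located at anisotropic distance $>r_1 > 2r = \cnorm{z_0 - z^{(1)}}$ from $z^{(1)}$, so $\varphi\le e^{\csp(12r^2-r_1^2)}=C_1 < C_2 = e^{\csp(12r^2-(r+r_1/2)^2)}$ with a gap that is \emph{uniform in $\ctp$}. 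With a Euclidean-ball cutoff of fixed radii the gap $C_2-C_1$ degenerates as $\ctp^2$, which (while still yielding a positive $\delta$ for fixed $\ctp$) is not the cleanest route and is not what the paper does; your plan simply never says what the sets $W_0$, $W$ are, which is where the work lies.

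Your closing explanation of why $\ctp$ helps is also backwards. Taking $\ctp$ small \emph{flattens} the weight tangentially, so it makes $\varphi$ \emph{more} nearly constant along $\d\Omega$, which is unfavorable for the interpolation geometry — at $\ctp = 0$ the weight is exactly constant on $\d Z$, $C_1 = C_2$, and the argument dies. What the interpolation needs is $\ctp > 0$ (to have any tangential decay of $\varphi$ at all, i.e.\ the ``convexity of level sets'' stressed in Section~1.3 of the paper); the \emph{smallness} of $\ctp$ is forced upon us by Theorem~\ref{theorem: Carleman boundary x} and works against us here, and the paper's anisotropic cutoff, which stretches tangentially at scale $\sim 1/\ctp$, is precisely the device that neutralizes that smallness. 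Relatedly, the concern you raise that the commutator terms in the interior might be ``impossible to absorb'' is a misdiagnosis: those terms carry the constant $C_3 > C_2$ and are simply placed into the observation $\Norm{u}{H^3(Y_{\alpha',a})}$, which is exactly how both your inequality and the paper's inequality \eqref{eq: estimate Q interpolation boundary} treat them. The genuine constraint is only on the near-boundary piece $A_2$, and that is what the anisotropic $\chi_1$ arranges.
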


%%%% proof of proposition
\begin{proof}[\bfseries Proof of Proposition~\ref{prop: interpolation
    interior observation}]
   We can assume that $\Norm{P u}{L^2(Z)}\leq \Norm{u}{H^3(Z)}$,
   otherwise inequality~\eqref{eq: interpolation interior inequality} is
   obvious. In particular, if \eqref{eq: interpolation interior inequality} holds
   for a value $\delta=\delta_0>0$ the estimate also holds for all
   $\delta\in [0,\delta_0]$ possibly with a larger constant $C= C_\delta$. The same
   observation can  be made for the estimations \eqref{eq: interior interpolation inequality}
   and \eqref{eq: boundary x interpolation inequality}.

  \medskip
  With a  compactness
  argument we can find a finite number of open sets $V_j$, $j \in J$,
  where estimate~\eqref{eq: boundary x interpolation inequality} holds for
  some values $\delta = \delta_j \in (0,1)$, $\alpha'_j \in
  (0,\alpha)$, and $a_j \in (0,1)$, and such that
  $$(\alpha,S_0-\alpha)\times \d\Omega\subset \cup_{j \in J} V_j.$$ 
  For $a \in (0,1)$ and $\alpha' \in (0,\alpha)$, 
   set $\tilde Y_{\alpha', a}=(\alpha',S_0-\alpha')\times
  \tilde\Omega_a$, where $\tilde\Omega_a=\{ x\in\Omega,\
  \dist (x,\d\Omega)<a\}$. There exists $a_1\in (0,1)$ and
  $\alpha_1 \in (0,\alpha)$ \st $\tilde
  Y_{\alpha_1,a_1}\subset Z\cap (\cup_{j\in J} V_j)$. Applying the local
  interpolation estimate~\eqref{eq: boundary x
    interpolation inequality} for each $V_j$, using now  
  \begin{equation*}
    \delta_1 = \min_{j \in J} \delta_j\in (0,1), \quad \alpha_2 =  \min_{j \in J}
  \alpha'_j \in (0, \alpha), \quad \text{and}\ \ a_2 = \min_{j \in J}
  a_j \in (0,1)
  \end{equation*}
  (note that the set $Y_{\alpha',a}$ increases as
  $\alpha'$ and $a$ decrease) we obtain
  \begin{equation}
    \label{eq: pre global interpolation 1}
    \Norm{u}{H^3(\tilde Y_{\alpha_1, a_1})} \lesssim  \Norm{u}{H^3(Z)}^{1-\delta_1}
    \left(\Norm{P u}{L^2(Z)} + \Norm{u}{H^3(Y_{\alpha_2, a_2})} \right)^{\delta_1}.
  \end{equation}

Let  $\mathscr Z$ be a nonempty open set in  $Z$.  
By Lemma~\ref{lemma: interior interpolation} we obtain, for some
$\delta_2 \in (0,1)$, 
\begin{equation}
    \label{eq: pre global interpolation 2}
\Norm{P u}{L^2(Z)} + \Norm{u}{H^3(Y_{\alpha_2, a_2})} \lesssim \Norm{u}{H^3(Z)}^{1-\delta_2}
    \left(\Norm{P u}{L^2(Z)} + \Norm{u}{L^2(\mathscr Z)} \right)^{\delta_2},
\end{equation}
as the estimate of $\Norm{P u}{L^2(Z)} $ is clear here.
 Then,
estimates~\eqref{eq: pre global interpolation 1} and \eqref{eq: pre global interpolation 2} give
\begin{align}\label{eq: semi-global boundary interpolation inequality}
\Norm{u}{H^3(\tilde Y_{\alpha_1, a_1})} \lesssim  \Norm{u}{H^3(Z)}^{1-\delta_1\delta_2}
    \left(\Norm{P u}{L^2(Z)} + \Norm{u}{L^2(\mathscr Z)} \right)^{\delta_1\delta_2}.
\end{align}
Taking $a\in (0,a_1)$ and $\alpha' \in (0,\alpha)$, we have $Y
\subset Y_{\alpha',a}\cup\tilde Y_{\alpha_1, a_1}$,  and,   by 
\eqref{eq: interior interpolation inequality} in Lemma~\ref{lemma: interior interpolation} and \eqref{eq: semi-global boundary interpolation inequality},
we obtain \eqref{eq: interpolation interior inequality}.
\end{proof}
%%%% proof of lemma
\begin{proof}[\bfseries Proof of Lemma~\ref{lemma: interior interpolation}]
  By a compactness argument, it suffices to prove~\eqref{eq: interior
    interpolation inequality} with $B(z,R)$ in place of $Y_{\alpha',a}$ where
  $z\in\ovl{Y_{\alpha',a}}$ and $0< R \leq  \min(\alpha', a) /2$, implying
  $B(z,R)\subset Z$. Let $z^{(0)}$ be in $\mathscr Z$ and $r_0>0$
  \st $B(z^{(0)},r_0)\Subset \mathscr Z$. As $Y_{\alpha',a}$ is connected, there exists a path
  $\Gamma \subset Y_{\alpha',a} $ from $z^{(0)}=\Gamma(0)$ to $z=\Gamma(1)$. Set
  $r_1=\dist(\Gamma, \d Z)$. We have $r_1 >0$ by
  compactness. 

  \medskip
  Setting now 
  $r=\inf(R, r_0,r_1/4)$, we define a sequence $(z^{(j)})_j$, for $j\ge0$, by
  $z^{(j)}=\Gamma(t_j)$ where $t_0=0$ and 
  \begin{align*}
    t_j= \begin{cases}
      \inf  A_j & \text{if}\  A_j \neq \emptyset,\\
      1 & \text{if}\  A_j =\emptyset,
      \end{cases}\qquad 
      A_j = \{ \sigma\in(t_{j-1},1];\ \Gamma(\sigma)\not\in
      B(z^{j-1},r)\}.
    \end{align*}      
    The sequence $(z^{(j)})_j$ is finite by a compactness argument.
    The construction of the sequence is illustrated in Figure~\ref{fig:
      sequence construction}.
    %%%%%%%%%%%%%%%%%%%%%%%%
% figure               %
%%%%%%%%%%%%%%%%%%%%%%%%
\begin{figure}
  \begin{center}
    \begin{picture}(0,0)%
\includegraphics{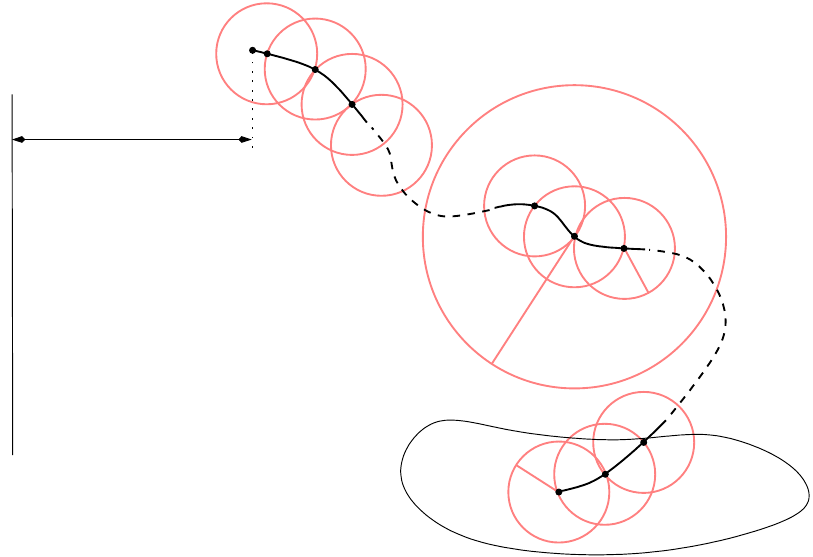}%
\end{picture}%
\setlength{\unitlength}{3947sp}%
\begingroup\makeatletter\ifx\SetFigFont\undefined%
\gdef\SetFigFont#1#2#3#4#5{%
  \reset@font\fontsize{#1}{#2pt}%
  \fontfamily{#3}\fontseries{#4}\fontshape{#5}%
  \selectfont}%
\fi\endgroup%
\begin{picture}(6484,4438)(-1140,-3963)
\put(3055,-2171){\makebox(0,0)[lb]{\smash{{\SetFigFont{12}{14.4}{\rmdefault}{\mddefault}{\updefault}{\color[rgb]{0,0,0}$3 r$}%
}}}}
\put(3823,-3349){\makebox(0,0)[lb]{\smash{{\SetFigFont{12}{14.4}{\rmdefault}{\mddefault}{\updefault}{\color[rgb]{0,0,0}$z^{(1)}$}%
}}}}
\put(3900,-1418){\makebox(0,0)[lb]{\smash{{\SetFigFont{12}{14.4}{\rmdefault}{\mddefault}{\updefault}{\color[rgb]{0,0,0}$z^{(j-1)}$}%
}}}}
\put(3008,-3443){\makebox(0,0)[lb]{\smash{{\SetFigFont{12}{14.4}{\rmdefault}{\mddefault}{\updefault}{\color[rgb]{0,0,0}$r$}%
}}}}
\put(3522,-1396){\makebox(0,0)[lb]{\smash{{\SetFigFont{12}{14.4}{\rmdefault}{\mddefault}{\updefault}{\color[rgb]{0,0,0}$z^{(j)}$}%
}}}}
\put(2950,-1041){\makebox(0,0)[lb]{\smash{{\SetFigFont{12}{14.4}{\rmdefault}{\mddefault}{\updefault}{\color[rgb]{0,0,0}$z^{(j+1)}$}%
}}}}
\put(1481,-84){\makebox(0,0)[lb]{\smash{{\SetFigFont{12}{14.4}{\rmdefault}{\mddefault}{\updefault}{\color[rgb]{0,0,0}$z^{(N-2)}$}%
}}}}
\put(1077, 94){\makebox(0,0)[lb]{\smash{{\SetFigFont{12}{14.4}{\rmdefault}{\mddefault}{\updefault}{\color[rgb]{0,0,0}$z^{(N-1)}$}%
}}}}
\put(2322,-3258){\makebox(0,0)[lb]{\smash{{\SetFigFont{12}{14.4}{\rmdefault}{\mddefault}{\updefault}{\color[rgb]{0,0,0}$\mathscr Z$}%
}}}}
\put(-1125,-3345){\makebox(0,0)[lb]{\smash{{\SetFigFont{12}{14.4}{\rmdefault}{\mddefault}{\updefault}{\color[rgb]{0,0,0}$\d Z$}%
}}}}
\put(4718,-2196){\makebox(0,0)[lb]{\smash{{\SetFigFont{12}{14.4}{\rmdefault}{\mddefault}{\updefault}{\color[rgb]{0,0,0}$\Gamma$}%
}}}}
\put(-720,-871){\makebox(0,0)[lb]{\smash{{\SetFigFont{12}{14.4}{\rmdefault}{\mddefault}{\updefault}{\color[rgb]{0,0,0}$r_1=\dist(\Gamma, \d Z) \geq 4 r$}%
}}}}
\put(4077,-3109){\makebox(0,0)[lb]{\smash{{\SetFigFont{12}{14.4}{\rmdefault}{\mddefault}{\updefault}{\color[rgb]{0,0,0}$z^{(2)}$}%
}}}}
\put(697,123){\makebox(0,0)[lb]{\smash{{\SetFigFont{12}{14.4}{\rmdefault}{\mddefault}{\updefault}{\color[rgb]{0,0,0}$z^{(N)}$}%
}}}}
\put(647,-79){\makebox(0,0)[lb]{\smash{{\SetFigFont{12}{14.4}{\rmdefault}{\mddefault}{\updefault}{\color[rgb]{0,0,0}$=y$}%
}}}}
\put(3211,-3669){\makebox(0,0)[lb]{\smash{{\SetFigFont{12}{14.4}{\rmdefault}{\mddefault}{\updefault}{\color[rgb]{0,0,0}$z^{(0)}$}%
}}}}
\put(3849,-1840){\makebox(0,0)[lb]{\smash{{\SetFigFont{12}{14.4}{\rmdefault}{\mddefault}{\updefault}{\color[rgb]{0,0,0}$r$}%
}}}}
\end{picture}%

    \caption{Construction of the sequence $(z^{(j)})_j$, $j \in J$,  along the path $\Gamma$.}
  \label{fig: sequence construction}
  \end{center}
\end{figure}

    \medskip
    Let $(z^{(0)}, \cdots,z^{(N)})$ be such a sequence with
  $z^{(N)}=z$. Note that we have $B(z^{(j+1)},r)\subset B(z^{(j)}, 3r) \subset Z$, for
  $j=0,\cdots, N-1$, because of the choice we made for $r$ above. Now we claim that there exists $C>0$ and $\delta
  \in (0,1)$ \st
  \begin{align}\label{eq: claim interpolation}
    \Norm{u}{H^3(B(z^{(j+1)},r))}\le \Norm{u}{H^3(B(z^{(j)},3r))} 
    \leq C  \Norm{u}{H^3(Z)}^{1-\delta}
    \Big(\Norm{P u}{L^2(Z)} + \Norm{u}{H^3(B(z^{(j)},r))} \Big)^{\delta},
\end{align}
for $j=0,\dots, N-1$. This claim is proven below.

\medskip We assume that $\Norm{P u}{L^2(Z)}\le \Norm{u}{H^3(Z)}$,
since otherwise the estimate we wish to prove is obvious. We then have
$$
\Norm{P u}{L^2(Z)}+\Norm{u}{H^3(B(z^{(j+1)},r))} \lesssim \Norm{u}{H^3(Z)}^{1-\delta}
    \left(\Norm{P u}{L^2(Z)} + \Norm{u}{H^3(B(z^{(j)},r))} \right)^{\delta}.
$$
By induction on $j$, we find
\begin{align}\label{eq: interpolation H1}
\Norm{P u}{L^2(Z)}+\Norm{u}{H^3(B(z,r))} \lesssim  \Norm{u}{H^3(Z)}^{1-\mu}
    \Big(\Norm{P u}{L^2(Z)} + \Norm{u}{H^3(B(z^{(0)},r))} \Big)^{\mu},
\end{align}
where $\mu=\delta^{N}$.

\medskip
As $P$ is elliptic, and $B(z^{(0)},r) \Subset \mathscr Z$ we have
 $\Norm{u}{H^3(B(z^{(0)},r))}\lesssim \Norm{P
  u}{L^2(Z)} +\Norm{u}{L^2(\mathscr Z)} $. 
This estimate and \eqref{eq: interpolation H1} give \eqref{eq: interior interpolation inequality}.

\bigskip To prove estimation \eqref{eq: claim interpolation} we apply
the local Carleman estimate of Proposition~\ref{prop: Carleman P
  interior}.  We set $\psi(z) = - |z-z^{(j)}|^2$ and
$\varphi(z)=e^{\csp \psi(z)}$ and $\chi \in \Con_c^\infty(B(z^{(j)},4r))$
to be such that
\begin{equation*}
 \chi(z)=
 \begin{cases}
   1 & \text{if}\  3r/4 <|z-z^{(j)}|<7r/2,\\
   0 & \text{if}\ |z-z^{(j)}|<5r/8 \ \text{or} \ 15 r/4 <  |z-z^{(j)}|.
  \end{cases}
\end{equation*}
The function $v=\chi u$ is supported in the open set
$B(z^{(j)},4r)\setminus B(z^{(j)},r/2) \subset Z$ where $d \psi$ does not vanish. 
For $\csp\geq 1$ chosen \suff large, by Proposition~\ref{prop: Carleman P
  interior}, we have
\begin{align} 
  \label{eq: carleman interpolation}
  \sum_{|\mi|\leq 4} \tau^{3 - |\mi|}  \Norm{e^{\tau \varphi} D_z^\mi v}{L^2(Z)}
    \lesssim  \Norm{e^{\tau \varphi} P v}{L^2(Z)}.
\end{align}
We have $Pv=\chi Pu+[P,\chi]u$ and $[P,\chi]$ is a differential
operator of order 3 supported in $A_1 \cup A_2$ with
\begin{equation*}
A_1 = \{ z; \ 5r/8\leq   |z-z^{(j)}|\leq 3r/4\}, \qquad A_2 = \{ z; \ 7r/2 \leq   |z-z^{(j)}|\leq 15r/4\}.
\end{equation*}
We write 
\begin{align*}
  \Norm{e^{\tau \varphi} P v}{L^2(Z)} 
  \leq \Norm{e^{\tau \varphi} P u}{L^2(B(z^{(j)},4r))} 
  + \Norm{e^{\tau \varphi} [P, \chi] u}{L^2(A_1 \cup A_2)}.
\end{align*}
Since $\varphi $ decreases as $ |z-z^{(j)}|$ increases, we find
\begin{align} \label{eq: estimate P interpolation}
 \Norm{e^{\tau \varphi} P v}{L^2(Z)}
 \lesssim e^{\tau C_3} \Norm{ P u}{L^2(Z)}
 + e^{\tau C_3} \Norm{u}{H^3(B(z^{(j)},r))}
 +e^{\tau C_1 } \Norm{u}{H^3( Z)},
\end{align}
where $C_1= e^{-\csp (7r/2)^2} $ and $C_3=e^{-\csp (5r/8)^2}$.

As we have $\chi\equiv 1$ 
on $B(z^{(j)},3r)\setminus B(z^{(j)},r)$ we  have
\begin{align}
  \label{eq: estimate u H1 interpolation}
  e^{\tau C_2}\Norm{u}{H^3(B(z^{(j)},3r)\setminus B(z^{(j)},r))} 
  \leq  \sum_{|\mi|\leq 4} \tau^{3 - |\mi|}  \Norm{e^{\tau \varphi} D_z^\mi v}{L^2(Z)},
\end{align}
where $C_2=e^{-\csp (3r)^2}$. Remark that $C_1<C_2<C_3$.

Inequalities~\eqref{eq: carleman interpolation}, \eqref{eq: estimate P
  interpolation}, and \eqref{eq: estimate u H1 interpolation} give
\begin{align*}
\Norm{u}{H^3(B(z^{(j)},3r))} \lesssim e^{\tau (C_3-C_2)}( \Norm{ P u}{L^2(Z)}+\Norm{u}{H^3(B(z^{(j)},r))})
+e^{-\tau (C_2-C_1) } \Norm{u}{H^3( Z)}.
\end{align*}
as
the estimate on $B(z^{(j)},r)$ is clear with such a \rhs if $\tau\geq
\tauast\geq 1$. 
We can optimize this last  estimate applying  Lemma~\ref{lemma:
  optimization interpolation}, which yields~\eqref{eq: claim
  interpolation}, and concludes the proof of Lemma~\ref{lemma: interior interpolation}.
\end{proof}

 %%%%%%%%%%%%%%%%%%%%%%%% 
  % figure               %
  %%%%%%%%%%%%%%%%%%%%%%%% 
  \begin{figure}
  \begin{center}
    \begin{picture}(0,0)%
\includegraphics{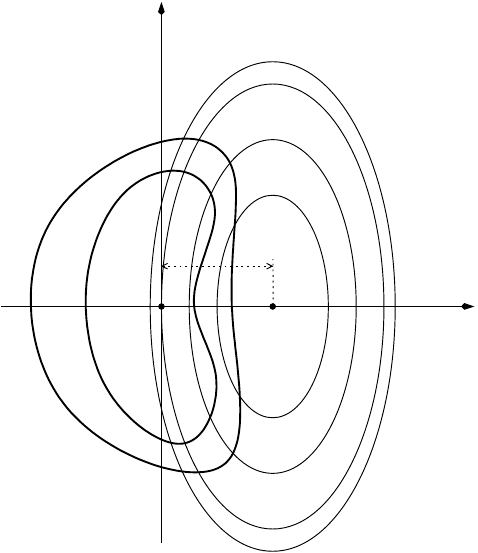}%
\end{picture}%
\setlength{\unitlength}{3947sp}%
\begingroup\makeatletter\ifx\SetFigFont\undefined%
\gdef\SetFigFont#1#2#3#4#5{%
  \reset@font\fontsize{#1}{#2pt}%
  \fontfamily{#3}\fontseries{#4}\fontshape{#5}%
  \selectfont}%
\fi\endgroup%
\begin{picture}(3807,4415)(-986,-3323)
\put(1233,-1311){\makebox(0,0)[lb]{\smash{{\SetFigFont{10}{12.0}{\rmdefault}{\mddefault}{\updefault}{\color[rgb]{0,0,0}$z^{(1)}$}%
}}}}
\put(2698,-1524){\makebox(0,0)[lb]{\smash{{\SetFigFont{10}{12.0}{\rmdefault}{\mddefault}{\updefault}{\color[rgb]{0,0,0}$z_{N}=x_d$}%
}}}}
\put(-417,-2193){\makebox(0,0)[lb]{\smash{{\SetFigFont{10}{12.0}{\rmdefault}{\mddefault}{\updefault}{\color[rgb]{0,0,0}$V$}%
}}}}
\put(-139,-1914){\makebox(0,0)[lb]{\smash{{\SetFigFont{10}{12.0}{\rmdefault}{\mddefault}{\updefault}{\color[rgb]{0,0,0}$W$}%
}}}}
\put(250, 34){\rotatebox{90.0}{\makebox(0,0)[lb]{\smash{{\SetFigFont{10}{12.0}{\rmdefault}{\mddefault}{\updefault}{\color[rgb]{0,0,0}$(0,S_0) \times \d \Omega$}%
}}}}}
\put(1870,157){\makebox(0,0)[lb]{\smash{{\SetFigFont{10}{12.0}{\rmdefault}{\mddefault}{\updefault}{\color[rgb]{0,0,0}$\cpsi= \Cst$}%
}}}}
\put(409,901){\makebox(0,0)[lb]{\smash{{\SetFigFont{10}{12.0}{\rmdefault}{\mddefault}{\updefault}{\color[rgb]{0,0,0}$z'=(s,x')\in \R^{N-1}$}%
}}}}
\put(967,-1176){\makebox(0,0)[lb]{\smash{{\SetFigFont{10}{12.0}{\rmdefault}{\mddefault}{\updefault}{\color[rgb]{0,0,0}$2r$}%
}}}}
\put(336,-1308){\makebox(0,0)[lb]{\smash{{\SetFigFont{10}{12.0}{\rmdefault}{\mddefault}{\updefault}{\color[rgb]{0,0,0}$z_0$}%
}}}}
\end{picture}%

    \caption{Geometry near the boundary for the application of the
      local Carleman estimate of Theorem~\ref{theorem: Carleman boundary x}.}
  \label{fig: local interpolation boundary x -1}
  \end{center}
\end{figure}

%%%% proof of lemma
\begin{proof}[\bfseries Proof of Lemma~\ref{lemma: boundary x
    interpolation inequality}]
 
  The proof follows the same ideas as that of estimate~\eqref{eq: claim
    interpolation} applying the boundary-type local Carleman estimate of Theorem~\ref{theorem: Carleman boundary x}. We use local
  coordinates in a bounded \nhd $V$ in $\R^N$ of the
  point $z_0= (s_0, x_0)$ of $(0,S_0)\times
  \d\Omega$ as introduced in Section~\ref{sec: local setting}, such that this part of the boundary  is locally given by
  $\{ z_{N} = x_d=0\} $ and $Z$ is
  locally given by $\{ z_{N}>0\} $; coordinates can be chosen to have
  moreover  $z_0=
  (z_0',0)$, with $z_0'=0$. We set  $z^{(1)}=(0,2r)$ where $r >0$. 

  We let $\psi \in \Cinf(\R^N)$ be such that 
  \begin{align*}
    \psi(z) =  \begin{cases}
      12 r^2- |z-z^{(1)}|^2 & \text{if}\ |z-z^{(1)}|\leq 3 r, \\
      r^2 & \text{if}\ 4r \leq |z-z^{(1)}|. 
      \end{cases}
  \end{align*}
  We have $\psi(z) \geq r^2>0$, $\Norm{\psi^{(k)}}{L^\infty} < \infty$,  $k
    \in \N$, and 
 \begin{align*}
   \d_\nu  \psi(z)=-\d_{z_{N}}\psi(z)=2 (z_{N}-2r) \leq -C <0,
 \end{align*}
 for $|z-z^{(1)}|\leq 3 r$ and $z_N=0$. 
 Upon reducing the open \nhd $V$, the weight function $\psi$ fulfills
 the requirements listed in \eqref{eq: cond psi} and \eqref{eq: cond psi2}. 
  
  We set $\varphi(z)=e^{\csp \cpsi(z)}$, where $\cpsi (z) = \psi(\ctp
  z '\! ,z_N)$.  
According to
  Theorem~\ref{theorem: Carleman boundary x}, there exist a \nhd
  $W \Subset V$ in $R^N$ of $z_0$, $\tau_0\geq \tauast$, $\csp_0\geq 1$, and
  $\ctp_0 \in (0,1]$ so that the Carleman estimate~\eqref{eq: Carleman
    boundary x} holds for $\tau \geq \tau_0$, $\csp \geq \csp_0$,
  $\ctp \in (0,\ctp_0]$ and smooth functions supported in $W$.  We set
  $\csp=\csp_0$ and $\ctp=\ctp_0$. The geometry of the level sets of
  the weight function is illustrated in Figure~\ref{fig: local
    interpolation boundary x -1}. 
  %%%%%%%%%%%%%%%%%%%%%%%% 
  % figure               %
  %%%%%%%%%%%%%%%%%%%%%%%% 
  \begin{figure}[t]
  \begin{center}
    \begin{picture}(0,0)%
\includegraphics{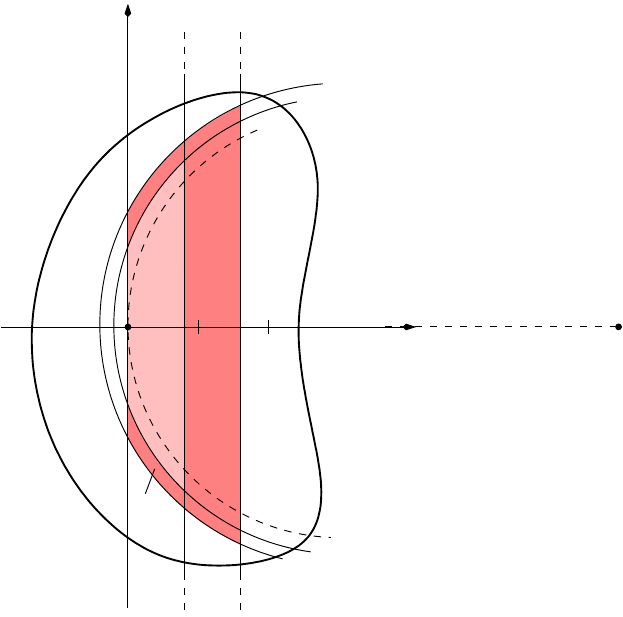}%
\end{picture}%
\setlength{\unitlength}{3947sp}%
\begingroup\makeatletter\ifx\SetFigFont\undefined%
\gdef\SetFigFont#1#2#3#4#5{%
  \reset@font\fontsize{#1}{#2pt}%
  \fontfamily{#3}\fontseries{#4}\fontshape{#5}%
  \selectfont}%
\fi\endgroup%
\begin{picture}(4980,4927)(-611,-3634)
\put(2662,-1542){\makebox(0,0)[lb]{\smash{{\SetFigFont{10}{12.0}{\rmdefault}{\mddefault}{\updefault}{\color[rgb]{0,0,0}$z_N=x_{d}$}%
}}}}
\put( 19,-2553){\makebox(0,0)[lb]{\smash{{\SetFigFont{10}{12.0}{\rmdefault}{\mddefault}{\updefault}{\color[rgb]{0,0,0}$W$}%
}}}}
\put(1026,-1257){\rotatebox{90.0}{\makebox(0,0)[lb]{\smash{{\SetFigFont{10}{12.0}{\rmdefault}{\mddefault}{\updefault}{\color[rgb]{0,0,0}$r/4$}%
}}}}}
\put(1588,-1248){\rotatebox{90.0}{\makebox(0,0)[lb]{\smash{{\SetFigFont{10}{12.0}{\rmdefault}{\mddefault}{\updefault}{\color[rgb]{0,0,0}$r/2$}%
}}}}}
\put(470,1158){\makebox(0,0)[lb]{\smash{{\SetFigFont{10}{12.0}{\rmdefault}{\mddefault}{\updefault}{\color[rgb]{0,0,0}$(s,x') \in \R^{N-1}$}%
}}}}
\put(816,-1264){\rotatebox{90.0}{\makebox(0,0)[lb]{\smash{{\SetFigFont{10}{12.0}{\rmdefault}{\mddefault}{\updefault}{\color[rgb]{0,0,0}$r_0$}%
}}}}}
\put(1274,-1269){\rotatebox{90.0}{\makebox(0,0)[lb]{\smash{{\SetFigFont{10}{12.0}{\rmdefault}{\mddefault}{\updefault}{\color[rgb]{0,0,0}$2r_0$}%
}}}}}
\put(443,-1274){\makebox(0,0)[lb]{\smash{{\SetFigFont{10}{12.0}{\rmdefault}{\mddefault}{\updefault}{\color[rgb]{0,0,0}$z_0$}%
}}}}
\put(2071,-3017){\makebox(0,0)[lb]{\smash{{\SetFigFont{10}{12.0}{\rmdefault}{\mddefault}{\updefault}{\color[rgb]{0,0,0}$\cnorm{z-z^{(1)}} = 2r$}%
}}}}
\put(1777,429){\makebox(0,0)[lb]{\smash{{\SetFigFont{10}{12.0}{\rmdefault}{\mddefault}{\updefault}{\color[rgb]{0,0,0}$\cnorm{z-z^{(1)}} = r_1$}%
}}}}
\put(1988,629){\makebox(0,0)[lb]{\smash{{\SetFigFont{10}{12.0}{\rmdefault}{\mddefault}{\updefault}{\color[rgb]{0,0,0}$\cnorm{z-z^{(1)}} = r_1'$}%
}}}}
\put(1026,-2118){\makebox(0,0)[lb]{\smash{{\SetFigFont{10}{12.0}{\rmdefault}{\mddefault}{\updefault}{\color[rgb]{0,0,0}$A_1$}%
}}}}
\put(475,-2800){\makebox(0,0)[lb]{\smash{{\SetFigFont{10}{12.0}{\rmdefault}{\mddefault}{\updefault}{\color[rgb]{0,0,0}$A_2$}%
}}}}
\put(357,315){\rotatebox{90.0}{\makebox(0,0)[lb]{\smash{{\SetFigFont{10}{12.0}{\rmdefault}{\mddefault}{\updefault}{\color[rgb]{0,0,0}$(0,S_0) \times \d \Omega$}%
}}}}}
\put(4348,-1269){\makebox(0,0)[lb]{\smash{{\SetFigFont{10}{12.0}{\rmdefault}{\mddefault}{\updefault}{\color[rgb]{0,0,0}$z^{(1)}$}%
}}}}
\end{picture}%

    \caption{Geometry near the boundary for the derivation of the
      local interpolation inequality. The light color region shows
      where $\chi\equiv 1$; the dark color region shows where $\chi$
      varies. Note that the relative scale of the two axes has been
      modified, if compared to Figure~\ref{fig: local interpolation
        boundary x -1}, for a better display of
      the regions $A_1$ and $A_2$ near $z_0$.}
  \label{fig: local interpolation boundary x -2}
  \end{center}
\end{figure}

 In connection with the weight
  function $\cpsi$, we introduce the following anisotropic norm in
  $\R^N$, 
  that depends on the (now fixed) parameter $\ctp$, 
  \begin{align*}
    \cnorm{z- y} = \big( \ctp^2|z'-y'|^2 + (z_N-y_N)^2\Big)^{1/2}.
  \end{align*}
  Note that with $\csp$ and $\ctp$ fixed we have $\tau \asymp \ttau$.

  We denote by $\cB(z,r)$ the ball of radius $r$ centered at $z$
  associated with  this norm. We have 
  \begin{align*}
    \cpsi(z) =  \begin{cases}
      12 r^2- \cnorm{z-z^{(1)}}^2 & \text{if}\ \cnorm{z-z^{(1)}}\leq 3 r, \\
      r^2 & \text{if}\ 4r \leq \cnorm{z-z^{(1)}}. 
      \end{cases}
  \end{align*}

  Let $\chi_0\in \Cinfc(\R)$ be \st
  \begin{equation*}
    \chi_0(z_N)= \begin{cases}
      1 & \text{if}\ |z_N|<r_0,\\
      0 & \text{if}\ 2 r_0 < |z_N|,
      \end{cases}
  \end{equation*} 
  where $r_0<r/4$. 
 Let also $\chi_1 \in \Cinfc(\cB(z^{(1)},3r))$  be such that 
  \begin{equation*}
    \chi_1(z)= \begin{cases}
      1 & \cnorm{z-z^{(1)}}<r_1,\\
      0 & \text{if}\ \ r_1' < \cnorm{z-z^{(1)}},
      \end{cases}
  \end{equation*} 
  where $r_1$, $r_1'$ are such that
  $2r<r_1<r_1'<3r$.
Observe that if we choose the values
of $r_1'-2r>0$ and $r_0>0$ \suff small, then the open set
$\{z\in Z;\ z_N\in (0, 2 r_0)\} \cap \{ z\in Z; \ \cnorm{z-z^{(1)}}<r_1'\}$ is
contained in $W$.
We now set $\chi(z) = \chi_1(z) \chi_0(z_N)$. 
  Figure~\ref{fig: local interpolation boundary x -2} shows, near $z_0$,
  the region where $\chi\equiv 1$ and where it varies, that is 
   $ \supp(\chi')\cap Z \subset A_1 \cup A_2$
    with 
    \begin{align*}
    A_1&= \{ z\in Z; z_N\in (r_0, 2 r_0) \    
    \text{and}\ \cnorm{z-z^{(1)}}<r_1'\} , \\
      A_2 &= 
    \{ z\in Z;  z_N\in (0, 2 r_0) \    
    \text{and}\ r_1< \cnorm{z-z^{(1)}}<r_1'\}.
  \end{align*}

The Carleman estimate~\eqref{theorem: Carleman boundary x} applies to
$v = \chi u$, by a density argument.  As $u_{|z_N=0^+} =0$ and $\d_\nu u_{|z_N=0^+} =0$ we
obtain (the values of $\csp$ and $\ctp$ were fixed above)
\begin{align}
  \label{eq: Carleman elliptique bord Dirichlet - interpolation}
   \sum_{|\mi| \leq 3}\tau^{3-|\mi|} \Norm{e^{\tau \varphi}
    D_{s,x}^\mi v}{L^2(W \cap Z)} 
 \lesssim  \Norm{e^{\tau \varphi} P  v}{L^2(W \cap Z)},
  \qquad \tau  \geq \tau_0.
   \end{align}

   We have $Pv=\chi P u+[P,\chi] u$, where
   $[P,\chi ]$ is a differential operator of order 3 that is 
   supported in $A_1 \cup A_2$.
   On $A_1$, we have $\varphi\le e^{\csp ( 12 r^2 - (2r-2r_0)^2)}$. 
   On $A_2$ we have  $\varphi\le e^{\csp ( 12 r^2 - r_1^2) } $. 
   We thus obtain
   \begin{align}
     \label{eq: estimate Q interpolation boundary}
     \Norm{e^{\tau \varphi} P  v}{L^2(W \cap Z)} 
     \lesssim e^{\tau C_3} \big(\Norm{P u}{L^2(Z)}
       +   \Norm{u}{H^3(Y_{\alpha',a})}  \big)   
       +      e^{\tau C_1 }  \Norm{u}{H^3(Z)},
\end{align}
where $C_1= e^{\csp (12 r^2 - r_1^2)  } $, $C_3= e^{\csp (12 r^2 - (2r-2r_0)^2)}$ and
$0< a< r_0$ and some $\alpha' \in (0,\alpha)$ (recalling the definition of the set $Y_{\alpha',a}$ in \eqref{eq: def Yeps}).

We restrict the \lhs of~\eqref{eq: Carleman elliptique bord
  Dirichlet - interpolation} to $V_0=\{z \in Z;\  z_N\in (0,r_0)\}\cap
\{ z\in Z; \ |z-z^{(1)}|  < r_2 \}$, with $r_2 = r+ r_1/2$, 
 whose closure is a  \nhd of $z_0$
in $\ovl{Z}$. Note that $2 r < r_2 < r_1$. As on this set we have $\varphi\geq
e^{\csp( 12 r^2-  r_2^2)}$ and $u\equiv v$,  we obtain
\begin{align}\label{eq: estimate u H1 interpolation boundary}
  e^{\tau C_2}\Norm{u}{H^3(V_0)}\leq  \sum_{|\mi| \leq 3}\tau^{3-|\mi|} \Norm{e^{\tau \varphi}
    D_{s,x}^\mi v}{L^2(W \cap Z)} ,
\end{align}
where $C_2=  e^{\csp(12 r^2 -  (r+r_1/2)^2)}$.
Then \eqref{eq: Carleman elliptique bord Dirichlet - interpolation},  \eqref{eq: estimate Q interpolation boundary} 
and  \eqref{eq: estimate u H1 interpolation boundary} give
\begin{align}
\Norm{u}{H^3(V_0)} \lesssim e^{\tau (C_3 -C_2)}
\big(\Norm{P u}{L^2(Z)}+   \Norm{u}{H^3(Y_{\alpha',a})}  \big)     
+      e^{-\tau (C_2-C_1) }  \Norm{u}{H^3(Z)}.
\end{align}
 Observe that we have $C_1<C_2<C_3$. 
By  Lemma~\ref{lemma: optimization interpolation}, we obtain the sought local
interpolation inequality at the boundary.
\end{proof}

\subsection{Spectral inequality}
\label{sec: from interpolation to spectral inequality}
Let $\phi_j$ and $\mu_j$ be eigenfunctions and associated eigenvalues of the
bi-Laplace operator $B$ with the clamped boundary conditions, that
form a Hilbert basis for $L^2(\Omega)$, \viz, 
\begin{align*}
  B \phi_j = \mu_j \phi_j, \qquad {\phi_j}_{|\d \Omega} = \d_{\nu}  {\phi_j}_{|\d \Omega} 
  = 0,\qquad (\phi_j, \phi_k)_{L^2(\Omega)}=\delta_{jk},
\end{align*}
with $0 < \mu_0 \leq \mu_1 \leq \cdots \leq \mu_j \leq \cdots$.  We
now prove the spectral inequality of Theorem~\ref{theorem: spectral inequality}, namely, for some $C>0$, 
\begin{align}
  \label{eq: spectral inequality bi-laplace 2} 
  \Norm{u}{L^2(\Omega)} \leq C e^{C \mu^{1/4}}
  \Norm{u}{L^2(\O)}, \qquad \mu >0, \quad   u \in \Span \{\phi_j; \ \mu_j \leq \mu\}.
\end{align}

%%%% proof of theorem
\begin{proof}
  We let $\mu>0$ and we pick $\alpha_0, \dots, \alpha_n \in \C$  with $n
  \in \N$ \st
  $\mu_n \leq \mu < \mu_{n+1}$. We set 
  \begin{align*}
    u (x) = \sum_{\mu_j \leq \mu}    \alpha_j \phi_j(x),  
    \qquad 
    w(s,x) = \sum_{\mu_j \leq \mu}  
    \alpha_j\mu_j^{-3/4} f(\mu_j^{1/4} s) \phi_j(x),  
  \end{align*}
  where $f(s) =\gamma \sin(\gamma s)\cosh(\gamma s)  - \gamma \cos(\gamma s)\sinh(\gamma s)$ 
  where here  $\gamma=\sqrt {2}/2$. 
  %= (\cosh(z_0 s) + \cos(z_0 s)) /2$, with 
  %$z_0 = e^{i \pi/4}$. 
  As $D_s^4 f = -f$,  we have $P v=0$, with $P = D_s^4 + B$. 
  We also have 
  \begin{align*}
    f(0) =f'(0) = f^{(2)}(0) =0, \quad  f^{(3)}(0) =1, 
  \end{align*}
  and 
  \begin{align*}
    f(s) =
    g(\gamma s), \quad 
    g(s) = \frac12(e^{-s}\cos ( s-\pi/4)-e^{ s}\cos (s+\pi/4))  .
  \end{align*}
  Since   $ w(s,x)|_{x\in \d \Omega} = \d_\nu w(s,x)|_{x\in \d \Omega}
    =0$, the interpolation inequality of Theorem~\ref{theorem:
    interpolation inequality} yields
  \begin{align*}
    \Norm{w}{H^3(Y)} \leq C \Norm{w}{H^3(Z)}^{1-\delta}
    \Norm{\d_s^3w_{|s=0}}{L^2(\O)}^\delta.
  \end{align*}  
  Observe that 
  we have $\d_s^3 w_{|s=0} = u$ and $\Norm{w}{H^3(Y)} \gtrsim \Norm{w}{L^2(Y)}$ with
  \begin{align*}
    \Norm{w}{L^2(Y)}^2 
    &=  \sum_{\mu_j \leq \mu}   \mu_j^{-3/2}|\alpha_j|^2
      \int_{\alpha}^{S_0-\alpha}
      f(\mu_j^{1/4} s)^2 d s
      = \sum_{\mu_j \leq \mu}   |\alpha_j|^2
      \gamma^{-1} \mu_j^{-7/4} \int_{\alpha \gamma \mu_j^{1/4} }^{(S_0-\alpha)
      \gamma \mu_j^{1/4}}
      g(s)^2 d s\\
    &\gtrsim \mu^{-7/4} \sum_{\mu_j \leq \mu}   |\alpha_j|^2 
      = \mu^{-7/4}  \Norm{u}{L^2(\Omega)}^2,\notag
  \end{align*}  
 using  the  following lemma, whose proof is given below.
  %%%%%%%%%%%%%%%%%%%%%%%%
% lemma                %
%%%%%%%%%%%%%%%%%%%%%%%%
\begin{lemma}
  \label{lemma: minoration integral}
  Let $0 <a < b$ and $t_0>0$.  There exists $C_0$ \st 
  $\int_{a t }^{b t}  g(s)^2 d s \geq C_0$ for $t\ge t_0$. 
\end{lemma}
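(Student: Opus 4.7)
The plan is to treat $h(t) := \int_{at}^{bt} g(s)^2\,ds$ as a continuous function of $t$ on $[t_0,+\infty)$ and establish a positive lower bound by combining three ingredients: strict positivity pointwise, continuity, and the asymptotic behavior as $t\to +\infty$.

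First, I would observe that $g$ is a non-trivial real analytic function on $\R$, since it is a linear combination of $e^{\pm s}\cos(s\pm\pi/4)$; therefore its zero set is discrete. Consequently, for every $t>0$ the interval $[at,bt]$ has positive Lebesgue measure and contains points where $g^2>0$, which gives $h(t)>0$. The map $t\mapsto h(t)$ is continuous on $[t_0,+\infty)$, so $h$ attains its minimum on any compact subinterval $[t_0,T]$, and this minimum is strictly positive.

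Next, I would show that $h(t)\to+\infty$ as $t\to +\infty$, so that the infimum is not destroyed at infinity. Using the explicit form
\begin{equation*}
  g(s) = \tfrac12\bigl(e^{-s}\cos(s-\pi/4) - e^{s}\cos(s+\pi/4)\bigr),
\end{equation*}
expansion gives $g(s)^2 = \tfrac14 e^{2s}\cos^2(s+\pi/4) + O(1)$ as $s\to+\infty$. Using $\cos^2(s+\pi/4) = \tfrac12(1-\sin(2s))$, an explicit integration yields
\begin{equation*}
  \int_{at}^{bt} \tfrac14 e^{2s}\cos^2(s+\pi/4)\,ds = \tfrac{1}{16}\bigl(e^{2bt}-e^{2at}\bigr) + O(e^{2bt}/\langle t\rangle),
\end{equation*}
which tends to $+\infty$ since $b>a>0$. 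Hence $h(t)\to +\infty$, so in particular there exists $T\geq t_0$ with $h(t)\geq 1$ for $t\geq T$.

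Combining the two estimates, set $C_0 = \min\bigl(1,\,\inf_{t\in[t_0,T]} h(t)\bigr)>0$; then $h(t)\geq C_0$ for all $t\geq t_0$, which is the desired bound. The only mildly delicate step is the asymptotic computation, and it is purely elementary once one discards the $e^{-s}$ contribution and the bounded cross term; no obstacle is genuinely substantive here, since $a<b$ ensures the interval of integration sweeps over many oscillations of $\cos^2(s+\pi/4)$ at an exponentially growing amplitude.
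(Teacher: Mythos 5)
Your proposal follows the same two-part strategy as the paper: handle $t$ large by growth of the integrand and $t$ in a compact range by continuity and strict positivity. Both steps are sound in spirit, but there is a technical error in the asymptotic computation worth flagging. After writing $\cos^2(s+\pi/4)=\tfrac12(1-\sin 2s)$, the term $\int_{at}^{bt} e^{2s}\sin(2s)\,ds$ is not $O(e^{2bt}/\langle t\rangle)$; an explicit antiderivative gives $\tfrac14 e^{2s}(\sin 2s-\cos 2s)$, so this contribution is genuinely $O(e^{2bt})$, \emph{of the same order} as the main term $\tfrac1{16}(e^{2bt}-e^{2at})$. The argument is saved only because the implicit constants are favorable: the oscillatory piece contributes at most $\tfrac{\sqrt 2}{32}e^{2bt}$, which is strictly smaller than $\tfrac1{16}e^{2bt}$, so $h(t)\to +\infty$ does hold — but this requires comparing constants, not orders of magnitude. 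Your stated error bound would suggest the oscillatory term is negligible, which it is not. The paper sidesteps the bookkeeping entirely by noting that for $t$ large the interval $[at,bt]$ contains a whole subinterval of the form $[-\pi/2+2k\pi,\,2k\pi]$, on which $\cos(s+\pi/4)\geq \sqrt2/2$ and hence $|g(s)|\geq 1$, immediately giving $\int_{at}^{bt}g^2\,ds\geq\pi/2$; this is cleaner and avoids any cancellation issues. Your treatment of the compact range is fine (analyticity of $g$ guarantees the zero set is discrete, so $h>0$ there, and continuity plus compactness gives a positive infimum), and matches the paper's one-line argument.
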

 We thus obtain
 \begin{align}
   \label{eq: spectral 1}
    \Norm{u}{L^2(\Omega)} \lesssim \mu^{7/8} 
   \Norm{w}{H^3(Z)}^{1-\delta}
    \Norm{u}{L^2(\O)}^\delta.
\end{align}
Next, we estimate $ \Norm{w}{H^3(Z)}$, with the following lemma,
which, from \eqref{eq: spectral 1},  allows one to conclude
the proof of Theorem~\ref{theorem: spectral inequality}.
\end{proof}
 %%%%%%%%%%%%%%%%%%%%%%%%
% lemma                %
%%%%%%%%%%%%%%%%%%%%%%%%
\begin{lemma}
  \label{lemma: }
There exists $C>0$ \st 
$\| w\|_{H^3(Z)}\le C e^{C\mu^{1/4}} \Norm{u}{L^2(\Omega)}$.
\end{lemma}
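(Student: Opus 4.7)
The plan is to estimate $\|w\|_{H^3(Z)}^2 \asymp \sum_{a+|\beta|\leq 3} \|\d_s^a \d_x^\beta w\|_{L^2(Z)}^2$ term by term. For each monomial we have the explicit expression
\begin{equation*}
\d_s^a \d_x^\beta w(s,x) = \sum_{\mu_j \leq \mu} \alpha_j\, \mu_j^{-3/4 + a/4}\, f^{(a)}(\mu_j^{1/4} s)\, \d_x^\beta \phi_j(x),
\end{equation*}
so everything reduces to two independent estimates: one on $f^{(a)}(\mu_j^{1/4} s)$ for $s \in (0,S_0)$, and one on $\|\d_x^\beta \phi_j\|_{L^2(\Omega)}$.

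First I would control the $x$-side using elliptic regularity for the clamped bi-Laplace. Since $\phi_j \in D(B) = H^4(\Omega) \cap H^2_0(\Omega)$ and $B\phi_j = \mu_j \phi_j$, estimate~\eqref{eq: bilaplace elliptic estimate} gives $\|\phi_j\|_{H^4(\Omega)} \leq C \|B\phi_j\|_{L^2} = C\mu_j$. Interpolating with $\|\phi_j\|_{L^2} = 1$ yields $\|\phi_j\|_{H^k(\Omega)} \leq C \mu_j^{k/4}$ for every $0\leq k \leq 4$, and in particular $\|\d_x^\beta \phi_j\|_{L^2(\Omega)} \leq C \mu_j^{|\beta|/4}$ for $|\beta|\leq 3$. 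Second, for the $s$-side, since $f$ is a linear combination of products of trigonometric functions with $\cosh(\gamma \cdot)$ and $\sinh(\gamma\cdot)$, each derivative $f^{(a)}$ is still bounded pointwise by $C_a e^{\gamma|\sigma|}$. Hence $|f^{(a)}(\mu_j^{1/4} s)| \leq C_a e^{\gamma S_0 \mu_j^{1/4}}$ for $s \in (0,S_0)$, giving
\begin{equation*}
\|f^{(a)}(\mu_j^{1/4} \cdot)\|_{L^2(0,S_0)} \leq C_a\, S_0^{1/2}\, e^{\gamma S_0 \mu_j^{1/4}} \leq C\, e^{\gamma S_0\, \mu^{1/4}}.
\end{equation*}

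Combining these two bounds by the triangle inequality, for $a+|\beta|\leq 3$,
\begin{equation*}
\|\d_s^a \d_x^\beta w\|_{L^2(Z)} \leq \sum_{\mu_j\leq \mu} |\alpha_j|\, \mu_j^{-3/4+a/4}\, \|f^{(a)}(\mu_j^{1/4}\cdot)\|_{L^2(0,S_0)} \|\d_x^\beta \phi_j\|_{L^2(\Omega)} \leq C\, e^{\gamma S_0\, \mu^{1/4}} \sum_{\mu_j\leq \mu} |\alpha_j|\, \mu_j^{-3/4 + (a+|\beta|)/4},
\end{equation*}
and the exponent $-3/4 + (a+|\beta|)/4 \leq 0$ together with $\mu_j \geq \mu_1 > 0$ makes the factor $\mu_j^{-3/4+(a+|\beta|)/4}$ uniformly bounded. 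A Cauchy--Schwarz inequality together with Weyl's law $\#\{\mu_j \leq \mu\} \leq C \mu^{d/4}$ yields $\sum_{\mu_j \leq \mu} |\alpha_j| \leq C \mu^{d/8} \|u\|_{L^2(\Omega)}$, and the polynomial factor $\mu^{d/8}$ is absorbed into the exponential at the cost of enlarging the constant $C$ in the exponent. Summing over all $a+|\beta|\leq 3$ then gives the announced bound $\|w\|_{H^3(Z)} \leq C e^{C\mu^{1/4}} \|u\|_{L^2(\Omega)}$. No step here is delicate --- the only point that requires care is the pointwise estimate of $f^{(a)}$, which is purely a computation on an explicit elementary function and produces the crucial rate $\mu^{1/4}$ in the exponent.
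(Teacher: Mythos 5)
Your proof is correct, but it takes a genuinely different route from the paper's and is somewhat less economical. The paper keeps the sum $\sum_j \alpha_j(\cdots)\phi_j$ intact and uses the $L^2(\Omega)$-orthogonality of the $\phi_j$ directly: after applying the elliptic estimate $\|v\|_{H^4(\Omega)}\lesssim \|\Delta^2 v\|_{L^2(\Omega)}$ (valid because each $\partial_s^k w(s,\cdot)$ satisfies clamped boundary conditions), the inner $L^2$ norm becomes, by Parseval, exactly $\sum_j |\alpha_j|^2 \mu_j^{(k+1)/2}(f^{(k)}(\mu_j^{1/4}s))^2$, and the uniform bound on $f^{(k)}$ immediately yields $e^{C\mu^{1/4}}\|u\|_{L^2(\Omega)}$. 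You instead apply the triangle inequality, which discards the orthogonality and forces you to pay for it later: you then need Cauchy--Schwarz plus Weyl's law to control $\sum_{\mu_j\leq\mu}|\alpha_j|$, producing an extra polynomial factor $\mu^{d/8}$ that must be reabsorbed into the exponential. You also replace the paper's single application of the elliptic estimate by an interpolation argument $\|\phi_j\|_{H^k(\Omega)}\lesssim\mu_j^{k/4}$. All of these steps are sound — Weyl's law for the clamped bi-Laplace is standard, the interpolation is classical, and since the span is trivial for $\mu<\mu_1$ the polynomial-into-exponential absorption is legitimate — but the net effect is a longer, dimension-dependent derivation of the same inequality. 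The main lesson worth noting is that when the $\phi_j$ form a Hilbert basis, exploiting orthogonality before applying any triangle inequality avoids the detour through eigenvalue counting entirely.
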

\begin{proof}
We have 
\begin{align*}
\| w\|_{H^3(Z)}^2 \asymp \sum_{k=0}^{3}\int_0^{S_0} \| \d_s^kw(s,.)\|^2_{H^{3-k}
(\Omega)}ds\lesssim  \sum_{k=0}^{3}\int_0^{S_0} \| \d_s^kw(s,.)\|^2_{H^{4}(\Omega)}ds,
\end{align*}
where $H^s(\Omega)$ denotes the classical Sobolev spaces in $\Omega$. 
Recalling from \eqref{eq: bilaplace elliptic estimate} that,  if $   v_{| \d \Omega}=\d_\nu v_{|\d \Omega}
    =0$, we have $\| v\|_{H^4(\Omega)}\lesssim \| \Delta^2 v\|_{L^2(\Omega)}$, we find
\begin{align*}
 \| \d_s^kw(s,.)\|^2_{H^{4}(\Omega)}&\lesssim \| \Delta ^2 \sum_{\mu_j \leq \mu}  
    \alpha_j\mu_j^{(k-3)/4} f^{(k)}(\mu_j^{1/4} s) \phi_j \|_{L^2(\Omega)}^2= 
     \|  \sum_{\mu_j \leq \mu}  
    \alpha_j\mu_j^{(k+1)/4} f^{(k)}(\mu_j^{1/4} s) \phi_j \|_{L^2(\Omega)}^2
    \\
& =  \sum_{\mu_j \leq \mu}  
    |\alpha_j|^2\mu_j^{(k+1)/2} (f^{(k)}(\mu_j^{1/4} s)  )^2\lesssim\mu ^2 e^{S_0\mu^{1/4}} 
    \sum_{\mu_j \leq \mu}  
    |\alpha_j|^2.
\end{align*}
Integrating this estimate over $(0,S_0)$ and summing over $k$ yields the result.
\end{proof}

%%%% proof of lemma
\begin{proof}[\bfseries Proof of Lemma~\ref{lemma: minoration integral}]
  For $s \in [-\pi/2 + 2k \pi, 2 k \pi]$, $k \in \N^*$, we have 
  $\cos (s+\pi/4) \geq \sqrt{2}/2$. For $t_1$ chosen \suff large, if $t \geq t_1$,
  there exists $k\in\N$ \st $[-\pi/2 + 2k \pi, 2 k \pi]\subset
  [a t, b t]$ and $|g(s)|=\frac12 |e^{-s}\cos (s-\pi/4)-e^s\cos (s+\pi/4)|\ge 1$. Then, 
  $\int_{a t }^{b t}  g(s)^2 d s \geq \pi/2$. Finally, there
  exists $C>0$ \st $\int_{a t }^{b t}  g(s)^2 d s \geq
  C$ for $t \in [t_0,t_1]$, since the function $g(s)^2$ is almost
  everywhere  positive.
\end{proof}

\subsection{A null-controllability result for a higher-order parabolic equation}

Let $T>0$.
We consider here the controlled evolution equation on $(0,T)\times \Omega$
with the clamped boundary conditions ($\nu$ denotes the outer unit normal
to $\d\Omega$):
\begin{align}
  \label{eq: parabolic system}
  \d_t y + \Delta^2 y = \chi_\O f, \qquad y_{|(0,T)\times
  \d\Omega} =0, \quad \d_\nu y_{|(0,T)\times
  \d\Omega} =0,\qquad y_{|t=0} = y_0 \in L^2(\Omega), 
\end{align}
where $\O$ is an open subset of $\Omega$ and
$\chi_\O \in L^\infty(\Omega)$ is \st $\chi_\O >0$ on
$\O$.  The function $f\in L^2((0,T)\times\Omega)$ is the control
function here. Well-posedness for this parabolic system is recalled in
Corollary~\ref{cor: semigroup}. One may wonder if one can choose $f$ to drive the
solution from its initial condition $y_0$ to zero at final time $T$.
Thanks to the spectral inequality of Theorem~\ref{theorem: spectral
  inequality} one can answer positively to this null-controllability
question.
%%%%%%%%%%%%%%%%%%%%%%%%
% theorem              %
%%%%%%%%%%%%%%%%%%%%%%%%
\begin{theorem}[Null-controllability]
  \label{theorem: null controllability}
  There exists $C>0$ \st for any $y_0 \in L^2(\Omega)$, there exists
  $f \in L^2((0,T)\times\Omega)$ \st the solution to \eqref{eq:
    parabolic system} vanishes at $T=0$ and moreover
  $\Norm{f}{L^2((0,T)\times\Omega)}\leq C \Norm{y_0}{L^2(\Omega)}$. 
\end{theorem}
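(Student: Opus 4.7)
The plan is to apply the Lebeau--Robbiano time-splitting scheme, with the spectral inequality of Theorem~\ref{theorem: spectral inequality} as the only non-trivial input, combined with the selfadjointness and $L^2$-dissipativity of $(B,D(B))$ from Corollaries~\ref{cor: bi-laplace spectral family}--\ref{cor: semigroup}.

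First I would establish a partial observability inequality. Let $E_\mu:=\Span\{\varphi_j;\ \mu_j\leq\mu\}$ and denote by $\Pi_\mu$ the associated orthogonal projection. Given $q_\tau\in E_\mu$, the adjoint profile $q(t):=S(\tau-t)q_\tau$ stays in $E_\mu$, and its spectral expansion $q(t)=\sum_{\mu_j\leq\mu}a_j e^{-\mu_j(\tau-t)}\varphi_j$ makes $t\mapsto\Norm{q(t)}{L^2(\Omega)}$ nondecreasing, so $\tau\Norm{q(0)}{L^2(\Omega)}^2\leq\int_0^\tau\Norm{q(t)}{L^2(\Omega)}^2\,dt$. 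Applying Theorem~\ref{theorem: spectral inequality} to $q(t)\in E_\mu$ and using that $\chi_\O$ is bounded below by a positive constant on $\O$ yields
\begin{equation}
\label{eq:pobs}
\Norm{q(0)}{L^2(\Omega)}^2 \leq \frac{C}{\tau}\,e^{C\mu^{1/4}}\int_0^\tau\Norm{\chi_\O q(t)}{L^2(\Omega)}^2\,dt,\qquad q_\tau\in E_\mu.
\end{equation}
By the standard HUM duality, \eqref{eq:pobs} is equivalent to the following partial null-controllability statement: for every $y_0\in L^2(\Omega)$ and $\tau>0$ there exists $f\in L^2((0,\tau)\times\Omega)$ with $\Norm{f}{L^2((0,\tau)\times\Omega)}\leq C\tau^{-1/2}e^{C\mu^{1/4}/2}\Norm{\Pi_\mu y_0}{L^2(\Omega)}$ such that the solution of \eqref{eq: parabolic system} satisfies $\Pi_\mu y(\tau)=0$.

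Next I would iterate this on a dyadic partition of $[0,T]$. Set $T_k:=(T/2)\,2^{-k}$, $\tau_k:=\sum_{j<k}T_j$ (so $\tau_k\uparrow T$), and $\mu^{(k)}:=\kappa\,2^{4k}$ for a constant $\kappa>0$ fixed below. Split each window $[\tau_k,\tau_{k+1}]=[\tau_k,\tau_k+T_k/2]\cup[\tau_k+T_k/2,\tau_{k+1}]$. On the first half-window, apply the partial control of the previous paragraph at threshold $\mu^{(k)}$, so that $\Pi_{\mu^{(k)}}y(\tau_k+T_k/2)=0$; on the second half-window, set $f\equiv 0$ and let the free semigroup $S$ act. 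Writing $y_k:=y(\tau_k)$, a Duhamel estimate combined with the $L^2$-contractivity of $S$ and the partial control cost gives $\Norm{y(\tau_k+T_k/2)}{L^2(\Omega)}\leq Ce^{C(\mu^{(k)})^{1/4}/2}\Norm{y_k}{L^2(\Omega)}$, while the spectral decay of $S(t)$ on $E_{\mu^{(k)}}^\perp$ yields
\begin{equation}
\Norm{y_{k+1}}{L^2(\Omega)}\leq C\exp\bigl(C(\mu^{(k)})^{1/4}/2-\mu^{(k)}T_k/2\bigr)\Norm{y_k}{L^2(\Omega)}.
\end{equation}
Since $(\mu^{(k)})^{1/4}\sim 2^k$ while $\mu^{(k)}T_k\sim 2^{3k}$, taking $\kappa$ large enough forces the exponent to be $\leq-c\,2^{3k}$ for large $k$, giving the super-exponential decay $\Norm{y_k}{L^2(\Omega)}\leq Ce^{-c\,2^{3k}}\Norm{y_0}{L^2(\Omega)}$. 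This forces $y(T)=0$ in $L^2(\Omega)$ by continuity of $y$ at $T$ (Corollary~\ref{cor: semigroup}) and, after summing the window-by-window control estimates (each bounded by $C\,2^{k/2}e^{c\,2^k-c'\,2^{3k}}\Norm{y_0}{L^2(\Omega)}$), produces the global bound $\Norm{f}{L^2((0,T)\times\Omega)}\leq C\Norm{y_0}{L^2(\Omega)}$.

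The main obstacle is entirely encapsulated in Theorem~\ref{theorem: spectral inequality}: the balance $(\mu^{(k)})^{1/4}\ll\mu^{(k)}T_k$ driving the iteration is precisely the quantitative content of the $\mu^{1/4}$ exponent in the spectral inequality, so the rest is standard Lebeau--Robbiano bookkeeping. Two minor technical verifications remain: that the HUM construction can be performed so that the control is supported only in the first half-window $[\tau_k,\tau_k+T_k/2]$ (immediate since \eqref{eq:pobs} is posed on an interval of length $T_k/2$, making the second half-window a genuine free-dissipation phase), and that the $\tau^{-1/2}$ prefactor in the partial-control cost remains harmless in the geometric series (it is, since $T_k^{-1/2}\sim 2^{k/2}$ is absorbed by the super-exponential decay of $\Norm{y_k}{L^2(\Omega)}$).
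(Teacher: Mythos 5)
Your proof is correct and carries out exactly the Lebeau--Robbiano time-splitting iteration that the paper (without giving details) defers to \cite{LR:95} and \cite{LL:12}. The dyadic bookkeeping with $\mu^{(k)}=\kappa\,2^{4k}$ and $T_k\sim 2^{-k}$ correctly exploits the $e^{C\mu^{1/4}}$ cost in Theorem~\ref{theorem: spectral inequality}, since $(\mu^{(k)})^{1/4}\sim 2^k$ is dominated by $\mu^{(k)}T_k\sim 2^{3k}$ (any choice $\mu^{(k)}\gg 2^{4k/3}$ would do), and the $T_k^{-1/2}$ prefactor in the HUM cost is indeed harmless against the super-exponential decay of $\Norm{y_k}{L^2(\Omega)}$.
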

The proof can be adapted in a straight forward manner from the proof
scheme of \cite{LR:95} developed for the heat equation and that is
presented in a fairly synthetic way in the survey article
\cite{LL:12}.

%%%%%%%%%%%%%%%
%% Section             %
%%%%%%%%%%%%%%%
\section{Resolvent estimate and application}
\label{sec: resolvent estimate}

Using one of the interpolation 
inequalities proven in Section~\ref{sec: spectral inequality}
(Proposition~\ref{prop: interpolation interior observation}), we prove
  the resolvent estimate of
Theorem~\ref{theorem: resolvent estimate bilaplace clamped}. Finally, as an
application, we state a stabilization result that follows from
it for the plate equation.

\subsection{Resolvent estimate}
Let $U \in D(\mathcal B) = (H^4(\Omega) \cap  H^2_0(\Omega))
\times  H^2_0(\Omega)$ and $F \in \H = H^2_0(\Omega)
\times L^2(\Omega)$, be such that 
\begin{align}
  \label{eq: resolvent equation}
  (i \sigma \id_\H - \mathcal B) U = F, \quad 
  U = \transp\begin{pmatrix} u_0,  u_1\end{pmatrix}, \ \ 
  F = \transp\begin{pmatrix} f_0,  f_1\end{pmatrix},
\end{align}
for $\sigma \neq 0$. 
Our goal is to find an estimate of the form  $\Norm{U}{\H}
\leq K e^{K |\sigma|^{1/2}} \Norm{F}{\H}$. 
We have
\begin{align}
  \label{eq: resolvent equation 2}
  i \sigma u_0 + u_1 = f_0, \quad 
  ( - \sigma^2 -  i \sigma  \alpha  + B) u_0 = f ,\quad \text{with}\ f=
   (i \sigma -\alpha ) f_0 - f_1.
\end{align}
Multiplying the second equation by $\ovl{u}_0$ and an integration over
$\Omega$ give
\begin{align*}
%	\label{eq: resolvent equation 2bis}
   \inp{(-\sigma^2+B) u_0}{ u_0}_{L^2(\Omega)} 
  - i \sigma \Norm{\alpha^{1/2} u_0}{L^2(\Omega)}^2 
  = \inp{f}{u_0}_{L^2(\Omega)},   
\end{align*}
The first term is real and the second
term is purely imaginary.
We thus have 
\begin{align*} 
  \sigma \Norm{\alpha^{1/2}u_0}{L^2(\Omega)}^2 = - \Im \inp{f}{ u_0}_{L^2(\Omega)}.
\end{align*}
Using that $\alpha \geq \delta>0$ in $\O$ yields
\begin{align}
  \label{eq: resolvent equation 3}
 \delta \sigma_0 \, \Norm{u_0}{L^2(\O)}^2 
 \leq \sigma \Norm{\alpha^{1/2}u_0}{L^2(\Omega)}^2
 \leq  \Norm{f}{L^2(\Omega)} \Norm{u_0}{L^2(\Omega)},
\end{align}
for $\sigma \geq \sigma_0$. 

A key estimate is given by the following lemma. We provide a proof below.
%%%%%%%%%%%%%%%%%%%%%%%%
% sub-lemma            %
%%%%%%%%%%%%%%%%%%%%%%%%
\begin{lemma}
  \label{lemma: pre-resolvent estimate}
  There exists $C>0$ such that 
  \begin{align*}
    \Norm{u_0}{H^3(\Omega)} \leq C e^{C|\sigma|^{1/2}}
    \big( \Norm{f}{L^2(\Omega)} +\Norm{u_0}{L^2(\O)}  \big).
  \end{align*}
\end{lemma}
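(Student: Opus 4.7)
The plan is to apply Proposition~\ref{prop: interpolation interior observation} to the auxiliary function $v(s,x) = w(s)u_0(x)$, where $w$ is an explicit solution of the ODE $w^{(4)}(s) + \sigma^2 w(s) = 0$. A convenient choice is $w(s) = e^{\lambda s}$ with $\lambda = (|\sigma|/2)^{1/2}(1+i)$, so that $\lambda^4 = -\sigma^2$, $|w(s)| = e^{(|\sigma|/2)^{1/2}s}$ and $|w^{(j)}(s)| = |\sigma|^{j/2}|w(s)|$. Thus $|w|$ is bounded below by $1$ on $[\alpha, S_0-\alpha]$ and bounded above by $Ce^{C|\sigma|^{1/2}}$ on $(0,S_0)$. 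Since $u_0 \in H^4(\Omega) \cap H^2_0(\Omega)$, the function $v$ lies in $H^4(Z)$ and satisfies the clamped boundary conditions $v|_{x\in\d\Omega} = \d_\nu v|_{x\in\d\Omega} = 0$; using the resolvent equation~\eqref{eq: resolvent equation 2} one computes
\begin{align*}
Pv = w^{(4)}u_0 + w\, Bu_0 = -\sigma^2 w u_0 + w\bigl(f + \sigma^2 u_0 + i\sigma\alpha u_0\bigr) = w\bigl(f + i\sigma\alpha u_0\bigr).
\end{align*}

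Apply Proposition~\ref{prop: interpolation interior observation} with $\mathscr Z = (0,S_0)\times\O$. The lower bound on $|w|$ on $[\alpha, S_0-\alpha]$, together with the $j=0$, $|\beta|\leq 3$ terms in the sum defining $\Norm{v}{H^3(Y)}^2$, yields $\Norm{v}{H^3(Y)} \geq c\Norm{u_0}{H^3(\Omega)}$; the upper bounds on $|w^{(j)}|$ give $\Norm{v}{H^3(Z)} \leq Ce^{C|\sigma|^{1/2}}\Norm{u_0}{H^3(\Omega)}$ and $\Norm{v}{L^2(\mathscr Z)} \leq Ce^{C|\sigma|^{1/2}}\Norm{u_0}{L^2(\O)}$ (absorbing polynomial factors $|\sigma|^k$ into the exponential).

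The critical step, which is also the main obstacle, is the estimation of $\Norm{Pv}{L^2(Z)} \leq \Norm{w}{L^2(0,S_0)}\Norm{f + i\sigma\alpha u_0}{L^2(\Omega)}$. The naive bound $\Norm{f + i\sigma\alpha u_0}{L^2}\leq \Norm{f}{L^2} + |\sigma|\Norm{\alpha}{L^\infty}\Norm{u_0}{L^2(\Omega)}$ would produce a $|\sigma|\Norm{u_0}{L^2(\Omega)}$ contribution that is \emph{not} absorbable into the prefactor $e^{C|\sigma|^{1/2}}$. To eliminate this obstruction, I will use the energy identity derived just before the lemma, viz.\ $\sigma\Norm{\alpha^{1/2}u_0}{L^2(\Omega)}^2 \leq \Norm{f}{L^2}\Norm{u_0}{L^2(\Omega)}$, together with $\Norm{\alpha u_0}{L^2}^2 \leq \Norm{\alpha}{L^\infty}\Norm{\alpha^{1/2}u_0}{L^2}^2$, to obtain the refined bound
\begin{align*}
\Norm{f + i\sigma\alpha u_0}{L^2(\Omega)}^2 \leq 2\Norm{f}{L^2}^2 + 2\sigma^2\Norm{\alpha u_0}{L^2}^2 \leq C\bigl(\Norm{f}{L^2}^2 + |\sigma|\Norm{f}{L^2}\Norm{u_0}{L^2(\Omega)}\bigr),
\end{align*}
which downgrades the exponent of $|\sigma|$ from $1$ to $\frac{1}{2}$ and yields $\Norm{Pv}{L^2(Z)} \leq Ce^{C|\sigma|^{1/2}}\bigl(\Norm{f}{L^2} + |\sigma|^{1/2}\Norm{f}{L^2}^{1/2}\Norm{u_0}{L^2(\Omega)}^{1/2}\bigr)$.

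Plugging these estimates into the interpolation inequality and dividing by $\Norm{u_0}{H^3(\Omega)}^{1-\delta}$ gives
\begin{align*}
\Norm{u_0}{H^3(\Omega)} \leq Ce^{C|\sigma|^{1/2}}\bigl(\Norm{f}{L^2(\Omega)} + |\sigma|^{1/2}\Norm{f}{L^2}^{1/2}\Norm{u_0}{L^2(\Omega)}^{1/2} + \Norm{u_0}{L^2(\O)}\bigr).
\end{align*}
The cross term is then treated by Young's inequality with a weight $\epsilon$: one has $|\sigma|^{1/2}\Norm{f}^{1/2}\Norm{u_0}{L^2}^{1/2} \leq \epsilon\Norm{u_0}{L^2}/2 + \epsilon^{-1}|\sigma|\Norm{f}/2$. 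Choosing $\epsilon$ of order $e^{-C|\sigma|^{1/2}}$ makes the $\epsilon$-contribution $\leq \frac{1}{2}\Norm{u_0}{L^2(\Omega)} \leq \frac{1}{2}\Norm{u_0}{H^3(\Omega)}$, which can be absorbed into the left-hand side, while the $\epsilon^{-1}$-contribution involves a factor $|\sigma|e^{C|\sigma|^{1/2}}$ that is bounded by $e^{C'|\sigma|^{1/2}}$. This yields the claimed bound, the heart of the argument being the use of the energy identity to turn the dangerous $|\sigma|$ factor into a $|\sigma|^{1/2}$ factor paired with $\Norm{f}{L^2}^{1/2}$, which is precisely what makes the Young-absorption scheme work.
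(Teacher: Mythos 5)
Your proof is correct, but it takes a genuinely different route from the paper's, and the comparison is worth making. The paper absorbs the damping into an augmented operator $Q = D_s^4 + B + \alpha D_s^2$ and picks the phase $\rho = e^{i\,\mathrm{sgn}(\sigma)\pi/4}$ so that, for $u = e^{s\rho|\sigma|^{1/2}}u_0$, one has both $D_s^4 u = -\sigma^2 u$ \emph{and} $D_s^2 u = -i\sigma u$; this makes $Qu = e^{s\rho|\sigma|^{1/2}} f$ exactly, with no residual term. The price is that one must then invoke the interpolation inequality of Proposition~\ref{prop: interpolation interior observation} for $Q$ rather than $P$ — a fact which holds because $\alpha D_s^2$ is of lower order and is absorbed by the underlying Carleman estimates, but which the paper uses without comment. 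You instead keep $P = D_s^4 + B$ (so Proposition~\ref{prop: interpolation interior observation} applies verbatim) and only match the quartic power $D_s^4 w = -\sigma^2 w$, accepting the residual $w\, i\sigma\alpha u_0$ in $Pv$. You then neutralize the dangerous factor $|\sigma|$ by the energy identity $|\sigma|\Norm{\alpha^{1/2}u_0}{L^2}^2 \leq \Norm{f}{L^2}\Norm{u_0}{L^2}$ together with a weighted Young absorption. Interestingly, the paper also uses both of these ingredients, but \emph{after} the lemma (in passing from Lemma~\ref{lemma: pre-resolvent estimate} to \eqref{eq: resolvent equation 5}); your version front-loads them into the lemma's proof so as to compensate for the residual term. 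Net effect: your route is slightly more self-contained (no implicit extension of the interpolation inequality to $Q$) at the cost of a little more bookkeeping with the Young weights; the paper's route gives a cleaner identity $Qu = e^{s\rho|\sigma|^{1/2}}f$ at the cost of the unremarked perturbation of $P$ by $\alpha D_s^2$.
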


\bigskip
Then estimate~\eqref{eq: resolvent equation 3} yields
\begin{align*}
   \Norm{u_0}{H^2(\Omega)} \lesssim e^{C|\sigma|^{1/2}}
   \big( \Norm{f}{L^2(\Omega)} +  \Norm{u_0}{L^2(\Omega)}^\hf
   \Norm{f}{L^2(\Omega)}^\hf \big), \quad \sigma \geq \sigma_0,
\end{align*}
and with the Young inequality we obtain
\begin{align*}
   \Norm{u_0}{H^2(\Omega)} 
   \lesssim e^{C|\sigma|^{1/2}}
   \Norm{f}{L^2(\Omega)}.
 \end{align*}
Using the form of $f$ given in \eqref{eq: resolvent equation 2} we then obtain
\begin{align*}
   \Norm{u_0}{H^2(\Omega)} 
   \lesssim  e^{C|\sigma|^{1/2}} \big(\Norm{f_0}{L^2(\Omega)} + \Norm{f_1}{L^2(\Omega)}\big), 
\end{align*}
Finally as $u_1 = f_0 - i \sigma u_0 $ we obtain 
\begin{align}
  \label{eq: resolvent equation 5}
   \Norm{u_0}{H^2(\Omega)}  +   \Norm{u_1}{L^2(\Omega)} 
   \lesssim e^{C|\sigma|^{1/2}}
   \big(\Norm{f_0}{L^2(\Omega)} + \Norm{f_1}{L^2(\Omega)}\big), 
\end{align}
yielding the resolvent estimate of Theorem~\ref{theorem: resolvent estimate bilaplace clamped}.

\subsection{Proof of Lemma~\ref{lemma: pre-resolvent estimate}}

Let $\rho = \exp( i \sgn(\sigma) \pi/4)$, yielding $\rho^2 =
\sgn(\sigma) i$ and $\rho^4 =-1$. We set $u = \exp(s \rho
|\sigma|^{1/2}) u_0$ and have $Q u=e^{s \rho |\sigma|^{1/2}} f$, 
with $Q=D_s^4 + B + \alpha D_s^2$, recalling \eqref{eq: resolvent equation 2}.
Let $S_0>0$ and $\beta \in (0,S_0/2)$. Let also $Z=(0,S_0)\times \Omega$ and
$Y=(\beta,S_0-\beta)\times \Omega$. 
 We then
apply the interpolation inequality of  Proposition~\ref{prop:
  interpolation interior observation}: with $0<
\beta_1 < \beta_2 < S_0$ we have $C>0$ and $\delta_0 >0$ such that
\begin{align}
  \label{eq: resolvent equation 4}
  \Norm{u}{H^3(Y)} \leq C \Norm{u}{H^3(Z)}^{1-\delta}
    \left(\Norm{Q u}{L^2(Z)} + \Norm{u}{L^2((\beta_1, \beta_2)\times \O)} \right)^\delta.
\end{align}
Next, we note that we have 
\begin{align*}
   &\Norm{u}{H^3(Y)}  \geq \Norm{u}{L^2((\beta, S_0 - \beta), H^3(\Omega))}
   \geq e^{-C |\sigma|^{1/2}}   \Norm{u_0}{H^3(\Omega)}, \\
    &\Norm{u}{H^3(Z)} \lesssim e^{C |\sigma|^{1/2}}   \Norm{u_0}{H^3(\Omega)},
    \\
    &\Norm{u}{L^2((\beta_1, \beta_2)\times \O )} \leq 
      e^{C |\sigma|^{1/2}} \Norm{u_0}{L^2(\O)},
\end{align*}
yielding with \eqref{eq: resolvent equation 4}
\begin{align*}
   \Norm{u_0}{H^3(\Omega)} \leq C e^{C|\sigma|^{1/2}}
   \big( \Norm{f}{L^2(Z)} +\Norm{u_0}{L^2(\O)}  \big).
\end{align*}
This concludes the proof of the estimate of
Lemma~\ref{lemma: pre-resolvent estimate}.

\subsection{A stabilization result for the plate equation}
Let now $(y_0,y_1) \in D(\mathcal B^k)$, $k\geq 1$, and $y$ be the solution of the
damped plate equation 
\begin{equation}
  \label{eq: damped plate equation}
  \d_t^2 y  + \Delta^2  y + \alpha \d_t  y =0, \qquad y_{|t=0} = y_0, \ \d_t
  y_{|t=0} = y_1,
  \qquad y_{| [0,+\infty) \times \d \Omega}=\d_\nu y_{| [0,+\infty) \times \d \Omega}=0,
\end{equation}
with $\alpha$ a nonnegative function such that $y\geq \delta >0$ on
$\O$, an open subset of $\Omega$.  If we set $Y = (y,\d_t y)$ we have
$(\d_t + \mathcal B) Y=0$. From the resolvent estimate of
Theorem~\ref{theorem: Carleman boundary x} we obtain the following
energy decay for the damped plate equation, using the results set in an abstract framework in \cite{Ba-Du:08}.
%%%%%%%%%%%%%%%%%%%%%%%%
% theorem              %
%%%%%%%%%%%%%%%%%%%%%%%%
\begin{theorem}
  \label{theorem: stabilisation plate}
  With the energy function introduced in \eqref{eq: energy plate
    equation}
  the solution to the damped plate equation~\eqref{eq: damped plate
    equation} satisfies, for some $C>0$, 
  \begin{equation*}
     E(y)(t)  \le \frac{C}{\big(\log (2+t)\big)^{4k}} \Norm{ \mathcal
       B^k Y_0}{\H}^2, \ \ t>0, \qquad Y_0 = (y_0, y_1) \in D(\mathcal
     B^k).
  \end{equation*}
\end{theorem}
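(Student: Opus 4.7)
The plan is to recast the statement as a decay property of the semigroup $\Sigma(t)$ in $\H=H^2_0(\Omega)\times L^2(\Omega)$, then apply the abstract framework of \cite{Ba-Du:08} directly to the resolvent estimate of Theorem~\ref{theorem: resolvent estimate bilaplace clamped}. First, by Corollary~\ref{theorem: existence semigroup damped plate 2}, the solution satisfies $Y(t)=(y(t),\d_t y(t))=\Sigma(t)Y_0$, so that
\begin{equation*}
  2 E(y)(t)=\Norm{\d_t y(t)}{L^2(\Omega)}^2+\Norm{\Delta y(t)}{L^2(\Omega)}^2 \asymp \Norm{\Sigma(t)Y_0}{\H}^2,
\end{equation*}
where the equivalence uses that $\Norm{\Delta u}{L^2(\Omega)}$ is an equivalent norm on $H^2_0(\Omega)$, as noted after \eqref{eq: bilaplace elliptic estimate}. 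Consequently, it suffices to establish
\begin{equation*}
  \Norm{\Sigma(t)Y_0}{\H}\le \frac{C}{(\log(2+t))^{k}}\,\Norm{\mathcal B^k Y_0}{\H},\qquad Y_0\in D(\mathcal B^k),
\end{equation*}
and then square.

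Second, I would verify that the abstract hypotheses of Batty--Duyckaerts are met by $\mathcal B$. The operator $\mathcal B$ generates a contraction $C_0$-semigroup on $\H$ (Corollary~\ref{theorem: existence semigroup damped plate}), in particular a bounded semigroup. By the proposition preceding Corollary~\ref{theorem: existence semigroup damped plate}, the spectrum of $\mathcal B$ lies in the closed right half-plane away from $i\R$, and Theorem~\ref{theorem: resolvent estimate bilaplace clamped} asserts that $i\sigma\,\id-\mathcal B$ is invertible for every $\sigma\in\R$ with $\Norm{(i\sigma\,\id-\mathcal B)^{-1}}{\mathscr L(\H,\H)}\le K e^{K|\sigma|^{1/2}}$ for $|\sigma|\ge \sigma_0$. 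For $|\sigma|\le \sigma_0$ the resolvent is uniformly bounded by a constant, since $\sigma\mapsto (i\sigma\,\id-\mathcal B)^{-1}$ is continuous from $\R$ to $\mathscr L(\H,\H)$ (as $i\R\subset \rho(\mathcal B)$) and $[-\sigma_0,\sigma_0]$ is compact. Hence there exists $C_0>0$ with
\begin{equation*}
  \Norm{(i\sigma\,\id-\mathcal B)^{-1}}{\mathscr L(\H,\H)}\le M(|\sigma|),\qquad M(s)=C_0\bigl(1+e^{K s^{1/2}}\bigr),\quad s\ge 0.
\end{equation*}

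Third, I would invoke the abstract decay theorem of \cite{Ba-Du:08} (together with its iterated form for higher regularity of the data). The function $M$ above is of the form $e^{K s^{1/2}}$ up to polynomial corrections, so that the associated auxiliary function $M_{\log}(s):=M(s)\bigl(\log(1+M(s))+\log(1+s)\bigr)$ satisfies $M_{\log}^{-1}(t)\asymp(\log t)^{2}$. The abstract framework then yields $\Norm{\Sigma(t)\mathcal B^{-k}}{\mathscr L(\H,\H)}\le C(\log(2+t))^{-k}$, and applying this to $Y_0=\mathcal B^{-k}(\mathcal B^k Y_0)$ for $Y_0\in D(\mathcal B^k)$ gives the desired bound on $\Norm{\Sigma(t)Y_0}{\H}$; squaring produces the stated energy estimate.

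The main point to watch is simply the verification that \cite{Ba-Du:08} applies in our concrete functional setting: the contraction property of $\Sigma(t)$, the inclusion $i\R\subset\rho(\mathcal B)$, and the growth $M(s)\lesssim e^{Ks^{1/2}}$ of the resolvent on $i\R$. Once these are in place, the theorem is purely an abstract consequence of Theorem~\ref{theorem: resolvent estimate bilaplace clamped}; there is no additional analytic input required.
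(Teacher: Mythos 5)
Your strategy is exactly the paper's: recast the energy as $\|\Sigma(t)Y_0\|^2_{\H}$, verify the standing hypotheses of the Batty--Duyckaerts abstract decay theorem (bounded semigroup, $i\R\subset\rho(\mathcal B)$, resolvent bound $M(|\sigma|)$), and read off the decay from $M_{\log}^{-1}$. The paper itself cites \cite{Ba-Du:08} without elaboration, so your filled-in verification of the hypotheses (contraction semigroup from Corollary~\ref{theorem: existence semigroup damped plate}, uniform resolvent bound on $[-\sigma_0,\sigma_0]$ by compactness and continuity, growth $M(s)\lesssim e^{Ks^{1/2}}$) is a faithful reconstruction.

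There is a small arithmetic slip in the iteration step. You correctly compute $M_{\log}^{-1}(t)\asymp(\log t)^2$ from $M(s)\approx e^{Ks^{1/2}}$, but the Batty--Duyckaerts theorem then gives $\Norm{\Sigma(t)\mathcal B^{-1}}{\mathscr L(\H)}\lesssim 1/M_{\log}^{-1}(t)\asymp(\log t)^{-2}$, and the iterated form (splitting $\Sigma(t)=\Sigma(t/k)^k$) yields $\Norm{\Sigma(t)\mathcal B^{-k}}{\mathscr L(\H)}\lesssim (\log(2+t))^{-2k}$, not $(\log(2+t))^{-k}$ as you wrote. Squaring would then give $E(y)(t)\lesssim (\log(2+t))^{-4k}\Norm{\mathcal B^k Y_0}{\H}^2$, which is stronger than the theorem's $(\log(2+t))^{-2k}$. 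In other words, the half-power gain in the resolvent growth ($e^{K|\sigma|^{1/2}}$ here versus $e^{K|\sigma|}$ in the damped wave case of Theorem~\ref{theorem: resolvent estimate A}) doubles the logarithmic exponent compared to the classical $\big(\log(2+t)\big)^{-2k}$ decay for the damped wave equation; the theorem's exponent $2k$ is a valid but non-sharp consequence of your computation. Your final stated inequality does match the paper, but it does not actually follow from the line immediately preceding it as written; if you keep the proof as is you should either correct the intermediate exponent to $2k$ and note the resulting $4k$ is stronger than needed, or weaken $M(s)$ to $e^{Ks}$ to land exactly on the stated bound.
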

Among the existing results available in the literature for plate type
equations, many of them concern the ``hinged'' boundary conditions.
We first mention these result. An important result obtained in
\cite{Jaffard:90} on the controllability of the plate equation on a
rectangle domain with an arbitrary small control domain. The method
relies on the generalization of Ingham type inequalities in
\cite{Kahane:62}. An exponential stabilization result, in the same
geometry, can be found in \cite{RTT:06}, using similar techniques. In
\cite{RTT:06} the localized damping term involves the time derivative
$\d_ty$ as in \eqref{eq: damped plate equation}. Interior nonlinear feedbacks can be used for exponential
stabilization \cite{Tebou:09}. There, feedbacks are localized in a
neighborhood of part of the boundary that fulfills multiplier-type
conditions.  A general analysis of nonlinear damping that
includes the plate equation is
provided in \cite{AA:11} under multiplier-type conditions.
For ``hinged''
boundary conditions also, with a boundary damping term, we cite
\cite{ATT:07} where, on a square domain, a necessary and sufficient
condition is provided for exponential stabilization. In
\cite{Nouira:09}, a polynomial stabilization rate is obtained if the
condition of \cite{ATT:07} is relaxed. 

For ``clamped'' boundary conditions, few results are available. We
cite \cite{Alabau:06}, where a general analysis of nonlinearly damped
systems that includes the plate equation 
under multiplier-type conditions is provided. In \cite{APT:17}, the
analysis of discretized general nonlinearly damped system is also
carried out, with the plate equation as an application.  In \cite{Tebou:12}, a nonlinear
damping involving the p-$Laplacian$ is used also under multiplier-type
conditions. In \cite{DS:15}, an
exponential decay is obtained in the case of ``clamped'' boundary
conditions, yet with a damping term of the Kelvin-Voigt type, that is
of the form $\d_t \Delta y$, that acts over the whole domain.

Theorem~\ref{theorem: stabilisation plate} provides a
log-type stabilization result. Optimality is a natural
question and one could be interested in seeking geometries that improve this
decay rate, yielding polynomial or exponential rate, in the case of
``clamped'' boundary conditions,  in the spirit of
some of the existing results we cite above.

\def\appendixname{}
% appendices
\appendix
\section*{Appendices}

%%%%%%%%%%%%%%%
%% Section             %
%%%%%%%%%%%%%%%
\section{Proofs of some technical results}

\subsection{Proof of the estimate optimality in the case of  symbol
  flatness}
\label{sec: prop: Carleman P2 - optim}

Here, we provide a proof of Proposition~\ref{prop: Carleman P2 -
  optim}. 

We have $Q(z,D_z) = q(z,D_z) + r_{m-1}(z,D_z) + r_{m-2}(z,D_z)$ with
$r_{m-1}(z,D_z)$ homogeneous of degree $m-1$ and $r_{m-2}(z,D_z)$ of
order $m-2$ (non necessarily homogeneous).
  
  If there exists $(z_0, \zeta_0, \tau_0)$ as in the statement of the
  proposition, then by homogeneity, as $\tau_0 \neq 0$, there exists $\zeta_1
  \in \R^N$, \st 
  \begin{align}
    \label{eq: choice zeta0}
    q(z_0,\theta_1 )= 0, 
    \quad  d_{z,\zeta} q (z_0,\theta_1 )= 0, \quad
    \theta_1^\mi\neq 0, \ \ \text{with} \ \theta_1 = \zeta_1 + i  d \varphi(z_0).
 \end{align}
 Without any loss of generality we may assume that $z_0=0$. Because of the form of
  \eqref{eq: Carleman P2 - optim}, observe also that there is no loss of
  generality  if we assume that $\varphi(0) =0$.

  We then introduce  $w(z) = \inp{z}{\theta_1}$. We note that 
  \begin{align*}
    \varphi(z) - \Im w(z) = G(z) + |z|^3 \O(1), \qquad G(z) = \hf
    d_z^2 \varphi(0) (z,z). 
  \end{align*}
 We then pick $f \in \Cinfc(\R^N)$, $f \not\equiv 0$,  and set
  $u_\tau(z) = e^{i \tau w(z)} f ( \tau^{1/2} z)$.
We have
\begin{align}
  \label{eq: asymp L2 norm}
  \Norm{ e^{\tau \varphi} u_\tau}{L^2(\R^N)}^2
  &= \int_{\R^N}  e^{2 \tau \big(G(z) + |z|^3 \O(1)\big)} 
   |
  f (\tau^{1/2} z)
  |^2 d z 
  = \tau^{-N/2}\int_{\R^N} 
  e^{2 G(y) + \tau^{-{1/2}} |y|^3 \O(1)} 
  |f(y)|^2 d y\\
  &\mathop{\sim}_{\tau \to \infty} 
  \tau^{-N/2} \int_{\R^N} e^{2  G(y)} 
  | f(y)|^2 d y,\notag
\end{align}
with the change of variables $y =\tau^{1/2} z$ and the dominated
convergence theorem.

As we note that 
\begin{align*}
  e^{-i\tau w(z)} D_z^\mi u_\tau 
  = (D_z + \tau \theta_1)^\mi f \big(\tau^{1/2} z\big)
  = \tau^{|\mi|} \theta_1^\mi f \big(\tau^{1/2} z\big)
  + \tau^{|\mi| - {1/2}} \O(1),  
\end{align*}
similarly, we find
\begin{align}
  \label{eq: asymp L2 norm derivatives}
  \Norm{ e^{\tau \varphi} D_z^\mi u_\tau}{L^2(\R^N)}^2
  \mathop{\sim}_{\tau \to \infty} 
  \tau^{2 |\mi|-N/2}  |\theta_1^\mi|^2 \int_{\R^N} e^{2  G(y)} 
  | f(y)|^2 d y,
\end{align}
as we have  $\theta_1^\mi \neq 0$.

\medskip
We have 
\begin{align*}
  e^{-i \tau w(z)} Q  e^{i \tau w(z)}
  = q(z,D_z + \tau \theta_1) + r_{m-1}(z,D_z + \tau \theta_1) + r_{m-2}(z,D_z + \tau
  \theta_1).
\end{align*}
 With the Taylor formula and homogeneity we observe that 
\begin{align*}
  q(z,D_z + \tau \theta_1) &= \tau^m q(z, \theta_1) 
  + \tau^{m-1}  d_\zeta q (z,\theta_1) (D_z)
  +\frac12  \tau^{m-2} d^2_\zeta q(z,\theta_1)(D_z,D_z)
  \\
&\quad  + \frac12 \int_0^1 (1-t)^2 d_\zeta^3  q (z,t D_z + \tau\theta_1)
  (D_z,D_z,D_z)\,  d t.
\end{align*}
Next, we write 
\begin{align*}
  q(z,\theta_1) = \underbrace{q(0,\theta_1)}_{=0}
  + \underbrace{d_z q (0,\theta_1) (z) }_{=0}
  + \hf d_z^2  q(0,\theta_1)(z,z) 
  + \hf \int_0^1 (1-t)^2 
  d_z^3 q(t z,\theta_1)(z,z,z) d t, 
\end{align*}
\begin{align*}
  d_\zeta q (z,\theta_1) (D_z)
  = \underbrace{ d_\zeta q (0,\theta_1) (D_z)}_{=0}
  +  d _{\zeta} d_z  q (0,\theta_1)(D_z,z) 
  + \int_0^1 (1-t) d_\zeta d_z^2 q (t z,\theta_1)(D_z,z,z) d t, 
\end{align*}
and
\begin{align*}
  d_\zeta^2 q (z,\theta_1) (D_z,D_z)
  =d_\zeta^2 q (0,\theta_1) (D_z,D_z)
  + \int_0^1  d_\zeta^2d_z q  (t z,\theta_1)(D_z,D_z,z) d t, 
\end{align*}
which gives 
\begin{align*}
  e^{-i \tau w(z)} Q  e^{i \tau w(z)}
  &= \tau^{m-1} \bigg( \hf d_z^2 q(0,\theta_1)(\tau^{1/2} z,\tau^{1/2}  z) 
  + \hf \tau^{-{1/2}}\int_0^1 (1-t)^2 
  d_z^3 q(t z,\theta_1)(\tau^{1/2} z,\tau^{1/2} z,\tau^{1/2} z)
  dt \\
  &\qquad \quad +
  \tau^{-{1/2}}  d_\zeta d_z q(0,\theta_1)(D_z, \tau^{{1/2}} z) 
  + \tau^{-1} \int_0^1 (1-t) 
  d_\zeta d_z^2 q(tz,\theta_1)(D_z,\tau^{{1/2}}z,\tau^{{1/2}} z) 
  d t\\
   &\qquad \quad
  +  \hf \tau^{-1}  d_\zeta^2 q (0,\theta_1) (D_z,D_z)
  +\hf \tau^{-3/2} \int_0^1  d_\zeta^2d_z q (t z,\theta_1)(D_z,D_z, \tau^{{1/2}} z) d t
   \\
  &\qquad \quad + \tau^{1-m} \big(r_{m-1}(z,D_z + \tau \theta_1) 
  + r_{m-2}(z,D_z + \tau\theta_1)\big)
  \bigg).
\end{align*}
We then find
\begin{align*}
  e^{-i \tau w(z)} Q u_\tau &= \tau^{m-1}
  \Big(\hf d_z^2 q(0,\theta_1)(\tau^{1/2} z,\tau^{1/2}  z) f(\tau^{1/2} z)
  +  d_\zeta d_z q(0,\theta_1)\big(D_z f(\tau^{1/2} z) , \tau^{{1/2}} z\big) 
 \\
 &\qquad\quad  +\hf  \Big( d_\zeta^2 q (0,\theta_1) (D_z,D_z) f\Big)(\tau^{1/2} z)
  + r_{m-1}(z, \theta_1) f(\tau^{1/2} z)
  + \tau^{-{1/2}} \O(1)
  \Big).
\end{align*}
Arguing as for \eqref{eq: asymp L2 norm}, we obtain, as $\tau \to
\infty$, 
\begin{align}
  \label{eq: asymp L2 norm rhs}
  \Norm{e^{\tau \varphi} Q u_\tau}{L^2(\R^N)}^2
  &=
   \tau^{2(m-1)- N/2} \int_{\R^N} e^{2  G(y)} 
  \big|
  \hf d_z^2 q(0,\theta_1)(y,y) f(y)
  + d_\zeta d_z q(0,\theta_1)\big(D_z f(y) , y\big) 
  \\
  &\qquad \qquad \qquad \qquad \qquad +\hf  \big( d_\zeta^2 q (0,\theta_1) (D_z,D_z) f\big)(y)
  + r_{m-1}(0, \theta_1) f(y)
  \big|^2 d y \notag\\
  &\quad + \O (\tau^{2(m-1)- N/2-1/2}).\notag
\end{align}
The assumed estimate \eqref{eq: Carleman P2 - optim} along with \eqref{eq: asymp L2
  norm}--\eqref{eq: asymp L2 norm rhs}  thus implies that $\delta=0$
and moreover that  the integral above does not
vanish.
\hfill \qedsymbol \endproof

\begin{remark}
  Observe that if in addition we assume that $m \geq 3$ then the
  partial Carleman estimate \eqref{eq: Carleman P2 - optim} with the
  loss of a full derivative implies that $d \varphi(z)$ does not
  vanish in $\Omega$.  In fact, if $d \varphi(z_0) =0$ and if we pick
  $\zeta_0 =0$ then $\theta_1 =0$ and since $m\geq 3$ we have the
  properties listed in \eqref{eq: choice zeta0}. The remainder of the
  proof then yields a contradiction as the integral term in
  \eqref{eq: asymp L2 norm rhs} vanishes.

In the case $m=1$,  it is known that a Carleman estimate with the loss of
a half derivative can hold even if the gradient of the weight function
vanishes (see Lemma~8.1.1 in \cite{Hoermander:63}). For instance, for
$\varphi(z) =z_1^2/2$ and for the operator $D_{z_1}$, we have 
\begin{align*}
  \tau^{1/2} \Norm{e^{\tau \varphi} u}{L^2(\R^N)} 
  \lesssim \Norm{e^{\tau \varphi} D_{z_1} u}{L^2(\R^N)}, 
\end{align*}
for $\tau>0$ and $u \in \Cinfc(\R^N)$. 
Then, for the operator $D_{z_1}^2$, we have 
\begin{align*}
  \tau \Norm{e^{\tau \varphi} u}{L^2(\R^N)} 
  \lesssim \Norm{e^{\tau \varphi} D_{z_1}^2 u}{L^2(\R^N)}, 
\end{align*}
for $\tau>0$ and $u \in \Cinfc(\R^N)$. 
We then have the case of an operator of order $m=1$ or $2$ in $\R^N$ such
that an estimate with a loss a full derivative holds and yet $d
\varphi$ may vanish. 
\end{remark}

\begin{remark}
  The reader should observe that the statement of Proposition~\ref{prop: Carleman P2 - optim}
  assumes that the symbol $q(z,\zeta+ i \tau d \varphi(z))$ vanishes at second order at a complex root,
  that is, for $\tau>0$. Flatness at a real root may not yield
  $\delta=0$. In fact, in $\R^N$, $N\ge 2$, consider the
  operator $Q= (D_{z_1} + D_{z_2})^m$ with $m\geq 2$ and $\varphi(z) =z_1$. 
  Then $q (\zeta + i \tau d \varphi) = (\zeta_1 + \zeta_2 + i \tau)^m$ which
  vanishes (at order $m$) for $\tau=0$ and $\zeta_1 + \zeta_2=0$. Yet, we have the
  following estimate
  \begin{align}
    \label{eq: hyperbolic estimate}
    \tau^m \Norm{e^{\tau \varphi} u}{L^2(\R^N)}  \leq\Norm{e^{\tau
        \varphi}P u}{L^2(\R^N)}, 
  \end{align}
  for $v \in \Cinfc(\R^2)$. 
  This means $\delta =1$ here.
  
  The proof of \eqref{eq: hyperbolic estimate} is as follows. We write
  $e^{\tau \varphi} (D_{z_1} + D_{z_2}) u  = (D_{z_1}  + i \tau +
  D_{z_2}) v$ with $v = e^{\tau \varphi} u$.
  We then have 
  \begin{align*}
    \Norm{e^{\tau \varphi} (D_{z_1} + D_{z_2}) u }{L^2(\R^N)}^2
    &= \Norm{ (D_{z_1} + D_{z_2}) v}{L^2(\R^N)}^2 + \Norm{\tau v}{L^2(\R^N)}^2
   - 2 i \tau \Re \int  \ovl{v} (D_{z_1} + D_{z_2}) v\ dz\\
   &= \Norm{ (D_{z_1} + D_{z_2}) v}{L^2(\R^N)}^2 + \Norm{\tau v}{L^2(\R^N)}^2
   - \tau  \underbrace{\int  (\d_{z_1} + \d_{z_2} )|v|^2  dz}_{=0 \
     \text{as} \ \supp(v) \ \text{compact}}\\
   &\geq \tau^2 \Norm{ v}{L^2(\R^N)}^2 =  \tau^2 \Norm{ e^{\tau \varphi} u}{L^2(\R^N)}^2.
  \end{align*}
  Multiple applications of this estimate yield \eqref{eq: hyperbolic
    estimate}. 

  \medskip Note however that we do not claim to have
  $\Norm{e^{\tau \varphi} D u}{L^2(\R^N)} \lesssim \Norm{e^{\tau
      \varphi} (D_{z_1} + D_{z_2}) u}{L^2(\R^N)}$,
  as $D_{z_1} + D_{z_2}$ is not elliptic.
\end{remark}

\subsection{Proofs associated with the semi-classical calculus}
\label{sec: proofs semi-classical calculus}
\subsubsection{Proof of Proposition~\ref{prop: admissible metric and order function}}
\label{proof prop: admissible metric and order function}
The dual quadratic form of $g$ on $\W$ is given by
  \begin{align*}
    g^\sigma = \lambsc^2 |d z|^2 
    + \frac{|d \zeta'|^2}{(1+ \csp \ctp)^{2} }
    + \frac{|d \zeta_N|^2}{\csp^{2} }.
  \end{align*}
  We then have, for $X = (z_X, \zeta_X)$, as $\gamma \geq 1$, 
  \begin{align*}
    \big(h_g)^{-1}(X) 
    &= \inf_{T \in \W \atop T \neq 0} 
    \Big( g_X^\sigma(T) /g_X(T)\Big)^{1/2}
    = \min \big( \csp^{-1}, (1+ \csp \ctp)^{-1} \big) \lambsc(X)\\
    &\geq (2\csp)^{-1} \lambsc(X) \geq \tau \varphi(z_X) /2 \geq 1,  
    \end{align*}
    as $\tau \geq \tauast\geq 2$. 
    The uncertainty principle is thus fulfilled.

    For $X=(z_X, \zeta_X) \in \W$, we write $z_X= (z_X', (z_X)_N)$,
    with $z_X'\in \R^{N-1}$.
    Similarly, we also write $\zeta_X = (\zeta_X', (\zeta_X)_N)$, with 
    $\zeta_X'\in \R^{N-1}$.

    We now prove the slow variations of $g$ and $\varphi$, $\lambsc$, namely, there exist
    $K>0$, $r>0$, \st 
    \begin{align*}
      \forall X,Y,T \in \W, \quad 
      g_X(Y-X) \leq r^2 \ \ \imp \ \ 
      \begin{cases}
        g_Y(T) \leq K g_X(T),\\   
        K^{-1} \leq \frac{\varphi(z_X)}{\varphi(z_Y)} \leq K, 
        \quad 
        K^{-1} \leq \frac{\lambsc(X)}{\lambsc(Y)} \leq K,
      \end{cases}
    \end{align*}
    where $X = (z_X,\zeta_X)$ and $Y= (z_Y,\zeta_Y)$.
    We thus assume that $g_X(Y-X) \leq r^2$, with $0<r<1$ to be chosen
    below. This gives
    \begin{align}
      \label{eq: slow variation 1}
      (1+\csp \ctp) (|z_X' - z_Y'|) + \csp |(z_X)_N - (z_Y)_N| + \lambsc(X)^{-1} (
      |\zeta_X-\zeta_Y|)\leq C r.
    \end{align}
    We observe that we have
    \begin{align*}
      \varphi(z_X) = e^{\csp \cpsi(z_X)} 
      = \varphi(z_Y) e^{\csp \big(\cpsi(z_X)- \cpsi(z_Y)\big)},
    \end{align*}
    where $\cpsi (z_X) = \psi(\ctp z_X', (z_X)_N)$.
   Note  that
    \begin{align*}
      |\cpsi(z_X)- \cpsi(z_Y)|
      \leq \big(\ctp |z_X' - z_Y'| + |(z_X)_N - (z_Y)_N|\big) 
      \Norm{\psi'}{L^\infty}.
    \end{align*}
    With~\eqref{eq: slow variation 1}, 
    we thus obtain 
     \begin{align}
         \label{eq: varphi x -> y}
      \varphi(z_X) 
      \leq \varphi(z_Y) e^{C r\Norm{\psi'}{L^\infty}}\lesssim \varphi(z_Y).
    \end{align}
    Similarly, we have 
    \begin{align}
      \label{eq: varphi y -> x}
    \varphi(z_Y) 
    \lesssim \varphi(z_X).
    \end{align}

    We also have 
    \begin{align}
      \label{eq: eta < mu (X)}
      |\zeta_Y| \leq |\zeta_Y - \zeta_X| + |\zeta_X|
      \leq C r \lambsc(X) + |\zeta_X| \lesssim \lambsc(X).
    \end{align}
    Next, we write 
    \begin{align*}
      |\zeta_X| \leq |\zeta_Y - \zeta_X| + |\zeta_Y|
      \leq  C r \lambsc(X) +  |\zeta_Y|
      \leq C r \big( \tau \csp \varphi(z_X) + |\zeta_X| \big) +  |\zeta_Y|.
    \end{align*}
    Hence, for $r$ \suff small, with \eqref{eq: varphi x -> y}, we have
    \begin{align}
      \label{eq: xi < mu (Y)}
      |\zeta_X| \lesssim   \tau \csp \varphi(z_X) + |\zeta_Y|
      \lesssim \lambsc(Y).
    \end{align}
    With \eqref{eq: varphi x -> y} and \eqref{eq: xi < mu (Y)}, \resp
    \eqref{eq: varphi y -> x} and \eqref{eq: eta < mu (X)},  we find
    \begin{align*}
      \lambsc(X) \lesssim \lambsc(Y), \quad \text{\resp}\ \  \lambsc(Y) \lesssim \lambsc(X).
    \end{align*}
    Then, if $T= (z_T, \zeta_T) \in \W$ we find
    \begin{align*}
      \frac{|\zeta_T|^2}{\lambsc(Y)^2} \lesssim \frac{|\zeta_T|^2}{\lambsc(X)^2} 
     \lesssim  \frac{|\zeta_T|^2}{\lambsc(Y)^2},
    \end{align*}
    and this gives $g_Y(T) \lesssim g_X(T) \lesssim g_Y(T)$,
    concluding the proof of the slow variations of $\lambsc$ and $g$.

    \bigskip
    We now prove the temperance of $g$, $\varphi$ and $\lambsc$, namely, there exist 
    $K>0$, $N>0$, \st 
    \begin{align*}
      &\forall X,Y,T \in \W, \quad  
      \frac{g_X(T)}{g_Y(T)} \leq C \big(1 + g_X^\sigma(X-Y)\big)^N, \\
      &\forall X,Y \in \W, \quad  
        \frac{\varphi(z_X)}{\varphi(z_Y)} \leq C \big(1 +
        g_X^\sigma(X-Y)\big)^N,
        \quad         
      \frac{\lambsc(X)}{\lambsc(Y)} \leq C \big(1 + g_X^\sigma(X-Y)\big)^N,
    \end{align*}
     where $X = (z_X,\zeta_X)$ and $Y= (z_Y,\zeta_Y)$.
     We have
    \begin{align*}
      g_X^\sigma(X-Y)=\lambsc(X)^2 |z_X-z_Y|^2
      + \frac{|\zeta_X'- \zeta_Y'|^2}{(1+ \csp \ctp)^2} 
      + \frac{|(\zeta_X)_N- (\zeta_Y)_N|^2}{\csp^2}.
    \end{align*}
    
    We note that 
    \begin{align}
      \label{eq: est xi}
      |\zeta_X| 
      &\leq |\zeta_Y| + |\zeta_X- \zeta_Y| 
      \leq |\zeta_Y| 
      + \frac{|\zeta_X - \zeta_Y|}{\csp}  \tau \csp \varphi(z_Y)\\
      &\leq |\zeta_Y| 
      + \bigg(2\frac{|\zeta_X' - \zeta_Y'|}{1+ \csp \ctp} 
      + \frac{|(\zeta_X)_N - (\zeta_Y)_N|}{\csp} \bigg) \tau \csp
      \varphi(z_Y)
      \notag\\
      &\lesssim  \big(1 + g_X^\sigma(X-Y)^{1/2}  \big) \lambsc(Y).
      \notag
    \end{align}
    First, if $(1+ \ctp \csp )|z_X'-z_Y'| + \csp|(z_X)_N - (z_Y)_N | \leq 1$,  
    then, 
    arguing as in \eqref{eq: varphi x -> y}, we find
    $$
    \varphi(z_X) \lesssim \varphi(z_Y), \qquad \tau \csp \varphi(z_X) \lesssim \lambsc(Y).
    $$
    Second, if $(1+ \ctp \csp)|z_X'-z_Y'| + \csp|(z_X)_N - (z_Y)_N | \geq
    1$, we then have $2|z_X - z_Y| \geq 1/\csp$. We write, as
    $\tau\geq \tauast\geq 1$, 
    $$
    \varphi(z_X) = \frac{\ttau(z_X)}{\csp \tau} \leq
    \frac{\lambsc(X)}{\csp}
    \lesssim |z_X-z_Y|\lambsc(X) \lesssim \big(1 + g_X^\sigma(X-Y)^{1/2}
    \big)
    \lesssim \big(1 + g_X^\sigma(X-Y)^{1/2} \varphi(z_Y), 
    $$
    using that $\varphi\geq 1$. 
    We also write 
    $$
    \tau \csp \varphi(z_X) \lesssim \lambsc(X) \leq \lambsc(X)\frac{\lambsc(Y)}{\csp}
    \lesssim  |z_X-z_Y|\lambsc(X)\lambsc(Y) 
    \lesssim \big(1 + g_X^\sigma(X-Y)^{1/2}  \big) \lambsc(Y).
    $$
    In any case, we have 
    $$\varphi(z_X) 
    \leq \big(1 + g_X^\sigma(X-Y)^{1/2} \big) \varphi(z_Y) 
    \lesssim \big(1 + g_X^\sigma(X-Y) \big) \varphi(z_Y), 
    $$
    that is, the temperance of $\varphi$ and we have 
$\tau \csp \varphi(z_X) \lesssim \big(1 +
    g_X^\sigma(X-Y)^{1/2} \big) \lambsc(Y)$, which, along with \eqref{eq: est xi},
    yields the temperance of $\lambsc$:
    $$
    \lambsc(X) \lesssim \big(1 +
    g_X^\sigma(X-Y)^{1/2} \big) \lambsc(Y) 
    \lesssim \big(1 +
    g_X^\sigma(X-Y) \big) \lambsc(Y).
    $$
    
    For the temperance of $g$ we need to prove
    \begin{align*}
      (1+\ctp \csp) |z_T'| +  \csp |(z_T)_N| + \frac{|\zeta_T|}{\lambsc(X)}
      \lesssim \big(1 + g_X^\sigma(X-Y)\big)^N 
      \Big( (1+\ctp \csp) |z_T'| +  \csp |(z_T)_N| + \frac{|\zeta_T|}{\lambsc(Y)}\Big),
    \end{align*}
    for $T=(z_T, \zeta_T) \in \W$. 
    To conclude it suffices to prove
    \begin{align*}
      \lambsc(Y) \lesssim \big(1 + g_X^\sigma(X-Y)\big)^N  \lambsc(X).
    \end{align*} 
    We have 
    \begin{align}
      \label{eq: est eta}
      |\zeta_Y| \leq |\zeta_X| + |\zeta_X - \zeta_Y| 
      &\leq |\zeta_X| 
      + \frac{|\zeta_X - \zeta_Y|}{\csp}  \tau \csp \varphi(z_X)
      \\
      &\leq |\zeta_X| 
      + \bigg(2\frac{|\zeta_X' - \zeta_Y'|}{1+ \csp\ctp}
      + \frac{|(\zeta_X)_N- (\zeta_Y)_N|}{\csp}\bigg)
      \tau \csp \varphi(z_X) \notag\\
      &\lesssim  \big(1 + g_X^\sigma(X-Y)^{1/2}  \big) \lambsc(X) . 
        \notag
    \end{align}
    It thus remains to prove
    \begin{align}
      \label{eq: final estimate}
      \tau \csp \varphi(z_Y) \lesssim \big(1 + g_X^\sigma(X-Y)\big)^N  \lambsc(X).
    \end{align} 
    First, if $(1+\csp \ctp) |z_X'-z_Y'|   + \csp|(z_X)_N-(z_Y)_N| \leq 1$, then $\varphi(z_Y) \lesssim
    \varphi(z_X)$, arguing as in \eqref{eq: varphi y ->
      x}. Estimate~\eqref{eq: final estimate} is then clear.  Second,
    if $(1+\csp \ctp) |z_X'-z_Y'|   + \csp|(z_X)_N-(z_Y)_N| \geq 1$, which
    implies $2|z_X-z_Y|   \geq 1/\csp$,  with
    \eqref{eq: cond inf-sup} we write
    \begin{align*}
      \tau \csp \varphi(z_Y) 
      &\leq \tau \csp \varphi(z_X)^{k+1} \lesssim \frac{\lambsc(X)^{k+1}}{(\tau \csp)^k}
      \lesssim \Big( \frac{\lambsc(X)}{\tau \csp}\Big)^k \lambsc(X)
      \lesssim \Big( |z_X-z_Y| \frac{\lambsc(X)}{\tau}\Big)^k \lambsc(X)\\
      &\lesssim \big(1 + g_X^\sigma(X-Y)^{1/2} \big)^k   \lambsc(X),
    \end{align*} 
    since $\tau\geq \tauast\geq 1$. In any case, we thus have
\begin{align*}
      \tau \csp \varphi(z_Y) 
      \lesssim \big(1 + g_X^\sigma(X-Y)^{1/2} \big)^k   \lambsc(X),
    \end{align*}
    which concludes the proof.
\hfill \qedsymbol \endproof

\subsubsection{Proof of Lemma~\ref{lemma: ttau in the calculus}}
\label{proof: lemma: ttau in the calculus}
  We have $\ttau \lesssim \lambsc$ (\resp
  $\ttau \lesssim \lambsct$) and $d_\zeta \ttau =0$. Only
  differentiations of $\ttau$ with respect to $z$ thus need to be
  considered.  Recalling that
  $\ttau = \tau \csp \varphi_{\csp,\ctp}$ we find that, for
  $\mi= (\mi', \mi_N) \in \N^N$, we can write
  $\d_{z}^{\mi} \ttau(\y')$ as a linear
combination of terms of the form
  \begin{align*}
    &\tau \csp^{1+ k} \varphi_{\csp,\ctp}(z)\  
   \prod_{j=1}^k \d_{z}^{\mi^{(j)}} \cpsi (z)
      = \tau \csp^{1+ k} \ctp^{|\mi'|}\  \varphi_{\csp,\ctp}(z)\ 
      \prod_{j=1}^k \d_{z}^{\mi^{(j)}} \psi (\ctp z', z_N) ,
  \end{align*}
with $\mi^{(1)} + \cdots +
\mi^{(k)} = \mi$, $|\mi^{(j)}|\geq 1$, $j=1,\dots, k$, and $k \leq |\mi|$,
 implying, as $\csp \geq 1$,  
    $|\d_{z}^{\mi}\ttau(\y')| 
    \lesssim \ttau (\y')
    \csp^{|\mi|} \ctp^{|\mi'|}
    \lesssim \ttau (\y')
    \csp^{\mi_N} (\ctp \csp)^{|\mi'|}$,
as $\Norm{\psi^{(\ell)}}{L^\infty}\leq C$ for any $\ell\in \N$,  
which yields the results.\hfill \qedsymbol \endproof

\subsubsection{Proof of Lemma~\ref{lemma: from on calculus to the other}{}}
\label{proof lemma: from on calculus to the other}
  For $\mi = (\mi', \mi_N) \in \N^N$ and $\smi' \in \N^{N-1}$,
  we may write  $\d_{z}^{\mi} \d_{\zeta'}^{\smi'} a (\y')$ as a linear
  combination of terms of the form, 
  \begin{align*}
   b (\y') =  \big(  \prod_{j=1}^{k} \d_{z}^{\mi^{(j)}}  \htau_{p_j}(\y')\big) 
    \
    \d_{z}^{\mi^{(a)}} \d_{\zeta'}^{\smi'} \d_{\hatt}^{\mi^{(b)} }
    \hat{a} \big(\kappa (\y')\big),
  \end{align*}
  for some $\mi^{(b)} \in \N^N$, with $k = |\mi^{(b)}|$, with $\mi = \mi^{(a)} +
  \mi^{(1)} + \cdots + \mi^{(k)}$,   $|\mi^{(j)}|\geq 1$, and
  where $p_j \in \{1, \dots, N\}$, $j=1,\dots,k$.
   Using Lemma~\ref{lemma: htau in the calculus} and Definition~\ref{def: semi-classical
    symbol}, and we obtain
  \begin{align*}
    | b (\y') | 
    &\lesssim 
      \prod_{j=1}^{k}  
      \big(\lambsct (1+\ctp \csp)^{|\mi^{(j)\prime}|} \csp^{|\mi^{(j)}_N|}\big)\
      (|\htau(\y')| +|\zeta'|)^{m-|\smi'| - |\mi^{(b)}|}   \\
      &\lesssim
      (1+ \ctp  \csp)^{|\mi^{(1)\prime} | + \cdots + |\mi^{(k) \prime}|}
      \csp^{\mi^{(1)}_N  + \cdots + \mi^{(k)}_N  }\lambsct^k 
         (|\htau(\y')| +|\zeta'|)^{m-|\smi'| - |\mi^{(b)}|}  
    \\
     &\lesssim (1+ \ctp  \csp)^{|\mi'|} \csp^{\mi_N}
       \lambsct^k 
       (|\htau(\y')| +|\zeta'|)^{m-k - |\smi'| },
  \end{align*}
  as $\csp \geq 1$. 
  If $\hat{a}$ is polynomial then the term $b (\y')$ vanishes if
  $m-|\smi'| - |\mi^{(b)}| <0$. Thus if $m-|\smi'| - |\mi^{(b)}|
  \geq 0$ and, as $|\htau|\lesssim \ttau$ in $\U$,  we obtain
  \begin{align*}
    | b (\y') |\leq (1+ \ctp  \csp)^{|\mi'|} \csp^{\mi_N}
       \lambsct^{m-|\smi'|},
    \end{align*}
  which yields the result.
   If $\hat{a}$ is not polynomial and if  we have $\ttau \asymp |\htau|$,
   we obtain the same estimation, even if $m-|\smi'| - |\mi^{(b)}| <0$.
\hfill \qedsymbol \endproof

\subsubsection{Proof of Lemma~\ref{lemma: equivalence norms}}
\label{proof lemma: equivalence norms}
By applying~\eqref{eq: boundedness operators}, we have 
  \begin{equation*}
    \Norm{\Lambsct^{m} \ttau^{r} u}{+}
    \lesssim
   \Norm{\Opt(\ttau^{r}\lambsct^{m}) u}{+} .
  \end{equation*}
  Next, we write $\Opt(\ttau^{r}\lambsct^{m})  = \Op(\lambsct^{m})
  \ttau^{r}  + \csp R$, with $R \in \Psi( \ttau^{r}
  \lambsct^{m-1}, \gt)$ by the tangential calculus we have introduced.
  This yields, as $\ttau^r \in S(\lambsct^r, \gt)$, 
  \begin{equation*}
    \Norm{\Opt(\ttau^{r}\lambsct^{m}) u}{+} 
    \lesssim
    \Norm{\Op(\lambsct^{m}) \ttau^{r} u}{+} 
    + \csp  \Norm{\Opt(\ttau^{r}\lambsct^{m-1}) u}{+},
  \end{equation*}
  which yields \eqref{eq: equivalence norms} by choosing $\tau$ \suff
  large. 
  Estimation~\eqref{eq: equivalence norms 2} follows the same.
\hfill \qedsymbol \endproof 
\subsubsection{Proof of Lemma~\ref{lemma: different form norm}}
\label{proof lemma: different form norm}
By definition of the Sobolev norms introduced in Section~\ref{sec: operator sobolev} we 
  have 
  \begin{align*}
    \Norm{\ttau^r w}{m,m',\ttau} 
    \asymp \sum_{j=0}^m \Norm{D_{x_d}^j ( \ttau^r w)}
    {0, m + m' - j,\ttau} 
    = \sum_{j=0}^m \Norm{ \Lambsct^{m+m'-j} D_{x_d}^j ( \ttau^r w)}
    {+}.
  \end{align*}
  Let $m''_j \in \R$. We have
  $[\Lambsct^{m''_j}, D_{x_d}^j] \in \sum_{i=1}^j \csp^i
  \Psi(\lambsct^{m''_j}, \gt) D_{x_d}^{j-i}$, 
  from the tangential calculus we have introduced.  With
  Lemma~\ref{lemma: ttau in the calculus} we have
  $[\ttau^r, \Lambsct^{m''_j}] \in (1+ \ctp\csp) \Psi(\ttau^r
  \Lambsct^{m''_j-1}, \gt)$. With the same lemma, 
  for $r'_j
  \in \R$ we also have $[\ttau^{r'_j} , D_{x_d}^j] \in \sum_{i=1}^j
  \csp^i\Psi( \ttau^{r'_j}, \gt) D_{x_d}^{j-i}$.  For $r= r'_j+
  r''_j$, and $m+ m' -j = m_j''+ m_j'''$, with $r_j', r_j'' \in
  \R$ and $m_j'', m_j''' \in \R$, we thus obtain, by
  Proposition~\ref{prop: Sobolev regularity pseudo}, 
  \begin{align*}
    \Norm{\ttau^r w}{m,m',\ttau} 
    &\geq 
    \sum_{j=0}^m \Norm{ \ttau^{r'_j}\Lambsct^{m''_j} D_{x_d}^j
    ( \ttau^{r''_j} \Lambsct^{m'''_j} w)} {+} 
    - C'\sum_{j=1}^m \sum_{i=1}^j \csp^i\Norm{  \ttau^{r}
      D_{x_d}^{j-i} w}{0, m + m' - j,\ttau}\\
    &\qquad - C''\sum_{j=0}^m \csp \Norm{  \ttau^{r}
      D_{x_d}^{j} w}{0, m + m' - j-1,\ttau}\\
    &\geq 
    \sum_{j=0}^m \Norm{ \ttau^{r'_j}\Lambsct^{m''_j} D_{x_d}^j
    ( \ttau^{r''_j} \Lambsct^{m'''_j} w)} {+} 
    - C'\sum_{j=0}^{m-1} \sum_{i=1}^m \csp^i\Norm{  \ttau^{r}
      D_{x_d}^{j} w}{0, m + m' - j-i,\ttau}\\
    &\qquad - C''\sum_{j=0}^m \csp \Norm{  \ttau^{r}
      D_{x_d}^{j} w}{0, m + m' - j-1,\ttau}.
\end{align*}
With the argument given in \eqref{eq: usual semi-classical argument},
we have 
\begin{align*}
  \sum_{j=0}^{m-1} \sum_{i=1}^m \csp^i\Norm{  \ttau^{r}
  D_{x_d}^{j} w}{0, m + m' - j-i,\ttau}
  + \sum_{j=0}^m \csp \Norm{  \ttau^{r}
      D_{x_d}^{j} w}{0, m + m' - j-1,\ttau}
 \ll \Norm{\ttau^r w}{m,m',\ttau}, 
\end{align*}
for $\tau$ chosen \suff large, 
and we thus find
\begin{align*}
    \Norm{\ttau^r w}{m,m',\ttau} 
    &\gtrsim 
    \sum_{j=0}^m \Norm{ \ttau^{r'_j}\Lambsct^{m''_j} D_{x_d}^j
    ( \ttau^{r''_j} \Lambsct^{m'''_j} w)} {+} ,
\end{align*}
for $\tau$ chosen \suff large.
Similarly, we find that 
  \begin{align*}
    \Norm{\ttau^r w}{m,m',\ttau} 
    &\lesssim 
    \sum_{j=0}^m \Norm{ \ttau^{r'_j}\Lambsct^{m''_j} D_{x_d}^j
    ( \ttau^{r''_j} \Lambsct^{m'''_j} w)} {+} ,
\end{align*}
for $\tau$ chosen \suff large. The result for the trace norms is
obtained arguing the same.  \hfill \qedsymbol \endproof

%%%%%%%%%%%%%%%
%% Section             %
%%%%%%%%%%%%%%%
\section{Elliptic and sub-elliptic estimates at the boundary
  $(0,S_0)\times \d\Omega$}
\label{sec: elliptic sub-elliptic estimates boundary x}

\subsection{Roots with negative imaginary part: a perfect elliptic
  estimate}
\label{sec: perfect elliptic estimate}
For $z_0 \in \d Z$, $V$ denotes the \nhd introduced in
Section~\ref{sec: local setting}.  We recall that
$\MtV = V \times \R^{N-1} \times [\tauast,+\infty) \times
[1,+\infty)\times [0,1]$.

 Let $\ell(\y)\in \Symbsc^{m,0}$, with $\y = (z,\zeta,\tau, \csp,\ctp)$ and
  $m\geq 1$, be polynomial in $\zeta_N$ with homogeneous coefficients
  in $(\zeta',\htau)$ and $L = \ell(z,D_z,\tau,\csp,\ctp)$.

%%%%%%%%%%%%%%
% lemma             %
%%%%%%%%%%%%%%
\begin{lemma}
   \label{lemma: microlocal elliptic estimate}
   Let $\U$ be a conic open subset of
  $\MtV$. 
   We assume that, for $\ell(\y',\zeta_N)$
   viewed as a polynomial in $\zeta_N$, for $\y' \in \U$, 
   \begin{itemize}
     \item the leading coefficient is $1$;
     \item all roots of $\ell(\y',\zeta_N)=0$ have negative imaginary
       part. 
   \end{itemize}
   Let $\chi(\y') \in S(1,\gt)$, be homogeneous of degree zero and \st
   $\supp(\chi) \subset \U$. Then, for any $M \in \N$,
   there exist $C>0$, $\tau_0\geq \tauast$, $\csp_0\geq 1$ \st
  \begin{equation*}
    \Norm{\Opt(\chi)w}{m,0,\ttau}
    +\norm{\trace(\Opt(\chi) w)}{m-1,1/2,\ttau}
    \leq C\Big(
    \Norm{L \Opt(\chi)w}{+}
    +\Norm{w}{m,-M,\ttau}\Big),
\end{equation*}
for $w\in \S(\Rpb)$
and $\tau\geq \tau_0$, $\csp\geq \csp_0$, $\ctp \in [0,1]$.
\end{lemma}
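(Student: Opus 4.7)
The plan is to factor $\ell$ into first-order factors whose roots all lie in the open lower half plane and then iterate a basic first-order perfect elliptic estimate. Using continuity of the roots and homogeneity, one first shrinks to a conic open set $\U'$ with $\supp(\chi) \subset \U' \Subset \U$ on which
\begin{equation*}
\ell(\y',\zeta_N) = \prod_{j=1}^{m}\bigl(\zeta_N - \rho_j(\y')\bigr), \qquad \Im \rho_j(\y') \leq -c\,\lambsct(\y'), \qquad \y' \in \U',
\end{equation*}
with $\rho_j \in S(\lambsct,\gt)$ in $\U'$, homogeneous of degree one, and $c>0$. I then fix $\uchi \in S(1,\gt)$, homogeneous of degree zero, with $\supp(\uchi) \subset \U'$ and $\uchi \equiv 1$ on a conic neighborhood of $\supp(\chi)$, and introduce the first-order factors
\begin{equation*}
L_j := D_{z_N} - \Opt^w\!\bigl(\uchi\,\rho_j\bigr) \in \PsiOpsc^{1,0}, \qquad j=1,\dots,m.
\end{equation*}

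\textbf{First-order perfect elliptic estimate.} The crucial building block is that for each $j$, any $r\in\R$, and any $M\in\N$, for $\tau$ and $\csp$ sufficiently large,
\begin{equation*}
\Norm{v}{1,r,\ttau} + \norm{\trace(v)}{0,r+1/2,\ttau} \lesssim \Norm{L_j v}{0,r,\ttau} + \Norm{v}{0,r-M,\ttau}, \qquad v \in \S(\Rpb).
\end{equation*}
Its proof expands
\begin{equation*}
\Norm{L_j v}{+}^2 = \Norm{D_{z_N} v}{+}^2 + \Norm{\Opt^w(\uchi\rho_j) v}{+}^2 - 2\Re\bigl(D_{z_N} v, \Opt^w(\uchi\rho_j) v\bigr)_{+}
\end{equation*}
and integrates by parts in $z_N$ in the cross term. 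The resulting boundary contribution is proportional, modulo lower-order tangential terms, to $\bigl(v_{|z_N=0^+},\Opt^w(-\uchi\,\Im\rho_j) v_{|z_N=0^+}\bigr)_{L^2(\R^{N-1})}$, which is bounded below by $c'\,\norm{v_{|z_N=0^+}}{1/2,\ttau}^2$ via the microlocal tangential G{\aa}rding inequality (Proposition~\ref{prop: microlocal tangential Garding}) applied to the symbol $-\uchi\,\Im\rho_j \geq c\,\uchi\,\lambsct$. Interior coercivity comes from $\Norm{D_{z_N}v}{+}^2$ together with a second G{\aa}rding application, using $|\uchi\rho_j|^2 \gtrsim \lambsct^2$ on $\supp(\uchi)$, which produces $\Norm{v}{1,0,\ttau}^2$. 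The general weight $r$ follows by composing with $\Lambsct^r$ and tracking the commutator errors.

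\textbf{Iteration.} By the tangential calculus of Section~\ref{sec: pseudo -3p} and the construction of $\uchi$,
\begin{equation*}
L\,\Opt(\chi) = L_1 L_2 \cdots L_m\,\Opt(\chi) + R\,\Opt(\chi) + R',
\end{equation*}
with $R \in \PsiOpsc^{m-1,0}$ and $R' \in \PsiOpsc^{m,-M}$ for any $M \in \N$. Setting $v_k := L_{k+1}\cdots L_m\,\Opt(\chi)w$ (with $v_m := \Opt(\chi)w$), each $v_k$ is microlocalized in $\supp(\uchi)$, and $v_{k-1} = L_k v_k$ modulo remainders of the same type. Applying the first-order estimate to $v_k$ with weight $r=k-1$ yields
\begin{equation*}
\Norm{v_k}{1,k-1,\ttau} + \norm{\trace(v_k)}{0,k-1/2,\ttau} \lesssim \Norm{v_{k-1}}{0,k-1,\ttau} + (\text{lower-order errors})
\end{equation*}
for $k = m, m-1, \ldots, 1$. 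Using $\Norm{v_{k-1}}{0,k-1,\ttau} \leq \Norm{v_{k-1}}{1,k-2,\ttau}$ (for $k \geq 2$), the chain telescopes down to $\Norm{v_0}{+} = \Norm{L\,\Opt(\chi)w}{+}$ plus lower-order remainders. The relations $D_{z_N} v_k = v_{k-1} + \Opt^w(\uchi\rho_k) v_k$ then reconstruct each $\Norm{\Lambsct^{m-j} D_{z_N}^j \Opt(\chi)w}{+}$ and each boundary trace $\norm{D_{z_N}^j \Opt(\chi)w_{|z_N=0^+}}{m-j-1/2,\ttau}$ from the iteration bounds, assembling the full $\Norm{\Opt(\chi)w}{m,0,\ttau} + \norm{\trace(\Opt(\chi)w)}{m-1,1/2,\ttau}$.

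\textbf{Absorption and main obstacle.} All remainders generated through commutators, the approximate factorization, and the cutoffs lie in strictly lower-order classes (either in differentiability or in the weight $\ttau$) and can be absorbed by the left-hand side via the semi-classical argument~\eqref{eq: usual semi-classical argument} for $\tau$ and $\csp$ large, producing the desired remainder $\Norm{w}{m,-M,\ttau}$. The main difficulty is the first-order estimate above: the sign $\Im\rho_j<0$ must be made to yield simultaneously interior coercivity and a boundary contribution of the correct sign through integration by parts, and every commutator arising from the tangential Weyl quantization (used precisely because $\Opt^w(a)^* = \Opt^w(\bar a)$ modulo lower order) must be tracked carefully to ensure it remains a genuine remainder within the three-parameter calculus.
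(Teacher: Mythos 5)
The outline (reduce to first-order factors with roots in the open lower half plane, prove a first-order perfect elliptic estimate, then iterate and absorb errors) is reasonable, and your sign analysis for the first-order building block is in fact correct: since $\Im\rho_j < 0$, expanding $\Norm{L_j v}{+}^2$ and integrating by parts in $z_N$ produces a boundary term of the form $\Re\scp{v\brz}{\Opt^w(-\uchi\,\Im\rho_j)\,v\brz}_{L^2(\R^{N-1})}$, which has a favorable sign and gives trace control via the microlocal G{\aa}rding inequality. The iteration and the absorption of the remainders via~\eqref{eq: usual semi-classical argument} are also in order.

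The gap is the very first step: the hypothesis that every root of $\ell(\y',\cdot)$ lies in $\{\Im\zeta_N<0\}$ does \emph{not} allow you to write
$\ell(\y',\zeta_N)=\prod_{j=1}^m\bigl(\zeta_N-\rho_j(\y')\bigr)$
with each $\rho_j\in S(\lambsct,\gt)$ after shrinking the conic set. Roots that stay in the open lower half plane can still coalesce, and at such points they are not smooth (nor even Lipschitz) functions of $\y'$. Shrinking $\U$ does not remove the coalescence; this is a local obstruction. And it actually occurs in the application the paper makes of the lemma: in Proposition~\ref{prop: microlocal estimate E-} the lemma is invoked for $Q_1$ on $\supp(\chi^{(1)}_{\delta,-})$, and there $q_1$ can develop a double root. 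Indeed, by Lemma~\ref{lemma: root behaviors1} a double root of $q_1$ occurs exactly where $m_1=0$, and at such points $\hmu_1=-4\htau_{\xi_d}^4<0$, which is compatible with $\mu_1\leq -\delta/2$ (one only needs $\htau_{\xi_d}\gtrsim\lambsct$, which holds away from $F$). So your factorization into first-order factors with $\rho_j\in S(\lambsct,\gt)$ is simply not available on the region where the lemma is needed. Grouping coalescing roots into clusters (via a contour integral around each cluster) yields a smooth factorization $\ell=\prod_a\ell_a$ into lower-degree factors, but any $\ell_a$ of degree $\geq 2$ just reproduces the original difficulty and cannot be further split. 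The paper avoids this entirely: it does not give a proof but instead appeals to Lemma~6.5 of~\cite{BLR:13}, which establishes the perfect elliptic estimate at the level of the full $m$-th order operator, without any smooth factorization of $\ell$ into first-order factors. To repair your argument you would have to replace the factorization by such a direct energy/parametrix argument (e.g.\ exploiting that $\ell(\y',\cdot)^{-1}$ extends holomorphically to $\{\Im\zeta_N\geq 0\}$), rather than iterate first-order estimates.
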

This lemma can be proven by adapting the proof of \cite[Lemma
6.5]{BLR:13} to the semi-classical calculus we use here. For the
notion of homogeneity for symbols and conic sets in the present calculus, we refer to
Section~\ref{sec: conic homog}. 

\subsection{Sub-ellipticity quantification}

For $z_0 \in \d Z$, $V$ denotes the \nhd introduced in
Section~\ref{sec: local setting}.  We let the function $\psi$ be as
introduced in Section~\ref{sec: Carleman boundary x}, satisfying
\eqref{eq: cond psi} and \eqref{eq: cond psi2}, and we recall that  $\cpsi (z) =
\psi(\ctp z '\! ,z_N)$
and $\varphi(z) = \exp(\csp \cpsi(z))$.
We also recall that $\lambsc^2 =
  \ttau^2 + |\zeta|^2$ with $\ttau(\y') = \tau \csp \varphi(z)$. 
%%%%%%%%%%%%%%%%%%%%%%%%
% proposition          %
%%%%%%%%%%%%%%%%%%%%%%%%
\begin{proposition}
  \label{prop: simple characteristic estimate}
  Let $\ell(z,\zeta)$ be polynomial of degree $m$ in $\zeta$, with
  smooth coefficient in $z$. We assume that for any $M\in \R^N
  \setminus \{0\}$, the
  symbol $\ell$ satisfies the simple-characteristic property in
  direction $M$ in a \nhd of $V$ (see Definition~\ref{def: simple characteristics}).  There exist $C>0$ and $\csp_0\geq 1$
  \st,
  \begin{align*}
  &|\ell(z,\zeta + i \htau(\y'))|^2 
  + \tau \varphi(z) |\cpsi'(z)|^2
  \big\{ \Re \ell(z,\zeta + i \htau(\y')), \Im  \ell(z,\zeta + i \htau(\y'))\big\}
    \geq C \lambsc^{2m},
\end{align*}
for $z \in \ovl{V}$, $\zeta\in \R^N$, $\tau\geq \tauast$,
$\csp\geq\csp_0$ and $\ctp\in [0,1]$.
\end{proposition}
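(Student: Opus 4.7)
The plan is to quantify H\"ormander's strong pseudo-convexity argument in the precise three-parameter framework, tracking the $\csp$-dependence explicitly. The key identity $\d_{z_j}\d_{z_k}\varphi = \csp^2\varphi\,\d_{z_j}\cpsi\,\d_{z_k}\cpsi + \csp\varphi\,\d_{z_j}\d_{z_k}\cpsi$ generates a positive $\csp$-leading contribution to the Poisson bracket. Setting $\tilde\ell(z,\zeta) := \ell(z,\zeta+i\htau(\y'))$ with $\htau = \ttau\,d\cpsi$, and using the identity $\sum_j \d_{\zeta_j}\tilde\ell\,\d_{z_j}\cpsi = D$ where $D := d_\zeta \ell(z,\zeta+i\htau)(d\cpsi)$, a direct differentiation gives $\d_{z_j}\tilde\ell = E_j + i\csp\ttau\,\d_{z_j}\cpsi\cdot D$ with $E_j$ of degree $m$ in $(\zeta,\ttau)$ carrying no explicit $\csp$-factor; expanding the Poisson bracket then yields
\begin{equation*}
\{\Re\tilde\ell,\Im\tilde\ell\}(z,\zeta) = \csp\ttau\,|D|^2 + R(z,\zeta,\ttau,\ctp),
\end{equation*}
with $R$ polynomial of degree $2m-1$ in $(\zeta,\ttau)$ uniformly bounded for $\csp\geq 1$, $\ctp\in[0,1]$, $z\in\ovl{V}$. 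Multiplying by $\tau\varphi|\cpsi'|^2 = \ttau|\cpsi'|^2/\csp$ and using $|\htau|^2 = \ttau^2|\cpsi'|^2$ cancels the $\csp$-factor of the leading term and produces
\begin{equation*}
\tau\varphi|\cpsi'|^2\{\Re\tilde\ell,\Im\tilde\ell\} = |\htau|^2|D|^2 + \tfrac{\ttau}{\csp}|\cpsi'|^2 R = |\htau|^2|D|^2 + O(\lambsc^{2m}/\csp).
\end{equation*}

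Thus for $\csp_0$ large enough to absorb the $O(\lambsc^{2m}/\csp)$ remainder, the proposition reduces to the $\csp$-free inequality
\begin{equation*}
|\ell(z,\zeta+i\htau)|^2 + |\htau|^2|D|^2 \geq 2C\,\lambsc^{2m},
\end{equation*}
to be shown uniformly for $z\in\ovl{V}$, $\zeta\in\R^N$, $\ttau\geq\tauast$, $\ctp\in[0,1]$. Both sides being homogeneous polynomials of degree $2m$ in $(\zeta,\ttau)$, I reduce by homogeneity to the unit sphere $\{|\hat\zeta|^2 + \hat\ttau^2 = 1,\ \hat\ttau\geq 0\}$ with $(\hat\zeta,\hat\ttau) = (\zeta,\ttau)/\lambsc$, and argue by contradiction on the corresponding compact set. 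A limit point of failure $(z^*,\hat\zeta^*,\hat\ttau^*,\ctp^*)$ satisfies $\ell(z^*,\hat\zeta^* + i\hat\ttau^* d\cpsi(z^*)) = 0$ and $\hat\ttau^* D^* = 0$. If $D^* = 0$, the simple-characteristic property in the direction $M = d\cpsi(z^*)$, nonvanishing because $\d_{z_N}\cpsi = \d_{z_N}\psi \geq C>0$ on $V$ by~\eqref{eq: cond psi2}, forces $\hat\zeta^* = \hat\ttau^* = 0$, contradicting $|\hat\zeta^*|^2 + |\hat\ttau^*|^2 = 1$. The residual case $\hat\ttau^* = 0$ yields $\ell(z^*,\hat\zeta^*) = 0$ with $|\hat\zeta^*|=1$, excluded by the ellipticity of the principal symbol of $\ell$ relevant in the applications (notably $P_k = (-1)^k iD_s^2 + A$, which is elliptic).

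The main obstacle is the careful bookkeeping of the Poisson-bracket expansion: one must isolate the pure $\csp^2\varphi$ contribution from the many cross-terms and verify that all remaining contributions are collected into $R$ whose dependence in $\csp\geq 1$ and $\ctp\in[0,1]$ is truly uniform, so that $(\ttau/\csp)|\cpsi'|^2 R$ is unambiguously bounded by $O(\lambsc^{2m}/\csp)$ relative to $\lambsc^{2m}$. Once this algebraic identity is in place, the homogeneity-plus-compactness argument is standard; the only delicate point is the boundary regime $\hat\ttau^* \to 0$ of the compactness argument, where one must use ellipticity of $\ell$ consistently with the simple-characteristic hypothesis to close the estimate.
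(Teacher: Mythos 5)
Your proof is essentially the same as the paper's: the paper establishes the decomposition $\Theta_{\ell,\varphi}(z,\zeta,\tau)=\Theta_{\ell,\cpsi}(z,\zeta,\ttau)+\csp\ttau\,|\inp{\ell'_\zeta(z,\zeta+i\htau)}{\cpsi'(z)}|^2$, which matches your $\csp\ttau|D|^2+R$ identity, then multiplies by $\tau\varphi|\cpsi'|^2=\ttau|\cpsi'|^2/\csp$ and absorbs the $O(\lambsc^{2m}/\csp)$ remainder, reducing to the $\csp$-free inequality $|\ell(z,\zeta+i\htau)|^2+|\htau|^2|D|^2\geq C\lambsc^{2m}$ that it proves by the same compactness-plus-homogeneity argument (their $f(z,\zeta,\theta,M)$ on the compact set $\Con$ is exactly your quantity on the unit sphere). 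The only cosmetic difference is in the boundary case $\hat\ttau^*=0$: you invoke ellipticity of $\ell$ as an extra input, whereas the paper reads it off the simple-characteristic hypothesis itself, which is legitimate for $N\geq 3$ (if $\ell(z,\zeta_0)=0$ with real $\zeta_0\neq 0$, choose $M\neq 0$ solving $d_\zeta\ell(z,\zeta_0)(M)=0$ — two real equations in $N\geq 3$ unknowns — to produce a forbidden double root of $\rho_{z,\zeta_0,M}$ at $\theta=0$), and in any case ellipticity of $P_k$ holds directly in the applications.
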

%%%% proof of proposition
\begin{proof}
We have $0<  C_0 \leq |\cpsi'(z)|\leq C_1$ for $z \in \ovl{V}$ and we
set $K = \{ M \in \R^N; C_0 \leq |M| \leq C_1\}$. 
As $V$ is assumed bounded (see section~\ref{sec: local setting}), we
consider the compact set 
\begin{align*}
  \Con = \big\{ (z,\zeta, \theta,M); \ \theta^2 + |\zeta|^2 =1, \ \ z \in
  \ovl{V}, \ \zeta \in \R^N, \ \theta \in \R_+, \ M \in K\big\}.
\end{align*}
We define
\begin{align}
  \label{eq: def f appendix}
  f(z, \zeta, \theta,M) = |\ell(z,\zeta + i \theta M)|^2 + 
   |\theta M|^2
  \big| \inp{\ell'_\zeta (z,\zeta + i \theta M)}{M}  \big|^2.
 \end{align}
As the simple-characteristic property holds in direction $M$ for all $M \in
K$ and $z \in \ovl{V}$, we have
\begin{align*}
  f(z, \zeta, \theta,M) \geq C>0, \qquad (z, \zeta, \theta,M) \in \Con.
\end{align*}
By homogeneity, we obtain
\begin{align}
  \label{eq: positivity f appendix}
  f(z, \zeta, \theta,M) \geq C(\theta^2 + |\zeta|^2)^{m}, \qquad z \in
  \ovl{V},\ \zeta \in \R^N,\ \theta \in \R_+,\ M  \in K.
\end{align}

We compute the following Poisson bracket, with $\htau (\y') = \tau d
\varphi(z)$, 
\begin{align*}
  \{ \Re \ell(z,\zeta + i \htau(\y')), \Im  \ell(z,\zeta + i \htau(\y'))\} 
  &=
  \frac{1}{2i}
  \{ \ovl{\ell(z,\zeta + i \htau(\y'))},  \ell(z,\zeta + i \htau(\y'))\} 
 = \Theta_{\ell,\varphi}(z,\zeta,\tau),
\end{align*}
with 
\begin{align*}
\Theta_{\ell, \phi}(z,\zeta,t) 
  &:= t \sum_{j,k} \d^2_{z_jz_k} \phi(z)\, 
    \d_{\zeta_j} \ell(z,\zeta + i t d \phi(z))\, 
    \d_{\zeta_k} \ovl{\ell(z,\zeta + i t d \phi (z))} \\
    &\quad + \Im \sum_j \d_{z_j} \ell(z,\zeta+ i t d \phi (z)) \, 
     \d_{\zeta_j}\ovl{\ell (z,\zeta + i t d \phi (z))}. \nonumber
\end{align*}
Note that $\Theta_{\ell, \phi}(z,\zeta,t)$ is homogeneous of degree
$2m-1$ in $(\zeta,t)$.
With $\varphi(z) = \exp(\csp \cpsi(z))$ we obtain 
\begin{align*}
\Theta_{\ell,\varphi}(z,\zeta,\tau) = \Theta_{\ell,\cpsi}(z,\zeta,\ttau(\y')) 
  + \csp \ttau(\y') |\inp{\ell'_\zeta(z,\zeta+ i \htau(\y'))}{\cpsi'(z)}|^2.
\end{align*}

We thus find, with $f$ defined in \eqref{eq: def f appendix}, 
\begin{align}
  \label{eq: minoration subellipticity}
  &|\ell(z,\zeta + i \htau(\y'))|^2 
  + \tau \varphi(z) |\cpsi'(z)|^2
  \big\{ \Re \ell(z,\zeta + i \htau(\y')), \Im  \ell(z,\zeta + i
    \htau(\y')) \big\} \\
  &\qquad =|\ell(z,\zeta + i \htau(\y'))|^2 
    + \tau \varphi(z) |\cpsi'(z)|^2 \Theta_{\ell,\varphi}(z,\zeta,\tau) \notag\\
     &\qquad =|\ell(z,\zeta + i \htau(\y'))|^2 
       + |\ttau(\y') \cpsi'(z)|^2 |\inp{\ell'_\zeta(z,\zeta+ i
       \htau(\y'))}{\cpsi'(z)}|^2
       + \tau \varphi(z) |\cpsi'(z)|^2
        \Theta_{\ell,\cpsi}(z,\zeta,\ttau(\y')) \notag\\
  &\qquad = f(z, \zeta, \ttau(\y'),\cpsi'(z)) + \tau \varphi(z) |\cpsi'(z)|^2
        \Theta_{\ell,\cpsi}(z,\zeta,\ttau(\y')).\notag
\end{align}
Now, as  $\cpsi'(z)$ remains in the compact set $K$, we find, by \eqref{eq: positivity f
  appendix},
\begin{align}
  \label{eq: minoration f appendix}
  f(z, \zeta, \ttau(\y'),\cpsi'(z)) \gtrsim
(\htau(\y')^2 + |\zeta|^2)^m \gtrsim\lambsc^{2m}, 
 \end{align}
since $|\htau(\y')| = |\cpsi'| \ttau(\y') \geq C_0 \ttau(\y')$. 
The homogeneity of $\Theta_{\ell,\cpsi}(z,\zeta,\ttau(\y'))$ gives
\begin{align*}
  |\tau \varphi(z) |\cpsi'(z)|^2
        \Theta_{\ell,\cpsi}(z,\zeta,\ttau(\y'))| 
  \lesssim \gamma^{-1} \ttau(\y') \lambsc^{2m-1} \lesssim \gamma^{-1} \lambsc^{2m}.
\end{align*}
With \eqref{eq: minoration subellipticity} and \eqref{eq: minoration f
  appendix}, we obtain the result for $\csp$ chosen \suff large.
\end{proof}

We recall the definition of $q_k(\y)$ given in \eqref{eq: def symbol
  qk}, we have $q_k (y) = p_k(z,\zeta + i \htau(\y'))$ with
$p_k(z,\zeta) = (-1)^k i \sigma^2 + \xi_d^2 + r(x,\xi')$.
From Proposition~\ref{prop: simple-char} and Proposition~\ref{prop:
  simple characteristic estimate}, we have the following result, in any dimension
$N\geq 2$, that is, $d\geq 1$.
%%%%%%%%%%%%%%%%%%%%%%%%
% corollary              %
%%%%%%%%%%%%%%%%%%%%%%%%
\begin{corollary}
  \label{cor: subellipticity qk}
  Let $k=1$ or $2$. There exist $C>0$ and $\csp_0\geq 1$ \st
  \begin{align*}
  &|q_k(\y)|^2 
  + \tau \varphi(z) |\cpsi '(z)|^2
  \{ \Re q_k(\y'), \Im  q_k(\y)\}
    \geq C \lambsc^{4},\qquad \y = (z,\zeta, \tau, \csp, \ctp),
\end{align*}
for $z \in \ovl{V}$, $\zeta\in \R^N$, $\tau\geq \tauast$,
$\csp\geq\csp_0$, and $\ctp \in [0,1]$, and where $\htau(\y') = \tau \csp \varphi(z) d \cpsi(z)$.
\end{corollary}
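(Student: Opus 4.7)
The plan is to reduce the corollary to a direct combination of Proposition~\ref{prop: simple-char} (which furnishes the simple-characteristic property for $P_k$) and Proposition~\ref{prop: simple characteristic estimate} (which quantifies sub-ellipticity when the simple-characteristic property holds).

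First, I would unwind the notation: by definition \eqref{eq: def symbol qk}, one has $q_k(\y) = p_k\bigl(z, \zeta + i \htau(\y')\bigr)$, where $p_k(z,\zeta) = (-1)^k i \sigma^2 + \xi_d^2 + r(x,\xi')$ is the principal symbol of $P_k$ and $\htau(\y') = \tau \csp \varphi(z)\, d\cpsi(z)$. Setting $\ell = p_k$ (so $m=2$, hence $\lambsc^{2m} = \lambsc^4$), the desired inequality is exactly the conclusion of Proposition~\ref{prop: simple characteristic estimate} applied with this $\ell$ and with the weight $\cpsi$ playing the role of $\phi$; indeed, the $\tau\varphi(z)|\cpsi'(z)|^2 \{\Re q_k,\Im q_k\}$ term in the statement of the corollary coincides with the corresponding bracket term in Proposition~\ref{prop: simple characteristic estimate} once one recognises $\htau = \tau\csp \varphi\, d\cpsi$ as the total cotangent vector appearing in the conjugated symbol.

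Second, I would verify the hypotheses required to invoke Proposition~\ref{prop: simple characteristic estimate}: namely, that $p_k$ satisfies the simple-characteristic property in \emph{every} direction $M \in \R^N\setminus\{0\}$ in a neighbourhood of $V$. For $d\geq 2$ this is immediate from Lemma~\ref{lemma: dim geq 3}, since $p_k(z,\cdot)$ is a second-order polynomial with complex coefficients which is elliptic on $\ovl{V}$ (the real part $\xi_d^2 + r(x,\xi')$ is definite in $\zeta$ for $d\geq 2$, so complex zeros are controlled). For $d=1$, the dimension-one calculation carried out in the second half of the proof of Proposition~\ref{prop: simple-char} (which treats a general vector $M = (M_\sigma,M_\xi)\neq 0$, not specifically $d\psi$) applies verbatim to any $M\in\R^2\setminus\{0\}$; alternatively, applying Proposition~\ref{prop: simple-char} with the linear weight $\tilde\psi(z) = \inp{M}{z}$ produces the desired property in the fixed direction $M$. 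In either dimension, the simple-characteristic property thus holds for $p_k$ in every direction on a neighbourhood of $V$.

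Third, with these hypotheses in hand, Proposition~\ref{prop: simple characteristic estimate} directly yields the inequality, provided $\csp$ is taken larger than the threshold $\csp_0$ produced there; no new ideas are required at this stage. The only genuine subtlety in the plan — which I would flag as the point deserving the most care — is the passage from the direction $d\psi$ in Proposition~\ref{prop: simple-char} to the arbitrary direction $M$ needed to feed Proposition~\ref{prop: simple characteristic estimate}; once this is observed, everything else is bookkeeping, and the identification $\ell(z,\zeta+i\htau) = q_k$ together with $m=2$ gives $\lambsc^{2m} = \lambsc^4$ and completes the argument.
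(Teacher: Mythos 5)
Your proposal is correct and essentially mirrors the paper's own (very terse) justification, which is the single sentence ``From Proposition~\ref{prop: simple-char} and Proposition~\ref{prop: simple characteristic estimate}, we have the following result.'' You are also right to flag the one genuine subtlety — that Proposition~\ref{prop: simple characteristic estimate} requires the simple-characteristic property in \emph{every} direction $M$, whereas Proposition~\ref{prop: simple-char} is phrased for the direction $d\psi$ — and your resolution (the proof of Proposition~\ref{prop: simple-char}, via Lemma~\ref{lemma: dim geq 3} for $d\geq 2$ and the explicit $d=1$ computation, applies to an arbitrary nonvanishing $M$; alternatively, take $\psi$ linear) is exactly the observation that makes the paper's invocation legitimate.
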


\subsection{Estimates for first-order factors}

In this section, we shall assume that $\U_0 \subset \Mt$ is a conic open
set where the symbol $q_k(\y) = p_k(z,\zeta + i \htau(\y'))$ can be factorized
into two {\em smooth} first-order terms,
\begin{align*}
  q_k (\y) = q_{k,-} (\y)  q_{k,+}(\y) , \quad q_{k,\pm}(\y)  = \xi_d - \rho_{k,\pm}(\y').
\end{align*}
By Lemma~\ref{lemma: root behaviors1} we see that $q_{k,-} $ is
elliptic, and $q_{k,+}$ may vanish.

\subsubsection{A root with a positive imaginary part: an elliptic
  estimate with a trace term}
\label{sec: A root with a positive imaginary part}

Here, we further assume that there exists a second conic open set $\U_1 \subset \U_0$
\st  $\Im \rho_{k,+}(\y') \gtrsim \lambsct$, for $\y' \in
\U_1$.  We let $\chi, \uchi \in S(1, \gt)$ be homogeneous of degree zero
and \st 
\begin{align*}
  \uchi \equiv 1\ \text{on a \cnhd of}\ \supp(\chi), \qquad \supp(\uchi) \subset
  \U_1.
\end{align*} 
With $Q_{k,+}= D_{x_d} - \Opt^w(\uchi^2 \rho_{k,+})$ we have the
following estimation.
%%%%%%%%%%%%%%%%%%%%%%%% 
  % lemma                %
  %%%%%%%%%%%%%%%%%%%%%%%% 
  \begin{lemma}
    \label{lemma: elliptic estimate Qk+}
    Let $\ell \in \R$ and $M \in \N$. There exist $\tau_1\geq \tauast$,
    $\csp_1\geq 1$, and $C>0$ , \st
    \begin{align}
      \label{eq: ellitptic estimate Qk+}
      \Norm{ \Opt(\chi) w}{1,\ell, \ttau} 
      \leq C \Big( \Norm{Q_{k,+} \Opt(\chi) w}{0,\ell,\ttau}
  +  \norm{\trace(\Opt(\chi) w)}{0,\ell+ 1/2,\ttau} 
    + \Norm{w}{0,-M, \ttau} 
    \Big), 
  \end{align}
  for $\tau \geq \tau_1$, $\csp \geq \csp_1$, $\ctp \in [0,1]$,
  and for $w \in \S(\Rpb)$.
\end{lemma}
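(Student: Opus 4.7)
\medskip
\noindent\textbf{Proof plan.} The strategy is the usual energy/multiplier method for a first-order elliptic factor, adapted to the three-parameter tangential Weyl--H\"ormander calculus. Set $U = \Opt(\chi) w$ and $F = Q_{k,+} U$. I will test $F$ against the multiplier $W U$, with $W := \Opt(\lambsct^{2\ell+1})$, and extract three quantities from $\Im \scp{F}{W U}_+$: the trace at $\{x_d=0\}$, a positive tangential norm of $U$, and the expected control by $\|F\|_{0,\ell,\ttau}$.

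For the differential part, integration by parts in $x_d$, together with $W D_{x_d} = D_{x_d} W$ (since $W$ is tangential) and the near-selfadjointness of $W$ (modulo $\Psi((1+\ctp\csp)\lambsct^{2\ell}, \gt)$), produces the boundary identity
\begin{equation*}
  \Im \scp{D_{x_d} U}{W U}_+ = \tfrac{1}{2}\scp{W U\brz}{U\brz}_{L^2(\R^{N-1})} + \scp{R' U}{U}_+,
\end{equation*}
where the first term is bounded in absolute value by $\norm{\trace(U)}{0,\ell+1/2,\ttau}^2$ and $R' \in (1+\ctp\csp)\Psi(\lambsct^{2\ell}, \gt)$. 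For the multiplication part, the Weyl quantization $\Opt^w$ has the adjoint $\Opt^w(\ovl{\uchi^2 \rho_{k,+}})$, so that
\begin{equation*}
  \Im \scp{\Opt^w(\uchi^2 \rho_{k,+}) U}{W U}_+
  = \Re \scp{\Opt^w(\uchi^2 \Im \rho_{k,+}) W U}{U}_+ + \scp{R'' U}{U}_+,
\end{equation*}
with $R'' \in (1+\ctp\csp)\Psi(\lambsct^{2\ell+1}, \gt)$ coming from the commutator $[\Opt^w(\uchi^2 \rho_{k,+}), W]$ and from writing $\Opt^w(\uchi^2\rho_{k,+})^\ast W$ in symmetric form. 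Since $\uchi \equiv 1$ on a conic neighborhood of $\supp(\chi)$ and $\Im\rho_{k,+} \gtrsim \lambsct$ on $\U_1 \supset \supp(\uchi)$, the real symbol $\uchi^2 (\Im\rho_{k,+})\lambsct^{2\ell+1}$ satisfies $\geq C\lambsct^{2\ell+2}$ on $\supp(\chi)$, and the microlocal tangential G{\aa}rding inequality of Proposition~\ref{prop: microlocal tangential Garding} (applied to $\chi$) yields, for any $M$,
\begin{equation*}
  \Re \scp{\Opt^w(\uchi^2 \Im \rho_{k,+}) W U}{U}_+
  \geq C' \Norm{\Lambsct^{\ell+1} \Opt(\chi) w}{+}^2 - C_M \Norm{w}{0,-M,\ttau}^2.
\end{equation*}

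Combining these identities with the Cauchy--Schwarz and Young inequalities applied to $|\scp{F}{WU}_+| \leq \|F\|_{0,\ell,\ttau}\|U\|_{0,\ell+1,\ttau}$ gives
\begin{equation*}
  \Norm{\Opt(\chi) w}{0,\ell+1,\ttau}^2
  \lesssim \Norm{F}{0,\ell,\ttau}^2 + \norm{\trace(\Opt(\chi)w)}{0,\ell+1/2,\ttau}^2
  + \Norm{w}{0,-M,\ttau}^2 + (1+\ctp\csp)\Norm{\Opt(\chi) w}{0,\ell+1/2,\ttau}^2,
\end{equation*}
where the last term collects the calculus remainders coming from $R'$ and $R''$. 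Using $(1+\ctp\csp) \leq 2\csp$ together with the standard semi-classical absorption argument \eqref{eq: usual semi-classical argument}, this $\csp$-weighted lower-order term is swallowed by the left-hand side for $\tau$ and $\csp$ chosen sufficiently large. Finally, the normal-derivative norm is recovered from the equation itself: $D_{x_d} \Opt(\chi) w = F + \Opt^w(\uchi^2 \rho_{k,+}) \Opt(\chi) w$, and since $\uchi^2 \rho_{k,+} \in S(\lambsct, \gt)$, Proposition~\ref{prop: Sobolev regularity pseudo tangential} gives $\Norm{D_{x_d}\Opt(\chi)w}{0,\ell,\ttau} \lesssim \Norm{F}{0,\ell,\ttau} + \Norm{\Opt(\chi)w}{0,\ell+1,\ttau}$, which together with the previous bound yields \eqref{eq: ellitptic estimate Qk+}. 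The main obstacle is bookkeeping: one must carefully verify that every commutator and adjoint correction produced by the three-parameter calculus lands in $(1+\ctp\csp)\Psi(\lambsct^{r}, \gt)$ with $r$ strictly lower than $2\ell+2$, so that the large-parameter argument indeed absorbs them uniformly in $\ctp \in [0,1]$.
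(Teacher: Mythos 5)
Your proof is correct and follows essentially the same multiplier strategy as the paper's: pair $Q_{k,+}\Opt(\chi)w$ with $\Lambsct^{2\ell+1}\Opt(\chi)w$, extract the boundary trace by integration by parts in $x_d$, invoke the microlocal tangential G{\aa}rding inequality on the $\Im\rho_{k,+}$ part to produce the positive $\Norm{\cdot}{0,\ell+1,\ttau}^2$ term, absorb the $\csp$-weighted calculus remainders for $\tau$ large, and recover the $D_{x_d}$-norm from the equation. The paper phrases this through the selfadjoint decomposition $Q_{k,+}=A-iB$ with $A=D_{x_d}-\Opt^w(\uchi^2\Re\rho_{k,+})$, $B=\Opt^w(\uchi^2\Im\rho_{k,+})$ and computes $2\Re\scp{Q\cdot}{-i\Lambsct^s\cdot}_+$, which is the same quantity you extract via $\Im\scp{F}{WU}_+$; this is purely a notational difference.
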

\begin{proof}
  We write $Q = A - i B$ with 
    \begin{align*}
      A =  D_{x_d} - \Opt^w(\uchi^2\Re \rho_{k,+}), \quad B = 
      \Opt^w(\uchi^2\Im \rho_{k,+}),
    \end{align*}
 both formally selfadjoint.

  We use a pseudo-differential multiplier technique, following for instance \cite{LR-L:13}
  and compute, with $s = 2 \ell +1$,
  \begin{align*}
    &2 \Re \scp{Q \Opt(\chi) w}{- i \Lambsct^s \Opt(\chi) w}_+\\
    &\qquad = 
      -2 \Re \scp{A\Opt(\chi) w}{ i \Lambsct^s \Opt(\chi)w}_+
                + 2 \Re \scp{B\Opt(\chi) w}{ \Lambsct^s \Opt(\chi)w}_+\\
    &\qquad =
      - \scp{i [A ,  \Lambsct^s
      ]\Opt(\chi) w}{\Opt(\chi) w}_+
      + 2 \Re \scp{B \Opt(\chi) w}{\Lambsct^s \Opt(\chi) w}_+\\
      &\qquad \quad - \scp{\Lambsct^s \Opt(\chi) w\brx}{\Opt(\chi)
        w\brx}_{L^2(\R^{N-1})}\\
      & \qquad \geq  2 \Re \scp{B \Opt(\chi) w}{\Lambsct^s \Opt(\chi) w}_+
        - C  \csp^2 \Norm{\Opt(\chi) w}{0,\ell + 1/2, \ttau}^2\\
        &\qquad \quad - \norm{\Opt(\chi) w \brx}{0,\ell + 1/2,\ttau}^2,
  \end{align*}
which by the (microlocal) G{\aa}rding inequality of
Proposition~\ref{prop: microlocal tangential Garding}  yields, for any $M \in \N$, 
\begin{align*}
  \Re \scp{\Lambsct^{\ell} Q \Opt(\chi) w}{- i \Lambsct^{\ell+1} \Opt(\chi) w}_+
      + \norm{\Opt(\chi) w\brx}{0,\ell + 1/2,\ttau}^2
      + \Norm{w}{0,-M, \ttau}^2
      \gtrsim 
      \Norm{\Opt(\chi) w}{0,\ell + 1, \ttau}^2,
\end{align*}
for $\tau$ and $\csp$ chosen \suff large. 
Then, with the Young inequality, we obtain
\begin{align*}
  \Norm{Q \Opt(\chi) w}{0,\ell, \ttau}
      + \norm{\Opt(\chi) w\brx}{0,\ell + 1/2,\ttau}
      + \Norm{w}{0,-M, \ttau}
      \gtrsim 
      \Norm{\Opt(\chi) w}{0,\ell + 1, \ttau}.
\end{align*}
Finally, 
observing that we have 
\begin{align*}
  \Norm{D_{x_d} \Opt(\chi) w}{0,\ell, \ttau} 
  \lesssim \Norm{Q \Opt(\chi) w}{0,\ell, \ttau} + \Norm{\Opt(\chi) w}{0,\ell + 1, \ttau}
\end{align*}
allows one to conclude the proof.
\end{proof}

\subsubsection{Transmitted sub-ellipticity}

In $\U_0$ where $q_k(\y)$ is smoothly factorized, $q_k (\y) = q_{k,-} (\y)  q_{k,+}(\y)$,
we now describe how the sub-ellipticity property of
Corollary~\ref{cor: subellipticity qk} is ``transmitted'' to the
nonelliptic factor $q_{k,+}$. 
%%%%%%%%%%%%%%%%%%%%%%%%
% proposition          %
%%%%%%%%%%%%%%%%%%%%%%%%
\begin{proposition}
  \label{prop: sub-ellipticity q k+}
  Let $k=1$ or $2$. There exist $\csp_0 \geq 1$, $\alpha_0>0$,  and $C>0$ \st
  \begin{align}
  \label{eq: E- estim Qk+ subellipticity}
  \alpha \csp \ttau^{-1} |\Im \rho_{k,+}|^2  +\big\{\xi_d - \Re \rho_{k,+}, -\Im
  \rho_{k,+}\big\} \geq C \csp \ttau^{-1} \lambsct^2,\qquad \y' \in \U_0,
 \end{align}
 for $\csp \geq \csp_0$ and $\alpha\geq \alpha_0$.
\end{proposition}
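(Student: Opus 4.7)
The plan is to transfer the sub-ellipticity of the product $q_k = q_{k,-}q_{k,+}$ provided by Corollary~\ref{cor: subellipticity qk} to the non-elliptic factor $q_{k,+}$, exploiting that $q_{k,-}$ remains elliptic on $\U_0$. A convenient first observation is that, since $a = \xi_d - \Re\rho_{k,+}$ is linear in $\xi_d$ with $\d_{\xi_d}a = 1$ and $\d_{\xi_d}b = 0$, while $\rho_{k,\pm}$ themselves have no $\xi_d$ dependence, both $b$ and $\{a, b\}$ are independent of $\xi_d$. Hence the inequality sought is really a fiber-wise statement on $\Mt$, and I am free to evaluate at any specific value of $\xi_d$. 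I will pick $\xi_d = \Re\rho_{k,+}(\y')$, at which $a = 0$, $q_{k,+} = -ib$, $|q_{k,+}|^2 = b^2$, and moreover $|\xi_d| \leq |\rho_{k,+}| \lesssim \lambsct$ so that $\lambsc \asymp \lambsct$.

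At this value of $\xi_d$, Corollary~\ref{cor: subellipticity qk} reads
\[
|q_{k,-}|^2\, b^2 + \tau\varphi(z)\,|\cpsi'(z)|^2\,\{\Re q_k, \Im q_k\} \gtrsim \lambsct^4.
\]
Expanding the Poisson bracket via the Leibniz rule applied to $q_k = q_{k,-}q_{k,+}$ gives
\[
\{\Re q_k, \Im q_k\} = |q_{k,-}|^2\{a, b\} + |q_{k,+}|^2 A_- + \Im\bigl(\overline{q_{k,-}}\,q_{k,+}\,\{\overline{q_{k,+}}, q_{k,-}\}\bigr),
\]
with $A_- := \{\Re q_{k,-}, \Im q_{k,-}\}$. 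At the chosen $\xi_d$ the second summand becomes $b^2 A_-$ and the third becomes $b\,R$ with $R = \Re(\overline{q_{k,-}}\{\overline{q_{k,+}}, q_{k,-}\})$.

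The main structural input will be the ellipticity bound $|q_{k,-}(z,\Re\rho_{k,+},\zeta')|^2 \gtrsim \lambsct^2$ on $\U_0$. I plan to establish it by direct computation from $\rho_{k,\pm} = -i\htau_{\xi_d} \pm i\hat{h}_k$: at the chosen $\xi_d$ one has $|q_{k,-}|^2 = 4(\Im\hat{h}_k)^2 + (\htau_{\xi_d} + \Re\hat{h}_k)^2$, and I would split into two regimes. When $|\zeta'| \lesssim \ttau$, the assumption $\d_{x_d}\psi \geq C>0$ gives $\htau_{\xi_d}^2 \asymp \ttau^2 \asymp \lambsct^2$, already supplying the bound. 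When $|\zeta'| \gg \ttau$, I would use that the system $\Re m_k = \Im m_k = 0$ is only solvable when $|\zeta'|$ is comparable to $|\htau|$, by the positive-definiteness of $r(x,\cdot)$ and the bounds on $\htau$, and deduce $|m_k| = |\hat{h}_k|^2 \gtrsim |\zeta'|^2$; combining with $\Re\hat{h}_k \geq 0$ shows that either $(\Re\hat{h}_k)^2$ or $(\Im\hat{h}_k)^2$ is $\gtrsim |\zeta'|^2$, which then propagates through the explicit formula for $|q_{k,-}|^2$.

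Once the ellipticity of $q_{k,-}$ is in hand, the tangential calculus yields the standard size estimates $A_- \lesssim \csp\lambsct$ and $\{\overline{q_{k,+}}, q_{k,-}\} \lesssim \csp\lambsct$, hence $R \lesssim \csp\lambsct^2$. Dividing the displayed sub-ellipticity by $|q_{k,-}|^2\,|\cpsi'|^2 \asymp \lambsct^2$ (both factors are bounded above and below on $V$) produces an inequality of the schematic shape
\[
b^2 + \csp^{-1}\ttau\{a, b\} + O(b^2) + O(\ttau\, b) \geq C\lambsct^2,
\]
where the $O(b^2)$ comes from $\csp^{-1}\ttau \cdot b^2 A_-/|q_{k,-}|^2 \lesssim \ttau b^2/\lambsct \leq b^2$ and the $O(\ttau b)$ from $\csp^{-1}\ttau \cdot b R/|q_{k,-}|^2 \lesssim \ttau b$. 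Applying the Young inequality $\ttau\, b \leq \epsilon\lambsct^2 + C_\epsilon b^2$ with $\epsilon$ small enough to absorb $\epsilon\lambsct^2$ into the right-hand side reduces the above to $C_1 b^2 + \csp^{-1}\ttau\{a, b\} \geq C_2\lambsct^2$. Multiplying by $\csp\ttau^{-1}$ and setting $\alpha_0 := C_1$ delivers the claimed inequality for any $\alpha \geq \alpha_0$ and $\csp \geq \csp_0$ (with $\csp_0$ inherited from Corollary~\ref{cor: subellipticity qk}). The main technical obstacle is the uniform verification of $|q_{k,-}|^2 \gtrsim \lambsct^2$ across the two regimes of $|\zeta'|/\ttau$; everything after that is bookkeeping with the symbolic calculus and Young's inequality.
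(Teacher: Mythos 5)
Your proposal is correct and follows essentially the same route as the paper: both expand $\{\ovl q_k, q_k\}$ along the factorization $q_k = q_{k,-}q_{k,+}$, estimate the two cross- and lower-order bracket terms by $\csp\lambsc\,|q_{k,+}|^2$ and $\csp\lambsc^2|q_{k,+}|$ respectively, absorb them with Young's inequality, then divide by $|q_{k,-}|^2\asymp\lambsc^2$ and finally restrict to $\xi_d=\Re\rho_{k,+}$. The one place you expend more effort than the paper is in verifying $|q_{k,-}|\gtrsim\lambsct$ at $\xi_d=\Re\rho_{k,+}$ by the two-regime analysis on $|\zeta'|/\ttau$; the paper takes that ellipticity as given, and your argument is a correct way to supply it.
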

\begin{proof}
We write 
\begin{align*}
   2i \{ \Re q_k, \Im  q_k\} = \{ \ovl{q_k}, q_k\}
  = |q_{k,-}|^2 \{ \ovl{q_{k,+}}, q_{k,+}\}
  + |q_{k,+}|^2 \{ \ovl{q_{k,-}}, q_{k,-}\}
  + 2 i \Im \big(\{\ovl{q_{k,-}} , q_{k,+}\} \ovl{q_{k,+}} q_{k,-} \big),
\end{align*}
yielding 
 \begin{align*}
   \{ \Re q_k, \Im  q_k\} 
  = |q_{k,-}|^2 \{ \Re q_{k,+},\Im q_{k,+}\}
  + |q_{k,+}|^2 \{ \Re q_{k,-}, \Im q_{k,-}\}
  +\Im \big(\{\ovl{q_{k,-}} , q_{k,+}\} \ovl{q_{k,+}} q_{k,-} \big).
\end{align*}
We write, for $M>0$, 
\begin{align*}
  |q_{k,+}|^2 \big| \{ \Re q_{k,-}, \Im q_{k,-}\}\big|
  +\big|\Im \big(\{\ovl{q_{k,-}} , q_{k,+}\} \ovl{q_{k,+}} q_{k,-}
  \big)\big|
  &\leq C\Big( \gamma \lambsc |q_{k,+}|^2 + \gamma \lambsc^2 |q_{k,+}|\big)\\
  &\leq C'(1 +M) \gamma \lambsc |q_{k,+}|^2 + C'M^{-1} \gamma \lambsc^3.
\end{align*}
For $M>0$ and $\csp_0 \geq 1$ chosen \suff large we obtain, with Corollary~\ref{cor: subellipticity qk},
\begin{align*}
  &|q_{k,-}(\y)|^2 \Big(
    |q_{k,+}(\y)|^2 
  + \tau \varphi(z) |\cpsi'(z)|^2
  \{ \Re q_{k,+},\Im q_{k,+}\}(\y) \Big)
    \geq C \lambsc^{4} - C' (1+M) \ttau\lambsc |q_{k,+}|^2,
\end{align*}
In $\U_0$ we have $|q_{k,-}(\y)| \asymp \lambsc$, as $q_{k,-}$ is
elliptic which gives
\begin{align*}
  \alpha  |q_{k,+}(\y)|^2 
  + \tau \varphi(z) |\cpsi'(z)|^2
  \{ \Re q_{k,+},\Im q_{k,+}\}(\y) 
    \geq C \lambsc^{2},\qquad \y' \in \U_0, \xi_d \in \R,
\end{align*}
for $\alpha>0$ chosen \suff large. If we now choose $\xi_d = \Re
\rho_{k,+}(\y')$ we then obtain the result.
\end{proof}

\subsubsection{A root with a vanishing imaginary part: a sub-elliptic estimate with a trace term.}
\label{sec: A root with a vanishing imaginary part}

Here, we consider as above a conic open set $\U_0 \subset \Mt$, \st
the symbol $q_k(\y) = p_k(z,\zeta + i \htau(\y'))$ can be factorized
into two {\em smooth} first-order terms,
$q_k (\y) = q_{k,-} (\y) q_{k,+}(\y)$.  We let
$\chi, \uchi \in S(1, \gt)$ be as above and we recall that
$Q_{k,+} := D_{x_d} - \Opt^w(\uchi^2 \rho_{k,+})$.
  We have the following lemma.
  %%%%%%%%%%%%%%%%%%%%%%%% 
  % lemma                %
  %%%%%%%%%%%%%%%%%%%%%%%% 
  \begin{lemma}
    \label{lemma: sub-ellitptic estimate Qk+}
    Let $\ell, m \in \R$ and $M\in \N$. There exist $\tau_1\geq \tauast$,
    $\csp_1 \geq 1$, and $C>0$ , \st
    \begin{align}
      \label{eq: sub-ellitptic estimate Qk+}
      \csp^{1/2}\Norm{\ttau^{m-1/2} \Opt(\chi) w}{1,\ell, \ttau} 
      \leq C \Big( \Norm{\ttau^{m} Q_{k,+} \Opt(\chi) w}{0,\ell,\ttau}
  +  \norm{\trace(\ttau^{m}\Opt(\chi) w)}{0,\ell+ 1/2,\ttau} 
  + \Norm{w}{0,-M, \ttau}
    \Big), 
  \end{align}
   for $\tau \geq \tau_1$, $\csp \geq \csp_1$, $\ctp \in [0,1]$,
  and for $w \in \S(\Rpb)$.
\end{lemma}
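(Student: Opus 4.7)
The plan is to mimic the pseudo-differential multiplier argument of Lemma~\ref{lemma: elliptic estimate Qk+}, replacing the pointwise positivity of $\Im\rho_{k,+}$ (which was the content of the ellipticity assumption and is now unavailable) by the transmitted sub-ellipticity of Proposition~\ref{prop: sub-ellipticity q k+}. The loss of a half-derivative visible in the prefactor $\csp^{1/2}\ttau^{-1/2}$ on the left reflects that this bracket only dominates $\csp\ttau^{-1}\lambsct^2$ rather than $\lambsct^2$ itself. As a preliminary reduction, the general $(m,\ell)$ case will be obtained from the base case $m=\ell=0$ by conjugating $Q_{k,+}$ with the tangential operator $\Opt(\ttau^m \lambsct^\ell)$: the commutator $[\ttau^m \Lambsct^\ell, D_{x_d}]$ has tangential symbol of order $\csp\,\ttau^m\lambsct^\ell$, since $\d_{x_d}\ttau = \csp\ttau\,\d_{x_d}\cpsi$, so the conjugated $Q_{k,+}$ differs from the original by an operator of order zero with a factor $\csp$. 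The associated errors are absorbable into the LHS by the semi-classical argument~\eqref{eq: usual semi-classical argument} using $\Lambsct \geq \ttau \geq \csp$ for $\tau$ large.

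For the base case $m=\ell=0$, with $v := \Opt(\chi)w$, I would decompose $Q_{k,+} = A - iB$ into its formally selfadjoint real and imaginary parts, where $A = D_{x_d} - \Opt^w(\uchi^2 \Re \rho_{k,+})$ and $B = \Opt^w(\uchi^2 \Im \rho_{k,+})$, and expand
\begin{equation*}
  \Norm{Q_{k,+}v}{+}^2
  = \scp{(A^2 + B^2 - i[A,B])v}{v}_+ + \mathcal{B}[v\brx],
\end{equation*}
where the boundary quadratic form $\mathcal{B}$ arises from integrating $D_{x_d}$ by parts and is estimated by $\norm{\trace(v)}{0,1/2,\ttau}^2$. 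The principal symbol of $-i[A,B]$ is, modulo lower-order tangential terms, precisely the Poisson bracket $\{\xi_d - \Re\rho_{k,+},\, -\Im\rho_{k,+}\}$, valid on a conic neighborhood of $\supp(\chi)$ since $\uchi \equiv 1$ there.

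The key step is then to apply Proposition~\ref{prop: sub-ellipticity q k+} to bound this bracket pointwise below by $C\csp\ttau^{-1}\lambsct^2 - \alpha\csp\ttau^{-1}|\Im\rho_{k,+}|^2$, and to convert to an operator inequality through the microlocal tangential G{\aa}rding inequality (Proposition~\ref{prop: microlocal tangential Garding}), picking up a residue $\lesssim \Norm{w}{0,-M,\ttau}^2$. The penalty $-\alpha\csp\Norm{\ttau^{-1/2}Bv}{+}^2$ is bounded by $\alpha\,\tauast^{-1}\Norm{Bv}{+}^2$ since $\csp\ttau^{-1} = (\tau\varphi)^{-1} \leq \tauast^{-1}$, and absorbed by $\Norm{Bv}{+}^2$ once $\tau_1$ is taken large relative to the $\alpha_0$ of Proposition~\ref{prop: sub-ellipticity q k+}. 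Collecting positive terms yields
\begin{equation*}
  \Norm{Av}{+}^2 + \Norm{Bv}{+}^2 + \csp\Norm{\ttau^{-1/2}v}{0,1,\ttau}^2
  \lesssim \Norm{Q_{k,+}v}{+}^2 + \norm{\trace(v)}{0,1/2,\ttau}^2 + \Norm{w}{0,-M,\ttau}^2,
\end{equation*}
after which the normal-derivative contribution is recovered via $D_{x_d}v = Av + \Opt^w(\uchi^2 \Re\rho_{k,+})v$ and $\csp\ttau^{-1} \leq 1$, upgrading the tangential control to the full $\csp^{1/2}\Norm{\ttau^{-1/2}v}{1,0,\ttau}$ on the left.

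The hardest part will be bookkeeping the various lower-order remainders so that none exceeds the left-hand side: those from the Weyl calculus in identifying $-i[A,B]$, the commutators with $\uchi^2$, the conjugation error by $\ttau^m\Lambsct^\ell$, and the sub-ellipticity penalty $|\Im\rho_{k,+}|^2$. In each instance the errors come with a factor $\csp$ rather than $\csp^{3/2}$, whereas the LHS is $\csp^{1/2}\Norm{\ttau^{-1/2}\,\cdot\,}{1,0,\ttau}$; absorption via $\csp\ttau^{-1} \leq \tauast^{-1}$ then succeeds, provided $\tau_1$ is chosen sufficiently large relative to $\alpha_0$.
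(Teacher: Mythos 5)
Your proposal is correct and follows essentially the same route as the paper's proof: write $Q_{k,+}=A+iB$ with formally selfadjoint $A,B$, expand $\Norm{Q_{k,+}\wlm}{+}^2$ to isolate $i\scp{[A,B]\wlm}{\wlm}_+$ plus the boundary quadratic form, combine the positive term $\Norm{B\wlm}{+}^2$ with $i[A,B]$ via $\alpha\csp\ttau^{-1}\leq 1$ so that the microlocal G{\aa}rding inequality applied to the positive symbol from Proposition~\ref{prop: sub-ellipticity q k+} controls $\csp^{1/2}\Norm{\ttau^{m-1/2}\Opt(\chi)w}{0,1+\ell,\ttau}$, and finally recover $D_{x_d}$ from $A$. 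The only cosmetic difference is that the paper sets $\wlm=\ttau^m\Lambsct^\ell\Opt(\chi)w$ directly rather than performing your preliminary reduction to $m=\ell=0$; both handle the conjugation error $[\ttau^m\Lambsct^\ell, D_{x_d}]\in\csp\Psi(\ttau^m\lambsct^\ell,\gt)$ identically. One small imprecision: the condition you invoke, $\csp\ttau^{-1}=(\tau\varphi)^{-1}\leq\tauast^{-1}$, does not by itself shrink as $\tau$ grows since $\tauast$ is fixed; what actually matters is $\alpha_0\csp\ttau^{-1}=\alpha_0(\tau\varphi)^{-1}\leq\alpha_0/\tau_1<1$ for $\tau_1>\alpha_0$, which is what you conclude anyway.
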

\begin{proof}
 For concision, we write $Q$ in place of $Q_{k,+}$. We decompose
 $Q$ according to $Q = A + i B $
with
 \begin{align}
   \label{eq:  decomposition Qk+, A B}
   A = D_{x_d} 
     - \Opt^w (\uchi^2\Re \rho_{k,+}) \in
     \PsiOpsc^{1,0}, 
     \qquad B = 
    -  \Opt^w( \uchi^2\Im \rho_{k,+}) \in
     \PsiOpsc^{0,1} = \Psi(\lambsct, \gt). 
  \end{align}
  Observe that both $A$ and $B$ are formally selfadjoint.

  We set $\wlm =  \ttau^{m} \Lambsct^\ell \Opt(\chi)  w$ and compute
\begin{align}
  \label{eq: Carleman Qk+}
  \Norm{Q  \wlm}{+}^2 
  = \Norm{(A  + i B)\wlm}{+}^2
    = \Norm{A\wlm}{+}^2 
    + \Norm{B \wlm}{+}^2 
  + 2 \Re \scp{A \wlm}{i B\wlm}_{+}
\end{align}
From the form of $A$ and $B$  given in \eqref{eq:  decomposition Qk+,
  A B} we find
\begin{align*}
  2 \Re \scp{A \wlm}{ i B \wlm}_{+}
  &= i \scp{[A,B] \wlm }{\wlm}_{+}
    - \scp{\Opt^w( \uchi^2\Im\rho_{k,+}){\wlm}\brx }
  { {\wlm}\brx}_{L^2(\R^{N-1})}.
\end{align*}
yielding, with \eqref{eq: Carleman Qk+}, 
\begin{align*}
  &\Norm{Q  \wlm}{+}^2 
  + \norm{\trace( \ttau^{m}\Opt(\chi)w)}{0,\ell+ 1/2,\ttau}^2\\
  &\qquad \gtrsim \Norm{A \wlm}{+}^2 
    + \Norm{B \wlm}{+}^2 + i \scp{[A,B] \wlm }{\wlm}_{+}\\
  &\qquad\gtrsim 
    \Norm{A \wlm}{+}^2 + 
  \scp{ \big(\alpha \csp \ttau^{-1} B^2 +i [A,B] \big) \wlm}
    {\wlm}_{+}\\
  &\qquad\gtrsim 
    \Norm{A \wlm}{+}^2 + 
  \scp{ \Lambsct^{\ell}\ttau^{m}\big(\alpha \csp \ttau^{-1} B^2 +i [A,B] \big)
    \ttau^{m}\Lambsct^{\ell} \Opt(\chi)w}
    {\Opt(\chi)w}_{+}, 
\end{align*}
for $\alpha=\alpha_0$ with $\alpha_0$ given by Proposition~\ref{prop:
  sub-ellipticity q k+}, and for $\tau$  \st 
$\alpha \csp \ttau^{-1}\leq 1$.
As the principal symbol of $\Lambsct^{\ell}\ttau^{m}\big(\alpha \csp \ttau^{-1} B^2 +i [A,B] \big)
    \ttau^{m}\Lambsct^{\ell}$ is given, in a \cnhd of $\supp(\chi)$, where
$\uchi \equiv 1$,  by
\begin{align*}
  \ttau^{2m} \lambsct^{2\ell} 
  \big(\alpha \csp \ttau^{-1} (\Im \rho_{k,+})^2 
  + \{\xi_d - \Re \rho_{k,+}, -\Im\rho_{k,+}\}\big) 
  \in S(\csp \ttau^{2m-1}\lambsct^{2+ 2 \ell},\gt),
\end{align*}
then Proposition~\ref{prop: sub-ellipticity q k+} and the (microlocal) G{\aa}rding
inequality of Proposition~\ref{prop: microlocal tangential Garding} yield, for any $M \in N$, by choosing $\tau$ and $\csp$ \suff large,  
\begin{align*}
  \Norm{Q  \wlm}{+} 
  + \norm{\trace(\ttau^{m}\Opt(\chi)w)}{0,\ell+ 1/2,\ttau}
  +  \Norm{w}{0,-M}
  &\gtrsim \Norm{A \wlm}{+}   
    + \csp^{1/2} \Norm{\ttau^{m-1/2}\Opt(\chi)w}{0,1+\ell,\ttau}.
\end{align*}
From the form of $A$ in \eqref{eq:  decomposition Qk+, A B} we have 
\begin{align*}
  \csp^{1/2}\Norm{\ttau^{-1/2} D_{x_d} \wlm }{+} 
  &\lesssim
  \Norm{A \wlm}{+}    
    + \csp^{1/2}
    \Norm{\ttau^{-1/2}\wlm}{0,1,\ttau}\\
  &\lesssim
   \Norm{A \wlm}{+}    
    + \csp^{1/2}
    \Norm{\ttau^{m-1/2}\Opt(\chi)w}{0,1+\ell,\ttau}.
\end{align*}
We thus obtain
\begin{align*}
  &\Norm{Q  \wlm}{+} 
  + \norm{\trace(\ttau^{m}\Opt(\chi)w)}{0,\ell+ 1/2,\ttau}
  +  \Norm{w}{0,-M}\\
  &\qquad \gtrsim 
     \csp^{1/2}  \big(\Norm{\ttau^{m-1/2}\Opt(\chi)w}{0,1+\ell,\ttau}
    +  \Norm{\ttau^{-1/2} D_{x_d} \wlm}{+}\big)\\
    &\qquad 
    \gtrsim 
     \csp^{1/2}  \Norm{\ttau^{m-1/2}\Opt(\chi)w}{1,\ell,\ttau},
\end{align*}
by choosing $\tau$ \suff large and using Lemma~\ref{lemma: different
  form norm}. This concludes the proof.
\end{proof}

\subsection{Estimate concatenations}
\label{sec: Estimate concatenations}

Let $\U_0$ be on conic open set of $\Mt$. 
Let $\uchi(\y') \in S(1,\gt)$ be homogeneous of degree zero \st
$\supp(\uchi) \subset \U_0$.
Let $\rho^{(k)}(\y') \in S(\lambsct,\gt)$, $k=1,2$, be homogeneous of
degree one in $\U_0$ and define $Q^{(k)} = D_{x_d} -
\Opt^w(\uchi^2 \rho^{(k)})$. The operators $Q_{k,\pm}$, $k=1,2$,
defined in what precedes
and in Section~\ref{sec: Carleman boundary x} are of this form.
Above, for such operators, we proved some microlocal
estimates of the form
\begin{multline}
  \label{eq: generic 1st order estimate}
    \csp^{\alpha_k/2}\Norm{\ttau^{\alpha_k(m-1/2)} \Opt(\chi) w}{1,\ell,
    \ttau}
    + \delta_k \norm{\trace(\ttau^{m \alpha_k} \Opt(\chi) w)}{0,\ell+ 1/2,\ttau} \\
      \leq C \Big( \Norm{\ttau^{m \alpha_k}  Q^{(k)}\Opt(\chi) w}{0,\ell,\ttau}
  +  (1 -\delta_k)\norm{\trace(\ttau^{m \alpha_k} \Opt(\chi) w)}{0,\ell+ 1/2,\ttau} 
  + \Norm{w}{0,-M, \ttau}
    \Big),
  \end{multline}
  with $\delta_k = (1-\alpha_k) (1-\beta_k)$ and
  $\alpha_k,\beta_k \in \{0,1\}$, $\ell, m \in \R$, and  where $\chi \in S(1,\gt)$,
  homogeneous of degree zero and \st $\uchi\equiv 1$ on a \cnhd of $\supp(\chi)$.

If $\alpha_k=0$ and $\beta_k=0$ the estimate reads
\begin{align*}
  \Norm{\Opt(\chi) w}{1,\ell,\ttau}
  + \norm{\trace(\Opt(\chi) w)}{0,\ell+ 1/2,\ttau} 
  \leq C \Big( \Norm{Q^{(k)} \Opt(\chi) w}{0,\ell,\ttau}
  + \Norm{w}{0,-M, \ttau}
  \Big).
  \end{align*}
This is a
perfect elliptic estimate that holds  if $\rho^{(k)}$ is in the lower half complex plane --see
Lemma~\ref{lemma: microlocal elliptic estimate}.

If $\alpha_k=0$ and $\beta_k=1$ the estimate reads
\begin{align*}
  \Norm{\Opt(\chi) w}{1,\ell,\ttau}
  \leq C \Big( \Norm{Q^{(k)} \Opt(\chi) w}{0,\ell,\ttau}
  + \norm{\trace(\Opt(\chi) w)}{0,\ell+ 1/2,\ttau} 
  + \Norm{w}{0,-M, \ttau}
  \Big).
  \end{align*}
This is an elliptic estimate, yet with a trace observation term in the \rhs,  that holds  if $\rho^{(k)}$
is in the upper half complex plane --see
Lemma~\ref{lemma: elliptic estimate Qk+}. 

Finally, if $\alpha_k=1$, independently of the value of $\beta_k$ we
have 
\begin{align*}
    \csp^{1/2}\Norm{\ttau^{m-1/2} \Opt(\chi) w}{1,\ell,
    \ttau}
      \leq C \Big( \Norm{\ttau^{m} Q^{(k)} \Opt(\chi) w}{0,\ell,\ttau}
  + \norm{\trace(\ttau^{m}\Opt(\chi) w)}{0,\ell+ 1/2,\ttau} 
  + \Norm{w}{0,-M, \ttau}
    \Big).
  \end{align*}
  This estimate is characterized by the loss of a half derivative and
  a boundary observation term in the \rhs; such an estimate is proven
  in Lemma~\ref{lemma: sub-ellitptic estimate Qk+} when the root
  $\rho^{(k)}$ may cross the real axis.

\bigskip
We shall now describe how such estimates can be concatenated, as this is
often done in the course of the proof of Theorem~\ref{theorem: Carleman boundary x}. 
%%%%%%%%%%%%%%%%%%%%%%%%
% proposition          %
%%%%%%%%%%%%%%%%%%%%%%%%
\begin{proposition}
  \label{prop: estimate concatenation}
  Let $\ell \in \R$ and $M \in \N$.
   Let $Q^{(k)}$ be defined as above, for $k=1,2$. Let $\tau_0\geq
   \tauast$, $\csp_0\geq 1$
  and $C>0$ \st estimate \eqref{eq: generic 1st order estimate} holds,
  with $\ell, m \in \R$, 
  with $\alpha_k,
  \beta_k \in\{0,1\}$, for
  both $k=1$ and $2$, for $\tau \geq \tau_0$, $\csp \geq \csp_0$, $\ctp \in [0,1]$,
  and for $w \in \S(\Rpb)$.  We assume that $\alpha_1 \leq \alpha_2$ and
  $1 -\delta_1\leq 1 -\delta_2$. 

  Let $\chi \in S(1,\gt)$,
  be homogeneous of degree zero and \st $\uchi\equiv 1$ on $\supp(\chi)$.
  There exist $\tau_1\geq \tauast$, $\csp_1\geq 1$
  and $C>0$ \st the following estimate for the second-order operator
  $Q^{(1)} Q^{(2)}$ holds, 
  \begin{align*}
    &\csp^{(\alpha_1+\alpha_2)/2}\Norm{\ttau^{-(\alpha_1+\alpha_2)/2}
       \Opt(\chi) w}{2,\ell,\ttau}
    + \norm{\trace(\Opt(\chi) w)}{1,\ell+ 1/2,\ttau} 
    \\
    &\qquad\leq C \Big(
      \Norm{Q^{(1)} Q^{(2)} \Opt(\chi)  w}{0,\ell,\ttau}
    + (1 -\delta_1)\norm{\trace(\Opt(\chi) w)}{1,\ell+ 1/2,\ttau} \notag\\
    &\quad\qquad + (1-\delta_2) \csp^{\alpha_1/2}
    \norm{\trace(\ttau^{-\alpha_1/2} \Opt(\chi) w)}{0,\ell+ 3/2,\ttau}
     + \Norm{w}{2,-M, \ttau} \Big),
      \notag
  \end{align*}
   for $\tau \geq \tau_1$, $\csp \geq \csp_1$, $\ctp \in [0,1]$,
  and for $w \in \S(\Rpb)$. 
\end{proposition}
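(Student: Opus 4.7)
The plan is to apply the two first-order estimates \eqref{eq: generic 1st order estimate} in cascade, using the $Q^{(2)}$-estimate to bound the tangential part of the target norm and the $Q^{(1)}$-estimate applied to $Q^{(2)}w$ to bound the normal-derivative part. By Lemma~\ref{lemma: different form norm} (with $r=-(\alpha_1+\alpha_2)/2$, $m=2$, $m'=\ell$), modulo terms absorbable for $\tau$ large,
\begin{equation*}
\Norm{\ttau^{-(\alpha_1+\alpha_2)/2}\Opt(\chi)w}{2,\ell,\ttau}
\asymp \Norm{\ttau^{-(\alpha_1+\alpha_2)/2}\Opt(\chi)w}{1,\ell+1,\ttau}
+ \Norm{\ttau^{-(\alpha_1+\alpha_2)/2}D_{x_d}\Opt(\chi)w}{1,\ell,\ttau},
\end{equation*}
so it suffices to control each of the two summands on the right.

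For the tangential summand, I would apply the $Q^{(2)}$-estimate at level $\ell+1$ with the parameter $m$ chosen so that $\alpha_2(m-\hf)=-(\alpha_1+\alpha_2)/2$, equivalently $m\alpha_2=-\alpha_1/2$ (the value of $m$ is immaterial when $\alpha_2=0$). This yields
\begin{align*}
&\csp^{\alpha_2/2}\Norm{\ttau^{-(\alpha_1+\alpha_2)/2}\Opt(\chi)w}{1,\ell+1,\ttau}
+\delta_2\norm{\trace(\ttau^{-\alpha_1/2}\Opt(\chi)w)}{0,\ell+3/2,\ttau}\\
&\qquad\lesssim
\Norm{\ttau^{-\alpha_1/2}Q^{(2)}\Opt(\chi)w}{0,\ell+1,\ttau}
+(1-\delta_2)\norm{\trace(\ttau^{-\alpha_1/2}\Opt(\chi)w)}{0,\ell+3/2,\ttau}
+\Norm{w}{0,-M,\ttau},
\end{align*}
whose first left-hand term is precisely $\csp^{\alpha_2/2}$ times the first summand of interest.

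For the normal-derivative summand, I would apply the $Q^{(1)}$-estimate with $w$ replaced by $Q^{(2)}w$ and $m$ chosen so that $m\alpha_1=-\alpha_2/2$. Since $\uchi\equiv 1$ on a conic \nhd of $\supp(\chi)$ and $D_{x_d}$ commutes with tangential operators, the commutators $[\Opt(\chi),Q^{(k)}]=-[\Opt(\chi),\Opt^w(\uchi^2\rho^{(k)})]$ are tangential of order zero, with a prefactor $(1+\csp\ctp)$. Writing
\begin{equation*}
\Opt(\chi)Q^{(2)}w=Q^{(2)}\Opt(\chi)w+[\Opt(\chi),Q^{(2)}]w,
\qquad Q^{(1)}\Opt(\chi)Q^{(2)}w=\Opt(\chi)Q^{(1)}Q^{(2)}w+\text{commutators},
\end{equation*}
and using $Q^{(2)}\Opt(\chi)w=D_{x_d}\Opt(\chi)w+\Opt^w(\uchi^2\rho^{(2)})\Opt(\chi)w$, the left side of this applied $Q^{(1)}$-estimate dominates $\csp^{\alpha_1/2}\Norm{\ttau^{-(\alpha_1+\alpha_2)/2}D_{x_d}\Opt(\chi)w}{1,\ell,\ttau}$ (up to a tangential contribution that is absorbed by the first step), while the right side is bounded by $\Norm{\ttau^{-\alpha_2/2}\Opt(\chi)Q^{(1)}Q^{(2)}w}{0,\ell,\ttau}$ plus $\delta_1$-traces of $\ttau^{-\alpha_2/2}\Opt(\chi)Q^{(2)}w$ (which, commuting $\Opt(\chi)$ past $Q^{(2)}$ once more, reduce to $\norm{\trace(\Opt(\chi)w)}{1,\ell+1/2,\ttau}$) and remainders of type $\Norm{w}{1,-M,\ttau}$.

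Summing the two bounds and converting $\Opt(\chi)Q^{(1)}Q^{(2)}w$ into $Q^{(1)}Q^{(2)}\Opt(\chi)w$ via a final commutator $[\Opt(\chi),Q^{(1)}Q^{(2)}]$ (again absorbable thanks to $\uchi\equiv 1$ on $\supp(\chi)$) produces the desired estimate. The hypotheses $\alpha_1\leq\alpha_2$ and $\delta_1\geq\delta_2$ are exactly what is needed to guarantee that the trace contributions and $\ttau$-weights generated by the $Q^{(1)}$-step do not exceed those allowed in the target estimate, so that the combination closes once all error terms are absorbed by choosing $\tau\geq\tau_1$ and $\csp\geq\csp_1$ large enough and invoking~\eqref{eq: usual semi-classical argument}. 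The principal obstacle is precisely the book-keeping of the three separate weightings (powers of $\ttau$, of $\csp$, and trace vs.\ interior); the commutator analysis itself is routine given the tangential calculus.
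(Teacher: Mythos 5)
There is a genuine gap in your weight bookkeeping, and it concerns precisely the sub-elliptic/sub-elliptic case $\alpha_1=\alpha_2=1$.

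Your strategy decomposes the target norm into a tangential summand and a normal-derivative summand and bounds each separately: the tangential one by the $Q^{(2)}$-estimate with $m$ chosen so that $m\alpha_2=-\alpha_1/2$, and the normal one by the $Q^{(1)}$-estimate applied to $Q^{(2)}w$ with $m$ chosen so that $m\alpha_1=-\alpha_2/2$. The intermediate quantity is $Q^{(2)}\Opt(\chi)w$: after your tangential step, the right-hand side contains
\begin{equation*}
\Norm{\ttau^{-\alpha_1/2}Q^{(2)}\Opt(\chi)w}{0,\ell+1,\ttau},
\end{equation*}
while the left-hand side of your $Q^{(1)}$-step controls
\begin{equation*}
\csp^{\alpha_1/2}\Norm{\ttau^{-(\alpha_1+\alpha_2)/2}Q^{(2)}\Opt(\chi)w}{1,\ell,\ttau}
\gtrsim
\csp^{\alpha_1/2}\Norm{\ttau^{-(\alpha_1+\alpha_2)/2}Q^{(2)}\Opt(\chi)w}{0,\ell+1,\ttau}.
\end{equation*}
For $\alpha_1=\alpha_2=1$ the two $\ttau$-weights are $\ttau^{-1/2}$ and $\ttau^{-1}$, and the discrepancy $\csp^{1/2}\ttau^{-1/2}$ tends to $0$ (since $\ttau=\tau\csp\varphi\geq 2\csp$), so the $Q^{(1)}$-step is strictly too weak to absorb the $Q^{(2)}\Opt(\chi)w$-term generated by the tangential step. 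The same mismatch appears in the conversion $D_{x_d}\Opt(\chi)w\leftrightarrow Q^{(2)}\Opt(\chi)w$ that you mention parenthetically: the tangential error term $\csp^{\alpha_1/2}\Norm{\ttau^{-\alpha_1/2}\Opt(\chi)w}{1,\ell+1,\ttau}$ is \emph{stronger} than the tangential left-hand side $\csp^{(\alpha_1+\alpha_2)/2}\Norm{\ttau^{-(\alpha_1+\alpha_2)/2}\Opt(\chi)w}{1,\ell+1,\ttau}$, so it cannot be ``absorbed by the first step''. Your scheme does close for the cases $\alpha_1=0$ (where $m$ is immaterial in the $Q^{(1)}$-step and the weights coincide), but not when both factors are sub-elliptic.

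The paper avoids this by reversing the order and choosing a different pair of weights. It first applies the $Q^{(1)}$-estimate with $m=0$ to $Q^{(2)}\Opt(\chi)w$, obtaining a bound on $\csp^{\alpha_1/2}\Norm{\ttau^{-\alpha_1/2}Q^{(2)}\Opt(\chi)w}{1,\ell,\ttau}$ in terms of $\Norm{Q^{(1)}Q^{(2)}\Opt(\chi)w}{0,\ell,\ttau}$; it then splits this $\{1,\ell\}$-norm using Lemma~\ref{lemma: different form norm} into $\{0,\ell\}$ of $Q^{(2)}D_{x_d}\Opt(\chi)w$ plus $\{0,\ell+1\}$ of $Q^{(2)}\Opt(\chi)w$, applies the $Q^{(2)}$-estimate with $m=-\alpha_1/2$ to $D_{x_d}\Opt(\chi)w$ and to $\Opt(\chi)w$, and finally multiplies those estimates by $\csp^{\alpha_1/2}$ to align the $\csp$-powers. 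With that scheme the intermediate $Q^{(2)}\Opt(\chi)w$-weight $\ttau^{-\alpha_1/2}$ is identical on both sides of the junction, and the identity $\alpha_1\alpha_2=\alpha_1$ (from $\alpha_1\leq\alpha_2$) produces exactly the weight $\ttau^{-(\alpha_1+\alpha_2)/2}$ required in the target. A second, minor point: the right-hand side of \eqref{eq: generic 1st order estimate} carries $(1-\delta_k)$-traces, not $\delta_k$-traces as you write, so the reduction of the $Q^{(1)}$-step trace to $\norm{\trace(\Opt(\chi)w)}{1,\ell+1/2,\ttau}$ only applies when $\delta_1=0$, which is consistent with the factor $(1-\delta_1)$ in the target but should be stated as such.
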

Note that the assumptions made on $\alpha_k$ and $1- \delta_k$, $k=1,2$, imply
that $Q^{(1)}$ yields an estimate of better quality than that
associated with $Q^{(2)}$.

%%%% proof of proposition
\begin{proof}
  We introduce $\chi_1 \in S(1,\gt)$ that is \st
  $\chi_1 \equiv 1$ on $\supp(\chi)$ and $\uchi\equiv 1$ on
  $\supp(\chi_1)$. For concision, we write $\Xi= \Opt(\chi)$ and $\Xi_1 =
  \Opt(\chi_1)$. 
  Here, $M$ will denote an arbitrary large integer whose value may
  change from one line to the other.

 Using $Q^{(2)}\Xi w$ as the unknown function in
  the estimate \eqref{eq: generic 1st order estimate} for $Q^{(1)}$, with $m=0$ gives, 
\begin{align}
  \label{eq: concatenate 1}
  &\csp^{\alpha_1/2}\Norm{\ttau^{-\alpha_1/2} Q^{(2)}\Xi w}{1,\ell,
  \ttau} 
  + \delta_1  \norm{\trace( Q^{(2)} \Xi w)}{0,\ell+ 1/2,\ttau} \\
  &\qquad \lesssim 
    \csp^{\alpha_1/2}\Norm{\ttau^{-\alpha_1/2} \Xi_1 Q^{(2)}\Xi w}{1,\ell,
  \ttau} 
  + \delta_1  \norm{\trace(  \Xi_1 Q^{(2)} \Xi w)}{0,\ell+ 1/2,\ttau} 
  \notag\\
   &\qquad \quad  + \Norm{w}{1,-M, \ttau} + \norm{\trace(w)}{1,-M, \ttau}  \notag\\
    &\qquad \lesssim  \Norm{Q^{(1)} \Xi_1 Q^{(2)}\Xi w}{0,\ell,\ttau}
      + (1 -\delta_1)\norm{\trace(\Xi_1 Q^{(2)}\Xi w)}{0,\ell+ 1/2,\ttau} 
      + \Norm{w}{1,-M, \ttau} + \norm{\trace(w)}{1,-M, \ttau}  
      \notag \\
  &\qquad \lesssim  \Norm{Q^{(1)} Q^{(2)} \Xi  w}{0,\ell,\ttau}
    + (1 -\delta_1)\norm{\trace(\Xi w)}{1,\ell+ 1/2,\ttau} 
    + \Norm{w}{2,-M, \ttau}.
    \notag
  \end{align}
  
  Observe now that we can write, using that $D_{x_d} - Q^{(2)}
  \in\Psi(\lambsct, \gt)$, 
  \begin{align*}
   & (1-\delta_2) \csp^{\alpha_1/2}
   \norm{\trace(\ttau^{-\alpha_1/2}  \Xi w)}{1,\ell + 1/2,\ttau} \\
    &\qquad \lesssim  
      \delta_1 \csp^{\alpha_1/2}\norm{\trace( \ttau^{-\alpha_1/2}  Q^{(2)} \Xi w)}{0,\ell+ 1/2,\ttau}
      +  (1-\delta_1) \csp^{\alpha_1/2} \norm{\trace(\ttau^{-\alpha_1/2} \Xi w)}{1,\ell+  1/2,\ttau}\\
      &\qquad\quad + (1-\delta_2) 
    \csp^{\alpha_1/2}\norm{\trace(\ttau^{-\alpha_1/2} \Xi w)}{0,\ell+ 3/2,\ttau}\\
    &\qquad \lesssim  
      \delta_1 \norm{\trace( Q^{(2)} \Xi w)}{0,\ell+ 1/2,\ttau}
      +  (1-\delta_1)  \norm{\trace(\Xi w)}{1,\ell+  1/2,\ttau}
      + (1-\delta_2) 
    \csp^{\alpha_1/2}\norm{\trace(\ttau^{-\alpha_1/2} \Xi w)}{0,\ell+ 3/2,\ttau}.
  \end{align*}
With this estimate and \eqref{eq: concatenate 1}, we thus obtain 
\begin{align}
  \label{eq: concatenate 1 A}
  &\csp^{\alpha_1/2} \Big(
    \Norm{\ttau^{-\alpha_1/2} Q^{(2)}\Xi w}{1,\ell,\ttau} 
    + (1-\delta_2) \norm{\trace(\ttau^{-\alpha_1/2}  \Xi w)}{1,\ell + 1/2,\ttau} \Big)\\
  &\qquad \lesssim  \Norm{Q^{(1)} Q^{(2)} \Xi  w}{0,\ell,\ttau}
    + (1 -\delta_1)\norm{\trace(\Xi w)}{1,\ell+ 1/2,\ttau} 
    + (1-\delta_2) 
    \csp^{\alpha_1/2}\norm{\trace(\ttau^{-\alpha_1/2} \Xi w)}{0,\ell+ 3/2,\ttau}
  \notag\\
  &\qquad \quad 
    + \Norm{w}{2,-M, \ttau}.
    \notag
  \end{align}
  Up to creating error terms, we shall now modify this inequality to
  be able to apply the estimate \eqref{eq: generic 1st order estimate}
  associated with $Q^{(2)}$. We write 
  \begin{align*}
    &\Norm{\ttau^{-\alpha_1/2} Q^{(2)} \Xi_1 D_{x_d}\Xi w}{0,\ell,\ttau} 
    + (1-\delta_2) \norm{\trace(\ttau^{-\alpha_1/2}  \Xi_1 D_{x_d}\Xi
      w)}{0,\ell + 1/2,\ttau}\\
    &\qquad \lesssim
      \Norm{\ttau^{-\alpha_1/2} Q^{(2)}D_{x_d}\Xi w}{0,\ell,\ttau} 
    + (1-\delta_2) \norm{\trace(\ttau^{-\alpha_1/2}  D_{x_d}\Xi
      w)}{0,\ell + 1/2,\ttau}
    + \Norm{w}{1,-M, \ttau} + \norm{\trace(w)}{1,-M, \ttau}\\
    &\qquad \lesssim
      \Norm{\ttau^{-\alpha_1/2} Q^{(2)}\Xi w}{1,\ell,\ttau} 
    + (1-\delta_2) \norm{\trace(\ttau^{-\alpha_1/2}  \Xi
      w)}{1,\ell + 1/2,\ttau}
      + \Norm{w}{1,-M, \ttau} + \norm{\trace(w)}{1,-M, \ttau}\\
    &\qquad\quad + \csp\Norm{\ttau^{-\alpha_1/2} \Xi w}{1,\ell,\ttau},
  \end{align*}
  using that $[D_{x_d}, Q^{(2)}] \in \csp \PsiOpsc^{1,0}$ and using  Lemma~\ref{lemma: different
  form norm}.
  Hence with~\eqref{eq: concatenate 1 A} we have
  \begin{align}
     \label{eq: concatenate 1 B}
    &\csp^{\alpha_1/2} \Big(\Norm{\ttau^{-\alpha_1/2} Q^{(2)} \Xi_1 D_{x_d}\Xi w}{0,\ell,\ttau} 
    + (1-\delta_2) \norm{\trace(\ttau^{-\alpha_1/2}  \Xi_1 D_{x_d}\Xi
      w)}{0,\ell + 1/2,\ttau}\\
    &\qquad + \Norm{\ttau^{-\alpha_1/2} Q^{(2)}\Xi w}{0,\ell+1,\ttau} 
    + (1-\delta_2) \norm{\trace(\ttau^{-\alpha_1/2}  \Xi w)}{0,\ell + 3/2,\ttau}\Big) \notag\\
     &\quad\lesssim \csp^{\alpha_1/2} \Big(
    \Norm{\ttau^{-\alpha_1/2} Q^{(2)}\Xi w}{1,\ell,\ttau} 
    + (1-\delta_2) \norm{\trace(\ttau^{-\alpha_1/2}  \Xi w)}{1,\ell
       + 1/2,\ttau} \notag\\
        &\quad\qquad \quad + \csp \Norm{\ttau^{-\alpha_1/2} \Xi w}{1,\ell,\ttau} 
          \Big)+ \Norm{w}{1,-M, \ttau} + \norm{\trace(w)}{1,-M, \ttau}\notag\\
      &\quad\lesssim 
       \Norm{Q^{(1)} Q^{(2)} \Xi  w}{0,\ell,\ttau}
    + (1 -\delta_1)\norm{\trace(\Xi w)}{1,\ell+ 1/2,\ttau} 
    + (1-\delta_2) 
        \csp^{\alpha_1/2}
    \norm{\trace(\ttau^{-\alpha_1/2} \Xi w)}{0,\ell+ 3/2,\ttau}\notag\\
    &\quad\qquad \quad
      + \csp^{1+ \alpha_1/2}\Norm{\ttau^{-\alpha_1/2} \Xi w}{1,\ell,\ttau} 
    + \Norm{w}{2,-M, \ttau} \notag
  \end{align}
  We write, with Lemma~\ref{lemma: different form norm}, for $\tau$ chosen \suff large,
  \begin{align}
    \label{eq: concatenate 1 C}
    \Norm{\ttau^{-(\alpha_1+\alpha_2)/2} \Xi w}{2,\ell,\ttau}
   &\asymp  \Norm{\ttau^{-(\alpha_1+\alpha_2)/2}
       D_{x_d} \Xi w}{1,\ell,  \ttau}
      + \Norm{\ttau^{-(\alpha_1+\alpha_2)/2}
      \Xi w}{1,\ell+1, \ttau}\\
   &\lesssim \Norm{\ttau^{-(\alpha_1+\alpha_2)/2}
      \Xi_1 D_{x_d}\Xi w}{1,\ell, \ttau}
      + \Norm{\ttau^{-(\alpha_1+\alpha_2)/2} \Xi w}{1,\ell+1, \ttau}
      + \Norm{w}{2,-M, \ttau},\notag
  \end{align}
  and 
  \begin{align}
    \label{eq: concatenate 1 D}
    \norm{\trace(\ttau^{-\alpha_1/2} 
       \Xi w)}{1,\ell+ 1/2,\ttau} 
    &\asymp \norm{\trace(\ttau^{-\alpha_1/2} 
       D_{x_d}\Xi w)}{0,\ell+ 1/2,\ttau} 
    + \norm{\trace(\ttau^{-\alpha_1/2} 
      \Xi w)}{0,\ell+ 3/2,\ttau}\\
    &\lesssim \norm{\trace(\ttau^{-\alpha_1/2} \Xi_1 D_{x_d}
       \Xi w)}{0,\ell+ 1/2,\ttau} 
    + \norm{\trace(\ttau^{-\alpha_1/2} 
      \Xi w)}{0,\ell+ 3/2,\ttau}
    + \norm{\trace(w)}{1,-M,\ttau}.\notag
  \end{align}
  Applying now  estimate \eqref{eq: generic 1st order estimate}
  associated with $Q^{(2)}$ to  $D_{x_d}\Xi w$ and $w$,
  with $m=-\alpha_1 /2$, using that $\alpha_1  = \alpha_1 \alpha_2$, 
  we obtain
  \begin{align}
    \label{eq: concatenate 1 E}
    &\csp^{(\alpha_1+\alpha_2)/2}\Norm{\ttau^{-(\alpha_1+\alpha_2)/2}
      \Xi_1 D_{x_d}\Xi w}{1,\ell, \ttau}
    + \delta_2 \csp^{\alpha_1/2}  \norm{\trace(\ttau^{-\alpha_1/2} \Xi_1 D_{x_d}
       \Xi w)}{0,\ell+ 1/2,\ttau} \\
    &\qquad \lesssim 
       \csp^{\alpha_1/2} \Big(\Norm{\ttau^{-\alpha_1/2} Q^{(2)} \Xi_1 D_{x_d}\Xi w}{0,\ell,\ttau} 
    + (1-\delta_2) \norm{\trace(\ttau^{-\alpha_1/2}  \Xi_1 D_{x_d}\Xi
      w)}{0,\ell + 1/2,\ttau}\Big) +  \Norm{w}{1,-M, \ttau} \notag
  \end{align}
 and 
 \begin{align}
   \label{eq: concatenate 1 F}
    &\csp^{(\alpha_1+\alpha_2)/2}
      \Norm{\ttau^{-(\alpha_1+\alpha_2)/2} \Xi w}{1,\ell+1, \ttau}
      + \delta_2 \csp^{\alpha_1/2}  \norm{\trace(\ttau^{-\alpha_1/2} 
      \Xi w)}{0,\ell+ 3/2,\ttau}
      \\
   &\qquad \lesssim \csp^{\alpha_1/2} \Big( 
     \Norm{\ttau^{-\alpha_1/2} Q^{(2)}\Xi w}{0,\ell+1,\ttau} 
    + (1-\delta_2) 
     \norm{\trace(\ttau^{-\alpha_1/2}  \Xi w)}{0,\ell + 3/2,\ttau}
     \Big)  +  \Norm{w}{0,-M, \ttau}.\notag
     \end{align}
With \eqref{eq: concatenate 1 C}--\eqref{eq: concatenate 1 F}, we achieve
\begin{align*}
    &\csp^{(\alpha_1+\alpha_2)/2}\Norm{\ttau^{-(\alpha_1+\alpha_2)/2}
       \Xi w}{2,\ell,\ttau}
    + \delta_2 \csp^{\alpha_1/2} \norm{\trace(\ttau^{-\alpha_1/2} 
       \Xi w)}{1,\ell+ 1/2,\ttau} \\
    &\lesssim
    \csp^{\alpha_1/2} \Big(\Norm{\ttau^{-\alpha_1/2} Q^{(2)} \Xi_1 D_{x_d}\Xi w}{0,\ell,\ttau} 
    + (1-\delta_2) \norm{\trace(\ttau^{-\alpha_1/2}  \Xi_1 D_{x_d}\Xi
      w)}{0,\ell + 1/2,\ttau}\\
    &\qquad \qquad \qquad + \Norm{\ttau^{-\alpha_1/2} Q^{(2)}\Xi w}{0,\ell+1,\ttau} 
    + (1-\delta_2) \norm{\trace(\ttau^{-\alpha_1/2}  \Xi w)}{0,\ell
      + 3/2,\ttau}\Big) + \Norm{w}{2,-M, \ttau}.
  \end{align*}
  Combining this latter estimate with \eqref{eq: concatenate 1 B} we obtain
  \begin{align*}
    &\csp^{(\alpha_1+\alpha_2)/2}\Norm{\ttau^{-(\alpha_1+\alpha_2)/2}
       \Xi w}{2,\ell,\ttau}
    + \delta_2 \csp^{\alpha_1/2} \norm{\trace(\ttau^{-\alpha_1/2} 
       \Xi w)}{1,\ell+ 1/2,\ttau} \\
    &\qquad\lesssim
      \Norm{Q^{(1)} Q^{(2)} \Xi  w}{0,\ell,\ttau}
    + (1 -\delta_1)\norm{\trace(\Xi w)}{1,\ell+ 1/2,\ttau} 
    + (1-\delta_2) \csp^{\alpha_1/2}
    \norm{\trace(\ttau^{-\alpha_1/2} \Xi w)}{0,\ell+ 3/2,\ttau}\notag\\
    &\quad\qquad 
      + \csp^{1+ \alpha_1/2} \Norm{\ttau^{-\alpha_1/2} \Xi w}{1,\ell,\ttau} 
    + \Norm{w}{2,-M, \ttau} \notag,
  \end{align*}
  which,  with  the usual
  semi-classical inequality \eqref{eq: usual semi-classical argument}
  \begin{align*}
    &\csp^{(\alpha_1+\alpha_2)/2}\Norm{\ttau^{-(\alpha_1+\alpha_2)/2}
       \Xi w}{2,\ell,\ttau}
    + \delta_2 \csp^{\alpha_1/2} \norm{\trace(\ttau^{-\alpha_1/2} 
       \Xi w)}{1,\ell+ 1/2,\ttau} \\
    &\qquad\lesssim
      \Norm{Q^{(1)} Q^{(2)} \Xi  w}{0,\ell,\ttau}
    + (1 -\delta_1)\norm{\trace(\Xi w)}{1,\ell+ 1/2,\ttau} 
    + (1-\delta_2) \csp^{\alpha_1/2}
    \norm{\trace(\ttau^{-\alpha_1/2} \Xi w)}{0,\ell+ 3/2,\ttau}\notag\\
    &\quad\qquad 
     + \Norm{w}{2,-M, \ttau}.
      \notag
  \end{align*}
  Let us now consider two cases:
  \begin{description}
    \item[\bfseries Case $\bld{\alpha_1 =1}$] Then $\delta_1 =0$ and $\alpha_2=1$. 
      We thus have the term $\norm{\trace(\Xi w)}{1,\ell+ 1/2,\ttau}$
      in the \rhs of the estimation and the sought result then holds.
    \item [\bfseries Case $\bld{\alpha_1 =0}$]
      Then we write 
      \begin{align*}
    \norm{\trace( \Xi w)}{1,\ell+ 1/2,\ttau} 
    &\lesssim \norm{\trace( Q^{(2)} \Xi w)}{0,\ell+ 1/2,\ttau} 
    + \norm{\trace( \Xi w)}{0,\ell+ 3/2,\ttau} \\
    &\lesssim \norm{\trace( Q^{(2)} \Xi w)}{0,\ell+ 1/2,\ttau} 
    + \delta_2 \norm{\trace(\Xi w)}{1,\ell+ 1/2,\ttau}
      + (1-\delta_2) \norm{\trace(\Xi w)}{0,\ell+ 3/2,\ttau}. 
  \end{align*}
  which leads to 
  \begin{align*}
    \delta_1 \norm{\trace(\Xi w)}{1,\ell+ 1/2,\ttau} 
    &\lesssim \delta_1 \norm{\trace( Q^{(2)} \Xi w)}{0,\ell+ 1/2,\ttau} 
    + \delta_2 \norm{\trace(\Xi w)}{1,\ell+ 1/2,\ttau}
      + (1-\delta_2) \norm{\trace(\Xi w)}{0,\ell+ 3/2,\ttau}. 
  \end{align*}
  Recalling that the term $\delta_1 \norm{\trace( Q^{(2)} \Xi w)}{0,\ell+
    1/2,\ttau} $ can be found in the \lhs of \eqref{eq: concatenate 1}, 
  We thus obtain 
  \begin{align*}
    &\csp^{\alpha_2/2}\Norm{\ttau^{-\alpha_2/2}
       \Xi w}{2,\ell,\ttau}
    + (\delta_1 + \delta_2) \norm{\trace(\Xi w)}{1,\ell+ 1/2,\ttau} \\
    &\qquad\lesssim
      \Norm{Q^{(1)} Q^{(2)} \Xi  w}{0,\ell,\ttau}
    + (1 -\delta_1)\norm{\trace(\Xi w)}{1,\ell+ 1/2,\ttau} 
    + (1-\delta_2)  \norm{\trace(\Xi w)}{0,\ell+ 3/2,\ttau}\notag\\
    &\quad\qquad 
     + \Norm{w}{2,-M, \ttau}.
      \notag
  \end{align*}
  If $\delta_1 + \delta_2 >0$ we then have the sought estimate in the
  case $\alpha_1 =0$. If $\delta_1 + \delta_2=0$ then the term
  $\norm{\trace(\Xi w)}{1,\ell+ 1/2,\ttau}$
  can be found in the \rhs of the estimation and can thus be
  ``artificially'' added in the \lhs.
    \end{description}
    This concludes the proof of Proposition~\ref{prop: estimate concatenation}.
\end{proof}

\bigskip
We now show how to obtain microlocal estimates for some products of two 
factors of order two. 
%%%%%%%%%%%%%%%%%%%%%%%%
% proposition          %
%%%%%%%%%%%%%%%%%%%%%%%%
\begin{proposition}
  \label{prop:  estimate order 4 operator}
  Let assume that $Q^{-}(z,D_z,\tau,\csp, \ctp) \in \PsiOpsc^{2,0}$ fulfills the requirement of
  Lemma~\ref{lemma: microlocal elliptic estimate} in  some conic open
  subset $\U$.  Let $Q^{+}(z,D_z,\tau,\csp, \ctp)
  \in \PsiOpsc^{2,0}$ be \st, there exist $\tau_0\geq \tauast$,
  $\csp_0 \geq 1$
  and $C>0$ \st, for $\ell \in \{0,1,2\}$ and all $\chi \in S(1,\gt)$, homogeneous of
  degree zero, with $\supp(\chi) \subset \U$, for $\Xi = \Opt(\chi)$, 
  \begin{align}
    \label{eq: estimate Q+}
    &\csp^{(\alpha_1+\alpha_2)/2}\Norm{\ttau^{-(\alpha_1+\alpha_2)/2}
       \Xi w}{2,\ell,\ttau}
    + \norm{\trace(\Xi w)}{1,\ell+ 1/2,\ttau} 
    \\
    &\qquad\leq C \Big(
      \Norm{Q^{+} \Xi  w}{0,\ell,\ttau}
    + (1 -\delta_1)\norm{\trace(\Xi w)}{1,\ell+ 1/2,\ttau} \notag\\
    &\quad\qquad + (1-\delta_2) \csp^{\alpha_1/2}
    \norm{\trace(\ttau^{-\alpha_1/2} \Xi w)}{0,\ell+ 3/2,\ttau}
     + \Norm{w}{2,-M, \ttau} \Big),
      \notag
  \end{align}
   for $\tau \geq \tau_0$, $\csp \geq \csp_0$, $\ctp \in [0,1]$,
  and for $w \in \S(\Rpb)$, where $\alpha_1, \alpha_2 \in \{0,1\}$ and
  $\delta_1, \delta_2 \in \{0,1\}$ with $\alpha_1 \leq
  \alpha_2$, $1-\delta_1 \leq 1-\delta_2$ and  moreover $\delta_k=0$ if
  $\alpha_k=1$, $k=1,2$. We also assume that $Q^+ \Xi =D_{x_d}^2
  \Xi + T_{1,1} \Xi$ with
  $T_{1,1}\in \PsiOpsc^{1,1}$.

  \medskip Let $M \in \N$ and let $\chi \in S(1,\gt)$ be as above.  In
  the case $\alpha_1 + \alpha_2 =2$, we furthermore assume that, for
  any $M\in \N$,
  $[D_{x_d} + i \htau_{\xi_d}, Q^{+}] \Opt(\chi_1)= (1+ \ctp\csp)
  R_{2,0} \Opt(\chi_1) + R_{2,-M}$,
  with $R_{2,0}\in \PsiOpsc^{2,0}$ and $R_{2,-M}\in \PsiOpsc^{2,-M}$,
  if $\chi_1 \in S(1,\gt)$ is homogeneous of degree 0 and \st
  $\chi_1 \equiv 1$ in a \cnhd of $\supp(\chi)$ and
  $\supp(\chi_1)\subset \U$.

There exist
  $\tau_1\geq \tauast$, $\csp_1 \geq 1$, $\ctp_1\in (0,1]$, and $C>0$ \st
  \begin{align}
    \label{eq: estimate Q-Q+}
    &\csp^{(\alpha_1+\alpha_2)/2}\Norm{\ttau^{-(\alpha_1+\alpha_2)/2}
       \Xi w}{4,0,\ttau}
    + \norm{\trace(\Xi w)}{3, 1/2,\ttau} 
    \\
    &\qquad\leq C \Big(
      \Norm{Q^- Q^{+} \Xi  w}{+}
    + (1 -\delta_1)\norm{\trace(\Xi w)}{1,5/2,\ttau} \notag\\
    &\quad\qquad + (1-\delta_2) \csp^{\alpha_1/2}
    \norm{\trace(\ttau^{-\alpha_1/2} \Xi w)}{0,7/2,\ttau}
     + \Norm{w}{4,-M, \ttau} \Big),
      \notag
  \end{align}
   for $\tau \geq \tau_1$, $\csp \geq \csp_1$, $\ctp \in [0,\ctp_1]$,
  and for $w \in \S(\Rpb)$. In the case $\alpha_1 + \alpha_2\leq 1$,
  we can take $\ctp_1 =1$. 
\end{proposition}
In Section~\ref{sec: Carleman boundary x}, for example, this proposition will be applied to $Q^{+} =
Q_{1,+}Q_{2,+}$ for which an estimation of the form of \eqref{eq: estimate Q+} will
hold by Proposition~\ref{prop: estimate concatenation}.
Note that this proposition, in the case  $\alpha_1 + \alpha_2 =2$,  is one instance where it is important to
take $\ctp>0$ \suff small. 

%%%% proof of proposition
\begin{proof}
  We introduce $\chi_1 \in S(1,\gt)$ that is \st
  $\chi_1 \equiv 1$ on $\supp(\chi)$ and 
  $\supp(\chi_1)\subset \U$. For concision, we write $\Xi= \Opt(\chi)$ and $\Xi_1 =
  \Opt(\chi_1)$. 
  Here, $M$ will denote an arbitrary large integer whose value may
  change from one line to the other.

  Using $Q^{+}\Xi w$ as the unknown function in
  the estimate of Lemma~\ref{lemma: microlocal elliptic estimate} for
  the operator $Q^{-}$: 
  \begin{align}
    \label{eq: concatenate 2 A}
     &\Norm{ Q^{+}\Xi w}{2,0,\ttau}
    +\norm{\trace(Q^{+}\Xi w)}{1,1/2,\ttau}\\
    &\qquad \lesssim
    \Norm{\Xi_1 Q^{+}\Xi w}{2,0,\ttau}
    +\norm{\trace(\Xi_1 Q^{+}\Xi w)}{1,1/2,\ttau}
    + \Norm{w}{4,-M,\ttau}\notag\\
    &\qquad \lesssim
    \Norm{Q^{-} \Xi_1 Q^{+}\Xi w}{+}
    + \Norm{w}{4,-M,\ttau}\notag\\
    &\qquad \lesssim
    \Norm{Q^{-} Q^{+}\Xi w}{+}
    + \Norm{w}{4,-M,\ttau}.\notag
\end{align}

Combining \eqref{eq: estimate Q+}, for $\ell=2$, with \eqref{eq: concatenate 2 A} we find
\begin{align}
    \label{eq: concatenate 2 B}
     &\Norm{ Q^{+}\Xi w}{2,0,\ttau}
    +\norm{\trace(Q^{+}\Xi w)}{1,1/2,\ttau} 
       + \norm{\trace(\Xi w)}{1,5/2,\ttau} \\
    &\qquad \lesssim
    \Norm{Q^{-} Q^{+}\Xi w}{+}
      + (1 -\delta_1)\norm{\trace(\Xi w)}{1,5/2,\ttau} 
      + (1-\delta_2) \csp^{\alpha_1/2}
    \norm{\trace(\ttau^{-\alpha_1/2} \Xi w)}{0,7/2,\ttau}\notag\\
    &\qquad\quad + \Norm{w}{4,-M,\ttau}.\notag
\end{align}
We now make the following claim whose proof is given below.
%%%%%%%%%%%%%%%%%%%%%%%%
% sub-lemma                %
%%%%%%%%%%%%%%%%%%%%%%%%
\begin{lemma}
  \label{sublemma: trace Q+}
There exists $C>0$ \st 
\begin{align*} 
  \norm{\trace(\Xi  v)}{3,1/2,\ttau} 
  & \leq C\big(
    \norm{\trace( Q^{+}\Xi w)}{1,1/2,\ttau}
    + \norm{\trace(\Xi v)}{1,5/2,\ttau} 
      \big).
\end{align*}
\end{lemma}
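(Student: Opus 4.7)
The plan is to exploit the assumed explicit form $Q^{+}\Xi v = D_{x_d}^{2}\Xi v + T_{1,1}\Xi v$, with $T_{1,1}\in\PsiOpsc^{1,1}$, to eliminate the two highest normal derivatives appearing in $\norm{\trace(\Xi v)}{3,1/2,\ttau}$ in favor of traces of $Q^{+}\Xi v$ and of lower-order normal derivatives of $\Xi v$. Unrolling the definition of the boundary norm,
\begin{equation*}
\norm{\trace(\Xi v)}{3,1/2,\ttau}\asymp\sum_{j=0}^{3}\norm{\Lambsct^{7/2-j}D_{x_d}^{j}\Xi v\brz}{L^{2}(\R^{N-1})},
\end{equation*}
the terms $j=0,1$ are exactly those controlled by $\norm{\trace(\Xi v)}{1,5/2,\ttau}$; only $j=2,3$ require an argument.

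First I would write $T_{1,1}=A_{0}+A_{1}D_{x_d}$ with $A_{0}\in\Psi(\lambsct^{2},\gt)$ and $A_{1}\in\Psi(\lambsct,\gt)$, so that
\begin{equation*}
D_{x_d}^{2}\Xi v=Q^{+}\Xi v-A_{0}\Xi v-A_{1}D_{x_d}\Xi v.
\end{equation*}
Restricting to $x_{d}=0$ and applying $\Lambsct^{3/2}$, the Sobolev boundedness of tangential operators (Proposition~\ref{prop: Sobolev regularity pseudo tangential}) yields
\begin{equation*}
\norm{\Lambsct^{3/2}D_{x_d}^{2}\Xi v\brz}{L^{2}}\lesssim\norm{\Lambsct^{3/2}(Q^{+}\Xi v)\brz}{L^{2}}+\norm{\Lambsct^{7/2}\Xi v\brz}{L^{2}}+\norm{\Lambsct^{5/2}D_{x_d}\Xi v\brz}{L^{2}},
\end{equation*}
which is bounded by $\norm{\trace(Q^{+}\Xi v)}{1,1/2,\ttau}+\norm{\trace(\Xi v)}{1,5/2,\ttau}$, handling $j=2$.

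For $j=3$ I would differentiate once more in $x_{d}$: using $[D_{x_d},A_{k}]\in\csp\,\Psi(\lambsct^{2-k},\gt)$, I get
\begin{equation*}
D_{x_d}^{3}\Xi v=D_{x_d}(Q^{+}\Xi v)-A_{1}D_{x_d}^{2}\Xi v-A_{0}D_{x_d}\Xi v-[D_{x_d},A_{1}]D_{x_d}\Xi v-[D_{x_d},A_{0}]\Xi v.
\end{equation*}
Substituting the previous identity for $D_{x_d}^{2}\Xi v$ in the second term eliminates the remaining interior derivative and expresses $D_{x_d}^{3}\Xi v\brz$ as $D_{x_d}(Q^{+}\Xi v)\brz$ plus $A_{1}(Q^{+}\Xi v)\brz$ plus tangential pseudo-differential operators (of the appropriate orders) applied to $\Xi v\brz$ and $D_{x_d}\Xi v\brz$. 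Multiplying by $\Lambsct^{1/2}$ and using the same tangential Sobolev bounds, every resulting term falls into either $\norm{\trace(Q^{+}\Xi v)}{1,1/2,\ttau}$ or $\norm{\trace(\Xi v)}{1,5/2,\ttau}$.

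There is no real obstacle here: the argument is purely algebraic manipulation of the factorization $Q^{+}=D_{x_d}^{2}+T_{1,1}$ followed by Sobolev-type boundedness for tangential operators, with the $\csp$-dependent commutators causing no trouble since they only produce terms of lower or equal tangential order that are already absorbed by $\norm{\trace(\Xi v)}{1,5/2,\ttau}$. The only subtle bookkeeping is to verify that the powers of $\Lambsct$ match after the substitutions, which is immediate from the $(1,1)$ order of $T_{1,1}$ and the $1/2$-shift in the target norm.
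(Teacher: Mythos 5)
Your proof is correct and follows essentially the same route as the paper: both exploit $Q^{+}\Xi = D_{x_d}^{2}\Xi + T_{1,1}\Xi$ to trade the normal derivatives of orders $2$ and $3$ at the boundary for traces of $Q^{+}\Xi v$ plus lower-order terms, with the $j=3$ case handled by a second substitution of the same identity. The paper packages the same computation as a two-step telescoping estimate directly in the boundary norms $\norm{\trace(\cdot)}{2,3/2,\ttau}$ and $\norm{\trace(\cdot)}{3,1/2,\ttau}$, which is slightly more compact, but the content is identical.
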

This gives
\begin{multline}
    \label{eq: concatenate 2 C}
     \Norm{ Q^{+}\Xi w}{2,0,\ttau}
    + \norm{\trace(\Xi  v)}{3,1/2,\ttau}  
    \lesssim
    \Norm{Q^{-} Q^{+}\Xi w}{+}
      + (1 -\delta_1)\norm{\trace(\Xi w)}{1,5/2,\ttau} \\
      + (1-\delta_2) \csp^{\alpha_1/2}
    \norm{\trace(\ttau^{-\alpha_1/2} \Xi w)}{0,7/2,\ttau}
    + \Norm{w}{4,-M,\ttau}.
\end{multline}

\bigskip
First, we treat the case $\alpha_1 + \alpha_2 \leq 1$. As $\alpha_1
\leq \alpha_2$ then $\alpha_1=0$. 
We write 
\begin{align*}
 &\sum_{j=0}^2 \Big( \Norm{ Q^{+}\Xi_1 D_{x_d}^j \Xi w}{0,2-j,\ttau}
  +\norm{\trace(\Xi_1 D_{x_d}^j \Xi  v)}{1,5/2-j,\ttau}  \Big)\\
  &\qquad \lesssim \sum_{j=0}^2 \Big( \Norm{ Q^{+} D_{x_d}^j \Xi w}{0,2-j,\ttau}
  +\norm{\trace(D_{x_d}^j \Xi  v)}{1,5/2-j,\ttau}  \Big)
  + \Norm{w}{4,-M,\ttau}\\
  &\qquad \lesssim \sum_{j=0}^2 \Big( \Norm{ D_{x_d}^j Q^{+}\Xi w}{0,2-j,\ttau}
  +\norm{\trace(D_{x_d}^j \Xi  v)}{1,5/2-j,\ttau}  \Big)
    + \csp \Norm{\Xi w}{3,0}
  + \Norm{w}{4,-M,\ttau}\\
   &\qquad \lesssim \Norm{ Q^{+}\Xi w}{2,0,\ttau}
    + \norm{\trace(\Xi  v)}{3,1/2,\ttau}  
    + \csp \Norm{\Xi w}{3,0}
  + \Norm{w}{4,-M,\ttau}.
\end{align*}
With \eqref{eq: concatenate 2 C} we then find
\begin{align*}
    &\sum_{j=0}^2 \Big( \Norm{ Q^{+}\Xi_1 D_{x_d}^j \Xi w}{0,2-j,\ttau}
  +\norm{\trace(\Xi_1 D_{x_d}^j \Xi  v)}{1,5/2-j,\ttau}  \Big)\\
  &\qquad \lesssim
    \Norm{Q^{-} Q^{+}\Xi w}{+}
      + (1 -\delta_1)\norm{\trace(\Xi w)}{1,5/2,\ttau} 
      + (1-\delta_2) 
    \norm{\trace(\Xi w)}{0,7/2,\ttau}\notag\\
    &\qquad \quad + \csp \Norm{\Xi w}{3,0}
    + \Norm{w}{4,-M,\ttau}.\notag
\end{align*}
Now, applying \eqref{eq: estimate Q+} with $\ell=2-j$,  we obtain
\begin{align}
  \label{eq: concatenate 2 D}
    &\sum_{j=0}^2\Big(  \csp^{\alpha_2/2}\Norm{\ttau^{-\alpha_2/2}
       \Xi_1 D_{x_d}^j \Xi w}{2,2-j,\ttau}
    + \norm{\trace(\Xi_1 D_{x_d}^j \Xi w)}{1,5/2-j,\ttau}\Big) 
    \\
    &\qquad \lesssim
    \Norm{Q^{-} Q^{+}\Xi w}{+}
      + (1 -\delta_1)\norm{\trace(\Xi w)}{1,5/2,\ttau} 
      + (1-\delta_2) 
    \norm{\trace(\Xi w)}{0,7/2,\ttau}\notag\\
    &\qquad \quad + \csp \Norm{\Xi w}{3,0}
    + \Norm{w}{4,-M,\ttau}.\notag
  \end{align}
With Lemma~\ref{lemma: different
  form norm}, we write, for $\tau$ chosen \suff large, 
\begin{align*}
  &\csp^{\alpha_2/2}\Norm{ \ttau^{-\alpha_2/2}
  \Xi w}{4,0,\ttau}
  + \norm{\trace(\Xi w)}{3,1/2,\ttau}
    \\
   &\qquad \asymp\sum_{j=0}^2 \Big( \csp^{\alpha_2/2}\Norm{\ttau^{-\alpha_2/2}D_{x_d}^j 
  \Xi w}{2,2-j,\ttau}
  + \norm{\trace(D_{x_d}^j \Xi w)}{1,5/2-j,\ttau}\Big)\\
    &\qquad \lesssim \sum_{j=0}^2  \Big(\csp^{\alpha_2/2}\Norm{\ttau^{-\alpha_2/2}
       \Xi_1 D_{x_d}^j \Xi w}{2,2-j,\ttau}
    + \norm{\trace(\Xi_1 D_{x_d}^j \Xi w)}{1,5/2-j,\ttau} \Big)
    + \Norm{w}{4,-M,\ttau}.
\end{align*}
Finally, using \eqref{eq: concatenate 2 D} we obtain
\begin{align*}
  &\csp^{\alpha_2/2}\Norm{ \ttau^{-\alpha_2/2}
  \Xi w}{4,0,\ttau}
  + \norm{\trace(\Xi w)}{3,1/2,\ttau}\\
    &\qquad \lesssim
    \Norm{Q^{-} Q^{+}\Xi w}{+}
      + (1 -\delta_1)\norm{\trace(\Xi w)}{1,5/2,\ttau} 
      + (1-\delta_2) 
    \norm{\trace(\Xi w)}{0,7/2,\ttau}\notag\\
    &\qquad \quad + \csp \Norm{\Xi w}{3,0}
    + \Norm{w}{4,-M,\ttau},\notag
  \end{align*}
and taking $\tau$ \suff large, as $0\leq \alpha_2 \leq 1$,
 we achieve the sought estimate.

\bigskip 
Second, we treat the case $\alpha_1 + \alpha_2 =2$, that is, $\alpha_1
= \alpha_2 =1$. 
We set $\tD_{x_d} = D_{x_d} + i \htau_{\xi_d} \in
\PsiOpsc^{1,0}$. We use the further assumption made in this case,
namely, for any $M \in \N$, 
$[\tD_{x_d}, Q^+]\Xi_1 =  (1 + \ctp\csp) R_{2,0}\Xi_1 + R_{2,-M}$ with
$R_{2,0}\in\PsiOpsc^{2,0}$ and $R_{2,-M} \in \PsiOpsc^{2,-M}$.  
We write 
\begin{align*}
 &\sum_{j=0}^2 \Big( \Norm{ Q^{+}\Xi_1 \tD_{x_d}^j \Xi w}{0,2-j,\ttau}
  +\norm{\trace(\Xi_1 \tD_{x_d}^j \Xi  v)}{1,5/2-j,\ttau}  \Big)\\
  &\qquad \lesssim \sum_{j=0}^2 \Big( \Norm{ Q^{+} \tD_{x_d}^j  \Xi_1 \Xi w}{0,2-j,\ttau}
  +\norm{\trace(\tD_{x_d}^j \Xi  v)}{1,5/2-j,\ttau}  \Big)
  + \Norm{w}{4,-M,\ttau}\\
  &\qquad \lesssim \sum_{j=0}^2 \Big( \Norm{ \tD_{x_d}^j Q^{+}\Xi_1\Xi w}{0,2-j,\ttau}
  +\norm{\trace(\tD_{x_d}^j \Xi  v)}{1,5/2-j,\ttau}  \Big)
    +(1 +\ctp \csp) \Norm{\Xi w}{3,0}
  + \Norm{w}{4,-M,\ttau}\\
   &\qquad \lesssim \Norm{ Q^{+}\Xi w}{2,0,\ttau}
    + \norm{\trace(\Xi  v)}{3,1/2,\ttau}  
    + (1 +\ctp \csp) \Norm{\Xi w}{3,0}
  + \Norm{w}{4,-M,\ttau}.
\end{align*}
With \eqref{eq: concatenate 2 C} we then find
\begin{align*}
    &\sum_{j=0}^2 \Big( \Norm{ Q^{+}\Xi_1 \tD_{x_d}^j \Xi w}{0,2-j,\ttau}
  +\norm{\trace(\Xi_1 \tD_{x_d}^j \Xi  v)}{1,5/2-j,\ttau}  \Big)\\
  &\qquad \lesssim
    \Norm{Q^{-} Q^{+}\Xi w}{+}
      + (1 -\delta_1)\norm{\trace(\Xi w)}{1,5/2,\ttau} 
      + (1-\delta_2) \csp^{1/2}
    \norm{\ttau^{-1/2} \trace(\Xi w)}{0,7/2,\ttau}\notag\\
    &\qquad \quad + (1 +\ctp \csp)\Norm{\Xi w}{3,0}
    + \Norm{w}{4,-M,\ttau}.\notag
\end{align*}
Now, applying \eqref{eq: estimate Q+} with $\ell=2-j$,  we obtain
\begin{align}  
  \label{eq: concatenate bis}
    &\sum_{j=0}^2\Big(  \csp\Norm{\ttau^{-1}
       \Xi_1 \tD_{x_d}^j \Xi w}{2,2-j,\ttau}
    + \norm{\trace(\Xi_1 \tD_{x_d}^j \Xi w)}{1,5/2-j,\ttau}\Big) 
    \\
    &\qquad \lesssim
    \Norm{Q^{-} Q^{+}\Xi w}{+}
      + (1 -\delta_1)\norm{\trace(\Xi w)}{1,5/2,\ttau} 
      + (1-\delta_2) \csp^{1/2}
    \norm{\ttau^{-1/2}\trace(\Xi w)}{0,7/2,\ttau}\notag\\
    &\qquad \quad +  (1 +\ctp \csp)\Norm{\Xi w}{3,0}
    + \Norm{w}{4,-M,\ttau}.\notag
  \end{align}
  Now, as $[\tD_{x_d}, \ttau^{-1}] \in \gamma \PsiOpsc^{0,-1}$, we have
\begin{align*}
  &\sum_{j=0}^2 \Big( \csp\Norm{\tD_{x_d}^j \ttau^{-1}
    \Xi w}{2,2-j,\ttau}
    + \norm{\trace(\tD_{x_d}^j \Xi w)}{1,5/2-j,\ttau}\Big)\\
  &\qquad \lesssim
    \sum_{j=0}^2 \Big( \csp\Norm{ \ttau^{-1}\tD_{x_d}^j
    \Xi w}{2,2-j,\ttau}
    + \norm{\trace(\tD_{x_d}^j \Xi w)}{1,5/2-j,\ttau}\Big)
    +\csp^2 \Norm{\ttau^{-1} \Xi w}{3,0}
    \\
    &\qquad \lesssim \sum_{j=0}^2  \Big(\csp \Norm{\ttau^{-1}
       \Xi_1 \tD_{x_d}^j \Xi w}{2,2-j,\ttau}
    + \norm{\trace(\Xi_1 \tD_{x_d}^j \Xi w)}{1,5/2-j,\ttau} \Big)\\
     &\qquad \qquad +  \csp^2 \Norm{\ttau^{-1} \Xi w}{3,0}
    + \Norm{w}{4,-M,\ttau}, 
\end{align*}
yielding with \eqref{eq: concatenate bis}, as $\csp^2 \ttau^{-1}\lesssim 1$, 
\begin{align}
  \label{eq: concatenate 2 E}
  &\sum_{j=0}^2 \Big( \csp\Norm{\tD_{x_d}^j \ttau^{-1}
  \Xi w}{2,2-j,\ttau}
  + \norm{\trace(\tD_{x_d}^j \Xi w)}{1,5/2-j,\ttau}\Big)\\
 &\qquad \lesssim
    \Norm{Q^{-} Q^{+}\Xi w}{+}
      + (1 -\delta_1)\norm{\trace(\Xi w)}{1,5/2,\ttau} 
      + (1-\delta_2) \csp^{1/2}
    \norm{\ttau^{-1/2} \trace(\Xi w)}{0,7/2,\ttau}\notag\\
    &\qquad \quad  +  (1 +\ctp \csp)\Norm{\Xi w}{3,0}
    + \Norm{w}{4,-M,\ttau}.\notag
  \end{align}
As $ D_{x_d} -\tD_{x_d}  = T\in\PsiOpsc^{0,1}$,  observe that we have 
\begin{align*}
  \Norm{D_{x_d} \ttau^{-1} \Xi w}{2,1,\ttau}
  \lesssim \Norm{\tD_{x_d} \ttau^{-1} \Xi w}{2,1,\ttau}
  + \Norm{\ttau^{-1} \Xi w}{2,2,\ttau}, 
\end{align*}
meaning that we have
\begin{align*}
  \Norm{\ttau^{-1} \Xi w}{3,1,\ttau}
  \lesssim \Norm{\tD_{x_d} \ttau^{-1} \Xi w}{2,1,\ttau}
  + \Norm{\ttau^{-1} \Xi w}{2,2,\ttau}.
\end{align*}
Next, we write
\begin{align*}
\Norm{D_{x_d}^2\ttau^{-1}\Xi w}{2,0,\ttau}
  &\lesssim \Norm{D_{x_d} \tD_{x_d} \ttau^{-1}\Xi w}{2,0,\ttau}
  + \Norm{D_{x_d} T \ttau^{-1}\Xi w}{2,0,\ttau}\\
  &\lesssim \Norm{\tD_{x_d}^2 \ttau^{-1}\Xi w}{2,0,\ttau}
    + \Norm{\tD_{x_d} \ttau^{-1}\Xi w}{2,1,\ttau}
  + \Norm{ \ttau^{-1}\Xi
  w}{3,1,\ttau},
\end{align*}
and thus
\begin{align*}
  \Norm{\ttau^{-1} \Xi w}{4,0,\ttau}
  \lesssim \sum_{j=0}^2 \Norm{\tD_{x_d}^j \ttau^{-1}
  \Xi w}{2,2-j,\ttau}.
\end{align*}
Similarly, we find
\begin{align*}
  \norm{\trace(\Xi w)}{3,1/2,\ttau}
  \lesssim
  \sum_{j=0}^2\norm{\trace(\tD_{x_d}^j \Xi w)}{1,5/2-j,\ttau}.
\end{align*}
With \eqref{eq: concatenate 2 E} we thus obtain 
\begin{align*}
  &\csp \Norm{\ttau^{-1} \Xi w}{4,0,\ttau} + \norm{\trace(\Xi w)}{3,1/2,\ttau}\\
 &\qquad \lesssim
    \Norm{Q^{-} Q^{+}\Xi w}{+}
      + (1 -\delta_1)\norm{\trace(\Xi w)}{1,5/2,\ttau} 
      + (1-\delta_2) \csp^{1/2}
    \norm{\ttau^{-1/2} \trace(\Xi w)}{0,7/2,\ttau}\notag\\
    &\qquad \quad +  (1 +\ctp \csp)\Norm{\Xi w}{3,0}
    + \Norm{w}{4,-M,\ttau}.\notag
  \end{align*}
Then, taking $\csp$ \suff large and $\ctp>0$ \suff small we obtain the
sought estimate.
\end{proof}

\begin{proof}[\bfseries Proof of Lemma~\ref{sublemma: trace Q+}]
  Recalling that $Q^+ \Xi =D_{x_d}^2 \Xi + T_{1,1} \Xi$, where
  $T_{1,1}\in \PsiOpsc^{1,1}$, we have
\begin{align*}
  \norm{\trace(\Xi v)}{2, 3/2, \ttau} &\asymp
  \norm{\trace(D_{x_d}^2\Xi v)}{0, 3/2, \ttau}
  + \norm{\trace(\Xi v)}{1, 5/2, \ttau}
  =  \norm{\trace((Q^+-T_{1,1})\Xi v)}{0, 3/2, \ttau}
  + \norm{\trace(\Xi v)}{1, 5/2, \ttau}\\
  &\lesssim 
    \norm{\trace(Q^+\Xi v)}{0, 3/2, \ttau}
  + \norm{\trace(\Xi v)}{1, 5/2, \ttau}.
\end{align*}
We then write 
\begin{align*}
  \norm{\trace(\Xi v)}{3, 1/2, \ttau} &\asymp
  \norm{\trace(D_{x_d}^3\Xi v)}{0, 1/2, \ttau}
  + \norm{\trace(\Xi v)}{2, 3/2, \ttau}
  =  \norm{\trace(D_{x_d}(Q^+-T_{1,1})\Xi v)}{0, 1/2, \ttau}
  + \norm{\trace(\Xi v)}{2, 3/2, \ttau}\\
  &\lesssim 
    \norm{\trace(Q^+\Xi v)}{1, 1/2, \ttau}
  + \norm{\trace(\Xi v)}{2, 3/2, \ttau}.
\end{align*}
Combining the two estimates yields the result.
\end{proof}
\subsection{An Estimate for $Q_k$}
\label{sec: An Estimate for Qk}

We recall that 
\begin{align*}Q_k = \big(D_{x_d} + i \htau_{\xi_d}(\y')\big)^2 
+  (-1)^k i \big(D_{s} + i \htau_{\sigma}(\y')\big)^2
+ r\big(x,D_{x'} + i \htau_{\xi'}(\y')\big),
\end{align*} 
with $k=1,2$.
For this operator we have the following estimation.
%%%%%%%%%%%%%%%%%%%%%%%%
% proposition          %
%%%%%%%%%%%%%%%%%%%%%%%%
\begin{proposition}
  \label{prop: estimate Qk}
  Let $V' \Subset V$. 
  Let $\ell \in \R$. There exist  $\tau_0\geq \tauast$, $\csp_0\geq 1$ and $C>0$ \st 
  \begin{align*}
    &\csp^{1/2}\Norm{\ttau^{-1/2} v}{2,\ell,\ttau}
    + \norm{\trace(v)}{1,\ell+ 1/2,\ttau} 
      \leq C\Big(
      \Norm{Q_k v}{0,\ell,\ttau}
    + \norm{\trace(v)}{0,\ell+ 3/2,\ttau}\Big),
      \qquad k=1,2,
   \end{align*}
   for $\tau \geq \tau_0$, $\csp \geq \csp_0$, $\ctp \in [0,1]$,
  and for $v = w|_{\Rpb}$, with
  $ w \in\Cinfc(\R^N)$ and $\supp(w) \subset V'$. 
\end{proposition}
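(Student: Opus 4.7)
The plan is to obtain this estimate directly from the sub-ellipticity property of the couple $(Q_k,\varphi)$ established in Corollary~\ref{cor: subellipticity qk}, by adapting the integration-by-parts computation that underlies the proof of Theorem~\ref{theorem: sharp Carleman boundary s=0}, but now performed in the three-parameter tangential calculus of Section~\ref{sec: pseudo -3p} and keeping track of boundary contributions at $\{x_d=0\}$. It suffices to prove the case $\ell=0$: the general case follows by applying it to $\Opt^w(\lambsct^{\ell}) v$ and estimating the commutators with the help of \eqref{eq: usual semi-classical argument}.

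\textbf{First step: symmetric splitting.} I would write $Q_k = A_k + i B_k$ where $A_k$ and $B_k$ are formally selfadjoint elements of $\PsiOpsc^{2,0}$ (modulo tolerated remainders), with principal symbols $a_k = \Re q_k$ and $b_k = \Im q_k$ respectively. Observe that the explicit form of $q_k$ gives $a_k \in \Symbsc^{2,0}$ while $b_k$ carries the semi-classical weight $\ttau$ (its $\xi_d$-leading term is $2 \xi_d \htau_{\xi_d}$), so $B_k \in \PsiOpsc^{1,1}$. Then expanding, for $v = w_{|\Rpb}$ compactly supported in $V'$,
\begin{equation*}
\Norm{Q_k v}{+}^2 = \Norm{A_k v}{+}^2 + \Norm{B_k v}{+}^2 + 2 \Re \scp{A_k v}{i B_k v}_+,
\end{equation*}
and integrating by parts in $x_d$ produces
\begin{equation*}
2 \Re \scp{A_k v}{i B_k v}_+ = \scp{i [A_k, B_k] v}{v}_+ + \mathcal B_k(v),
\end{equation*}
where $\mathcal B_k(v)$ is a boundary quadratic form at $x_d = 0$ controlled by $\norm{\trace(v)}{1,1/2,\ttau}^2 + \norm{\trace(v)}{0,3/2,\ttau}^2$.

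\textbf{Second step: G{\aa}rding from sub-ellipticity.} The principal symbol of $i[A_k,B_k]$ is $\{a_k, b_k\} = \{\Re q_k, \Im q_k\}$, and Corollary~\ref{cor: subellipticity qk} yields, for $\csp \geq \csp_0$,
\begin{equation*}
|q_k|^2 + \tau \varphi |\cpsi'|^2 \{\Re q_k, \Im q_k\} \geq C \lambsc^4
\quad \text{in } \ovl{V} \times \R^N \times \R_+.
\end{equation*}
Multiplying the cross term by the appropriate weight, this provides
\begin{equation*}
\Norm{Q_k v}{+}^2 + \csp \scp{\Opt(\ttau^{-1}) \Re(i[A_k,B_k]) v}{v}_+ \gtrsim C \Norm{v}{0,2,\ttau}^2
\end{equation*}
in principle, but what I actually need is a bound on $\Norm{v}{2,0,\ttau}^2$. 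To obtain it, I add the contribution of $\Norm{A_k v}{+}^2$, whose principal symbol is elliptic of order $2$ in $\xi_d$ away from a \nhd of $\Char(Q_k)$, and combine with the sub-ellipticity estimate above through the local tangential G{\aa}rding inequality of Proposition~\ref{prop: local Garding} applied in $V'$. This yields, after keeping track of the scaling $\csp \ttau^{-1}$,
\begin{equation*}
\csp \Norm{\ttau^{-1/2} v}{2,0,\ttau}^2 \lesssim \Norm{Q_k v}{+}^2 + |\mathcal B_k(v)| + \csp^2 \Norm{v}{1,0,\ttau}^2,
\end{equation*}
with the last term absorbed by taking $\tau$ large via \eqref{eq: usual semi-classical argument}.

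\textbf{Third step: boundary terms.} The boundary form $\mathcal B_k(v)$, coming from integrating by parts $\d_{x_d}$ twice against itself and once against tangential operators, will involve $\|D_{x_d} v_{|x_d=0}\|$ paired with $\|v_{|x_d=0}\|$ in various weights, producing a main contribution $\gtrsim \norm{\trace(v)}{1,1/2,\ttau}^2$ with the correct sign (from the principal part $\xi_d^2$), plus a controlled remainder bounded by $\norm{\trace(v)}{0,3/2,\ttau} \cdot \norm{\trace(v)}{1,1/2,\ttau}$. The sign analysis of this main contribution is the delicate point: it should be positive because the second-order factor $\xi_d^2$ in $q_k$ produces, upon the integration-by-parts computation $\scp{D_{x_d}^2 v}{v}_+ = \Norm{D_{x_d}v}{+}^2 + \scp{D_{x_d}v\brx}{v\brx}_{L^2(\R^{N-1})}$, a trace pairing that, combined with the contributions from $\htau_{\xi_d}$ and $M_k$, gives the $\norm{\trace(v)}{1,1/2,\ttau}^2$ estimate modulo a Young-inequality-controlled error. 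Adding this to the previous estimate and absorbing with Young's inequality gives
\begin{equation*}
\csp \Norm{\ttau^{-1/2} v}{2,0,\ttau}^2 + \norm{\trace(v)}{1,1/2,\ttau}^2 \lesssim \Norm{Q_k v}{+}^2 + \norm{\trace(v)}{0,3/2,\ttau}^2,
\end{equation*}
which, after taking square roots, is the sought estimate.

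\textbf{Main obstacle.} The main difficulty is controlling the boundary quadratic form $\mathcal B_k(v)$ with the correct sign on the leading trace term $\norm{\trace(v)}{1,1/2,\ttau}^2$. Unlike the interior case where sub-ellipticity suffices, here the integration by parts in $x_d$ produces boundary symbols whose positivity on a large part of phase space must be checked directly from the explicit form $q_k = (\xi_d + i \htau_{\xi_d})^2 + m_k$. The key observation is that the leading $x_d$-quadratic piece $(D_{x_d} + i\htau_{\xi_d})^2$ of $Q_k$ generates, through $\Re \scp{(D_{x_d} + i\htau_{\xi_d})^2 v}{v}_+$, a positive boundary contribution whose weight matches $\norm{\trace(v)}{1,1/2,\ttau}$; all remaining cross terms from $M_k$, $[D_{x_d},M_k]$, and lower-order corrections involve tangential derivatives and fit into the calculus error budget, hence can be absorbed either by Young's inequality or by the semi-classical argument \eqref{eq: usual semi-classical argument} after fixing $\csp$ large enough and then $\tau$ large enough.
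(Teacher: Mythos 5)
Your proposal proceeds by a direct Carleman-type integration-by-parts computation on $Q_k$, invoking the sub-ellipticity of Corollary~\ref{cor: subellipticity qk} together with a boundary quadratic form. The paper takes a genuinely different route: it microlocalizes according to the sign of $\mu_k$, and where $\mu_k\geq -C\delta$ it factorizes $Q_k$ modulo tolerated remainders as $Q_-Q_+$ with first-order factors (the roots being simple there by Lemma~\ref{lemma: mu > -C}); it then applies the perfect elliptic estimate of Lemma~\ref{lemma: microlocal elliptic estimate} (in the region $\mu_k\leq -C\delta$ to $Q_k$ directly, and to the factor $Q_-$ elsewhere), the sub-elliptic estimate of Lemma~\ref{lemma: sub-ellitptic estimate Qk+} to $Q_+$, and glues through Proposition~\ref{prop: estimate concatenation}. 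This is not cosmetic: the factorization is precisely what produces $\norm{\trace(v)}{1,\ell+1/2,\ttau}$ on the left with only $\norm{\trace(v)}{0,\ell+3/2,\ttau}$ as observation, because the root $\rho_{k,-}$ of the first-order factor satisfies $\Im\rho_{k,-}\leq -C\lambsct$ on the relevant conic set (Proposition~\ref{prop root behaviors2}, point (1), on $F$; $\ttau\asymp\lambsct$ outside $F$).

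There is a genuine gap in your Third Step, exactly at the point you flag as delicate. The boundary quadratic form $\mathcal B_k(v)$ produced by integrating by parts in $2\Re\scp{A_kv}{iB_kv}_+$ has its positive $\|D_{x_d}v\brx\|^2$-piece coming from the pairing of $D_{x_d}^2$ in $A_k$ with $2\htau_{\xi_d}D_{x_d}$ in $B_k$, so it is of size $\htau_{\xi_d}\|D_{x_d}v\brx\|^2\asymp\ttau\|D_{x_d}v\brx\|^2$. The trace norm you need on the left (take $\ell=0$) contains $\lambsct\|D_{x_d}v\brx\|^2$, and on the microlocal region $F$ where $\ttau\leq\delta\theta_1\lambsct$ one has $\ttau\ll\lambsct$. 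Meanwhile the cross terms in $\mathcal B_k(v)$ coming from $\Re M_k$, $\Im M_k$ scale after Young's inequality as $\eta\lambsct\|D_{x_d}v\brx\|^2+C_\eta\lambsct^3\|v\brx\|^2$; no choice of $\eta$ uniform over phase space makes $\eta\lambsct\leq\hf\ttau$ on $F$, so the positive contribution cannot absorb the errors there. The factorization is what repairs this: the perfect elliptic estimate for the first-order factor $D_{x_d}-\Opt^w(\uchi^2\rho_-)$ with root at distance $\gtrsim\lambsct$ below the real line is what delivers the $\lambsct^{1/2}$-weighted trace bound on $D_{x_d}v\brx$ that the direct quadratic-form computation cannot supply. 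To salvage your approach you would need to separate the region $F$ from its complement and argue differently there, which essentially reintroduces the paper's microlocal decomposition.
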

The open \nhd $V$ is that introduced in Section~\ref{sec: local setting}. 
%%%% proof of proposition
\begin{proof}
  Let $k$ be equal to $1$ or $2$. We write $Q$ in place of $Q_k$ for
  concision. We also write $\mu$ in place of $\mu_k$. 

  We need to define microlocalization symbols and operators as in
  Section~\ref{sec: def microlocal regions} and use some of the
  symbols introduced therein. Let $\chi_{V'} \in \Cinf(\R^N)$ be \st
  $\supp(\chi_{V'}) \subset V$ and $\chi_{V'}\equiv 1$ on an open \nhd
  of $V'$.

 For $\delta \in (0,1]$, we set 
\begin{align*}
  &\chi_{\delta,-} (\y') = \chi_{V'}(z)\ 
    \chi_-(\mu(\y')/\delta) \in S(1,\gt)
    \qquad \tchi_{\delta,0} (\y') = \chi_{V'}(z)\ 
  \big(1-\chi_-(\mu(\y')/\delta)\big)  \in S(1,\gt), 
\end{align*} for $\chi_-$ defined in Section~\ref{sec: def microlocal regions}, 
and observe that $\chi_{\delta,-}  + \chi_{\delta,0} = 1$ on
$\MtVp$. We set $\Xi_{\delta,-} = \Opt(\chi_{\delta,-})$ and 
$\Xi_{\delta,0}  = \Opt(\tchi_{\delta,0})$.

 In a \cnhd of $\supp(\chi_{\delta,-}) \subset \MtV$  we have $\mu \leq -C
  \delta$. As \eqref{eq: cond psi2} holds in $V$ we have 
  $\htau_{\xi_d} \geq C \ttau$ and thus $|\htau_\xi| \asymp  \ttau$.
  Thus, by Lemma~\ref{lemma: root behaviors1}, both roots of the
  symbol $q$ of the operator $Q$ are in the lower half complex
  plane. Then, with Lemma~\ref{lemma: microlocal elliptic estimate}
  we have the following perfect elliptic estimate, for any $M >0$, 
  \begin{equation}
    \label{eq: proof estimate Qk}
    \Norm{\Xi_{\delta,-} v}{2,0,\ttau}
    +\norm{\trace(\Xi_{\delta,-} v)}{1,1/2,\ttau}
    \lesssim
    \Norm{Q \Xi_{\delta,-} v}{+}
    + \Norm{v}{2,-M,\ttau},
\end{equation}
for $v\in \S(\Rpb)$, for $\tau\geq \tauast$, $\csp\geq 1$ chosen \suff large, and
$\ctp \in [0,1]$.

We now let $\uchi_{\delta}, \chi_{\delta, 1} \in S(1, \gt)$ supported
in $\MtV$, homogeneous of degree zero, be \st
$\mu \geq -C \delta$ on their supports and
$\chi_{\delta,1} \equiv 1$ in a \cnhd of $\supp(\tchi_{\delta,0})$ and
$\uchi_{\delta} \equiv 1$ in a \cnhd of $\supp(\chi_{\delta,1})$.

\medskip We choose $\delta>0$ \suff small so that the result of
Lemma~\ref{lemma: mu > -C} applies, that is, on
$\supp(\uchi_\delta)$ the roots of $q$ are simple. 
We have
\begin{align*}
  q (\y) = q_{-} (\y)  q_{+}(\y) , \quad q_{\pm}(\y)  = \xi_d - \rho_{\pm}(\y').
\end{align*}
We set $Q_{\pm} := D_{x_d} - \Opt^w(\uchi^2 \rho_{\pm})$.

We shall denote by $R_{j,k}$ as a generic operator in
  $\PsiOpsc^{j,k}$, $j \in \mathbb N$, $k \in \R$,  whose expression may
  change from one line to the other. We denote by $M$ an arbitrary
  large integer whose value may change from one line to the other. 
  We have with a proof similar to that of
Lemma~\ref{lemma: E0 factorization Qk},
\begin{align}
  \label{eq: proof estimate Qk 0}
  Q \Xi_{\delta,0} = Q_- Q_+ \Xi_{\delta,0} + \csp R_{1,0}
\Xi_{\delta,0} + R_{2,-M}.
\end{align}

In a \cnhd of $\supp(\tchi_{\delta, 0})$, the root of the symbol of $Q_-$ is 
 in the lower half complex
  plane. Then, with Lemma~\ref{lemma: microlocal elliptic estimate},
  we have the following perfect elliptic estimate, for any $M >0$, 
  \begin{equation}
    \label{eq: proof estimate Qk 1}
    \Norm{\Xi_{\delta,0} v}{1,0,\ttau}
    +\norm{\trace(\Xi_{\delta,0} v)}{0,1/2,\ttau}
    \lesssim
    \Norm{Q_- \Xi_{\delta,0} v}{+}
    + \Norm{v}{1,-M,\ttau},
\end{equation}
for $v\in \S(\Rpb)$, for $\tau\geq \tauast$, $\csp\geq 1$ chosen \suff large, and
$\ctp \in [0,1]$.

For $Q_+$ we have the following estimate,
characterized by the loss of a half derivative and a trace
observation, as given
by Lemma~\ref{lemma: sub-ellitptic estimate Qk+},
\begin{align*}
      \csp^{1/2}\Norm{\ttau^{m-1/2} \Xi_{\delta,0} v}{1,\ell, \ttau} 
      \lesssim \Norm{\ttau^{m} Q_{+} \Xi_{\delta,0} v}{0,\ell,\ttau}
  +  \norm{\trace(\ttau^{m}\Xi_{\delta,0} v)}{0,\ell+ 1/2,\ttau} 
  + \Norm{v}{0,-M, \ttau}
  , 
  \end{align*}
  for $v \in \S(\Rpb)$ and $\ell \in \R$, and for $\tau$ and $\csp$
  chosen \suff large, and $\ctp \in [0,1]$.  Then, according to
  Proposition~\ref{prop: estimate concatenation}, applied with
  $\alpha_1 =0$, $\alpha_2=1$, $\delta_1=1$, and $\delta_2=0$, we have
  the following estimates for the operator $Q_-Q_+$, for $M >0$ and
  $\ell \in \R$,
  \begin{align*}
    \csp^{1/2}\Norm{\ttau^{-1/2}\Xi_{\delta,0}  v}{2,0,\ttau}
    + \norm{\trace(\Xi_{\delta,0}  v)}{1, 1/2,\ttau} 
      \lesssim
      \Norm{Q_{-}Q_{+} \Xi_{\delta,0}  v}{+}
    +\norm{\trace(\Xi_{\delta,0}  v)}{0, 3/2,\ttau} 
      + \Norm{v}{2,-M, \ttau},
  \end{align*}
for $v \in \S(\Rpb)$, and for $\tau$ and $\csp$ chosen \suff large.
With \eqref{eq: proof estimate Qk 0} we thus obtain
\begin{align}
    \label{eq: proof estimate Qk 2}
    \csp^{1/2}\Norm{\ttau^{-1/2}\Xi_{\delta,0}  v}{2,0,\ttau}
    + \norm{\trace(\Xi_{\delta,0}  v)}{1, 1/2,\ttau} 
      \lesssim
      \Norm{Q \Xi_{\delta,0}  v}{+}
    +\norm{\trace(\Xi_{\delta,0}  v)}{0, 3/2,\ttau} 
      + \Norm{v}{2,-M, \ttau},
  \end{align}
  for $\tau$ chosen \suff large  with  the usual
  semi-classical inequality \eqref{eq: usual semi-classical argument}.

  Using that $\chi_{\delta,-}  + \chi_{\delta,0} = 1$ on
$\MtVp$ we obtain, with \eqref{eq: proof estimate Qk} and \eqref{eq: proof estimate Qk 2}
\begin{align*}
  &\csp^{1/2}\Norm{\ttau^{-1/2} v}{2,0,\ttau}
    + \norm{\trace( v)}{1, 1/2,\ttau} \\
  &\qquad  \lesssim
    \csp^{1/2}\Norm{\ttau^{-1/2} \Xi_{\delta,-} v}{2,0,\ttau}
    + \csp^{1/2}\Norm{\ttau^{-1/2} \Xi_{\delta,0}  v}{2,0,\ttau}
    + \norm{\trace( \Xi_{\delta,-} v)}{1, 1/2,\ttau}
    + \norm{\trace( \Xi_{\delta,0}  v)}{1, 1/2,\ttau}\\
  &\qquad  \lesssim
    \Norm{Q \Xi_{\delta,-}  v}{+}
    +  \Norm{Q \Xi_{\delta,0}  v}{+}
    +\norm{\trace(\Xi_{\delta,0}  v)}{0, 3/2,\ttau} 
    + \Norm{v}{2,-M, \ttau},
  \end{align*}
  for $v = w|_{\Rpb}$, with
  $ w \in\Cinfc(\R^N)$ and $\supp(w) \subset V'$. 
  Observing that $[Q, \Xi_{\delta,-} ]$ and $[Q, \Xi_{\delta,0} ]$ are
  both in $\csp \PsiOpsc^{1,0}$ we conclude the proof  with  the usual
  semi-classical inequality \eqref{eq: usual semi-classical argument}
  for $\tau$ chosen \suff large.
\end{proof}

\bigskip

\subsection*{Conflict of interest:}  The authors declare that they have no conflict of interest.

% references
\bibliographystyle{amsalpha}
\bibliography{ref-jerome}

\end{document}